\newcommand{\myfixwrapfig}{~\vspace*{-\baselineskip}\vspace*{-10pt}}
\newcommand{\myfixwrapfigdouble}{~\vspace*{-\baselineskip}\vspace*{-20pt}}
\colorlet{lightred}{red!20!white}
\colorlet{lightblue}{blue!30!white}
\colorlet{darkgreen}{green!70!black}
\colorlet{lightgreen}{green!50!white}
\colorlet{gold}{yellow!90!black!70!red}
\newtheorem{theorem}{Theorem}[section] % 1st argument is your name for it
\newtheorem{lemma}[theorem]{Lemma}     % 2nd argument is what is printed
\newtheorem{corollary}[theorem]{Corollary}
\newtheorem{proposition}[theorem]{Proposition}
\newtheorem{fact}[theorem]{Fact}
\def\co{\colon\thinspace}
\newcommand{\qp}[2]{(\textcolor{red}{p_{#2}}\vert{\blue q_{#1}})}
\newcommand{\pq}[2]{(\textcolor{red}{q_{#2}}\vert{\blue p_{#1}})}
\newcommand{\pn}[1]{(\textcolor{red}{-}\vert{\blue p_{#1}})}
\newcommand{\qn}[1]{(\textcolor{red}{-}\vert{\blue q_{#1}})}
\newcommand{\np}[1]{(\textcolor{red}{p_{#1}}\vert{\blue -})}
\newcommand{\nq}[1]{(\textcolor{red}{q_{#1}}\vert{\blue -})}
\newcommand{\ob}{\operatorname{ob}}
\DeclareMathOperator{\Mor}{Mor}
\DeclareMathOperator{\Mat}{Mat} 
\DeclareMathOperator{\Cx}{Cx} 
\DeclareMathOperator{\Cxpre}{Cx^\text{\normalfont pre}}
\DeclareMathOperator{\pqMod}{pqMod}
\DeclareMathOperator{\gpqMod}{gpqMod}
\DeclareMathOperator{\Com}{Com}
\DeclareMathOperator{\loops}{\normalfont C}
\DeclareMathOperator{\CC}{CC}
\DeclareMathOperator{\CFTd}{CFT^\partial}
\DeclareMathOperator{\CFT}{\widehat{CFT}}
\DeclareMathOperator{\HFT}{\widehat{HFT}}
\newcommand{\CFTminus}{\operatorname{CFT}^-}
\DeclareMathOperator{\m}{m}  
\DeclareMathOperator{\rr}{r}
\DeclareMathOperator{\mr}{mr}
\DeclareMathOperator{\HFL}{\widehat{HFL}}
\DeclareMathOperator{\CFL}{\widehat{CFL}}
\newcommand{\CFLminus}{\operatorname{CFL}^-}
\DeclareMathOperator{\BSD}{\widehat{BSD}}
\DeclareMathOperator{\BSAA}{\widehat{BSAA}}
\DeclareMathOperator{\BSAD}{\widehat{BSAD}}
\DeclareMathOperator{\BSDA}{\widehat{BSDA}}
\DeclareMathOperator{\BSDD}{\widehat{BSDD}}
\newcommand{\typeA}[2]{\relax\!\!~_{#1}#2}
\DeclareMathOperator{\HF}{\widehat{HF}}
\DeclareMathOperator{\LagrangianFH}{HF}
\DeclareMathOperator{\LagrangianFC}{CF}
\DeclareMathOperator{\SFC}{SFC}
\newcommand{\Ad}{\operatorname{\mathcal{A}}^\partial}
\newcommand{\Anull}{\operatorname{\mathcal{A}}^{\partial}_{31}}
\newcommand{\Apre}{\operatorname{\mathcal{A}}^\text{\normalfont pre}}
\newcommand{\Rpren}{\operatorname{R}^\text{pre}_n}
\newcommand{\Rn}{\operatorname{R}_n}
\newcommand{\Aminus}{\operatorname{\mathcal{A}}^-_n}
\newcommand{\Id}{\operatorname{\mathcal{I}}^\partial}
\newcommand{\Atw}{\operatorname{\mathcal{A}}^{\infty}}
\DeclareMathOperator{\id}{id} 
\DeclareMathOperator{\Sym}{Sym}
\DeclareMathOperator{\GL}{GL}
\DeclareMathOperator{\im}{im}
\newcommand{\A}{\boldsymbol{\alpha}}
\newcommand{\B}{\boldsymbol{\beta}}
\newcommand{\Ac}{\boldsymbol{\alpha}^c}
\newcommand{\Aa}{\operatorname{\boldsymbol{\alpha}}^a}
\newcommand{\x}{\boldsymbol{x}}
\newcommand{\y}{\boldsymbol{y}}
\newcommand{\z}{\boldsymbol{z}}
\def\Mydot{\pst@object{Mydot}}
\def\Mydot@i(#1,#2,#3,#4,#5){%
	\begin@ClosedObj%
	\rput(#2,#3){
		\psdot[linecolor=#4, dotsize=5pt](0,0)
		\uput{0.3}[#1](0,0){\footnotesize #5}
	}
	\end@ClosedObj%
}
\newlength{\stringwidth}
\newlength{\stringwhite}
\newcommand{\qedsymbol}{$\square$}
\newcommand{\Red}[1]{{\textcolor{red}{#1}}} 
\newcommand{\nmathphantom}[1]{\settowidth{\dimen0}{$#1$}\hspace*{-\dimen0}}
\newcommand{\nphantom}[1]{\settowidth{\dimen0}{#1}\hspace*{-\dimen0}}
\title{Peculiar modules for 4-ended tangles}
\author{Claudius Zibrowius}
\gdef\@journal{}
\begin{document}
\psset{arrowsize=3pt 2}
\maketitle

\begin{abstract}
	With a 4-ended tangle $T$, we associate a Heegaard Floer invariant $\CFTd(T)$, the peculiar module of $T$. Based on Zarev's bordered sutured Heegaard Floer theory~\cite{ZarevThesis}, we prove a glueing formula for this invariant which recovers link Floer homology $\HFL$. 
	Moreover, we classify peculiar modules in terms of immersed curves on the 4-punctured sphere. In fact, based on an algorithm of Hanselman, Rasmussen and Watson~\cite{HRW}, we prove general classification results for the category of curved complexes over a marked surface with arc system. This allows us to reinterpret the glueing formula for peculiar modules in terms of Lagrangian intersection Floer theory on the 4-punctured sphere.
	
	We then study some applications: firstly, we show that peculiar modules detect rational tangles. Secondly, we give short proofs of various skein exact triangles. Thirdly, we compute the peculiar modules of the 2-stranded pretzel tangles $T_{2n,-(2m+1)}$ for $n,m>0$ using nice diagrams. We then observe that these peculiar modules enjoy certain symmetries which imply that mutation of the tangles $T_{2n,-(2m+1)}$ preserves $\delta$-graded, and for some orientations even bigraded link Floer homology. 
\end{abstract}

%Please refrain from inserting extra formatting or spaces into your
%paper, as this makes the work of the copy-editor more difficult.
%
%If the proof ends with a single displayed equation, using proof* one should align a square box with that equation by putting singlebox and
%esinglebox as the first and last commands inside the equation.
%If the proof ends with an \verb"eqnarray*" environment, using
%\verb"proof*" one should align a square box with the last equation
%of the \verb"eqnarray*" environment by \emph{surrounding} that
%environment with a pair of \verb"\multbox" and \verb"\emultbox"
%commands.
%
%Do not leave blank lines above and below displayed equations unless a new paragraph is intended. 
%Lists generated with enumerate or flushenumerate are preferred. 
%Bullet points should be avoided if possible.

% Intro.tex
\section*{Introduction}\label{sec:intro}\addcontentsline{toc}{section}{Introduction}

\subsection{Peculiar modules}
Let $L$ be an oriented link in the 3-sphere~$S^3$. Consider an embedded closed 3-ball~$B^3\subset S^3$ whose boundary intersects~$L$ transversely. Then, modulo a parametrization of the boundary $\partial B^3$, the embedding $L\cap B^3\hookrightarrow B^3$ is essentially what we call an oriented tangle. In \cite{HDsForTangles}, I introduced a set of Alexander polynomials $\nabla_T^s$ and a Heegaard Floer theory $\HFT(T)$ for such tangles~$T$. They should be regarded as generalisations of the classical multivariate Alexander polynomial \cite{Alexander} and Ozsváth and Szabó's and J.\,Rasmussen's knot and link Floer homology \cite{OSHFK,Jake,OSHFL}, respectively. Indeed, both tangle invariants have similar properties to their corresponding link invariants. In particular, the graded Euler characteristic of $\HFT(T)$ recovers the polynomial invariants $\nabla_T^s$. Moreover, the polynomials $\nabla_T^s$ satisfy a simple glueing theorem which allows one to prove results about the classical multivariate Alexander polynomial of links, such as invariance under Conway mutation~\cite[Corollary~3.6]{HDsForTangles}. 
Unfortunately, we do not have a similar glueing theorem for the categorified invariants $\HFT(T)$.

The main objective of this paper is to resolve this problem in the case of 4-ended tangles, ie tangles with four ends on $\partial B^3$, see Figure~\ref{fig:INTRO2m3pt}. For this purpose, we upgrade the tangle Floer homology $\HFT(T)$ of 4-ended tangles $T$ to an invariant which we call the peculiar module of $T$ and denote by $\CFTd(T)$. This is done by adding some more structure maps. 
In fact, we construct an even more general invariant $\CFTminus(T):=\CFTminus(T,M)$, a generalised peculiar module, for tangles $T$ in homology 3-balls $M$ with spherical boundary.
As algebraic objects, both generalized and ordinary peculiar modules are curved type~D structures over certain algebras, the generalized and ordinary peculiar algebras $\Aminus$ and $\Ad$, respectively.
The algebra~$\Ad$ is a quotient of $\Aminus$, obtained by setting certain variables equal to 0. Similarly, $\CFTd(T)$ can be recovered from $\CFTminus(T)$, just as the hat version $\CFL$ of link Floer homology can be recovered from its $-$-version $\CFLminus$.

Both generalised and ordinary peculiar modules are Heegaard Floer type invariants and, as such, rely on some choice of Heegaard diagram. So the first goal is to prove that the invariants are independent of this choice. However, this follows essentially from multi-pointed Heegaard Floer theory. 

\begin{theorem}[(\ref{thm:PecMod})]\label{thm:INTROPecMod}
	Given a 4-ended oriented tangle \(T\) in a homology 3-ball with spherical boundary, the bigraded chain homotopy types of \(\CFTminus(T)\) and \(\CFTd(T)\) are invariants of~\(T\).
\end{theorem}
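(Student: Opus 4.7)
The plan is to follow the standard strategy for Heegaard Floer invariance, adapted to the multi-pointed bordered setting for 4-ended tangles, as the author hints just before the statement. First, I would establish that any two Heegaard diagrams representing the same tangle are connected by a finite sequence of elementary moves performed away from the parametrized boundary: isotopies of $\alpha$- and $\beta$-curves not crossing any tangle basepoint, handleslides among same-coloured curves, and index-$1$/$2$ (de)stabilizations. This is a relative Morse-theoretic statement about the sutured manifold complement of $T$, and it should follow from the analogous statement in Zarev's bordered sutured Heegaard Floer theory \cite{ZarevThesis}, once one verifies that the moves can be kept away both from the boundary parametrization and from the tangle basepoints.

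Second, for each elementary move I would produce an explicit chain homotopy equivalence of curved type~D structures over $\Aminus$. Isotopies give continuation maps via Hamiltonian isotopies of Lagrangian tori in the symmetric product; handleslides give triangle-counting maps using an auxiliary diagram obtained by slightly perturbing the slid curve, with quasi-inverse provided by the reverse handleslide; (de)stabilizations give equivalences after tensoring with a genus-$1$ model diagram whose local complex has a canonical contracting homotopy. Because every move takes place away from the boundary parametrization, the $\Aminus$-action, encoded by Reeb chord outputs along the boundary, transports transparently through each map. The Alexander and Maslov bigrading is defined in terms of $\mathrm{Spin}^c$-data intrinsic to the tangle and is therefore independent of the diagram; the grading shifts under continuation, triangle, and stabilization maps reduce to standard index calculations.

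The main technical obstacle, and the reason this is not a direct quote of ordinary multi-pointed Heegaard Floer invariance, will be verifying that the curvature term and the Reeb chord outputs defining the $\Aminus$-action are compatible with the triangle and continuation maps. Concretely, I would need to check that a holomorphic triangle contributing to the handleslide equivalence admits a boundary degeneration decomposition that faithfully records the algebra data in the same way as for the differential; this is analogous to the type~D invariance arguments in bordered Heegaard Floer theory. Once this is established, invariance of $\CFTd(T)$ follows from invariance of $\CFTminus(T)$ by specialising the appropriate variables to zero, which commutes with all the chain homotopies constructed above.
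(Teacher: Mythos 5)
Your overall strategy (a complete set of diagram moves plus an explicit equivalence for each move) is the standard one and is, in outline, what the paper relies on; but you have placed it in the wrong technical setting, which both manufactures an obstacle the paper does not face and hides the two points the paper actually has to check. The entire purpose of passing to \emph{peculiar} Heegaard diagrams (Remark~\ref{rem:PeculiarHDmoves}) is to avoid bordered and bordered sutured theory: the boundary is capped off, the four $\alpha$-arcs are fused into a single distinguished $\alpha$-circle, and the algebra element $a(\phi)$ attached to a domain is read off from its multiplicities at the basepoints $p_i,q_i,z_j,w_j$ --- there are no Reeb chords anywhere in the construction. Hence $\CFTminus(T)$ is literally the complex of a closed multi-pointed Heegaard diagram, invariance over the free polynomial ring $\Rpren$ is quoted directly from multi-pointed Heegaard Floer theory, and your ``main technical obstacle'' (compatibility of triangle maps with Reeb chord outputs) simply does not arise; likewise the move set comes from the earlier tangle Heegaard diagram paper, not from Zarev. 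What your proposal omits, and what the paper does have to argue, is: (i) that $\partial^2$ equals the prescribed curvature, which is proved by counting ends of Maslov index~$2$ moduli spaces and identifying the $\alpha$- and $\beta$-boundary degenerations with the terms of the curvature (Facts~\ref{fact:OSHFL_lemma_5_4} and~\ref{fact:OSHFL_lemma_5_5}) --- without this the objects are not curved complexes of the right curvature and their homotopy type in $\gpqMod$ is not even defined; and (ii) that $\alpha$-handleslides over the distinguished $\alpha$-circle are excluded from the move set, which is precisely what preserves the decomposition of generators by sites and upgrades invariance from a statement over $\Rpren$ to one over the idempotented algebra $\Aminus$. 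Your final step, obtaining $\CFTd(T)$ from $\CFTminus(T)$ by applying the quotient functor, matches the paper.
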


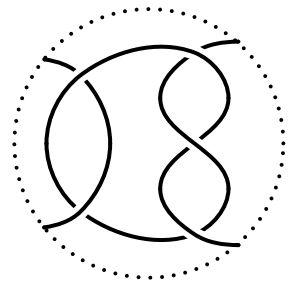
\begin{wrapfigure}{r}{0.3333\textwidth}
	\centering
	\psset{unit=0.6}
	\begin{pspicture}[showgrid=false](-4.2,-3.1)(2.2,3.1)
	\psset{linewidth=\stringwidth}
	\psecurve{c-c}(-2.5,1.5)(0,2)(0.75,1)(-0.75,-1)(0,-2)(0.97,-2.24)(2,-2)
	\psecurve{c-c}(2,2)(0.97,2.24)(0,2)(-0.75,1)(0.75,-1)(0,-2)(-2.5,-1.5)(-3.25,0)(-2.5,1.5)(0,2)(0.75,1)
	\psecurve{c-c}(-6,1.5)(-3.3,1.85)(-2.5,1.5)(-1.85,0)(-2.5,-1.5)(-3.3,-1.85)(-6,-1.5)
	
	\psecurve[linecolor=white,linewidth=\stringwhite](0.75,-1)(0,-2)(-2.5,-1.5)(-3.25,0)(-2.5,1.5)
	\psecurve{c-c}(0.75,-1)(0,-2)(-2.5,-1.5)(-3.25,0)(-2.5,1.5)
	
	\psecurve[linecolor=white,linewidth=\stringwhite](0.75,1)(-0.75,-1)(0,-2)(0.97,-2.24)(2,-2)
	\psecurve[linecolor=white,linewidth=\stringwhite](0,2)(-0.75,1)(0.75,-1)(0,-2)
	\psecurve[linecolor=white,linewidth=\stringwhite](-2.5,-1.5)(-3.25,0)(-2.5,1.5)(0,2)(0.75,1)(-0.75,-1)
	
	\psecurve{c-c}(0.75,1)(-0.75,-1)(0,-2)(0.97,-2.24)(2,-2)
	\psecurve{c-c}(0,2)(-0.75,1)(0.75,-1)(0,-2)
	\psecurve{c-c}(-2.5,-1.5)(-3.25,0)(-2.5,1.5)(0,2)(0.75,1)(-0.75,-1)
	
	\psecurve[linecolor=white,linewidth=\stringwhite](-2.5,1.5)(-1.85,0)(-2.5,-1.5)(-3.3,-1.85)(-6,-1.5)
	\psecurve{c-c}(-2.5,1.5)(-1.85,0)(-2.5,-1.5)(-3.3,-1.85)(-6,-1.5)
	
	\pscircle[linestyle=dotted](-1,0){3}
	
	%	\uput{2.5}[180](-1,0){$a$}
	%	\uput{2.3}[-90](-1,0){$b$}
	%	\uput{2.1}[0](-1,0){$c$}
	%	\uput{2.3}[90](-1,0){$d$}
	\end{pspicture}
	\caption{A diagram of a 4-ended tangle; in this case, the $(2,-3)$-pretzel tangle $T_{2,-3}$% with its four open regions $a$, $b$, $c$ and~$d$ corresponding to its sites.
		.}\label{fig:INTRO2m3pt}
	\vspace{-35pt}
\end{wrapfigure}

We do not have a glueing theorem for the generalised peculiar modules \(\CFTminus(T)\), except that one can recover $\CFLminus$ of certain closures of the tangle $T$ from \(\CFTminus(T)\) (see Remark~\ref{rem:LazyClosing}). Nonetheless, we do have a glueing formula for peculiar modules \(\CFTd(T)\). Its proof relies on Zarev's Glueing Theorem for his bordered sutured invariants~\cite{ZarevThesis} and an identification of some structure maps of certain bordered sutured invariants for tangles and peculiar modules. The precise statement of our Glueing Theorem uses the $\boxtimes$-tensor product between type~A and type~D structures familiar from bordered Heegaard Floer homology; for details, see Definition~\ref{def:PairingTypeDandA}.

\begin{wrapfigure}{r}{0.3333\textwidth}
	\centering
	\psset{unit=0.2}
	\vspace*{20pt}
	\begin{pspicture}[showgrid=false](-11,-7)(11,7)
	\rput{-45}(0,0){
		% tangle circles inside
		\pscircle[linecolor=lightgray](4,4){2.5}
		\pscircle[linecolor=lightgray](-4,-4){2.5}
		
		% arcs segments in T_1
		\psline[linecolor=white,linewidth=\stringwhite](1,-4)(-2,-4)
		\psline[linewidth=\stringwidth,linecolor=gray](1,-4)(-2,-4)
		\psline[linecolor=white,linewidth=\stringwhite](-4,1)(-4,-2)
		\psline[linewidth=\stringwidth,linecolor=gray](-4,1)(-4,-2)
		\psline[linecolor=white,linewidth=\stringwhite](-6,-4)(-9,-4)
		\psline[linewidth=\stringwidth,linecolor=gray](-6,-4)(-9,-4)
		\psline[linecolor=white,linewidth=\stringwhite](-4,-6)(-4,-9)
		\psline[linewidth=\stringwidth,linecolor=gray](-4,-6)(-4,-9)
		
		% arcs segments in T_2
		\psline[linecolor=white,linewidth=\stringwhite](-1,4)(2,4)
		\psline[linewidth=\stringwidth,linecolor=gray](-1,4)(2,4)
		\psline[linecolor=white,linewidth=\stringwhite](4,-1)(4,2)
		\psline[linewidth=\stringwidth,linecolor=gray](4,-1)(4,2)
		\psline[linecolor=white,linewidth=\stringwhite](6,4)(9,4)
		\psline[linewidth=\stringwidth,linecolor=gray](6,4)(9,4)
		\psline[linecolor=white,linewidth=\stringwhite](4,6)(4,9)
		\psline[linewidth=\stringwidth,linecolor=gray](4,6)(4,9)
		
		% tangle circles outside
		\pscircle[linestyle=dotted,linewidth=\stringwidth](4,4){5}
		\pscircle[linestyle=dotted,linewidth=\stringwidth](-4,-4){5}
		
		% arc segments left 
		\psecurve[linewidth=\stringwidth]{C-C}(8,-3)(-1,4)(-10,-3)(-9,-4)(-8,-3)
		\psecurve[linewidth=\stringwidth]{C-C}(-3,8)(4,-1)(-3,-10)(-4,-9)(-3,-8)
		
		% cover of arc segments left 
		\psecurve[linewidth=\stringwhite,linecolor=white](-8,3)(1,-4)(10,3)(9,4)(8,3)
		\psecurve[linewidth=\stringwhite,linecolor=white](3,-8)(-4,1)(3,10)(4,9)(3,8)
		
		% arc segments right
		\psecurve[linewidth=\stringwidth]{C-C}(-8,3)(1,-4)(10,3)(9,4)(8,3)
		\psecurve[linewidth=\stringwidth]{C-C}(3,-8)(-4,1)(3,10)(4,9)(3,8)
		
		% labelling
		\rput{45}(-4,-4){$T_1$}
		\rput{45}(4,4){$T_2$}
		
%		\rput{45}(1.5,1.5){$a$}
%		\rput{45}(6.5,1.5){$b$}
%		\rput{45}(6.5,6.5){$c$}
%		\rput{45}(1.5,6.5){$d$}
%		
%		\rput{45}(-6.5,-6.5){$a$}
%		\rput{45}(-1.5,-6.5){$b$}
%		\rput{45}(-1.5,-1.5){$c$}
%		\rput{45}(-6.5,-1.5){$d$}
	}
	\end{pspicture}
	\caption{A link obtained from two 4-ended tangles.}\label{fig:INTROglueing2tangles}
	\vspace*{10pt}
\end{wrapfigure}
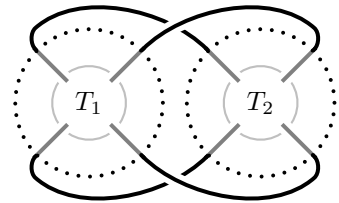
\myfixwrapfig

\begin{theorem}[(\ref{thm:CFTdGeneralGlueing}, \ref{prop:reversedmirror})]\label{thm:INTROCFTdGeneralGlueing}
	Let \(T_1\) and \(T_2\) be two 4-ended tangles and \(L\) the link obtained by glueing them together as illustrated in Figure~\ref{fig:INTROglueing2tangles}. Then the link Floer homology \(\HFL(L)\) can be computed from \(\CFTd(T_1)\) and \(\CFTd(T_2)\). 
	More precisely, there exists a bounded, strictly unital type~AA structure \(\mathcal{P}\) such that
	\[\CFL(L)\otimes V^{i}\cong\rr(\CFTd(T_1))\boxtimes\,\mathcal{P}\boxtimes\,\CFTd(T_2),\]
where \(V\) is some 2-dimensional vector space, \(i\in\{0,1\}\) and $\rr(\cdot)$ is the operation on peculiar modules which reverses Alexander gradings (see Definition~\ref{def:reversedmirror}). 
\end{theorem}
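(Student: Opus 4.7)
The strategy is to package each peculiar module $\CFTd(T_i)$ as a bordered sutured invariant (in the sense of Zarev \cite{ZarevThesis}) associated to the tangle complement with a carefully chosen sutured structure on its boundary $S^2$, and then to recover $\HFL(L)$ via Zarev's glueing theorem, inserting an explicit interpolating type~AA bimodule $\mathcal{P}$ to account for the mismatch between the topological ``closure'' of two tangles and the straight bordered sutured glueing.

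First I would specify, for each 4-ended tangle $T\subset B^3$, a bordered sutured structure on $\partial B^3$ compatible with the four tangle endpoints, and exhibit a bordered sutured Heegaard diagram for the resulting manifold derived from any Heegaard diagram computing $\CFTd(T)$. The arc parametrisation should be chosen so that the associated bordered sutured algebra agrees with the peculiar algebra $\Ad$ (or a convenient opposite thereof). The key technical step is identifying certain structure maps: showing that the holomorphic curve counts defining the bordered sutured type~D structure recover the curvature and differentials of $\CFTd(T)$ variable-by-variable in $\Ad$. This matching is the step I expect to be hardest, since it requires comparing holomorphic curve contributions in both theories and checking that they are recorded by the same generators of $\Ad$, not merely up to chain homotopy but on the nose for a suitable diagram.

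Next I would apply Zarev's glueing theorem. Glueing the two bordered sutured tangle complements along their 4-punctured boundary spheres does not directly produce the sutured link exterior of $L$; rather, a standard ``pairing'' piece on the 4-punctured sphere must be inserted to realise the topological closure depicted in Figure~\ref{fig:INTROglueing2tangles}. I would let $\mathcal{P}$ be the bordered sutured type~AA invariant of this pairing piece, with type~A actions on each side compatible with the two tangle pieces. Strict unitality and boundedness of $\mathcal{P}$ follow from the explicit combinatorial nature of this local model and ensure that the iterated $\boxtimes$ is defined, cf.\ Definition~\ref{def:PairingTypeDandA}. Zarev's glueing formula then identifies $\SFH$ of the resulting sutured manifold with $\rr(\CFTd(T_1))\boxtimes\mathcal{P}\boxtimes\CFTd(T_2)$, and a standard computation relating $\SFH$ of the link exterior with the chosen sutures to $\HFL(L)$ produces the factor $V^{i}$ from the additional basepoints required to obtain the unreduced invariant.

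Finally, the appearance of $\rr$ on the first factor will be justified by Proposition~\ref{prop:reversedmirror}: reversing Alexander gradings realises, on the level of peculiar modules, the algebraic operation needed to convert the type~D structure of $T_1$ into one compatible with the opposite side of $\mathcal{P}$, matching the orientations of the glueing in Figure~\ref{fig:INTROglueing2tangles}. Once the identification of $\CFTd(T)$ with the appropriate bordered sutured invariant is in place, the remainder of the argument is a formal consequence of Zarev's theorem together with the explicit analysis of the combinatorial pairing region $\mathcal{P}$.
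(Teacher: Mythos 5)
Your proposal follows essentially the same route as the paper: equip the two tangle complements and a thickened 4-punctured sphere with bordered sutured structures, let $\mathcal{P}$ be (a reduction of) the type~AA bimodule $\BSAA$ of that interpolating piece computed explicitly from a Heegaard diagram, identify the bordered sutured type~D modules of the tangle complements with the peculiar modules, and conclude via Zarev's glueing theorem, with the reversal $\rr$ accounted for by the orientation reversal of the Heegaard surface for $T_1$. The only point your sketch glosses over is that the bordered sutured side never sees the full curved algebra $\Ad$ but only the quotient $\Anull$ obtained by setting $p_3=0=q_1$ (so the identification of structure maps is with the image of $\CFTd$ under this quotient functor, and one needs the Pairing Adjunction together with the fact that this quotient loses no information after pairing), plus a local domain comparison at the tangle ends in the spirit of Lemma~\ref{lem:IdentificationCFTdBSD}.
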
	

\subsection{Classification of peculiar modules}

In sections~\ref{sec:classification} and~\ref{sec:glueingrevisited}, we classify peculiar modules in terms of immersed curves on the 4-punctured sphere. 

\begin{definition}[(\ref{def:curves})]\label{def:INTROcurves}
	A \textbf{loop} on the 4-punctured sphere $S=S^2\smallsetminus 4D^2$ is a pair $(\gamma, X)$, where $\gamma$ is an immersion of an oriented circle into $S$ representing a non-trivial primitive element of $\pi_1(S)$ and $X\in \GL_n(\mathbb{F}_2)$ for some integer $n$. For such loops $(\gamma, X)$, we call $X$ the \textbf{local system} of the loop. A \textbf{collection of loops} is a set of loops \(\{(\gamma_i,X_i)\}_{i\in I}\) such that the immersed curves $\gamma_i$ are pairwise non-homotopic.
\end{definition}

\begin{theorem}[(\ref{exa:pqModSpecialCaseofCC}, \ref{cor:SplittingCatsForEquivalenceComplexes}, \ref{thm:PairingMorLagrangianFH}, \ref{thm:CompleteClassification}, \ref{thm:ImmersedCurveInvariants})]\label{thm:classificationPecMod}
		With every peculiar module \((C,\partial)\), we can associate a collection of loops \(L(C,\partial)=\{(\gamma_i,X_i)\}_{i\in I}\) such that if \((C',\partial')\) is another peculiar module with \(L(C',\partial')=\{(\gamma'_{i'},X'_{i'})\}_{i'\in I'}\), \((C,\partial)\) and \((C',\partial')\) are homotopic iff there is a bijection \(\iota\co I\rightarrow I'\) such that \(\gamma_i\)~is homotopic to~\(\gamma'_{\iota(i)}\) and \(X_i\)~is similar to~\(X'_{\iota(i)}\) for all $i\in I$.
		Moreover, the homology of the space of morphisms between two peculiar modules is chain homotopic to the Lagrangian intersection Floer theory between their associated collections of immersed curves:
		\[H_\ast(\Mor((C,\partial),(C',\partial')))\cong\LagrangianFH(L(C,\partial),L(C',\partial')).\]
\end{theorem}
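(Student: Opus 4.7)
The plan is to prove this by a chain of reductions via the general theory of curved complexes over a marked surface with arc system, which is developed in Sections~\ref{sec:classification} and~\ref{sec:glueingrevisited}. First, I would verify (cf.\ Example~\ref{exa:pqModSpecialCaseofCC}) that peculiar modules are a special instance of curved complexes over the 4\nobreakdash-punctured sphere $S$, equipped with a specific arc system whose dual decomposition recovers the peculiar algebra $\Ad$. This identifies the homotopy category of peculiar modules with the homotopy category of a geometrically defined object, so that the classification becomes a classification of curved complexes up to homotopy equivalence.

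Second, I would establish a splitting theorem (Corollary~\ref{cor:SplittingCatsForEquivalenceComplexes}): the homotopy category of curved complexes over $(S,\text{arc system})$ splits as a direct sum indexed by homotopy classes of closed curves on $S$. The idea is to mimic the Hanselman--Rasmussen--Watson algorithm \cite{HRW}, performing cancellations and basis changes to bring an arbitrary curved complex $(C,\partial)$ into a normal form. Each generator of $C$ corresponds to an arc of the arc system; the differentials encode how these arcs are concatenated along the boundary. After enough simplifications, one reads off a collection of loops $\{\gamma_i\}$ and, for each loop $\gamma_i$ traversed multiple times, an indecomposable local system $X_i \in \GL_{n_i}(\mathbb{F}_2)$ that records the gluing data which cannot be eliminated by further homotopies. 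This yields the assignment $(C,\partial)\mapsto L(C,\partial)$.

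Third, for the homotopy classification, I would show that two collections of loops give rise to homotopic curved complexes iff the curves are pairwise homotopic and the local systems are similar. The forward implication is straightforward since similarity of matrices and homotopies of immersed curves can be realized by explicit basis changes and cancellation moves. The converse requires a uniqueness argument for the normal form: any two minimal normal forms of the same complex must agree as collections of loops. This is where one invokes standard indecomposability arguments (Krull--Schmidt style) combined with the geometry of bigon moves on $S$.

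For the morphism space statement, I would invoke Theorem~\ref{thm:PairingMorLagrangianFH}, which identifies the morphism complex $\Mor((C,\partial),(C',\partial'))$ between two curved complexes in normal form with the Lagrangian Floer chain complex of the associated immersed multicurves, in the usual way: intersection points correspond to pairs of generators on the same arc, and the differential on $\Mor$ matches the count of immersed bigons between the curves on $S$. Combining the splitting result with this identification gives the claimed chain-homotopy equivalence with $\LagrangianFH(L(C,\partial),L(C',\partial'))$. The main obstacle I anticipate is the uniqueness of the normal form up to similarity of local systems: ensuring that the ambiguities in the cancellation algorithm exactly correspond to similarity of matrices, and no further identifications are forced. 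This is the content of Theorems~\ref{thm:CompleteClassification} and~\ref{thm:ImmersedCurveInvariants}, and the argument will require delicately tracking how the allowed elementary moves on curved complexes act on the pair (immersed curve, local system).
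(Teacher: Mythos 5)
Your overall architecture matches the paper for the existence half and for the morphism-space statement: peculiar modules are indeed identified with curved complexes over the 4-punctured sphere (Example~\ref{exa:pqModSpecialCaseofCC}), an HRW-style simplification algorithm produces the normal form as a collection of loops with local systems (Theorem~\ref{thm:EverythingIsLoopTypeUpToLocalSystems}), and the morphism complex is computed by bigon counting (Theorem~\ref{thm:PairingMorLagrangianFH}). One correction of substance: Corollary~\ref{cor:SplittingCatsForEquivalenceComplexes} is not a direct-sum splitting of the homotopy category indexed by homotopy classes of curves; it is the equivalence between curved complexes and the auxiliary category of \emph{precurves}, $\CC(S,M,A)\simeq\CC_i(S,M,A)$, and it is on precurves (with their geometric representation via $f$-joins, crossings and crossover arrows) that the simplification algorithm actually runs.

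The genuine gap is in your uniqueness step. You propose ``standard indecomposability arguments (Krull--Schmidt style)'' to show that two normal forms of homotopic complexes agree. It is not established --- and not obvious --- that the homotopy category of curved complexes satisfies Krull--Schmidt: the endomorphism ring of a loop with an indecomposable local system is not visibly local (by formula~\eqref{eqn:MorSpacesParallel} its degree-zero part already has dimension governed by self-intersections of the underlying curve plus a kernel term coming from the local system), so unique decomposition into indecomposables cannot be invoked off the shelf, and you would additionally have to classify the indecomposable objects. The paper's proof of Theorem~\ref{thm:CompleteClassification} takes a different route entirely: it pairs both collections against carefully chosen \emph{test curves} supported on the same underlying curves but with companion-matrix local systems $X_{f}$ built from the minimal polynomials of $X_j$ and $X'_j$, and compares dimensions of morphism spaces using Theorem~\ref{thm:PairingFormula} and Lemma~\ref{lem:reformulationofparalleldimensioncount}. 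The key point is that the two summands in~\eqref{eqn:MorSpacesParallel} grow differently in $\deg f$ (one linearly, one bounded), which separates the data of the underlying curves from that of the local systems; Frobenius normal forms and a divisibility argument then pin down the similarity class. If you want to retain your approach, you must first prove a Krull--Schmidt property for this homotopy category, which is substantial extra work the paper deliberately avoids.
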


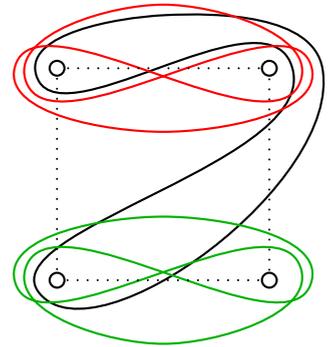
\begin{wrapfigure}{r}{0.3333\textwidth}\centering
	\psset{unit=1.4}
	%\vspace*{20pt}
	\begin{pspicture}(-1.6,-1.7)(1.6,1.7)

	\psecurve(-1.2,0.95)(0,1.5)(1.5,1)(-1.16,-1.13)(1.2,1.05)(-1.2,0.95)(0,1.5)(1.5,1)

	\psecurve[linecolor=red](1.3,0.9)(0,1.6)(-1.3,0.9)(1.4,1)(0,0.4)(-1.4,1)(1.3,0.9)(0,1.6)(-1.3,0.9)
	
	\psrotate(0,0){180}{
		\psecurve[linecolor=darkgreen](1.3,0.9)(0,1.6)(-1.3,0.9)(1.4,1)(0,0.4)(-1.4,1)(1.3,0.9)(0,1.6)(-1.3,0.9)
	}
	
	\psline[linestyle=dotted](1,1)(1,-1)
	\psline[linestyle=dotted](1,-1)(-1,-1)
	\psline[linestyle=dotted](-1,-1)(-1,1)
	\psline[linestyle=dotted](-1,1)(1,1)
	
	\pscircle[fillstyle=solid, fillcolor=white](1,1){0.08}
	\pscircle[fillstyle=solid, fillcolor=white](-1,1){0.08}
	\pscircle[fillstyle=solid, fillcolor=white](1,-1){0.08}
	\pscircle[fillstyle=solid, fillcolor=white](-1,-1){0.08}
	
	\end{pspicture}
	\caption{The three loops of $L_{T_{2,-3}}$ (with the unique 1-dimensional local systems) on the 4-punctured sphere for the $(2,-3)$-pretzel tangle $T_{2,-3}$ from Figure~\ref{fig:INTRO2m3pt}. }\label{fig:INTROmutationexamplefinalresult}
	\vspace*{-30pt}
\end{wrapfigure}
\myfixwrapfig

\begin{definition}[(\ref{def:ImmersedCurveInvariantspqMod})]
	In particular, with any 4-ended tangle~\(T\) in a homology 3-ball $M$ with spherical boundary, we can associate a collection of loops, denoted by $L_T:=L_{T,M}$, which is a tangle invariant up to homotopy of the underlying curves and similarity of the local systems.
\end{definition}

\begin{question}\label{que:GeometricMeaningOfTheNumberOfCurves}
	The number of curves in $L_T$ is obviously a tangle invariant. What is its geometric meaning?
\end{question}

The proof of Theorem~\ref{thm:classificationPecMod} is based on an algorithm due to Hanselman, Rasmussen and Watson~\cite{HRW} which they use to classify bordered Heegaard Floer invariants for 3-manifolds with torus boundary. There are striking similarities between their bordered Heegaard Floer invariants and peculiar modules, and it would be interesting to see if there exists a closer connection between them apart from their formal properties.

Theorem~\ref{thm:classificationPecMod} allows us to restate the Glueing Theorem as follows.

\begin{corollary}[(\ref{thm:CFTdGlueingAsMorphism})]\label{cor:INTROCFTdGlueingAsMorphism}
	With the same notation as in Theorems~\ref{thm:INTROCFTdGeneralGlueing} and~\ref{thm:classificationPecMod}, 
	$$\HFL(L)\otimes V^{i}\cong H_\ast(\Mor(\CFTd(\mr(T_1)),\CFTd(T_2)))\cong\LagrangianFH(L_{\mr(T_1)},L_{T_2}),$$
	where \(\mr(T_1)\) denotes the reversed mirror of \(T_1\) (see Definition~\ref{def:reversedmirror}). 
\end{corollary}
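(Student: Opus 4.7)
The corollary splits into two isomorphisms, which I would prove separately. The second,
\[H_\ast(\Mor(\CFTd(\mr(T_1)),\CFTd(T_2)))\cong\LagrangianFH(L_{\mr(T_1)},L_{T_2}),\]
is an immediate consequence of Theorem~\ref{thm:classificationPecMod} applied to the two peculiar modules $\CFTd(\mr(T_1))$ and $\CFTd(T_2)$: by Definition~\ref{def:ImmersedCurveInvariantspqMod}, their associated collections of loops are precisely $L_{\mr(T_1)}$ and $L_{T_2}$, and the second half of Theorem~\ref{thm:classificationPecMod} identifies the morphism homology with the Lagrangian intersection Floer homology of these collections.

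For the first isomorphism, the plan is to combine the glueing formula of Theorem~\ref{thm:INTROCFTdGeneralGlueing},
\[\CFL(L)\otimes V^{i}\cong\rr(\CFTd(T_1))\boxtimes\mathcal{P}\boxtimes\CFTd(T_2),\]
with a duality statement that recasts the right-hand side as $\Mor(\CFTd(\mr(T_1)),\CFTd(T_2))$. The key algebraic step is to show that the type~AA structure $\mathcal{P}$, together with the grading-reversal operation $\rr$, implements the standard type~D/type~A duality for the peculiar algebra $\Ad$: that is, for any peculiar module $N$, one should have a chain homotopy equivalence
\[\rr(\CFTd(T_1))\boxtimes\mathcal{P}\boxtimes N\;\simeq\;\Mor(\CFTd(\mr(T_1)),N).\]
This follows from the standard Mor-as-pairing identity once one knows that $\CFTd(\mr(T_1))$ is dual, in the appropriate type~D/type~A sense, to $\rr(\CFTd(T_1))$.

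The geometric content behind this duality is that mirroring a tangle swaps the roles of the $\alpha$- and $\beta$-curves in any adapted Heegaard diagram while reversing the Alexander gradings; on the invariant side, this interchange is exactly what $\rr$ combined with the type~D dualization accomplishes. The main obstacle is making this duality precise at the chain level: one must identify an appropriate anti-involution of $\Ad$ (roughly, interchanging the $p$- and $q$-variables) under which $\CFTd(\mr(T))$ and $\rr(\CFTd(T))$ become dual as modules. I would isolate this as a preliminary lemma, comparing Heegaard diagrams for $T$ and $\mr(T)$ and tracking the effect on the structure maps directly, and then assemble it with the Mor-as-pairing identity and Theorem~\ref{thm:INTROCFTdGeneralGlueing} to obtain the first isomorphism.
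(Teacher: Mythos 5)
Your handling of the second isomorphism is correct and agrees with the paper: it is exactly Theorem~\ref{thm:PairingMorLagrangianFH} (packaged inside Theorem~\ref{thm:classificationPecMod}) applied to the two peculiar modules, using that $L_{\mr(T_1)}$ and $L_{T_2}$ are by definition the curve collections of $\CFTd(\mr(T_1))$ and $\CFTd(T_2)$.

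For the first isomorphism your route diverges from the paper's, and the divergence hides a genuine gap. The paper does \emph{not} prove an algebraic duality $\rr(\CFTd(T_1))\boxtimes\mathcal{P}\boxtimes N\simeq\Mor(\CFTd(\mr(T_1)),N)$. Instead it argues geometrically: having already identified $H_*(\Mor)$ with $\LagrangianFH(L_{\mr(T_1)},L_{T_2})$, it homotopes the two curve collections onto neighbourhoods of the red and blue skeleta in Figure~\ref{fig:GlueingInterpretationFUK} and then matches, generator by generator and bigon by bigon, the resulting Lagrangian Floer complex with $\Pi(L_{\rr(T_1)})\boxtimes\mathcal{P}\boxtimes\Pi(L_{T_2})$, checking the $\delta$- and Alexander gradings for each of the eight generators of $\mathcal{P}$ and verifying that each component $({\red p}\vert{\blue p'})$ of $\mathcal{P}$ corresponds to exactly one bigon with the prescribed multiplicities at the $p_i$ and $q_i$. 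In other words, the paper's ``duality'' is mediated entirely by the immersed-curve picture; the bimodule $\mathcal{P}$ is never identified abstractly as a dualizing object.

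The gap in your plan is that the ``standard Mor-as-pairing identity'' you invoke is precisely the content that needs proof here, and it is not standard for this algebra and this bimodule. One half of your preliminary lemma --- that $\CFTd(\mr(T))$ is obtained from $\rr(\CFTd(T))$ by reversing arrows, inverting gradings and swapping $p_i\leftrightarrow q_i$ --- is indeed available (it is Proposition~\ref{prop:reversedmirror}). But the other half requires showing that $\mathcal{P}$ is homotopy equivalent to the dualizing type~AA bimodule of $\Ad$ twisted by that anti-involution, in a curved setting where the curvature $p^4+q^4$ of the two type~D factors must cancel against the $A_\infty$-relations of $\mathcal{P}$ for the total object to be a chain complex at all. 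Note also that $\Mor(M,N)$ is infinitely generated over $\mathbb{F}_2$ (since $\Ad$ is), while $\m(M)\boxtimes\mathcal{P}\boxtimes N$ has finitely many generators, so at best you get a homotopy equivalence, and exhibiting it is a substantive computation that your proposal does not carry out. Without that computation the first isomorphism is unproven; with it, your route would be a legitimate (and arguably more conceptual) alternative to the paper's geometric matching.
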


Section~\ref{sec:glueingrevisited} contains a variety of examples illustrating this version of the Glueing Theorem. 

We actually prove Theorem~\ref{thm:classificationPecMod} for all categories of curved complexes over arbitrary marked surfaces, see section~\ref{sec:classification}. As a consequence, we can show that by passing to certain quotients of the peculiar algebra $\Ad$, we do not lose information. This rather abstract observation has the following very practical consequence.

\begin{theorem}[(\ref{cor:PeculiarModulesFromNiceDiagrams})]\label{thm:INTROPeculiarModulesFromNiceDiagrams}
	Peculiar modules for 4-ended tangles can be computed combinatorially.
\end{theorem}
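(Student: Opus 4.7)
The plan is to combine the Sarkar--Wang nice-diagram technique with the classification of peculiar modules by immersed curves (Theorem~\ref{thm:classificationPecMod}), exploiting the observation made immediately before the statement that passing to certain quotients of the peculiar algebra $\Ad$ does not lose homotopy-theoretic information.

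First I would start with any admissible multi-pointed Heegaard diagram $H$ for a 4-ended tangle $T$ computing $\CFTd(T)$, and adapt the Sarkar--Wang finger-move procedure so as to fix the four boundary basepoints and the boundary parametrisation of $H$. Repeated finger moves then isotope the attaching curves until every region not adjacent to a designated boundary region is a bigon or an embedded rectangle, yielding a \emph{nice} diagram $H'$. On such a diagram, the standard Lipshitz index/area argument identifies the index-one pseudo-holomorphic discs whose domain has multiplicity zero at the designated boundary regions with empty embedded bigons and rectangles connecting pairs of generators; these are combinatorially enumerable from $H'$.

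Second, the structure maps read off from $H'$ in this way assemble into a curved type~D structure over a quotient $\Ad/J$ of the peculiar algebra, where $J$ is generated by those boundary variables whose contributions have been discarded. By the general classification of curved complexes over marked surfaces proved in section~\ref{sec:classification} together with the quotient invariance remark preceding the theorem, the image of $\CFTd(T)$ in $\Ad/J$ carries the same collection of loops $L_T$ as the full peculiar module. Theorem~\ref{thm:classificationPecMod} then reconstructs $\CFTd(T)$ up to chain homotopy from $L_T$.

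The principal technical obstacle is calibrating $J$ in the first step: it must be small enough that the quotient $\Ad/J$ still fits into the marked-surface-with-arc-system framework on which section~\ref{sec:classification} rests, so that the classification applies, yet large enough that on $H'$ no holomorphic annulus or high-multiplicity disc escapes the combinatorial enumeration. Checking this requires careful bookkeeping of the admissible multiplicities at each of the four boundary basepoints against the arc system defining $\Ad$; once this is done, every remaining piece of the argument is either a direct appeal to Sarkar--Wang or a direct appeal to the results of section~\ref{sec:classification}.
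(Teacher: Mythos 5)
Your proposal follows essentially the same route as the paper: Sarkar--Wang niceification of a peculiar Heegaard diagram relative to chosen special basepoints, combinatorial enumeration of the bigons and squares avoiding them, and recovery of the full homotopy type of $\CFTd(T)$ from its image in a quotient of $\Ad$ via the classification results of section~\ref{sec:classification}. The "calibration of $J$" you flag as the remaining obstacle is resolved in the paper by simply taking $J=(p_i,q_j)$ for a single $p_i$ and a single $q_j$, i.e.\ one special basepoint in each of the two faces of the arc system, which is exactly the hypothesis of Corollary~\ref{cor:AddingBasepointFunctorIsFaithfulUpToHom} guaranteeing that the quotient functor is faithful up to homotopy.
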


We illustrate this result in section~\ref{subsec:pretzels} by computing $\CFTd(T)$ for an infinite family of 2-stranded pretzel tangles. Moreover, the general algorithm is implemented in the Mathematica package~\cite{PQM.m}, which can be used to compute the peculiar module and the corresponding immersed curves of any 4-ended tangle.

\begin{definition}\label{def:INTROlooptype}
	Inspired by \cite[Definition~3.2]{HanselmanWatson}, we call a tangle $T$ \textbf{loop-type} if all local systems in $L_T$ are similar to permutation matrices. 
\end{definition}

All tangles for which I have so far computed the invariants are loop-type. This provokes the following question, whose corresponding counterpart for 3-manifolds with torus boundary is open as well~\cite[section~2.4]{HRW}.

\begin{question}\label{que:LocalSystemsForTangles}
	Are all tangles loop-type?
\end{question}

\subsection{Applications}
Peculiar modules detect rational tangles. In fact, this is true for any invariant of 4-ended tangles for which there exists a glueing theorem recovering link Floer homology. This is an easy consequence of unlink detection of link Floer homology and should be regarded as its corresponding analogue for invariants of 4-ended tangles. However, the description of peculiar modules of rational tangles is particularly simple, and hence so is rational tangle detection for our invariants:
\begin{theorem}[(\ref{thm:CFTdDetectsRatTan})]\label{thm:INTROCFTdDetectsRatTan}
	A 4-ended tangle \(T\) in the 3-ball is rational iff \(L_T\) is a single \textit{embedded} loop with the unique 1-dimensional local system.
\end{theorem}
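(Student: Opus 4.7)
The plan is to reduce Theorem~\ref{thm:INTROCFTdDetectsRatTan} to a question about simple closed curves on the 4-punctured sphere by means of Corollary~\ref{cor:INTROCFTdGlueingAsMorphism}, and then to upgrade Floer-theoretic information to topological equivalence by invoking unknot detection for link Floer homology together with the classical theory of rational tangles.

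For the ``only if'' direction I would first compute $\CFTd$ of a trivial rational tangle directly from a one-generator Heegaard diagram and verify, via Theorem~\ref{thm:classificationPecMod}, that its loop invariant $L_T$ is a single embedded essential curve on the 4-punctured sphere carrying the unique $1$-dimensional local system. Every rational tangle is obtained from a trivial one by successively glueing on elementary twist tangles, each of which is itself rational; by Theorem~\ref{thm:INTROCFTdGeneralGlueing} together with Theorem~\ref{thm:classificationPecMod}, such a twist acts on $L_T$ by the corresponding Dehn twist of the 4-punctured sphere. Since Dehn twists preserve both the property of being a single embedded loop and the isomorphism class of the local system, the forward direction follows by induction on the length of a rational tangle presentation.

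For the ``if'' direction, suppose $L_T=\{(\gamma,1)\}$ for a single embedded loop $\gamma$. Isotopy classes of essential simple closed curves on the 4-punctured sphere are classified by slopes in $\mathbb{Q}\cup\{\infty\}$, and by the ``only if'' direction every slope is realised as $L_R$ for some rational tangle $R$. Pick such an $R$ with $L_R=L_T$; Theorem~\ref{thm:classificationPecMod} then gives $\CFTd(T)\simeq\CFTd(R)$. I would now let $T_1$ range over rational tangles for which $T_1\cup R$ is an unknot (such $T_1$ exist for every rational $R$, e.g.\ the rational tangle of ``inverse slope''). Corollary~\ref{cor:INTROCFTdGlueingAsMorphism} combined with the equivalence $\CFTd(T)\simeq\CFTd(R)$ yields
\[\HFL(T_1\cup T)\otimes V^i\cong\HFL(T_1\cup R)\otimes V^i\cong\HFL(\text{unknot})\otimes V^i,\]
so each $T_1\cup T$ has $\HFL$ of unknot rank. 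Since $\HFL$ detects the unknot, each such closure $T_1\cup T$ must be the unknot; from classical tangle theory (specifically, Schubert's classification of 2-bridge links via branched double covers), a 4-ended tangle all of whose rational closures coincide with those of a rational tangle $R$ is itself isotopic to $R$, forcing $T$ to be rational.

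The hard part will be the last step of the ``if'' direction, namely translating Floer-theoretic agreement across rational closures into an honest isotopy of tangles. One cleaner route would be to argue directly from the structure of $\CFTd(T)$ that the 2-fold branched cover of $(B^3,T)$ is a solid torus, which characterises rational tangles and bypasses the Schubert classification entirely; this would require identifying the slope of the filling with the slope of $\gamma$. A minor bookkeeping issue, common to every application of the Glueing Theorem, is to account consistently for the factor $V^i$ across the different closures so that the rank comparison is unambiguous, but this is absorbed by the rank comparison itself.
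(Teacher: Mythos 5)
Your overall shape (reduce to simple closed curves, realise the curve by a rational tangle, use detection results for closures) is close to the paper's, but there are gaps in both directions. For the ``only if'' direction, the induction on twist regions does not follow from the results you cite: Theorem~\ref{thm:INTROCFTdGeneralGlueing} computes $\HFL$ of a \emph{closed link} from two peculiar modules; it says nothing about the peculiar module of a composite tangle, and no tangle-composition bimodule (hence no statement that adding a crossing acts by a Dehn twist on $L_T$) is established in the paper. This direction is in fact immediate without any induction: every rational tangle admits a genus-$0$ Heegaard diagram with a single $\beta$-curve, and by Example~\ref{exa:CFTdRatTang} that embedded $\beta$-curve \emph{is} $L_T$, with trivial local system.

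For the ``if'' direction there are two genuine problems. First, your claim that essential simple closed curves on the $4$-punctured sphere are classified by slopes is false as stated: an embedded loop encircling a single puncture is non-nullhomotopic and primitive, hence an admissible loop, but is not $L_R$ for any rational $R$. One must first rule such curves out, which the paper does via Observation~\ref{obs:AlexGradingOfCFTdLoops} (the Alexander grading forces exactly two punctures on each side of $\gamma$). Second, and more seriously, the last step is the actual content of the theorem and your appeal to ``Schubert's classification'' does not supply it. A single unknotted rational closure does \emph{not} force $T$ to be rational: by the Montesinos trick, the quotient tangle of any strongly invertible knot $K$ has an unknotted rational closure, yet its double branched cover is the exterior of $K$, not a solid torus. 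To salvage your route you would need at least two distinct unknotted closures, Gordon--Luecke (or the cyclic surgery theorem), and the characterisation of rational tangles by their branched double covers --- a substantially heavier argument than you indicate. The paper instead pairs $T$ with its reversed mirror $\mr(T)$: since $L_{\mr(T)}=L_{\mr(R)}$ as well, the resulting link has the link Floer homology of the $2$-component unlink, unlink detection identifies it as the unlink, and the elementary innermost-disc argument of Lemma~\ref{lem:RatTanDet} shows that when two tangles glue to the $2$-component unlink one of them is rational (rationality of $\mr(T)$ then gives rationality of $T$). This self-pairing trick is what lets the paper avoid both Gordon--Luecke and branched covers entirely.
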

%Likewise, any tangle invariant like $\CFTd$ also detects orientable tangle genera, defined in a suitable sense. For peculiar modules, this will be discussed in a forthcoming, separate paper joint with Liam Watson. 

As another application, we reprove a result originally due to Manolescu~\cite{Manolescu}, namely the existence of an unoriented skein exact sequence, see Theorem~\ref{thm:ResolutionExactTriangle}. Similarly, we obtain the following slight generalisation of Ozsváth and Szabó's oriented skein exact sequence \cite{OSHFK}.

\begin{wrapfigure}{r}{0.3333\textwidth}
	\centering
	\vspace*{10pt}
	\psset{unit=0.35}
	\begin{subfigure}[b]{0.116\textwidth}\centering
		$n\left\{\raisebox{-1cm}{
			\begin{pspicture}(-1.1,-3.1)(1.1,3.1)
			\psset{linewidth=\stringwidth}
			\psecurve(1,5)(-1,3)(1,1)(-1,-1)
			\psecurve[linecolor=white,linewidth=\stringwhite](-1,5)(1,3)(-1,1)(1,-1)
			\psecurve(-1,5)(1,3)(-1,1)(1,-1)
			\psline[linestyle=dotted,dotsep=0.4](0,-0.6)(0,0.6)
			\psecurve(-1,-5)(1,-3)(-1,-1)(1,1)
			\psecurve[linecolor=white,linewidth=\stringwhite](1,-5)(-1,-3)(1,-1)(-1,1)
			\psecurve(1,-5)(-1,-3)(1,-1)(-1,1)
			\end{pspicture}}\right.
		$
		\caption{$T_{n}$}\label{fig:INTROOrientedSkeinRelationTn}
	\end{subfigure}
	\begin{subfigure}[b]{0.116\textwidth}\centering
		$n\left\{\raisebox{-1cm}{
			\begin{pspicture}(-1.1,-3.1)(1.1,3.1)
			\psset{linewidth=\stringwidth}
			\psecurve(-1,5)(1,3)(-1,1)(1,-1)
			\psecurve[linecolor=white,linewidth=\stringwhite](1,5)(-1,3)(1,1)(-1,-1)
			\psecurve(1,5)(-1,3)(1,1)(-1,-1)
			\psline[linestyle=dotted,dotsep=0.4](0,-0.6)(0,0.6)
			\psecurve(1,-5)(-1,-3)(1,-1)(-1,1)
			\psecurve[linecolor=white,linewidth=\stringwhite](-1,-5)(1,-3)(-1,-1)(1,1)
			\psecurve(-1,-5)(1,-3)(-1,-1)(1,1)
			\end{pspicture}}\right.
		$
		\caption{$T_{-n}$}\label{fig:INTROOrientedSkeinRelationTmn}
	\end{subfigure}
	\begin{subfigure}[b]{0.085\textwidth}\centering
		$\left.\raisebox{-1cm}{
			\begin{pspicture}(-1.1,-3.1)(1.1,3.1)
			\psset{linewidth=\stringwidth}
			\psecurve(-2,6)(-1,3)(-1,-3)(-2,-6)
			\psecurve(2,6)(1,3)(1,-3)(2,-6)
			\end{pspicture}}\right.
		$
		\caption{$T_0$}\label{fig:INTROOrientedSkeinRelationT0}
	\end{subfigure}
	\vspace*{-20pt}
	\caption{Basic tangles.}\label{fig:INTROOrientedSkeinRelation}
	\vspace*{-50pt}
\end{wrapfigure}
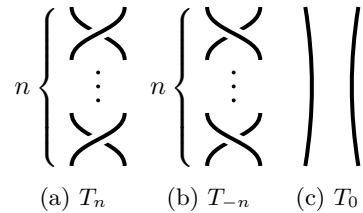
\myfixwrapfig

\begin{theorem}[(\ref{thm:nTwistSkeinRelation}, see also \ref{rem:nTwistSkeinRelation})]\label{thm:INTROnTwistSkeinRelation}
	Let \(T_n\) be the positive \(n\)-twist tangle, \(T_{-n}\) the negative \(n\)-twist tangle and \(T_0\) the trivial tangle, see Figure~\ref{fig:INTROOrientedSkeinRelation}. Then there is an exact triangle
	$$\begin{tikzcd}[row sep=0.9cm, column sep=-0.5cm]
	\CFTd(T_{-n})
	\arrow{rr}
	& &
	\CFTd(T_{n})
	\arrow{dl}
	\\
	&
	\CFTd(T_{0})\otimes V
	\arrow{lu}
	\end{tikzcd}$$
	where \(V\) is a 2-dimensional vector space. If the tangles are oriented and coloured consistently, one obtains (bi)graded versions of this triangle. 
	Furthermore, it gives rise to an exact triangle relating the (appropriately stabilised) link Floer homologies of links that differ in these three tangles.
\end{theorem}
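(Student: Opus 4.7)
My plan is to first establish the exact triangle at the level of peculiar modules and then deduce the link Floer version via the Glueing Theorem. Since $T_n$, $T_{-n}$ and $T_0$ are all rational tangles, Theorem~\ref{thm:INTROCFTdDetectsRatTan} tells us that $L_{T_n}$, $L_{T_{-n}}$ and $L_{T_0}$ are each a single embedded loop on the 4-punctured sphere carrying the trivial 1-dimensional local system. Using Theorem~\ref{thm:INTROPeculiarModulesFromNiceDiagrams}, I would first compute these three peculiar modules explicitly as small curved type~D structures over $\Ad$. In each case the underlying complex should be a ``staircase'' whose shape is determined by the slope of the rational tangle, and I expect the three staircases to fit into a visibly compatible family parametrised by $n$.

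Next, I would construct the maps of the triangle. Theorem~\ref{thm:classificationPecMod} identifies the morphism spaces $\Mor(\CFTd(T_{-n}),\CFTd(T_n))$ and $\Mor(\CFTd(T_n),\CFTd(T_0)\otimes V)$ with Lagrangian intersection Floer complexes of pairs of loops on the 4-punctured sphere. I would pick explicit cycles representing chain maps $f\co \CFTd(T_{-n})\to \CFTd(T_n)$ and $g\co \CFTd(T_n)\to \CFTd(T_0)\otimes V$, and then show, by a direct combinatorial inspection of the complexes computed in the first step, that the mapping cone of $f$ is homotopy-equivalent to $\CFTd(T_0)\otimes V$ with the quotient map homotopic to $g$. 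The factor $V$ should appear naturally because $L_{T_0}$ meets the relevant reference Lagrangian twice as many times as either of $L_{T_{\pm n}}$ does, reflecting the extra link component produced by the oriented resolution in the corresponding closure.

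For the link-level statement I would then apply Theorem~\ref{thm:INTROCFTdGeneralGlueing}: for any 4-ended tangle $T'$ sharing the appropriate boundary with $T_{\pm n}$ and $T_0$, the link Floer homology of the glued link is computed as $\rr(\CFTd(T_{\pm n}))\boxtimes\mathcal{P}\boxtimes\CFTd(T')$, up to stabilisation by $V$. Since $\boxtimes$ is exact in each variable, tensoring the peculiar-module triangle with $\mathcal{P}\boxtimes\CFTd(T')$ immediately yields the claimed exact triangle of link Floer homologies, the stabilising copies of $V$ being inherited from the first step.

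The hard part will be the bookkeeping for the bigraded version. Promoting the triangle to the (bi)graded setting requires a coherent choice of orientations and colours on $T_n$, $T_{-n}$ and $T_0$ so that the connecting maps respect the Alexander and Maslov grading shifts dictated by the oriented skein relation; I expect this to be routine but fiddly, and the precise power of $V$ stabilising the link version will have to be tracked carefully along the way.
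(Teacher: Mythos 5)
Your proposal is correct and follows essentially the same route as the paper: since $T_{\pm n}$ are rational, their peculiar modules are computed explicitly from genus-$0$ diagrams, an explicit connecting map is written down whose mapping cone reduces to two copies of $\CFTd(T_0)$ after cancelling identity components, and the link-level triangle then follows by pairing via the Glueing Theorem. The only cosmetic difference is that the paper writes the connecting morphism down directly rather than locating it through the Lagrangian intersection picture, but your ``direct combinatorial inspection'' amounts to the same computation.
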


\begin{wrapfigure}{r}{0.3333\textwidth}
	\centering\vspace*{-5pt}
	{\psset{unit=0.2}
		\begin{pspicture}(-12,-5.5)(12,5.5)
		\rput{-45}(-1,0){
			% tangle circles inside
			\pscircle[linecolor=gray](-4,-4){2.5}
			
			% arcs segments in T_1
			\psline[linecolor=white,linewidth=\stringwhite](1,-4)(-2,-4)
			\psline[linewidth=\stringwidth,linecolor=black](1,-4)(-2,-4)
			\psline[linecolor=white,linewidth=\stringwhite](-4,1)(-4,-2)
			\psline[linewidth=\stringwidth,linecolor=black](-4,1)(-4,-2)
			\psline[linecolor=white,linewidth=\stringwhite](-6,-4)(-9,-4)
			\psline[linewidth=\stringwidth,linecolor=black](-6,-4)(-9,-4)
			\psline[linecolor=white,linewidth=\stringwhite](-4,-6)(-4,-9)
			\psline[linewidth=\stringwidth,linecolor=black](-4,-6)(-4,-9)
			
			% tangle circles outside
			\pscircle[linestyle=dotted,linewidth=\stringwidth](-4,-4){5}
					
			% labelling
			\rput{45}(-4,-4){$R$}
		}
		
		\rput(0,0){$\rightarrow$}	
		\rput{-45}(1,0){
			% tangle circles inside
			\pscircle[linecolor=gray](4,4){2.5}
			
			% arcs segments in T_2
			\psline[linecolor=white,linewidth=\stringwhite](-1,4)(2,4)
			\psline[linewidth=\stringwidth,linecolor=black](-1,4)(2,4)
			\psline[linecolor=white,linewidth=\stringwhite](4,-1)(4,2)
			\psline[linewidth=\stringwidth,linecolor=black](4,-1)(4,2)
			\psline[linecolor=white,linewidth=\stringwhite](6,4)(9,4)
			\psline[linewidth=\stringwidth,linecolor=black](6,4)(9,4)
			\psline[linecolor=white,linewidth=\stringwhite](4,6)(4,9)
			\psline[linewidth=\stringwidth,linecolor=black](4,6)(4,9)
			
			% tangle circles outside
			\pscircle[linestyle=dotted,linewidth=\stringwidth](4,4){5}
			
			% labelling
			\rput{-135}(4,4){$R$}
		}
		\end{pspicture}
		\caption{Conway mutation.}\label{fig:MutationIntro}}\bigskip
\end{wrapfigure}
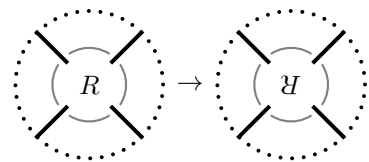
\myfixwrapfigdouble

\begin{definition}[(Conway mutation)]\label{def:INTROmutation}
	Given a link $L$ in $S^3$, let $L'$ be the link obtained by cutting out a tangle diagram~$R$ with four ends from a diagram of~$L$ and glueing it back in after a half-rotation, see Figure~\ref{fig:MutationIntro} for an illustration. We say $L'$ is a \textbf{Conway mutant} of~$L$ and we call~$R$ the \textbf{mutating tangle} in this mutation. If $L$ is oriented, we choose an orientation of $L'$ that agrees with the one for~$L$ outside of~$R$. If this means that we need to reverse the orientation of the two open components of~$R$, then we also reverse the orientation of all other components of~$R$ during the mutation; otherwise we do not change any orientation. 
\end{definition}

Ozsváth and Szabó showed in~\cite{OSmutation} that knot and link Floer homology is not invariant under mutation in general. But in~\cite[Conjecture~1.5]{BaldwinLevine}, Baldwin and Levine conjectured the following.

\begin{conjecture}\label{conj:MutInvHFL}
Let \(L\) be a link and let \(L'\) be obtained from \(L\) by Conway mutation. Then \(\HFL(L)\) and \(\HFL(L')\) agree after collapsing the bigrading to a single \(\mathbb{Z}\)-grading, known as the \(\delta\)-grading. More colloquially, \(\delta\)-graded link Floer homology is mutation invariant.
\end{conjecture}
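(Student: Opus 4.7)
The plan is to reduce Conway mutation invariance to a symmetry statement about the immersed curve invariant on the 4-punctured sphere, then invoke the Glueing Theorem to translate that symmetry into invariance of $\HFL$. First I would set up the mutation on the invariant side: each of the three Conway mutations of a tangle~$R$ is implemented by a hyperelliptic involution $\iota$ of the 4-punctured sphere~$S$, and these three involutions together with the identity form a Klein four-group acting on isotopy classes of collections of loops on~$S$. Writing $L=R\cup T_{2}$ and $L'=R'\cup T_{2}$ for the mutated link, Corollary~\ref{cor:INTROCFTdGlueingAsMorphism} gives
\begin{align*}
\HFL(L)\otimes V^{i} &\cong \LagrangianFH(L_{\mr(R)},L_{T_{2}}),\\
\HFL(L')\otimes V^{i} &\cong \LagrangianFH(\iota(L_{\mr(R)}),L_{T_{2}}),
\end{align*}
so the conjecture reduces to the statement that applying $\iota$ to one factor of a Lagrangian Floer pairing on~$S$ does not change the $\delta$-graded result---which is guaranteed as soon as $L_R$ itself is $\iota$-invariant up to $\delta$-graded homotopy (and suitable similarity of local systems).

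The main step is therefore to establish $\iota(L_R)\simeq L_R$ for every 4-ended tangle~$R$. I would attempt this on the peculiar module side: each $\iota$ acts on the peculiar algebra $\Ad$ by permuting the four idempotents and the generators accordingly, hence on peculiar modules by pullback, and by Theorem~\ref{thm:classificationPecMod} any $\delta$-graded homotopy equivalence between $\CFTd(R)$ and its $\iota$-pullback would suffice. For the families where the invariant is tractable one can verify this directly: rational tangles are covered by Theorem~\ref{thm:INTROCFTdDetectsRatTan}, because embedded loops on~$S$ are classified by their slope and the hyperelliptic involutions preserve slopes; for the pretzel tangles $T_{2n,-(2m+1)}$ the explicit computation afforded by Theorem~\ref{thm:INTROPeculiarModulesFromNiceDiagrams} allows one to inspect the resulting loop collection and read off the required $\delta$-graded symmetry, recovering precisely the partial result announced in the abstract. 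The finer bigraded conclusion for special orientations should drop out of the same computation once one tracks how the hyperelliptic involutions interact with the Alexander multi-grading.

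The hard part---and the reason Conjecture~\ref{conj:MutInvHFL} remains open in general---is proving this symmetry for an arbitrary~$R$. A generic tangle is not a fixed point of~$\iota$, and the hyperelliptic symmetry of the double branched cover of~$R$ does not translate in any obvious way into a symmetry of the Heegaard data from which $\CFTd(R)$ is extracted. I would therefore expect the present machinery to yield Conjecture~\ref{conj:MutInvHFL} only one family of tangles at a time, by explicit peculiar module computations in the spirit of Theorem~\ref{thm:INTROPeculiarModulesFromNiceDiagrams}; a uniform proof seems to require a new geometric principle guaranteeing the $\iota$-invariance of the immersed curve invariant, which is not visible in the current formalism.
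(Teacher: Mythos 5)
This statement is a \emph{conjecture} (due to Baldwin and Levine), not a theorem of the paper: the paper offers no proof of it, and only establishes the special case of mutation about the pretzel tangles $T_{2n,-(2m+1)}$ (Theorem~\ref{thm:INTROpretzeltangleMutation} / Corollary~\ref{cor:pretzeltangleMutation}). Your proposal correctly recognises this, and your reduction is exactly the one the paper uses for its partial results: the Glueing Theorem turns mutation invariance of $\HFL$ into a symmetry statement for the tangle invariant of the mutating tangle (this is Theorem~\ref{thm:GeneralMutationInvariance}), and that symmetry is then checked by explicit computation of $\CFTd$ for the pretzel family via nice diagrams (Corollary~\ref{cor:pretzeltangleMutation}, where the symmetry is read off from the chessboard picture of the loops). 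Your assessment that a uniform proof for arbitrary $R$ would require a genuinely new input, not present in this formalism, is also accurate; so as a status report your proposal is essentially the paper's own point of view.

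Two small cautions if you were to write this up. First, the action of the three mutation involutions on $\Ad$ is not in every case just a permutation of the idempotents: two of the three hyperelliptic involutions exchange the front and back faces of the $4$-punctured sphere, which on the algebra swaps the $p_i$ with the $q_j$ (compare the mirror operation $\m$ in Definition~\ref{def:reversedmirror}); the paper's Theorem~\ref{thm:GeneralMutationInvariance} is stated only for the relabelling $a\leftrightarrow c$, $b\leftrightarrow d$, and you would need to formulate and justify the analogous statements for the other two involutions. Second, the orientation bookkeeping matters: as Corollary~\ref{cor:pretzeltangleMutation} shows, the bigraded statement holds only for certain orientations of the strands, and for the reversed orientation one only retains the $\delta$-graded statement because the Alexander diagonals change direction under the involution. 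Your sketch gestures at both points but would need them made precise before the family-by-family argument is complete.
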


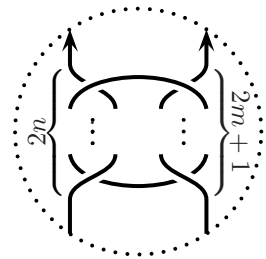
\begin{wrapfigure}{r}{0.3333\textwidth}
	\centering
	\psset{unit=0.3}
	\vspace*{-5pt}
	\begin{pspicture}(-5.5,-5.5)(5.5,5.5)
	\psset{linewidth=\stringwidth}
	% boundary
	\pscircle[linestyle=dotted](0,0){5.5}
	
	% brackets
	\rput(0,0){$\left\{\textcolor{white}{\rule[-0.8cm]{1.9cm}{1.8cm}}\right\}$}
	\rput{-90}(4.45,0){$2m+1$}
	\rput{90}(-4.45,0){$2n$}
	
	% bottom join
	\psecurve(-1,1)(-3,-1)(0,-2.4)(3,-1)(1,1)
	
	% left
	\pscustom{
		\psline{<-}(-3,4.5)(-3,3)
		\psecurve(-1,5)(-3,3)(-1,1)(-3,-1)
	}
	\rput(-2,0.3){$\vdots$}
	\psline[linestyle=dotted,dotsep=0.4](-2,-0.6)(-2,0.6)
	
	\psecurve[linecolor=white,linewidth=\stringwhite](-1,-5)(-3,-3)(-1,-1)(-3,1)
	\pscustom{
		\psline(-3,-4.5)(-3,-3)
		\psecurve(-1,-5)(-3,-3)(-1,-1)(-3,1)
	}
	
	% right
	\pscustom{
		\psline{<-}(3,4.5)(3,3)
		\psecurve(1,5)(3,3)(1,1)(3,-1)
	}
	\psline[linestyle=dotted,dotsep=0.4](2,-0.6)(2,0.6)
	\psecurve[linecolor=white,linewidth=\stringwhite](1,-5)(3,-3)(1,-1)(3,1)
	\pscustom{
		\psline(3,-4.5)(3,-3)
		\psecurve(1,-5)(3,-3)(1,-1)(3,1)
	}
	
	% join top
	\psecurve[linecolor=white,linewidth=\stringwhite](-1,-1)(-3,1)(0,2.4)(3,1)(1,-1)
	\psecurve(-1,-1)(-3,1)(0,2.4)(3,1)(1,-1)
	
%	%labels
%	\rput(-2,4){$t_1$}
%	\rput(2,4){$t_2$}
%	\rput(-2.2,-4){$t_1$}
%	\rput(2.2,-4){$t_2$}
%	
	\end{pspicture}
	\caption{An infinite family of pretzel tangles for $n,m>0$.}\label{fig:INTROpretzeltangle2nm2mp1}
	\vspace*{-45pt}
\end{wrapfigure}
In this paper, we prove a slightly stronger version of this conjecture for an infinite family of mutating tangles.

\begin{theorem}[(\ref{cor:pretzeltangleMutation})]\label{thm:INTROpretzeltangleMutation}
	Mutation of \((2n,-(2m+1))\)-pretzel tangles for \(n,m>0\), oriented as in Figure~\ref{fig:INTROpretzeltangle2nm2mp1}, preserves bigraded link Floer homology, after identifying the Alexander gradings of the two open strands. If we reverse the orientation of one of the two strands, mutation of these tangles preserves \(\delta\)-graded link Floer homology.
\end{theorem}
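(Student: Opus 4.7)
The plan is to combine the combinatorial computation of the peculiar modules of the pretzel tangles $T_{2n,-(2m+1)}$ with the classification theorem in order to exhibit an explicit symmetry of $\CFTd(T_{2n,-(2m+1)})$ under the mutation involution, and then appeal to the glueing formula (Corollary~\ref{cor:INTROCFTdGlueingAsMorphism}) to deduce invariance of the link Floer homology of any link containing such a pretzel tangle as a mutating subtangle.

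First, I would carry out the explicit computation promised in the abstract and in section~\ref{subsec:pretzels}: using Theorem~\ref{thm:INTROPeculiarModulesFromNiceDiagrams} I would pick a nice Heegaard diagram for the $(2n,-(2m+1))$-pretzel tangle and read off $\CFTd(T_{2n,-(2m+1)})$, then apply the algorithm underlying Theorem~\ref{thm:classificationPecMod} to convert it into a collection $L_{T_{2n,-(2m+1)}}$ of loops with local systems on the 4-punctured sphere $S$. I expect all local systems to be trivial (so the tangle is loop-type in the sense of Definition~\ref{def:INTROlooptype}), and I would also keep track of the bigrading data carried by each loop, since this is what distinguishes the bigraded from the merely $\delta$-graded statement.

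Next, I would analyse the action of Conway mutation on the level of peculiar modules. Mutation amounts to replacing one of the two pieces in the glueing picture of Figure~\ref{fig:INTROglueing2tangles} by the precomposition of the tangle with a half-rotation of the 4-punctured sphere; this half-rotation $\tau$ is an orientation-preserving diffeomorphism of $S$ permuting the punctures, and via the curve classification it acts on collections of loops by $L\mapsto\tau(L)$. The crucial step is therefore to verify the symmetry
\[L_{T_{2n,-(2m+1)}}\;\simeq\;\tau\bigl(L_{T_{2n,-(2m+1)}}\bigr)\]
(up to homotopy of curves and similarity of local systems) directly from the explicit description obtained in the first step. Given the manifest rotational symmetry of the pretzel diagram in Figure~\ref{fig:INTROpretzeltangle2nm2mp1}, the loops should come in pairs related by $\tau$, together with loops that are individually $\tau$-invariant, and this can be read off the computation.

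Once this symmetry is established, the theorem follows formally: for any link $L$ containing $T_{2n,-(2m+1)}$ as a mutating subtangle, Corollary~\ref{cor:INTROCFTdGlueingAsMorphism} expresses $\HFL(L)$ as a Lagrangian intersection Floer homology $\LagrangianFH(L_{\mr(T')},L_{T_{2n,-(2m+1)}})$ where $T'$ is the complementary tangle, and the mutated link has invariant $\LagrangianFH(L_{\mr(T')},\tau(L_{T_{2n,-(2m+1)}}))$. Since $\tau$ is a diffeomorphism of $S$, it induces an isomorphism on Lagrangian Floer homology, so the two sides agree. The remaining grading issue is the main obstacle and needs to be handled with care: $\tau$ acts on the Alexander multi-grading by permuting the two Alexander variables associated with the two strands of the tangle, so in general only the collapsed $\delta$-grading is preserved; however, for the two orientations of the pretzel tangles for which the two strands carry the same Alexander variable (equivalently, the symmetry of Figure~\ref{fig:INTROpretzeltangle2nm2mp1} is orientation-preserving on strands), the Alexander variables are identified and the full bigrading survives, giving the stronger statement. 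This last check amounts to comparing how $\tau$ acts on the gradings carried by the loops computed in the first step, and combining this with the orientation conventions fixed in Definition~\ref{def:INTROmutation}.
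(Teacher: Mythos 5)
Your proposal is correct and follows essentially the same route as the paper: compute $\CFTd$ of the pretzel tangles combinatorially via nice diagrams, observe that the resulting collection of loops is invariant up to graded homotopy under the relabelling $a\leftrightarrow c$, $b\leftrightarrow d$ induced by mutation (together with the orientation adjustment required by the definition of mutation), and conclude via the glueing theorem in its Lagrangian intersection Floer form. One small caution: in your final step the identification $\LagrangianFH(L_{\mr(T')},\tau(L_T))\cong\LagrangianFH(L_{\mr(T')},L_T)$ does not follow merely from $\tau$ being a diffeomorphism of the 4-punctured sphere (that would only compare the pairing after applying $\tau$ to \emph{both} factors); it follows from the symmetry $\tau(L_T)\simeq L_T$ that you rightly single out as the crucial step, combined with homotopy invariance of the pairing.
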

This generalises an earlier result from my thesis~\cite{MyThesis} for the $(2,-3)$-pretzel tangle. The result again simply follows from an observation that the peculiar invariants for the mutating tangles have a certain symmetry. However, the calculation of the invariants for general \(n,m>0\) is more involved and relies on Theorem~\ref{thm:INTROPeculiarModulesFromNiceDiagrams}. 

\subsection{Similar work by other people}
It is interesting to compare the ideas described in this paper to the combinatorial tangle Floer theory by Petkova and Vértesi \cite{cHFT} and the algebraic tangle homology theory by Ozsváth and Szabó \cite{OSKauffmanStates1,OSKauffmanStates2} as well as their corresponding decategorifications in terms of the representation theory of $\mathcal{U}_q(\mathfrak{gl}(1\vert 1))$ \cite{DecatCTFH,ManionDecat}. In fact, the definition of our generalised peculiar modules $\CFTminus$ is primarily inspired by the invariants from~\cite{OSKauffmanStates2}, see Remark~\ref{rem:ComparisonOS}.
In~\cite{EftekharyAlishahiTangles}, Eftekhary and Alishahi define a Heegaard Floer theory for tangles using a suitable generalisation of sutured Floer homology~\cite{EftekharyAlishahiSFT}. They study cobordism maps between their tangle invariants, but they do not discuss glueing properties. It would be interesting to see how these two approaches can be merged.  
Finally, I also want to mention some impressive work of Lambert-Cole \cite{LambertCole1,LambertCole2}, where he confirms Conjecture~\ref{conj:MutInvHFL} for various families of mutant pairs (different from the one in Theorem~\ref{thm:INTROpretzeltangleMutation}), using entirely different techniques.

\begin{acknowledgements}\label{ackref}
	This paper has grown out of the final chapter of my PhD thesis~\cite{MyThesis}. I would therefore like to take the opportunity to thank my PhD supervisor Jake Rasmussen for his generous support. I consider myself very fortunate to have been his student.
	
	My PhD was funded by an EPSRC scholarship covering tuition fees and a DPMMS grant for maintenance, for which I thank the then Head of Department Martin Hyland. Some parts of this paper were written during my stay at the Isaac Newton Institute during the programme \textit{Homology Theories in Low Dimensions} (EPSRC grant number EP/K032208/1). The paper was completed during my time as a CIRGET postdoctoral fellow at the Université de Sherbrooke. 
	
	I thank my PhD examiners Ivan Smith and András Juhász for many valuable comments on and corrections to my thesis. I also thank Jonathan Hanselman, Robert Lipshitz, Andy Manion, Allison Moore, Ina Petkova, Vera Vértesi and Marcus Zibrowius for helpful conversations. I am immensely grateful to the anonymous referees who provided many valuable and detailed comments on and corrections to an earlier version of this paper. My special thanks go to Liam Watson for his continuing interest in my work.
\end{acknowledgements}

\section{Preliminaries: Algebraic structures from dg categories}\label{sec:AlgStructFromGDCats}
In this paper, we often work in categories of various algebraic structures, namely type~D and curved type~D structures, but also type~A structures and various bimodules. In all settings, we often want to simplify these structures by replacing them by homotopy equivalent ones. The main goal of this section is to develop some tools for dealing with this problem, namely the Cancellation Lemma (\ref{lem:AbstractCancellation}) and the Clean-Up Lemma (\ref{lem:AbstractCleanUp}). The former can be used to reduce the number of generators of an algebraic structure, essentially by doing Gaussian elimination as in~\cite[Lemma~3.2]{BarNatanBurgosSoto}, the latter for making the structure maps ``look nicer'', essentially by changing the basis. 

In the category of ordinary chain complexes, both tools will be familiar to the reader as easy exercises in linear algebra. So it might not be too surprising that they also work in quite general settings. We will spend the first part of this section explaining a general construction which turns any differential graded category into another such category in which the lemmas hold in some generality sufficient for our purposes, see Definitions~\ref{def:CatOfMatrices} and~\ref{def:CatOfComplexes}. Next, we show that the various different algebraic structures mentioned above arise naturally from this general construction. We also study its functoriality properties and interpret the $\boxtimes$-tensor product between type~A and type~D structures in this framework. Finally, we state and prove the Cancellation and Clean-Up Lemmas.

For simplicity, we only work over the field $\mathbb{F}_2=\mathbb{Z}/2$, so we do not need to keep track of signs. However, with the correct sign conventions, most (if not all) statements should also hold over fields of arbitrary characteristic.
\subsection{The general construction}

\begin{definition}
Let $\Com$ be the category of $\mathbb{Z}$-graded chain complexes over $\mathbb{F}_2$ and grading preserving chain maps between them. A \textbf{differential graded (dg) category}~$\mathcal{C}$ over~$\mathbb{F}_2$ is an enriched category over $\Com$. To spell this out more explicitly, the hom-objects are $\mathbb{Z}$-graded $\mathbb{F}_2$-vector spaces,
\[\Mor(A,B)=\bigoplus_{i\in\mathbb{Z}}\Mor_i(A,B)\]
endowed with differentials
\[\partial_i\co\Mor_i(A,B)\rightarrow \Mor_{i-1}(A,B),\]
ie vector space homomorphisms satisfying 
$\partial_{i-1}\partial_i=0$
and
\begin{equation}\label{eqn:CompatibleWithComposition}
\partial\circ m=m\circ(\partial\otimes \id+\id\otimes \partial),
\end{equation}
where
\[m\co \Mor_j(B,C)\otimes\Mor_i(A,B)\rightarrow\Mor_{i+j}(A,C)\]
denotes composition in $\mathcal{C}$, which is associative and unital. For more details on enriched categories, see for example \cite{cathtpy}. Note that the identity morphisms have degree zero and lie in the kernel of~$\partial$.
\end{definition}
\begin{definition}\label{def:UnderlyingOrdinaryCat}\cite[Definition~3.4.5]{cathtpy} 
Given an enriched category $\mathcal{C}$ over some monoidal category $\mathcal{V}$, the \textbf{underlying ordinary category} $\mathcal{C}_0$ of $\mathcal{C}$ has the same objects as $\mathcal{C}$ and its hom-sets are defined by 
\[\mathcal{C}_0(A,B):=\Mor_\mathcal{V}(1_\mathcal{V},\mathcal{C}(A,B)).\]
\end{definition}
\begin{example}\label{exa:UnderlyingOrdCat}
Let $\mathcal{C}$ be a dg category. The unit in $\Com$ is the complex $0\rightarrow\mathbb{F}_2\rightarrow0$, supported in homological degree 0, and the morphisms in $\Com$ are grading preserving. Hence, the hom-sets of $\mathcal{C}_0$ are those elements in the kernel of $\partial_0$.
Next, consider the enriched category $H_\ast(\mathcal{C})$ over the category of graded vector spaces and grading preserving morphisms between them, obtained from $\mathcal{C}$ by replacing the hom-objects by their homologies with respect to the differential $\partial$. By passing to the underlying ordinary category, we pick out the degree~0 morphisms in $H_\ast(\mathcal{C})$. Therefore, we denote this category by $H_0(\mathcal{C})$.
Since the hom-sets in $H_0(\mathcal{C})$ are just quotients of those in $\mathcal{C}_0$, we now get the usual notions of chain homotopies between morphisms and objects. The reason why we need to pass to the underlying category is that otherwise, two objects could be (chain) isomorphic through grading shifting morphisms. 
\end{example}
\begin{definition}\label{def:CatOfMatrices}
(cp.~\cite[section~6]{BarNatanKhT} and~\cite[section~I.3k]{Seidel})
Given a dg category~$\mathcal{C}$, we define another dg category $\Mat(\mathcal{C})$ as follows. Its objects are formal direct sums
\[\bigoplus_{i\in I}O_i[n_i],\]
where $I$ is some finite index set and $O_i[n_i]$ denotes the object $O_i\in\ob(\mathcal{C})$ with a formal grading shift by an integer $n_i$. Morphisms are given by
\[\Mor_n(\bigoplus_{i\in I}O_i[n_i],\bigoplus_{j\in J}O_j[n_j]):=\bigoplus_{(i,j)\in I\times J}\Mor_{n+n_i-n_j}(O_i,O_j).\]
Compositions and differentials in $\Mat(\mathcal{C})$ are induced by those in $\mathcal{C}$.
\end{definition}
\begin{definition}\label{def:CatOfComplexes}(cp.~\cite[section~I.3l]{Seidel})
Given a differential graded category $\mathcal{C}$, we define an auxiliary category $\Cxpre(\mathcal{C})$, \textbf{the category of pre-complexes}, which is an enriched category over the category of $\mathbb{Z}$-graded vector spaces and grading preserving morphisms between them. Its objects are pairs $(O,d_O)$, where $O\in\ob(\mathcal{C})$ and $d_O\in\Mor_{-1}(O,O)$. 
The hom-objects are the same as in $\mathcal{C}$,
\[\Mor((O,d_O),(O',d_{O'}))=\Mor(O,O'),\]
viewed as $\mathbb{Z}$-graded vector spaces. On these, we can define a map
\[D\co\Mor_{i}((O,d_O),(O',d_{O'}))\rightarrow \Mor_{i-1}((O,d_O),(O',d_{O'}))\]
by setting 
\[D(f):=d_{O'}\circ f+f \circ d_O+\partial(f).\]
We would like $D$ to be a differential in order to turn $\Cxpre(\mathcal{C})$ into a dg category. However, this only works in general if we restrict ourselves to a full subcategory of $\Cxpre(\mathcal{C})$. 
It is easy to check that $D$ is always compatible with multiplication in the sense of (\ref{eqn:CompatibleWithComposition}). So $D$ is a differential iff
\[D^2(f)=(d_{O'}^2+\partial(d_{O'}))\circ f
+f\circ (d_O^2+\partial(d_O))\]
vanishes. This is, of course, the case for the full subcategory $\Cx^{0}(\mathcal{C})$ of $\Cxpre(\mathcal{C})$ consisting of those objects $(O,d_O)$ for which
\begin{equation} 
d^2_O+\partial(d_O) \tag{$\ast$}\label{eqn:d2term}
\end{equation}
vanishes. However, in some situations, other conditions on (\ref{eqn:d2term}) also  work. For example, if we replace $\Com$ by the category of $\mathbb{Z}/2$-graded chain complexes, we can restrict to those objects $(O,d_O)$ for which (\ref{eqn:d2term}) is equal to the identity. Also, if the hom-objects are bimodules over an algebra~$\mathcal{A}$, we can ask (\ref{eqn:d2term}) to be equal to $a\cdot\id_O$ for a fixed central algebra element $a$ (of degree $-2$) which commutes with all morphisms~$f$. In both cases, $D$ will be a differential.
We call any such full subcategory a \textbf{category of complexes}, denoted by $\Cx^{\ast}(\mathcal{C})$, where $\ast\in\{0,1,a\}$ is the value of~(\ref{eqn:d2term}). By construction, $\Cx^{\ast}(\mathcal{C})$ is a dg category. 
\end{definition}

\begin{remark}\label{exa:GraphsForAlgebraicStructures}
As usual, we can associate a directed graph to a category, where objects correspond to vertices and arrows to  morphisms. In the same way, we can think of complexes in $\Cx^{\ast}(\Mat(\mathcal{C}))$ as graphs. 
\end{remark}
The point of the construction above is that we can interpret the categories of type~D, type~A, type~AA and curved type~D structures as instances of $\Cx^{\ast}(\Mat(\mathcal{C}))$ for suitable choices of relatively simple differential graded categories $\mathcal{C}$. But let us start with an even simpler example: ordinary chain complexes.\bigskip

\textbf{Note of warning.}
In the following examples, our definitions only coincide with the usual ones after passing to the underlying ordinary categories, see Example~\ref{exa:UnderlyingOrdCat}. The advantage of our point of view is that the conditions we usually impose on morphism and chain homotopies for various algebraic structures arise naturally by viewing those morphisms as elements of chain complexes.  

\begin{example}[(ordinary chain complexes over $\mathbb{F}_2$)] \label{exa:HighBrowDefChainCxs}
Let $\mathcal{C}$ be the category with a single object $\bullet$ in grading 0, $\Mor(\bullet,\bullet)=\mathbb{F}_2$ and vanishing differential. %Then (the underlying ordinary category of) $\Cx^{0}(\Mat(\mathcal{C}))$ is $\Com$.%, the category of $\mathbb{Z}$-graded chain complexes over $\mathbb{F}_2$. 
Then (the underlying ordinary category of) $\Cx^{0}(\Mat(\mathcal{C}))$ is isomorphic to the dg category of $\mathbb{Z}$-graded chain complexes over $\mathbb{F}_2$ with a preferred choice of basis via the identification of $\bullet$ with $\mathbb{F}_2$, considered as the 1-dimensional vector space over $\mathbb{F}_2$. Thus, $\Cx^{0}(\Mat(\mathcal{C}))$ is equivalent to $\Com$. 
\end{example}
\begin{example}[(type~D structures over dg $\mathbb{F}_2$-algebras)]\label{exa:HighBrowDefTypeDoverF2}
Let $\mathcal{A}$ be a differential graded algebra over $\mathbb{F}_2$. Let $\mathcal{C}$ be the category with a single object~$\bullet$ and morphisms being elements in $\mathcal{A}$. Composition is multiplication in $\mathcal{A}$ and the differential~$\partial$ is induced by the differential on $\mathcal{A}$. We define the category of type~D structures over $\mathcal{A}$ by $\Cx^{0}(\Mat(\mathcal{C}))$. (Again, note that we need to pass to the underlying ordinary category to obtain the definitions in \cite{Zarev} and \cite{LOT}.)
\end{example}

\begin{example}[(type~D structures over dg $\mathcal{I}$-algebras)]\label{exa:HighBrowDefTypeDoverI}
Let us assume that $\mathcal{A}$ is an algebra over some ring $\mathcal{I}\subseteq\mathcal{A}$ of idempotents and fix a basis $\{i_j\}_{j\in J}$ of idempotents of $\mathcal{I}$, where $J$ is some index set. Let $\mathcal{C}^D_\mathcal{A}$ be the category with one object for each basis element of $\mathcal{I}$, and for any two such elements $i_1$ and $i_2$, let $\Mor(i_1,i_2):=i_2.\mathcal{A}.i_1$. 
Again, composition is multiplication in $\mathcal{A}$ and the differential $\partial$ is induced by the differential on $\mathcal{A}$. We call $\Cx^{0}(\Mat(\mathcal{C}^D_\mathcal{A}))$ the category of (right) type~D structures over the dg $\mathcal{I}$-algebra $\mathcal{A}$. To obtain the category of \emph{left} type~D structures, we only need to change the morphism spaces to $\Mor(i_1,i_2):=i_1.\mathcal{A}.i_2$ with the obvious multiplication; however, we usually work with right type D structures in this paper, since we interpret algebra elements as functions and thus read them from right to left.
\end{example}
\begin{remark}\label{rem:HighBrowNoBasisForI}
\textit{A priori}, the definition in the previous example depends on a choice of basis for~$\mathcal{I}$. In all examples in this paper, there is a natural choice of such a basis, so this is not an issue. 
However, we can replace $\mathcal{C}^D_\mathcal{A}$ above by the enlarged category $\overline{\mathcal{C}^D_\mathcal{A}}$, where there is an object for \textit{every} element in $\mathcal{I}$. 	Then $\mathcal{C}^D_\mathcal{A}$ is a full subcategory of $\overline{\mathcal{C}^D_\mathcal{A}}$ and it is not hard to see that $\Mat(\mathcal{C}^D_\mathcal{A})$ and $\Mat(\overline{\mathcal{C}^D_\mathcal{A}})$ are equivalent. Now, the construction of the category of complexes is functorial (in the category of dg categories), so after all, the definition above does not depend on a basis for $\mathcal{I}$. 
\end{remark}
\begin{example}[(curved type~D structures over dg $\mathcal{I}$-algebras)]\label{exa:HighBrowDefcurvedTypeD}
We start with the same category $\mathcal{C}^D_\mathcal{A}$ as in the previous example, but we fix a central element $a_c\in Z(\mathcal{A})$, the curvature, and define the category of curved (right) type~D structures over $\mathcal{A}$ with curvature $a_c$ as $\Cx^{a_c}(\Mat(\mathcal{C}^D_\mathcal{A}))$. For a more explicit, but less concise definition, see Definition~\ref{def:curvedTypeDStructure}.
\end{example}
%\begin{remark}
%In the Heegaard Floer community, the term ``curved'' seems to be the accepted attribute for algebraic structures for which some differential is non-vanishing; however, the first written reference (that I am aware of) in which this terminology is used is of very recent date \cite{Zemke}.
%\end{remark}
\begin{example}[(type~A structures over an $A_\infty$-algebra over $\mathcal{I}$)]\label{exa:HighBrowDefTypeAoverI}
Let $\mathcal{A}$ be an $A_\infty$-algebra over a ring of idempotents $\mathcal{I}$ over $\mathbb{F}_2$. As in Example~\ref{exa:HighBrowDefTypeDoverI}, fix a basis $\{i_k\}_{k\in I}$ of idempotents of $\mathcal{I}$, where $I$ is some index set. Let $\mathcal{C}^A_\mathcal{A}$ be the category with one object for each basis element in $\mathcal{I}$, just as for type~D structures. However, a morphism in a hom-object $\Mor(i_1,i_2)$ of $\mathcal{C}^A_\mathcal{A}$ is given by a sequence of vector space homomorphisms 
$$(f_i\co i_2.\mathcal{A}^{\otimes i}.i_1\rightarrow \mathbb{F}_2)_{i\geq0}$$
where composition is defined by 
$$(f\circ g)_i(a_i\otimes\cdots\otimes a_{1}):= \sum_{j+k=i}f_k(a_{i}\otimes\cdots\otimes a_{j+1}) \cdot g_j(a_{j}\otimes\cdots\otimes a_{1}).$$
For $i\in\{i_k\}_{k\in I}$, the identity morphism $\id_{i}=(\id_{i,l})_{l\geq0}\in\Mor(i,i)$ is given by $\id_{i,0}(i.1.i)=1$ and $\id_{i,l}=0$ for all $l>0$.
The differential $\partial$ is given by
$$(\partial(f))_i(a_i\otimes\cdots\otimes a_{1}):=\sum_{j+k=i+1}\sum_{l=0}^{i-k} f_j(a_i\otimes\dots\otimes \mu_k(a_{l+k}\otimes \dots \otimes a_{l+1})\otimes \dots \otimes a_{1}).$$
We call $\Cx^{0}(\Mat(\mathcal{C}^A_\mathcal{A}))$ the category of (left) type~A structures over $\mathcal{A}$. 
We define the category of strictly unital type~A structures by restricting to those objects $(O,d_O)$ with the identity action 
\begin{equation}\label{eqn:identityAction}
d_O(\cdot,1)=\id_O
\end{equation}
and with the property that
\[d_O(\cdot,a_i\otimes\dots\otimes a_1)=0\text{ if $i>1$ and $a_j=1$ for some $j=1,\dots,i$}\]
and restricting to those morphisms satisfying
\[f(\cdot,a_i\otimes\dots\otimes a_1)=0\text{ if $i>0$ and $a_j=1$ for some $j=1,\dots,i$}.\]
When we talk about type A structures in this paper, we will always implicitly assume them to be strictly unital. 

\emph{Right} type~A structures are defined in an analogous way; we only define the hom-objects of the underlying dg category to be given by sequences of vector space homomorphisms 
\[(f_i\co i_1.\mathcal{A}^{\otimes i}.i_2\rightarrow \mathbb{F}_2)_{i\geq0},\]
and adapt the multiplication maps accordingly. In this paper, we restrict ourselves to left type~A structures.
\end{example}

\begin{remark}\label{exa:GraphsForTypeAstructures}
When we describe type~A structures as directed graphs, it is useful to fix a basis of the algebra $\mathcal{A}$. Then, we label an arrow corresponding to a morphism $f$ by the formal sum of those tuples/tensor products of basis elements of the algebra $\mathcal{A}$ on which $f$ is non-zero. In particular, the identity morphisms are the length 0 (ie ``empty'') labels. In this language, composition of two morphisms $f$ and $g$ can be described as the sum of all concatenations of labels for $f$ and $g$ (modulo~2).

Now consider a morphism $f_a$ whose only label is a tuple of basic algebra elements $a=(a_i,\dots,a_1)$. Then the arrow of $\partial(f_a)$ is labelled by the formal sum (modulo 2) of all labels obtained from $a$ by replacing a single entry $a_i$ by sequences $(a'_j,\dots, a'_1)$ such that \[a_i=\mu_j(a'_j\otimes\cdots\otimes a'_1).\]

Note that in accordance with our conventions from the previous definition, we will always omit the arrows corresponding to the identity action~\eqref{eqn:identityAction}, ie arrows from each vertex to itself labelled by the length one sequence consisting of the identity of the algebra $\mathcal{A}$. 
\end{remark}

\begin{example}[(bimodules of various kinds)]
	We can form the categories of type~DD, type~DA, type~AD and type~AA bimodules as follows. We start with the dg category where objects correspond to idempotents as before, but where the hom-objects are defined as the corresponding products of the hom-objects of $\mathcal{C}^A_\mathcal{A}$ and $\mathcal{C}^D_\mathcal{A}$ in Examples~\ref{exa:HighBrowDefTypeDoverI} and~\ref{exa:HighBrowDefTypeAoverI}. Multiplication is defined as the product on the two factors and the differential is defined as usual by the Leibniz rule. Likewise, multi-modules can be defined, but we will not need those in the current paper. 
	
	We sometimes include the algebras over which the modules are defined in our notation, following the usual convention to use subscripts for type A and superscripts for type D sides. For example, the notation $\typeA{\mathcal{A}}{M}^\mathcal{B}$ means that $M$ is a type~AD $\mathcal{A}$-$\mathcal{B}$-bimodule where $\mathcal{A}$ acts on the left and $\mathcal{B}$ on the right.
\end{example}

\subsection{Functoriality and pairing for type A and type D structures}

In the proofs of the glueing formula in section~\ref{sec:Pairing}, we need to change the underlying algebras and rings of idempotents of the algebraic structures involved. %First, let us consider a construction for (curved) type~D structures. 
%\begin{definition}\label{def:QuotientFunctors}
% In terms of the categorical description above, such a homomorphism corresponds to a functor 
%$$\tilde{\mathcal{Q}}_\pi\co\mathcal{C}_A\rightarrow\mathcal{C}_B,$$
%where $\mathcal{C}_A$ and $\mathcal{C}_B$ are the two categories corresponding to the algebras $A$ and $B$ that we used in the construction of type~D structures (examples~\ref{exa:HighBrowDefTypeDoverI} and~\ref{exa:HighBrowDefcurvedTypeD}). If $\pi$ respects the differentials on $A$ and $B$, then so does $\tilde{\mathcal{Q}}_\pi$. 
%Then $\tilde{\mathcal{Q}}_\pi$ in turn induces a functor 
%$$\mathcal{Q}_\pi:\Cx^{\ast}(\Mat(\mathcal{C}_A))\rightarrow\Cx^{\ast}(\Mat(\mathcal{C}_B))$$
%that likewise respects the differentials on both sides. We call $\mathcal{Q}_\pi$ the \textbf{quotient functor}. In particular it sends homotopic complexes to homotopic ones. Also note that a curved type~D structure might become an ordinary type~D structure, if $\pi(a_{tw})=0$.
%\end{definition}
%Next, let us consider a slightly more special situation, where we also change the underlying ring of idempotents.
\begin{definition}\label{def:InducedFunctors}
Suppose we have a subring $\mathcal{J}$ of the ring of idempotents~$\mathcal{I}$. Let us also fix an $\mathbb{F}_2$-basis $\{\iota_i\mid i\in J\}$ of $\mathcal{J}$ and extend it to a basis $\{\iota_i\mid i\in I\}$ of~$\mathcal{I}$. Let $\mathcal{A}$ be a dg $\mathcal{I}$-algebra and $\mathcal{B}$ be a dg $\mathcal{J}$-algebra. Note that via the inclusion $\mathcal{J}\hookrightarrow\mathcal{I}$, we can regard $\mathcal{A}$ also as a dg $\mathcal{J}$-algebra. Let $\pi\co\mathcal{B}\rightarrow\mathcal{A}$ be a dg $\mathcal{J}$-algebra homomorphism. 

Let us first consider the construction for (curved) type~D structures: consider the category $\mathcal{C}^D_{\mathcal{A}}$ corresponding to the $\mathcal{I}$-algebra $\mathcal{A}$ from Example~\ref{exa:HighBrowDefTypeDoverI}. Similarly, let $\mathcal{C}^D_{\mathcal{B}}$ be the category corresponding to the $\mathcal{J}$-algebra $\mathcal{B}$. The inclusion $\mathcal{J}\hookrightarrow\mathcal{I}$ and the $\mathcal{J}$-algebra homomorphism $\pi$ induce a functor 
$$\tilde{\mathcal{F}}^D_\pi\co\mathcal{C}^D_{\mathcal{B}}\rightarrow\mathcal{C}^D_{\mathcal{A}}$$
and hence also a functor
$$\mathcal{F}^D_\pi\co\Cx^{\ast}(\Mat(\mathcal{C}^D_{\mathcal{B}}))\rightarrow\Cx^{\ast}(\Mat(\mathcal{C}^D_{\mathcal{A}})),$$
both of which respect the differentials on both sides.

There is also a dual construction for type~A structures: consider the category $\mathcal{C}^A_{\mathcal{A}}$ corresponding to the $\mathcal{I}$-algebra $\mathcal{A}$ from Example~\ref{exa:HighBrowDefTypeAoverI}. Similarly, let $\mathcal{C}^A_{\mathcal{B}}$ be the category corresponding to the $\mathcal{J}$-algebra $\mathcal{B}$. However, to define an induced functor, we need to slightly modify $\mathcal{C}^A_{\mathcal{B}}$ by adding a zero object. Let us call this new category $\mathcal{C}^{A,0}_{\mathcal{B}}$. Then, we can define a functor 
$$\tilde{\mathcal{F}}^A_\pi\co\mathcal{C}^A_{\mathcal{A}}\rightarrow\mathcal{C}^{A,0}_{\mathcal{B}}$$
as follows: an object $\iota_i$ is sent to $\iota_i$ if $i\in J$ and to the zero-object otherwise. A basic morphism $f\in \Mor(\iota_1,\iota_2)$ of the form $\iota_2.\mathcal{A}^{\otimes i}.\iota_1\rightarrow \mathbb{F}_2$ is sent to 
$$\left(f\circ \iota_2.\pi^{\otimes i}.\iota_1\co\iota_2.\mathcal{B}^{\otimes i}.\iota_1\rightarrow \iota_2.\mathcal{A}^{\otimes i}.\iota_1\rightarrow \mathbb{F}_2\right)\in \Mor(\iota_1,\iota_2)$$
if $\iota_1$ and $\iota_2$ are in $J$ and to $0\in\Mor\left(\tilde{\mathcal{F}}^A_\pi(\iota_1),\tilde{\mathcal{F}}^A_\pi(\iota_2)\right)$ otherwise.
This functor is well-defined and respects the differential if $\mathcal{A}$ and $\mathcal{B}$ are $A_\infty$-algebras over $\mathcal{I}$ and $\mathcal{J}$, respectively, and $\pi$ is an $A_\infty$-algebra homomorphism. Thus, it induces a functor
$$\mathcal{F}^A_\pi\co\Cx^{0}(\Mat(\mathcal{C}^A_{\mathcal{A}}))\rightarrow\Cx^{0}(\Mat(\mathcal{C}^{A,0}_{\mathcal{B}}))$$
that likewise respects the differentials on both sides. Note that the category  $\Cx^{0}(\Mat(\mathcal{C}^{A,0}_{\mathcal{B}}))$ agrees with the category of type~A structures $\Cx^{0}(\Mat(\mathcal{C}^A_{\mathcal{B}}))$, since adding a zero-object in the underlying dg category does not change the result after applying $\Cx^{0}(\Mat(\cdot))$.

Similarly, we may define functors for bimodules of various types.  
\end{definition}

\begin{remark}\label{rem:InducedFunctors}
	In the previous definition, non-injective homomorphisms $\mathcal{J}\rightarrow\mathcal{I}$ also induce functors
	$$\mathcal{F}^D_\pi\co\Cx^{\ast}(\Mat(\mathcal{C}^D_{\mathcal{B}}))\rightarrow\Cx^{\ast}(\Mat(\mathcal{C}^D_{\mathcal{A}}))$$
	between the categories of (curved) type~D structures. All we need to change in its construction is to add a zero-object to $\mathcal{C}^D_{\mathcal{A}}$. 
	Note, however, that in all examples that we are concerned with in this paper, either $\mathcal{J}=\mathcal{I}$ and $\pi$ is a quotient map, or $\mathcal{B}=\mathcal{J}.\mathcal{A}.\mathcal{J}$ and $\pi$ is the inclusion.
\end{remark}

\begin{observation}\label{obs:InducedFunctorsForGraphs}
	Let \(\mathcal{J}\), \(\mathcal{I}\), \(\mathcal{A}\), \(	\mathcal{B}\) and \(\pi\co\mathcal{B}\rightarrow\mathcal{A}\) be as in Definition~\ref{def:InducedFunctors}. Choose a basis of the kernel of~$\pi$ and extend it to~$\mathcal{B}$. This induces a basis on the image of~$\pi$, which we can then extend it to a basis of $\mathcal{A}$. 
	Suppose we have an oriented labelled graph representing a type~D structure $N$ over $\mathcal{B}$ (with respect to the basis chosen above). Then the graph representing $\mathcal{F}^D_{\pi}(N)$ is obtained by replacing all algebra elements by their images under $\pi$. Similarly, if we have an oriented labelled graph representing a type~A structure $M$ over $\mathcal{A}$, $\mathcal{F}^A_{\pi}(M)$ has the effect of replacing each label $a=(a_n,\dots,a_1)$ by the formal sum of all labels whose images under $\pi^{\otimes n}$ are equal to $a$. 
\end{observation}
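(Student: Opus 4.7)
The plan is to chase the definitions of the functors \(\tilde{\mathcal{F}}^D_\pi\) and \(\tilde{\mathcal{F}}^A_\pi\) from Definition~\ref{def:InducedFunctors} through the graph labelling conventions explained in Remark~\ref{exa:GraphsForAlgebraicStructures}. The first step is to observe that the choice of bases in the statement is tailored to $\pi$: starting from a basis of \(\ker\pi\), extending to a basis of \(\mathcal{B}\), pushing forward to a basis of \(\operatorname{im}\pi\) and then extending to a basis of \(\mathcal{A}\), every basis element \(b\in\mathcal{B}\) either maps to \(0\) or to a basis element of \(\mathcal{A}\). By multiplicativity, \(\pi^{\otimes n}\) sends each basis tensor of \(\mathcal{B}^{\otimes n}\) either to \(0\) or to a basis tensor of \(\mathcal{A}^{\otimes n}\), so the formal sums in the statement are well-defined.

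For the type~D case, I would argue that an edge of the graph for \(N\) from \(\iota_1\) to \(\iota_2\) labelled by a basis element \(b\in\iota_2.\mathcal{B}.\iota_1\) encodes precisely the morphism summand \(b\in\Mor_{\mathcal{C}^D_\mathcal{B}}(\iota_1,\iota_2)\) of the type~D differential. By construction \(\tilde{\mathcal{F}}^D_\pi\) is the identity on objects and sends this morphism to \(\pi(b)\in\iota_2.\mathcal{A}.\iota_1\); since \(\mathcal{F}^D_\pi\) on \(\Cx^{\ast}(\Mat(\mathcal{C}^D_\mathcal{B}))\) is induced component-wise and $\mathbb{F}_2$-linearly from \(\tilde{\mathcal{F}}^D_\pi\), the image edge is relabelled by \(\pi(b)\), and it vanishes exactly when \(b\in\ker\pi\). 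A trivial linearity argument extends this to edges whose labels are arbitrary (not necessarily basic) algebra elements, yielding the first assertion.

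For the type~A case, I would interpret an edge of the graph for \(M\) labelled by a basis tuple \(a=(a_n,\dots,a_1)\) as the morphism \(f_a\in\Mor_{\mathcal{C}^A_\mathcal{A}}(\iota_1,\iota_2)\) which is the indicator function of \(a\) on the chosen basis of \(\iota_2.\mathcal{A}^{\otimes n}.\iota_1\). By Definition~\ref{def:InducedFunctors}, \(\tilde{\mathcal{F}}^A_\pi(f_a)=f_a\circ(\iota_2.\pi^{\otimes n}.\iota_1)\), which evaluates on a basis tuple \(b=(b_n,\dots,b_1)\in\iota_2.\mathcal{B}^{\otimes n}.\iota_1\) to \(f_a(\pi(b_n)\otimes\cdots\otimes\pi(b_1))\); by the basis compatibility this is \(1\) precisely when \(\pi^{\otimes n}(b)=a\) and \(0\) otherwise. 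Hence \(\tilde{\mathcal{F}}^A_\pi(f_a)=\sum_{\pi^{\otimes n}(b)=a} f_b\), which is exactly the indicator function of the formal sum \(\sum_{\pi^{\otimes n}(b)=a} b\); this is the relabelling claimed. Note that if \(a\) lies outside \(\operatorname{im}\pi^{\otimes n}\), the sum is empty and the edge disappears, consistent with the general formula.

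There is no real obstacle: the observation is a definition-unwinding, and the only care required is bookkeeping of the two basis choices and the direction of the functors (\(\tilde{\mathcal{F}}^D_\pi\) covariant in algebra homomorphism, \(\tilde{\mathcal{F}}^A_\pi\) reversing its role via precomposition). Linearity over \(\mathbb{F}_2\) of both the functors and the graph-labelling convention lets the verification be carried out on basic labels and then extended by linearity, completing the proof.
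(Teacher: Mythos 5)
Your proof is correct and matches the paper's (implicit) reasoning: the paper states this as an Observation with no written proof precisely because it is the definition-unwinding you carry out, and your careful point that the chosen bases make $\pi$ send each basis element to $0$ or to a basis element (so that $f_a\circ\pi^{\otimes n}=\sum_{\pi^{\otimes n}(b)=a}f_b$) is exactly the bookkeeping needed.
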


\begin{definition}[(pairing type~D and type~A structures)]\label{def:PairingTypeDandA}
Let $M$ be a (right) type~D structure and $N$ a (left) type~A structure over the same dg algebra $\mathcal{A}$ over a ring $\mathcal{I}$ of idempotents, together with a fixed basis of $\mathcal{I}$ and $\mathcal{A}$. We now reformulate the definition of the chain complex $(M\boxtimes N,\partial^\boxtimes)$ from \cite[Definition~7.4]{Zarev} and \cite[section~2.4]{LOT} in terms of the graphs associated with $M$ and $N$. The generators of $M\boxtimes N$ are defined by pairs of vertices in $M$ and $N$ labelled by the same idempotents. Given two such pairs $(v_1,w_1)$ and $(v_2,w_2)$, the $(v_2,w_2)$-component of $\partial^\boxtimes(v_1,w_1)$ is equal to the number of sequences $s$ of labels of consecutive arrows along a path from~$v_1$ to~$v_2$ in~$M$ such that $s$ agrees with a label on the arrow from $w_1$ to $w_2$ in~$N$, all modulo 2. 
Note that for the differential to be well-defined, we need to make sure that this number is finite. This is usually done by requiring that at least one of $M$ or $N$ is \textbf{bounded}: for type D structures, this means that there are no loops in its graph; for type A structures, this means that there are only finitely many labels.

If we start with a type~AD or type~DD bimodule~$M$ and a type~AA or type~AD bimodule $N$, the pairing is defined in the same way, except that we need to record the labels for the remaining type~A or type~D sides: the first component is equal to the product (ie algebra product or concatenation) of the first components of the arrows along the corresponding path from~$v_1$ to~$v_2$ in~$M$; the second component of a label on the arrow from $(v_1,w_1)$ to $(v_2,w_2)$ is equal to the second component of the corresponding label of the arrow from $w_1$ to $w_2$ in~$N$. Similarly, we can define a pairing between other types of bimodules, as long as we pair type~A sides with type~D sides and left structures with right structures. For details, see~\cite[Definition~2.3.9]{LOTBimodules}.
\end{definition}

\begin{remark}\label{rem:PairingIsFunctorial}
	The result of the pairing operation described above is a well-defined object in the corresponding category, ie a chain complex, type~A, type~D structure or a bimodule, see \cite[Proposition~2.3.10]{LOTBimodules}. 	
	Furthermore, pairing type~D and type~A structures is invariant under homotopy up to homotopy since it is functorial, see \cite[Lemma~2.3.13]{LOTBimodules}. 	
\end{remark}

\begin{question}
	Is it possible to interpret the pairing on the level of the categories $\mathcal{C}^D_\mathcal{A}$ and $\mathcal{C}^A_\mathcal{A}$?
\end{question}

\begin{theorem}[(Pairing Adjunction)]\label{thm:PairingAdjunction}
	Let \(\mathcal{J}\), \(\mathcal{I}\), \(\mathcal{A}\), \(
	\mathcal{B}\) and \(\pi\co\mathcal{B}\rightarrow\mathcal{A}\) be as in Definition~\ref{def:InducedFunctors}. Let \(M\) be a (right) type~D structure over~\(\mathcal{B}\) and \(N\) a (left) type~A structure over~\(\mathcal{A}\). Then we have an identification
	$$ \mathcal{F}^D_{\pi}(M)^\mathcal{A}\boxtimes \typeA{\mathcal{A}}{N}\cong  M^\mathcal{B}\boxtimes \typeA{\mathcal{B}}{\mathcal{F}^A_{\pi}(N)}.$$
	The same holds true for bimodules of various types.
\end{theorem}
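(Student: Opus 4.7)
The plan is to verify the identification directly at the level of the graphs associated to the structures, using the graph-theoretic description of $\boxtimes$ from Definition~\ref{def:PairingTypeDandA} together with the explicit description of the induced functors from Observation~\ref{obs:InducedFunctorsForGraphs}. I would first treat the pure case of a right type~D structure $M$ over $\mathcal{B}$ and a left type~A structure $N$ over $\mathcal{A}$; the various bimodule cases then follow verbatim because the extra type~A and type~D labels live on the unpaired sides and are untouched by $\mathcal{F}^D_\pi$ and $\mathcal{F}^A_\pi$.

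To match generators, I would fix compatible bases of idempotents. A generator of $\mathcal{F}^D_\pi(M)^\mathcal{A}\boxtimes\typeA{\mathcal{A}}{N}$ is a pair $(v,w)$ where $v$ is a vertex of $M$, whose idempotent in $\mathcal{J}$ is viewed in $\mathcal{I}$ via the inclusion, and $w$ is a vertex of $N$ carrying that same idempotent; in particular $w$'s idempotent must lie in $\mathcal{J}$. On the other side, $\mathcal{F}^A_\pi$ sends vertices of $N$ whose idempotent lies outside $\mathcal{J}$ to the zero-object, so the generators of $M^\mathcal{B}\boxtimes\typeA{\mathcal{B}}{\mathcal{F}^A_\pi(N)}$ are pairs $(v,w)$ with matching idempotent in $\mathcal{J}$. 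These two descriptions coincide.

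For the differentials, Observation~\ref{obs:InducedFunctorsForGraphs} gives that each arrow of $\mathcal{F}^D_\pi(M)$ is labelled by $\pi(b)$ for the corresponding label $b\in\mathcal{B}$ of an arrow of $M$. Hence the $\boxtimes$-differential in $\mathcal{F}^D_\pi(M)\boxtimes N$ counts paths $v_1\to v_2$ in $M$ with label sequence $(b_n,\dots,b_1)$ such that $(\pi(b_n),\dots,\pi(b_1))$ is a label of the arrow from $w_1$ to $w_2$ in $N$. On the other side, again by Observation~\ref{obs:InducedFunctorsForGraphs}, the labels on that same arrow in $\mathcal{F}^A_\pi(N)$ are exactly the sequences $(b_n,\dots,b_1)$ over $\mathcal{B}$ whose image under $\pi^{\otimes n}$ appears as a label of the arrow in $N$. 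So the $\boxtimes$-differential in $M\boxtimes\mathcal{F}^A_\pi(N)$ counts precisely the same paths, modulo~$2$.

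The identification is therefore visible at the level of graphs, and no step looks hard; the main obstacle will be simply setting up the notation so that the bijection between generators and between contributions to the differential is transparent, and checking that the $\mathbb{F}_2$-linearity of $\pi$ is compatible with the formal-sum convention used in the definition of $\boxtimes$. Boundedness is preserved by both functors (applying $\pi$ introduces no new loops, and the preimage of any finite label set under $\pi^{\otimes n}$ is finite when $\pi$ is a basis-respecting map, which one may always arrange by Remark~\ref{rem:HighBrowNoBasisForI}), so the pairing is well-defined on both sides. Finally, the bimodule version follows immediately by the same argument, as the labels on the unpaired sides appear identically in both $\boxtimes$-products.
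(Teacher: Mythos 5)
Your proposal is correct and follows essentially the same route as the paper: a direct graph-level comparison via Observation~\ref{obs:InducedFunctorsForGraphs}, first matching generators by noting that only idempotents in $\mathcal{J}$ survive on either side, then matching differential contributions by observing that paths in $M$ whose label sequence maps under $\pi$ to a label of $N$ are exactly the paths matching a label of $\mathcal{F}^A_\pi(N)$, with the bimodule case handled by noting the functors only touch the paired sides.
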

\begin{proof}
	This follows almost tautologically from the interpretation of the induced functors in terms of oriented labelled graphs in Observation~\ref{obs:InducedFunctorsForGraphs}. 
	On the level of generators this identity is clear, since by definition, $M$, $\mathcal{F}^D_{\pi}(M)$ and $\mathcal{F}^A_{\pi}(N)$ only possess generators belonging to idempotents in $\mathcal{J}$, so only those generators of $N$ that belong to idempotents in $\mathcal{J}$ survive the pairing. Next, fix a label $a=(a_m,\dots,a_1)$ from a vertex $w_1$ to $w_2$ in $N$, along with two vertices $v_1$ and $v_2$ in $M$ such that $v_1$ and $w_1$ belong to the same idempotent in~$\mathcal{J}$ and so do $v_2$ and $w_2$. The contribution of $a$ to the differential from $(v_1, w_1)$ to $(v_2, w_2)$ on the left-hand side is equal to the number of all sequences $b=(b_n,\dots,b_1)$ of labels from $v_1$ to $v_2$ such that $\pi(b):=(\pi(b_n),\dots,\pi(b_1))=a$.
	The label $a$ in $N$ corresponds to labels  $b'=(b'_n,\dots,b'_1)$ from $w_1$ to $w_2$ in $\mathcal{F}^A_{\pi}(N)$ such that $\pi(b'):=(\pi(b'_n),\dots,\pi(b'_1))=a$.
	Those labels contribute 1 to the differential from $(v_1, w_1)$ to $(v_2, w_2)$ on the right-hand side iff they agree with some $b$ and do not contribute otherwise. So the contributions agree.
	
	For bimodules, note that the functors only act on one component of the morphism spaces.
\end{proof}

%\myfixwrapfig

\subsection{Cancellation and Cleaning-up}

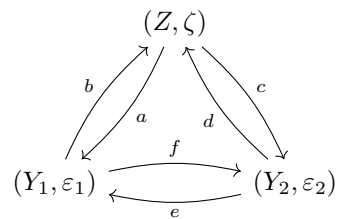
\begin{wrapfigure}{r}{0.3333\textwidth}
	\centering
	$\begin{tikzcd}[row sep=1.5cm, column sep=0.3cm]
	& (Z,\zeta) \arrow[bend left=12]{ld}{a} \arrow[bend left=12]{rd}{c} \\
	(Y_1,\varepsilon_1)\arrow[bend left=12]{rr}{f} \arrow[bend left=12]{ru}{b}
	& &
	(Y_2,\varepsilon_2) \arrow[bend left=12]{ll}{e}\arrow[bend left=12]{lu}{d}
	\end{tikzcd}$
	\caption{The object~$(X,\delta)$ for Lemma~\ref{lem:AbstractCancellation}.}\label{fig:AbstractCancellation}\vspace*{-35pt}
\end{wrapfigure}

We now state and prove the two central lemmas mentioned at the beginning of this section.
\begin{lemma}[(Cancellation Lemma)]\label{lem:AbstractCancellation}
Let \((X,\delta)\) be an object of \(\Cx^{\ast}(\Mat(\mathcal{C}))\) for some differential graded category \(\mathcal{C}\) and suppose it has the form shown in Figure~\ref{fig:AbstractCancellation}, 
where \((Y_1,\varepsilon_1)\), \((Y_2,\varepsilon_2)\), \((Z,\zeta)\in\ob(\Cxpre(\Mat(\mathcal{C})))\) and \(f\) is an isomorphism with inverse \(g\). Then \((X,\delta)\) is chain homotopic to \((Z,\zeta+bgc)\).
\end{lemma}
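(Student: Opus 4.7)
The plan is to exhibit explicit chain homotopy inverses
$$\iota\co(Z,\zeta+bgc)\to(X,\delta) \quad\text{and}\quad \pi\co(X,\delta)\to(Z,\zeta+bgc),$$
together with a self-homotopy $H$ of $(X,\delta)$, and verify the required relations using the curvature equation $D(\delta)=\ast\cdot\id_X$.

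Writing $X=Y_1\oplus Y_2\oplus Z$, I would present $\delta$ as the $3\times 3$ block matrix whose entries are the nine arrows of Figure~\ref{fig:AbstractCancellation}. The natural candidates to try are
$$\iota=\begin{pmatrix}gc\\ 0\\ \id_Z\end{pmatrix},\qquad \pi=\begin{pmatrix}0 & bg & \id_Z\end{pmatrix},\qquad H=\begin{pmatrix}0 & g & 0\\ 0 & 0 & 0\\ 0 & 0 & 0\end{pmatrix},$$
where the twists $gc\co Z\to Y_1$ and $bg\co Y_2\to Z$ are the standard corrections to the naive split inclusion and projection, designed to account for the interaction between the cancelled piece $Y_1\oplus Y_2$ and the surviving summand $Z$.

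The equality $\pi\iota=\id_Z$ is a direct computation, leaving the three substantive verifications $D(\iota)=0$, $D(\pi)=0$, and $\iota\pi+\id_X=D(H)$, with $D(h)=\delta_{\mathrm{tgt}}\,h+h\,\delta_{\mathrm{src}}+\partial(h)$ in the relevant morphism complex. Each reduces to a $3\times 3$ block computation whose essential input is the curvature equation $\delta^2+\partial(\delta)=\ast\cdot\id_X$: its off-diagonal block entries give identities such as
$$f\varepsilon_1+\varepsilon_2 f+cb+\partial(f)=0,\qquad fa+\varepsilon_2 c+c\zeta+\partial(c)=0,$$
and the analogous identities from the $(1,3)$, $(3,1)$, and $(3,2)$ positions. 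Since $f,g$ represent an isomorphism in the underlying ordinary category, I also have $\partial(f)=0$, whence $\partial(g)=0$ by differentiating $fg=\id$. Substituting these relations, using $fg=gf=\id$, and exploiting $\mathbb{F}_2$-cancellation collapses every block entry of the three verification equations to zero.

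The main obstacle is purely bookkeeping rather than conceptual: one has to track carefully which Leibniz $\partial$-contribution arising inside each composition is cancelled against which off-diagonal entry of $\delta^2+\partial(\delta)$, and in what order the components $\varepsilon_i,a,b,c,d,e,f,g,\zeta$ appear. Once all of this matching is in place, the triple $(\iota,\pi,H)$ realises the desired chain homotopy equivalence in $\Cx^\ast(\Mat(\mathcal{C}))$, and the lemma follows.
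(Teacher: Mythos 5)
Your maps are exactly the right ones --- your $\iota$, $\pi$ and $H$ coincide with the paper's $F$, $G$ and $H$ --- but the verification as you describe it has a genuine gap: the claim that $\partial(f)=0$ and hence $\partial(g)=0$. The morphism $f$ is a component of the degree $-1$ differential $\delta$, so it is \emph{not} a morphism of the underlying ordinary category (those are the degree-$0$ cycles); being invertible in $\Mat(\mathcal{C})$ does not make it $\partial$-closed. In fact the very curvature identity you quote, $f\varepsilon_1+\varepsilon_2 f+cb+\partial(f)=0$, i.e.\ $D(f)=cb$, shows that in general $\partial(f)=cb+\varepsilon_2f+f\varepsilon_1\neq 0$. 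Without $\partial(f)=0$ your block computations do not simply ``collapse'': each of $D(\iota)=0$, $D(\pi)=0$ and $\iota\pi+\id_X=D(H)$ reduces to a residual term of the form $(D(g)+gcbg)(\cdots)$ or $(\cdots)(D(g)+gcbg)$, where $D(g)=\varepsilon_1g+g\varepsilon_2+\partial(g)$. The missing ingredient is the identity
$$gcbg=gD(f)g=D(gfg)+D(g)fg+gfD(g)=D(g),$$
which follows from $D(f)=cb$, the Leibniz rule and $fg=gf=\id$ over $\mathbb{F}_2$, and requires no hypothesis on $\partial(f)$. Once you have this, all residual terms vanish and your argument closes; as written, it is only valid in the special case (e.g.\ $Y_1=Y_2$, $f=\id$) where $\partial(f)=0$ actually holds.

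A second, smaller omission: you never check that $(Z,\zeta+bgc)$ is an object of $\Cx^{\ast}(\Mat(\mathcal{C}))$, i.e.\ that $(\zeta+bgc)^2+\partial(\zeta+bgc)$ still equals $\ast$. This is needed for the statement to make sense, and its proof uses the other off-diagonal curvature identities ($D(b)=df$, $D(c)=fa$ up to signs irrelevant over $\mathbb{F}_2$) together with the same identity $gcbg=D(g)$ to kill the term $bgcbgc+b\partial(g)c+\cdots$.
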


\begin{figure}[b]
	\centering
	\begin{subfigure}{0.48\textwidth}
		\centering
		$\begin{tikzcd}[row sep=1cm, column sep=0cm]
		(Z,\zeta+bgc)\arrow{rrr}{1}\arrow[bend right=12]{rrdd}{gc} &~~~~~~~ && (Z,\zeta) \arrow[bend left=12,pos=0.3]{ldd}{a} \arrow[bend left=12]{rrd}{c}\\
		& &&&~& (Y_2,\varepsilon_2) \arrow[bend left=12]{llld}{e}\arrow[bend left=12]{llu}{d}\\
		& &(Y_1,\varepsilon_1) \arrow[bend left=10,pos=0.7]{rrru}{f} \arrow[bend left=12]{ruu}{b}
		\end{tikzcd}$
		\caption{The chain map $F$.}\label{fig:ProofAbstractCancellationF}
	\end{subfigure}
	\begin{subfigure}{0.48\textwidth}
		\centering
		$\begin{tikzcd}[row sep=1cm, column sep=0cm]
		& (Z,\zeta) \arrow[bend left=12,pos=0.3]{ldd}{a} \arrow[bend left=12]{rrd}{c}\arrow{rrrr}{1}&&&~~~~~ &(Z,\zeta+bgc)\\
		&&~& (Y_2,\varepsilon_2) \arrow[bend left=12]{llld}{e}\arrow[bend left=12]{llu}{d}\arrow[bend right=12]{rru}{bg}\\
		(Y_1,\varepsilon_1) \arrow[bend left=10,pos=0.7]{rrru}{f} \arrow[bend left=12]{ruu}{b}
		\end{tikzcd}$
		\caption{The chain map $G$.}\label{fig:ProofAbstractCancellationG}
	\end{subfigure}
	\begin{subfigure}{0.95\textwidth}
		\centering
		$\begin{tikzcd}[row sep=1cm, column sep=0cm]
		& (Z,\zeta) \arrow[bend left=12,pos=0.3]{ldd}{a} \arrow[bend left=12]{rrd}{c}\arrow{rrrr}{1}&&&~~~~~ &(Z,\zeta+bgc)
		\arrow{rrr}{1}\arrow[bend right=12]{rrdd}{gc} &~~~~~~~ && (Z,\zeta) \arrow[bend left=12,pos=0.3]{ldd}{a} \arrow[bend left=12]{rrd}{c}
		\\
		&&~& (Y_2,\varepsilon_2) \arrow[bend left=12]{llld}{e}\arrow[bend left=12]{llu}{d}\arrow[bend right=12]{rru}{bg}\arrow[dashed,swap, bend right=12]{rrrrd}{g}&&
		& &&&~& (Y_2,\varepsilon_2) \arrow[bend left=12]{llld}{e}\arrow[bend left=12]{llu}{d}
		\\
		(Y_1,\varepsilon_1) \arrow[bend left=12,pos=0.7]{rrru}{f} \arrow[bend left=12]{ruu}{b}&&&&&
		& &(Y_1,\varepsilon_1) \arrow[bend left=12,pos=0.7]{rrru}{f} \arrow[bend left=12]{ruu}{b}
		\end{tikzcd}$
		\caption{The composition of $F$ and $G$ and the homotopy $H$ (dashed arrow).}\label{fig:ProofAbstractCancellationHomotopy}
	\end{subfigure}
	\caption{Maps for the proof of Lemma~\ref{lem:AbstractCancellation}.}\label{fig:ProofAbstractCancellation}
\end{figure}
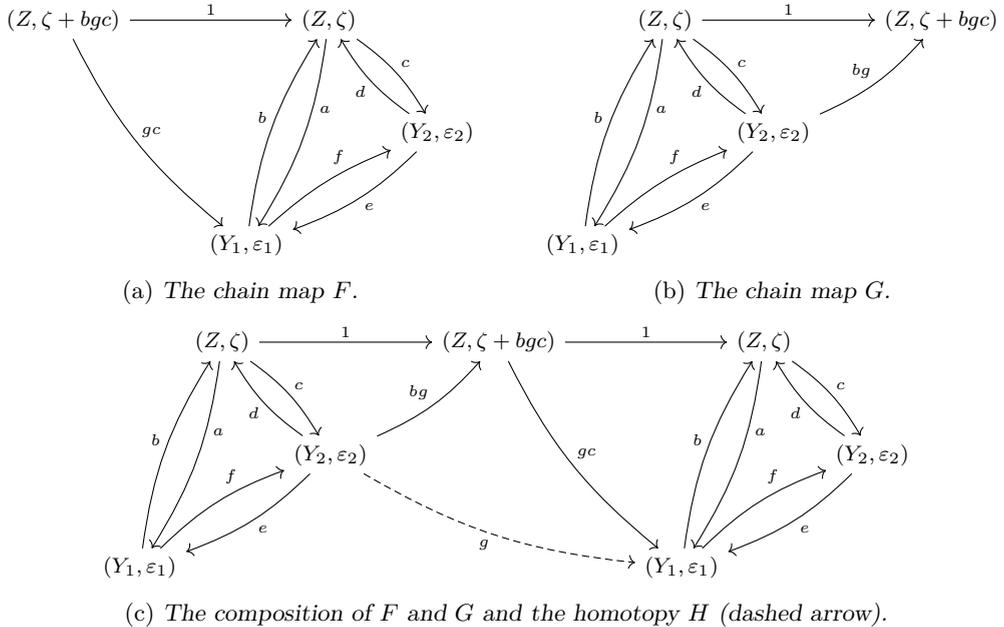

\begin{remark}
	We usually apply this lemma to the case where $(Y_1,\varepsilon_1)=(Y_2,\varepsilon_2)$ and $f$ is the identity map.
\end{remark}

\begin{proof}
First of all, let us check that $(Z,\zeta+bgc)$ is indeed an object of $\Cx^{\ast}(\Mat(\mathcal{C}))$:
\begin{align*}
(\zeta+bgc)^2+\partial(\zeta+bgc)=&~\zeta\zeta+\zeta bgc+bgc \zeta +bgcbgc+\partial(\zeta)+\partial(b)gc+b\partial(g)c+bg\partial(c)\\
=&~\zeta\zeta+(\zeta b +\partial(b))gc+bg(c \zeta +\partial(c)) +\partial(\zeta)+b\partial(g)c+bgcbgc\\
=&~\zeta\zeta+(b\varepsilon_1 +df)gc+bg(\varepsilon_2 c +fa) +\partial(\zeta)+b\partial(g)c+bgcbgc\\
=&~\zeta\zeta+dc+ba+\partial(\zeta)+\cancel{b\left(D(g)+gcbg\right)c}=\ast.
\end{align*}
For the last step, we observe that $$gcbg=gD(f)g=D(gfg)+D(g)fg+gfD(g)=D(g).$$
Next, we consider the two chain maps 
$$F\co(Z,\zeta+bgc)\rightarrow(X,\delta)\qquad\text{and}\qquad G\co(X,\delta)\rightarrow(Z,\zeta+bgc)$$
defined in Figure~\ref{fig:ProofAbstractCancellationF} and \ref{fig:ProofAbstractCancellationG}, respectively. One easily checks that $D(F)=0$ and $D(G)=0$. Indeed, the only non-trivial terms we need to compute are
\begin{align*}
gc(\zeta+bgc)+\varepsilon_1gc+\partial(gc)+a=& ~g(c\zeta+\partial(c))+(\varepsilon_1g+\partial(g))c+a+gcbgc\\
=&~g(fa+\varepsilon_2c)+(\varepsilon_1g+\partial(g))c+a+gcbgc\\
=&~\left(D(g)+gcbg\right)c=0
\end{align*}
for the first identity and similarly
\begin{align*}
(\zeta+bgc)bg+bg\varepsilon_2+\partial(bg)+d=& ~(\zeta b+\partial(b))g+b(g\varepsilon_2+\partial(g))+d+bgcbg\\
=&~(df+b\varepsilon_1)g+b(g\varepsilon_2+\partial(g))+d+bgcbg\\
=&~b\left(D(g)+gcbg\right)=0
\end{align*}
for the second identity. Now, $GF=\id_{Z}$ and conversely, it is not hard to check that 
$$FG=\id_{X}+D(H),$$
where $H$ is the homotopy given by the dashed line in Figure~\ref{fig:ProofAbstractCancellationHomotopy}.
\end{proof}

\begin{lemma}[(Clean-Up Lemma)]\label{lem:AbstractCleanUp}
Let \((O,d_O)\) be an object in \(\Cx^{\ast}(\mathcal{C})\) for some differential graded category \(\mathcal{C}\).
Then for any morphism \(h\in\Mor_0((O,d_O),(O,d_O))\) for which 
$$h^2, \quad hD(h) \quad\text{ and } \quad D(h)h$$
vanish, \((O,d_O)\) is chain isomorphic to \((O,d_O+D(h))\).
\end{lemma}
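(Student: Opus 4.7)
The natural candidate for the isomorphism is $F := \id_O + h \in \Mor_0((O,d_O),(O,d_O+D(h)))$. My plan has three parts: verify that $(O,d_O+D(h))$ genuinely lies in $\Cx^{\ast}(\mathcal{C})$; check that $F$ is a chain map (both in the forward and backward direction between the two objects); and check that $F$ is its own inverse using $h^2=0$ and characteristic $2$.

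First, I would verify the curvature condition for $(O,d_O+D(h))$. Expanding,
\[(d_O+D(h))^2+\partial(d_O+D(h)) = \left(d_O^2+\partial(d_O)\right) + \left(d_OD(h)+D(h)d_O+\partial(D(h))\right) + D(h)^2.\]
The first bracket equals $\ast$. The second bracket is exactly $D(D(h))$ viewed as a morphism from $(O,d_O)$ to itself, hence vanishes because $D$ is a differential on $\Mor((O,d_O),(O,d_O))$ (this is where the hypothesis $(O,d_O)\in\Cx^{\ast}$ enters). For the third summand, I would invoke the Leibniz rule for $D$ with respect to composition, which holds in any dg category (and in characteristic $2$ gives $D(fg)=D(f)g+fD(g)$), to compute
\[0 = D(D(h)\cdot h) = D^2(h)\cdot h + D(h)\cdot D(h) = D(h)^2,\]
using $D(h)h=0$. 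So $(O,d_O+D(h))$ satisfies the curvature condition, with the same value $\ast$.

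Next, for the chain map condition on $F=\id+h$ considered as a morphism $(O,d_O)\to(O,d_O+D(h))$, I would compute
\[D(F) = (d_O+D(h))(\id+h) + (\id+h)d_O + \partial(\id+h).\]
Since $\partial(\id)=0$, $d_O+d_O=0$ in characteristic $2$, and $D(h)=d_Oh+hd_O+\partial(h)$ by definition, this collapses to $D(h)+D(h)h = D(h)h$, which vanishes by hypothesis. The symmetric computation, with $F$ regarded as a morphism $(O,d_O+D(h))\to(O,d_O)$, gives $hD(h)$, which also vanishes by hypothesis.

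Finally, $F\circ F = (\id+h)^2 = \id + 2h + h^2 = \id$, using $h^2=0$ and $\mathrm{char}\,\mathbb{F}_2=2$. Hence $F$ is a chain isomorphism with inverse $F$ itself, completing the proof. No single step is hard; the only subtle point is that to conclude $(O,d_O+D(h))\in\Cx^{\ast}$ one must know $D(h)^2=0$, and this is not given directly but follows from the Leibniz rule together with the two one-sided vanishing hypotheses.
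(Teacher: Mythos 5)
Your proof is correct and follows essentially the same route as the paper's: the same isomorphism $\id+h$ in both directions, the same three-term expansion of the curvature check (your identification of the middle term as $D^2(h)$ and your use of the Leibniz rule on $D(h)h=0$ rather than $hD(h)=0$ are only cosmetic variants of the paper's argument), and the same verification that $D(\id+h)$ reduces to $D(h)h$, resp.\ $hD(h)$, with $(\id+h)^2=\id$.
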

\begin{proof}
We can easily check that $(O,d_O+D(h))$ is an object in $\Cx^{\ast}(\Mat(\mathcal{C}))$:
\begin{align*}
(d_O+D(h))^2+\partial(d_O+D(h)) = &\left(d_O^2+\partial(d_O)\right)+\left(d_OD(h)+D(h)d_O^{\phantom{2}\!\!}+\partial(D(h))\right)\\
&+D(h)D(h).
\end{align*}
The first term on the right gives ($\ast$) and the last term vanishes, which can be seen by applying the differential $D$ to $h D(h)=0$. The middle term also vanishes, which can be seen by expanding $D(h)=d_Oh+hd_O+\partial(h)$ and using the fact that a term ($\ast$) commutes with any morphism.
The chain isomorphisms between the two objects are given by 
\begin{center}
$\begin{tikzcd}[row sep=1.5cm, column sep=0.6cm]
(O,d_O) \arrow{rr}{1+h} && (O,d_O+D(h))
\end{tikzcd}$
\quad
and
\quad
$\begin{tikzcd}[row sep=1.5cm, column sep=0.6cm]
(O,d_O+D(h)) \arrow{rr}{1+h} && (O,d_O).
\end{tikzcd}$
\end{center}
Indeed, these two morphisms lie in the kernel of $D$, since $hD(h)$ and $D(h)h$ vanish. Their composition is equal to $1+h^2=1$.
\end{proof}

% 3_HFTd.tex

\section{\texorpdfstring{The invariant $\CFTd$}{The invariant CFTᵈ}}\label{sec:CFTd}

\subsection{Heegaard diagrams for tangles}

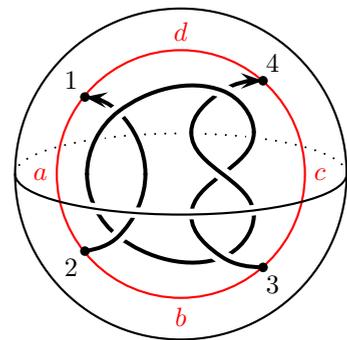
\begin{wrapfigure}{r}{0.3333\textwidth}
	\centering
	\psset{unit=0.55}
	\vspace*{-20pt}
	\begin{pspicture}[showgrid=false](-5.1,-4.1)(3.1,4.1)
	
	\psellipticarc[linestyle=dotted](-1,0)(4,1){0}{180}
	
	\psecurve[linecolor=white,linewidth=\stringwhite]{c-c}(-2.5,1.5)(0,2)(0.75,1)(-0.75,-1)(0,-2)(0.97,-2.24)(2,-2)
	\psecurve[linecolor=white,linewidth=\stringwhite]{c-c}(2,2)(0.97,2.24)(0,2)(-0.75,1)(0.75,-1)(0,-2)(-2.5,-1.5)(-3.25,0)(-2.5,1.5)(0,2)(0.75,1)
	% tangle
	{
		\psset{linewidth=\stringwidth}
		\psecurve{c-c}(-2.5,1.5)(0,2)(0.75,1)(-0.75,-1)(0,-2)(0.97,-2.24)(2,-2)
		\psecurve{<-c}(2,2)(0.97,2.24)(0,2)(-0.75,1)(0.75,-1)(0,-2)(-2.5,-1.5)(-3.25,0)(-2.5,1.5)(0,2)(0.75,1)
		\psecurve{<-c}(-6,1.5)(-3.3,1.85)(-2.5,1.5)(-1.85,0)(-2.5,-1.5)(-3.3,-1.85)(-6,-1.5)
		
		\psecurve[linecolor=white,linewidth=\stringwhite](0.75,-1)(0,-2)(-2.5,-1.5)(-3.25,0)(-2.5,1.5)
		\psecurve{c-c}(0.75,-1)(0,-2)(-2.5,-1.5)(-3.25,0)(-2.5,1.5)
		
		\psecurve[linecolor=white,linewidth=\stringwhite](0.75,1)(-0.75,-1)(0,-2)(0.97,-2.24)(2,-2)
		\psecurve[linecolor=white,linewidth=\stringwhite](0,2)(-0.75,1)(0.75,-1)(0,-2)
		\psecurve[linecolor=white,linewidth=\stringwhite](-2.5,-1.5)(-3.25,0)(-2.5,1.5)(0,2)(0.75,1)(-0.75,-1)
		
		\psecurve{c-c}(0.75,1)(-0.75,-1)(0,-2)(0.97,-2.24)(2,-2)
		\psecurve{c-c}(0,2)(-0.75,1)(0.75,-1)(0,-2)
		\psecurve{c-c}(-2.5,-1.5)(-3.25,0)(-2.5,1.5)(0,2)(0.75,1)(-0.75,-1)
		
		\psecurve[linecolor=white,linewidth=\stringwhite](-2.5,1.5)(-1.85,0)(-2.5,-1.5)(-3.3,-1.85)(-6,-1.5)
		\psecurve{c-c}(-2.5,1.5)(-1.85,0)(-2.5,-1.5)(-3.3,-1.85)(-6,-1.5)
	}
	
	\psellipticarc[linewidth=4pt,linecolor=white](-1,-0.09)(4,1){180}{0}
	
	\pscircle[linecolor=red](-1,0){3}
	
	\psdots(-3.3,1.85)(-3.3,-1.85)(0.97,-2.24)(0.97,2.24)
	
	\psellipticarc(-1,0)(4,1){180}{0}
	\pscircle(-1,0){4}
	
	\uput{3.2}[140](-1,0){1}
	\uput{3.2}[-140](-1,0){2}
	\uput{3.2}[-50](-1,0){3}
	\uput{3.2}[50](-1,0){4}
	
	\uput{3.2}[180](-1,0){\red$a$}
	\uput{3.2}[-90](-1,0){\red$b$}
	\uput{3.2}[0](-1,0){\red$c$}
	\uput{3.2}[90](-1,0){\red$d$}
	\end{pspicture}
	\caption{The $(2,-3)$-pretzel tangle in $B^3$.}\label{fig:2m3pt}
	\vspace*{-10pt}
\end{wrapfigure}

Let us start by recalling the basic definitions from~\cite[sections~1 and~4]{HDsForTangles}, adapted to 4-ended tangles. 

\begin{definition}
	A \textbf{4-ended tangle} $T$ in a homology 3-ball~$M$ with spherical boundary is an embedding
	\[T\co\left(I \amalg I\amalg \coprod S^1,\partial\right)\hookrightarrow \left(M,{\red S^1}\subset S^2=\partial M\right),\]
	such that the endpoints of the two intervals lie on a fixed oriented circle ${\red S^1}$ on the boundary of~$M$, together with a choice of distinguished tangle end. Starting at this distinguished ($=$first) tangle end and following the orientation of the fixed circle ${\red S^1}$, we number the tangle ends and label the arcs ${\red S^1}\smallsetminus \im(T)$ by $a$, $b$, $c$ and $d$, in that order. Sometimes, we will find it more convenient to use the labels $s_1$ for $a$, $s_2$ for $b$, $s_3$ for $c$ and $s_4$ for $d$ instead. We call a choice of a single arc a \textbf{site} of the tangle~$T$.
	\\\indent
	We consider tangles up to ambient isotopy which fixes the distinguished tangle end and the orientation of ${\red S^1}$ (and thus preserves the labelling of the tangle ends and arcs). The images of the two intervals are called the \textbf{open components}, the images of any circles are called the \textbf{closed components} of the tangle. We label these tangle components by variables $t_1$ and $t_2$ for the open components and $t'_1,t'_2,\dots$ for the closed components. We call those variables the \textbf{colours} of~$T$.
	An orientation of a tangle is a choice of orientation of the two intervals and the circles. 
	\\\indent
	Note that the orientation of ${\red S^1}$ enables us to distinguish between the two components of $\partial M\smallsetminus{\red S^1}$. The \textbf{back} component of $\partial M\smallsetminus{\red S^1}$ is the one whose boundary orientation agrees with the orientation of ${\red S^1}$, using the right-hand rule and a normal vector field pointing into $M$. We call the other one the \textbf{front} component. 
\end{definition}

\begin{definition}\label{def:HDsfortangles}
	A \textbf{Heegaard diagram $\mathcal{H}_T$ for a 4-ended tangle} $T$ with $n$ closed components in a $\mathbb{Z}$-homology 3-ball with spherical boundary $M$ is a tuple $(\Sigma_g,\A=\Ac\cup\Aa,\B)$, where
	\begin{itemize}
		\item $\Sigma_g$ is an oriented surface of genus $g$ with $2n+4$ boundary components, denoted by~$\Gamma$, which are partitioned into $(n+2)$ pairs,
		\item $\Ac$ is a set of $(g+n)$ pairwise disjoint circles $\alpha_1,\dots, \alpha_{g+n}$ on $\Sigma_g$,
		\item $\Aa$ is a set of $4$ pairwise disjoint arcs on $\Sigma_g$, labelled $a$, $b$, $c$, $d$, which are disjoint from~$\Ac$ and whose endpoints lie on~$\Gamma$, and
		\item $\B$ is a set of $(g+n+1)$ pairwise disjoint circles $\beta_1,\dots, \beta_{g+n+1}$ on~$\Sigma_g$.
	\end{itemize}
	We impose the following condition on the data above: the 3-manifold obtained by attaching 2-handles to $\Sigma_g\times [0,1]$ along $\Ac\times\{0\}$ and $\B\times\{1\}$ is equal to the tangle complement~$M\smallsetminus \nu(T)$ such that under this identification, 
	\begin{itemize}
		\item each pair of circles in $\Gamma$ is a pair of meridional circles for the same tangle component, and each tangle component belongs to exactly one such pair, and 
		\item $\Aa\times\{0\}$ is equal to ${\red S^1}\smallsetminus \nu(\partial T)\subset \partial M$.
	\end{itemize}
	If the tangle $T$ is oriented, we also orient the boundary components of $\Sigma_g$ as oriented meridians of the tangle components, using the right-hand rule.
\end{definition}

\begin{remark}\label{rem:conventions1}
	As in~\cite{HDsForTangles}, our \textbf{convention on the orientation of the Heegaard surface} is that its normal vector (determined using the right-hand rule) points in the direction of positive gradient of the defining Morse function, ie in the direction of the $\beta$-handlebody. However, we usually draw the Heegaard surfaces such that the normal vector points into the projection plane.
\end{remark}

\begin{definition}
Let $T$ be a 4-ended tangle. A \textbf{peculiar Heegaard diagram} for $T$ is obtained from a tangle Heegaard diagram for $T$ by a local modification around the punctures, as illustrated in Figure~\ref{fig:HD4ended}: we collapse the four boundary components of $\Sigma$ which meet the $\alpha$-arcs, thereby joining the four $\alpha$-arcs to a single $\alpha$-circle $\red S^1$. 
Then, for each tangle end, we add marked points on either side of $\red S^1$ and connect them by an arc which intersects $\red S^1$ exactly once and no other curve. As illustrated in Figure~\ref{fig:HD4ended}, % we label those marked points by $p_i$ and $q_i$, where $i$ is the index of the corresponding tangle end. T
we label the marked points on the front component of $\partial M\smallsetminus{\red S^1}$ by $p_i$ and those on the back by $q_i$. %Here, the index $i$ corresponds to the index of the tangle end. 
Furthermore, for each closed component, we contract the corresponding boundary components to points $z_j$ and $w_j$. Thus, we obtain a multi-pointed Heegaard diagram, whose underlying Heegaard surface is now closed and carries basepoints $p_i$, $q_i$, $z_j$ and $w_j$. If the tangle $T$ is oriented, we choose corresponding orientations for the basepoints. By convention, we choose the label $z_j$ for a basepoint corresponding to a closed component iff its orientation agrees with the orientation of the normal vector field of the Heegaard surface. This agrees with Ozsváth and Szabó's conventions in~\cite[Definitions~2.1]{OSHFLThurston} and in~\cite[section~3.5]{OSHFL} noting that their gradient flowlines flow upwards~\cite[section~3.1]{OSHFL}; for an illustration, see Figure~\ref{fig:HDBasepointszw}.  
As in~\cite{HDsForTangles}, we need to restrict ourselves to \textbf{admissible} diagrams, ie diagrams whose non-zero periodic domains avoiding all basepoints have both negative and positive multiplicities, see~\cite[Definition~4.16]{HDsForTangles}. 
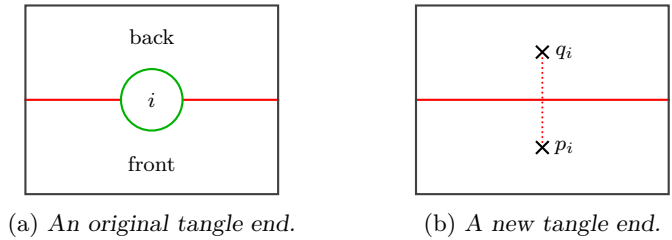
\begin{figure}[t]
\psset{unit=0.07}
\centering
{\psset{unit=0.6}
\begin{subfigure}{0.33\textwidth}\centering
\begin{pspicture}(-40,-30)(40,30)
\psline[linecolor=red](40,0)(-40,0)
\pscircle[fillstyle=solid,fillcolor=white,linecolor=darkgreen](0,0){10}
\rput(0,0){$i$}
\rput(0,20){back}
\rput(0,-20){front}
\psframe[linecolor=darkgray](-40,-30)(40,30)
\end{pspicture}
\caption{An original tangle end.}\label{fig:HD4endedoriginal}
\end{subfigure}
\quad
\begin{subfigure}{0.33\textwidth}\centering
\begin{pspicture}(-40,-30)(40,30)

\psline[linestyle=dotted, linecolor=red,dotsep=1pt](0,15)(0,-15)
\psline[linecolor=red](-40,0)(40,0)
\psline(-2,17)(2,13)
\psline(-2,13)(2,17)
\psline(-2,-17)(2,-13)
\psline(-2,-13)(2,-17)
\rput(7,15){$q_i$}
\rput(7,-15){$p_i$}
\psframe[linecolor=darkgray](-40,-30)(40,30)
\end{pspicture}
\caption{A new tangle end.}\label{fig:HD4endednew}
\end{subfigure}
}
\caption{The difference between peculiar and ordinary Heegaard diagrams for tangles. The former are obtained from the latter by local changes near the boundary of the Heegaard surface.}\label{fig:HD4ended}
\end{figure}
\end{definition}
\begin{remark}\label{rem:PeculiarHDmoves}
It is obvious that we can go from a peculiar Heegaard diagram back to an ordinary tangle Heegaard diagram. The only reason for introducing peculiar Heegaard diagrams is to avoid any bordered Heegaard Floer theory, so the proof of invariance of the algebraic structures we are about to define is a minor adaptation of the one for link Floer homology. In particular, note that the number of $\alpha$-circles and $\beta$-circles in a peculiar Heegaard diagram is the same.

Obviously, the Heegaard moves from \cite[Lemma~4.13]{HDsForTangles} are equivalent to the following moves for peculiar Heegaard diagrams:
\begin{itemize}
\item isotopies of the $\alpha$- and $\beta$-curves away from the marked points and the connecting arcs,
\item handleslides of $\beta$-curves over $\beta$-curves and handleslides of $\alpha$-curves over $\alpha$-curves other than $\red S^1$, and
\item stabilisation.
\end{itemize}
\end{remark}

\begin{remark}
The attribute ``peculiar'' should be considered as a homophone of ``$p$-$q$-lier'', a reference to the labels $p_i$ and $q_i$ we have chosen for the marked points. This choice of variables is loosely related to the notation used in \cite{Abouzaid} for describing the wrapped Fukaya category of the $n$-punctured sphere in terms of twisted complexes. From a Heegaard Floer perspective, we will find it more natural to use curved complexes instead, which are in some sense dual to twisted complexes; see also Remark~\ref{rem:comparisonToKontsevich} as well as~\cite[section~III.5]{MyThesis}. 
\end{remark}
Let us briefly recall the definition of the Heegaard Floer homology $\HFT(T)$ from~\cite[section~5]{HDsForTangles}, adapted to 4-ended tangles.
\begin{definition}\label{def:RecallCFT}
Given a peculiar Heegaard diagram $\mathcal{H}_T$ for a 4-ended tangle $T$, let $\mathbb{T}:=\mathbb{T}(\mathcal{H}_T)$ be the set of tuples of intersection points, also called generators, such that each $\alpha$- and each $\beta$-curve is occupied by exactly one point. In particular, each generator contains an intersection point that occupies the special $\alpha$-circle ${\red S^1}$, which replaced the four $\alpha$-arcs in the original Heegaard diagram from Definition~\ref{def:HDsfortangles}. So each generator $\x\in\mathbb{T}$ occupies exactly one of these four arcs. We denote the corresponding site by $s(\x)$ and write $\mathbb{T}_s$ for the set of all generators $\x$ with $s=s(\x)$. We obtain a partition
$$\mathbb{T}=\coprod_{i=1,2,3,4}\mathbb{T}_{s_i}.$$ 
Let $\Id$ be the ring of idempotents defined as the direct product of four copies of $\mathbb{F}_2$ and write $\iota_i$ for the $i^\text{th}$ element of the standard basis of $\Id$.
Then, we define a right $\Id$-module $\CFT(T)$ by letting $\CFT(T).\iota_i$ be the vector space over $\mathbb{F}_2$ freely generated by elements in $\mathbb{T}_{s_i}$. 

Furthermore, for any two generators $\x$ and $\y$ of $\mathbb{T}$, we can consider the space of domains from $\x$ to $\y$, denoted by $\pi_2(\x,\y)$. For each element $\phi\in\pi_2(\x,\y)$, we can define a moduli space $\mathcal{M}(\phi)$ of holomorphic curves in $\Sigma\times [0,1]\times\mathbb{R}$ representing $\phi$. (Alternatively, one may count holomorphic discs in $\Sym^{g+n+1}(\Sigma_g)$; however, by the main result of \cite{CylindricalReformulation}, this gives the same theory, see also \cite[section~5.2]{OSHFL}.) The dimension of this moduli space is given by the Maslov index $\mu(\phi)$. The differential in $\CFT(T)$ is defined as
\[\partial \x=\sum_{\y}\sum_{\substack{\phi\in\pi_2^0(\x,\y)\\ \mu(\phi)=1}}\#\left(\frac{\mathcal{M}(\phi)}{\mathbb{R}}\right)\y,\]
where $\pi_2^0(\x,\y)$ denotes the domains avoiding all basepoints and 
$\#\left(\tfrac{\mathcal{M}(\phi)}{\mathbb{R}}\right)$ denotes the number of points in the quotient of the 1-dimensional moduli spaces $\mathcal{M}(\phi)$ by a natural $\mathbb{R}$-action. The \textbf{non-glueable Heegaard Floer homology $\HFT(T)$ of the tangle $T$} is defined to be the homology of $\CFT(T)$.
\end{definition}

\begin{definition}\label{def:matching}
	A \textbf{matching} $P$ is a partition $\{\{i_1,o_1\},\{i_2,o_2\}\}$ of $\{1,2,3,4\}$ into pairs. An \textbf{ordered matching} is a matching in which the pairs are ordered. A 4-ended tangle $T$ gives rise to a matching $P_T$ as follows: the first pair consists of the two endpoints of the open component with colour $t_1$, the second consists of the two endpoints of the second open component of $T$, the one labelled $t_2$. Given an orientation of the two open components of $T$, we order each pair of points such that the inward pointing end comes first, the outward pointing end second. For an illustration, see Figure~\ref{fig:HDBasepointspq}.
\end{definition}
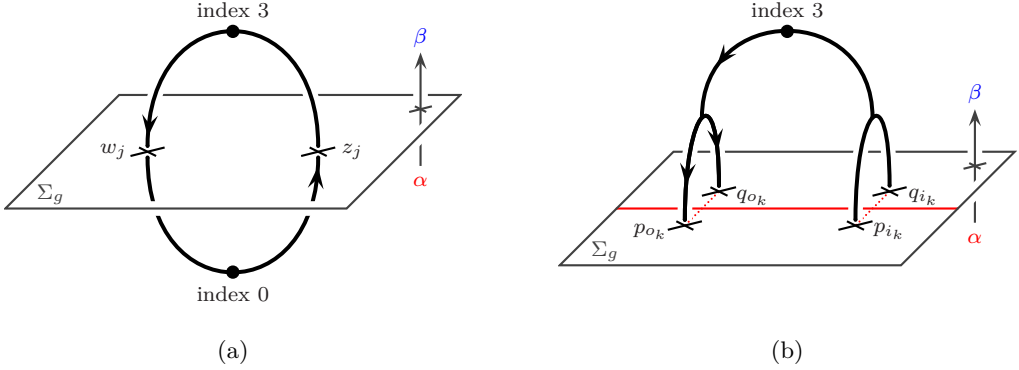
\begin{figure}[t]
	\centering
	{\psset{unit=0.075}
		\begin{subfigure}{0.48\textwidth}\centering
			\begin{pspicture}(-40,-30)(40,30)
			
			% orientation of surface
			\psline[linecolor=darkgray](33,-2.5)(33,7.5)
			
			%bottom strand
			\rput(0,-1){
				\psbezier[linewidth=\stringwidth,ArrowInside=->,ArrowInsidePos=0.96]{c-c}(-15,0)(-15,-27)(15,-27)(15,0)
				\psdot[linewidth=\stringwidth](0,-20.2)
			}
			
			%PLANE
			\psline[linecolor=white,linewidth=\stringwhite](0,-10)(20,-10)(40,10)(-20,10)(-40,-10)(0,-10)
			\psline[linecolor=darkgray](0,-10)(20,-10)(40,10)(-20,10)(-40,-10)(0,-10)
			
			%top strand
			\rput(0,1){
				\psbezier[linecolor=white,linewidth=\stringwhite](-15,0)(-15,27)(15,27)(15,0)
				\psbezier[linewidth=\stringwidth,ArrowInside=->,ArrowInsidePos=0.96]{c-c}(15,0)(15,27)(-15,27)(-15,0)
				\psdot[linewidth=\stringwidth](0,20.2)
			}
			
			% basepoints w and z
			\rput(-15,0){\psset{unit=0.75}
				\psline(2,-1)(-2,1)\psline(4,1)(-4,-1)}
			\rput(15,0){\psset{unit=0.75}
				\psline(2,-1)(-2,1)\psline(4,1)(-4,-1)}
			
			%labels
			\rput(-32,-7.5){$\textcolor{darkgray}{\Sigma_g}$}
			\rput(-21,0){$w_j$}
			\rput(21,0){$z_j$}
			\rput(0,25){index $3$}
			\rput(0,-25){index $0$}
			
			% orientation of surface
			\rput[t](33,-4.5){$\textcolor{red}{\alpha}$}
			\rput[b](33,18.5){$\textcolor{blue}{\beta}$}
			\psline[linecolor=white,linewidth=\stringwhite](33,7.5)(33,17.5)
			\psline[linecolor=darkgray]{->}(33,7.5)(33,17.5)
			\rput(33,7.5){\psset{unit=0.5}
				\psline[linecolor=darkgray](2,-1)(-2,1)\psline[linecolor=darkgray](4,1)(-4,-1)}
			\end{pspicture}
			\caption{}\label{fig:HDBasepointszw}
		\end{subfigure}
		\quad
		\begin{subfigure}{0.48\textwidth}\centering
			\begin{pspicture}(-40,-20)(40,40)
			
			% arcs connecting p/q
			\psline[linestyle=dotted, linecolor=red,dotsep=1pt](12,-3)(18,3)
			\psline[linestyle=dotted, linecolor=red,dotsep=1pt](-12,3)(-18,-3)
			% orientation of surface
			\psline[linecolor=darkgray](33,-2.5)(33,7.5)
			
			%PLANE with alpha-curve
			\psline[linecolor=white,linewidth=\stringwhite](0,-10)(20,-10)(40,10)(-20,10)(-40,-10)(0,-10)
			\psline[linecolor=red]{c-c}(-30,0)(30,0)
			\psline[linecolor=darkgray](0,-10)(20,-10)(40,10)(-20,10)(-40,-10)(0,-10)
			
			% basepoints w and z
			\rput(-18,-3){\psset{unit=0.75}
				\psline(2,-1)(-2,1)\psline(4,1)(-4,-1)}
			\rput(-12,3){\psset{unit=0.75}
				\psline(2,-1)(-2,1)\psline(4,1)(-4,-1)}
			
			\rput(18,3){\psset{unit=0.75}
				\psline(2,-1)(-2,1)\psline(4,1)(-4,-1)}
			\rput(12,-3){\psset{unit=0.75}
				\psline(2,-1)(-2,1)\psline(4,1)(-4,-1)}
			
			%top strand			
			\rput(0,1){
				\psbezier[linecolor=white,linewidth=\stringwhite](12,-3)(12,17)(18,23)(18,3)
				\psbezier[linewidth=\stringwidth]{c-c}(12,-3)(12,17)(18,23)(18,3)
				\psbezier[linecolor=white,linewidth=\stringwhite](-12,3)(-12,23)(-18,17)(-18,-3)
				\psbezier[linewidth=\stringwidth,ArrowInside=->,ArrowInsidePos=0.8]{c-c}(-12,3)(-12,23)(-18,17)(-18,-3)
				\psbezier[linewidth=\stringwidth,ArrowInside=->,ArrowInsidePos=0.8]{c-c}(-18,-3)(-18,17)(-12,23)(-12,3)
			}
			\rput(0,16){
				\psbezier[linewidth=\stringwidth,ArrowInside=->,ArrowInsidePos=0.8]{c-c}(15,0)(15,20.4)(-15,20.4)(-15,0)
				\psdot[linewidth=\stringwidth](0,15.2)
			}
			
			%labels
			\rput(-32,-7.5){$\textcolor{darkgray}{\Sigma_g}$}
			\rput(-6,2){$q_{o_k}$}
			\rput(-24,-4){$p_{o_k}$}
			\rput(24,2){$q_{i_k}$}
			\rput(18,-4){$p_{i_k}$}
			\rput(0,35){index $3$}
			
			% orientation of surface
			\rput[t](33,-4.5){$\textcolor{red}{\alpha}$}
			\rput[b](33,18.5){$\textcolor{blue}{\beta}$}
			\psline[linecolor=white,linewidth=\stringwhite](33,7.5)(33,17.5)
			\psline[linecolor=darkgray]{->}(33,7.5)(33,17.5)
			\rput(33,7.5){\psset{unit=0.5}
				\psline[linecolor=darkgray](2,-1)(-2,1)\psline[linecolor=darkgray](4,1)(-4,-1)}
			\end{pspicture}
			\caption{}\label{fig:HDBasepointspq}
		\end{subfigure}
	}
	\caption{A summary of our orientation conventions. The two schematic pictures show Heegaard diagrams near the basepoints corresponding to a closed (a) and an open (b) tangle component. If we think of the Heegaard diagram as a combinatorial description of a Morse function on the $\mathbb{Z}$-homology sphere $M$, the tangle components are the gradient flowlines connecting the basepoints $w_j$, $z_j$, $p_{o_k}/q_{o_k}$ and $p_{i_k}/q_{i_k}$ to the index~3 and index~0 critical points. The arrow in the top right corner of each picture shows a normal vector determining the orientation of $\Sigma_g$ via the right-hand rule; see Remark~\ref{rem:conventions1}.}\label{fig:HDBasepoints}
\end{figure}
\begin{definition}\label{def:RecallGradingsFromHDsForTangles}
	Given a domain $\phi\in\pi_2(\x,\y)$ between two generators $\x$ and $\y$ in $\mathbb{T}$ and a basepoint $x=p_i,q_i,z_j,w_j$, let $x(\phi)$ denote the multiplicity of $\phi$ at $x$. Then, we define three gradings on the generators of $\CFT(T)$: the \textbf{$\delta$-grading} $\delta$ is a relative $\frac{1}{2}\mathbb{Z}$-grading and defined by 
	\[\delta(\y)-\delta(\x)=\mu(\phi)-\sum_{i=1,2,3,4}\tfrac{1}{2}(p_i(\phi)+q_i(\phi))-\sum_{i=1}^n(z_i(\phi)+w_i(\phi)).\]
	When comparing this to~\cite[Definition~5.13]{HDsForTangles}, note that we now use peculiar Heegaard diagrams to compute the Maslov index $\mu$. 
	Furthermore, for every component $t$ of the tangle, there is a relative $\mathbb{Z}$-grading $A_t$, which is called the \textbf{Alexander grading}, see \cite[Definition~5.6]{HDsForTangles}. Given an ordered matching $P=\{\{i_1,o_1\},\{i_2,o_2\}\}$, we define $A_{t_k}$ for $k=1,2$ by 
	\[A_{t_k}(\y)-A_{t_k}(\x):=A_{t_k}(\phi):=p_{o_k}(\phi)+q_{o_k}(\phi)-p_{i_k}(\phi)-q_{i_k}(\phi).\]
	For $j=1,\dots,n$, we define $A_{t'_j}$ by
	\[A_{t'_j}(\y)-A_{t'_j}(\x):=A_{t'_j}(\phi):=2w_j(\phi)-2z_j(\phi).\]
	By taking the sum of all Alexander gradings, we obtain a relative $\mathbb{Z}$-grading, the \textbf{reduced Alexander grading} $\overline{A}$. 
	Finally, the \textbf{homological grading} $h$, a relative $\mathbb{Z}$-grading, is defined as
	\[h=\tfrac{1}{2}\overline{A}-\delta.\]
	We sometimes denote the Alexander grading on generators by a superscript list of integers (or half-integers, if eg, we want to achieve the same symmetry present in the decategorified invariants from~\cite{HDsForTangles}), like $a^\Red{+1}$ for the univariate or $a^{(\frac{3}{2},-\frac{1}{2})}$ for the multivariate grading. 
\end{definition}

\subsection{Peculiar algebras} 

\begin{definition}
	For $n\geq0$, let $\Rpren$ be the free polynomial ring generated by the variables $p_i$, $q_i$ and $U'_j$ and $V'_j$, where $i=1,2,3,4$ and $j=1,\dots,n$. Let $\Apre_n$ be the $\Id$-$\Id$-algebra whose underlying $\Id$-$\Id$-bimodule structure is given by  $\iota_{s'}\Apre_n.\iota_{s}:=\Rpren$ for pairs $(s,s')$ of sites
	and whose algebra multiplication is defined by the unique $\Id$-$\Id$-bimodule homomorphism $\Apre_n\otimes_{\Id}\Apre_n\rightarrow \Apre_n$ which, for all triples $(s,s',s'')$ of sites, restricts to the multiplication map in $\Rpren$:
	\[\underbrace{\iota_{s''}.\Apre_n.\iota_{s'}}_{\Rpren}\otimes_{\Id}\underbrace{\iota_{s'}.\Apre_n.\iota_{s}}_{\Rpren}\rightarrow \underbrace{\iota_{s''}.\Apre_n.\iota_{s}}_{\Rpren}.\]
	We define a $\tfrac{1}{2}\mathbb{Z}$-grading on $\Apre_n$, called the $\delta$-grading, by setting 
	\[\delta(\iota_i):=0,\quad \delta(p_i)=\delta(q_i):=\tfrac{1}{2}\quad\text{ and }\quad\delta(U'_j)=\delta(V'_j):=1,\]
	where $i=1,2,3,4$ and $j=1,\dots,n$, and then extending linearly to all of $\Apre_n$. Similarly, given an ordered matching $P=\{\{i_1,o_1\},\{i_2,o_2\}\}$, we define relative $\mathbb{Z}$-gradings $A_{t_k}$ for $k=1,2$, called Alexander gradings, by
	\[A_{t_k}(\iota_s):=0,\quad  A_{t_k}(p_{o_k})=A_{t_k}(q_{o_k}):=-1\quad\text{ and }\quad A_{t_k}(p_{i_k})=A_{t_k}(q_{i_k}):=1,\]
	and similarly Alexander gradings $A_{t'_j}$ for $j=1,\dots,n$ by
	\[A_{t'_j}(\iota_s):=0,\quad A_{t'_j}(U'_{j}):=-2\quad\text{ and }\quad A_{t'_j}(V'_{j}):=2,\]
	and then extend linearly to $\Apre_n$. These gradings give rise to a reduced Alexander grading and a homological grading as in Definition~\ref{def:RecallGradingsFromHDsForTangles}.
\end{definition}
\begin{definition}
	Let $\Rn$ be the free polynomial ring in the variables $U_i$, $U'_j$ and $V'_j$ for $i=1,2,3,4$ and $j=1,\dots,n$. Via the inclusion
	$$\Rn\hookrightarrow \Rpren,\quad U_i\mapsto p_iq_i,\quad U'_j\mapsto U'_j,\quad V'_j\mapsto V'_j,$$
	we can regard $\Apre_n$ as an $\Rn$-algebra. Let $\Aminus$ be the subalgebra of $\Apre_n$ generated as an $\Rn$-algebra by the idempotents in $\Id$ and 
	\[p_i:=\iota_{i-1}.p_i.\iota_i,\quad\text{ and }\quad q_i:=\iota_{i}.q_i.\iota_{i-1},\]
	where we take the indices $i=1,2,3,4$ modulo 4 with an offset of 1. Note that $p_iq_i=\iota_{i-1}.U_i$ and $q_ip_i=\iota_i.U_i$ as elements in $\Aminus$. Thus, any element in $\Aminus$ can be written uniquely as a sum of elements of the form 
	\[\iota_i.r, \quad p_ip_{i+1}\dots p_{k-1}p_{k}.r\quad\text{ and }\quad q_iq_{i-1}\dots q_{k+1}q_{k}.r,\]
	where $r\in \Rn$ is a monomial. This is the standard basis on $\Aminus$ as a vector space over $\mathbb{F}_2$. For convenience, we sometimes write the elements $p_ip_{i+1}\dots p_{k-1}p_{k}$ as $p_{i(i+1)\cdots (k-1)k}$ and $q_iq_{i-1}\dots q_{k+1}q_{k}$ as $q_{i(i-1)\cdots (k+1)k}$, where again, we take the indices modulo 4 with an offset of 1. Furthermore, to simplify notation, we set 
	\[
	p=p_1+p_2+p_3+p_4\in\Aminus
	\quad\text{ and }\quad
	q=q_1+q_2+q_3+q_4\in\Aminus,
	\]
	so we can write for example $p^4=p_{1234}+p_{2341}+p_{3412}+p_{4123}$. We call $\Aminus$ the \textbf{generalised peculiar algebra}. 
\end{definition}

\begin{wrapfigure}{r}{0.3333\textwidth}
	\centering
	{\vspace*{-5pt}
	$
	\begin{tikzcd}[row sep=0.5cm, column sep=0.5cm]
	% 1st row
	&
	\overset{\iota_4}{\bullet}
	\arrow[bend left=10,leftarrow]{dr}[inner sep=1pt]{q_4}
	\arrow[bend right=10,swap]{dr}[inner sep=1pt]{p_4}
	&
	\\
	% 2nd row
	\!\!\!\raisebox{7pt}{$\underset{\iota_1}{~}$}\,\bullet
	\arrow[bend left=10,leftarrow]{ur}[inner sep=1pt]{q_1}
	\arrow[bend right=10,swap]{ur}[inner sep=1pt]{p_1}
	&
	&
	\bullet\,\raisebox{7pt}{$\underset{\iota_3}{~}$}\!\!\!
	\arrow[bend left=10,leftarrow]{dl}[inner sep=1pt]{q_3}
	\arrow[bend right=10,swap]{dl}[inner sep=1pt]{p_3}
	\\
	% 3rd row
	&
	\underset{\iota_2}{\bullet}
	\arrow[bend left=10,leftarrow]{ul}[inner sep=1pt]{q_2}
	\arrow[bend right=10,swap]{ul}[inner sep=1pt]{p_2}
	\end{tikzcd}
	\vspace{-5pt}
	$
	}
	\caption{The quiver for an alternative definition of~$\Ad$.}\label{fig:quiverForAd}\vspace*{-20pt}
\end{wrapfigure}
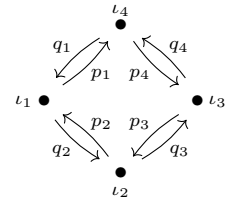
For most of this paper, we will only be concerned with a certain quotient of $\Aminus$, which was already defined in~\cite{MyThesis}.

\begin{definition}	
	Let $\Ad$ be the quotient of $\Aminus$ by the relations $U_i=0$, $U'_j=0$ and $V'_j=0$. This algebra can be interpreted as the path algebra of the quiver in Figure~\ref{fig:quiverForAd} 
	with relations $p_iq_i=0=q_ip_i$, see also Example~\ref{exa:pqModSpecialCaseofCC} and Figure~\ref{fig:nutshelll}. We call $\Ad$ the \textbf{peculiar algebra}. 
\end{definition}

\subsection{Peculiar modules}

For the differential in $\CFT$, we only consider holomorphic curves which stay away from the basepoints in our peculiar Heegaard diagrams. We claim that we also obtain a tangle invariant if we add those curves to our differential, recording their multiplicities at the basepoints by elements of the algebras $\Aminus$ or $\Ad$. However, the resulting complex does not satisfy the relation $\partial^2=0$. Instead, we obtain a slightly modified $\partial^2$-relation which enables us to promote $\CFT$ to more sophisticated homological invariants, namely certain \emph{curved} type D structures. As abstract algebraic structures, we defined curved type D structures in Example~\ref{exa:HighBrowDefTypeDoverI}. Let us recall this definition here in slightly more down-to-earth terms.

\begin{definition}\label{def:curvedTypeDStructure}
	Let $I$ be a ring of idempotents and $A$ a $\mathbb{Z}$-graded algebra over $I$. Also fix a central element $a_c\in Z(A)$ of degree $-2$. A \textbf{(right) curved type D structure} over $A$ is a $\mathbb{Z}$-graded $I$-module $M$ (with a preferred choice of basis) together with a (right) $I$-module homomorphism $\partial\co M\rightarrow M\otimes_I A$ of degree $-1$ satisfying 
	\[(1_M\otimes \mu)\circ(\partial\otimes 1_A)\circ\partial=1_M\otimes a_c,\]
	where $\mu$ denotes composition in $A$. We call $a_c$ the \textbf{curvature} of $M$. A morphism between two curved type D structures $(M,\partial_M)$ and $(N,\partial_N)$ is an $I$-module homomorphism \linebreak$M\rightarrow N\otimes_I A$. For two such morphisms $f$ and $g$, their composition is defined as 
	\[(g\circ f)=(1\otimes\mu)\circ(g\otimes 1_A)\circ f.\]
	We endow the space of morphisms $\Mor(M,N)$ with a differential~$D$ defined by
	\[D(f)= \partial_N\circ f+f\circ\partial_M.\]
	Then indeed $D^2=0$, since we have chosen $a_c$ to be central. This gives us an enriched category over $\Com$, the category of ordinary chain complexes over $\mathbb{F}_2$. The underlying ordinary category is obtained by restricting the morphism spaces to degree 0 elements in the kernel of $D$, giving us the usual notions of chain map and chain homotopy, see Definition~\ref{def:UnderlyingOrdinaryCat} and Example~\ref{exa:UnderlyingOrdCat}.
\end{definition}

\begin{remark}\label{rem:MatFakAsCurvedComplexes}
	It is interesting to compare curved type D structures to matrix factorisations as studied by Khovanov-Rozansky \cite{KhRoz}. Given an algebra~$A$ over some field $k$, a matrix factorisation of a potential $w\in A$ consists of two free $A$-modules $M_0$ and $M_1$ with two maps 
	\begin{equation}\label{eqn:matrixfac}
	\begin{tikzcd}[row sep=0.5cm, column sep=1cm]
	% 1st row
	M_0
	\arrow[bend left=10]{r}[inner sep=1pt]{d_0}
	&
	M_1
	\arrow[bend left=10]{l}[inner sep=1pt]{d_1}
	\end{tikzcd}
	\end{equation}
	such that $d_1d_0=w.\id_{M_0}$ and $d_0d_1=w.\id_{M_1}$. If $\overline{M_0}$ and $\overline{M_1}$ denote the $k$-vector spaces generated by an $A$-basis of $M_0$ and $M_1$, respectively, we can regard $d_0$ and $d_1$ as maps 
	\[\overline{d_0}\co\overline{M_0}\rightarrow \overline{M_1}\otimes_IA\quad\text{ and }\quad\overline{d_1}\co\overline{M_1}\rightarrow \overline{M_0}\otimes_IA.\] 
	Then $(\overline{M_0}\oplus \overline{M_1},\overline{d_0}+\overline{d_1})$ defines a curved type D structure over the $k$-algebra $A$. 
	
	In general, we cannot go in the other direction. For example, curved complexes associated with manifolds with torus boundary do not, in general, admit a splitting of the form~\eqref{eqn:matrixfac}. This is for the simple reason that the total number of generators can be odd, see for example~\cite[Figure~5]{HRW}. However, for the curved type D structure invariants of tangles, such splittings exist, which is an easy corollary of the classification in terms of immersed curves on the 4-punctured sphere, see sections~\ref{sec:classification} and~\ref{sec:glueingrevisited}. 
\end{remark}

\begin{definition}
	Given an (ordered) matching $P=\{\{i_1,o_1\},\{i_2,o_2\}\}$, let $\pqMod:=\pqMod_P$ be the category of $\delta$-graded (and Alexander graded) curved complexes over $\Ad$ with curvature \[p^4+q^4.\] We call the objects of this category \textbf{peculiar modules}. Furthermore, let $\gpqMod_{P,n}$ be the category of $\delta$-graded (and Alexander graded) curved complexes over $\Aminus$ with curvature
	\begin{equation}\label{eqn:generalcurvature}
	p^4+q^4+U_{i_1}U_{o_1}+U_{i_2}U_{o_2}.
	\end{equation}
	We call the objects of this category \textbf{generalised peculiar modules}. 
\end{definition}

\begin{definition}\label{def:CFTd}
Given a 4-ended tangle $T$ with $n$ closed components in a $\mathbb{Z}$-homology 3-ball $M$ with spherical boundary and an (admissible) peculiar Heegaard diagram for $T$, let us define a generalised peculiar module $\CFTminus(T):=\CFTminus(T,M)$ in $\gpqMod_{P_T,n}$ whose underlying relatively bigraded right $\Id$-module agrees with $\CFT(T)$. However, the differential $\partial$ on $\CFTminus(T)$ is defined by 
\begin{equation}\label{eqn:differentialOnCFTd}
\partial \x=\sum_{y\in\mathbb{T}}\sum_{\substack{\phi\in\pi_2(\x,\y)\\ \mu(\phi)=1}}\#\left(\frac{\mathcal{M}(\phi)}{\mathbb{R}}\right) \cdot \y\otimes_{\Id} a(\phi),
\end{equation}
where for $\phi\in\pi_2(\x,\y)$, $a(\phi)$ is the preimage of 
\[\iota_{s(\y)}.\prod_{i=1,2,3,4} p_i^{p_i(\phi)}\cdot q_i^{q_i(\phi)}\cdot\prod_{j=1}^n (U'_j)^{w_j(\phi)}\cdot (V'_j)^{z_j(\phi)}.\iota_{s(\x)},\]
under the inclusion map $\Aminus\hookrightarrow\Apre_n$. We call $(\CFTminus(T),\partial)$ the \textbf{generalised peculiar module of~$T$}. Its image under the functor 
\[\gpqMod_{P_T,n}\rightarrow\pqMod\]
induced by the quotient map $\Aminus\rightarrow\Ad$ is denoted by $\CFTd(T)$, which we call the \textbf{peculiar module} of $T$.
\end{definition}
\begin{theorem}\label{thm:PecMod}
	\(\CFTminus(T)\) is indeed a well-defined generalised peculiar module. Furthermore, its relatively bigraded chain homotopy type is an invariant of the tangle~\(T\). Hence \(\CFTd(T)\) is a well-defined peculiar module, whose relatively bigraded chain homotopy type is also an invariant of the tangle~\(T\).
\end{theorem}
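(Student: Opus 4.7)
My plan is to establish four things in sequence: (a) $\partial$ of Definition~\ref{def:CFTd} is a well-defined $\Id$-linear map from $\CFTminus(T)$ into $\CFTminus(T)\otimes_\Id \Aminus$ of $\delta$-degree $-1$ and preserving the Alexander grading; (b) $\partial$ satisfies the curved structure equation with curvature as in~\eqref{eqn:generalcurvature}; (c) the resulting chain homotopy type is independent of the peculiar Heegaard diagram; and (d) the corresponding facts for $\CFTd(T)$ follow by functoriality.

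For (a), the key check is that the element $a(\phi)$ defined in Definition~\ref{def:CFTd} for $\phi\in\pi_2(\x,\y)$ actually lies in the subalgebra $\iota_{s(\y)}.\Aminus.\iota_{s(\x)}\subset \iota_{s(\y)}.\Apre_n.\iota_{s(\x)}$. This is a purely local combinatorial statement about how the distinguished $\alpha$-circle sits relative to the arcs connecting the $p_i$ to the $q_i$ in Figure~\ref{fig:HD4endednew}: the multiplicities $(p_i(\phi), q_i(\phi))$ at consecutive basepoints automatically assemble into products of the standard basis elements $p_{i(i+1)\cdots k}$, $q_{i(i-1)\cdots k}$, together with powers of the $U_i$. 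The degree claim then follows directly by comparing the grading formulas on generators (Definition~\ref{def:RecallGradingsFromHDsForTangles}) with those on $\Aminus$: $\mu(\phi)=1$ forces $\delta$-degree $-1$, and the Alexander contributions of the basepoints are offset exactly by those of $a(\phi)$. Admissibility of the diagram ensures finiteness of the sums.

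For (b), I would count the ends of the 1-dimensional moduli spaces $\mathcal{M}(\phi)/\mathbb{R}$ for domains $\phi$ with $\mu(\phi) = 2$ in the standard way. Such ends contribute either to two-story holomorphic buildings (the algebraic $\partial^2$ term) or to boundary degenerations, the latter governed by the Ozsv\'ath--Szab\'o boundary-degeneration formula. For a peculiar Heegaard diagram, the relevant $\alpha$-periodic classes of multiplicity one are the two disc regions bounded by the distinguished $\alpha$-circle: the front region contains $p_1,\ldots,p_4$ and its boundary degenerations contribute $p^4=\sum_i p_{i(i+1)(i+2)(i+3)}$, while the back region contributes $q^4$. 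The $\beta$-periodic classes supported near the open-tangle-component basepoints (Figure~\ref{fig:HDBasepointspq}) contribute $U_{i_1}U_{o_1}$ and $U_{i_2}U_{o_2}$. I expect the most delicate bookkeeping to be verifying that no further boundary-degeneration terms survive—in particular, the configurations around closed-component basepoints (Figure~\ref{fig:HDBasepointszw}) must be organised so that their contributions are absorbed into the standard $\partial^2$ count, producing exactly the curvature~\eqref{eqn:generalcurvature}. This is the main technical step of the theorem.

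For (c), Remark~\ref{rem:PeculiarHDmoves} reduces invariance to isotopies, handleslides away from the distinguished $\alpha$-circle, and stabilizations, together with changes of almost-complex structure and admissibility-preserving perturbations. The standard continuation-map, holomorphic-triangle, and higher-homotopy constructions from multi-pointed Heegaard Floer invariance proofs all lift to this setting without new analytical input, since the added basepoints $p_i, q_i$ only affect the bookkeeping through algebra weights in $\Aminus$; the resulting chain maps and homotopies automatically promote to morphisms in $\gpqMod_{P_T,n}$ with the same curvature. Finally, for (d), the quotient homomorphism $\pi\co\Aminus\rightarrow\Ad$ sends the curvature $p^4+q^4+U_{i_1}U_{o_1}+U_{i_2}U_{o_2}$ to $p^4+q^4$, so the induced functor $\mathcal{F}^D_\pi\co\gpqMod_{P_T,n}\rightarrow\pqMod$ of Definition~\ref{def:InducedFunctors} carries $\CFTminus(T)$ to a genuine peculiar module $\CFTd(T)$, and preserves chain homotopy equivalences (Remark~\ref{rem:InducedFunctors}), so invariance of $\CFTd(T)$ is inherited from that of $\CFTminus(T)$.
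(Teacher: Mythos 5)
Your proposal follows essentially the same route as the paper: it isolates the same three preliminary checks (that $a(\phi)$ lands in $\Aminus$, the grading behaviour, and finiteness via admissibility, which the paper states as separate lemmas), verifies the curved $\partial^2$-relation by counting ends of Maslov-index-2 moduli spaces with the Ozsv\'ath--Szab\'o boundary-degeneration results supplying exactly the terms of~\eqref{eqn:generalcurvature} and the closed-component degenerations cancelling in pairs, and deduces invariance from standard multi-pointed Heegaard Floer theory restricted to the moves of Remark~\ref{rem:PeculiarHDmoves}. The only quibble is a sign convention: by Lemma~\ref{lem:CFTdGradings} the differential \emph{increases} the $\delta$-grading by 1 (the degree $-1$ statement refers to the homological grading), but this does not affect the argument.
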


\begin{remark}\label{rmk:RelativeGradings}
	On link Floer homology, one can promote both Alexander and $\delta$-gradings to absolute gradings via symmetries and the spectral sequence to $\HF(S^3)$~\cite{OSHFL}. I expect that something similar can be done for our tangle invariants. Alternatively, one could simply fix absolute gradings on a specific test tangle, say a trivial tangle, and then define absolute gradings on all other tangles via the pairing with this test tangle (using Theorem~\ref{thm:CFTdGlueingAsMorphism}) and the absolute gradings on $\HFL$. However, in this paper, we are only working with the relative gradings on $\CFTd$ and $\CFTminus$ inherited from those on $\CFT$ which were defined in~\cite{HDsForTangles}. So, throughout this paper, all gradings on $\CFL$ should be regarded as relative, too. 
\end{remark}

\begin{remark}\label{rem:ComparisonOS}
	The generalized algebra $\Aminus$ and the generalised invariant $\CFTminus$ are inspired by Ozsv\'{a}th and Szab\'{o}'s algebra and tangle invariant from~\cite{OSKauffmanStates2}. Computations suggest that their invariant for one-sided 4-ended tangles is closely related to $\CFTminus$. Conceptually, it might also be interesting to set up their theory for an odd number of tangle strands and then compare $\CFTminus$ to their invariants of $(1,3)$-tangles. 
\end{remark}

\begin{lemma}
	For any \(\phi\in\pi_2(\x,\y)\), \(a(\phi)\) lies in the image of inclusion map \(\Aminus\hookrightarrow\Apre_n\).
\end{lemma}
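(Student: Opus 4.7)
The plan is to decompose the defining element
\[\iota_{s(\y)}\cdot \prod_{i=1,2,3,4}p_i^{p_i(\phi)}q_i^{q_i(\phi)} \cdot \prod_{j=1}^n (U'_j)^{w_j(\phi)}(V'_j)^{z_j(\phi)}\cdot\iota_{s(\x)}\in\Apre_n\]
into factors that are each manifestly in the image of \(\Aminus\hookrightarrow \Apre_n\). The closed-component factor \(\prod_j (U'_j)^{w_j(\phi)}(V'_j)^{z_j(\phi)}\) sits in \(\Rn\subset \Aminus\), so the problem reduces to the open-component monomial. Using the identity \(U_i = p_iq_i\) (valid under the inclusion into \(\Apre_n\)), I would extract a factor \(\prod_i U_i^{\min(p_i(\phi),q_i(\phi))}\in\Rn\), leaving a residual monomial governed by the differences \(d_i := p_i(\phi) - q_i(\phi)\), namely \(\prod_i p_i^{\max(d_i,0)} q_i^{\max(-d_i,0)}\).

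The crux is to show that this residual corresponds to a single path in the cyclic quiver underlying \(\Aminus\). I would deduce this from the geometry of \(\phi\) as follows: restrict \(\partial\phi\) to the special \(\alpha\)-circle obtained by collapsing the four \(\alpha\)-arcs. Since \(p_i\) and \(q_i\) lie in the regions on opposite sides of this circle immediately adjacent to the \(i\)-th tangle end, the boundary multiplicity of \(\partial\phi\) on those segments equals \(d_i\) up to a universal sign. The two regions do not change as one crosses the tangle end, so this multiplicity is locally constant across tangle ends, and within each arc \(s_i\) it can only jump at the intersection points of \(\x\) and \(\y\) with the \(\alpha\)-circle, each contributing a unit jump. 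Since \(\x\) and \(\y\) meet the \(\alpha\)-circle in exactly one point each, the \(d_i\) take at most two consecutive integer values, and are therefore either all non-negative or all non-positive.

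Assume without loss of generality that all \(d_i\geq 0\); the other case is symmetric, with \(q\) replacing \(p\). Set \(d := \min_i d_i\) and let \(I := \{i: d_i = d+1\}\). The residual then equals \((p_1p_2p_3p_4)^d \cdot \prod_{i\in I} p_i\). Tracking the sign conventions, the jump analysis above identifies \(I\) with the cyclic interval of indices that realises a path from \(s(\x)\) to \(s(\y)\) in the cyclic quiver by applying \(|I|\) successive \(p\)-generators. With the prescribed idempotents on either side, the residual is thus exactly the standard basis element of \(\Aminus\) consisting of \(d\) complete backward loops followed by this partial path, so the whole element \(a(\phi)\) lies in \(\Aminus\). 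The main obstacle I anticipate is pinning down the various orientation conventions — the orientation of the \(\alpha\)-circle, the front/back labelling of \(p_i\) and \(q_i\), the sign convention for \(\partial_\alpha\), and the direction of the quiver — so that the partial path matches \(I\) rather than its complement.
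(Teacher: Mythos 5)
Your proof is correct and is essentially the paper's argument written out in full: the paper's one-sentence proof rests on exactly the observation you make precise, namely that $\partial\phi$ restricted to the special $\alpha$-circle is a $1$-chain with boundary $\y-\x$, so its multiplicity function takes two consecutive integer values with jumps only at the points of $\x$ and $\y$, which forces the residual monomial (after extracting powers of $U_i=p_iq_i$) to be a single path in the cyclic quiver with the correct idempotents. The only work beyond the paper's phrasing is your explicit bookkeeping of the differences $d_i$ and the orientation conventions, which you flag appropriately.
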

\begin{proof}
	This follows from the observation that $\partial\phi$ intersected with the $\alpha$-circle ${\red S^1}$ is a path on ${\red S^1}$ connecting the two points of $\x$ and $\y$ on ${\red S^1}$. 
\end{proof}

\begin{lemma}\label{lem:CFTdGradings}
\(\partial\) increases the \(\delta\)-grading by 1 and preserves the Alexander grading. (As~usual, the grading on a tensor product is given by the sum of the gradings of the tensor factors.)
\end{lemma}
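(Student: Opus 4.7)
The proof is a direct verification using the grading definitions. The plan is to show that for every domain $\phi\in\pi_2(\x,\y)$ with $\mu(\phi)=1$ contributing a term $\y\otimes a(\phi)$ to $\partial\x$, the total $\delta$- and Alexander gradings of $\y\otimes a(\phi)$ differ from those of $\x$ by exactly $+1$ and $0$, respectively.

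First I would recall the two sets of grading conventions: on generators, the $\delta$- and Alexander gradings are defined via multiplicities of $\phi$ at the basepoints (Definition~\ref{def:RecallGradingsFromHDsForTangles}); on the algebra $\Aminus$, the generators $p_i, q_i, U'_j, V'_j$ are assigned $\delta$-degrees $\tfrac12,\tfrac12,1,1$ and appropriate Alexander degrees according to whether the corresponding basepoint sits on an incoming or outgoing strand (or belongs to a closed component). The key observation is that the algebra element $a(\phi)$ is essentially the exponentiated record of the basepoint multiplicities of $\phi$, so its gradings are prescribed by those multiplicities.

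For the $\delta$-grading, a direct substitution gives
\[\delta(a(\phi))=\sum_{i=1,2,3,4}\tfrac{1}{2}\bigl(p_i(\phi)+q_i(\phi)\bigr)+\sum_{j=1}^n\bigl(w_j(\phi)+z_j(\phi)\bigr).\]
Adding this to $\delta(\y)-\delta(\x)=\mu(\phi)-\sum\tfrac12(p_i(\phi)+q_i(\phi))-\sum(z_j(\phi)+w_j(\phi))$ causes every basepoint term to cancel, leaving
\[\delta(\y\otimes a(\phi))-\delta(\x)=\mu(\phi)=1,\]
as required. For each Alexander grading $A_{t_k}$ associated with an open component, the contribution of $a(\phi)$ is $-p_{o_k}(\phi)-q_{o_k}(\phi)+p_{i_k}(\phi)+q_{i_k}(\phi)$, which exactly cancels $A_{t_k}(\y)-A_{t_k}(\x)$. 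The analogous cancellation for the closed-component gradings $A_{t'_j}$ follows from $A_{t'_j}(U'_j)=-2$ and $A_{t'_j}(V'_j)=2$ together with the definition $A_{t'_j}(\y)-A_{t'_j}(\x)=2w_j(\phi)-2z_j(\phi)$.

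No step presents any obstacle: the gradings on $\Aminus$ were chosen precisely so that these cancellations happen. Indeed, this is the algebraic reason for the factors of $\tfrac12$ and the sign conventions in Definition~\ref{def:RecallGradingsFromHDsForTangles}. One only needs to remark at the end that the grading on $\CFTd(T)$ is inherited via the quotient $\Aminus\twoheadrightarrow\Ad$, so the same verification applies verbatim in $\pqMod$.
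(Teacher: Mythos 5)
Your verification is correct and is exactly the computation the paper leaves implicit: its proof consists of the single sentence that both statements follow directly from the definitions of the gradings on generators and on algebra elements, and your explicit cancellation of the basepoint multiplicities (leaving $\mu(\phi)=1$ for the $\delta$-grading and $0$ for each Alexander grading) is the routine check being alluded to. Nothing is missing.
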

\begin{proof}
Both statements follows directly from the definitions of the gradings of generators and algebra elements. 
\end{proof}
\begin{lemma}
For each \(\x\in\CFTminus\), the sum on the right-hand side of  \eqref{eqn:differentialOnCFTd} is finite.
\end{lemma}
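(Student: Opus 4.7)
The plan is to combine admissibility of the peculiar Heegaard diagram with the positivity of multiplicities forced by $\mathcal{M}(\phi)\neq\emptyset$, and to extract from the $\delta$-grading formula a uniform bound on the multiplicities of~$\phi$ at the basepoints.

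First I would observe that $\#(\mathcal{M}(\phi)/\mathbb{R})$ vanishes unless every region of $\Sigma_g\smallsetminus(\A\cup\B)$ has non-negative multiplicity in $\phi$, so we may restrict attention to non-negative domains. Next, the $\delta$-grading formula from Definition~\ref{def:RecallGradingsFromHDsForTangles}, rearranged with $\mu(\phi)=1$, reads
\[\tfrac{1}{2}\sum_{i=1}^{4}\bigl(p_i(\phi)+q_i(\phi)\bigr)+\sum_{j=1}^{n}\bigl(z_j(\phi)+w_j(\phi)\bigr)=1-\bigl(\delta(\y)-\delta(\x)\bigr).\]
For any $\phi$ contributing to the sum, the right-hand side must be non-negative, and each summand on the left is then a non-negative integer bounded by a constant depending only on $\x$ and $\y$. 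Hence only finitely many distinct algebra elements $a(\phi)\in\Aminus$ can arise for each fixed $\y$.

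Finally, for fixed $\y$ and fixed $a\in\Aminus$, I would show that only finitely many contributing $\phi$ exist. Any two such domains $\phi_1,\phi_2\in\pi_2(\x,\y)$ differ by a periodic domain $P\in\pi_2(\x,\x)$ with vanishing multiplicity at every basepoint. Admissibility forces any non-zero such $P$ to have both strictly positive and strictly negative regional multiplicities; consequently, the non-negative elements of the coset $\phi_1+\Lambda$, where $\Lambda$ is the lattice of basepoint-avoiding periodic domains, form the integer points of a polytope whose recession cone is trivial, hence a finite set. Combining the two finiteness statements with the finiteness of~$\mathbb{T}$ yields the lemma.

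The main obstacle is rather mild: the third step is essentially the standard admissibility argument already implicit in the definition of $\CFT$ from~\cite{HDsForTangles}, and the only new ingredient is the observation in the second step that the Maslov index condition $\mu(\phi)=1$, when read through the $\delta$-grading formula, gives a \emph{uniform} bound on basepoint multiplicities rather than merely controlling Alexander-graded differences.
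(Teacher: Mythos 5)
Your argument is correct and follows essentially the same route as the paper: the paper likewise uses the fact that $\delta(\y)-\delta(\x)$ determines $\delta(a(\phi))$ (hence bounds the basepoint multiplicities and leaves only finitely many possible coefficients), and then fixes those multiplicities and invokes admissibility via the standard argument of \cite[Lemma~4.13]{OSHF3mfds}. Your write-up merely makes explicit the rearrangement of the $\delta$-grading formula and the polytope-finiteness step that the paper delegates to the references.
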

\begin{proof}
The proof is essentially the same as in link Floer homology, see \cite[Lemma~4.2]{OSHFL}. Since $\CFTminus$ is finitely generated, it is sufficient to show that the coefficient of each $\y\in\CFTminus$ is a finite sum. Note that by the previous lemma, the difference of the $\delta$-gradings of $\x$ and $\y$ determines the $\delta$-grading of $a(\phi)$. Thus, there are only finitely many choices for the coefficients $a(\phi)$. So let us also fix the multiplicities of $\phi$ at the basepoints. We can now argue as in the proof of \cite[Lemma~4.13]{OSHF3mfds}, using admissibility of the underlying Heegaard diagram.
\end{proof}
\pagebreak[3]
In the following, we need two analytical facts from \cite{OSHFL}.
\begin{fact}\label{fact:OSHFL_lemma_5_4}
\cite[Lemma~5.4]{OSHFL} Given a homology class \(\phi\) of an \(\alpha\)-injective boundary degeneration, write \(\phi\) as a linear combination of connected components of \(\Sigma\smallsetminus \A\). Then its Maslov index \(\mu(\phi)\) is equal to twice the sum of the coefficients. The same holds for \(\beta\)-injective boundary degenerations. 
\end{fact}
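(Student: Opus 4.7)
The Maslov index is additive under disjoint unions of source Riemann surfaces, so $\mu\!\left(\sum_i n_i R_i\right) = \sum_i n_i\,\mu(R_i)$, and it suffices to prove that $\mu(R) = 2$ for any single connected component $R$ of $\Sigma \smallsetminus \A$; the $\beta$-injective case will then follow by symmetry, interchanging the roles of $\A$ and $\B$. An $\alpha$-injective boundary degeneration whose total domain is $\sum_i n_i R_i$ really is such a disjoint union, because $\alpha$-injectivity forces the source to split over the distinct regions $R_i$, with $n_i$ sheets covering each $R_i$.

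For the single-region computation, my first plan is to invoke Lipshitz's index formula from the cylindrical reformulation~\cite{CylindricalReformulation}, which expresses $\mu(\phi)$ as a combination of the Euler measure $e(\phi)$ of the domain and point multiplicities at the intersection points involved. An $\alpha$-injective degeneration over a single $R$ has all of its corners at one intersection point lying on the $\alpha$-curves bounding $R$, so the corner and multiplicity contributions organise themselves into a purely topological expression, and a direct evaluation yields the clean answer $\mu(R)=2$. This matches what one expects on general grounds: the moduli space should carry the translation $\mathbb{R}$-action plus one additional modulus, giving dimension $2$.

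A more conceptual alternative (which is probably the route I would actually write out in detail) is a doubling argument: reflect $R$ across its $\alpha$-boundary to obtain a closed Riemann surface $\widetilde{R}$ equipped with an anti-holomorphic involution. The deformation problem for holomorphic discs with totally real boundary on the $\alpha$-torus corresponds to the equivariant deformation problem of the doubled Cauchy--Riemann operator on $\widetilde{R}$, and Riemann--Roch on $\widetilde{R}$ then gives the expected dimension $2$ on the nose. The main obstacle in either route is the careful bookkeeping of boundary and corner contributions: the point of the $\alpha$-injectivity hypothesis is precisely to guarantee that these corrections are trivial, so that the index is pinned down by a topological count; without injectivity the formula would pick up non-trivial corner terms and fail to be proportional to the sum of the coefficients.
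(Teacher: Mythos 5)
First, a point of reference: the paper does not prove this statement at all --- it is recorded as a \emph{Fact} and imported verbatim from \cite[Lemma~5.4]{OSHFL}, so there is no in-paper argument to compare yours against. Your outline does have the right architecture for how such index computations are actually done: the Maslov index is a homomorphism on $\pi_2^{\alpha}(\x)\cong\mathbb{Z}^{\#\pi_0(\Sigma\smallsetminus\A)}$ (additivity under juxtaposition of domains), so one reduces to a single region $R$ with coefficient $1$, and the computation $\mu(R)=2$ is then done either via Lipshitz's combinatorial index formula or by doubling and Riemann--Roch. However, as written the proof has a hole exactly where the content is: both proposed routes for $\mu(R)=2$ are gestured at ("a direct evaluation yields the clean answer") rather than carried out, and the reduction step is phrased in terms of a geometric splitting of a holomorphic representative into sheets over the $R_i$, which is neither needed nor generally true --- additivity of $\mu$ is a statement about homology classes, not representatives.

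More seriously, your closing claim about the role of the injectivity hypothesis is wrong. The Maslov index of a class in $\pi_2^{\alpha}(\x)$ is a purely homological quantity, and the formula $\mu(\phi)=2\sum_i n_i$ holds for \emph{every} such class; there are no "corner terms" that appear for non-injective classes, because an $\alpha$-boundary degeneration has no corners --- its entire boundary lies on the $\alpha$-curves. The injectivity/positivity hypotheses in \cite{OSHFL} are needed for the transversality and curve-counting statements (the analogue of Fact~\ref{fact:OSHFL_lemma_5_5}, where one must identify the actual moduli space and show the count is odd), not for the index formula. Asserting that the formula "would fail" without injectivity misidentifies where the hypothesis is used and would mislead a reader about what is topological and what is analytical in this circle of results.
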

\begin{fact}\label{fact:OSHFL_lemma_5_5}
	\cite[Theorem~5.5]{OSHFL} Let \(\Sigma\) be a surface of genus \(g\), equipped with a set \(\A\) of \((g+r)\) attaching circles for a handlebody, where $r\geq1$.
	Let \(\phi\) be a homology class of boundary degenerations whose domain is non-negative and whose Maslov index is equal to 2. Then the domain of \(\phi\) is equal to one of the connected components of \(\Sigma\smallsetminus \A\).
	%Then the (necessarily non-negative) domain of any pseudo-holomorphic boundary degeneration of Maslov index 2 is equal to one of the connected components of \(\Sigma\smallsetminus \A\). 
	Moreover, the number of pseudo-holomorphic boundary degenerations (considered up to reparametrization) whose domains are equal to the same connected component of \(\Sigma\smallsetminus \A\) is odd. The same holds for \(\beta\)-injective boundary degenerations.
\end{fact}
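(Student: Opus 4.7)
The plan is to separate the two assertions: the first is a purely topological/combinatorial consequence of Fact~\ref{fact:OSHFL_lemma_5_4}, while the second is an analytic parity count that I would carry out by reducing to a model computation on each region.

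For the first assertion, let $\phi$ be an $\alpha$-injective boundary degeneration with non-negative domain $D(\phi)$ and $\mu(\phi)=2$. Write $D(\phi)=\sum_i n_i\mathcal{R}_i$ where the $\mathcal{R}_i$ enumerate the connected components of $\Sigma\smallsetminus\A$. Non-negativity forces $n_i\in\mathbb{Z}_{\geq 0}$ for all $i$, and Fact~\ref{fact:OSHFL_lemma_5_4} gives $2=\mu(\phi)=2\sum_i n_i$, so $\sum_i n_i=1$. Hence exactly one $n_j$ equals $1$ and all other coefficients vanish, proving that $D(\phi)$ is a single connected component of $\Sigma\smallsetminus\A$. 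The argument for $\beta$-injective boundary degenerations is identical.

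For the second assertion, fix a connected component $\mathcal{R}$ of $\Sigma\smallsetminus\A$ and consider the moduli space $\mathcal{N}(\mathcal{R})$ of pseudo-holomorphic $\alpha$-injective boundary degenerations with domain $\mathcal{R}$, taken modulo reparametrization. The source Riemann surface $S$ is determined by the topology of $\overline{\mathcal{R}}$ together with the punctures coming from intersections with $\B$ (which record the asymptotics of the degenerating end), so the only moduli come from the complex structure on $S$ and the choice of map. The plan is to verify that, for a generic path of almost complex structures adapted to the cylindrical setup, $\mathcal{N}(\mathcal{R})$ is a compact smooth $0$-manifold and that its mod-$2$ count agrees with a purely topological invariant of~$\mathcal{R}$.

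The main obstacle is the parity count itself, and I would handle it by a degeneration-and-gluing argument in the spirit of~\cite{OSHFL}. Concretely, one stretches the neck along an embedded arc or curve that cuts $\mathcal{R}$ into pieces of strictly smaller Euler measure; holomorphic representatives degenerate into pairs of boundary degenerations on the subregions, and the parity of the total count is the product of the parities on the pieces, via a gluing theorem providing an odd number of glued representatives for each broken configuration. Iterating this reduction, one lands on model regions (discs or bigons) for which the count of holomorphic representatives can be computed directly as the degree of an evaluation/uniformization map and is seen to be~$1$ mod~$2$. Putting the local odd counts together yields that $\#\mathcal{N}(\mathcal{R})$ is odd. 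The same reasoning applies to $\beta$-injective boundary degenerations after swapping the roles of the two handlebodies. Verifying transversality throughout the degeneration, excluding boundary strata of codimension~$1$ in the compactification other than those accounted for by the gluing step, and matching signs (which, over $\mathbb{F}_2$, reduces to matching components) are the delicate analytic ingredients; otherwise the proof is formal.
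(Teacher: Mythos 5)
First, note that the paper does not prove this statement at all: it is stated as a \emph{Fact} and quoted directly from \cite[Theorem~5.5]{OSHFL}, so you are in effect attempting to reprove an external analytic result rather than reproduce an argument from this paper. Your first paragraph is fine and is the standard deduction: non-negativity plus the index formula of Fact~\ref{fact:OSHFL_lemma_5_4} forces $\sum_i n_i = 1$, hence the domain is a single component of $\Sigma\smallsetminus\A$. That part can stand on its own.

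The second assertion, however, is the entire content of Ozsv\'ath--Szab\'o's theorem, and your proposal does not prove it; it defers every essential ingredient (transversality, compactness of the relevant compactification, the gluing statement, and the base-case count) to ``delicate analytic ingredients.'' Worse, the specific reduction scheme you outline has an index-accounting problem. A boundary degeneration whose domain is a component $\mathcal{R}$ has $\mu = 2$; if you cut $\mathcal{R}$ along a curve or arc (equivalently, enlarge $\A$), each resulting subregion again supports only index-$2$ degenerations by Fact~\ref{fact:OSHFL_lemma_5_4} applied to the new diagram, so a broken configuration consisting of degenerations on both pieces has total index $4$, not $2$ --- an index-$2$ family cannot generically limit to such a configuration, and the claimed ``product of parities'' identity has no dimension count backing it. Likewise, the proposed terminal objects of the induction are wrong: a bigon has Maslov index $1$ and is not a boundary degeneration, and a disc component of $\Sigma\smallsetminus\A$ is the generic case you are trying to handle, not a simpler one (such a component typically meets many $\alpha$-circles along its boundary, and counting its holomorphic representatives in the symmetric product is precisely the nontrivial model computation). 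If you want an honest proof you would need to follow the actual argument of \cite[Theorem~5.5]{OSHFL}, which identifies $\#\mathcal{N}(\mathcal{R})$ with an explicitly computable count in a model symmetric product rather than factoring it through a cut-and-glue induction; within the present paper the correct move is simply to cite the result, as the author does.
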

\begin{proof}[of Theorem~\ref{thm:PecMod}]
Checking the $\partial^2$-identity is analogous to the link case; we can follow \cite[proof of Lemma~4.3]{OSHFL} and count ends of moduli spaces of Maslov index 2 curves. We fix two generators $\x$ and $\z$ and consider the disjoint union of moduli spaces $\mathcal{M}(\phi)$, where $\phi$ varies over those curves in $\pi_2(\x,\z)$ with $\mu(\phi)=2$ and $a(\phi)=a$ for some fixed $a\in \Aminus$. (In particular, this fixes the multiplicities of $\phi$ at the basepoints.)
If there are no boundary degenerations, there is an even number of ends, so the $\z\otimes a$-component of $\partial^2 \x$ vanishes. If there are boundary degenerations, then by Fact~\ref{fact:OSHFL_lemma_5_4} above, they contribute at least 2 to the Maslov index, so the remaining curve has to be constant, hence $\x=\z$. By Fact~\ref{fact:OSHFL_lemma_5_5}, we get a boundary degeneration for each component of $\Sigma\smallsetminus\A$ and $\Sigma\smallsetminus\B$. For closed tangle components, these boundary degenerations come in pairs which cancel each other. The remaining two $\alpha$-injective boundary degenerations contribute the first two terms of~\eqref{eqn:generalcurvature} and the remaining two $\beta$-injective boundary degenerations contribute the last two terms. All other ends appear in pairs again, so their contributions cancel. 

It remains to show that the peculiar module is an invariant of the tangle $T$. However, $\CFTminus(T)$ is essentially the chain complex associated with a multi-pointed Heegaard diagram in Heegaard Floer theory. Thus, we obtain invariance as a (curved) type D structure over the free polynomial ring~$\Rpren$. However, the same proof also works if we work over $\Aminus$, since we only allow handleslides of $\alpha$-curves over $\alpha$-curves other than the special $\alpha$-circle $\red S^1$. 
\end{proof}

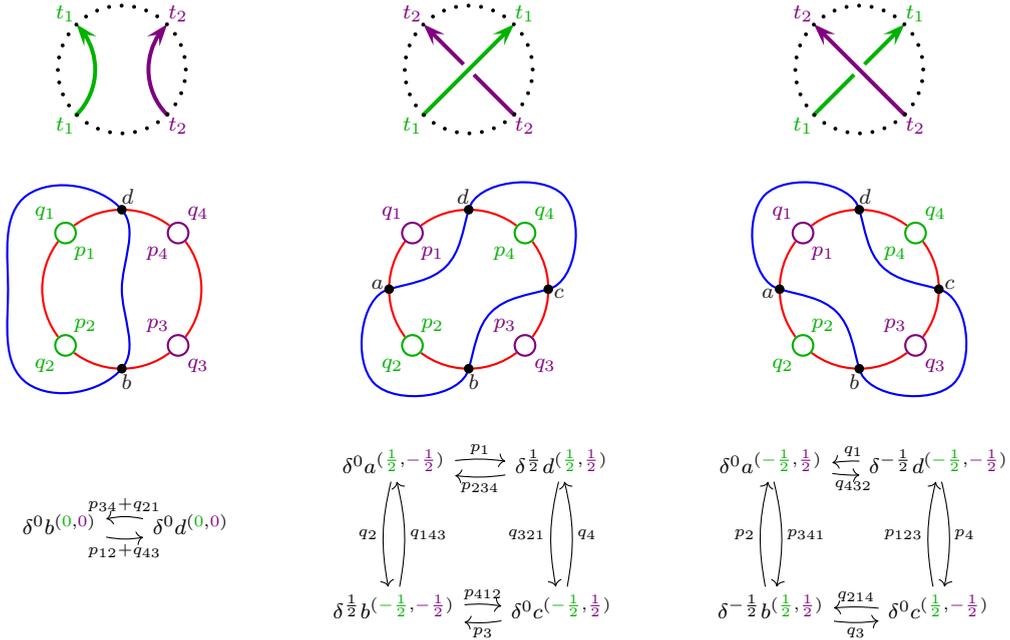
\begin{figure}
\centering
\psset{unit=0.15}
\begin{subfigure}[b]{0.26\textwidth}\centering
\psset{unit=4}
\begin{pspicture}(-1.5,-1.5)(1.5,1.5)
\psarc[linecolor=violet, linewidth=\stringwidth]{<-c}(2,0){1.41432}{-225}{-135}
\psarc[linecolor=darkgreen, linewidth=\stringwidth]{c->}(-2,0){1.41432}{-45}{45}
\pscircle[linestyle=dotted, linewidth=\stringwidth](0,0){1.45}
\rput[c](1.75;135){$\textcolor{darkgreen}{t_1}$}
\rput[c](1.75;-135){$\textcolor{darkgreen}{t_1}$}
\rput[c](1.75;45){$\textcolor{violet}{t_2}$}
\rput[c](1.75;-45){$\textcolor{violet}{t_2}$}
\end{pspicture}
\end{subfigure}
\quad
\begin{subfigure}[b]{0.32\textwidth}\centering
\psset{unit=4}
\begin{pspicture}(-1.5,-1.5)(1.5,1.5)
\psline[linecolor=violet, linewidth=\stringwidth]{c->}(1,-1)(-1,1)
\psline[linecolor=white, linewidth=\stringwhite](-1,-1)(1,1)
\psline[linecolor=darkgreen, linewidth=\stringwidth]{c->}(-1,-1)(1,1)
\pscircle[linestyle=dotted, linewidth=\stringwidth](0,0){1.45}
\rput[c](1.75;45){$\textcolor{darkgreen}{t_1}$}
\rput[c](1.75;-135){$\textcolor{darkgreen}{t_1}$}
\rput[c](1.75;135){$\textcolor{violet}{t_2}$}
\rput[c](1.75;-45){$\textcolor{violet}{t_2}$}
\end{pspicture}
\end{subfigure}
\quad
\begin{subfigure}[b]{0.34\textwidth}\centering
\psset{unit=4}
\begin{pspicture}(-1.5,-1.5)(1.5,1.5)
\psline[linecolor=darkgreen, linewidth=\stringwidth]{c->}(-1,-1)(1,1)
\psline[linecolor=white, linewidth=\stringwhite](1,-1)(-1,1)
\psline[linecolor=violet, linewidth=\stringwidth]{c->}(1,-1)(-1,1)
\pscircle[linestyle=dotted, linewidth=\stringwidth](0,0){1.45}
\rput[c](1.75;45){$\textcolor{darkgreen}{t_1}$}
\rput[c](1.75;-135){$\textcolor{darkgreen}{t_1}$}
\rput[c](1.75;135){$\textcolor{violet}{t_2}$}
\rput[c](1.75;-45){$\textcolor{violet}{t_2}$}
\end{pspicture}
\end{subfigure}
\bigskip\\
\begin{subfigure}[b]{0.26\textwidth}\centering
\begin{pspicture}(-10.3,-10.3)(10.3,10.3)
\psrotate(0,0){90}{
\psecurve[linecolor=blue](7,0)(0,0)(-7,0)(-8.3,8.3)(0,10)(8.3,8.3)(7,0)(0,0)(-7,0)
}

\psarc[linecolor=red](0,0){7}{0}{360}

\rput(7;45){\pscircle*[linecolor=white]{1}\pscircle[linecolor=violet]{1}}
\rput(7;135){\pscircle*[linecolor=white]{1}\pscircle[linecolor=darkgreen]{1}}
\rput(7;225){\pscircle*[linecolor=white]{1}\pscircle[linecolor=darkgreen]{1}}
\rput(7;315){\pscircle*[linecolor=white]{1}\pscircle[linecolor=violet]{1}}

\psdot(0,7)
\psdot(0,-7)

\rput[c](4.5;135){$\textcolor{darkgreen}{p_1}$}
\rput[c](9.5;135){$\textcolor{darkgreen}{q_1}$}
\rput[c](4.5;-135){$\textcolor{darkgreen}{p_2}$}
\rput[c](9.5;-135){$\textcolor{darkgreen}{q_2}$}

\rput[c](4.5;45){$\textcolor{violet}{p_4}$}
\rput[c](9.5;45){$\textcolor{violet}{q_4}$}
\rput[c](4.5;-45){$\textcolor{violet}{p_3}$}
\rput[c](9.5;-45){$\textcolor{violet}{q_3}$}

\rput[bl](0,7.5){$d$}
\rput[tl](0,-7.5){$b$}
\end{pspicture}
\end{subfigure}
\quad
\begin{subfigure}[b]{0.32\textwidth}\centering
\begin{pspicture}(-10.3,-10.3)(10.3,10.3)
\psrotate(0,0){90}{

\psecurve[linecolor=blue](8,-0.2)(7,0)(2,2)(0,7)(-0.2,8)
\psecurve[linecolor=blue](-4,4)(0,7)(-8.3,8.3)(-7,0)(-4,4)
\psecurve[linecolor=blue](-8,0.2)(-7,0)(-2,-2)(0,-7)(0.2,-8)
\psecurve[linecolor=blue](4,-4)(0,-7)(8.3,-8.3)(7,0)(4,-4)

\psarc[linecolor=red](0,0){7}{0}{360}

\rput(7;45){\pscircle*[linecolor=white]{1}\pscircle[linecolor=violet]{1}}
\rput(7;135){\pscircle*[linecolor=white]{1}\pscircle[linecolor=darkgreen]{1}}
\rput(7;225){\pscircle*[linecolor=white]{1}\pscircle[linecolor=violet]{1}}
\rput(7;315){\pscircle*[linecolor=white]{1}\pscircle[linecolor=darkgreen]{1}}

}
\psdot(0,7)
\psdot(7,0)
\psdot(0,-7)
\psdot(-7,0)

\rput[c](4.5;135){$\textcolor{violet}{p_1}$}
\rput[c](9.5;135){$\textcolor{violet}{q_1}$}
\rput[c](4.5;-135){$\textcolor{darkgreen}{p_2}$}
\rput[c](9.5;-135){$\textcolor{darkgreen}{q_2}$}

\rput[c](4.5;45){$\textcolor{darkgreen}{p_4}$}
\rput[c](9.5;45){$\textcolor{darkgreen}{q_4}$}
\rput[c](4.5;-45){$\textcolor{violet}{p_3}$}
\rput[c](9.5;-45){$\textcolor{violet}{q_3}$}

\rput[br](0,7.5){$d$}
\rput[tl](7.5,0){$c$}
\rput[tl](0,-7.5){$b$}
\rput[br](-7.5,0){$a$}
\end{pspicture}
\end{subfigure}
\quad
\begin{subfigure}[b]{0.34\textwidth}\centering
\begin{pspicture}(-10.3,-10.3)(10.3,10.3)
\psecurve[linecolor=blue](8,-0.2)(7,0)(2,2)(0,7)(-0.2,8)
\psecurve[linecolor=blue](-4,4)(0,7)(-8.3,8.3)(-7,0)(-4,4)
\psecurve[linecolor=blue](-8,0.2)(-7,0)(-2,-2)(0,-7)(0.2,-8)
\psecurve[linecolor=blue](4,-4)(0,-7)(8.3,-8.3)(7,0)(4,-4)

\psarc[linecolor=red](0,0){7}{0}{360}

\rput(7;45){\pscircle*[linecolor=white]{1}\pscircle[linecolor=darkgreen]{1}}
\rput(7;135){\pscircle*[linecolor=white]{1}\pscircle[linecolor=violet]{1}}
\rput(7;225){\pscircle*[linecolor=white]{1}\pscircle[linecolor=darkgreen]{1}}
\rput(7;315){\pscircle*[linecolor=white]{1}\pscircle[linecolor=violet]{1}}

\psdot(0,7)
\psdot(7,0)
\psdot(0,-7)
\psdot(-7,0)

\rput[c](4.5;135){$\textcolor{violet}{p_1}$}
\rput[c](9.5;135){$\textcolor{violet}{q_1}$}
\rput[c](4.5;-135){$\textcolor{darkgreen}{p_2}$}
\rput[c](9.5;-135){$\textcolor{darkgreen}{q_2}$}

\rput[c](4.5;45){$\textcolor{darkgreen}{p_4}$}
\rput[c](9.5;45){$\textcolor{darkgreen}{q_4}$}
\rput[c](4.5;-45){$\textcolor{violet}{p_3}$}
\rput[c](9.5;-45){$\textcolor{violet}{q_3}$}

\rput[bl](0,7.5){$d$}
\rput[bl](7.5,0){$c$}
\rput[tr](0,-7.5){$b$}
\rput[tr](-7.5,0){$a$}
\end{pspicture}
\end{subfigure}
\bigskip\\
\begin{subfigure}[b]{0.26\textwidth}\centering
$\begin{tikzcd}[row sep=1.4cm, column sep=0.5cm]
\delta^{0}b^{(\textcolor{darkgreen}{0},\textcolor{violet}{0})}
\arrow[leftarrow,bend left=12]{r}{p_{34}+q_{21}}
& 
\delta^{0}d^{(\textcolor{darkgreen}{0},\textcolor{violet}{0})}
\arrow[leftarrow,bend left=10]{l}{p_{12}+q_{43}}
\end{tikzcd}$
\end{subfigure}
\quad
\begin{subfigure}[b]{0.32\textwidth}\centering
$\begin{tikzcd}[row sep=1.4cm, column sep=0.5cm]
\delta^{0}a^{(\textcolor{darkgreen}{\frac{1}{2}},\textcolor{violet}{-\frac{1}{2}})}
\arrow[leftarrow,bend right=7,swap]{r}{p_{234}}
\arrow[leftarrow,bend left=12]{d}{q_{143}}
& 
\delta^{\frac{1}{2}}d^{(\textcolor{darkgreen}{\frac{1}{2}},\textcolor{violet}{\frac{1}{2}})}
\arrow[leftarrow,bend right=7,swap]{l}{p_1}
\arrow[leftarrow,bend left=12]{d}{q_4}
\\
\delta^{\frac{1}{2}}b^{(\textcolor{darkgreen}{-\frac{1}{2}},\textcolor{violet}{-\frac{1}{2}})}
\arrow[leftarrow,bend right=7,swap]{r}{p_3}
\arrow[leftarrow,bend left=12]{u}{q_2}
&
\delta^{0}c^{(\textcolor{darkgreen}{-\frac{1}{2}},\textcolor{violet}{\frac{1}{2}})}
\arrow[leftarrow,bend right=7,swap]{l}{p_{412}}
\arrow[leftarrow,bend left=12]{u}{q_{321}}
\end{tikzcd}$
\end{subfigure}
\quad
\begin{subfigure}[b]{0.34\textwidth}\centering
$\begin{tikzcd}[row sep=1.4cm, column sep=0.35cm]
\delta^{0}a^{(\textcolor{darkgreen}{-\frac{1}{2}},\textcolor{violet}{\frac{1}{2}})}
\arrow[leftarrow,bend left=5,pos=0.8]{r}{q_1}
\arrow[leftarrow,bend right=12,swap]{d}{p_2}
& 
\delta^{-\frac{1}{2}}d^{(\textcolor{darkgreen}{-\frac{1}{2}},\textcolor{violet}{-\frac{1}{2}})}
\arrow[leftarrow,bend left=5,pos=0.2]{l}{q_{432}}
\arrow[leftarrow,bend right=12,swap]{d}{p_{123}}
\\
\delta^{-\frac{1}{2}}b^{(\textcolor{darkgreen}{\frac{1}{2}},\textcolor{violet}{\frac{1}{2}})}
\arrow[leftarrow,bend left=5]{r}{q_{214}}
\arrow[leftarrow,bend right=12,swap]{u}{p_{341}}
&
\delta^{0}c^{(\textcolor{darkgreen}{\frac{1}{2}},\textcolor{violet}{-\frac{1}{2}})}
\arrow[leftarrow,bend left=7]{l}{q_3}
\arrow[leftarrow,bend right=12,swap]{u}{p_4}
\end{tikzcd}$
\end{subfigure}
\caption{Basic rational tangles, their Heegaard diagrams and peculiar modules. The superscripts of the generators specify the Alexander grading. Compare this to \protect\cite[Figure~17]{HDsForTangles}. }\label{fig:CFTdForSomeRatTangles}
\end{figure}

\begin{sidewaysfigure}[p]
\vspace*{435pt}
\centering
\begin{subfigure}[b]{0.35\textwidth}\centering
\psset{unit=0.8, linewidth=1.1pt}
{
\begin{pspicture}[showgrid=false](-5.2,-3.1)(3.2,3.1)
	\psset{linewidth=\stringwidth}
	\psecurve[linecolor=violet]{c-c}(-2.5,1.5)(0,2)(0.75,1)(-0.75,-1)(0,-2)(0.97,-2.24)(2,-2)
	\psecurve[linecolor=violet]{<-c}(2,2)(0.97,2.24)(0,2)(-0.75,1)(0.75,-1)(0,-2)(-2.5,-1.5)(-3.25,0)(-2.5,1.5)(0,2)(0.75,1)
	\psecurve[linecolor=darkgreen]{<-c}(-6,1.5)(-3.3,1.85)(-2.5,1.5)(-1.85,0)(-2.5,-1.5)(-3.3,-1.85)(-6,-1.5)
	
	\psecurve[linecolor=white,linewidth=\stringwhite](0.75,-1)(0,-2)(-2.5,-1.5)(-3.25,0)(-2.5,1.5)
	\psecurve[linecolor=violet]{c-c}(0.75,-1)(0,-2)(-2.5,-1.5)(-3.25,0)(-2.5,1.5)
	
	\psecurve[linecolor=white,linewidth=\stringwhite](0.75,1)(-0.75,-1)(0,-2)(0.97,-2.24)(2,-2)
	\psecurve[linecolor=white,linewidth=\stringwhite](0,2)(-0.75,1)(0.75,-1)(0,-2)
	\psecurve[linecolor=white,linewidth=\stringwhite](-2.5,-1.5)(-3.25,0)(-2.5,1.5)(0,2)(0.75,1)(-0.75,-1)
	
	\psecurve[linecolor=violet]{c-c}(0.75,1)(-0.75,-1)(0,-2)(0.97,-2.24)(2,-2)
	\psecurve[linecolor=violet]{c-c}(0,2)(-0.75,1)(0.75,-1)(0,-2)
	\psecurve[linecolor=violet]{c-c}(-2.5,-1.5)(-3.25,0)(-2.5,1.5)(0,2)(0.75,1)(-0.75,-1)
	
	\psecurve[linecolor=white,linewidth=\stringwhite](-2.5,1.5)(-1.85,0)(-2.5,-1.5)(-3.3,-1.85)(-6,-1.5)
	\psecurve[linecolor=darkgreen]{c-c}(-2.5,1.5)(-1.85,0)(-2.5,-1.5)(-3.3,-1.85)(-6,-1.5)

%\psecurve[linecolor=violet](-2.5,1.5)(0,2)(0.75,1)(-0.75,-1)(0,-2)(0.97,-2.24)(2,-2)
%\psecurve[linecolor=violet]{<-}(2,2)(0.97,2.24)(0,2)(-0.75,1)(0.75,-1)(0,-2)(-2.5,-1.5)(-3.25,0)(-2.5,1.5)(0,2)(0.75,1)
%\psecurve[linecolor=darkgreen]{<-}(-6,1.5)(-3.3,1.85)(-2.5,1.5)(-1.85,0)(-2.5,-1.5)(-3.3,-1.85)(-6,-1.5)
%\pscircle*[linecolor=white](-2.5,1.5){0.2}
%
%\psecurve[linecolor=violet](0.75,-1)(0,-2)(-2.5,-1.5)(-3.25,0)(-2.5,1.5)
%
%\pscircle*[linecolor=white](0,2){0.2}
%\pscircle*[linecolor=white](0,0){0.2}
%\pscircle*[linecolor=white](0,-2){0.2}
%
%\psecurve[linecolor=violet](0.75,1)(-0.75,-1)(0,-2)(0.97,-2.24)(2,-2)
%\psecurve[linecolor=violet](0,2)(-0.75,1)(0.75,-1)(0,-2)
%\psecurve[linecolor=violet](-2.5,-1.5)(-3.25,0)(-2.5,1.5)(0,2)(0.75,1)(-0.75,-1)
%
%\pscircle*[linecolor=white](-2.5,-1.5){0.2}
%\psecurve[linecolor=darkgreen](-2.5,1.5)(-1.85,0)(-2.5,-1.5)(-3.3,-1.85)(-6,-1.5)

\psline[linecolor=violet]{<-}(-3.25,-0.1)(-3.25,0.1)
\pscircle[linestyle=dotted](-1,0){3.05}

%\psdot[linecolor=violet](0.97,2.24)
\uput{0.2}[45](0.97,2.24){$\textcolor{violet}{t_2}$}
%\psdot[linecolor=violet](0.97,-2.24)
\uput{0.2}[-45](0.97,-2.24){$\textcolor{violet}{t_2}$}
%\psdot[linecolor=darkgreen](-3.3,1.85)
\uput{0.2}[135](-3.3,1.85){$\textcolor{darkgreen}{t_1}$}
%\psdot[linecolor=darkgreen](-3.3,-1.85)
\uput{0.2}[-135](-3.3,-1.85){$\textcolor{darkgreen}{t_1}$}

\uput{2.5}[180](-1,0){$\textcolor{red}{a}$}
\uput{2.3}[-90](-1,0){$\textcolor{blue}{b}$}
\uput{2.1}[0](-1,0){$\textcolor{darkgreen}{c}$}
\uput{2.3}[90](-1,0){$\textcolor{gold}{d}$}

\end{pspicture}
}
\caption{A tangle diagram for the $(2,-3)$-pretzel tangle.}\label{fig:HFTdmutationpretzeltangleT}
\end{subfigure}
\quad
\begin{subfigure}[b]{0.6\textwidth}\centering
\psset{unit=0.25}
{
\begin{pspicture}(-24,-11)(24,11)
%%left
\rput(-12,0){\psrotate(0,0){-90}{

\psarc[linecolor=red](0,0){8}{0}{360}

\SpecialCoor
\rput(8;45){\pscircle*[linecolor=white]{1}\pscircle[linecolor=violet]{1}}
\rput(8;135){\pscircle*[linecolor=white]{1}\pscircle[linecolor=violet]{1}}
\rput(8;225){\pscircle*[linecolor=white]{1}\pscircle[linecolor=darkgreen]{1}}
\rput(8;315){\pscircle*[linecolor=white]{1}\pscircle[linecolor=darkgreen]{1}}

%\psecurve[linecolor=yellow](8;-65)(9.5;132)(8;-70)(11;-45)(10.5;45)(8;65)(8;-60)(10;-45)(9.5;45)(8;55)(9.5;-45)(8;-55)(8;60)(10;45)(10.5;-45)(8;-65)(9.5;132)(8;-70)

\psecurve[linecolor=blue]%
(10;130)(10;140)(8;155)%
(8;-95)(11;-45)%
(10.5;45)(8;70)%
(8;-10)%
(10;-45)(8;-70)%
(8;120)(10;130)(10;140)(8;155)%

\psdot(8;155)%
\psdot(8;120)%
\psdot(8;70)%
\psdot(8;-10)%
\psdot(8;-70)%
\psdot(8;-95)%
}
\rput[b](8.7;65){$d$}
\rput[l](8;30){~$x_1$}
\rput[l](8;-20){~$x_2$}
\rput[b](7.3;-100){$b$}
\rput[l](8;-160){~$a_2$}
\rput[l](8;-185){~$a_1$}

\footnotesize
\rput[t](7.1;110){$\textcolor{darkgreen}{p_1}$}
\rput[t](9.8;110){$\textcolor{darkgreen}{q_1}$}
\rput[c](8.8;-123){$\textcolor{darkgreen}{q_2}$}
\rput[l](2;-90){$\textcolor{darkgreen}{p_2}$}
\rput[c](3.5;90){\texttt{4}}
\rput[c](7;-60){\texttt{2}}
\rput[c](9;-80){\texttt{1}}
\rput[c](9.5;42){\texttt{3}}
}

%%right
\rput(12,0){\psrotate(0,0){-90}{

\psarc[linecolor=red](0,0){8}{0}{360}

\SpecialCoor
\rput(8;45){\pscircle*[linecolor=white]{1}\pscircle[linecolor=violet]{1}}
\rput(8;135){\pscircle*[linecolor=white]{1}\pscircle[linecolor=violet]{1}}
\rput(8;225){\pscircle*[linecolor=white]{1}\pscircle[linecolor=violet]{1}}
\rput(8;315){\pscircle*[linecolor=white]{1}\pscircle[linecolor=violet]{1}}

%\psecurve[linecolor=yellow](8;-65)(9.5;132)(8;-70)(11;-45)(10.5;45)(8;65)(8;-60)(10;-45)(9.5;45)(8;55)(9.5;-45)(8;-55)(8;60)(10;45)(10.5;-45)(8;-65)(9.5;132)(8;-70)

\psecurve[linecolor=blue]%
(10;-130)(10;-140)(8;-155)%
(8;95)(11;45)%
(10.5;-45)(8;-80)%
(8;60)
(9.5;45)(8;20)
(8;-60)(9.5;-45)(10;45)
(8;75)%
(5.5;100)%
(8;-120)(10;-130)(10;-140)(8;-155)%

\psdot(8;-155)%
\psdot(8;-120)%
\psdot(8;-80)%
\psdot(8;-60)%
\psdot(8;20)%
\psdot(8;60)%
\psdot(8;75)%
\psdot(8;95)%
}
\rput[b](8.7;-245){$d'$}
\rput[r](8;-210){$y_1$~}
\rput[r](8;-170){$y_2$~}
\rput[b](7;-154){$y_3$}
\rput[b](7.3;-70){$b'$}
\rput[r](8;-29){$c_3$~}
\rput[r](8;-15){$c_2$~}
\rput[l](8;5){~$c_1$}

\footnotesize
\rput[t](7.5;70){$\textcolor{violet}{p_4}$}
\rput[t](9.8;70){$\textcolor{violet}{q_4}$}
\rput[t](3;-90){$\textcolor{violet}{p_3}$}
\rput[l](9.5,-6){~$\textcolor{violet}{q_3}$}
\rput[l](9.5,-7.5){~\texttt{1}}
\rput[l](9.5,-9){~\texttt{5}}
\psline[linestyle=dotted,dotsep=1pt](9.1;-45)(9.5,-6)
\psline[linestyle=dotted,dotsep=1pt](8.7;-80)(9.5,-7.5)
\psline[linestyle=dotted,dotsep=1pt](9.9;-80)(9.5,-9)
\rput[c](4;90){\texttt{4}}
\rput[c](1;90){\texttt{6}}
\rput[c](7;-120){\texttt{2}}
\rput[c](9.5;138){\texttt{3}}
}

\psline[linestyle=dashed](-6.25,5.65685424949236)(6.25,5.65685424949236)

\psline[linestyle=dashed](-6.25,-5.65685424949236)(6.25,-5.65685424949236)

\end{pspicture}
}
\caption{A Heegaard diagram for the tangle on the left.}
\label{fig:HFTdmutationpretzeltangleHD}
\end{subfigure}
\\
\begin{subfigure}[b]{\textwidth}
	\centering
	%crop via "pdfcrop -margins '0 10 0 10' ihatetikz.pdf ihatetikzcrop.pdf"
\includegraphics[height=200pt]{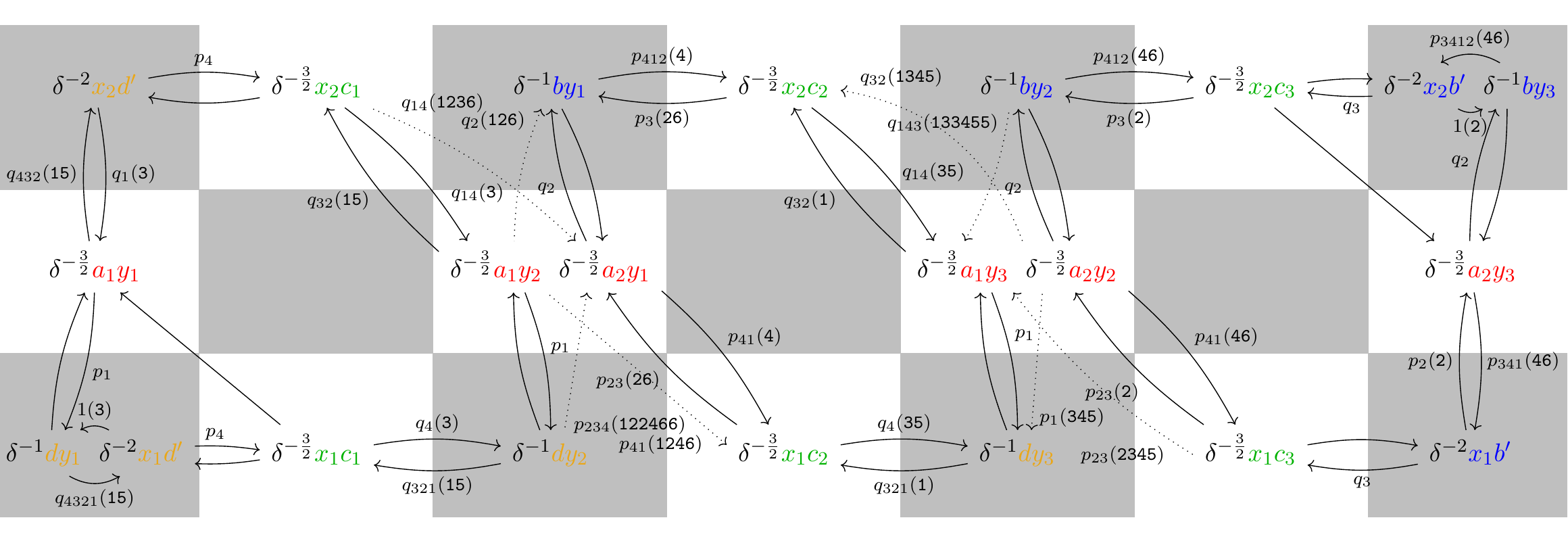}
\caption{The peculiar module for the pretzel tangle above.}\label{fig:firstcomplex}
\end{subfigure}
\caption{The computation of a peculiar module for a non-rational tangle, see example~\ref{exa:HFTdpretzeltangle}.}\label{fig:HFTdmutationexample}
\end{sidewaysfigure}

\begin{example}[(rational tangles)]\label{exa:CFTdRatTang}
	Figure~\ref{fig:CFTdForSomeRatTangles} shows the peculiar modules of some very simple 4-ended tangles. As shown in \cite[Example~4.3]{HDsForTangles}, every rational tangle $T$ has a tangle Heegaard diagram with just a single $\beta$-curve. Thus, we only count bigons in the differential of the peculiar invariant, and only those that do not occupy both~$p_i$ and~$q_j$. By tightening the $\beta$-curve, we can assume that there are no honest differentials in $\CFTd(T)$, ie that every bigon covers some $p_i$ or $q_i$. Then $\CFTd(T)$ can be read off from this single $\beta$-curve as follows: the vertices of the graph of $\CFTd(T)$ correspond to intersection points of this $\beta$-curve with the $\alpha$-arcs. Its arrows come in pairs labelled by powers of $p$ or $q$. More precisely, for each component of the $\beta$-curve minus the $\alpha$-arcs, we obtain an arrow pair connecting the vertices corresponding to the ends of this component; this arrow pair is labelled by powers of $p$ if the component goes via the front component of the 4-punctured sphere minus the $\alpha$-arcs and by powers of $q$ otherwise. Conversely, the $\beta$-curve can be read off from $\CFTd(T)$. So in this case, we might actually view the $\beta$-curve as the invariant associated with the tangle. 
\end{example}

\begin{figure}[t]\centering
\begin{subfigure}[b]{\textwidth}\centering
\psset{unit=1.5}
{
\begin{pspicture}(-3.6,-1.6)(3.6,1.6)

{\psset{linecolor=lightgray}

\psframe*(-2.5,0.5)(-3.5,1.5)
\psframe*(-0.5,0.5)(-1.5,1.5)
\psframe*(0.5,0.5)(1.5,1.5)
\psframe*(2.5,0.5)(3.5,1.5)

\psframe*(-1.5,-0.5)(-2.5,0.5)
\psframe*(-0.5,-0.5)(0.5,0.5)
\psframe*(1.5,-0.5)(2.5,0.5)

\psframe*(-2.5,-0.5)(-3.5,-1.5)
\psframe*(-0.5,-0.5)(-1.5,-1.5)
\psframe*(0.5,-0.5)(1.5,-1.5)
\psframe*(2.5,-0.5)(3.5,-1.5)
}

\psline(3,0)(3,-1)
\psline(2,1)(1,1)
\psline(0,1)(-1,1)
\psline(-2,1)(-3,1)
\psline(2,-1)(1.25,0)
\psline(1,-1)(0.75,0)
\psline(0,-1)(-0.75,0)
\psline(-1,-1)(-1.25,0)
\psline(-2,-1)(-3,0)

{\psset{linestyle=dashed,dash=4pt 2pt}
\psline(-3,0)(-3,1)
\psline(-2,-1)(-1,-1)
\psline(0,-1)(1,-1)
\psline(2,-1)(3,-1)
\psline(-2,1)(-1.25,0)
\psline(-1,1)(-0.75,0)
\psline(0,1)(0.75,0)
\psline(1,1)(1.25,0)
\psline(2,1)(3,0)
}

\psdots[linecolor=red](-3,0)(-1.25,0)(-0.75,0)(0.75,0)(1.25,0)(3,0)
\psdots[linecolor=blue](-1,1)(1,1)(3,-1)
\psdots[linecolor=gold](-1,-1)(1,-1)(-3,1)
\psdots[linecolor=darkgreen](-2,1)(-2,-1)(0,1)(0,-1)(2,1)(2,-1)

{
%red
\uput{0.1}[180](-3,0){$-\tfrac{3}{2}$}
\uput{0.1}[205](-1.25,0){$-\tfrac{3}{2}$}
\uput{0.1}[25](-0.75,0){$-\tfrac{3}{2}$}
\uput{0.1}[205](0.75,0){$-\tfrac{3}{2}$}
\uput{0.1}[25](1.25,0){$-\tfrac{3}{2}$}
\uput{0.1}[0](3,0){$-\tfrac{3}{2}$}

%green
\uput{0.1}[65](-2,1){$-\tfrac{3}{2}$}
\uput{0.1}[-115](-2,-1){$-\tfrac{3}{2}$}
\uput{0.1}[65](0,1){$-\tfrac{3}{2}$}
\uput{0.1}[-115](0,-1){$-\tfrac{3}{2}$}
\uput{0.1}[65](2,1){$-\tfrac{3}{2}$}
\uput{0.1}[-115](2,-1){$-\tfrac{3}{2}$}

%blue
\uput{0.1}[90](-1,1){$-1$}
\uput{0.1}[90](1,1){$-1$}
\uput{0.1}[-45](3,-1){$-2$}

%gold
\uput{0.1}[-90](-1,-1){$-1$}
\uput{0.1}[-90](1,-1){$-1$}
\uput{0.1}[135](-3,1){$-2$}
}
\end{pspicture}
}
\caption{A schematic picture of the result. Generators correspond to vertices, arranged according to their Alexander grading and labelled by their $\delta$-grading. The dotted edges correspond to pairs of arrows labelled by powers of $q$, the solid ones to pairs of arrows labelled by powers of $p$.}\label{fig:examplesimplifiedgraph}
\end{subfigure}
\\
\psset{unit=1.3}
\begin{subfigure}[b]{0.3\textwidth}\centering
{
\begin{pspicture}(-1.5,-1.5)(1.5,1.5)
\psecurve(1.2,1)(0,1.4)(-1.2,1)(1.4,1)(0,0.6)(-1.4,1)(1.2,1)(0,1.4)(-1.2,1)

{
\psframe*[linecolor=white](-2,-2)(-1,2)
\psframe*[linecolor=white](1,-2)(2,2)
\psframe*[linecolor=white](-2,1)(2,2)
\psframe*[linecolor=white](-2,-2)(2,-1)

\psline[linestyle=dotted](1,1)(1,-1)
\psline[linestyle=dotted](1,-1)(-1,-1)
\psline[linestyle=dotted](-1,-1)(-1,1)
\psline[linestyle=dotted](-1,1)(1,1)
}

\psecurve[linestyle=dashed,dash=4pt 2pt](1.2,1)(0,1.4)(-1.2,1)(1.4,1)(0,0.6)(-1.4,1)(1.2,1)(0,1.4)(-1.2,1)

\psset{dotsize=5pt}

\psdot[linecolor=red](-1,0.8)
\psdot[linecolor=red](-1,0.663)

\psdot[linecolor=gold](0.125,1)
\psdot[linecolor=gold](-0.125,1)

\psdot[linecolor=darkgreen](1,0.8)
\psdot[linecolor=darkgreen](1,0.663)

\pscircle[fillstyle=solid, fillcolor=white](1,1){0.08}
\pscircle[fillstyle=solid, fillcolor=white](-1,1){0.08}
\pscircle[fillstyle=solid, fillcolor=white](1,-1){0.08}
\pscircle[fillstyle=solid, fillcolor=white](-1,-1){0.08}

\end{pspicture}
}
\caption{}\label{fig:loopbottom}
\end{subfigure}
\quad
\begin{subfigure}[b]{0.3\textwidth}\centering
\begin{pspicture}(-1.5,-1.5)(1.5,1.5)

\psecurve(-0.8,-1.2)(-1.2,-1.2)(1.2,1.1)(-1.2,0.9)(0,1.4)(1.4,1.2)(-0.8,-1.2)(-1.2,-1.2)(1.2,1.1)

{
	\psframe*[linecolor=white](-2,-2)(-1,2)
	\psframe*[linecolor=white](1,-2)(2,2)
	\psframe*[linecolor=white](-2,1)(2,2)
	\psframe*[linecolor=white](-2,-2)(2,-1)
	
	\psline[linestyle=dotted](1,1)(1,-1)
	\psline[linestyle=dotted](1,-1)(-1,-1)
	\psline[linestyle=dotted](-1,-1)(-1,1)
	\psline[linestyle=dotted](-1,1)(1,1)
}

\psecurve[linestyle=dashed,dash=4pt 2pt](-0.8,-1.2)(-1.2,-1.2)(1.2,1.1)(-1.2,0.9)(0,1.4)(1.4,1.2)(-0.8,-1.2)(-1.2,-1.2)(1.2,1.1)

\psset{dotsize=5pt}

\psdot[linecolor=red](-1,0.724)
\psdot[linecolor=red](-1,-0.635)

\psdot[linecolor=gold](-0.027,1)

\psdot[linecolor=darkgreen](1,0.51)
\psdot[linecolor=darkgreen](1,-0.03)

\psdot[linecolor=blue](-0.38,-1)

\pscircle[fillstyle=solid, fillcolor=white](1,1){0.08}
\pscircle[fillstyle=solid, fillcolor=white](-1,1){0.08}
\pscircle[fillstyle=solid, fillcolor=white](1,-1){0.08}
\pscircle[fillstyle=solid, fillcolor=white](-1,-1){0.08}

\end{pspicture}
\caption{}\label{fig:loopmiddle}
\end{subfigure}
\quad
\begin{subfigure}[b]{0.3\textwidth}\centering
\begin{pspicture}(-1.5,-1.5)(1.5,1.5)

\psrotate(0,0){180}{
\psecurve(1.2,1)(0,1.4)(-1.2,1)(1.4,1)(0,0.6)(-1.4,1)(1.2,1)(0,1.4)(-1.2,1)

{
	\psframe*[linecolor=white](-2,-2)(-1,2)
	\psframe*[linecolor=white](1,-2)(2,2)
	\psframe*[linecolor=white](-2,1)(2,2)
	\psframe*[linecolor=white](-2,-2)(2,-1)
	
	\psline[linestyle=dotted](1,1)(1,-1)
	\psline[linestyle=dotted](1,-1)(-1,-1)
	\psline[linestyle=dotted](-1,-1)(-1,1)
	\psline[linestyle=dotted](-1,1)(1,1)
}

\psecurve[linestyle=dashed,dash=4pt 2pt](1.2,1)(0,1.4)(-1.2,1)(1.4,1)(0,0.6)(-1.4,1)(1.2,1)(0,1.4)(-1.2,1)

\psset{dotsize=5pt}

\psdot[linecolor=darkgreen](-1,0.8)
\psdot[linecolor=darkgreen](-1,0.663)

\psdot[linecolor=blue](0.125,1)
\psdot[linecolor=blue](-0.125,1)

\psdot[linecolor=red](1,0.8)
\psdot[linecolor=red](1,0.663)

\pscircle[fillstyle=solid, fillcolor=white](1,1){0.08}
\pscircle[fillstyle=solid, fillcolor=white](-1,1){0.08}
\pscircle[fillstyle=solid, fillcolor=white](1,-1){0.08}
\pscircle[fillstyle=solid, fillcolor=white](-1,-1){0.08}
}
\end{pspicture}
\caption{}\label{fig:looptop}
\end{subfigure}
\caption{The final result of the computation from Example~\ref{exa:HFTdpretzeltangle} and Figure~\ref{fig:HFTdmutationexample}. The Subfigures (b)--(d) show the three loops from (a) separately on 4-punctured spheres.}\label{fig:mutationexamplefinalresult}
\end{figure}

\begin{example}[(the $(2,-3)$-pretzel tangle)]\label{exa:HFTdpretzeltangle}
	Figure~\ref{fig:HFTdmutationexample} shows the computation of $\CFTd(T)$ for the $(2,-3)$ pretzel tangle from Figure~\ref{fig:2m3pt}. In subsection~\ref{subsec:pretzels}, we will compute the peculiar modules for more general pretzel tangles, using some more advanced methods, which we develop in section~\ref{sec:glueingrevisited} as a corollary of the general classification of peculiar modules; here, we compute everything from the definition, which works surprisingly well.
	
	First, we compute the generators of the complex. They are ordered according to their Alexander grading on an infinite chessboard, where the generators in each of its fields have the same Alexander grading and where moving one field down, respectively to the right, increases the Alexander grading corresponding to the colour $\textcolor{darkgreen}{t_1}$, respectively $\textcolor{violet}{t_2}$, by 1. Next, we compute bigons and squares. Those correspond to the labelled arrows in Figure~\ref{fig:firstcomplex}. (The numbers in parentheses indicate which of the regions appear in the domain with which multiplicity. For example, the label $q_{432}(\texttt{15})$ of the arrow $\textcolor{red}{a_1y_1}\rightarrow\textcolor{gold}{x_2d'}$ means that the corresponding domain is given by the regions in Figure~\ref{fig:HFTdmutationpretzeltangleHD} labelled $q_4$, $q_3$, $q_2$, $\texttt{1}$ and $\texttt{5}$, each with multiplicity 1.) But there are also other contributing domains. Grading constraints tell us that we can only get additional morphisms between those generators which are connected by the other arrows. In principle, those could point in both directions. However, in each case, the connecting domains in one direction either have negative multiplicities or occupy both $p_i$s and $q_i$s, so we can only get arrows in one direction. From this and the $\partial^2$-relation, we can deduce that all solid arrows contribute. There are only eight remaining arrows (the dotted ones) and they can only appear in pairs. But it is easy to see that we can homotope those dotted arrows away (using the Clean-Up Lemma for curved type D structures), so in any case, the complex is homotopic to the invariant consisting of the solid arrows only. 
	We can then apply the Cancellation Lemma. We obtain a complex in which every arrow is paired with another one going in the opposite direction and every generator is connected along the arrows to exactly two other generators -- just as for rational tangles! A schematic picture of this complex is shown in Figure~\ref{fig:examplesimplifiedgraph}, where these arrow pairs have been replaced by single unoriented edges, such that we obtain a collection of loops.
	
	In Figures~\ref{fig:mutationexamplefinalresult}b-d, the loops have been transferred onto separate 4-punctured spheres in such a way that the vertices lie on the four arcs that connect the punctures and the unoriented edges lie on the front or back of the spheres depending on whether they correspond to arrow pairs labelled by powers of $p$ or $q$.
	
	The meaning of both of these representations as loops will be discussed in section~\ref{sec:glueingrevisited}. For the moment, they are just a convenient way to see certain symmetries.
\end{example}

%\begin{wrapfigure}{r}{0.3333\textwidth}
%	\centering
%	
%	\bigskip
%	\bigskip
%\end{wrapfigure}
%\myfixwrapfig

\begin{figure}
	\centering
	\begin{subfigure}[b]{0.34\textwidth}\centering
		\psset{unit=0.3}
		\bigskip
		\begin{pspicture}(-8.01,-3.01)(8.01,3.01)
		\pscircle[linestyle=dotted, linewidth=\stringwidth](0,0){3}
		%\psline(3;45)(3;-135)
		%\psline(3;-45)(3;135)
		\pscircle[fillcolor=white,fillstyle=solid,linecolor=lightgray](0,0){1.5}
		
		\rput(0,0){$T$}
		\rput(2.25;180){$a$}
		\rput(2.25;0){$c$}
		
		\psecurve[linecolor=white,linewidth=\stringwhite](0,0)(1.1;-135)(3;-140)(4;-150)(-4.5,0)(4;150)(3;140)(1.1;135)(0,0)
		
		\psecurve[linecolor=gray, linewidth=\stringwidth](0,0)(1.1;-135)(3;-140)(4;-150)
		\psecurve[linecolor=gray, linewidth=\stringwidth](4;150)(3;140)(1.1;135)(0,0)

		\psecurve[linecolor=white,linewidth=\stringwhite](0,0)(1.1;45)(3;40)(4;30)(4.5,0)(4;-30)(3;-40)(1.1;-45)(0,0)
		
		\psecurve[linecolor=gray, linewidth=\stringwidth](0,0)(1.1;45)(3;40)(4;30)
		\psecurve[linecolor=gray, linewidth=\stringwidth](4;-30)(3;-40)(1.1;-45)(0,0)
		
		\psecurve[linewidth=\stringwidth]{C-C}(1.1;-135)(3;-140)(4;-150)(-4.5,0)(4;150)(3;140)(1.1;135)
		\psecurve[linewidth=\stringwidth]{C-C}(1.1;45)(3;40)(4;30)(4.5,0)(4;-30)(3;-40)(1.1;-45)
		\end{pspicture}
		\caption{A link obtained as the closure of a tangle $T$.}\label{fig:ClosureDiagram}
	\end{subfigure}
	\quad
	\begin{subfigure}[b]{0.6\textwidth}\centering
		{\vspace*{-5pt}
			$
			\begin{tikzcd}[row sep=0.5cm, column sep=0.7cm]
			% 1st row
			&
			\textcolor{gold}{d}\phantom{\delta^{\frac{1}{2}}}\nmathphantom{\delta^{\frac{1}{2}}}
			\arrow[bend left=10,leftarrow]{dr}[inner sep=1pt]{q_4}
			\arrow[bend right=10,swap]{dr}[inner sep=1pt]{p_4}
			\\
			% 2nd row
			\textcolor{red}{a}\phantom{t^{A(p_{12})}}\nmathphantom{t^{A(p_{12})}}
			\arrow[bend left=10,leftarrow]{ur}[inner sep=1pt]{q_1}
			\arrow[bend right=10,swap]{ur}[inner sep=1pt]{p_1}
			&
			&
			\textcolor{darkgreen}{c}\phantom{t^{A(p_{12})}}\nmathphantom{t^{A(p_{12})}}
			\arrow[bend left=10,leftarrow]{dl}[inner sep=1pt]{q_3}
			\arrow[bend right=10,swap]{dl}[inner sep=1pt]{p_3}
			\\
			% 3rd row
			&
			\textcolor{blue}{b}\phantom{\delta^{\frac{1}{2}}}\nmathphantom{\delta^{\frac{1}{2}}}
			\arrow[bend left=10,leftarrow]{ul}[inner sep=1pt]{q_2}
			\arrow[bend right=10,swap]{ul}[inner sep=1pt]{p_2}
			\end{tikzcd}
			\quad\raisebox{-3pt}{$\longrightarrow$}\quad
			\begin{tikzcd}[row sep=0.5cm, column sep=-0.6cm]
			% 1st row
			&
			t^{(0,0)}\delta^{1}\mathbb{F}_2
			\\
			% 2nd row
			t^{A(p_{2})}\delta^{\frac{1}{2}}\mathbb{F}_2
			\arrow{ur}{1}
			&&
			t^{A(q_{3})}\delta^{\frac{1}{2}}\mathbb{F}_2
			\arrow[leftarrow]{dl}{1}
			\arrow[swap]{ul}{1}
			\\
			% 3rd row
			&
			t^{(0,0)}\delta^{0}\mathbb{F}_2
			\arrow{ul}{1}
			\end{tikzcd}
			$
		}
		\caption{The functor $\Omega'$.}\label{fig:ClosureFunctor}
	\end{subfigure}
	\caption{Illustration of Proposition~\ref{prop:LazyClosing}.}\label{fig:Closure}
\end{figure}
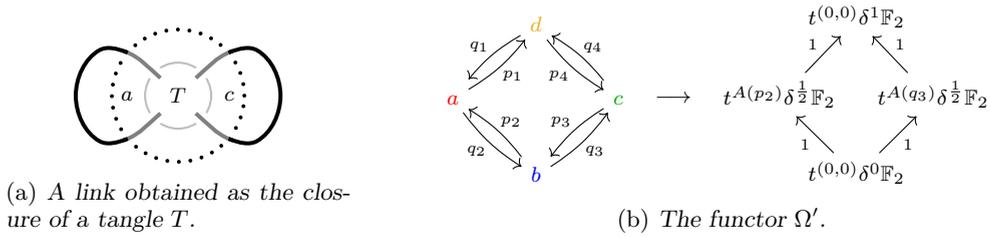

To state the next proposition, we first need to introduce some notation: 
let \(T\) be a 4-ended tangle and \(L\) the link obtained by closing \(T\) at the sites \(a\) and \(c\) as shown in Figure~\ref{fig:ClosureDiagram}. We distinguish two cases: either the two open components of \(T\) belong to distinct components of \(L\) (case 1) or they belong to the same component of \(L\) (case 2). Consider the quotient homomorphism
\begin{align*}
\omega\co\Ad\rightarrow\mathbb{F}_2,&\quad 
p_1,p_2,q_3,q_4\mapsto 1,\quad q_1,q_2,p_3,p_4\mapsto 0.
\end{align*}
We can promote this map to a functor $\Omega'$ from $\Ad$ to the category of bigraded vector spaces which on objects is defined by
\begin{align*}
\textcolor{red}{a}&\mapsto t^{A(p_2)}\delta^{\frac{1}{2}}\mathbb{F}_2&
\textcolor{blue}{b}&\mapsto t^{(0,0)}\delta^{0}\mathbb{F}_2&
\textcolor{darkgreen}{c}&\mapsto t^{A(q_3)}\delta^{\frac{1}{2}}\mathbb{F}_2&
\textcolor{gold}{d}&\mapsto t^{(0,0)}\delta^{1}\mathbb{F}_2,
\end{align*}
where 
$$t^{(a_1,a_2)}=
\begin{cases*}
t_1^{a_1}t_2^{a_2} & \text{(case 1)}\\
t^{a_1+a_2} & \text{(case 2)}
\end{cases*}
$$
For an illustration, see Figure~\ref{fig:ClosureFunctor}. 
Clearly, the functor $\Omega'$ preserves the $\delta$-grading of morphisms. It also preserves their Alexander grading, since in case~1, $$A(p_1)+A(p_2)=(0,0)=A(q_3)+A(q_4),$$
and in case 2, $t_1$ and $t_2$ are identified. 
So $\Omega'$ preserves the bigrading. Therefore, it induces a well-defined functor between the categories of peculiar modules and bigraded chain complexes, which we denote by $\Omega$.

\begin{proposition}\label{prop:LazyClosing}
With the notation as above, 
$$
\Omega(\CFTd(T))=
\begin{cases*}
\CFL(L)& \text{(case 1)}\\
\CFL(L)\otimes V & \text{(case 2)}\\
\end{cases*}
$$
where \(V\) is the 2-dimensional vector space over \(\mathbb{F}_2\) supported in Alexander gradings \(t\) and~\(t^{-1}\) and identical \(\delta\)-gradings.
Similar formulas hold for cyclic permutations of the indices.
\end{proposition}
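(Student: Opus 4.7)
The plan is to identify $(\Sigma_g, \A \cup \{\red S^1\}, \B)$ together with the reduced basepoint set
$$\mathbb{X} := \{q_1, q_2, p_3, p_4\} \cup \{z_j, w_j \mid j = 1, \ldots, n\}$$
as a multi-pointed Heegaard diagram for $L$, and then to check that the chain complex it computes matches $\Omega(\CFTd(T))$ on the nose, up to the extra factor $V$ in case~2.

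The first step is algebraic. By construction, $\partial$ on $\CFTd(T)$ counts holomorphic curves $\phi$ weighted by $a(\phi) \in \Ad$, where $a(\phi)$ records the multiplicities of $\phi$ at each $p_i$ and $q_i$. Applying $\Omega$ annihilates every summand whose label $a(\phi)$ contains any of $q_1, q_2, p_3, p_4$, while retaining those built entirely from $p_1, p_2, q_3, q_4$ with weight~$1$. Hence $\Omega(\CFTd(T))$ counts exactly those curves that are allowed to cover $\{p_1, p_2, q_3, q_4\}$ with arbitrary multiplicity but must avoid every basepoint in $\mathbb{X}$; that is, it is the Heegaard-Floer chain complex of $(\Sigma_g, \A \cup \{\red S^1\}, \B, \mathbb{X})$.

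The second step is topological: one must check that $(\Sigma_g, \A \cup \{\red S^1\}, \B, \mathbb{X})$ is a valid multi-pointed Heegaard diagram for $L$. Reversing the local modifications of Definition~\ref{def:HDsfortangles} recovers an ordinary tangle Heegaard diagram for $T$; the closure at site $a$ (respectively $c$) corresponds to attaching a 2-handle to the meridians of the two tangle ends adjacent to that site, fusing them into a new closed strand of $L$. Tracing through the orientation conventions of Figure~\ref{fig:HDBasepoints} together with the closure pattern of Figure~\ref{fig:ClosureDiagram}, the new closed strand near site~$a$ carries the basepoints $\{q_1, q_2\}$ (its $p$-side is absorbed by the closure arc) and the new closed strand near site~$c$ carries $\{p_3, p_4\}$ (its $q$-side is absorbed), which is exactly the prescription encoded by $\omega$.

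With the Heegaard diagram identified, the remainder is routine. In case~1 each of the two newly closed strands of $L$ carries the minimum two basepoints, so the resulting complex is literally $\CFL(L)$; in case~2 the two open strands of $T$ merge into a single component of $L$ carrying four basepoints, and the standard multi-pointed formalism \cite{OSHFLThurston} contributes the extra tensor factor $V$. The $\delta$- and Alexander gradings match by construction: the site-dependent grading shifts in the definition of $\Omega'$ are exactly those that convert the Heegaard-Floer formulas of Definition~\ref{def:RecallGradingsFromHDsForTangles} with basepoint set $\{p_i, q_i\} \cup \{z_j, w_j\}$ into the corresponding formulas with basepoint set $\mathbb{X}$. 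The main obstacle is the second step, namely the careful bookkeeping of orientations needed to confirm that each closure arc absorbs the $\omega$-prescribed side; the cyclic variants correspond to choosing a different pair of opposite sites for the closure, with the combinatorial pattern of the functor rotating accordingly.
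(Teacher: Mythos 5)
Your proposal is correct and follows essentially the same route as the paper's proof: delete the basepoints that $\omega$ sends to $1$ to obtain a multi-pointed Heegaard diagram for $L$, observe that the surviving differentials are exactly the curves avoiding the remaining basepoints, match the gradings, and account for the extra factor $V$ in case~2 by the four basepoints on the newly closed component. Your treatment of the topological identification step is more explicit than the paper's, but the argument is the same.
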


\begin{proof}
If we delete those basepoints in a peculiar Heegaard diagram of $T$ that correspond to the variables that $\omega$ sends to 1, we obtain a Heegaard diagram for $L$. In $\CFL(L)$, we only count those holomorphic curves that stay away from the remaining basepoints, so we need to set those algebra elements equal to 0. On the relative gradings of the generators, this has the same effect as the functor $\Omega$. The additional tensor factor $V$ in case 2 comes from the fact that the new closed component has four instead of the usual two basepoints. 
\end{proof}

\begin{remark}\label{rem:LazyClosing}
One can obtain the minus version of link Floer homology from the generalised peculiar modules from the same idea as in the proof of the previous proposition. 
\end{remark}

\begin{definition}\label{def:reversedmirror}
	Let $T$ be an oriented 4-ended tangle in a $\mathbb{Z}$-homology 3-ball. Let $\m(T)$ be the tangle obtained by reversing the orientation of $M$, while preserving the labelling and orientation of $T$. We call $\m(T)$ the \textbf{mirror} of $T$. Note that the front and back component of $ \partial M\smallsetminus\textcolor{red}{S^1}$ are swapped. If $T$ is a tangle in $B^3$, a diagram of $\m(T)$ is obtained from one of $T$ by switching all crossings. 
	Furthermore, let $\rr(T)$ be the tangle obtained by reversing the orientation of all components of $T$. We write $\mr(T)$ for $\m(\rr(T))=\rr(\m(T))$ and call it the \textbf{reversed mirror} of $T$.
	
	If $X$ is a peculiar module, let $\m(X)$ be the peculiar module obtained from $X$ by reversing the direction of all arrows, inverting the gradings of all generators and switching the labels $p_i$ and $q_i$ for each $i=1,2,3,4$. Furthermore, if $X$ is bigraded, let $\rr(X)$ be the bigraded peculiar module obtained from $X$ by reversing the Alexander gradings of the generators, switching algebra elements $U'_j$ and $V'_j$ and reversing the Alexander gradings $A_{t_1}$ and $A_{t_2}$ of the underlying algebra. We write $\mr(X)$ for $\m(\rr(X))=\rr(\m(X))$.
\end{definition}

The following proposition should be compared to the analogous results \cite[Propositions~6.5 and~6.6, Proposition~2.3 and Corollary~2.7]{HDsForTangles} for the non-glueable Heegaard Floer theory~$\HFT$ and the polynomial invariant~$\nabla^s_T$.

\begin{proposition}\label{prop:reversedmirror}
	For any 4-ended tangle \(T\), 
	$$\CFTminus(\m(T))=\m(\CFTminus(T))
	\text{ and hence }
	\CFTd(\m(T))=\m(\CFTd(T)).$$
	Similarly,
	$$\CFTminus(\rr(T))=\rr(\CFTminus(T))
	\text{ and hence }
	\CFTd(\rr(T))=\rr(\CFTd(T)).$$
\end{proposition}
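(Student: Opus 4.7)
The plan is to prove each identity by constructing an explicit peculiar Heegaard diagram for $\m(T)$ (resp.\ $\rr(T)$) out of one for $T$, and then reading off how Definition~\ref{def:CFTd} transforms. The two cases are essentially independent and can be treated separately; the $\CFTd$ statements then follow from the $\CFTminus$ statements by applying the quotient functor $\gpqMod_{P_T,n}\to\pqMod$, which commutes tautologically with both $\m$ and $\rr$ (since the relations $U_i=U'_j=V'_j=0$ defining the quotient are stable under swapping $p_i\leftrightarrow q_i$ and $U'_j\leftrightarrow V'_j$).

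For the mirror, I would start from a peculiar Heegaard diagram $\mathcal{H}_T=(\Sigma,\A,\B)$ of $T$ and reverse the orientation of $\Sigma$; equivalently, swap the roles of the $\alpha$- and $\beta$-handlebodies. This gives a peculiar Heegaard diagram $\mathcal{H}_{\m(T)}$ for $\m(T)$ with the same underlying curves and the same set $\mathbb{T}$ of generators. Reversing the normal vector of $\Sigma$ swaps the front and back components of $\partial M\smallsetminus\textcolor{red}{S^1}$, so each basepoint previously labelled $p_i$ is now labelled $q_i$ and vice versa; by Figure~\ref{fig:HDBasepointszw}, the labels $z_j$ and $w_j$ on basepoints corresponding to closed components get interchanged as well. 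Under the natural orientation-reversing involution of $\Sigma\times[0,1]\times\mathbb{R}$, each domain $\phi\in\pi_2(\x,\y)$ in $\mathcal{H}_T$ corresponds to a domain $\bar\phi\in\pi_2(\y,\x)$ in $\mathcal{H}_{\m(T)}$ with identical multiplicities at all basepoints, and $\mathcal{M}(\phi)\cong\mathcal{M}(\bar\phi)$. Thus every arrow $\x\to\y\otimes a(\phi)$ in $\CFTminus(T)$ is replaced by an arrow $\y\to\x\otimes a(\phi)^{p\leftrightarrow q,\,U'\leftrightarrow V'}$ in $\CFTminus(\m(T))$. Using Definition~\ref{def:RecallGradingsFromHDsForTangles}, the sign flip of the Maslov index under orientation reversal of the domain compensates with the relabelling of basepoints to precisely invert the $\delta$- and Alexander gradings on generators; this is the step that requires the most care and is the main technical point. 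Comparing the outcome with Definition~\ref{def:reversedmirror} identifies the result with $\m(\CFTminus(T))$.

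For the reverse, the key observation is that the peculiar Heegaard diagram $\mathcal{H}_T$ already serves as a diagram for $\rr(T)$, with the only changes being the orientations of the meridional boundary circles. This has two consequences that I would unpack in turn. First, for each closed component $t'_j$, the convention in Figure~\ref{fig:HDBasepointszw} now assigns the label $z_j$ to the basepoint previously labelled $w_j$ (and vice versa), so in \eqref{eqn:differentialOnCFTd} every coefficient $a(\phi)$ sees its $U'_j$ and $V'_j$ exchanged. Second, for the two open components, the matching $P_{\rr(T)}$ is obtained from $P_T=\{\{i_1,o_1\},\{i_2,o_2\}\}$ by swapping the order within each pair, so $A_{t_k}$ changes sign on both the generators and the variables $p_i,q_i,U'_j,V'_j$ of $\Aminus$. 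The $\delta$-grading, the holomorphic curve counts and the algebra structure constants are unaffected. This is exactly the definition of $\rr(\CFTminus(T))$.

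Beyond the Maslov index bookkeeping in the mirror case, essentially no genuinely new input is needed: everything is a careful translation of the conventions in Definitions~\ref{def:HDsfortangles}--\ref{def:CFTd} and Figure~\ref{fig:HDBasepoints}, together with standard orientation-reversal arguments familiar from multipointed link Floer homology (cf.\ \cite[Propositions~6.5, 6.6]{HDsForTangles} for the corresponding statements at the level of $\HFT$). The fact that the curvature $p^4+q^4+U_{i_1}U_{o_1}+U_{i_2}U_{o_2}$ is symmetric in $p\leftrightarrow q$ and in $(i_k,o_k)$ swaps means that no adjustment of curvature is required, so both $\m(\CFTminus(T))$ and $\rr(\CFTminus(T))$ lie in the correct categories.
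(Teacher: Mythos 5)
Your overall strategy is the same as the paper's (reverse the orientation of the Heegaard surface for $\m$, reinterpret the same diagram with reversed tangle orientation for $\rr$, then track labels and gradings), but the mirror half contains a concrete bookkeeping error: you claim that under $\m$ the labels $z_j$ and $w_j$ of the basepoints on closed components are interchanged, and accordingly that the coefficient $a(\phi)$ picks up the substitution $U'_j\leftrightarrow V'_j$. This is wrong, and it matters: Definition~\ref{def:reversedmirror} defines $\m(X)$ by reversing arrows, inverting gradings and swapping $p_i\leftrightarrow q_i$ \emph{only}; the swap $U'_j\leftrightarrow V'_j$ belongs to $\rr(X)$. Geometrically, the label $z_j$ versus $w_j$ is decided by comparing the orientation of the tangle component (unchanged under $\m$) with the normal vector of $\Sigma$ pointing towards the $\beta$-handlebody; since the mirror reverses the orientations of \emph{both} the ambient manifold and the Heegaard surface, this normal direction is unchanged and the $z_j/w_j$ labels stay put. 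What does flip is the front/back decomposition of $\partial M\smallsetminus\textcolor{red}{S^1}$, because it is measured against the \emph{fixed} orientation of $\textcolor{red}{S^1}$ and the boundary orientation induced from $M$ — hence only $p_i\leftrightarrow q_i$. The $z_j\leftrightarrow w_j$ swap occurs precisely in the $\rr$ case, where the tangle orientation reverses while the surface and its normal do not; you handle that case correctly.

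You can see that your version cannot be right by checking Alexander gradings. For $\phi\in\pi_2(\x,\y)$ one has $A_{t'_j}(\y)-A_{t'_j}(\x)=2w_j(\phi)-2z_j(\phi)$ and $A_{t'_j}(a(\phi))=-2w_j(\phi)+2z_j(\phi)$. In $\m(\CFTminus(T))$ the generator gradings are negated and the arrow runs $\y\to\x\otimes a'$, so grading preservation forces $A_{t'_j}(a')=A_{t'_j}(\x)-A_{t'_j}(\y)=-2w_j(\phi)+2z_j(\phi)$, which is the grading of $(U'_j)^{w_j(\phi)}(V'_j)^{z_j(\phi)}$ \emph{without} the swap; with the swap the differential would fail to preserve $A_{t'_j}$ whenever $z_j(\phi)\neq w_j(\phi)$. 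So the fix is simply to delete the $U'\leftrightarrow V'$ substitution from your mirror argument; with that correction your proof agrees with the paper's. (Two smaller quibbles: the Maslov index does not change sign under the identification $\mathcal{M}(\phi)\cong\mathcal{M}(\bar\phi)$ — only the roles of source and target are exchanged, which is already what inverts the relative $\delta$-grading; and your observation that the curvature \eqref{eqn:generalcurvature} is symmetric under the relevant substitutions is a worthwhile check that the paper leaves implicit.)
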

\begin{proof}
	The first part follows from the usual arguments by changing the orientation of the Heegaard surface. If we embed the Heegaard surface in $\mathbb{R}^3$, we may think of this operation as a reflection along a plane. Since front and back component of $ \partial M\smallsetminus\textcolor{red}{S^1}$ are swapped, so are the labels $p_i$ and $q_i$. If $\x$ and $\y$ are two generators, the moduli space $\mathcal{M}(\x,\y)$ for the original Heegaard diagram becomes $\mathcal{M}(\y,\x)$ for the new diagram. This has the effect of changing the direction of the arrows in the complexes computed from these diagrams. Naturally, this reverses the relative $\delta$-grading on generators and also the Alexander gradings, since the orientation of the components of $T$ is preserved.
	
	The second part follows from the fact that orientation reversal of the tangle components simply reverses the orientation of the basepoints in a Heegaard diagram of $T$. Orientation reversal of the open components of $T$ changes the ordered matching associated with $T$, which amounts to reversing the corresponding Alexander gradings in the algebra. Orientation reversal of the closed components of $T$ also involves switching labels $w_j$ and $z_j$ for closed components of $T$, so we need to switch the algebra elements $U'_j$ and $V'_j$. 
\end{proof}

We end this section with some simple observations about $\CFTd$.

\begin{observation}
By definition, the Alexander grading corresponding to a closed tangle component vanishes on $\Ad$. Also, the differential of a peculiar module preserves the Alexander grading by Lemma~\ref{lem:CFTdGradings}. Thus, $\CFTd(T)$ decomposes into the direct sum over the Alexander gradings of closed tangle components. 
\end{observation}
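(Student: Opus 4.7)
The plan is to verify the claim essentially by unwinding definitions. The key algebraic fact to check first is that the Alexander grading $A_{t'_j}$ associated with any closed component $t'_j$ is identically zero on the peculiar algebra $\Ad$. Recall that on $\Aminus$ the grading $A_{t'_j}$ is defined by $A_{t'_j}(\iota_s)=0$, $A_{t'_j}(U'_j)=-2$ and $A_{t'_j}(V'_j)=2$ (and vanishes on the remaining generators $p_i$, $q_i$ and $U_i$, which concern only the four boundary ends). Passing to $\Ad$ kills $U'_j$ and $V'_j$ (as well as $U_i$), leaving only $\mathbb{F}_2$-linear combinations of products of the $p_i$, $q_i$ and idempotents~$\iota_i$. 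Every such element has $A_{t'_j}$-degree zero, which is the first sentence of the observation.

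Next I would invoke Lemma~\ref{lem:CFTdGradings}, which says that the differential $\partial\co \CFTd(T)\to \CFTd(T)\otimes_{\Id}\Ad$ preserves every Alexander grading, where the grading on the tensor product is the sum of the gradings on each factor. Combining this with the preceding observation, for each generator $\x$ and each closed component $t'_j$, every summand $\y\otimes a$ appearing in $\partial\x$ satisfies
\[
A_{t'_j}(\y) = A_{t'_j}(\y)+A_{t'_j}(a) = A_{t'_j}(\x).
\]
Thus $\partial$ respects the partition of the generating set of $\CFTd(T)$ according to the values of $A_{t'_j}$ for all closed components simultaneously.

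Writing $\CFTd(T)=\bigoplus_{\mathbf{s}}\CFTd(T,\mathbf{s})$, where $\mathbf{s}=(A_{t'_1},\dots,A_{t'_n})\in\mathbb{Z}^n$ ranges over the possible tuples of Alexander gradings of closed tangle components, the previous paragraph shows that each summand is a subcomplex, so the decomposition is as curved type~D structures over $\Ad$. There is no serious obstacle here: the content is entirely that the algebra $\Ad$ carries trivial Alexander grading with respect to closed components, while the gradings for the two open components $t_1,t_2$ are genuinely nontrivial on $\Ad$ (and so $\CFTd(T)$ does not split along those Alexander gradings). This matches the analogous decomposition of $\CFL$ in the classical theory along the Alexander gradings of closed link components.
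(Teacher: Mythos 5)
Your proposal is correct and follows exactly the argument sketched in the paper's observation: $A_{t'_j}$ vanishes on $\Ad$ because the only generators of $\Aminus$ with nontrivial $A_{t'_j}$-degree, namely $U'_j$ and $V'_j$, are set to zero in the quotient, and Lemma~\ref{lem:CFTdGradings} then forces each Alexander-grading summand for the closed components to be a subcomplex. Nothing further is needed.
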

\begin{observation}
In \cite[Theorem~1.3]{OSHFKalt} and \cite[Theorem~1.3]{OSHFL}, Ozsv\'{a}th and Szab\'{o} showed that the link Floer homology $\HFL$ of alternating links is completely determined by the Alexander polynomial (and, to be precise, the signature, but this is only needed to fix the absolute homological and $\delta$-grading). The proof generalises immediately to $\HFT$, using the generalised clock Theorem~\cite[Theorem~1.13]{MyEssay}. 
\end{observation}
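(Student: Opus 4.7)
The plan is to adapt the Ozsv\'ath--Szab\'o strategy for alternating links to the tangle setting by exhibiting $\CFT(T)$ as a \emph{thin} complex: all generators sharing a given Alexander multi-grading also share a single $\delta$-grading. Since the differential of $\CFT(T)$ preserves the Alexander grading and shifts $\delta$ by $+1$ (as in Lemma~\ref{lem:CFTdGradings} restricted to the uncolored differential), thinness forces $\partial\equiv 0$ on each Alexander-graded summand, so $\HFT(T) = \CFT(T)$ as a relatively bigraded module.

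First I would associate to an alternating projection of $T$ a Kauffman-states Heegaard diagram $\mathcal{H}_T$ of the type used in~\cite{OSHFKalt,OSHFL}, adapted to the tangle setting as in Definition~\ref{def:HDsfortangles}: place $\alpha$-circles around each bounded region of the planar projection (apart from a fixed unbounded region) and $\beta$-circles as meridians of the crossings and tangle components. After the boundary collapse described in Definition~\ref{def:HDsfortangles}, the elements of $\mathbb{T}(\mathcal{H}_T)$ are in bijection with tangle Kauffman states, each of which assigns to every crossing and every internal region an incident quadrant; the site decomposition $\mathbb{T} = \coprod_{s} \mathbb{T}_{s}$ records which of the four arcs $a,b,c,d$ receives the Kauffman marker at the distinguished external region.

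Next, I would compute the relative $\delta$- and Alexander gradings of a Kauffman state $\x$ by summing purely local contributions at each crossing, exactly as in the link case. The generalized clock theorem~\cite[Theorem~1.13]{MyEssay} asserts that any two Kauffman states of an alternating tangle diagram (belonging to the same site) are connected by a finite sequence of elementary clock moves at single crossings. A direct local check at each move type shows that in the alternating setting the change in the Maslov contribution is exactly cancelled by the change in basepoint multiplicities, so $\delta$ is invariant along the sequence. Hence all Kauffman states in a fixed Alexander grading carry the same $\delta$-grading, thinness follows, and $\partial\equiv 0$. The total dimension of $\HFT(T) = \CFT(T)$ in each multi-Alexander grading is then recorded by the graded Euler characteristic, which is precisely $\nabla_T^s$ by the main theorem of~\cite{HDsForTangles}, proving that $\HFT(T)$ is determined by the tangle Alexander polynomial up to an overall absolute grading shift.

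The main obstacle is bookkeeping rather than any new idea: one must check that the four arcs $a,b,c,d$ and the induced decomposition $\mathbb{T} = \coprod_{s} \mathbb{T}_{s}$ interact cleanly with the Kauffman state combinatorics, and that the generalized clock theorem of~\cite{MyEssay} applies uniformly within each site so that thinness extends over all four summands $\CFT(T).\iota_i$ simultaneously. Once thinness is established, the absolute $\delta$- and homological gradings can be fixed by a signature-type invariant of $T$ in exact analogy with the closed case, which accounts for the parenthetical remark in the observation.
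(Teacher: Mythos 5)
Your proposal is correct and follows exactly the argument the paper has in mind: the paper itself offers only the one-line sketch (Kauffman-state diagrams, thinness via the generalised clock theorem, hence vanishing differential and determination by $\nabla_T^s$), and your write-up is a faithful expansion of that sketch, including the correct observation that the hat differential preserves both the Alexander grading and the site decomposition so that thinness can be checked one summand $\CFT(T).\iota_i$ at a time. No changes needed.
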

\begin{question}
Given an alternating tangle \(T\), is the bigraded chain homotopy type of \(\CFTd(T)\) determined by \(\nabla_T^s\)?
\end{question}

% 3_Pairing.tex

\section{Pairing 4-ended tangles}\label{sec:Pairing}
In this section, we prove a glueing formula for $\CFTd$: given the peculiar modules of two 4-ended tangles $T_1$ and $T_2$, we compute the Heegaard Floer homology of the link $L$ obtained by glueing $T_1$ to $T_2$, up to at most one stabilisation. The proof is essentially a calculation of a bordered sutured type~AA bimodule. So let us start by recalling Zarev's Heegaard Floer theory for bordered sutured manifolds. 

\subsection{Review of bordered sutured Heegaard Floer theory}

We will assume some familiarity with \cite{Zarev,ZarevJoining,ZarevThesis} and only give a short review of the basic geometric objects involved.

\begin{definition}
	An \textbf{arc diagram} $\mathcal{Z}$ is a triple $(Z, \mathbf{a},M)$, where $Z$ is a (possibly empty) set of oriented line segments, $\mathbf{a}$ an even number of points on $Z$ and $M$ a matching of points in $\mathbf{a}$. The \textbf{graph $G(\mathcal{Z})$ of an arc diagram} $\mathcal{Z}$ is the graph obtained from the line segments $Z$ by adding an edge between matched points in $\mathbf{a}$. Given an arc diagram $\mathcal{Z}$, $-\mathcal{Z}$ is defined to be the same arc diagram, except that the orientation of the line segments is reversed. 
\end{definition}
\begin{definition}
	A \textbf{bordered sutured manifold with $\alpha$- and $\beta$-arcs} is a tuple 
	\[(Y,\Gamma, \mathcal{Z}_\alpha,\phi_\alpha, \mathcal{Z}_\beta,\phi_\beta),\]
	where
	\begin{itemize}
		\item $Y$ is a sutured manifold with sutures $\Gamma$; more precisely, $Y$ is an oriented manifold and $\Gamma\subset\partial Y$ are embedded oriented simple closed curves, dividing $\partial Y$ into two oriented open surfaces-with-boundary $R_-$ and $R_+$ such that $\partial R_-$ is equal to $\Gamma$ as embedded oriented 1-manifolds;
		\item $\mathcal{Z}_\alpha=(Z_\alpha, \mathbf{a}_\alpha,M_\alpha)$  and $\mathcal{Z}_\beta=(Z_\beta, \mathbf{a}_\beta,M_\beta)$ are arc diagrams;
		\item $\phi_\alpha$ is an embedding of $G(\mathcal{Z}_\alpha)$ into the closure of $R_-$ such that $\phi_\alpha(Z_\alpha)\subset\Gamma$;
		\item $\phi_\beta$ is an embedding of $G(\mathcal{Z}_\beta)$ into the closure of $R_+$ such that $\phi_\beta(Z_\beta)\subset\Gamma$ and $\phi_\alpha(\mathcal{Z}_\alpha)\cap\phi_\beta(\mathcal{Z}_\beta)=\emptyset$;
	\end{itemize}
	such that the map
	\begin{equation}\label{eqn:homlinind}
	\pi_0(\Gamma\smallsetminus (\phi_\alpha(Z_\alpha)\cup \phi_\beta(Z_\beta)))\rightarrow \pi_0(\partial Y\smallsetminus (\im(\phi_\alpha)\cup \im(\phi_\beta)))
	\end{equation}
	is surjective. 
\end{definition}
\begin{remark}
	Condition (\ref{eqn:homlinind}) is called \textbf{homological linear independence}. If we drop this condition, Zarev's invariants fail to be well-defined in general. Note that unlike Zarev, we allow the sutured surfaces of bordered sutured manifolds to be degenerate in the sense that surgery along all edges between matched points may contain closed components. This allows us to consider more general bordered sutured manifolds. If we restrict to non-degenerate sutured surfaces, homological linear independence is automatically satisfied, see \cite[Proposition~3.6]{Zarev}. 
\end{remark}

\begin{definition}\label{def:HDforBorderedSuturedMfdls}
	A \textbf{Heegaard diagram of a bordered sutured manifold} is obtained from a Heegaard diagram of the underlying sutured manifold by adding the graphs of the arc diagrams to it. To be more precise, consider a Heegaard diagram of the underlying sutured manifold. Then we can embed the graph $G(\mathcal{Z}_\alpha)$ into $R_-$ in such a way that it misses the 2-handles $D^1\times D^2$ corresponding to the $\alpha$-curves, simply by sliding them off $S^0\times D^2\subset R_-$. This gives us an embedding of $G(\mathcal{Z}_\alpha)$ into the Heegaard surface such that its image does not intersect the $\alpha$-curves. We view the images of the edges connecting points in $\mathbf{a}$ as $\alpha$-arcs. We proceed similarly for $G(\mathcal{Z}_\beta)$ in $R_+$ and the $\beta$-curves. 
	
	The image of $Z$ lies on the boundary of the Heegaard diagram, 
	ie the sutures, which we usually draw in \textcolor{darkgreen}{green}. We put a marked point, a \textbf{basepoint}, in every open component of the boundary minus the image of $Z$.
\end{definition}

Given an arc diagram $\mathcal{Z}$, Zarev defines a moving strands algebra $\mathcal{A}(\mathcal{Z})$, and given a bordered sutured Heegaard diagram, Zarev defines various bimodules over the strands algebras corresponding to its arc diagrams. Each arc diagram can either play the role of a type~D or type~A side of the bimodule; in fact, this is true for each connected component of an arc diagram, in which case, one obtains multimodules. For a discussion of type~A and~D structures as algebraic objects, we refer the reader to section~\ref{sec:AlgStructFromGDCats}, in particular Examples~\ref{exa:HighBrowDefTypeDoverI} and~\ref{exa:HighBrowDefTypeAoverI}.

A central result of Zarev's work is a glueing theorem, which is phrased in terms of the $\boxtimes$-tensor product (see Definition~\ref{def:PairingTypeDandA}). The following theorem summarizes his theory for bimodules. However, it is true for general multimodules. 

\begin{theorem}[{\cite[Theorem~3.10]{Zarev} or \cite[Theorem~12.3.2]{ZarevThesis}}]\label{thm:GlueingZarev}
Let \(Y\) be a bordered sutured manifold, bordered by two arc diagrams \(-\mathcal{Z}_1\) and \(\mathcal{Z}_2\). Then there are bimodules, well defined
up to homotopy equivalence:
\begin{align*}
_{\mathcal{A}(\mathcal{Z}_1)}
\BSAA(Y)
_{\mathcal{A}(\mathcal{Z}_2)} 
&&&
^{\mathcal{A}(\mathcal{Z}_1)}
\BSDA(Y)
_{\mathcal{A}(\mathcal{Z}_2)}\\
_{\mathcal{A}(\mathcal{Z}_1)}
\BSAD(Y)
^{\mathcal{A}(\mathcal{Z}_2)} 
&&&
^{\mathcal{A}(\mathcal{Z}_1)}
\BSDD(Y)
^{\mathcal{A}(\mathcal{Z}_2)}
\end{align*}
Let \(Y_1\) and \(Y_2\) be two such manifolds, bordered by \(-\mathcal{Z}_1\) and \(\mathcal{Z}_2\), and \(-\mathcal{Z}_2\) and \(\mathcal{Z}_3\),
respectively, and let \(Y_1\cup_{\mathcal{Z}_2} Y_2\) be the 3-manifold obtained by glueing \(Y_1\) and \(Y_2\) together along tubular neighbourhoods of the images of \(\mathcal{Z}_2\) on \(\partial Y_1\), respectively \(-\mathcal{Z}_2\) on \(\partial Y_2\). Then there are homotopy equivalences
\begin{align*}
_{\mathcal{A}(\mathcal{Z}_1)}
\BSAA(Y_1\cup_{\mathcal{Z}_2} Y_2)
_{\mathcal{A}(\mathcal{Z}_3)} 
& \cong
_{\mathcal{A}(\mathcal{Z}_1)}
\BSAA(Y)
_{\mathcal{A}(\mathcal{Z}_2)}
\boxtimes
^{\mathcal{A}(\mathcal{Z}_2)}
\BSDA(Y)
_{\mathcal{A}(\mathcal{Z}_3)}
,\\
~^{\mathcal{A}(\mathcal{Z}_1)}
\BSDA(Y_1\cup_{\mathcal{Z}_2} Y_2)
_{\mathcal{A}(\mathcal{Z}_3)} 
& \cong
~^{\mathcal{A}(\mathcal{Z}_1)}
\BSDD(Y)
^{\mathcal{A}(\mathcal{Z}_2)}
\boxtimes
_{\mathcal{A}(\mathcal{Z}_2)}
\BSAA(Y)
_{\mathcal{A}(\mathcal{Z}_3)},
\end{align*}
etc. Any combination of bimodules for \(Y_1\) and \(Y_2\) can be used, where one is type~A for
\(\mathcal{A}(\mathcal{Z}_2)\), and the other is type~D for \(\mathcal{A}(\mathcal{Z}_2)\).
\end{theorem}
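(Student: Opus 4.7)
The proof splits naturally into two independent pieces: establishing that the four bimodules are well-defined invariants of the bordered sutured manifold $Y$, and establishing the pairing formula. The plan is to treat these in turn, following the standard template of bordered Heegaard Floer theory adapted to the sutured setting.

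For the first piece, one starts by fixing a bordered sutured Heegaard diagram in the sense of Definition~\ref{def:HDforBorderedSuturedMfdls} and defining the structure maps by counting embedded pseudo-holomorphic curves in $\Sigma\times[0,1]\times\mathbb{R}$ with prescribed asymptotics. For the type~A side, one counts curves with sequences of Reeb chord asymptotics at east/west infinity whose shadows lie in $\phi_\alpha(G(\mathcal{Z}_\alpha))$ or $\phi_\beta(G(\mathcal{Z}_\beta))$; for the type~D side, one incorporates these chords as algebra element outputs. The structure relations ($\partial^2=0$ for type~D, the $A_\infty$ relations for type~A, the compatibility relations for mixed types) follow from counting ends of the one-dimensional moduli spaces, using that boundary degenerations contribute terms that cancel in pairs under homological linear independence~\eqref{eqn:homlinind}. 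Invariance of the homotopy type under the three Heegaard moves (isotopy of $\alpha$- and $\beta$-curves away from the embedded arc diagrams, handleslides not involving the arc graphs, and stabilisation) is then proved along the lines of \cite{LOT}, producing explicit chain homotopy equivalences for each move; the Cancellation Lemma~\ref{lem:AbstractCancellation} is effectively the formal backbone here.

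For the pairing formula, the strategy is the standard neck-stretching argument. Given $Y_1\cup_{\mathcal{Z}_2}Y_2$, one picks Heegaard diagrams $\mathcal{H}_1$ and $\mathcal{H}_2$ compatible with the arc diagram $\mathcal{Z}_2$ on the shared boundary and glues them to obtain a diagram $\mathcal{H}$ for the union. One then introduces a one-parameter family of almost complex structures on $\Sigma\times[0,1]\times\mathbb{R}$ that stretches the neck along a circle parallel to the image of $\mathcal{Z}_2$. In the limit, the Gromov compactification of a moduli space in $\mathcal{H}$ decomposes into matched pairs of moduli spaces in $\mathcal{H}_1$ and $\mathcal{H}_2$, with matched sequences of Reeb chord asymptotics along the cut. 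Summing over all matched pairs is precisely the combinatorics of the $\boxtimes$-tensor product (Definition~\ref{def:PairingTypeDandA}): the type~D side produces an algebra output sequence, and the type~A side consumes exactly that sequence. The hypothesis that one side is type~A and the other is type~D is essential both for the algebraic pairing to be defined and for the matched degenerations to exhaust the ends of the compactified moduli spaces. One still needs a finiteness/boundedness hypothesis to ensure $\boxtimes$ is defined; Zarev arranges this by choosing one of the factors to admit a bounded representative, or by appealing to provinciality.

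The main technical obstacle is controlling the degenerations. One must rule out extraneous limits in which holomorphic curves bubble off or break along the neck in ways not accounted for by the simple $\boxtimes$-pairing; this is where homological linear independence~\eqref{eqn:homlinind}, transversality arguments for generic almost complex structures, and careful index calculations enter. A second, more bookkeeping-level obstacle is showing that the resulting identification is chain-level and not merely between homologies, together with its independence of auxiliary choices — this is handled by comparing different neck lengths via a continuation map and invoking the functoriality of $\boxtimes$ as recorded in Remark~\ref{rem:PairingIsFunctorial}.
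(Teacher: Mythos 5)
You should know first that the paper offers no proof of this statement at all: it is imported verbatim from Zarev (\cite[Theorem~3.10]{Zarev}, \cite[Theorem~12.3.2]{ZarevThesis}) and used as a black box in the proof of Theorem~\ref{thm:CFTdGeneralGlueing}. So there is no in-paper argument to compare yours against; the relevant comparison is with Zarev's original proof, which in turn follows the Lipshitz--Ozsv\'ath--Thurston template.

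Judged on its own terms, your proposal is a correct roadmap of that template but not a proof: every step where the actual mathematical content lives is named and then deferred. Concretely: (i) the definition of the structure maps requires setting up the moduli spaces of curves with Reeb chord asymptotics, proving transversality, compactness and the gluing results for them, and establishing the index formula --- none of which is sketched beyond ``one counts curves''; (ii) the claim that the structure relations follow from counting ends of one-dimensional moduli spaces hides the analysis of which degenerations can occur (two-story buildings, join/split curve degenerations at east infinity, boundary degenerations), and it is exactly the matching of the join/split degenerations with the $A_\infty$ operations on $\mathcal{A}(\mathcal{Z})$ that makes the relations come out right; (iii) for the pairing, the identification of the limiting matched pairs with the combinatorics of $\boxtimes$ is the heart of the theorem and is asserted rather than argued --- in particular one must show that the only codimension-one degenerations in the stretched limit are the ones the tensor product sees, and that the chain-level identification is compatible over all Alexander/spin$^c$ structures and gradings. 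It is also worth noting that Zarev's written proof does not run the neck-stretching argument directly in the form you describe: for the pairing he adapts the nice-diagram and time-dilation proofs of Lipshitz--Ozsv\'ath--Thurston, which are rigorous implementations of the same degeneration heuristic. Your one genuinely apt observation --- that homological linear independence~\eqref{eqn:homlinind} is what rules out problematic boundary degenerations and makes the invariants well defined --- matches the remark the paper itself makes after the definition of bordered sutured manifolds. In short, the proposal is a faithful outline of the known proof strategy, but as a self-contained argument it has gaps at every analytically nontrivial step; for the purposes of this paper the correct move is simply to cite Zarev, which is what the author does.
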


\subsection{A first glueing formula}

\begin{definition}\label{def:tanglepairing}
	Given two 4-ended tangles $T_1$ and $T_2$, let $L(T_1,T_2)$ be the link obtained by glueing $T_1$ to $T_2$ as shown in Figure~\ref{fig:glueing2tangles1}. Equivalently, $L(T_1,T_2)$ is obtained by glueing  \reflectbox{$T_1$} to $T_2$ as shown in Figure~\ref{fig:glueing2tangles2}, where \reflectbox{$T_1$} is the tangle obtained from $T_1$ by rotating it along a vertical axis (along with the parametrization of the boundary). Note that $L(T_1,T_2)=L(T_2,T_1)$, which can be seen by rotating the link in Figure~\ref{fig:glueing2tangles2} along the vertical axis by $\pi$.
\end{definition}

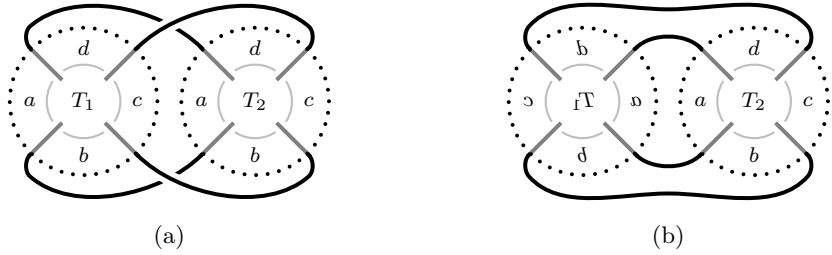
\begin{figure}[ht]
	\centering
	\psset{unit=0.2}
	\begin{subfigure}[b]{0.45\textwidth}\centering
		\begin{pspicture}[showgrid=false](-11,-7)(11,7)
		\rput{-45}(0,0){
			
			% tangle circles inside
			\pscircle[linecolor=lightgray](4,4){2.5}
			\pscircle[linecolor=lightgray](-4,-4){2.5}
			
			% arcs segments in T_1
			\psline[linecolor=white,linewidth=\stringwhite](1,-4)(-2,-4)
			\psline[linewidth=\stringwidth,linecolor=gray](1,-4)(-2,-4)
			\psline[linecolor=white,linewidth=\stringwhite](-4,1)(-4,-2)
			\psline[linewidth=\stringwidth,linecolor=gray](-4,1)(-4,-2)
			\psline[linecolor=white,linewidth=\stringwhite](-6,-4)(-9,-4)
			\psline[linewidth=\stringwidth,linecolor=gray](-6,-4)(-9,-4)
			\psline[linecolor=white,linewidth=\stringwhite](-4,-6)(-4,-9)
			\psline[linewidth=\stringwidth,linecolor=gray](-4,-6)(-4,-9)
			
			% arcs segments in T_2
			\psline[linecolor=white,linewidth=\stringwhite](-1,4)(2,4)
			\psline[linewidth=\stringwidth,linecolor=gray](-1,4)(2,4)
			\psline[linecolor=white,linewidth=\stringwhite](4,-1)(4,2)
			\psline[linewidth=\stringwidth,linecolor=gray](4,-1)(4,2)
			\psline[linecolor=white,linewidth=\stringwhite](6,4)(9,4)
			\psline[linewidth=\stringwidth,linecolor=gray](6,4)(9,4)
			\psline[linecolor=white,linewidth=\stringwhite](4,6)(4,9)
			\psline[linewidth=\stringwidth,linecolor=gray](4,6)(4,9)
			
			% tangle circles outside
			\pscircle[linestyle=dotted,linewidth=\stringwidth](4,4){5}
			\pscircle[linestyle=dotted,linewidth=\stringwidth](-4,-4){5}
			
			% arc segments left 
			\psecurve[linewidth=\stringwidth]{C-C}(8,-3)(-1,4)(-10,-3)(-9,-4)(-8,-3)
			\psecurve[linewidth=\stringwidth]{C-C}(-3,8)(4,-1)(-3,-10)(-4,-9)(-3,-8)
			
			% cover of arc segments left 
			\psecurve[linewidth=\stringwhite,linecolor=white](-8,3)(1,-4)(10,3)(9,4)(8,3)
			\psecurve[linewidth=\stringwhite,linecolor=white](3,-8)(-4,1)(3,10)(4,9)(3,8)
			
			% arc segments right
			\psecurve[linewidth=\stringwidth]{C-C}(-8,3)(1,-4)(10,3)(9,4)(8,3)
			\psecurve[linewidth=\stringwidth]{C-C}(3,-8)(-4,1)(3,10)(4,9)(3,8)
			
			% labelling
			\rput{45}(-4,-4){$T_1$}
			\rput{45}(4,4){$T_2$}
			
			\rput{45}(1.5,1.5){$a$}
			\rput{45}(6.5,1.5){$b$}
			\rput{45}(6.5,6.5){$c$}
			\rput{45}(1.5,6.5){$d$}
			
			\rput{45}(-6.5,-6.5){$a$}
			\rput{45}(-1.5,-6.5){$b$}
			\rput{45}(-1.5,-1.5){$c$}
			\rput{45}(-6.5,-1.5){$d$}
		}
		\end{pspicture}
		\caption{}\label{fig:glueing2tangles1}
	\end{subfigure}
	\begin{subfigure}[b]{0.45\textwidth}\centering
		\begin{pspicture}[showgrid=false](-11,-7)(11,7)
		\rput{-45}(0,0){
			
			% tangle circles inside
			\pscircle[linecolor=lightgray](4,4){2.5}
			\pscircle[linecolor=lightgray](-4,-4){2.5}
			
			% arcs segments in T_1
			\psline[linecolor=white,linewidth=\stringwhite](1,-4)(-2,-4)
			\psline[linewidth=\stringwidth,linecolor=gray](1,-4)(-2,-4)
			\psline[linecolor=white,linewidth=\stringwhite](-4,1)(-4,-2)
			\psline[linewidth=\stringwidth,linecolor=gray](-4,1)(-4,-2)
			\psline[linecolor=white,linewidth=\stringwhite](-6,-4)(-9,-4)
			\psline[linewidth=\stringwidth,linecolor=gray](-6,-4)(-9,-4)
			\psline[linecolor=white,linewidth=\stringwhite](-4,-6)(-4,-9)
			\psline[linewidth=\stringwidth,linecolor=gray](-4,-6)(-4,-9)
			
			% arcs segments in T_2
			\psline[linecolor=white,linewidth=\stringwhite](-1,4)(2,4)
			\psline[linewidth=\stringwidth,linecolor=gray](-1,4)(2,4)
			\psline[linecolor=white,linewidth=\stringwhite](4,-1)(4,2)
			\psline[linewidth=\stringwidth,linecolor=gray](4,-1)(4,2)
			\psline[linecolor=white,linewidth=\stringwhite](6,4)(9,4)
			\psline[linewidth=\stringwidth,linecolor=gray](6,4)(9,4)
			\psline[linecolor=white,linewidth=\stringwhite](4,6)(4,9)
			\psline[linewidth=\stringwidth,linecolor=gray](4,6)(4,9)
			
			% tangle circles outside
			\pscircle[linestyle=dotted,linewidth=\stringwidth](4,4){5}
			\pscircle[linestyle=dotted,linewidth=\stringwidth](-4,-4){5}
			
			% arc segments right
			\psecurve[linewidth=\stringwidth]{C-C}(-2,-1)(1,-4)(4,-1)(1,2)
			\psecurve[linewidth=\stringwidth]{C-C}(2,1)(-1,4)(-4,1)(-1,-2)
			
			% arc segments left 
			\psecurve[linewidth=\stringwidth]{C-C}(3,8)(4,9)(3,10)(-4.3,4.3)(-10,-3)(-9,-4)(-8,-3)
			\psecurve[linewidth=\stringwidth]{C-C}(-3,-8)(-4,-9)(-3,-10)(4.3,-4.3)(10,3)(9,4)(8,3)
			
			% labelling
			\rput{45}(4,4){$T_2$}
			\rput{45}(-4,-4){\reflectbox{$T_1$}}
			
			\rput{45}(-1.5,-1.5){\reflectbox{$a$}}
			\rput{45}(-6.5,-1.5){\reflectbox{$d$}}
			\rput{45}(-6.5,-6.5){\reflectbox{$c$}}
			\rput{45}(-1.5,-6.5){\reflectbox{$b$}}
			
			\rput{45}(6.5,6.5){$c$}
			\rput{45}(1.5,6.5){$d$}
			\rput{45}(1.5,1.5){$a$}
			\rput{45}(6.5,1.5){$b$}
		}
		\end{pspicture}
		\caption{}\label{fig:glueing2tangles2}
	\end{subfigure}
	\caption{The link obtained by glueing two 4-ended tangles together along matching sites.}\label{fig:glueing2tangles}
\end{figure}

\begin{theorem}[(Glueing Theorem, version 1)]\label{thm:CFTdGeneralGlueing}
	Let \(T_1\) and \(T_2\) be two 4-ended tangles and \(L=L(T_1,T_2)\). Let \(\mathcal{P}\) be the strictly unital left-left type~AA structure over \((\Ad,\Ad)\) defined in Figure~\ref{fig:GlueingTypeAAstructure}, where the Alexander grading of \(\Ad\) is given by the ordered matching for \(T_2\). Then
	\[\CFL(L)\otimes V^{i}\cong\CFTd(\rr(T_1))\boxtimes\,\mathcal{P}\boxtimes\,\CFTd(T_2)\]
	where \(V\) is the 2-dimensional vector space supported in Alexander gradings \(t\) and~\(t^{-1}\) and identical \(\delta\)-gradings and where \(i=\vert T_1\vert+\vert T_2\vert-\vert L\vert-2\in\{0,1\}\). 
\end{theorem}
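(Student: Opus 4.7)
The plan is to derive the glueing formula by realizing $\CFTd$ as essentially a bordered sutured invariant and then invoking Zarev's Glueing Theorem~\ref{thm:GlueingZarev}. More precisely, I would view the link complement $S^3 \smallsetminus \nu(L)$ as the union of three bordered sutured pieces: the tangle complement of $T_1$ (suitably oriented so as to match up with $T_2$ on the right-hand side), a ``buffer'' piece~$Y_{\mathcal{P}}$ which is essentially a thickened 4-punctured sphere carrying the connection data, and the tangle complement of $T_2$. The buffer is what will eventually produce the bimodule $\mathcal{P}$ of the statement; its role is to absorb the discrepancy between Zarev's arc-diagram combinatorics and the peculiar algebra $\Ad$.

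The first main step is to identify $\CFTd(T_2)$ with Zarev's type~D bordered sutured invariant of the tangle complement $M_{T_2} := M \smallsetminus \nu(T_2)$, equipped with a sutured structure whose sutures are a regular neighbourhood of the four arcs $a,b,c,d$ on $\partial M$ (each arc contributing one suture pair). A comparison of the two Heegaard diagram definitions (Definition~\ref{def:HDsfortangles} versus Definition~\ref{def:HDforBorderedSuturedMfdls}) shows that peculiar Heegaard diagrams can be reinterpreted as bordered sutured diagrams once one attaches a boundary degeneration "collar" encoding the $\alpha$-circle $\red S^1$; the curvature $p^4+q^4$ of $\CFTd$ arises precisely from the two $\alpha$- and two $\beta$-injective boundary degenerations that survive in $\Ad$ after setting $U_i = U'_j = V'_j = 0$ (compare the proof of Theorem~\ref{thm:PecMod}). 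The analogous identification for $T_1$ requires reversing the orientation of the boundary, which accounts for the passage from $\CFTd(T_1)$ to $\CFTd(\rr(T_1))$: reversing the orientation of $M$ relative to the tangle exchanges incoming and outgoing strands at the boundary, and this is exactly how $\rr$ acts on Alexander gradings and on the algebra elements $U'_j, V'_j$ by Definition~\ref{def:reversedmirror} and Proposition~\ref{prop:reversedmirror}.

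Next, I would compute the type~AA bimodule $\mathcal{P}$ directly from a model Heegaard diagram for $Y_{\mathcal{P}}$. Since $Y_{\mathcal{P}}$ is topologically trivial (a thickened sphere with four tangle strands running through it), a nice Heegaard diagram can be chosen in which the generators and holomorphic discs are entirely combinatorial, and the result will manifestly agree with the explicit $\mathcal{P}$ given in Figure~\ref{fig:GlueingTypeAAstructure}. At this stage one uses the Pairing Adjunction (Theorem~\ref{thm:PairingAdjunction}) to translate between Zarev's strands algebra on the pairing surface and the peculiar algebra $\Ad$; effectively, $\mathcal{P}$ encodes the change-of-algebra from the moving strands algebra of the 4-punctured sphere to $\Ad\otimes\Ad$. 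Boundedness and strict unitality need to be checked so that the iterated $\boxtimes$-tensor product makes sense, which follows from admissibility of the chosen diagrams together with the explicit form of $\mathcal{P}$.

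The final step is the application of Zarev's Glueing Theorem~\ref{thm:GlueingZarev} twice, once along each of the two interfaces, to obtain
\[
\SFC\bigl(S^3\smallsetminus\nu(L),\Gamma\bigr)\;\simeq\; \CFTd(\rr(T_1))\boxtimes\,\mathcal{P}\boxtimes\,\CFTd(T_2),
\]
where the sutured Heegaard Floer complex on the left is related to $\CFL(L)$ up to a tensor factor of $V$ accounting for the extra basepoints generated by the buffer piece. This is where the factor $V^i$ with $i=|T_1|+|T_2|-|L|-2\in\{0,1\}$ arises: depending on whether the four open ends of $T_1$ and $T_2$ reconnect into one or two link components, one either has one or two extra basepoint pairs compared to a minimal multi-pointed diagram for~$L$, cf.~the parallel count in Proposition~\ref{prop:LazyClosing}. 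The identification with the reversed mirror $\rr(T_1)$ (rather than $\mr(T_1)$, as appears in Corollary~\ref{cor:INTROCFTdGlueingAsMorphism}) is natural here because $\mathcal{P}$ absorbs the mirroring induced by the orientation reversal of $\partial M_{T_1}$, leaving only the Alexander grading reversal.

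I expect the main technical obstacle to be the identification in the first step: matching $\CFTd$ with Zarev's bordered sutured invariant requires carefully accounting for the special $\alpha$-circle $\red S^1$ (which is not part of Zarev's setup) and explaining how the four basepoint pairs $(p_i,q_i)$ emerge from the arc diagram combinatorics on the pairing sphere. Once this dictionary is established, the rest of the proof is a mechanical application of Zarev's machinery together with the bookkeeping for the $V^i$ factor.
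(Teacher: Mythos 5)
Your proposal follows essentially the same route as the paper: decompose the link complement into the two tangle complements glued to a thickened 4-punctured sphere, apply Zarev's Glueing Theorem, compute the type~AA bimodule of the middle piece from an explicit Heegaard diagram, and use the Pairing Adjunction to pass between Zarev's strands algebras and (quotients of) $\Ad$, with the $V^i$ factor coming from the basepoint count on the components containing the open strands. You have also correctly isolated the main technical point, namely the identification of $\CFTd$ with $\BSD$ near the tangle ends, which the paper handles via the quotient algebra $\Anull$ (setting $p_3=0=q_1$, which kills the curvature) together with the local domain comparison of Lemma~\ref{lem:IdentificationCFTdBSD}.
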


\begin{remark}
	Let us discuss the type AA structure $\mathcal{P}$ in Figure~\ref{fig:GlueingTypeAAstructure} in more detail. First of all, the identity action is implicit. Secondly, the idempotent of a generator ${\red x}{\blue y}$ is $\iota_x$ in the first component and $\iota_y$ in the second. Likewise, the algebra elements in the first and second components are coloured {\red red} and {\blue blue}, respectively. This is the same convention that we use in the proof of Theorem~\ref{thm:CFTdGeneralGlueing} below, where we identify $\mathcal{P}$ with the bordered sutured type~AA structure $\mathcal{P}'$ illustrated in Figure~\ref{fig:GlueingDomains}. This colour convention forestalls Theorem~\ref{thm:CFTdGlueingAsMorphism}, where we will interpret the peculiar module of $\rr(T_1)$ (or more precisely $\mr(T_1)$) as an ${\red\alpha}$-curve, the one for $T_2$ as a ${\blue\beta}$-curve and the link Floer homology of $L(T_1,T_2)$ in terms of the Lagrangian intersection Floer homology of those two curves. 
	
	The $\delta$- and Alexander gradings of each generator of $\mathcal{P}$ are specified by the exponents of~$\delta$ and $t$, where we write
	$$t^{(a_1,a_2)}=
	\begin{cases*}
	t_1^{a_1}t_2^{a_2} & \text{if $\vert T_1\vert+\vert T_2\vert-\vert L\vert-2=0$}\\
	t^{a_1+a_2} & \text{if $\vert T_1\vert+\vert T_2\vert-\vert L\vert-2=1$}
	\end{cases*}
	$$
	just as in the definition of the functor $\Omega$ for Proposition~\ref{prop:LazyClosing}. 
	The gradings on $\mathcal{P}$ are defined in such a way that pairing with any two peculiar modules gives a chain complex which preserves Alexander grading and increases $\delta$-grading by 1. 
\end{remark}

\begin{example}
	The functor $\Omega$, interpreted as a type A structure, can be seen as a special case of this glueing formula by taking $T_1$ to be the trivial tangle from Figure~\ref{fig:CFTdForSomeRatTangles}. 
\end{example}

\begin{figure}[b]
	\centering
	\[
	\begin{tikzcd}[row sep=1.9cm, column sep=2.5cm]
	% 1st row
	&
	% ba
	t^{A(q_2)}\delta^{\frac{1}{2}}\textcolor{red}{a}\blue b
	\arrow[leftarrow,bend right=20]{ld}[description]{\qn{2}}
	&
	% cd 
	t^{-A(q_4)}\delta^{\frac{1}{2}}\textcolor{red}{d}\blue c
	\arrow[leftarrow,bend right=20]{l}[description]{\qp{3}{1}}
	\arrow[leftarrow,bend right=10]{lld}[description]{\qp{32}{1}}
	\arrow[leftarrow,pos=0.35,bend right=6]{lldd}[description]{\qp{3}{12}}
	\\
	% aa
	t^{(0,0)}\delta^0\textcolor{red}{a}\blue a
	&&&
	% dd
	t^{(0,0)}\delta^1\textcolor{red}{d}\blue d
	\arrow[leftarrow,bend right=20]{lu}[description]{\qn{4}}
	\arrow[leftarrow,bend right=10]{llu}[description]{\qp{43}{1}}
	\arrow[leftarrow,bend right=3]{lll}[description]{\pq{1}{432}+\qp{432}{1}}
	\arrow[leftarrow,bend right=7]{llld}[description]{\pq{12}{43}+\qp{43}{12}}
	\arrow[leftarrow,pos=0.65,bend left=6]{lldd}[description]{\pq{1}{43}}
	\\
	% bb 	
	t^{(0,0)}\delta^0\textcolor{red}{b}\blue b
	\arrow[pos=0.55,bend left=11.5]{ruu}[description]{\np{2}}
	\arrow[bend right=10]{rrd}[description]{\pq{12}{3}}
	&&&
	% cc
	t^{(0,0)}\delta^{1}\textcolor{red}{c}\blue c
	\arrow[leftarrow,pos=0.55,bend right=11.5]{luu}[description]{\np{4}}
	\arrow[leftarrow,pos=0.65,bend right=6]{lluu}[description]{\qp{3}{41}}
	\arrow[leftarrow,bend left=7]{lllu}[description]{\pq{41}{32}+\qp{32}{41}}
	\arrow[leftarrow,bend left=3]{lll}[description]{\pq{412}{3}+\qp{3}{412}}
	\arrow[leftarrow,bend left=10]{lld}[description]{\pq{41}{3}}
	\arrow[leftarrow,bend left=20]{ld}[description]{\pn{4}}
	\\
	&
	% ab
	t^{A(p_2)}\delta^{\frac{1}{2}}\textcolor{red}{b}\blue a
	\arrow[leftarrow,pos=0.41,bend left=11.5]{luu}[description]{\nq{2}}
	\arrow[leftarrow,bend left=20]{lu}[description]{\pn{2}}
	&
	% dc
	t^{-A(p_4)}\delta^{\frac{1}{2}}\textcolor{red}{c}\blue d
	\arrow[leftarrow,pos=0.35,bend left=6]{lluu}[description]{\pq{1}{32}}
	\arrow[leftarrow,bend left=20]{l}[description]{\pq{1}{3}}
	\arrow[pos=0.41,bend right=11.5]{ruu}[description]{\nq{4}}	
	\end{tikzcd}\]
	\caption{The type~AA structure $\mathcal{P}$ for Theorem~\ref{thm:CFTdGeneralGlueing}.
	}\label{fig:GlueingTypeAAstructure}
\end{figure}
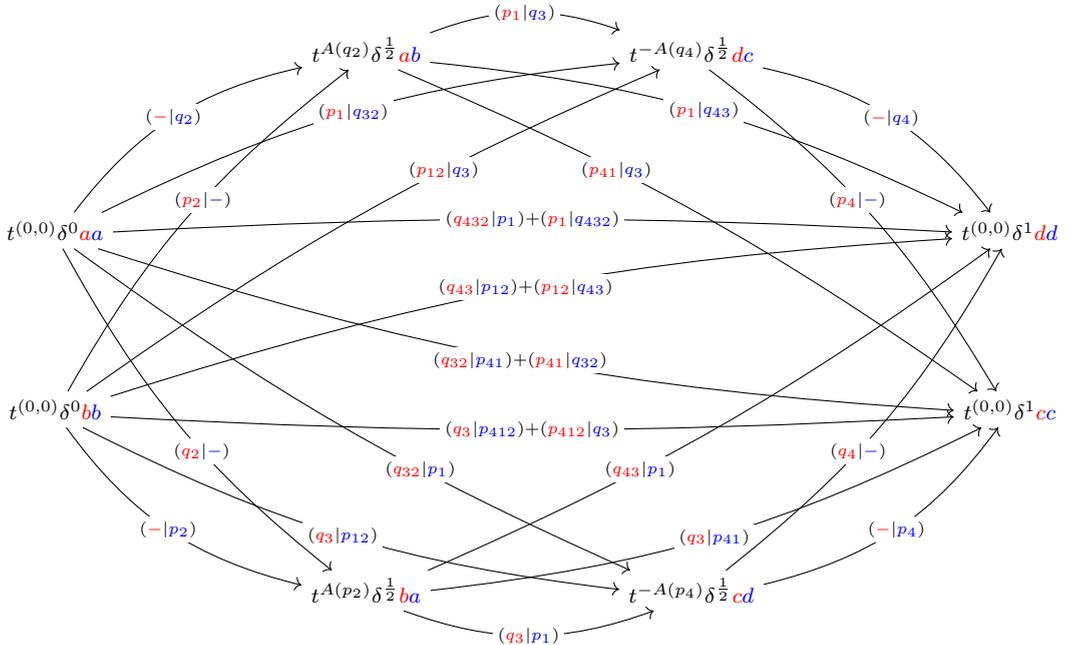

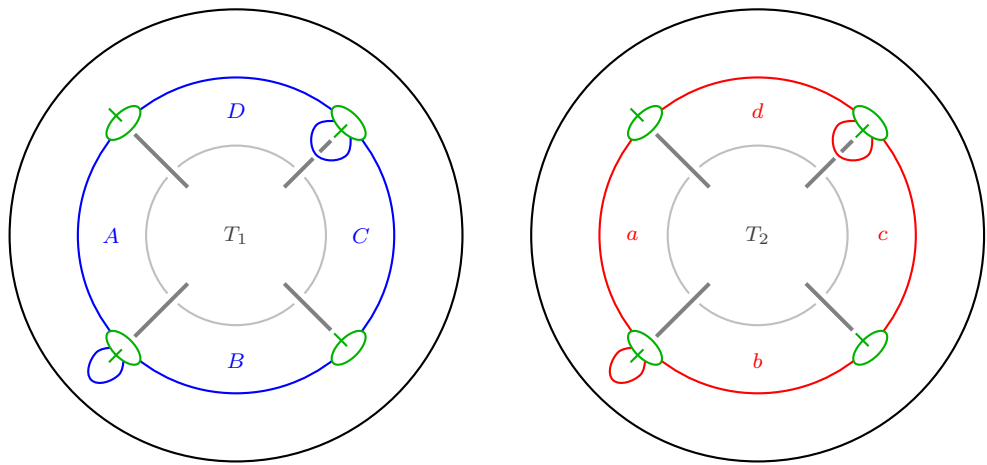
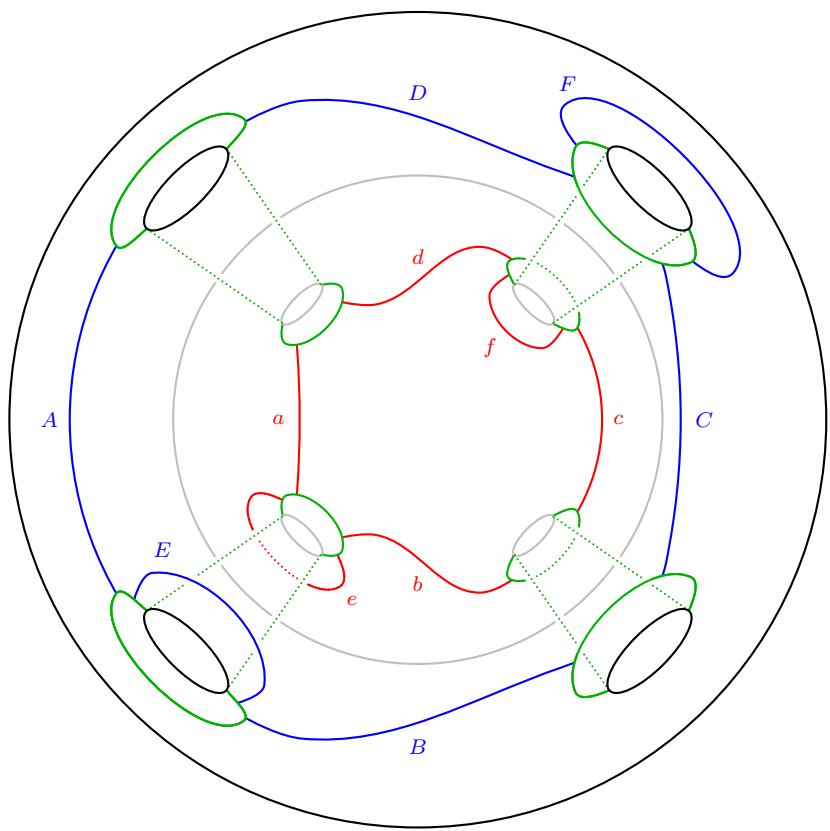
\begin{figure}[hp!]
	\centering
	\psset{unit=0.3}
	\bigskip
	\bigskip
	\begin{subfigure}[b]{0.45\textwidth}\centering
		\begin{pspicture}(-10,-10)(10,10)
		\pscircle(0,0){10}
		\pscircle[linecolor=lightgray](0,0){4}
		
		\psline[linecolor=white,linewidth=\stringwhite](3;45)(7;45)
		\psline[linecolor=white,linewidth=\stringwhite](3;135)(7;135)
		\psline[linecolor=white,linewidth=\stringwhite](3;-135)(7;-135)
		\psline[linecolor=white,linewidth=\stringwhite](3;-45)(7;-45)
		
		\psline[linecolor=gray,linewidth=\stringwidth](3;45)(4.8;45)
		\psline[linecolor=gray,linewidth=\stringwidth](5.2;45)(5.9;45)
		\psline[linecolor=gray,linewidth=\stringwidth](3;135)(6.3;135)
		\psline[linecolor=gray,linewidth=\stringwidth](3;-135)(6.3;-135)
		\psline[linecolor=gray,linewidth=\stringwidth](3;-45)(5.9;-45)
		
		\pscircle[linecolor=blue](0,0){7}
		
		\pscurve[linecolor=blue](7;-135)(8;-130)(9;-135)(8;-140)(7;-135)
		\pscurve[linecolor=blue](7;45)(6;36)(5;45)(6;54)(7;45)

		{\psset{fillstyle=solid,fillcolor=white}
			\rput{45}(7;45){\psellipse[linecolor=darkgreen](0,0)(0.5,1)}
			\rput{135}(7;135){\psellipse[linecolor=darkgreen](0,0)(0.5,1)}
			\rput{-135}(7;-135){\psellipse[linecolor=darkgreen](0,0)(0.5,1)}
			\rput{-45}(7;-45){\psellipse[linecolor=darkgreen](0,0)(0.5,1)}
		}
		
		\psline[linecolor=darkgreen](6.9;45)(6.1;45)
		\psline[linecolor=darkgreen](7.1;-135)(7.9;-135)
		\psline[linecolor=darkgreen](7.1;135)(7.9;135)
		\psline[linecolor=darkgreen](6.9;-45)(6.1;-45)
		
		\rput(-5.5,0){\blue $A$}
		\rput(0,-5.5){\blue $B$}
		\rput(5.5,0){\blue $C$}
		\rput(0,5.5){\blue $D$}
		\rput(0,0){\textcolor{darkgray}{$T_1$}}
		
		\end{pspicture}
		\caption{The parametrization on $\partial X_{T_1}$.}\label{fig:GlueingTangleT1}
	\end{subfigure}
	\quad
	\begin{subfigure}[b]{0.45\textwidth}\centering
		\begin{pspicture}(-10,-10)(10,10)
		\pscircle(0,0){10}
		\pscircle[linecolor=lightgray](0,0){4}
		
		\psline[linecolor=white,linewidth=\stringwhite](3;45)(7;45)
		\psline[linecolor=white,linewidth=\stringwhite](3;135)(7;135)
		\psline[linecolor=white,linewidth=\stringwhite](3;-135)(7;-135)
		\psline[linecolor=white,linewidth=\stringwhite](3;-45)(7;-45)
		
		\psline[linecolor=gray,linewidth=\stringwidth](3;45)(4.8;45)
		\psline[linecolor=gray,linewidth=\stringwidth](5.2;45)(5.9;45)
		\psline[linecolor=gray,linewidth=\stringwidth](3;135)(6.3;135)
		\psline[linecolor=gray,linewidth=\stringwidth](3;-135)(6.3;-135)
		\psline[linecolor=gray,linewidth=\stringwidth](3;-45)(5.9;-45)

		\pscircle[linecolor=red](0,0){7}
		
		\pscurve[linecolor=red](7;-135)(8;-130)(9;-135)(8;-140)(7;-135)
		\pscurve[linecolor=red](7;45)(6;36)(5;45)(6;54)(7;45)

		{\psset{fillstyle=solid,fillcolor=white}
			\rput{45}(7;45){\psellipse[linecolor=darkgreen](0,0)(0.5,1)}
			\rput{135}(7;135){\psellipse[linecolor=darkgreen](0,0)(0.5,1)}
			\rput{-135}(7;-135){\psellipse[linecolor=darkgreen](0,0)(0.5,1)}
			\rput{-45}(7;-45){\psellipse[linecolor=darkgreen](0,0)(0.5,1)}
		}
		
		\psline[linecolor=darkgreen](6.9;45)(6.1;45)
		\psline[linecolor=darkgreen](7.1;-135)(7.9;-135)
		\psline[linecolor=darkgreen](7.1;135)(7.9;135)
		\psline[linecolor=darkgreen](6.9;-45)(6.1;-45)
		
		\rput(-5.5,0){\red $a$}
		\rput(0,-5.5){\red $b$}
		\rput(5.5,0){\red $c$}
		\rput(0,5.5){\red $d$}
		\rput(0,0){\textcolor{darkgray}{$T_2$}}
		
		\end{pspicture}
		\caption{The parametrization on $\partial X_{T_2}$.}\label{fig:GlueingTangleT2}
	\end{subfigure}
	\\
	\bigskip
	\bigskip
	\begin{subfigure}[b]{0.9\textwidth}\centering
		\psset{unit=1.8}
		\begin{pspicture}(-10.2,-10.2)(10.2,10.4)
		
		% visual boundary of sphere outside
		\pscircle(0,0){10}
		
		% silly alpha-arcs 
		\pscurve[linecolor=red](3.5;-135)(4.5;-155)(4.5;-115)(3.5;-135)
		\pscustom*[linecolor=white,linewidth=0pt]{
			\psline(4.05;-125.6)(8.1;-125.2)
			\psline(8.1;-144.8)(4.05;-144.4)
		}
		\pscurve[linecolor=red,linestyle=dotted,dotsep=1pt](3.5;-135)(4.5;-155)(4.5;-115)(3.5;-135)
		\pscurve[linecolor=red](4.3;33)(3.5;30)(3.5;60)(4.3;57)
		
		% other alpha-arcs
		\pscurve[linecolor=red](4;140)(3.4;150)(3.4;-150)(4;-140)
		\pscurve[linecolor=red](4;-130)(3;-110)(4.5;-70)(4.5;-45)
		\psarcn[linecolor=red](0,0){4.5}{45}{-45}
		\pscurve[linecolor=red](4;130)(3;110)(4.5;70)(4.5;45)
		
		% silly beta-arc end 4
		\pscurve[linecolor=blue](7.5;45)(8.5;65)(8.5;25)(7.5;45)
		
		% other beta-arcs
		\psarc[linecolor=blue](0,0){8.5}{135}{-135}
		\pscurve[linecolor=blue](8.5;-130)(8.3;-110)(7;-60)(7.5;-45)
		\pscurve[linecolor=blue](7.5;45)(7;30)(7;-30)(7.5;-45)
		\pscurve[linecolor=blue](8.5;130)(8.3;110)(7;60)(7.5;45)
		
		%sutures outside
		\pscurve[linecolor=darkgreen,fillstyle=solid,fillcolor=white](8;45)(8.1;54.8)(7.7;60)(7.7;30)(8.1;35.2)(8;45)
		\psrotate(0,0){90}{\pscurve[linecolor=darkgreen,fillstyle=solid,fillcolor=white](8;45)(8.1;54.8)(8.5;60)(8.5;30)(8.1;35.2)(8;45)}
		\psrotate(0,0){180}{\pscurve[linecolor=darkgreen,fillstyle=solid,fillcolor=white](8;45)(8.1;54.8)(8.5;60)(8.5;30)(8.1;35.2)(8;45)}
		\psrotate(0,0){-90}{\pscurve[linecolor=darkgreen,fillstyle=solid,fillcolor=white](8;45)(8.1;54.8)(7.7;60)(7.7;30)(8.1;35.2)(8;45)}
		
		%sutures inside
		\pscurve[linecolor=darkgreen,fillstyle=solid,fillcolor=white](4;45)(4.05;54.4)(4.5;60)(4.5;30)(4.05;35.6)(4;45)
		\pscustom*[linecolor=white,linewidth=0pt]{
			\psline[linecolor=darkgreen,linestyle=dotted,dotsep=1pt](4.05;54.4)(8.1;54.8)
			\psline[linecolor=darkgreen,linestyle=dotted,dotsep=1pt](8.1;35.2)(4.05;35.6)
		}
		\pscurve[linecolor=darkgreen,linestyle=dotted,dotsep=1pt](4;45)(4.05;54.4)(4.5;60)(4.5;30)(4.05;35.6)(4;45)
		\psrotate(0,0){90}{
			\pscurve[linecolor=darkgreen,fillstyle=solid,fillcolor=white](4;45)(4.05;54.4)(3.7;60)(3.7;30)(4.05;35.6)(4;45)
		}
		\psrotate(0,0){180}{
			\pscurve[linecolor=darkgreen,fillstyle=solid,fillcolor=white](4;45)(4.05;54.4)(3.7;60)(3.7;30)(4.05;35.6)(4;45)
		}
		\psrotate(0,0){-90}{
			\pscurve[linecolor=darkgreen,fillstyle=solid,fillcolor=white](4;45)(4.05;54.4)(4.5;60)(4.5;30)(4.05;35.6)(4;45)
			\pscustom*[linecolor=white,linewidth=0pt]{
				\psline[linecolor=darkgreen,linestyle=dotted,dotsep=1pt](4.05;54.4)(8.1;54.8)
				\psline[linecolor=darkgreen,linestyle=dotted,dotsep=1pt](8.1;35.2)(4.05;35.6)
			}
			\pscurve[linecolor=darkgreen,linestyle=dotted,dotsep=1pt](4;45)(4.05;54.4)(4.5;60)(4.5;30)(4.05;35.6)(4;45)
			}
		
		% visual boundary of sphere inside
		\pscircle[linecolor=lightgray](0,0){6}
		
		% ends of tubes inside
		\rput{45}(4;45){\psellipse[linecolor=lightgray,fillstyle=solid,fillcolor=white](0,0)(0.25,0.7)}
		\rput{-45}(4;135){\psellipse[linecolor=lightgray,fillstyle=solid,fillcolor=white](0,0)(0.25,0.7)}
		\rput{45}(4;-135){\psellipse[linecolor=lightgray,fillstyle=solid,fillcolor=white](0,0)(0.25,0.7)}
		\rput{-45}(4;-45){\psellipse[linecolor=lightgray,fillstyle=solid,fillcolor=white](0,0)(0.25,0.7)}

		% white cover inside for sutures in tubes=visual boundary of tubes
		\psline[linecolor=white,linewidth=4pt](4.5;54.6)(5;54.6)
		\psline[linecolor=white,linewidth=4pt](4.5;35.4)(5;35.4)
		\psrotate(0,0){-90}{
			\psline[linecolor=white,linewidth=4pt](4.5;54.6)(5;54.6)
			\psline[linecolor=white,linewidth=4pt](4.5;35.4)(5;35.4)
		}
		\psrotate(0,0){180}{
			\psline[linecolor=white,linewidth=4pt](4.5;54.6)(5;54.6)
			\psline[linecolor=white,linewidth=4pt](4.5;35.4)(5;35.4)
		}
		
		% white cover for visual boundary of sphere/sutures in tubes=visual boundary of tubes
		\psline[linecolor=white,linewidth=4pt](5.8;54.6)(6.2;54.6)
		\psline[linecolor=white,linewidth=4pt](5.8;35.4)(6.2;35.4)
		\psrotate(0,0){90}{
			\psline[linecolor=white,linewidth=4pt](5.8;54.6)(6.2;54.6)
			\psline[linecolor=white,linewidth=4pt](5.8;35.4)(6.2;35.4)
		}
		\psrotate(0,0){180}{
			\psline[linecolor=white,linewidth=4pt](5.8;54.6)(6.2;54.6)
			\psline[linecolor=white,linewidth=4pt](5.8;35.4)(6.2;35.4)
		}
		\psrotate(0,0){-90}{
			\psline[linecolor=white,linewidth=4pt](5.8;54.6)(6.2;54.6)
			\psline[linecolor=white,linewidth=4pt](5.8;35.4)(6.2;35.4)
		}
		
		% sutures in tubes=visual boundary of tubes
		\psline[linecolor=darkgreen,linestyle=dotted,dotsep=1pt](4.05;54.4)(8.1;54.8)
		\psline[linecolor=darkgreen,linestyle=dotted,dotsep=1pt](4.05;35.6)(8.1;35.2)
		\psrotate(0,0){90}{
			\psline[linecolor=darkgreen,linestyle=dotted,dotsep=1pt](4.05;54.4)(8.1;54.8)
			\psline[linecolor=darkgreen,linestyle=dotted,dotsep=1pt](4.05;35.6)(8.1;35.2)
			}
		\psrotate(0,0){180}{
			\psline[linecolor=darkgreen,linestyle=dotted,dotsep=1pt](4.05;54.4)(8.1;54.8)
			\psline[linecolor=darkgreen,linestyle=dotted,dotsep=1pt](4.05;35.6)(8.1;35.2)
		}
		\psrotate(0,0){-90}{
			\psline[linecolor=darkgreen,linestyle=dotted,dotsep=1pt](4.05;54.4)(8.1;54.8)
			\psline[linecolor=darkgreen,linestyle=dotted,dotsep=1pt](4.05;35.6)(8.1;35.2)
		}
		
		% white covers for sutures outside
		\psline[linecolor=white,linewidth=4pt](6.7;54.6)(7;54.6)
		\psline[linecolor=white,linewidth=4pt](6.7;35.4)(7;35.4)
		\psrotate(0,0){-90}{
			\psline[linecolor=white,linewidth=4pt](6.7;54.6)(7;54.6)
			\psline[linecolor=white,linewidth=4pt](6.7;35.4)(7;35.4)
		}
		\psrotate(0,0){180}{
			\psline[linecolor=white,linewidth=4pt](6.65;54.6)(6.95;54.6)
			\psline[linecolor=white,linewidth=4pt](6.65;35.4)(6.95;35.4)
		}
		% silly beta-arc end 2
		\pscurve[linecolor=blue](8.3;-147)(7.5;-150)(7.5;-120)(8.3;-123)
		
		% redraw suture at end 2
		\psrotate(0,0){180}{\pscurve[linecolor=darkgreen,fillstyle=solid,fillcolor=white](8;45)(8.1;54.8)(8.5;60)(8.5;30)(8.1;35.2)(8;45)}
		
		%sutures outside (curves only)
		\pscurve[linecolor=darkgreen](8;45)(8.1;54.8)(7.7;60)(7.7;30)(8.1;35.2)(8;45)
		\psrotate(0,0){90}{\pscurve[linecolor=darkgreen](8;45)(8.1;54.8)(8.5;60)(8.5;30)(8.1;35.2)(8;45)}
		\psrotate(0,0){180}{\pscurve[linecolor=darkgreen](8;45)(8.1;54.8)(8.5;60)(8.5;30)(8.1;35.2)(8;45)}
		\psrotate(0,0){-90}{\pscurve[linecolor=darkgreen](8;45)(8.1;54.8)(7.7;60)(7.7;30)(8.1;35.2)(8;45)}
		
		% ends of tubes outside
		\rput{45}(8;45){\psellipse[fillstyle=solid,fillcolor=white](0,0)(0.5,1.4)}
		\rput{-45}(8;135){\psellipse[fillstyle=solid,fillcolor=white](0,0)(0.5,1.4)}
		\rput{45}(8;-135){\psellipse[fillstyle=solid,fillcolor=white](0,0)(0.5,1.4)}
		\rput{-45}(8;-45){\psellipse[fillstyle=solid,fillcolor=white](0,0)(0.5,1.4)}

		\rput(-3.4,0){$\red a$}
		\rput(0,-4){$\red b$}
		\rput(4.9,0){$\red c$}
		\rput(0,4){$\red d$}
		\rput(2.5;45){$\red f$}
		\rput(4.7;-110){$\red e$}
		
		\rput(-9,0){$\blue A$}
		\rput(0,-8){$\blue B$}
		\rput(7,0){$\blue C$}
		\rput(0,8){$\blue D$}
		\rput(9;66){$\blue F$}
		\rput(7;-153){$\blue E$}
		
		\end{pspicture}
		\caption{The parametrization on the boundary of the thickened 4-punctured sphere $X$.}\label{fig:GlueingAAP}
	\end{subfigure}
	\caption{A decomposition of the complement of the link from Figure~\ref{fig:glueing2tangles}.}\label{fig:GlueingMflds}
\end{figure}

\begin{figure}[p]
	\centering
	\begin{subfigure}[b]{\textwidth}
		\centering
		\psset{unit=0.13}
		\begin{pspicture}(-27.5,-27.5)(27.5,27.5)
		%% START OF -45°-ROTATION
		\psrotate(0,0){-45}{
			
			% green region near c
			\pscustom*[linecolor=lightgreen]{%
				\pscurve[linecolor=blue](1,20)(1,18)(18,1)(20,1)
			}
			\psline[linecolor=lightgreen,linewidth=3pt](1,20)(20,1)
			\pscustom*[linecolor=lightgreen]{%
				\pscurve[linecolor=red](1,20)(5,27)(24,4)(20,1)
			}
			
			% green region near a
			\pscustom*[linecolor=lightgreen]{%
				\pscurve[linecolor=blue](-1,-20)(-1,-18)(-18,-1)(-20,-1)
			}
			\psline[linecolor=lightgreen,linewidth=3pt](-1,-20)(-20,-1)
			\pscustom*[linecolor=lightgreen]{%
				\pscurve[linecolor=red](-1,-20)(-5,-27)(-24,-4)(-20,-1)
			}
			
			% green region nead delta
			\pscustom*[linecolor=lightgreen]{%
				\pscurve[linecolor=blue](-1,20)(-1,18)(-24,4)(-20,1)
				\pscurve[linecolor=red](-20,1)(-18,1)(-13,25)(-5,27)(-1,20)
			}
			\psline*[linecolor=white](-15.15,10.9)(0,10)(0,30)(-20,30)
			
			% green region nead beta
			\psrotate(0,0){180}{
				\pscustom*[linecolor=lightgreen]{%
					\pscurve[linecolor=blue](-1,20)(-1,18)(-24,4)(-20,1)
					\pscurve[linecolor=red](-20,1)(-18,1)(-13,25)(-5,27)(-1,20)
				}
				\psline*[linecolor=white](-15.15,10.9)(0,10)(0,30)(-20,30)
			}
			
			% green region near suture 4
			\rput(0,20){$~$%
				\pscustom*[linecolor=lightgreen]{%
					\pscurve(-2,0)(-4,-2)(-8,3)(8,3)(4,-2)(2,0)
					\pscurve(2,0)(4,2)(8,-3)(-8,-3)(-4,2)(-2,0)
				}
				\pscustom*[linecolor=white]{
					\pscurve(-2,0)(-4,-2)(-8,3)(8,3)(4,-2)(2,0)
					\pscurve(-2,0)(-4,2)(-8,-3)(8,-3)(4,2)(2,0)
				}
			}
			% green region near suture 2
			\psrotate(0,0){180}{
				\rput(0,20){$~$%
					\pscustom*[linecolor=lightgreen]{%
						\pscurve(-2,0)(-4,-2)(-8,3)(8,3)(4,-2)(2,0)
						\pscurve(2,0)(4,2)(8,-3)(-8,-3)(-4,2)(-2,0)
					}
					\pscustom*[linecolor=white]{
						\pscurve(-2,0)(-4,-2)(-8,3)(8,3)(4,-2)(2,0)
						\pscurve(-2,0)(-4,2)(-8,-3)(8,-3)(4,2)(2,0)
					}
				}
			}

			% alpha/beta arcs starting from suture 4
			\pscurve[linecolor=blue](1,20)(1,18)(18,1)(20,1)
			\pscurve[linecolor=red](1,20)(5,27)(24,4)(20,1)
			\pscurve[linecolor=blue](-1,20)(-1,18)(-24,4)(-20,1)
			\pscurve[linecolor=red](-1,20)(-5,27)(-13,25)(-18,1)(-20,1)
			
			% alpha/beta arcs starting from suture 2
			\psrotate(0,0){180}{
				\pscurve[linecolor=red](1,20)(1,18)(18,1)(20,1)
				\pscurve[linecolor=blue](1,20)(5,27)(24,4)(20,1)
				\pscurve[linecolor=red](-1,20)(-1,18)(-24,4)(-20,1)
				\pscurve[linecolor=blue](-1,20)(-5,27)(-13,25)(-18,1)(-20,1)
			}
			
			% suture 1
			\rput(-20,0){
				\psellipse[linecolor=darkgreen,fillstyle=solid,fillcolor=white](0,0)(3,4)
				\psline[linecolor=darkgreen](0,-3)(0,-5)
				\psline[linecolor=darkgreen](0,3)(0,5)
			}
			
			% suture 2
			\rput(0,-20){
				\pscurve[linecolor=red](-2,0)(-4,2)(-8,-3)(8,-3)(4,2)(2,0)
				\pscurve[linecolor=blue](-2,0)(-4,-2)(-8,3)(8,3)(4,-2)(2,0)
				\psellipse[linecolor=darkgreen,fillstyle=solid,fillcolor=white](0,0)(4,3)
				\psline[linecolor=darkgreen](-3,0)(-5,0)
				\psline[linecolor=darkgreen](3,0)(5,0)
			}
			
			% suture 3
			\rput(20,0){
				\psellipse[linecolor=darkgreen,fillstyle=solid,fillcolor=white](0,0)(3,4)
				\psline[linecolor=darkgreen](0,-3)(0,-5)
				\psline[linecolor=darkgreen](0,3)(0,5)
			}
			
			% suture 4
			\rput(0,20){
				\pscurve[linecolor=red](-2,0)(-4,2)(-8,-3)(8,-3)(4,2)(2,0)
				\pscurve[linecolor=blue](-2,0)(-4,-2)(-8,3)(8,3)(4,-2)(2,0)
				\psellipse[linecolor=darkgreen,fillstyle=solid,fillcolor=white](0,0)(4,3)
				\psline[linecolor=darkgreen](-3,0)(-5,0)
				\psline[linecolor=darkgreen](3,0)(5,0)
			}
			
			% intersection points near suture 4
			\rput(3.1,13.85){\psdot\rput{45}{\uput{1}[-155](0,0){$C$}}}
			\rput(-4.85,14.4){\psdot\rput{45}{\uput{1}[-135](0,0){$D$}}}
			\rput(3.5,26){\psdot\rput{45}{\uput{1}[65](0,0){$c$}}}
			\rput(-3.67,25.95){\psdot\rput{45}{\uput{1}[45](0,0){$d$}}}
			\rput(-15.15,10.9){\psdot\rput{45}{\uput{1.5}[-168](0,0){$\delta$}}}
			\rput(-7.23,20){\psdot\rput{45}{\uput{1}[135](0,0){$\zeta_1$}}}
			\rput(7.23,20){\psdot\rput{45}{\uput{1}[-45](0,0){$\zeta_2$}}}
			
			% intersection points near suture 2
			\psrotate(0,0){180}{
				\rput(3.1,13.85){\psdot\rput{-135}{\uput{1}[25](0,0){$a$}}}
				\rput(-4.85,14.4){\psdot\rput{-135}{\uput{1}[45](0,0){$b$}}}
				\rput(3.5,26){\psdot\rput{-135}{\uput{1}[-115](0,0){$A$}}}
				\rput(-3.67,25.95){\psdot\rput{-135}{\uput{1}[-135](0,0){$B$}}}
				\rput(-15.15,10.9){\psdot\rput{-135}{\uput{1.5}[12](0,0){$\beta$}}}
				\rput(-7.23,20){\psdot\rput{-135}{\uput{1}[-45](0,0){$\varepsilon_1$}}}
				\rput(7.23,20){\psdot\rput{-135}{\uput{1}[135](0,0){$\varepsilon_2$}}}
			}
			
			% labelling of alpha/beta arcs
			\rput(-8.7,-8.7){\rput{45}{\red $a$}}
			\rput(8.7,8.7){\rput{45}{\blue $C$}}
			
			\rput(15,21){\rput{45}{\red $c$}}
			\rput(-15,-21){\rput{45}{\blue $A$}}
			
			\rput(-17,5){\rput{45}{\red $d$}}
			\rput(17,-5){\rput{45}{\blue $B$}}
			
			\rput(22,-6){\rput{45}{\red $b$}}
			\rput(-22,6){\rput{45}{\blue $D$}}
			
			\rput(-0.6,12){\rput{45}{\red $f$}}
			\rput(-0.6,28){\rput{45}{\blue $F$}}
			
			\rput(0.6,-28){\rput{45}{\red $e$}}
			\rput(0.6,-12){\rput{45}{\blue $E$}}
			
			% labelling of algebra elements near suture 4
			\rput(-5,16.5){\rput{45}{\red $P_4$}}
			\rput(0,15.5){\rput{45}{\red $Q_4$}}
			\rput(5,16.5){\rput{45}{\red $P_4$}}
			
			\rput(-5,23.5){\rput{45}{\blue $p_4$}}
			\rput(0,24.5){\rput{45}{\blue $q_4$}}
			\rput(5,23.5){\rput{45}{\blue $p_4$}}
			
			% labelling of algebra elements near suture 2
			\rput(-5,-16.5){\rput{45}{\blue $q_2$}}
			\rput(0,-15){\rput{45}{\blue $p_2$}}
			\rput(5,-16.5){\rput{45}{\blue $q_2$}}
			
			\rput(-5,-23.5){\rput{45}{\red $Q_2$}}
			\rput(0,-24.5){\rput{45}{\red $P_2$}}
			\rput(5,-23.5){\rput{45}{\red $Q_2$}}
			
			% labelling of algebra elements near suture 1
			\rput(-25,0){\rput{45}{\red $P_1$}}
			\rput(-15,0){\rput{45}{\blue $p_1$}}
			
			% labelling of algebra elements near suture 1
			\rput(25,0){\rput{45}{\blue $q_3$}}
			\rput(15,0){\rput{45}{\red $Q_3$}}
			
			% labelleing of square regions
			\rput(-12,16){\rput{45}{$\textcolor{white}{_\delta}\square_\delta$}}
			\rput(12,-16){\rput{45}{$\textcolor{white}{_\beta}\square_\beta$}}
		}%% END OF -45°-ROTATION
		\end{pspicture}
		\caption{A Heegaard diagram for $X$. The orientation of the surface is such that the normal vector (determined by the right-hand rule) points out of the projection plane. %the orientation is counter-clockwise, $\alpha$ first.
		}\label{fig:GlueingHD}
	\end{subfigure}
	\begin{subfigure}[b]{\textwidth}
			\begin{align*}
			{\red a}{\blue a}\co &Cc\beta\delta\varepsilon_2
			&
			{\red b}{\blue a}\co &ACbc\delta
			&
			{\red c}{\blue c}\co &Aa\beta\delta\zeta_2
			&
			{\red d}{\blue c}\co &ACad\beta
			\\
			\overline{{\red a}{\blue a}}\co &BCbc\delta
			&
			{\red a}{\blue b}\co &BCac\delta
			&
			\overline{{\red c}{\blue c}}\co &ADad\beta
			&
			{\red c}{\blue d}\co &ADac\beta
			\\
			\underline{{\red a}{\blue a}}\co &Cc\beta\delta\varepsilon_1
			&
			{\red b}{\blue b}\co &ACac\delta
			&
			\underline{{\red c}{\blue c}}\co &Aa\beta\delta\zeta_1
			&
			{\red d}{\blue d}\co &ACac\beta
			\end{align*}
		\caption{Generators in idempotents $\mathcal{I}'_\beta$ and $\mathcal{I}'_\alpha$ for the Heegaard diagram above. The five-letter word to the right of a colon specifies the intersection points of that generator; the two-letter word to the left of that colon denotes the name of the generator which indicates the corresponding idempotents. Namely, the idempotents of a generator ${\red x}{\blue y}$ are $\iota_x\in\mathcal{I}'_\beta$ in the first component and $\iota_y\in\mathcal{I}'_\alpha$ in the second. }\label{fig:GlueingGeneratorList}
	\end{subfigure}
	\begin{subfigure}[b]{\textwidth}\centering
		\[
		\begin{tikzcd}[row sep=1.5cm, column sep=3.5cm]
		% 1st row
		&
		% ba
		\textcolor{red}{a}\blue b
		\arrow[leftarrow,bend right=20]{ld}[description]{\{{\blue q_{2}},\square_\beta\}}
		\arrow[leftarrow,pos=0.45,bend right=11.5]{ldd}[description]{\{{\red P_{2}}\}}
		\arrow[dashed,pos=0.35,bend left=6]{rrdd}[description]{\{{\red P_4},{\red P_{1}}/{\blue q_{3}},\square_\delta\}}
		&
		% cd 	
		\textcolor{red}{d}\blue c
		\arrow[leftarrow,bend right=20]{l}[description]{\{{\red P_{1}}/{\blue q_{3}}\}}
		\arrow[dashed,leftarrow,bend right=10]{lld}[description]{\{{\red P_{1}}/{\blue q_3},{\blue q_{2}},\square_\beta\}}
		\arrow[dashed,leftarrow,pos=0.35,bend right=6]{lldd}[description]{\{{\red P_2},{\red P_{1}}/{\blue q_{3}}\}}
		\\
		% aa
		\textcolor{red}{a}\blue a
		\arrow[dashed,pos=0.65,bend right=6]{rrdd}[description]{\{{\red Q_2},{\red Q_{3}}/{\blue p_{1}},\square_\beta\}}
		&&&
		% dd
		\textcolor{red}{d}\blue d
		\arrow[leftarrow,bend right=20]{lu}[description]{\{{\blue q_{4}}\}}
		\arrow[dashed,leftarrow,pos=0.53,bend right=10]{llu}[description]{\{{\red P_{1}}/{\blue q_{3}},{\blue q_4}\}}
		\arrow[dashed,leftarrow,bend right=1.5]{lll}[description]{\{{\red Q_2},{\red Q_4},{\red Q_{3}}/{\blue p_{1}},\square_\beta\}+\{{\red P_{1}}/{\blue q_{3}},{\blue q_2},{\blue q_4},\square_\beta\}}
		\arrow[dashed,leftarrow,bend right=7]{llld}[description]{\{{\red Q_4},{\red Q_{3}}/{\blue p_{1}},{\blue p_2}\}+\{{\red P_2},{\red P_{1}}/{\blue q_{3}},{\blue q_4}\}}
		\arrow[dashed,leftarrow,pos=0.65,bend left=6]{lldd}[description]{\{{\red Q_4},{\red Q_{3}}/{\blue p_{1}}\}}
		\arrow[leftarrow,pos=0.55,bend left=11.5]{ldd}[description]{\{{\red Q_4}\}}
		\\
		% bb 	
		\textcolor{red}{b}\blue b
		\arrow[dashed,bend right=10,pos=0.53]{rrd}[description]{\{{\red Q_{3}}/{\blue p_{1}},{\blue p_2}\}}
		&&&
		% cc
		\textcolor{red}{c}\blue c
		\arrow[leftarrow,pos=0.53,bend right=11.5]{luu}[description]{\{{\red P_{4}},\square_\delta\}}
		\arrow[dashed,leftarrow,bend left=7]{lllu}[description]{\{{\red Q_2},{\red Q_{3}}/{\blue p_{1}},{\blue p_4},\square_\beta,\square_\delta\}+\{{\red P_4},{\red P_{1}}/{\blue q_{3}},{\blue q_2},\square_\beta,\square_\delta\}}
		\arrow[dashed,leftarrow,bend left=1.5]{lll}[description]{\{{\red Q_{3}}/{\blue p_{1}},{\blue p_4},{\blue p_2},\square_\delta\}+\{{\red P_2},{\red P_4},{\red P_{1}}/{\blue q_{3}},\square_\delta\}}
		\arrow[dashed,leftarrow,bend left=10]{lld}[description]{\{{\red Q_{3}}/{\blue p_{1}},{\blue p_4},\square_\delta\}}
		\arrow[leftarrow,bend left=20]{ld}[description]{\{{\blue p_{4}},\square_\delta\}}
		\\
		&
		% ab
		\textcolor{red}{b}\blue a
		\arrow[leftarrow,pos=0.47,bend left=11.5]{luu}[description]{\{{\red Q_{2}},\square_\beta\}}
		\arrow[leftarrow,bend left=20]{lu}[description]{\{{\blue p_{2}}\}}
		&
		% dc
		\textcolor{red}{c}\blue d
		\arrow[leftarrow,bend left=20]{l}[description]{\{{\red Q_{3}}/{\blue p_{1}}\}}
		\end{tikzcd}\]	
		\caption{The domains that contribute to the type~AA structure $\mathcal{P}'$.}\label{fig:GlueingDomains}
	\end{subfigure}
	\caption{A Heegaard diagram for the bordered sutured manifold~$X$ from Figure~\ref{fig:GlueingAAP} and some computations of generators and domains.}\label{fig:GlueingCAT}
\end{figure}
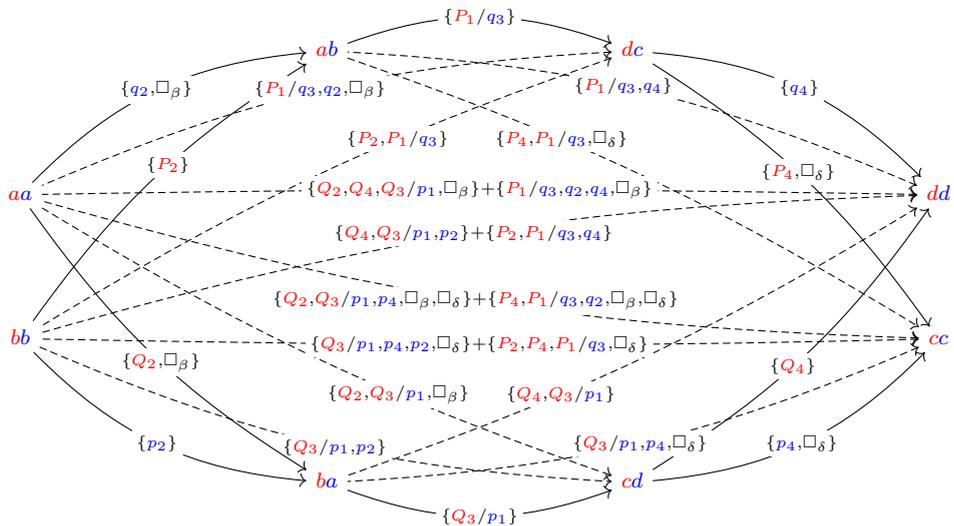

\begin{proof}[of Theorem~\ref{thm:CFTdGeneralGlueing}]
	The strategy of the proof is to glue the tangle complements $X_{T_1}$ and $X_{T_2}$ to opposite sides of a thickened 4-punctured sphere $X=I\times (S^2\smallsetminus4 D^2)$, ie along $\{0\}\times (S^2\smallsetminus4 D^2)$ and $\{1\}\times (S^2\smallsetminus4 D^2)$, respectively. For this, we equip $X_{T_1}$, $X_{T_2}$ and $X$ with the structures of bordered sutured manifolds specified by the arc diagrams in Figure~\ref{fig:GlueingMflds}. Note that each closed component of $T_1$ and $T_2$ carries two oppositely oriented meridional sutures. By glueing $X_{T_1}$ to the outside and $X_{T_2}$ to the inside of the thickened 4-punctured sphere $X$ as shown in Figure~\ref{fig:GlueingAAP}, we obtain the complement $X_L$ of the link $L$. In addition to those sutures on closed components of $T_1$ and $T_2$, the sutured manifold $X_L$ carries one meridional suture at each of the four places where the tangles have been glued together.

	Let $\mathcal{A}$ be the bordered sutured algebra corresponding to the arc diagram on $X$ consisting of $\alpha$-arcs and $\mathcal{I}_\alpha$ the corresponding ring of idempotents. Let $\mathcal{I}'_\alpha$ be the subring of idempotents occupying the $\alpha$-arcs ${\red e}$ and ${\red f}$ and $\iota_{\alpha}\co\mathcal{A}'\hookrightarrow\mathcal{A}$ the subalgebra $\mathcal{I}'_\alpha.\mathcal{A}.\mathcal{I}'_\alpha$ of~$\mathcal{A}$. Similarly define $\mathcal{I}_\beta$, $\mathcal{I}'_\beta$ and $\iota_{\beta}\co\mathcal{B}'\hookrightarrow\mathcal{B}$. 
	
	Zarev's Glueing Theorem (Theorem~\ref{thm:GlueingZarev}) tells us that
	\begin{equation}\label{eqn:FirstGlueingProofStep1}
	\SFC(X_L)\cong \BSD(X_{T_1})^\mathcal{B}\boxtimes\typeA{\mathcal{B},\mathcal{A}}{\BSAA(X)}\boxtimes\BSD(X_{T_2})^\mathcal{A}
	\end{equation}
	The left-hand side agrees with the left-hand term of the identity from Theorem~\ref{thm:CFTdGeneralGlueing}. So the goal is to identify the three tensor-factors on the right-hand sides with each other.
	
	First of all, observe that we can choose a Heegaard diagram for $X_{T_1}$ where the two $\beta$-arcs that have ends on the same suture do not intersect any $\alpha$-curve. Thus, generators of its type~D structure belong to the idempotents that occupy one of the four arcs $\blue A$, $\blue B$, $\blue C$ or $\blue D$. The same is true for $X_{T_2}$ and its $\alpha$-arcs. Moreover, the labels of the type~D structures for $X_{T_1}$ and $X_{T_2}$ are contained in $\mathcal{B}'$ and $\mathcal{A}'$, respectively. In other words, $\BSD(X_{T_1})$ and $\BSD(X_{T_2})$ lie in the images of the functors $\mathcal{F}^D_{\iota_\beta}$ and $\mathcal{F}^D_{\iota_\alpha}$ induced by the inclusions $\iota_\beta$ and $\iota_\alpha$, respectively (see Definition~\ref{def:InducedFunctors}). Thus, by the Pairing Adjunction (Theorem~\ref{thm:PairingAdjunction}), the right-hand side of (\ref{eqn:FirstGlueingProofStep1}) is equal to	
	\begin{equation}\label{eqn:FirstGlueingProofStep2}
	\BSD(X_{T_1})^{\mathcal{B}'}\boxtimes\typeA{\mathcal{B}',\mathcal{A}'}{\mathcal{F}^{AA}_{\iota_\beta,\iota_\alpha}(\BSAA(X))}\boxtimes\BSD(X_{T_2})^{\mathcal{A}'}
	\end{equation}
	where $\mathcal{F}^{AA}_{\iota_\beta,\iota_\alpha}$ is the functor induced by the inclusions $\iota_\beta$ and $\iota_\alpha$.
	
	So let us compute $\mathcal{F}^{AA}_{\iota_\beta,\iota_\alpha}(\BSAA(X))$. A Heegaard diagram for $X$ is shown in Figure~\ref{fig:GlueingHD}, which is obtained by ``flattening'' the thickened 4-punctured sphere from Figure~\ref{fig:GlueingAAP}. The regions adjacent to a basepoint are shaded green (\textcolor{lightgreen}{$\blacksquare$}). $\alpha$- and $\beta$-arcs are labelled by ${\red a}$, ${\red b}$, ${\red c}$, ${\red d}$, ${\red e}$, ${\red f}$ and ${\blue A}$, ${\blue B}$, ${\blue C}$, ${\blue D}$, ${\blue E}$, ${\blue F}$, respectively, as in Figure~\ref{fig:GlueingAAP}. 
	Intersection points of these are suggestively labelled by black Greek and Roman letters which indicate which $\alpha$- and $\beta$-arcs the intersection points lie on. For example, the intersection point $\beta$ lies on both ${\red b}$ and ${\blue B}$; the same principle applies to all Greek letter labels. The intersection points labelled by Roman letters lie on exactly one of $\{{\red e, f},{\blue E, F}\}$. This makes it easy to determine the generators of $\mathcal{F}^{AA}_{\iota_\beta,\iota_\alpha}(\BSAA(X))$ and their corresponding idempotents, which are shown in Figure~\ref{fig:GlueingGeneratorList}. 
	
	Next, let us consider the domains of this Heegaard diagram. The two square regions with vertices $\{d,\delta,D,\zeta_1\}$ and $\{\beta,b,\varepsilon_1,B\}$ are labelled by $\square_\delta$ and $\square_\beta$, respectively. All other regions in the Heegaard diagram have at least one boundary component and are labelled by ${\red P_i}$, ${\red Q_i}$, ${\blue p_i}$ and ${\blue q_i}$. There are two domains that have two labels, namely ${\red P_1}/{\blue q_3}$ and ${\red Q_3}/{\blue p_1}$. There are four pairs of regions that have the same label. Note however, that the coefficients of any paired regions agree in any domain connecting generators in $\mathcal{F}^{AA}_{\iota_\beta,\iota_\alpha}(\BSAA(X))$, ie those generators that occupy $\red e$, $\red f$, $\blue E$ and~$\blue F$. Thus, we may use these labels to describe all domains that contribute to $\mathcal{F}^{AA}_{\iota_\beta,\iota_\alpha}(\BSAA(X))$. 
		
	In the following, let us write domains $D$ as formal differences $D_+-D_-$ of unordered sets of regions $D_+$ and $D_-$ with $D_+\cap D_-=\emptyset$ such that 
	\[D=\sum_{r\in D_+}r-\sum_{r\in D_-}r.\]
	Let us calculate some connecting domains between the generators. First of all, here are some bigons with a single boundary puncture:
	\begin{align*}
			\{{\blue p_2}\}\co &{\red b}{\blue b}\rightarrow{\red b}{\blue a}, &
			\{{\red P_2}\}\co &{\red b}{\blue b}\rightarrow{\red a}{\blue b},&
			\{{\blue q_4}\}\co &{\red d}{\blue c}\rightarrow{\red d}{\blue d}, &
			\{{\red Q_4}\}\co &{\red c}{\blue d}\rightarrow{\red d}{\blue d}.
	\end{align*}
	The following domains consist of two bigons, each with a single boundary puncture:
	\begin{align*}
			\{{\blue q_2},\square_\beta\}\co &{\red a}{\blue a} \rightarrow{\red a}{\blue b}, &
			\{{\red Q_2},\square_\beta\}\co &{\red a}{\blue a}\rightarrow {\red b}{\blue a},&
			\{{\blue p_4},\square_\delta\}\co &{\red c}{\blue d}\rightarrow {\red c}{\blue c}, &
			\{{\red P_4},\square_\delta\}\co &{\red d}{\blue c}\rightarrow {\red c}{\blue c}.
	\end{align*}
	The polygonal regions ${\red P_1}/{\blue q_3}$ and ${\red Q_3}/{\blue p_1}$ connect the following generators:
	\begin{align*}
			\{{\red P_1}/{\blue q_3}\}\co &{\red a}{\blue b}\rightarrow {\red d}{\blue c}, &
			\{{\red Q_3}/{\blue p_1}\}\co &{\red b}{\blue a}\rightarrow {\red c}{\blue d}.
	\end{align*}
	All those domains above contribute to the type~AA structure. They correspond to the solid arrows in Figure~\ref{fig:GlueingDomains}. From these, we can compute the connecting domains labelling the dashed arrows in the same figure. We claim that these are in fact all domains that connect these eight generators and that have non-negative multiplicities. To show this, we can argue as follows: Pick any two generators $x$ and $y$ and calculate a connecting domain between them, eg by following along the arrows in Figure~\ref{fig:GlueingDomains}. We observe that if $x$ and $y$ are among those eight generators from Figure~\ref{fig:GlueingDomains}, we can always choose a domain 
	\[X=(X_+-X_-)\co x\rightarrow y\]
	with multiplicities in $\{-1,0,+1\}$. 	
	We can obtain any other connecting domain between $x$ and $y$ by adding a periodic domain $P=P_+-P_-$ to $X$. Suppose this new connecting domain $(X_++P_+)-(X_-+P_-)$ has non-negative (respectively non-positive) coefficients only.  Then 
	\begin{align*}
	X_++P_+\subseteq X_-+P_-, &\text{ so } P_+\subseteq X_-, X_+\subseteq P_-,\text{ or respecively}\\
	X_++P_+\supseteq X_-+P_-, &\text{ so } P_+\supseteq X_-, X_+\supseteq P_-.
	\end{align*}
	In particular, since neither $X_-$ nor $X_+$ contain any region more than once, we only need to consider periodic domains which have multiplicities $\geq -1$ or $\leq +1$.
	
	Let us compute the group of periodic domains of our Heegaard diagram. It is easy to see that it is freely generated by the following three domains:
	\begin{align*}
	\mathcal{D}_1&=\{{\red Q_4},{\red Q_3}/{\blue p_1},{\blue p_2}\}-\{{\red P_2},{\red P_1}/{\blue q_3},{\blue q_4}\},\\
	\mathcal{D}_2&=\{{\blue p_2},{\blue q_2}\}-\{{\red P_2},{\red Q_2}\},\\
	\mathcal{D}_3&=\{{\blue p_4},{\blue q_4}\}-\{{\red P_4},{\red Q_4}\}.
	\end{align*}
	%Obviously, these three domains are periodic and linearly independent. Also, given any periodic domain, we may add $\mathcal{P}_i$ to it such that the multiplicities of the regions $\{{\blue p_2},{\blue p_1}/{\red P_3}$ are all zero. But then also ${\red Q_4}$ has multiplicity 0 (consider $c$) and hence so has ${\blue p_4}$ (consider $f$). 
	The periodic domains which have multiplicities $\geq -1$ or $\leq +1$ are given by $\mathcal{D}_1$, $\mathcal{D}_2$, $\mathcal{D}_3$,
	\begin{align*}
	\mathcal{D}_1-\mathcal{D}_2&=\{{\red Q_4},{\red Q_2},{\red Q_3}/{\blue p_1}\}-\{{\red P_1}/{\blue q_3},{\blue q_4},{\blue q_2}\},\\
	\mathcal{D}_1+\mathcal{D}_3&=\{{\red Q_3}/{\blue p_1},{\blue p_4},{\blue p_2}\}-\{{\red P_4},{\red P_2},{\red P_1}/{\blue q_3}\},\\
	\mathcal{D}_1-\mathcal{D}_2+\mathcal{D}_3&=\{{\red Q_2},{\red Q_3}/{\blue p_1},{\blue p_4}\}-\{{\red P_4},{\red P_1}/{\blue q_3},{\blue q_2}\},\\
	\mathcal{D}_2+\mathcal{D}_3&=\{{\blue p_2},{\blue q_2},{\blue p_4},{\blue q_4}\}-\{{\red P_2},{\red Q_2},{\red P_4},{\red Q_4}\},\\
	\mathcal{D}_2-\mathcal{D}_3&=\{{\red P_4},{\red Q_4},{\blue p_2},{\blue q_2}\}-\{{\red P_2},{\red Q_2},{\blue p_4},{\blue q_4}\}
	\end{align*}
	and their negatives.
	It is now elementary to check that indeed, Figure~\ref{fig:GlueingDomains} shows all connecting domains with non-negative multiplicities. 
	
	There are four generators that we have not considered yet, namely $\overline{{\red a}{\blue a}}$, $\underline{{\red a}{\blue a}}$, $\overline{{\red c}{\blue c}}$ and $\underline{{\red c}{\blue c}}$. They are connected by the following two contributing domains
	\begin{align*}
			\{\square_\beta\}\co &\underline{{\red a}{\blue a}}\rightarrow\overline{{\red a}{\blue a}}, &
			\{\square_\delta\}\co &\overline{{\red c}{\blue c}}\rightarrow\underline{{\red c}{\blue c}}.
	\end{align*}
	Thus the two generator pairs can be cancelled. Let us connect these two generators to those from Figure~\ref{fig:GlueingDomains}. There are for example the two domains 
	\begin{align*}
			\{{\red P_2},{\red Q_2}\}\co &{\red a}{\blue a}\rightarrow\underline{{\red a}{\blue a}}, &
			\{{\red P_4},{\red Q_4}\}\co &\underline{{\red c}{\blue c}}\rightarrow{\red c}{\blue c}.
	\end{align*}
	We can use the same arguments as above to verify that there are no domains with non-negative domains leaving $\underline{{\red a}{\blue a}}$ or terminating at $\underline{{\red c}{\blue c}}$ other than $\{\square_\beta\}$ and $\{\square_\delta\}$. Hence, after cancellation, the remaining complex is the same as before. 
	We can now use the $d^2$-relation for type AA structures to deduce that all domains from Figure~\ref{fig:GlueingDomains} -- not only those on the solid arrows -- contribute. For example, the contribution to the $d^2$-relation of the composition ${\red a}{\blue a}\rightarrow{\red a}{\blue b}\rightarrow{\red d}{\blue c}$ can only be cancelled by the differential of ${\red a}{\blue a}\rightarrow{\red d}{\blue c}$. We can argue similarly for all other dashed arrows. 	
	Thus, we obtain a type~AA structure $\mathcal{P}'$ which, by definition, is homotopic to $\mathcal{F}^{AA}_{\iota_\beta,\iota_\alpha}(\BSAA(X))$. Thus, the expression (\ref{eqn:FirstGlueingProofStep2}) is chain homotopic to
	\begin{equation}\label{eqn:FirstGlueingProofStep3}
	\BSD(X_{T_1})^{\mathcal{B}'}\boxtimes\typeA{\mathcal{B}',\mathcal{A}'}{\mathcal{P}'}\boxtimes \BSD(X_{T_2})^{\mathcal{A}'}.
	\end{equation}
	Let $\Anull$ be the quotient algebra obtained from $\Ad$ by setting $p_3=0=q_1$. Now identify $\Id$ with $\mathcal{I}'_\alpha$ and $\mathcal{I}'_\beta$ such that an idempotent for site $s$ corresponds to an idempotent which does not occupy the $\alpha$- and respectively $\beta$-arc labelled $s$ in Figure~\ref{fig:GlueingAAP}. Under this identification, there are unique $\Id$-algebra epimorphisms
	\[\pi_\alpha\co \mathcal{A}'\rightarrow\Anull\quad\text{and}\quad\pi_\beta\co \mathcal{B}'\rightarrow\Anull.\]
	Note that, as the notation for our domains suggests, each domain from Figure~\ref{fig:GlueingDomains} is recorded in $\mathcal{P}'$ by the algebra elements in $\mathcal{A}'$ and $\mathcal{B}'$ corresponding to the algebra elements in $\Anull$ obtained by the product of all blue and red labels of the domain, respectively. 
	In fact, $\mathcal{P}'$ is equal to the image of the type~AA structure $\mathcal{P}$ from Figure~\ref{fig:GlueingTypeAAstructure} (viewed as a bimodule over $\Anull$) under the induced functor $\mathcal{F}^{AA}_{\pi_\beta,\pi_\alpha}$. Thus, by the pairing adjuction (Theorem~\ref{thm:PairingAdjunction}), the expression (\ref{eqn:FirstGlueingProofStep3}) is equal to
	\begin{equation}\label{eqn:FirstGlueingProofStep4}
	\mathcal{F}_{\pi_\beta}(\BSD(X_{T_1}))^{\Anull}\boxtimes\typeA{\Anull,\Anull}{\mathcal{P}}\boxtimes \mathcal{F}_{\pi_\alpha}(\BSD(X_{T_2}))^{\Anull}.
	\end{equation}
Let $\pi_{31}\co\Ad\rightarrow\Anull$ be the quotient map defining $\Anull$. Then by a final application of the Pairing Adjunction, it is sufficient to identify $\mathcal{F}_{\pi_\beta}(\BSD(X_{T_1}))$ and $\mathcal{F}_{\pi_\alpha}(\BSD(X_{T_2}))$ with $\mathcal{F}_{\pi_{31}}(\CFTd(\rr(T_1)))$ and $\mathcal{F}_{\pi_{31}}(\CFTd(T_2))$, respectively. 

The second identification is immediate from Lemma~\ref{lem:IdentificationCFTdBSD} below and the observation that the bordered sutured Heegaard diagram for $X_{T_2}$ can be regarded as a tangle Heegaard diagram for $T_2$, up to a minor modification at the tangle ends. The first identification follows likewise, observing that switching the roles of $\alpha$- and $\beta$-curves while at the same time switching the orientation of the Heegaard surface leaves the resulting complex unchanged, up to a reversal of Alexander gradings.

Finally, the $\delta$-grading on $\mathcal{P}$ is calculated as usual, see~\cite[Definition~5.13]{HDsForTangles}. By the additivity of the $\delta$-grading under glueing, the $\delta$-gradings on both sides of our glueing formula agree.
Similarly, if $A$ is the Alexander grading induced by an ordered matching for $T_2$, then the three generating periodic domains have vanishing Alexander gradings. This shows that the Alexander grading on $\mathcal{P}$ is well-defined (as a relative grading). Again, by additivity under glueing, the Alexander grading on both sides of the glueing formula agree.
\end{proof}

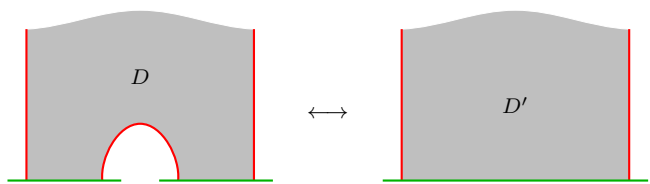
\begin{figure}[t]
	\centering
	$
	{\raisebox{-0.9cm}{
			\psset{unit=0.5}
			\begin{pspicture}(-4,-0.1)(4,4.6)
			\pscustom[fillstyle=solid,fillcolor=lightgray,linewidth=0pt,linecolor=white]{
				\psline(-3,4)(-3,0)
				\psline(-3,0)(3,0)
				\psline(3,0)(3,4)
				\psecurve(10,4.5)(3,4)(0,4.5)(-3,4)(-10,4.5)
			}
			\pscustom[fillstyle=solid,fillcolor=white,linewidth=0pt,linecolor=white]{
				\psecurve(0,-1)(-1,0)(0,1.5)(1,0)(0,-1)
				\psline(1,0)(1,-1)(-1,-1)(-1,0)
			}
			\rput(0,2.75){$D$}
			\psline[linecolor=red](3,0)(3,4)
			\psline[linecolor=red](-3,0)(-3,4)
			\psecurve[linecolor=red](0,-1)(-1,0)(0,1.5)(1,0)(0,-1)
			\psline[linecolor=darkgreen](-3.5,0)(-0.5,0)
			\psline[linecolor=darkgreen](0.5,0)(3.5,0)
			\end{pspicture}
		}
		\longleftrightarrow
		\raisebox{-0.9cm}{
			\psset{unit=0.5}
			\begin{pspicture}(-4,-0.1)(4,4.6)
			\pscustom[fillstyle=solid,fillcolor=lightgray,linewidth=0pt,linecolor=white]{
				\psline(-3,4)(-3,0)
				\psline(-3,0)(3,0)
				\psline(3,0)(3,4)
				\psecurve(10,4.5)(3,4)(0,4.5)(-3,4)(-10,4.5)
			}
			\rput(0,2){$D'$}
			\psline[linecolor=red](3,0)(3,4)
			\psline[linecolor=red](-3,0)(-3,4)
			\psline[linecolor=darkgreen](-3.5,0)(3.5,0)
			\end{pspicture}
		}
	}
	$
	\caption{A domain with multiplicity 1 near a tangle end.}\label{fig:DomainNearSillyArc}
\end{figure}

\begin{lemma}\label{lem:IdentificationCFTdBSD}
	Let \(D\) and \(D'\) be two domains which only differ in a small region of multiplicity 1 as shown in Figure~\ref{fig:DomainNearSillyArc}. Then, for a suitable choice of complex structure, \(D\) contributes iff \(D'\) does. 
\end{lemma}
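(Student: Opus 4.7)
The plan is to establish a bijection between holomorphic representatives of $D$ and $D'$ via a local analysis near the tangle end, mirroring standard handleslide/stabilisation arguments in the cylindrical reformulation of Heegaard Floer theory~\cite{CylindricalReformulation}.

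First I would identify precisely the local difference: the domains $D$ and $D'$ agree outside a small compact neighbourhood $U$ of the tangle end, and inside $U$ they differ by a small bigon $B$ of multiplicity $1$ bounded on one side by the ``bump'' arc segment (part of the $\alpha$-curve that, in the peculiar Heegaard diagram, arises from the curved portion of the special $\alpha$-circle ${\red S^1}$ adjacent to the basepoints $p_i$, $q_i$) and on the other side by the complementary piece of the same $\alpha$-curve. Crucially, no $\beta$-curve enters $U$, so the two boundary sides of $B$ lie on the same (single) $\alpha$-arc; the only intersection points of $\A\cap\B$ near the tangle end are the same for $D$ and $D'$, and neither $D$ nor $D'$ has any corners inside~$U$.

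Next I would choose a simple closed curve $\gamma\subset\Sigma$ separating $U$ from the rest of the Heegaard surface, disjoint from $\B$ and meeting $\A$ only transversely along the ``bump'' arc. Working with the cylindrical formulation, I would pick a family of almost complex structures $J_t$ on $\Sigma\times[0,1]\times\mathbb{R}$ that stretches the neck along $\gamma\times[0,1]\times\mathbb{R}$. By the standard SFT compactness theorem, as $t\to\infty$, any sequence of $J_t$-holomorphic representatives of $D$ (respectively $D'$) converges to a broken holomorphic building consisting of a piece lying on the ``outside'' of $\gamma$ (in $\Sigma\smallsetminus U$) and a piece lying inside~$U$.

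The outside pieces for $D$ and for $D'$ live in the same region and represent the same remainder domain, so they belong to the same moduli space. The inside piece on the $D$-side is forced by the multiplicities to be a holomorphic representative of the bigon $B$: this is a disc with boundary on a single $\alpha$-arc and no $\beta$-boundary, and for generic cylindrical almost complex structures its moduli space modulo $\mathbb{R}$-translation consists of a single point (the trivial bigon map). The inside piece on the $D'$-side is simply a collection of constant discs. In both cases, a standard gluing argument (transverse by dimension count, since the bigon contributes nothing to the expected dimension of the glued curve) shows that the outside pieces glue back canonically. This yields the desired bijection $\mathcal{M}(D)/\mathbb{R}\leftrightarrow \mathcal{M}(D')/\mathbb{R}$ and hence the equality of mod-$2$ counts.

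The main obstacle is the technical gluing/compactness step: one must verify that the moduli space of the local bigon $B$ is indeed rigidly a single point and that gluing to the outside piece is transversely cut out. This is essentially the same local analysis that underlies the invariance of Heegaard Floer homology under free stabilisation and that allows basepoints to be moved across arcs disjoint from the $\beta$-curves; consequently I would refer to~\cite[Sections~10-11]{CylindricalReformulation} for the detailed analytic setup rather than reproducing it here.
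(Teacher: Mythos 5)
Your overall strategy---stretch the neck to localise the difference between $D$ and $D'$ to a neighbourhood of the tangle end, show the local piece is rigid, and glue---is the right flavour: the paper's entire proof is a citation to \cite[Proposition~2.7]{Hanselman}, which handles exactly this kind of modification of a domain near the boundary of a bordered sutured Heegaard diagram by such a degeneration argument. So in outline you are reproducing the intended argument rather than inventing a new one.

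As written, however, there are genuine gaps. First, you have misread the local model of Figure~\ref{fig:DomainNearSillyArc}: the difference region $B=D'-D$ is bounded by the short $\alpha$-arc on one side and by a segment of $\partial\Sigma$ (the green suture) on the other, not by two pieces of the same $\alpha$-curve in the interior; moreover it is $D'$, not $D$, that carries the extra multiplicity, so your identification of which side has the nonconstant inside piece is reversed, and neither inside piece is ``just $B$'' or ``just constants,'' since both domains have multiplicity $1$ on most of the neighbourhood $U$. Second, and more seriously, because $B$ abuts $\partial\Sigma$ the local analysis is \emph{not} the interior free-stabilisation analysis of \cite{CylindricalReformulation}: a holomorphic curve whose domain has positive multiplicity in a region adjacent to the boundary of the Heegaard surface must be treated with the bordered/sutured machinery (asymptotics at east infinity along the arc diagram), and a curve component with boundary only on $\alpha$-curves is a boundary degeneration, whose moduli space is governed by Facts~\ref{fact:OSHFL_lemma_5_4} and~\ref{fact:OSHFL_lemma_5_5} rather than being ``a single trivial bigon'' for dimension reasons. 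The assertion that the local piece is rigid and glues with no index contribution is precisely the content that needs proof here; replacing it with an appeal to interior stabilisation does not close the argument. A correct write-up would either carry out this boundary analysis or cite \cite[Proposition~2.7]{Hanselman} directly, as the paper does.
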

\begin{proof}
	This follows from the same arguments as \cite[Proposition~2.7]{Hanselman}.
\end{proof}

% Classification.tex

\section{Curved complexes for marked surfaces}\label{sec:classification}

In this section, we prove the main classification results which allow us to interpret peculiar modules and morphisms between them geometrically in terms of Lagrangian intersection Floer theory. 
These classification results are not particular to peculiar modules. 
They actually hold in a more general framework, namely for curved complexes over certain algebras constructed from \textit{arbitrary} surfaces with boundary (plus some extra data). 
Therefore, we will work in this general framework throughout this section. In section~\ref{sec:glueingrevisited}, we will return to the discussion of our tangle invariants and summarize the consequences of our main classification results from this section to peculiar modules. 

Many ideas in this section originate from~\cite{HRW} and~\cite{Kontsevich}. In the first paper, Hanselman, Rasmussen and Watson classify extendable type D structures in the context of a bordered Heegaard Floer theory for 3-manifolds with torus boundary. While our tangle invariants are classified by immersed curves on the 4-punctured sphere, their immersed curves live on the once-punctured torus. The reader should feel encouraged to read the relevant passages in their paper in parallel to this section, as our discussion will stay rather close to theirs, see also Remark~\ref{rem:comparisonToHRW}. 

In~\cite{Kontsevich}, Haiden, Katzarkov and Kontsevich classify twisted complexes over marked surfaces. Their results, unlike the ones from~\cite{HRW}, do not include a correspondence between Lagrangian intersection Floer theory and morphism spaces. Nonetheless, the main ideas about how to set up the correct framework in which a classification result for curved complexes could hold come from this paper. For a slightly more detailed comparison between their setting and ours, see Remark~\ref{rem:comparisonToKontsevich}. 

The broad outline of this section is as follows: after setting up the general framework of curved complexes over marked surfaces with arc systems, we introduce the category of precurves. The notion of precurves serves as a useful intermediary between the algebraic nature of curved complexes and the geometric nature of immersed curves with local systems. In subsections~\ref{subsec:precurves}, \ref{subsec:precurves_geometry} and~\ref{subsec:simplify_precurves}, we show that the category of curved complexes and the category of precurves over a fixed marked surface with arc system are equivalent and we discuss how the simplification algorithm from~\cite{HRW} can be adapted to precurves. In subsections~\ref{subsec:ClassificationMor} and~\ref{subsec:formulaMor}, we classify the homology of morphism spaces for a particularly simple class of precurves in terms of Lagrangian intersection theory, before finally proving the full classification of precurves in subsection~\ref{subsec:complete_classification}.

\subsection{Curved complexes over marked surfaces with arc systems}

\begin{definition}\label{def:MarkedSurface}
	A \textbf{marked surface} is a pair $(S,M)$ where $S$ is a connected oriented surface with non-empty boundary and $M$ is a (possibly empty) set of basepoints on  $\partial S$. 
	An \textbf{arc} $a$ of a marked surface $(S,M)$ is (the image of) an embedding of an oriented closed interval
	\[(I, \partial I) \hookrightarrow (S,\partial S\smallsetminus M)\]
	such that its class $[a]\in\pi_1(S,\partial S\smallsetminus M)$ is non-trivial.
	Suppose $A$ is a non-empty set of pairwise disjoint arcs on a marked surface $(S,M)$. For each arc $a\in A$, choose a closed neighbourhood $N(a)$ of $a$ such that $N(a)\cap N(a')=\emptyset$ for all $a'\in A\smallsetminus \{a\}$. Let $s_1(a)$ and $s_2(a)$ be the closures of the two components of $\partial N(a)\smallsetminus\partial S$ such that $s_1(a)$ lies to the right of the oriented arc $a$ and $s_2(a)$ to its left; we call $s_1(a)$ and $s_2(a)$ the two \textbf{sides} of $a$. We also fix a foliation $\mathcal{F}_a=I\times I$ of $N(a)$ such that $s_1(a)$, $s_2(a)$ and $a$ are leaves of $\mathcal{F}_a$. 

	Furthermore, we call the closures of the connected components of $S\smallsetminus \bigcup_{a\in A}N(a)$ \textbf{faces} and denote the set of all faces by $F(S,M,A)$. We call~$A$ (together with a choice of fixed~$N(a)$ and foliation~$\mathcal{F}_a$ as above) an \textbf{arc system} if 
	%all arcs $a\in A$ are pairwise non-isotopic relative to~$\partial S\smallsetminus M$ and 
	each face $f\in F(S,M,A)$ is a topological disc containing at most one point in $M$. If it does contain a point in $M$, we call it an \textbf{open face}. Otherwise, we call it \textbf{closed}. 
	%We denote the subset of those faces $f\in F(S,M,A)$ for which $f$ is closed by $F^c(S,M,A)$.
	%
	%We now introduce some more notation. 
	%Given an arc system $A$ on a marked surface $(S,M)$, we denote the arc corresponding to a side $s$ by $a(s)$. 
	%A side $s$ also lies on the boundary of some face $f$ in $F(S,M,A)$; we denote this face by $f(s)$. 
	%The set of all sides $s$ with $f(s)=f$ is denoted by $S(f)$.
	Given a face $f\in F(S,M,A)$, we denote the set of all sides adjacent to $f$ by $S(f)$ and write $n_f$ for the cardinality of $S(f)$. Note that $n_f\geq2$ for closed faces $f$, because $n_f=0$ would imply $A=\emptyset$ and $n_f=1$ would imply that the arc adjacent to $f$ can be pushed into $\partial S\smallsetminus M$.
	\end{definition}
	
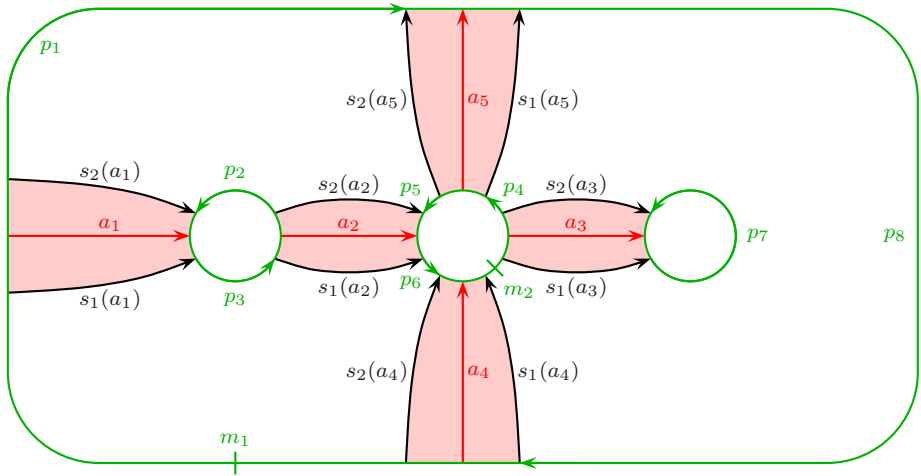
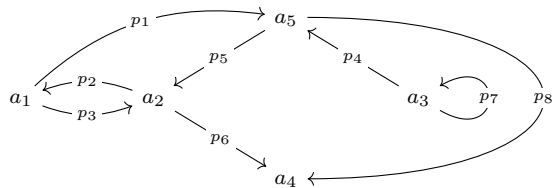
\begin{figure}[t]
	\centering
	
	\begin{subfigure}[b]{\textwidth}
		\centering
		\psset{unit=0.15}
		\begin{pspicture}(-41,-21)(41,21)
		% nodes
		\pnode(-20,0){A}
		\pnode(20,0){C}
		\pnode([nodesep=4,angle=30]A){A30}
		\pnode([nodesep=4,angle=150]A){A150}
		\pnode([nodesep=4,angle=-30]A){Am30}
		\pnode([nodesep=4,angle=-150]A){Am150}
		\pnode([nodesep=4,angle=30]C){C30}
		\pnode([nodesep=4,angle=150]C){C150}
		\pnode([nodesep=4,angle=-30]C){Cm30}
		\pnode([nodesep=4,angle=-150]C){Cm150}
		
		% arc a1
		\pscustom*[linecolor=lightred]{
			\pscurve(-40,5)(-33,4.5)(-28,3.6)(A150)
			\psline(A150)(Am150)
			\pscurve(Am150)(-28,-3.6)(-33,-4.5)(-40,-5)
		}
		\psline[linecolor=red]{<-}(-24,0)(-40,0)
		\pscurve{->}(-40,5)(-33,4.5)(-28,3.6)(A150)
		\pscurve{->}(-40,-5)(-33,-4.5)(-28,-3.6)(Am150)	
		\uput{0.4}[90](-31,0){\textcolor{red}{$a_1$}}
		\uput{4.7}[90](-31,0){$s_2(a_1)$}
		\uput{-4.7}[-90](-31,0){$s_1(a_1)$}
		
		% arc a2
		\pscustom*[linecolor=lightred]{
			\pscurve(4;150)(-7,3)(-13,3)(A30)
			\psline(A30)(Am30)
			\pscurve(Am30)(-13,-3)(-7,-3)(4;-150)
		}
		\psline[linecolor=red]{<-}(-4,0)(-16,0)
		\pscurve{<-}(4;150)(-7,3)(-13,3)(A30)
		\pscurve{<-}(4;-150)(-7,-3)(-13,-3)(Am30)	
		\uput{0.4}[90](-10,0){\textcolor{red}{$a_2$}}
		\uput{3.6}[90](-10,0){$s_2(a_2)$}
		\uput{-3.6}[-90](-10,0){$s_1(a_2)$}
		
		% arc a3
		\pscustom*[linecolor=lightred]{
			\pscurve(4;30)(7,3)(13,3)(C150)
			\psline(C150)(Cm150)
			\pscurve(Cm150)(13,-3)(7,-3)(4;-30)
		}
		\psline[linecolor=red]{->}(4,0)(16,0)
		\pscurve{->}(4;30)(7,3)(13,3)(C150)
		\pscurve{->}(4;-30)(7,-3)(13,-3)(Cm150)	
		\uput{0.4}[90](10,0){\textcolor{red}{$a_3$}}
		\uput{3.6}[90](10,0){$s_2(a_3)$}
		\uput{-3.6}[-90](10,0){$s_1(a_3)$}
		
		% arc a4
		\pscustom*[linecolor=lightred]{
			\pscurve(5,-20)(4.5,-13)(3.6,-8)(4;-60)
			\psline(4;-60)(4;-120)
			\pscurve[liftpen=1](4;-120)(-3.6,-8)(-4.5,-13)(-5,-20)
		}
		\psline[linecolor=red]{<-}(0,-4)(0,-20)
		\pscurve{->}(5,-20)(4.5,-13)(3.6,-8)(4;-60)
		\pscurve{->}(-5,-20)(-4.5,-13)(-3.6,-8)(4;-120)
		\uput{0.4}[0](0,-12){\textcolor{red}{$a_4$}}
		\uput{4.8}[180](0,-12){$s_2(a_4)$}
		\uput{-4.8}[0](0,-12){$s_1(a_4)$}
		
		% arc a5
		\pscustom*[linecolor=lightred]{
			\pscurve(5,20)(4.5,13)(3.6,8)(4;60)
			\psline(4;60)(4;120)
			\pscurve[liftpen=1](4;120)(-3.6,8)(-4.5,13)(-5,20)
		}
		\psline[linecolor=red]{->}(0,4)(0,20)
		\pscurve{<-}(5,20)(4.5,13)(3.6,8)(4;60)
		\pscurve{<-}(-5,20)(-4.5,13)(-3.6,8)(4;120)
		\uput{0.4}[0](0,12){\textcolor{red}{$a_5$}}
		\uput{4.8}[180](0,12){$s_2(a_5)$}
		\uput{-4.8}[0](0,12){$s_1(a_5)$}
		
		% punctures
		\psarc[fillcolor=white,fillstyle=solid,linecolor=darkgreen](0,0){4}{0}{360}
		\psarc[fillcolor=white,fillstyle=solid,linecolor=darkgreen](20,0){4}{0}{360}
		\psarc[fillcolor=white,fillstyle=solid,linecolor=darkgreen](-20,0){4}{0}{360}
		
		% boundary
		\psline[linecolor=darkgreen,linearc=8,cornersize=absolute](0,-20)(-40,-20)(-40,20)(40,20)(40,-20)(0,-20)
		%\psline[linecolor=darkgreen](39,0)(41,0)
		%\rput(37,0){\textcolor{darkgreen}{$m_2$}}
		\psline[linecolor=darkgreen](-20,-21)(-20,-19)
		\rput(-20,-18){\textcolor{darkgreen}{$m_1$}}
		\psline[linecolor=darkgreen](3;-45)(5;-45)
		\rput(7;-45){\textcolor{darkgreen}{$m_2$}}
		
		% algebra elements
		\psarc[linecolor=darkgreen]{->}(-20,0){4}{-150}{-30}
		\psarc[linecolor=darkgreen]{->}(-20,0){4}{30}{150}
		\psarc[linecolor=darkgreen]{->}(0,0){4}{30}{60}
		\psarc[linecolor=darkgreen]{->}(0,0){4}{120}{150}
		\psarc[linecolor=darkgreen]{->}(0,0){4}{-150}{-120}
		\psarc[linecolor=darkgreen]{->}(20,0){4}{-150}{150}
		\psline[linecolor=darkgreen,linearc=8,cornersize=absolute]{->}(-40,5)(-40,20)(-5,20)
		
		\uput{4}[-45](-40,20){$\textcolor{darkgreen}{p_1}$}
		
		\uput{5}[90](-20,0){$\textcolor{darkgreen}{p_2}$}
		\uput{5}[-90](-20,0){$\textcolor{darkgreen}{p_3}$}
		
		\uput{5}[45](0,0){$\textcolor{darkgreen}{p_4}$}
		\uput{5}[135](0,0){$\textcolor{darkgreen}{p_5}$}
		\uput{5}[-135](0,0){$\textcolor{darkgreen}{p_6}$}
		
		\uput{5}[0](20,0){$\textcolor{darkgreen}{p_7}$}
		\rput(38,0){\textcolor{darkgreen}{$p_8$}}
		\psline[linecolor=darkgreen]{->}(10,-20)(5,-20)
		
		\end{pspicture}
		\caption{An arc system $A$ on the marked surface $(S,M)=(D^2\smallsetminus 3D^2, \{m_1, m_2\})$. The boundary components are labelled by the elementary algebra elements $p_i$, which correspond to the arrows in the quiver $Q(S,M,A)$ below. }\label{fig:MarkedSurfacePic}
	\end{subfigure}
	\begin{subfigure}[b]{\textwidth}%\centering
		\[\begin{tikzcd}[row sep=0.7cm, column sep=1.2cm]
		% 1st row
		& &
		a_5
		\arrow{ld}[description]{p_5}
		\arrow[out=0, in=0,looseness=5]{dd}[description]{p_8}
		\\
		a_1
		\arrow[bend left=25]{rru}[description]{p_1}
		\arrow[bend right=20]{r}[description]{p_3}
		&
		a_2
		\arrow[bend right=20]{l}[description]{p_2}
		\arrow{rd}[description]{p_6}
		&&
		a_3
		\arrow{ul}[description]{p_4}
		\arrow[out=-30, in=30, loop]{lr}[description]{p_7}
		\\
		&&
		a_4
		\end{tikzcd}\]
		\caption{The quiver~$Q(S,M,A)$ for the arc system $A$ on the marked surface $(S,M)$ above.}\label{fig:MarkedSurfaceQuiver}
	\end{subfigure}
	\caption{An illustration of the Definitions~\ref{def:MarkedSurface} and~\ref{def:MarkedSurfaceQuiver}.}\label{fig:ExampleMarkedSurface}
\end{figure} 

\begin{remark}
	Our conventions are slightly different from those in~\cite{Kontsevich}: their markings $M$ correspond to the subsets $\partial S\smallsetminus M$. Also, their arc systems allow faces of arbitrary genus. Our arc systems correspond to their admissible arc systems, except that they require $n_f\geq3$. 
\end{remark}

\begin{example}\label{exa:MarkedSurface}
	Figure~\ref{fig:MarkedSurfacePic} shows an example of a marked surface $(S,M)$ with an arc system~$A$. We usually draw the boundary of the surface in green and mark elements in~$M$ by dashes through the boundary, in this case labelled by~$\textcolor{darkgreen}{m_1}$ and~$\textcolor{darkgreen}{m_2}$. The oriented arcs in $A$ are drawn in red, labelled by $\textcolor{red}{a_i}$, $i=1,\dots,5$. The neighbourhood of each arc $\textcolor{red}{a_i}$ is shaded in light red, bounded by solid arrows representing the two sides of $\textcolor{red}{a_i}$. In this particular example, the arcs cut the surface into three components, which are the faces in $F(S,M,A)$. For each face $f$, we have labelled the components of $\partial S\cap f$ which do not contain the two points $\textcolor{darkgreen}{m_1}$ and $\textcolor{darkgreen}{m_2}$ by variables $\textcolor{darkgreen}{p_i}$. These correspond to the basic algebra elements from the next definition.	
	
	Note that this particular marked surface will be irrelevant for our tangle invariants; it simply serves as an example to illustrate the generality of the arguments in this section.
\end{example}

\begin{wrapfigure}{r}{0.4\textwidth}
	\centering
	\psset{unit=0.8}\medskip
	\begin{pspicture}(-3,-1)(3,1)
	\pscustom*[linecolor=lightred,linewidth=0pt]{
		\psline(-2,0.3)(2,0.3)
		\psline(2,-0.3)(-2,-0.3)
	}
	\psline[linecolor=red](-2,0)(2,0)
	\psline(-2,0.3)(2,0.3)
	\psline(-2,-0.3)(2,-0.3)
	\pscustom*[linecolor=lightred,linewidth=0pt]{
		\psline(-0.2,0.2)(0.2,0.2)
		\psline(0.2,-0.2)(-0.2,-0.2)
	}
	\rput[c](0,0){\textcolor{red}{$a$}}

	\rput[r](-2.2,0){\textcolor{darkgreen}{$\partial S$}}
	\rput[l](2.2,0){\textcolor{darkgreen}{$\partial S$}}
	\psline[linecolor=darkgreen,linewidth=1pt](-2,0.5)(-2,-0.5)
	\psline[linecolor=darkgreen,linewidth=1pt](2,0.5)(2,-0.5)
	\psline[linecolor=darkgreen,linewidth=1pt]{->}(-2,0.3)(-2,1)\rput[l](-1.8,0.7){$\textcolor{darkgreen}{p_{1}}$}
	\psline[linecolor=darkgreen,linewidth=1pt]{->}(-2,-1)(-2,-0.3)\rput[l](-1.8,-0.7){$\textcolor{darkgreen}{q_{1}}$}
	\psline[linecolor=darkgreen,linewidth=1pt]{<-}(2,0.3)(2,1)\rput[r](1.8,0.7){$\textcolor{darkgreen}{p_{2}}$}
	\psline[linecolor=darkgreen,linewidth=1pt]{<-}(2,-1)(2,-0.3)\rput[r](1.8,-0.7){$\textcolor{darkgreen}{q_{2}}$}
	\end{pspicture}
	\caption{A typical neighbourhood of an arc $a$. }\label{fig:TypicalNeighbourhoodOfArc}
\end{wrapfigure}
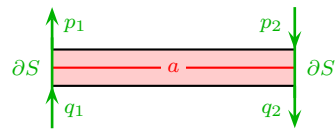
\myfixwrapfig

\begin{definition}\label{def:MarkedSurfaceQuiver}
	Given an arc system $A$ on a marked surface $(S,M)$, consider the graph $Q(S,M,A)$ obtained by contracting the closed neighbourhood of each arc in $A$ to a single point (which defines the vertices of $Q(S,M,A)$) and removing the interior of each face $f$ as well as any component of $\partial S\cap f$ containing a basepoint in $M$ (which defines the edges of $Q(S,M,A)$). By choosing the induced boundary orientation of $\partial f$, the 1-cells inherit an orientation from $(S,M)$, which turns $Q(S,M,A)$ into a quiver. Figure~\ref{fig:MarkedSurfaceQuiver} shows this quiver for the triple $(S,M,A)$ from Example~\ref{exa:MarkedSurface}.
\\\indent
	A typical neighbourhood of an arc $a\in A$ is shown in Figure~\ref{fig:TypicalNeighbourhoodOfArc}. As we can see, each arc $a$ is the source of at most two arrows and the target of at most two arrows. Using the notation from Figure~\ref{fig:TypicalNeighbourhoodOfArc}, we associate with the triple $(S,M,A)$ the path algebra
	\[\mathcal{A}:=\mathbb{F}_2 Q(S,M,A)/\{p_1q_1=0=q_2p_2\mid\text{arcs }a\in A\}.\]
	Note that we follow the convention to read algebra elements from right to left.
	Every arrow in the quiver corresponds to an algebra element which we call the \textbf{elementary algebra elements}. 
	For each closed face $f\in F(S,M,A)$, we write $U_f$ for the sum of all paths that start on a side on $f$ and go around $\partial f$ exactly once. We write  
	$U$ for the sum of $U_f$ over all closed faces. For each arc $a\in A$, denote the idempotent corresponding to~$a$ by~$\iota_a$ and let $\mathcal{I}$ be the ring of all idempotents. 
	We often consider $\mathcal{A}$ as a category: the underlying objects are given by the arcs in $A$. Then for any $a, b\in  A$, 
	\[\Mor_{\mathcal{A}}(a, b):=\iota_b.\mathcal{A}.\iota_a\]
	and multiplication is given by algebra multiplication. 
\end{definition}

\begin{definition}
	Let $A$ be an arc system on a marked surface $(S,M)$. An  %discrete\footnote{By a \textit{discrete} grading, we mean that the grading only takes finitely many values in any bounded subset of $\mathbb{R}$} 
	$\mathbb{R}^{\geq0}$-grading on $\mathcal{A}$ is called a \textbf{$\delta$-grading}, if for all closed faces $f\in F(S,M,A)$, $U_f$ is homogeneous of degree 2 and the subalgebra of grading $0$ is $\mathcal{I}$. (Note that such gradings always exist: for example, if $n=\text{lcm}(n_f)$, we can define a $\frac{2}{n}\mathbb{Z}$-grading $\delta$ by setting $\delta(p)=\frac{2}{n_f}$ for any elementary algebra elements $p$ of a face $f$.)
\end{definition}

\begin{definition}\label{def:CCMarkedSurfaces}
	Let $A$ be an arc system on a marked surface $(S,M)$. We denote the dg category $\Cx^{U}(\Mat(\mathcal{A}(S,M,A)))$ by $\CC(S,M,A)$ and call it the \textbf{category of curved complexes} associated with $(S,M,A)$. 
\end{definition}

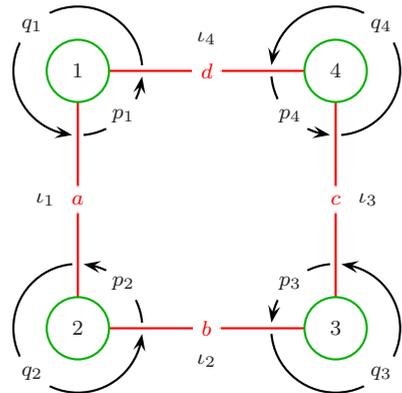
\begin{wrapfigure}{r}{0.4\textwidth}
	\centering
	\psset{unit=1.7}
	\begin{pspicture}(-1.52,-1.52)(1.52,1.52)
	
	\psline[linecolor=red](-1,-1)(-1,1)(1,1)(1,-1)(-1,-1)
	
	\rput(-1,0){\fcolorbox{white}{white}{\red $a$}}
	\rput(0,-1){\fcolorbox{white}{white}{\red $b$}}
	\rput(1,0){\fcolorbox{white}{white}{\red $c$}}
	\rput(0,1){\fcolorbox{white}{white}{\red $d$}}
	
	\rput(-1.25,0){$\iota_1$}
	\rput(0,-1.25){$\iota_2$}
	\rput(1.25,0){$\iota_3$}
	\rput(0,1.25){$\iota_4$}
	
	\pscircle[fillstyle=solid, fillcolor=white,linecolor=darkgreen](1,1){0.25}
	\pscircle[fillstyle=solid, fillcolor=white,linecolor=darkgreen](1,-1){0.25}
	\pscircle[fillstyle=solid, fillcolor=white,linecolor=darkgreen](-1,1){0.25}
	\pscircle[fillstyle=solid, fillcolor=white,linecolor=darkgreen](-1,-1){0.25}
	
	\rput[c](-1,1){1}
	\rput[c](-1,-1){2}
	\rput[c](1,-1){3}
	\rput[c](1,1){4}
	
	\rput(-1,1){
		\psarcn{<-}(0,0){0.5}{-5}{-85}
		\psarcn{<-}(0,0){0.5}{-95}{5}
		\pscircle*[linecolor=white](0.5;-45){8pt}
		\rput(0.5;-45){$p_1$}
		\pscircle*[linecolor=white](0.5;135){8pt}
		\rput(0.5;135){$q_1$}
	}
	
	\rput(-1,-1){
		\psarcn{<-}(0,0){0.5}{85}{5}
		\psarcn{<-}(0,0){0.5}{-5}{95}
		\pscircle*[linecolor=white](0.5;45){8pt}
		\rput(0.5;45){$p_2$}
		\pscircle*[linecolor=white](0.5;-135){8pt}
		\rput(0.5;-135){$q_2$}
	}
	
	\rput(1,-1){
		\psarcn{<-}(0,0){0.5}{175}{95}
		\psarcn{<-}(0,0){0.5}{85}{-175}
		\pscircle*[linecolor=white](0.5;135){8pt}
		\rput(0.5;135){$p_3$}
		\pscircle*[linecolor=white](0.5;-45){8pt}
		\rput(0.5;-45){$q_3$}
	}
	
	\rput(1,1){
		\psarcn{<-}(0,0){0.5}{-95}{-175}
		\psarcn{<-}(0,0){0.5}{175}{-85}
		\pscircle*[linecolor=white](0.5;-135){8pt}
		\rput(0.5;-135){$p_4$}
		\pscircle*[linecolor=white](0.5;45){8pt}
		\rput(0.5;45){$q_4$}
	}
	
	\end{pspicture}
	\caption{The marked surface and arc system which are relevant for peculiar modules, see Example~\ref{exa:pqModSpecialCaseofCC}.}\label{fig:nutshelll}\vspace*{-20pt}
\end{wrapfigure}
\myfixwrapfig

\begin{example}\label{exa:pqModSpecialCaseofCC}
	Let $(S,M)=(S^2\smallsetminus 4 D^2,\emptyset)$ and let $A$ be the arc system consisting of four arcs which divide the surface into exactly two faces each of which has four boundary components, as shown in Figure~\ref{fig:nutshelll}. Then $\mathcal{A}=\Ad$ and $\CC(S,M,A)=\pqMod$.
\end{example}

\begin{remark}
	Usually, we fix a $\delta$-grading on $\mathcal{A}$ in the definition of the category $\CC(S,M,A)$ of curved complexes. This plays the role of the $\mathbb{Z}$-grading in section~\ref{sec:AlgStructFromGDCats}, except that the differential here \textit{increases} $\delta$-grading by 1. So actually, with respect to the $\delta$-grading, one should call $\CC(S,M,A)$ the category of curved \textit{co}complexes. We have chosen this convention, because, as seen in the previous example, the category $\pqMod$ of peculiar modules is a special case of $\CC(S,M,A)$, and for $\pqMod$, there is also a homological grading $h$, which decreases along differentials by 1. The grading $h$ is defined as a combination of the $\delta$-grading and the Alexander grading, see Definition~\ref{def:RecallGradingsFromHDsForTangles}. The latter is preserved by the differentials, so for the present section it is irrelevant.
\end{remark}

\begin{remark}\label{rem:comparisonToKontsevich}
	In~\cite{Kontsevich}, Haiden, Katzarkov and Kontsevich associate with the triple $(S,M,A)$ the path algebra
	\[\Atw:=\mathbb{F}_2 Q(S,M,A)/\{p_1p_2=0=q_2q_1\mid\text{arcs }a\in A\}.\]
	Note that the relations they impose on the free quiver algebra $\mathbb{F}_2 Q(S,M,A)$ are in some sense ``dual'' to ours. Haiden, Katzarkov and Kontsevich define an $A_\infty$-structure on $\Atw$, which then gives rise to the notion of twisted complexes using an $A_\infty$-version of $\Cx^{0}(\Mat(\cdot))$. In \cite[Theorem~4.3]{Kontsevich}, they show that twisted complexes over $\Atw$ are classified by immersed curves on $S$ with local systems.
	
	Curved complexes over $\mathcal{A}$ and twisted complexes over $\Atw$ are closely related. In~\cite[Theorem~5.10, Proposition~5.15]{MyThesis}, I define two functors between the category of peculiar modules and the category of twisted complexes for the triple $(S,M,A)$ from Example~\ref{exa:pqModSpecialCaseofCC} which I expect to set up an equivalence of categories. However, to define these functors, one first needs to pass to infinitely generated twisted complexes and peculiar modules, which complicates matters. 
	
	The classification results below for peculiar modules and more generally curved complexes (which we assume to be finitely generated throughout this paper) suggest an alternative approach. We may argue that twisted complexes and peculiar modules encode the same geometric objects, namely immersed curves on the 4-punctured sphere with local systems. The only difference is that finitely generated peculiar modules correspond to \textit{compact} curves, while finitely generated twisted complexes can also represent non-compact ones. This explains why we cannot expect to obtain an equivalence for peculiar modules and twisted complexes in the finitely generated setting. 
\end{remark}

\begin{remark}\label{rem:comparisonToHRW}
	It is worthwhile to compare our definition of curved complexes associated with marked surfaces to extendable type D structures studied by Hanselman, Rasmussen and Watson in~\cite{HRW}. In fact, for the proof of Theorem~\ref{thm:EverythingIsLoopTypeUpToLocalSystems} below,  we will recycle a simplification algorithm which is a central ingredient in their classification result~\cite[Theorem~32]{HRW}. The main technical difference is that in~\cite{HRW}, the algebra $\mathcal{A}$ is truncated in the sense that any path that traverses certain elementary algebra elements more than once is set equal to zero. We work instead with the full algebra $\mathcal{A}$. \textit{A posteriori}, we will see that these two perspectives coincide and that it is actually sufficient to work with marked surfaces and arc systems in which each face has a basepoint, see Corollary~\ref{cor:AddingBasepointFunctorIsFaithfulUpToHom} and Corollary~\ref{cor:PeculiarModulesFromNiceDiagrams}.
	
	Apart from this rather technical difference (which could easily have been avoided), Theorem~\ref{thm:EverythingIsLoopTypeUpToLocalSystems} below follows directly from~\cite[Theorem~32]{HRW}. 
	However, in~\cite{HRW}, the classification of morphisms relies on some ad-hoc arguments that seem to be particular to the torus algebra. We propose alternative arguments which work for arbitrary marked surfaces with arc systems. The notion of precurves, which we define in the next subsection, seems to provide a good framework for these kinds of arguments. 
\end{remark}

\subsection{An algebraic reinterpretation of curved complexes}\label{subsec:precurves}

\begin{definition}\label{def:AlgebraFromGlueingFaces}
	Let $A$ be an arc system on a marked surface $(S,M)$. We will now give an alternative description of the algebra $\mathcal{A}$ in terms of a subalgebra of a larger algebra built out of simpler pieces. For each face $f\in F(S,M,A)$, let $\mathcal{A}_f$ be the free path algebra of the cyclic or linear quiver with $n_f$ vertices, depending on whether $f$ is closed or open. If $f$ is open, we may think of the linear quiver as a deformation retract of $f$ to its boundary minus the basepoint. If $f$ is closed, let $f^{\ast}$ be the face $f$ punctured at a fixed point; then we may think of the cyclic quiver as a deformation retract of $f^{\ast}$ to the boundary of $f$. Thus, each side $s$ of the face $f$ corresponds to a vertex and hence to the constant path $\iota_s$ at that vertex. 
	
	We denote the vector space generated by the idempotents of $\mathcal{A}_f$ by $\mathcal{I}_f$ and the (non-unital) subalgebra generated by paths of length $\geq1$ by $\mathcal{A}^+_f$. These fit into a short exact sequence
	$$
	\begin{tikzcd}[row sep=10pt]
	0
	\arrow{r}{}
	&
	\mathcal{A}^+_f
	\arrow{r}{}
	&
	\mathcal{A}_f
	\arrow{r}{}
	&
	\mathcal{I}_f
	\arrow{r}{}
	&
	0.
	\end{tikzcd}
	$$
	By taking the direct sum over all faces $f\in F(S,M,A)$, we obtain the short exact sequence  
	$$
	\begin{tikzcd}[row sep=10pt]
	0
	\arrow{r}{}
	&
	\overline{\mathcal{A}}^+:=\bigoplus_f\mathcal{A}^+_f
	\arrow{r}{}
	&
	\overline{\mathcal{A}}:=\bigoplus_f\mathcal{A}_f
	\arrow{r}{}
	&
	\overline{\mathcal{I}}:=\bigoplus_f\mathcal{I}_f
	\arrow{r}{}
	&
	0.
	\end{tikzcd}
	$$
	Then $\mathcal{I}$ agrees with the subring of $\overline{\mathcal{I}}$ given by
	$$\{\iota_{s_1(a)}+\iota_{s_2(a)}\mid a\in A\}$$
	and $\mathcal{A}$ agrees with the subalgebra of $\overline{\mathcal{A}}$ whose underlying vector space is given by $\mathcal{I}\oplus\overline{\mathcal{A}}^+$.
	For each face $f\in F(S,M,A)$, let $p_f$ be the sum of all length 1 paths in $\mathcal{A}_f$, $U_f=p_f^{n_f}$ and $U$ the sum of $U_f$ over all faces $f$. (Note that $U_f=0$ for open faces.) Our \textbf{standard basis} of $\mathcal{A}^+_f.\iota_s$ is given by $\{p_f^n.\iota_s\}_{n\geq1}$. We call $n$ the \textbf{length of a basis element} $p_f^n.\iota_s$. We denote the shortest element of the standard basis between two sides $s$ and $t$ of the same face by $p^s_t$. 
\end{definition}

\begin{remark}\label{rem:precurveFunctors}
	If we view the algebras $\mathcal{A}$ and $\overline{\mathcal{A}}$ as categories whose objects correspond to constant paths, we can view the inclusions $\mathcal{I}\hookrightarrow\overline{\mathcal{I}}$ and $\mathcal{A}\hookrightarrow\overline{\mathcal{A}}$ in terms of a functor between the additive enlargements of $\mathcal{A}$ and  $\overline{\mathcal{A}}$
	$$\mathcal{F}\co\Mat\mathcal{A}\rightarrow\Mat\overline{\mathcal{A}}$$
	which sends an object $\iota_a$, $a\in A$, to $\iota_{s_1(a)}\oplus \iota_{s_2(a)}$ and a morphism $\varphi\in\Mor_{\mathcal{A}}(\iota_a,\iota_b)$ to the map
	$$
	\begin{tikzcd}[column sep=5cm,ampersand replacement=\&]
	\iota_{s_1(a)}\oplus \iota_{s_2(a)}
	\arrow{r}{\left(\begin{matrix}
		\iota_{s_1(b)}.\varphi.\iota_{s_1(a)} & \iota_{s_1(b)}.\varphi.\iota_{s_2(a)} \\
		\iota_{s_2(b)}.\varphi.\iota_{s_1(a)} & \iota_{s_2(b)}.\varphi.\iota_{s_2(a)}
		\end{matrix}\right)}
	\&
	\iota_{s_1(b)}\oplus \iota_{s_2(b)}.
	\end{tikzcd}
	$$
	In order to recover a given object $C$ in $\Mat\mathcal{A}$ from $\mathcal{F}(C)$, one needs to remember how the idempotents are matched up. This motivates the following definition.
\end{remark} 

\begin{definition}\label{def:SplittingCatsForEquivalenceFunctors}
	Consider the category $\Mat_i\overline{\mathcal{A}}$ enriched over $\delta$-graded vector spaces
	\begin{itemize}
		\item whose objects are pairs $(C, \{P_a\}_{a\in A})$, where $C$ is a $\delta$-graded right module over $\overline{\mathcal{I}}$ and 
		$$P_a\co C.\iota_{s_1(a)}\rightarrow C.\iota_{s_2(a)}$$ 
		is a $\delta$-grading preserving vector space isomorphism for every arc $a$, and
		\item whose morphism spaces are defined as follows: for any two right $\overline{\mathcal{I}}$-modules $C$ and $C'$, the short exact sequences from Definition~\ref{def:AlgebraFromGlueingFaces} induce a split exact sequence
		$$
		\begin{tikzcd}[row sep=10pt]
		0
		\arrow{r}{}
		&
		\Mor_{\Mat\overline{\mathcal{A}}^+}(C,C')
		\arrow{r}{}
		&
		\Mor_{\Mat\overline{\mathcal{A}}}(C,C')
		\arrow{r}{}
		&
		\Mor_{\Mat\overline{\mathcal{I}}}(C,C')
		\arrow{r}{}
		&
		0.
		\end{tikzcd}
		$$
		In other words, we may write each morphism $\varphi$ in the middle uniquely as $\varphi^++\varphi^\times$, where $\varphi^+$ is an element of the morphism space on the left and $\varphi^\times$ an element of the morphism space on the right. 
		We then define the morphism spaces 
		$$
		\Mor((C, \{P_a\}_{a\in A}),(C', \{P'_a\}_{a\in A}))$$
		of $\Mat_i\overline{\mathcal{A}}$ by
		$$\{\varphi\in\Mor_{\Mat\overline{\mathcal{A}}}(C,C')\mid \forall a\in A\co (\iota_{s_2(a)}.\varphi^\times.\iota_{s_2(a)})\circ P_a=P'_a\circ(\iota_{s_1(a)}.\varphi^\times.\iota_{s_1(a)})\}.
		$$
		\item Finally,  $\Mat_i\overline{\mathcal{A}}$ inherits its composition from $\Mat\overline{\mathcal{A}}$, which is indeed well-defined. 
	\end{itemize}
	We may extend the functor $\mathcal{F}$ from Remark~\ref{rem:precurveFunctors} to a functor
	$$\mathcal{F}\co \Mat\mathcal{A}\longrightarrow\Mat_i\overline{\mathcal{A}}$$
	by setting, for a generator $x.\iota_a\in C.\iota_a$, $P_a(x.\iota_{s_1(a)})=x.\iota_{s_2(a)}$. (Note that this makes the image of any morphism under $\mathcal{F}$ well-defined.) Conversely, we may also define a functor 
	$$\mathcal{G}\co \Mat_i\overline{\mathcal{A}}\longrightarrow\Mat\mathcal{A}$$
	as follows: 
	given an object $(C,\{P_a\}_{a\in A})$ of $\Mat_i\overline{\mathcal{A}}$, we define an object $\mathcal{G}(X)$ in $\Mat\mathcal{A}$ by setting
	$\mathcal{G}(X).\iota_a=C.\iota_{s_1(a)}$
	as vector spaces. Furthermore, if $\varphi\in\Mor_{\Mat_i\overline{\mathcal{A}}}((C,\{P_a\}),(C',\{P'_a\}))$, we define a morphism $\mathcal{G}(\varphi)$ in $\Mat\mathcal{A}$ by setting for each $a, a'\in A$
	\begin{align*}
	\iota_{a'}.\mathcal{G}(\varphi).\iota_a:=(\iota_{s_1(a')}.\varphi^\times.\iota_{s_1(a)})&+(\iota_{s_1(a')}.\varphi^+.\iota_{s_1(a)})+(P'_{a'})^{-1}\circ(\iota_{s_2(a')}.\varphi^+.\iota_{s_1(a)})\\
	&+(\iota_{s_1(a')}.\varphi^+.\iota_{s_2(a)})\circ P_a+(P'_{a'})^{-1}\circ(\iota_{s_2(a')}.\varphi^+.\iota_{s_2(a)})\circ P_a.
	\end{align*}
\end{definition}  

\begin{lemma}\label{lem:SplittingCatsForEquivalenceAdditive}
	\(\mathcal{G}\circ\mathcal{F}=\id_{\Mat\mathcal{A}}\) and \(\mathcal{F}\circ\mathcal{G}\cong\id_{\Mat_i\overline{\mathcal{A}}}\). In particular, \(\Mat\mathcal{A}\) and \(\Mat_i\overline{\mathcal{A}}\) are equivalent. 
\end{lemma}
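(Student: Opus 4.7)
The strategy is to unwind the definitions of $\mathcal{F}$ and $\mathcal{G}$ on objects and morphisms, verify the identity $\mathcal{G}\circ\mathcal{F}=\id_{\Mat\mathcal{A}}$ directly, and then construct an explicit natural isomorphism $\eta\co\id_{\Mat_i\overline{\mathcal{A}}}\Rightarrow\mathcal{F}\circ\mathcal{G}$ using the isomorphisms $P_a$ supplied as part of the data of an object of $\Mat_i\overline{\mathcal{A}}$.

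First I would check the equality $\mathcal{G}\circ\mathcal{F}=\id_{\Mat\mathcal{A}}$ on objects. For an object $C$ of $\Mat\mathcal{A}$ with its canonical basis, the object $\mathcal{F}(C)$ has underlying $\overline{\mathcal{I}}$-module obtained by duplicating the idempotent $\iota_a$ of each generator into a pair at the two sides $s_1(a), s_2(a)$, equipped with the tautological isomorphisms $P_a$. Applying $\mathcal{G}$ picks out the $s_1$-component, which recovers $C$ on the nose. On morphisms, one observes that for $\varphi\in\Mor_{\Mat\mathcal{A}}(C,C')$ the image $\mathcal{F}(\varphi)$ has the block-matrix form prescribed in Remark~\ref{rem:precurveFunctors}. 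Because $P_a$ is the tautological identification, the diagonal components $\iota_{s_i(a)}.\mathcal{F}(\varphi)^\times.\iota_{s_i(a)}$ both equal $\iota_a.\varphi^\times.\iota_a$, and the four formulas in the definition of $\mathcal{G}$ collapse to a single term that reassembles $\iota_{a'}.\varphi.\iota_a$. (Here one uses that every length-$\geq 1$ basis element of $\iota_{s_i(b)}.\mathcal{A}.\iota_{s_j(a)}$ lifts uniquely to the basis element of $\iota_b.\mathcal{A}.\iota_a$ determined by the matching of sides $s_1\leftrightarrow s_2$ along each arc.)

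Next I would construct the natural isomorphism $\mathcal{F}\circ\mathcal{G}\cong\id_{\Mat_i\overline{\mathcal{A}}}$. For an object $X=(C,\{P_a\}_{a\in A})$, the underlying $\overline{\mathcal{I}}$-module of $\mathcal{F}\mathcal{G}(X)$ has, at each side $s_i(a)$, a copy of $C.\iota_{s_1(a)}$; define
\[
\eta_X\co X\longrightarrow\mathcal{F}\mathcal{G}(X),\qquad
\eta_X|_{C.\iota_{s_1(a)}}=\id,\qquad
\eta_X|_{C.\iota_{s_2(a)}}=P_a^{-1}.
\]
This is a grading-preserving isomorphism of $\overline{\mathcal{I}}$-modules, and by construction it intertwines the $P_a$'s of the source with the tautological identifications on the target, so it lies in $\Mor_{\Mat_i\overline{\mathcal{A}}}$. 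Naturality amounts to the identity
\[
\eta_{X'}\circ\varphi=\mathcal{F}\mathcal{G}(\varphi)\circ\eta_X
\]
for each morphism $\varphi\co X\to X'$, and this follows by comparing the five-term decomposition of $\iota_{a'}.\mathcal{G}(\varphi).\iota_a$ in Definition~\ref{def:SplittingCatsForEquivalenceFunctors} with the four block components of $\mathcal{F}(\mathcal{G}(\varphi))$: after conjugating by $\eta_X$ and $\eta_{X'}$, the $\varphi^\times$-part is controlled precisely by the compatibility condition $(\iota_{s_2(a)}.\varphi^\times.\iota_{s_2(a)})\circ P_a=P'_{a}\circ(\iota_{s_1(a)}.\varphi^\times.\iota_{s_1(a)})$ built into $\Mat_i\overline{\mathcal{A}}$, while the $\varphi^+$-part matches term-by-term using the conjugations by $P_a^{\pm 1}$.

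The main obstacle, such as it is, is purely bookkeeping: making sure that the $+/\times$-splitting of morphisms is preserved by $\mathcal{F}$ and $\mathcal{G}$ and that the compatibility condition defining $\Mat_i\overline{\mathcal{A}}$ is exactly what is needed to push diagonal pieces across $\eta_X$. No substantive analysis or combinatorics enters: once one writes out, for a single arc $a$, the commutative diagram
\[
\begin{tikzcd}[row sep=0.5cm]
C.\iota_{s_1(a)}\arrow{r}{\varphi^\times}\arrow{d}{\id}&C'.\iota_{s_1(a)}\arrow{d}{\id}\\
C.\iota_{s_1(a)}\arrow{r}{\mathcal{F}\mathcal{G}(\varphi)^\times}&C'.\iota_{s_1(a)}
\end{tikzcd}
\qquad
\begin{tikzcd}[row sep=0.5cm]
C.\iota_{s_2(a)}\arrow{r}{\varphi^\times}\arrow{d}{P_a^{-1}}&C'.\iota_{s_2(a)}\arrow{d}{(P'_a)^{-1}}\\
C.\iota_{s_1(a)}\arrow{r}{\mathcal{F}\mathcal{G}(\varphi)^\times}&C'.\iota_{s_1(a)}
\end{tikzcd}
\]
the verification is immediate. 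The last sentence of the lemma is then a direct consequence of the two (iso)morphisms $\mathcal{G}\circ\mathcal{F}=\id$ and $\mathcal{F}\circ\mathcal{G}\cong\id$.
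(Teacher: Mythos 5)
Your proof is correct and follows essentially the same route as the paper: the paper also declares $\mathcal{G}\circ\mathcal{F}=\id$ immediate from the definitions and exhibits the same pair of mutually inverse natural transformations built from the block matrices $\diag(\id,P_a^{\pm 1})$ (your $\eta_X$ is the paper's $\varepsilon$, written in the opposite direction). The only difference is that you spell out the naturality check — correctly locating it in the compatibility condition $(\iota_{s_2(a)}.\varphi^\times.\iota_{s_2(a)})\circ P_a=P'_a\circ(\iota_{s_1(a)}.\varphi^\times.\iota_{s_1(a)})$ and the five-term formula for $\mathcal{G}(\varphi)$ — which the paper leaves implicit.
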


\begin{proof}
	The first identity is obvious from the definitions. For the second, consider the two mutually inverse natural transformations
	$$
	\begin{tikzcd}[column sep=2cm]
	(C, \{\id_{C.\iota_{s_1(a)}}\}_{a\in A})=\mathcal{F}(\mathcal{G}(C, \{P_a\}_{a\in A}))
	\arrow[shift left]{r}{\eta_{(C, \{P_a\})}}
	&
	(C, \{P_a\}_{a\in A})
	\arrow[shift left]{l}{\varepsilon_{(C, \{P_a\})}}
	\end{tikzcd}
	$$
	given by 
	$$
	\iota_a.\eta_{(C, \{P_a\})}.\iota_a=
	\left(\begin{matrix}
	\id_{C.\iota_{s_1(a)}} & 0 \\
	0 & P_a
	\end{matrix}\right)
	\quad\text{ and }\quad
	\iota_a.\varepsilon_{(C, \{P_a\})}.\iota_a=
	\left(\begin{matrix}
	\id_{C.\iota_{s_1(a)}} & 0 \\
	0 & P^{-1}_a
	\end{matrix}\right)
	$$
	for all $a\in A$ and 0 in all other components. 
\end{proof}

\begin{definition}\label{def:precurvesAlgebraic}
	Let $\CC_i(S,M,A)=\Cx^{U}(\Mat_i(\overline{\mathcal{A}}))$. More explicitly, 
	\begin{itemize}
		\item objects are triples $(C, \{P_a\}_{a\in A},\partial)$, where $C$ is an object in $\Mat\overline{\mathcal{A}}$, 
		$$P_a\co C.\iota_{s_1(a)}\rightarrow C.\iota_{s_2(a)}$$ 
		is a $\delta$-grading preserving isomorphism for every arc $a$ and
		$$\partial\co C\longrightarrow C$$
		defines an endomorphism of $(C, \{P_a\}_{a\in A})\in\ob\Mat_i\overline{\mathcal{A}}$ which increases $\delta$-grading by 1 and satisfies $\partial^2=U\cdot\id$. 
		\item Morphism spaces are morphism spaces of the underlying objects in $\Mat_i\overline{\mathcal{A}}$. 
		The differentials on these morphism spaces are given as usual by pre- and post-composition with the endomorphisms $\partial$. 
		\item $\CC_i(S,M,A)$ inherits its composition from $\Mat_i\overline{\mathcal{A}}$. 
	\end{itemize}
	We call an object in $\CC_i(S,M,A)$ a \textbf{precurve}. To simplify notation, we often denote a precurve $(C, \{P_a\}_{a\in A},\partial)$ by $C$. We call a precurve \textbf{reduced} if its differential does not contain any identity component, ie $\partial^+=\partial$.
\end{definition}

\begin{corollary}\label{cor:SplittingCatsForEquivalenceComplexes}
	\(\CC(S,M,A)\) and \(\CC_i(S,M,A)\) are equivalent as dg categories.
\end{corollary}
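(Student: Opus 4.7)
The plan is to lift the additive equivalence from Lemma~\ref{lem:SplittingCatsForEquivalenceAdditive} to the level of curved complexes via the functoriality of the $\Cx^a(\cdot)$ construction. Recall from Definition~\ref{def:CatOfComplexes} that any dg functor $H\co\mathcal{C}\to\mathcal{C}'$ sending a central element $a\in\mathcal{C}$ to $H(a)\in\mathcal{C}'$ automatically induces a dg functor $\Cx^a(\mathcal{C})\to\Cx^{H(a)}(\mathcal{C}')$ by applying $H$ to the underlying object and to the endomorphism~$d_O$, and likewise to morphisms. Since $\Mat\mathcal{A}$ and $\Mat_i\overline{\mathcal{A}}$ are enriched over $\delta$-graded $\mathbb{F}_2$-vector spaces with vanishing differential, the functors $\mathcal{F}$ and $\mathcal{G}$ of Definition~\ref{def:SplittingCatsForEquivalenceFunctors} qualify as dg functors without any further check.

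First I would verify that $\mathcal{F}$ and $\mathcal{G}$ respect the curvature $U=\sum_f U_f$. Because $U$ acts as a scalar multiple of the identity on every object and functors preserve identity morphisms, one has $\mathcal{F}(U\cdot\id_C)=U\cdot\id_{\mathcal{F}(C)}$ and symmetrically for $\mathcal{G}$; this is where it is essential that $U$ is central and that $\mathcal{F}$, $\mathcal{G}$ are additive. Consequently, $\mathcal{F}$ and $\mathcal{G}$ induce dg functors
\[
\mathcal{F}_*\co\CC(S,M,A)\longrightarrow\CC_i(S,M,A),\qquad
\mathcal{G}_*\co\CC_i(S,M,A)\longrightarrow\CC(S,M,A).
\]

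Next I would show that $\mathcal{F}_*$ and $\mathcal{G}_*$ are mutually quasi-inverse. The strict identity $\mathcal{G}\circ\mathcal{F}=\id_{\Mat\mathcal{A}}$ from Lemma~\ref{lem:SplittingCatsForEquivalenceAdditive} gives $\mathcal{G}_*\circ\mathcal{F}_*=\id_{\CC(S,M,A)}$ on the nose, so the only remaining task is to promote the natural isomorphism $\eta\co\id\Rightarrow\mathcal{F}\circ\mathcal{G}$ (together with its inverse $\varepsilon$) from the proof of Lemma~\ref{lem:SplittingCatsForEquivalenceAdditive} to a natural isomorphism of dg functors. Concretely, for each precurve $(C,\{P_a\},\partial)$ in $\CC_i(S,M,A)$, I would check that the block-diagonal isomorphism $\eta_{(C,\{P_a\})}$ (with blocks $\id$ and $P_a$ on $C.\iota_{s_1(a)}\oplus C.\iota_{s_2(a)}$) is a closed degree-$0$ morphism in $\CC_i(S,M,A)$, i.e.\ that it intertwines $\partial$ with the induced differential $\mathcal{F}_*\mathcal{G}_*(\partial)$ on the precurve $(C,\{\id\}_{a\in A},\mathcal{F}_*\mathcal{G}_*(\partial))$.

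The main bookkeeping obstacle will be exactly this intertwining check, but it is not a deep one: expanding $\partial=\partial^++\partial^\times$ and unravelling the definition of $\mathcal{G}$ shows that $\mathcal{G}(\partial)$ pre- and post-composes the ``off-diagonal'' $P_a$ and $P_{a'}^{-1}$ factors onto the matrix entries of~$\partial^+$, and that the subsequent application of $\mathcal{F}$ reinserts these factors back into the block-matrix. Conjugating by the block isomorphism built from $\{P_a\}$ then returns $\partial$. Because no operation introduces differentials on morphism spaces (those being zero in $\Mat_i\overline{\mathcal{A}}$), this is a purely formal verification that mirrors the argument for $\Mat\mathcal{A}\simeq\Mat_i\overline{\mathcal{A}}$. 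Once this is in place, $\eta$ and $\varepsilon$ supply the required natural isomorphism, proving the corollary.
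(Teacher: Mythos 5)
Your proposal is correct and takes essentially the same route as the paper: the paper's proof consists of the single line ``This follows directly from the previous lemma,'' relying implicitly on the functoriality of the $\Cx^{\ast}(\cdot)$ construction already noted in Remark~\ref{rem:HighBrowNoBasisForI}. Your write-up merely fills in the details the paper leaves implicit — that $\mathcal{F}$ and $\mathcal{G}$ preserve the curvature $U$ and that the natural isomorphism $\eta$ from Lemma~\ref{lem:SplittingCatsForEquivalenceAdditive} lifts to a closed degree-$0$ isomorphism of the induced curved complexes — so no further comment is needed.
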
  

\begin{proof}
	This follows directly from the previous lemma. 
\end{proof}

\begin{lemma}\label{lem:CCreduced}
	Every curved complex is chain homotopic to a reduced one. The same holds for precurves. 
\end{lemma}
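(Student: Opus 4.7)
The plan is to iteratively apply the Cancellation Lemma~\ref{lem:AbstractCancellation} to remove the identity components of the differential, for both curved complexes and precurves.

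For a curved complex $(C,\partial)\in\CC(S,M,A)$, first I would write $C=\bigoplus_i O_i[n_i]$. If $\partial$ contains a non-zero identity component, i.e., a summand of $\partial$ of the form $\id\co O_i[n_i]\to O_j[n_j]$ where $O_i=O_j$ (as objects of $\mathcal{A}$) and $n_j=n_i+1$, then decomposing $C=O_i[n_i]\oplus O_j[n_j]\oplus Z$ puts $\partial$ into the form required by the Cancellation Lemma, with $f=\id$ being its own inverse. Applying the lemma produces a chain homotopic curved complex whose underlying object has two fewer summands. Since $C$ is finitely generated (as tacitly assumed throughout the paper), iterating this procedure terminates after finitely many steps and yields a reduced curved complex.

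The same argument will prove the statement for a precurve $(C,\{P_a\},\partial)\in\CC_i(S,M,A)$: by the splitting $\partial=\partial^++\partial^\times$ from Definition~\ref{def:SplittingCatsForEquivalenceFunctors}, an identity component of $\partial$ is exactly a non-zero summand of $\partial^\times$, and this is an isomorphism between two summands of $C$ lying in the same idempotent $\iota_s\in\overline{\mathcal{I}}$. Applying Lemma~\ref{lem:AbstractCancellation} within the dg category $\CC_i(S,M,A)$ to cancel such a pair yields a chain homotopic precurve with one fewer identity component in the differential; iteration terminates by finite generation and produces a precurve with $\partial^\times=0$, i.e.\ $\partial^+=\partial$.

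The only point that needs a moment of thought is that the isomorphisms $\{P_a\}$ must be respected under cancellation, but this is automatic: morphisms in $\CC_i(S,M,A)$ are by construction those compatible with the $\{P_a\}$, and the Cancellation Lemma is a purely categorical construction internal to this dg category. I therefore do not anticipate any serious obstacle, only routine verification that the decomposition of $C$ into $O_i[n_i]\oplus O_j[n_j]\oplus Z$ is an honest decomposition in $\Mat_i\overline{\mathcal{A}}$, which follows because identity components of $\partial$ match up idempotents in $\overline{\mathcal{I}}$ rather than mixing them.
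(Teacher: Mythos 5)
Your overall strategy --- iterated application of the Cancellation Lemma~\ref{lem:AbstractCancellation} --- is the same as the paper's, but there are two genuine gaps, one in each half.

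For curved complexes: to invoke Lemma~\ref{lem:AbstractCancellation} you need the \emph{entire} component $f$ of $\partial$ from $O_i[n_i]$ to $O_j[n_j]$ to be an isomorphism, not merely to contain an identity summand. A priori that component is an element of $\iota_j.\mathcal{A}.\iota_i$ and could equal $\iota_i+U_f^k.\iota_i$ for some $k\geq 1$; but $1+U_f$ has no inverse in $\mathcal{A}$ (its formal inverse is an infinite series), so the Cancellation Lemma would not apply. The missing observation --- which is the actual content of the paper's proof --- is that each component of $\partial$ is homogeneous with respect to the $\delta$-grading, the idempotent sits in degree $0$ while $U_f^k.\iota_i$ sits in degree $2k$, so any component containing an identity \emph{is} the identity. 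This is precisely the point emphasised in the remark following Proposition~\ref{prop:PreloopToCC}: without the $\delta$-grading the argument breaks down.

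For precurves, the claim that compatibility with the $\{P_a\}$ is ``automatic'' is not correct. A precurve requires each $P_a\colon C.\iota_{s_1(a)}\to C.\iota_{s_2(a)}$ to be an isomorphism, so you cannot split off a direct summand of $(C,\{P_a\})$ supported on only one side of an arc: cancelling two generators in $C.\iota_{s_1(a)}$ alone makes the dimensions of the two sides unequal and destroys $P_a$. Moreover, the defining relation $\iota_{s_2(a)}.\partial^\times.\iota_{s_2(a)}=P_a\circ(\iota_{s_1(a)}.\partial^\times.\iota_{s_1(a)})\circ P_a^{-1}$ shows that an identity component on one side of an arc forces conjugate identity components on the other side, and the subspace you would have to cancel there is the $P_a$-image of the first one, which need not be spanned by basis vectors. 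The clean repair is the paper's route: use the equivalence of Corollary~\ref{cor:SplittingCatsForEquivalenceComplexes} to replace the precurve by $\mathcal{F}(C,\partial)$ for a curved complex $(C,\partial)$ (so that the two sides of each arc are identified into a single generator), reduce $(C,\partial)$ by the first part, and note that $\mathcal{F}$ sends reduced curved complexes to reduced precurves.
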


\begin{proof}
	Any arrow on a $\delta$-graded curved complex which is labelled by an idempotent does not have any other labels because of the $\delta$-grading. Therefore we can always reduce the number of generators of an unreduced curved complex by cancellation until the complex is reduced. 
	
	By the previous corollary, any given precurve is isomorphic to one in the image of $\mathcal{F}$. Since $\mathcal{F}$ preserves chain homotopy classes, this implies that any precurve is chain homotopic to $\mathcal{F}(C,\partial)$ for some reduced curved complex $(C,\partial)$ such that $\mathcal{F}(C,\partial)$ is a reduced precurve. Alternatively, we may also apply Lemma~\ref{lem:AbstractCancellation} directly. 
\end{proof}

\subsection{A geometric interpretation of precurves}\label{subsec:precurves_geometry}

\begin{figure}[t]
	\centering
	\begin{subfigure}[b]{\textwidth}
		\centering
		\psset{unit=0.15}
		\begin{pspicture}(-41,-21)(41,21)
		% joins for solitary dots
		\psecurve(0,-2)(4.5,-13)(4;-52.5)(-13,4.5)
		%\rput(6.9;-45){\textcolor{white}{$\large\blacksquare\!\!\blacksquare$}}
		\psecurve(-10,-36.75)(-18,-20)(-10,-3.25)(-20,13.5)
		%\rput(-20,-18){\textcolor{white}{$\large\blacksquare\!\!\blacksquare$}}

		% nodes
		\pnode(-20,0){A}
		\pnode(20,0){C}
		\pnode([nodesep=4,angle=30]A){A30}
		\pnode([nodesep=4,angle=150]A){A150}
		\pnode([nodesep=4,angle=-30]A){Am30}
		\pnode([nodesep=4,angle=-150]A){Am150}
		\pnode([nodesep=4,angle=30]C){C30}
		\pnode([nodesep=4,angle=150]C){C150}
		\pnode([nodesep=4,angle=-30]C){Cm30}
		\pnode([nodesep=4,angle=-150]C){Cm150}
		
		% arc a1
		\pscustom*[linecolor=lightred]{
			\pscurve(-40,5)(-33,4.5)(-28,3.6)(A150)
			\psline(A150)(Am150)
			\pscurve(Am150)(-28,-3.6)(-33,-4.5)(-40,-5)
		}
		\psline[linecolor=red]{->}(-40,0)(-24,0)
		\pscurve{->}(-40,5)(-33,4.5)(-28,3.6)(A150)
		\pscurve{->}(-40,-5)(-33,-4.5)(-28,-3.6)(Am150)	
		\pscircle*[linecolor=lightred](-29,0){1}
		\rput[c]{-90}(-29,0){\textcolor{red}{$a_1$}}
		
		% arc a2
		\pscustom*[linecolor=lightred]{
			\pscurve(4;150)(-7,3)(-13,3)(A30)
			\psline(A30)(Am30)
			\pscurve(Am30)(-13,-3)(-7,-3)(4;-150)
		}
		\psline[linecolor=red]{<-}(-4,0)(-16,0)
		\pscurve{<-}(4;150)(-7,3)(-13,3)(A30)
		\pscurve{<-}(4;-150)(-7,-3)(-13,-3)(Am30)	
		\pscircle*[linecolor=lightred](-13,0){1}
		\rput[c]{-90}(-13,0){\textcolor{red}{$a_2$}}
		
		% arc a3
		\pscustom*[linecolor=lightred]{
			\pscurve(4;30)(7,3)(13,3)(C150)
			\psline(C150)(Cm150)
			\pscurve(Cm150)(13,-3)(7,-3)(4;-30)
		}
		\psline[linecolor=red]{->}(4,0)(16,0)
		\pscurve{->}(4;30)(7,3)(13,3)(C150)
		\pscurve{->}(4;-30)(7,-3)(13,-3)(Cm150)	
		\pscircle*[linecolor=lightred](6,0){1}
		\rput[c]{-90}(6,0){\textcolor{red}{$a_3$}}
		
		% arc a4
		\pscustom*[linecolor=lightred]{
			\pscurve(5,-20)(4.5,-13)(3.6,-8)(4;-60)
			\psline(4;-60)(4;-120)
			\pscurve[liftpen=1](4;-120)(-3.6,-8)(-4.5,-13)(-5,-20)
		}
		\psline[linecolor=red]{<-}(0,-4)(0,-20)
		\pscurve{->}(5,-20)(4.5,-13)(3.6,-8)(4;-60)
		\pscurve{->}(-5,-20)(-4.5,-13)(-3.6,-8)(4;-120)
		\pscircle*[linecolor=lightred](0,-9){1}
		\rput[c](0,-9){\textcolor{red}{$a_4$}}
		
		% arc a5
		\pscustom*[linecolor=lightred]{
			\pscurve(5,20)(4.5,13)(3.6,8)(4;60)
			\psline(4;60)(4;120)
			\pscurve[liftpen=1](4;120)(-3.6,8)(-4.5,13)(-5,20)
		}
		\psline[linecolor=red]{->}(0,4)(0,20)
		\pscurve{<-}(5,20)(4.5,13)(3.6,8)(4;60)
		\pscurve{<-}(-5,20)(-4.5,13)(-3.6,8)(4;120)
		\pscircle*[linecolor=lightred](0,16){1}
		\rput[c](0,16){\textcolor{red}{$a_5$}}
		
		% punctures
		\psarc[fillcolor=white,fillstyle=solid,linecolor=darkgreen](0,0){4}{0}{360}
		\psarc[fillcolor=white,fillstyle=solid,linecolor=darkgreen](20,0){4}{0}{360}
		\psarc[fillcolor=white,fillstyle=solid,linecolor=darkgreen](-20,0){4}{0}{360}
		
		% boundary
		\psline[linecolor=darkgreen,linearc=8,cornersize=absolute](0,-20)(-40,-20)(-40,20)(40,20)(40,-20)(0,-20)
		%\psline[linecolor=darkgreen](39,0)(41,0)
		%\rput(37,0){\textcolor{darkgreen}{$m_2$}}
		\psline[linecolor=darkgreen](-20,-21)(-20,-19)
		\rput(-20,-18){\textcolor{darkgreen}{$m_1$}}
		\psline[linecolor=darkgreen](3;-45)(5;-45)
		\rput(7;-40){\textcolor{darkgreen}{$m_2$}}
		
		% algebra elements
		\psarc[linecolor=darkgreen]{->}(-20,0){4}{-150}{-30}
		\psarc[linecolor=darkgreen]{->}(-20,0){4}{30}{150}
		\psarc[linecolor=darkgreen]{->}(0,0){4}{30}{60}
		\psarc[linecolor=darkgreen]{->}(0,0){4}{120}{150}
		\psarc[linecolor=darkgreen]{->}(0,0){4}{-150}{-120}
		\psarc[linecolor=darkgreen]{->}(20,0){4}{-150}{150}
		\psline[linecolor=darkgreen,linearc=8,cornersize=absolute]{->}(-40,5)(-40,20)(-5,20)
		
		\uput{4}[-45](-40,20){$\textcolor{darkgreen}{p_1}$}
		
		\uput{5}[90](-20,0){$\textcolor{darkgreen}{p_2}$}
		\uput{5}[-90](-20,0){$\textcolor{darkgreen}{p_3}$}
		
		\uput{5}[45](0,0){$\textcolor{darkgreen}{p_4}$}
		\uput{5}[135](0,0){$\textcolor{darkgreen}{p_5}$}
		\uput{5}[-135](0,0){$\textcolor{darkgreen}{p_6}$}
		
		\uput{5}[0](20,0){$\textcolor{darkgreen}{p_7}$}
		\rput(38,0){\textcolor{darkgreen}{$p_8$}}
		\psline[linecolor=darkgreen]{->}(10,-20)(5,-20)
		
		\rput[l](6,-16.5){$P_{a_3}=\left(
			\begin{matrix}
			0 & 1 \\ 
			1 & 0
			\end{matrix}\right)
			$}
		\rput[r](-8,16.5){$P_{a_5}=\left(
			\begin{matrix}
			1 & 1 \\ 
			0 & 1
			\end{matrix}\right)
			$}

		% vertices
		\psdots(4.5,13)(3.6,8)(-4.5,13)(-3.6,8)
		\psdots(7,3)(13,3)(7,-3)(13,-3)
		%\psdots(-7,3)(-13,3)(-7,-3)(-13,-3)

		\psdots(-33,4.5)(-33,-4.5)(-10,3.25)(-10,-3.25)(-4.5,-13)(4.5,-13)
		
		% two-sided f-joins 
		\psecurve(-3.6,-4)(-33,4.5)(-3.6,8)(33,4.5)
		\psecurve(-3.6,11)(-33,-4.5)(-4.5,-13)(33,-4.5)
		%\psecurve(-3.6,11)(-33,-4.5)(-10,-3.25)(33,4.5)
		\psecurve(1,3)(-4.5,13)(-10,3.1)(-4.5,-7)
		\psecurve(26,0)(13,3)(26,6)(26,-6)(13,-3)(26,0)
		\psecurve(0,3)(3.6,8)(7,3)(3,-2)
		\psecurve(-32,12)(4.5,13)(29,10)(29,-10)(7,-3)(29,4)
		
		% crossover arrows
		\psecurve(-12.6,8)(-4.5,13)(3.6,8)(11.7,13)
		\psecurve(12.6,8)(4.5,13)(-3.6,8)(-11.7,13)
		
		\psline{->}(1,9.1)(1.5,8.65)
		\psline{->}(-1,9.1)(-1.5,8.65)
		
		% train-tracks
		\psecurve(-32,10)(-33,4.5)(-33,-4.5)(-32,-10)
		\psecurve(-10,10)(-10,3.1)(-10,-3.1)(-10,-10)
		\psecurve(10,12)(4.5,13)(-4.5,13)(-10,12)
		\psecurve(10,7)(3.6,8)(-3.6,8)(-10,8)
		\psecurve(13,9)(7,3)(13,-3)(7,-9)
		\psecurve(13,-9)(7,-3)(13,3)(7,9)
		\psecurve(-14.5,-12.5)(-4.5,-13)(4.5,-13)(8,-12)
		
		\end{pspicture}
		\caption{A simply-faced precurve for the triple $(S,M,A)$ from Figure~\ref{fig:ExampleMarkedSurface}.}\label{fig:ExamplePreloopPic}
	\end{subfigure}
	\begin{subfigure}[b]{\textwidth}\centering
		{$
			\begin{tikzcd}[row sep=1.3cm, column sep=1.4cm]
			&&
			e^{a_5}_2
			\arrow[swap,out=180,in=90]{lldd}{p_{2}p_{5}}
			\\
			&&
			e^{a_5}_1
			\arrow[leftarrow,bend right=10]{ld}{p_{1}p_{2}}
			\\
			e^{a_1}_1
			\arrow[bend right,swap,in=200]{rrd}{p_{6}p_{3}}
			\arrow[bend right,out=45,in=150,pos=0.4]{rru}{p_{1}}
			\arrow[leftarrow,bend left,out=25,in=170,pos=0.2,swap]{rru}{p_{2}p_{5}}
			&
			e^{a_2}_1
			\arrow[leftarrow,bend right,swap,out=30,in=160,pos=0.8]{uur}{p_{5}}
			\arrow[bend left,out=50,in=140,pos=0.6]{uur}{p_{1}p_{2}}
			&
			&
			e^{a_3}_1
			\arrow[bend right=30,swap]{uul}{p_{4}p_{7}}
			&
			e^{a_3}_2
			\arrow[bend right=15,swap,pos=0.3]{ull}{p_{4}}
			\arrow{l}{p_{7}}
			%\arrow[bend right=30,swap,pos=0.7]{ull}{p_{4}}
			\\
			&&
			e^{a_4}_1
			\end{tikzcd}
			$}
		\caption{The image of the precurve shown above under the functor $\mathcal{G}$.}\label{fig:ExamplePreloopGraph}
	\end{subfigure}
	\caption{The geometric representation of a precurve and its corresponding curved complex.}\label{fig:ExamplePrecurve}
\end{figure} 

\begin{definition}
	Given three positive integers $i,j,n$ satisfying $i,j\leq n$ and $i\neq j$, we define the elementary matrix $E^{j}_{i}\in \GL_n(\mathbb{F}_2)$ by
	\renewcommand{\kbldelim}{(}% Left delimiter
	\renewcommand{\kbrdelim}{)}% Right delimiter
	$$E^{j}_{i}:=(\delta_{i'j'}+\delta_{ii'}\delta_{jj'})_{i'j'}=
	\kbordermatrix{
		&   & i & j &   \\
		& 1 &   &   &   \\
		i &   & \ddots  & 1 &   \\
		j &   &   & \ddots  &   \\
		&   &   &   & 1 
	}.
	$$
	Furthermore, let $P_{ij}\in \GL_n(\mathbb{F}_2)$ be the permutation matrix corresponding to the permutation $(i,j)$.
\end{definition}

\begin{definition}\label{def:precurvesGeometric}
	We interpret a reduced precurve $C$ geometrically on the marked surface $(S,M)$ with arc system $A$ as follows: 
	for each side $s$, we represent the fixed basis $\{e^s_{i}\}_{i=1,\dots,n_s}$ of $C.\iota_s$ by pairwise distinct dots $\bullet$ on $s$, which we order according to the orientation of $s$. We label the $i^\text{th}$ dot on $s$ by $(s,i)$. Next, since we are assuming that the precurve is reduced, every component of the differential lies in $\mathcal{A}_f^+$ for some face $f$. So let us consider such a component
	$$e^s_i\xrightarrow{p_f^n.\iota_s}e^{s'}_{i'}.$$
	We represent this component by an oriented or unoriented immersed interval connecting the dots $\bullet(s,i)$ and $\bullet(s',i')$, depending on the following three cases:	
	\begin{enumerate}
		\item If $f$ is open, we draw an unoriented curve between $\bullet(s,i)$ and $\bullet(s',i')$ on $f$. 
		\item If $f$ is closed and there is a component $e^{s'}_{i'}\rightarrow e^s_i$ of $\partial$, we do the same thing as in the first case. However, here, the unoriented curve represents both $e^s_i\rightarrow e^{s'}_{i'}$ and $e^{s'}_{i'}\rightarrow e^s_i$. 
		Also note that because of the $\delta$-grading, $p_f^n.\iota_s=p^s_{s'}$ and the component $e^{s'}_{i'}\rightarrow e^s_i$ is labelled by $p^{s'}_s$, such that the compositions of the two components are equal to $U_f.\iota_s$ and $U_f.\iota_{s'}$, respectively.
		\item If $f$ is closed and there is no component $e^{s'}_{i'}\rightarrow e^s_i$ in $\partial$, we draw an oriented immersed interval from $\bullet(s,i)$ to $\bullet(s',i')$ on the punctured face $f^{\ast}$ which is homotopic to the path $p_f^n.\iota_s$ in the quiver of $f$. 
	\end{enumerate}
	Up to homotopy in $f$, respectively $f^\ast$, this representation of $\partial$ is unique and uniquely determines the differential $\partial$. We call the oriented and unoriented immersed intervals \textbf{two-sided $f$-joins}.
	The $\partial^2$-relation for precurves says that each dot $\bullet(s,i)$ on a closed face $f$ is connected to some dot $\bullet(s',i')$ on the same face via an unoriented two-sided $f$-join. The same need not be true for open faces. So we connect those dots that do not lie on any two-sided $f$-joins to the boundary segment containing the puncture of $f$ by unoriented immersed curves on $f$. We call those intervals \textbf{one-sided $f$-joins}.
	We define the \textbf{length of an $f$-join} to be the length of the corresponding algebra element if the $f$-join is two-sided and 0 if it is one-sided. 
	
	Finally, we label each arc $a\in A$ by the matrix $P_a$. However, we can also represent the matrix $P_a$ itself geometrically if we have a decomposition of $P_a$ into elementary matrices
	$$P_a=P_a^{l_a}\cdots P_a^1$$
	for some $l_a\geq 0$, where for each $k=1,\dots,l_a$, $P_a^k$ is equal to some $E^{j}_{i}$ or $P_{ij}$, for some $i=i(k)$ and $j=j(k)$ with $i\neq j$. We divide the neighbourhood $N(a)$ of $a$ along some leaves of $\mathcal{F}_a$ into $l_a$~segments, ordered from $s_1(a)$ (right) to $s_2(a)$ (left), and label the $k^\text{th}$ segment by the matrix $P_a^k$. Finally, we represent each matrix $P_a^k$ graphically, as shown on the left of Figure~\ref{fig:traintrackmoves}. We call the crossing arcs for $P_a^k=P_{ij}$ \textbf{crossings} and the pair of arrows for $P_a^k=E^{j}_{i}$ \textbf{crossover arrows}. 
	This notation is borrowed from~\cite[section~3.3]{HRW}, except that we restrict crossings and crossover arrows to the neighbourhoods of the arcs. 

	Of course, the decomposition of $P_a$ is not unique. However, linear algebra tells us that it is unique up to the following moves, some of which are illustrated on the right of Figure~\ref{fig:traintrackmoves}:
	
	\begin{description}
		\item[(T1)] A crossings can be expressed in terms of crossover arrows, since $P_{ij}=E^{j}_{i}E^{i}_{j}E^{j}_{i}$.
		\item[(T2)] Two adjacent identical crossover arrows can be cancelled, since $E^{j}_{i}E^{j}_{i}$ is the identity. 
		\item[(T3)] Any two crossover arrows can be moved past one another unless one strand is the start of one and the end of the other; in such a case, a crossover arrow connecting the other two strands needs to be added, as shown in Figure~\ref{fig:traintrackmoves}. This corresponds to the identities
		$$
		E^{j}_{i}E^{j'}_{i'}=\begin{cases}
		E^{j'}_{i'}E^{j}_{i} & \text{if $i\neq j'$ and $j\neq i'$} \\
		E^{j'}_{i'}E^{j}_{i}E^{j}_{i'} & \text{if $i=j'$ and $j\neq i'$}\\
		E^{j'}_{i'}E^{j}_{i}E^{j'}_{i} & \text{if $i\neq j'$ and $j=i'$.}
		\end{cases}
		$$
	\end{description}
\end{definition}

\begin{remark}\label{rem:PathsForTrainTracks}
	The point of the graphical notation for the matrices $P_a$ is that we can recover the entry in the $j^\text{th}$ column and $i^\text{th}$ row of each $P_a$ by counting (up to homotopy and modulo 2) all paths in the restriction of the precurve to $N(a)$
	\begin{enumerate}
		\item which start at $\bullet(s_1(a),j)$ and end at $\bullet(s_2(a),i)$,
		\item which are transverse to the foliation $\mathcal{F}_a$ and 
		\item whose orientation agree with each crossover arrow they traverse. 
	\end{enumerate}
	Since
	$$P_a^{-1}=(P_a^{l_a}\cdots P_a^1)^{-1}=(P_a^1)^{-1}\cdots(P_a^{l_a})^{-1}=P_a^1\cdots P_a^{l_a},$$ 
	we can similarly read off the entry in the $j^\text{th}$ column and $i^\text{th}$ row of each $P_a^{-1}$ by following paths from $\bullet(s_2(a),j)$ to $\bullet(s_1(a),i)$ which also satisfy the other two conditions above. 
\end{remark}

\begin{remark}
	Up to the moves (T1)--(T3) and isotopies of the immersed curves, this geometric interpretation of a precurve is unique and also uniquely defines a precurve. So we will no longer carefully distinguish between precurves and their geometric representations. The following two definitions are examples of this principle. 
\end{remark}

\begin{figure}[t]
	\centering
	\psset{unit=0.2}
	\begin{subfigure}{0.35\textwidth}\centering
		\begin{pspicture}(-13,-10.1)(13,10.1)
		
		% arc
		\pscustom*[linecolor=lightred]{
			\psline(-9,-10)(-9,10)
			\psline(9,10)(9,-10)
		}
		\psline[linecolor=red,linestyle=dotted](-2,-10)(-2,10)
		\psline[linecolor=red,linestyle=dotted](2,-10)(2,10)
		
		\psline[linecolor=red,linestyle=dotted](-6,-10)(-6,10)
		\psline[linecolor=red,linestyle=dotted](6,-10)(6,10)

		\rput[l](9.5,7.5){$s_1(a)$}
		\rput[r](-9.5,7.5){$s_2(a)$}
		\rput[c](10.5,1.5){$i$}
		\rput[c]{-90}(10.5,-0.25){$>$}
		\rput[c](10.5,-2){$j$}
		
		\rput[c](4,7.5){\textcolor{red}{$E^{j}_{i}$}}
		\rput[c](-4,7.5){\textcolor{red}{$P_{ij}$}}
		\rput[c](7.5,7.5){\textcolor{red}{$\cdots$}}
		\rput[c](-7.5,7.5){\textcolor{red}{$\cdots$}}
		\rput[c](0,7.5){\textcolor{red}{$\cdots$}}
		
		\pscustom{
			\psline(9,5)(-9,5)
		}
		\pscustom{
			\psline(-9,1.5)(-6,1.5)
			\psecurve(-10,-2)(-6,1.5)(-2,-2)(2,1.5)
			\psline(-2,-2)(9,-2)
		}
		\pscustom{
			\psline(-9,-2)(-6,-2)
			\psecurve(-10,1.5)(-6,-2)(-2,1.5)(2,-2)
			\psline(-2,1.5)(9,1.5)
		}
		\pscustom{
			\psline(9,-5.5)(-9,-5.5)
		}
		
		% crossover arrow
		\psecurve(10,1.5)(6,-2)(2,1.5)(-2,-2)
		\psecurve(10,-2)(6,1.5)(2,-2)(-2,1.5)
		\psline{->}(4.85,0.87)(5.15,1.2)
		\psline{->}(3.15,0.87)(2.85,1.2)
		
		% sides
		\psline{->}(-9,-10)(-9,10)
		\psline{->}(9,-10)(9,10)
		
		% boundary
		\psline[linecolor=darkgreen](-10,10)(10,10)
		\psline[linecolor=darkgreen](-10,-10)(10,-10)
		
		\psdots(9,5)(9,1.5)(9,-2)(9,-5.5)
		\psdots(-9,5)(-9,1.5)(-9,-2)(-9,-5.5)
		\end{pspicture}
	\end{subfigure}
	\begin{minipage}{0.6\textwidth}\centering
	\begin{subfigure}{\textwidth}\centering
		\begin{pspicture}(-21.5,-5.1)(13.5,5.1)
		\rput(10,0){
			\pscustom*[linecolor=lightred,linewidth=0pt]{
				\psline(3,-5)(3,5)
				\psline(-7,5)(-7,-5)
			}
			\psline[linecolor=red,linestyle=dotted](-4,-5)(-4,5)
			\psline[linecolor=red,linestyle=dotted](0,-5)(0,5)
			
			\pscustom{
				\psline(1,-2)(0,-2)
				\psecurve(4,2)(0,-2)(-4,2)(-8,-2)
				\psline(-4,2)(-5,2)
			}
			\psline[linestyle=dotted,dotsep=1pt](-5,2)(-6,2)
			
			\pscustom{
				\psline(1,2)(0,2)
				\psecurve(4,-2)(0,2)(-4,-2)(-8,2)
				\psline(-4,-2)(-5,-2)
			}
			\psline[linestyle=dotted,dotsep=1pt](-5,-2)(-6,-2)
			
			\psline[linestyle=dotted,dotsep=1pt](2,2)(1,2)
			\psline[linestyle=dotted,dotsep=1pt](2,-2)(1,-2)
		}
		
		\rput[b](0,1){(T1)}
		\rput(0,0){$\longleftrightarrow$}

		\rput(-14,0){
			\pscustom*[linecolor=lightred,linewidth=0pt]{
				\psline(11,-5)(11,5)
				\psline(-7,5)(-7,-5)
			}
			
			\psline[linecolor=red,linestyle=dotted](-4,-5)(-4,5)
			\psline[linecolor=red,linestyle=dotted](0,-5)(0,5)
			\psline[linecolor=red,linestyle=dotted](4,-5)(4,5)
			\psline[linecolor=red,linestyle=dotted](8,-5)(8,5)

			%joins
			
			\psline[linestyle=dotted,dotsep=1pt](-5,2)(-6,2)
			\psline[linestyle=dotted,dotsep=1pt](-5,-2)(-6,-2)
			\psline[linestyle=dotted,dotsep=1pt](9,2)(10,2)
			\psline[linestyle=dotted,dotsep=1pt](9,-2)(10,-2)
			\psline(-5,2)(9,2)
			\psline(-5,-2)(9,-2)
			
			%crossings
			\rput(-2,0){
				\psecurve(-6,2)(-2,-2)(2,2)(6,-2)
				\psecurve(-6,-2)(-2,2)(2,-2)(6,2)
				\psline{->}(0.7,1.12)(1,1.55)
				\psline{->}(-0.7,1.12)(-1,1.55)
			}
			\rput(2,0){
				\psecurve(-6,2)(-2,-2)(2,2)(6,-2)
				\psecurve(-6,-2)(-2,2)(2,-2)(6,2)
				\psline{->}(0.7,-1.12)(1,-1.55)
				\psline{->}(-0.7,-1.12)(-1,-1.55)
			}
			\rput(6,0){
				\psecurve(-6,2)(-2,-2)(2,2)(6,-2)
				\psecurve(-6,-2)(-2,2)(2,-2)(6,2)
				\psline{->}(0.7,1.12)(1,1.55)
				\psline{->}(-0.7,1.12)(-1,1.55)
			}
		}
		\end{pspicture}
	\end{subfigure}
	\begin{subfigure}{\textwidth}\centering
		\begin{pspicture}(-17.5,-5.1)(21.5,5.1)
		\rput(-10,0){
			\pscustom*[linecolor=lightred,linewidth=0pt]{
				\psline(7,-5)(7,5)
				\psline(-7,5)(-7,-5)
			}
			
			\psline[linecolor=red,linestyle=dotted](-4,-5)(-4,5)
			\psline[linecolor=red,linestyle=dotted](0,-5)(0,5)
			\psline[linecolor=red,linestyle=dotted](4,-5)(4,5)		
			
			%joins
			\psline[linestyle=dotted,dotsep=1pt](-5,4)(-6,4)
			\psline[linestyle=dotted,dotsep=1pt](-5,0)(-6,0)
			\psline[linestyle=dotted,dotsep=1pt](-5,-4)(-6,-4)
			\psline[linestyle=dotted,dotsep=1pt](5,4)(6,4)
			\psline[linestyle=dotted,dotsep=1pt](5,0)(6,0)
			\psline[linestyle=dotted,dotsep=1pt](5,-4)(6,-4)
			\psline(-5,4)(5,4)
			\psline(-5,0)(5,0)
			\psline(-5,-4)(5,-4)
			
			%crossings
			\rput(-2,2){
				\psecurve(-6,2)(-2,-2)(2,2)(6,-2)
				\psecurve(-6,-2)(-2,2)(2,-2)(6,2)
				\psline{->}(0.7,1.12)(1,1.55)
				\psline{->}(-0.7,1.12)(-1,1.55)
			}
			\rput(2,-2){
				\psecurve(-6,2)(-2,-2)(2,2)(6,-2)
				\psecurve(-6,-2)(-2,2)(2,-2)(6,2)
				\psline{->}(0.7,1.12)(1,1.55)
				\psline{->}(-0.7,1.12)(-1,1.55)
			}
		}
		
		\rput[b](0,1){(T3)}
		\rput(0,0){$\longleftrightarrow$}

		\rput(10,0){
			\pscustom*[linecolor=lightred,linewidth=0pt]{
				\psline(11,-5)(11,5)
				\psline(-7,5)(-7,-5)
			}
			
			\psline[linecolor=red,linestyle=dotted](-4,-5)(-4,5)
			\psline[linecolor=red,linestyle=dotted](0,-5)(0,5)
			\psline[linecolor=red,linestyle=dotted](4,-5)(4,5)
			\psline[linecolor=red,linestyle=dotted](8,-5)(8,5)
			
			%joins
			
			\psline[linestyle=dotted,dotsep=1pt](-5,4)(-6,4)
			\psline[linestyle=dotted,dotsep=1pt](-5,0)(-6,0)
			\psline[linestyle=dotted,dotsep=1pt](-5,-4)(-6,-4)
			\psline[linestyle=dotted,dotsep=1pt](9,4)(10,4)
			\psline[linestyle=dotted,dotsep=1pt](9,0)(10,0)
			\psline[linestyle=dotted,dotsep=1pt](9,-4)(10,-4)
			\psline(-5,4)(9,4)
			\psline(-5,0)(9,0)
			\psline(-5,-4)(9,-4)
			
			%crossings
			\rput(-2,-2){
				\psecurve(-6,2)(-2,-2)(2,2)(6,-2)
				\psecurve(-6,-2)(-2,2)(2,-2)(6,2)
				\psline{->}(0.7,1.12)(1,1.55)
				\psline{->}(-0.7,1.12)(-1,1.55)
			}
			\rput(2,2){
				\psecurve(-6,2)(-2,-2)(2,2)(6,-2)
				\psecurve(-6,-2)(-2,2)(2,-2)(6,2)
				\psline{->}(0.7,1.12)(1,1.55)
				\psline{->}(-0.7,1.12)(-1,1.55)
			}
			\rput(6,0){
				\psecurve(-6,4)(-2,-4)(2,4)(6,-4)
				\psecurve(-6,-4)(-2,4)(2,-4)(6,4)
				\psline{->}(0.78,3.12)(1,3.55)
				\psline{->}(-0.78,3.12)(-1,3.55)
			}
		}
		\end{pspicture}
	\end{subfigure}
	\end{minipage}
	\caption{A precurve in a neighbourhood of an arc (left) and some moves corresponding to a change of matrix decomposition in Definition~\ref{def:precurvesGeometric} (right).}\label{fig:traintrackmoves}
\end{figure}
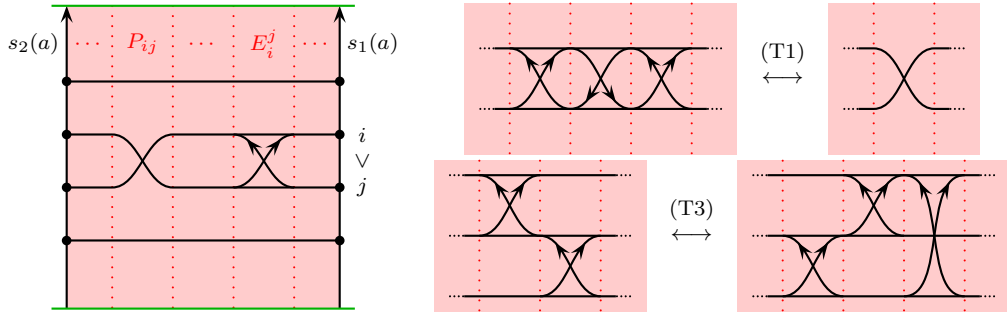

\begin{definition}
	We say a precurve is \textbf{simply-faced} if it is reduced and every dot lies on exactly one $f$-join. In particular, there are no oriented $f$-joins. For an illustration of a simply-faced precurve and its associated curved complex in $\CC(S,M,A)$, see Figure~\ref{fig:ExamplePrecurve}.
\end{definition}

\begin{definition}
	We say a precurve is \textbf{compact} if it is simply-faced and does not have any one-sided $f$-joins. 
\end{definition}

\subsection{From precurves to curves}\label{subsec:simplify_precurves}

One might call reduced precurves that lie in the image of the functor $\mathcal{F}$ ``simple arced'', because in the arc neighbourhoods, they just look like parallel strands without any crossings and crossover arrows. Using Corollary~\ref{cor:SplittingCatsForEquivalenceComplexes}, one can easily see that every reduced precurve is chain isomorphic to a ``simply-arced'' one. The same is true for simply-faced precurves: 

\begin{proposition}\label{prop:PreloopToCC}
	Every reduced precurve is chain isomorphic to a simply-faced precurve.
\end{proposition}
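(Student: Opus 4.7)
The strategy is a face-by-face application of the Clean-Up Lemma (Lemma~\ref{lem:AbstractCleanUp}). First I would exploit the fact that for a reduced precurve $(C,\{P_a\},\partial)$, the differential decomposes canonically as $\partial=\sum_f\partial_f$, where $\partial_f$ collects the components of $\partial$ lying in $\mathcal{A}_f^+$ for each face $f\in F(S,M,A)$. Since paths in distinct face algebras $\mathcal{A}_f$ and $\mathcal{A}_{f'}$ cannot compose non-trivially (the idempotents on a shared arc lift to distinct idempotents on the two sides in $\overline{\mathcal{I}}$), the curvature relation $\partial^2=U\cdot\id$ splits as a separate relation $\partial_f^2=U_f\cdot\id$ on the subspace spanned by the dots on sides of $f$, for each face~$f$. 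Consequently, the problem reduces to modifying each $\partial_f$ independently.

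For a fixed closed face $f$ with $n_f$ sides, $\partial_f$ is essentially a matrix factorization of $U_f=p_f^{n_f}$ over the cyclic-quiver algebra $\mathcal{A}_f$. My plan is to classify such $\partial_f$ up to chain isomorphism: each $\partial_f$ should decompose as a direct sum of ``elementary'' factorizations, each of which is a pair of opposite arrows $p_f^n$ and $p_f^{n_f-n}$ between two dots, for some $0<n<n_f$ (i.e., an unoriented two-sided $f$-join in the sense of Definition~\ref{def:precurvesGeometric}). I would establish this by induction: pick an arrow $e^s_i\xrightarrow{p_f^n}e^{s'}_{i'}$ with minimal length $n$; use the relation $\partial_f^2=U_f\cdot\id$ to find a return arrow $e^{s'}_{i'}\xrightarrow{p_f^{n_f-n}}e^s_i$ up to corrections; absorb the corrections using the Clean-Up Lemma with a homotopy $h\in\mathcal{A}_f^+$ chosen so that $h^2=hD(h)=D(h)h=0$ holds for $\delta$-grading reasons; and induct on what remains. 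For open faces, where $U_f=0$, the analogous argument is simpler: $\partial_f$ is square-zero, one pairs dots into unoriented $f$-joins by Gaussian elimination, and any leftover unpaired dots are represented by one-sided joins to the boundary puncture.

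Crucially, since every homotopy $h$ used here lies in $\mathcal{A}_f^+$ for some single face, its identity component $h^\times$ vanishes, so the compatibility condition with the arc isomorphisms $\{P_a\}$ in Definition~\ref{def:SplittingCatsForEquivalenceFunctors} is trivially satisfied. Thus the resulting chain isomorphism $1+h$ is a genuine morphism in $\CC_i(S,M,A)$ and the arc data $\{P_a\}$ is untouched. Running this face-by-face simplification (across all faces in parallel, since they do not interact) yields a reduced precurve in which every dot on every face lies on a single unoriented two-sided or one-sided join, i.e., a simply-faced precurve.

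The main obstacle is the inductive step in the closed-face classification: producing a homotopy $h$ which cleanly splits off an elementary factorization without disturbing the remainder. Concretely, after one finds the predicted return arrow ``up to error terms,'' one must show that all error terms can be absorbed into a direct summand by a single Clean-Up move while preserving reducedness. This boils down to a careful bookkeeping argument using the $\delta$-grading to guarantee length lower bounds on iterated compositions, together with an appeal to the Cancellation Lemma (Lemma~\ref{lem:AbstractCancellation}) to eliminate the elementary summand from further consideration.
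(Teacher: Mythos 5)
Your overall strategy --- working face by face, choosing a join of minimal length, and absorbing the other joins by Clean-Up moves whose hypotheses are checked via the $\delta$-grading --- is the same as the paper's, and your observation that the curvature relation splits as $\partial_f^2=U_f\cdot\id$ face by face is correct. But there is a genuine gap in your inductive step: you assert that every homotopy $h$ you use lies in $\mathcal{A}_f^+$, so that $h^\times=0$ and the arc data $\{P_a\}_{a\in A}$ is never modified. This fails precisely when the dot $e^s_i$ supports two $f$-joins of the \emph{same} length $n$; these necessarily end at two distinct dots on the \emph{same} side $t$ (the length and the source side determine the target side), and the homotopy needed to remove one of them has length $m-n=0$, i.e.\ it is an identity component, not an element of $\mathcal{A}_f^+$. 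This case cannot be avoided: on a closed face with two sides, a differential whose $p^s_t$-coefficient is $\left(\begin{smallmatrix}1&0\\1&1\end{smallmatrix}\right)$ (with its inverse as the $p^t_s$-coefficient) satisfies $\partial_f^2=U_f\cdot\id$ but is not simply-faced, and no positive-length homotopy can repair it --- one must change the basis of $C.\iota_t$, which in $\CC_i(S,M,A)$ is recorded by multiplying $P_a$ by an elementary matrix on the arc containing $t$. The paper treats exactly this case separately (``if $m=n$, \dots we remove the $f$-join \dots by pre- or postcomposing the matrix decorating the arc of the side $t$ by the elementary matrix $E^{k}_{j}$ or $E^{j}_{k}$''); this is where the crossover arrows in the local systems come from, and it is the essential content of the reduction. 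Without it your induction does not close, and the claim that the resulting simply-faced precurve has unchanged $\{P_a\}$ is false.

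A smaller point: the Cancellation Lemma is not the right tool for ``eliminating the elementary summand from further consideration'' --- it removes pairs of generators connected by isomorphisms, whereas here you want to keep the two dots and their join. Once $e^s_i$ emits a single join and $e^t_j$ receives a single join, the relation $\partial_f^2=U_f\cdot\id$ (together with the $\delta$-grading) already forces that no further joins touch this pair, so the induction simply proceeds on the remaining dots.
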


\begin{figure}[t]
	\centering
	\begin{subfigure}{0.48\textwidth}\centering
		{
			\begin{pspicture}(-2.2,-1.2)(2.2,3.2)
			\psrotate(0,1){-45}{
				\psline(-2,0)(-2,2)
				\psline(2,0)(2,2)
				\psline(1,3)(-1,3)
				
				\psdots(-2,1)(2,1)(0,3)
				
				\psecurve{->}(-12,2)(-2,1)(0,3)(-1,13)
				\psecurve[linestyle=dashed]{<-}(12,2)(2,1)(0,3)(1,13)
				\pscurve{->}(-2,1)(-1,0.8)(1,0.8)(2,1)
				
				\rput{45}(0.8,1.8){\psframebox*[framesep=1pt]{$(m-n)$}}
				\rput{45}(-0.8,1.8){\pscirclebox*[framesep=1pt]{$n$}}
				\rput{45}(0,0.7){\pscirclebox*[framesep=1pt]{$m$}}
				
				\uput{0.1}[180]{45}(-2,1){$e^s_i$}
				\uput{0.1}[90]{45}(0,3){$e^t_j$}
				\uput{0.1}[0]{45}(2,1){$e^r_k$}
			}
			\end{pspicture}
		}
		\caption{}\label{fig:BasicIsosI}
	\end{subfigure}
	\begin{subfigure}{0.48\textwidth}\centering
		{
			\begin{pspicture}(-2.2,-1.2)(2.2,3.2)
			\psrotate(0,1){45}{
				\psline(-2,0)(-2,2)
				\psline(2,0)(2,2)
				\psline(1,3)(-1,3)
				
				\psdots(-2,1)(2,1)(0,3)
				
				\psecurve[linestyle=dashed]{->}(-12,2)(-2,1)(0,3)(-1,13)
				\psecurve{<-}(12,2)(2,1)(0,3)(1,13)
				\pscurve{->}(-2,1)(-1,0.8)(1,0.8)(2,1)
				
				\rput{-45}(0.8,1.8){\pscirclebox*[framesep=1pt]{$n$}}
				\rput{-45}(-0.8,1.8){\psframebox*[framesep=1pt]{$(m-n)$}}
				\rput{-45}(0,0.7){\pscirclebox*[framesep=1pt]{$m$}}
				
				\uput{0.1}[180]{-45}(-2,1){$e^r_k$}
				\uput{0.1}[90]{-45}(0,3){$e^s_i$}
				\uput{0.1}[0]{-45}(2,1){$e^t_j$}
			}
			\end{pspicture}
		}
		\caption{}\label{fig:BasicIsosII}
	\end{subfigure}
	\caption{The morphisms $h_1$ and $h_2$ (given by the dashed arrows of length $(m-n)$) for applications of the Clean-Up Lemma in the proof of Proposition~\ref{prop:PreloopToCC}.}\label{fig:BasicIsos}
\end{figure}
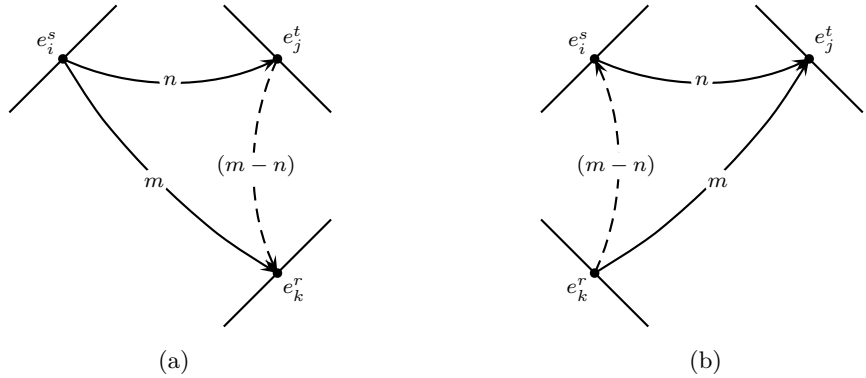
\begin{proof}	
	We simplify the precurve for each face separately. For this, we carefully choose two basis elements $e^s_i$ and $e^t_j$ such that there is an (oriented or unoriented) $f$-join going from one to the other whose length $n$ is smallest among all arrows leaving $e^s_i$ and arrows arriving at $e^t_j$. In particular, this implies that $e^s_i$ and $e^t_j$ do not lie on the same side. For open faces, this follows from the assumption that the precurve is reduced; for closed faces~$f$, we use the $\partial^2$-relation to see that the length of such a shortest arrow is strictly less than $n_f$. 
	
	Suppose there is an $f$-join of length $m$ leaving $e_i^s$ and going to a generator $e_k^r$. Consider an arrow $h_1$ of length $(m-n)\geq0$ going from $e_j^t$ to $e_k^r$, see Figure~\ref{fig:BasicIsosI}. 
	We want to apply  the Clean-Up Lemma (\ref{lem:AbstractCleanUp}) to our precurve with $h=h_1$, so let us verify the hypotheses of this lemma. The first hypothesis is $h_1^2=0$. Clearly, the composition of $h_1$ with itself can only be non-zero if $e^t_j=e_k^r$. Also, $h_1$ is a morphism of $\delta$-grading 0, so $e^t_j=e_k^r$ implies $m=n$. But this means that the two $f$-joins actually agree, which is a contradiction. 
	Given $h_1^2=0$, the second and third hypotheses of the Clean-Up Lemma are equivalent to $h_1\partial h_1=0$. Suppose there were an arrow from $e^r_k$ to $e^t_j$ in the differential $\partial$. Then its composition with $h_1$ would be a power of $U_f$ and thus have an even $\delta$-grading. However, the composition $h_1\partial$ has $\delta$-grading $1$, so we also have $h_1\partial h_1=0$. Hence, we may apply the Clean-Up Lemma to the precurve with $h=h_1$. 
	If $m>n$, this strictly reduces the number of $f$-joins leaving $e^s_i$. Also, it does not affect the precurve in other faces of $(S,M,A)$. This is not necessarily true if $m=n$, ie if $h_1$ contains an identity component. So instead, we remove the $f$-join from $e^s_i$ to $e_k^r$ in this case by pre- or postcomposing the matrix decorating the arc of the side $t$ by the elementary matrix $E^{k}_{j}$ or $E^{j}_{k}$. We now repeat this procedure until there is only one $f$-join leaving $e^s_i$ left. 
	
	Similarly, we can achieve that there are no other $f$-joins going into $e^t_j$. Indeed, by assumption, the length of any such $f$-join has to be at least $n$, so by the same argument as above, we can apply the Clean-Up Lemma with $h=h_2$ from Figure~\ref{fig:BasicIsosII}. After this, the $\partial^2$-relation ensures that there are no other $f$-joins leaving $e^t_j$ nor ending at $e^s_i$. 
	We can now ignore the two generators $e_i^s$ and $e_j^t$ and the one or two arrows between them and apply induction on the number of generators on the face $f$ until there are no two-sided $f$-joins left.
\end{proof}

\begin{remark}
	In some sense, to be made precise in the proof of Theorem~\ref{thm:EverythingIsLoopTypeUpToLocalSystems} below, our simply-faced precurves correspond to train-tracks from~\cite[Definition~17]{HRW} of a special form. Proposition~\ref{prop:PreloopToCC} above corresponds roughly to~\cite[Propositions~22 and~23]{HRW}. While their proofs rely on the fact that extendable type D structures are defined over the truncated algebra $\mathcal{A}$ (see Remark~\ref{rem:comparisonToHRW}), our proof relies in essential points on the $\delta$-grading of our curved complexes over the full algebra $\mathcal{A}$: indeed, in the main step of the proof, we repeatedly apply the Clean-Up Lemma. To verify that all hypotheses of the lemma are satisfied, we use the $\delta$-grading. Also note that the $\delta$-grading played a key role in the proof of Lemma~\ref{lem:CCreduced}. If we dropped the $\delta$-grading, we might also see arrows labelled by $1+U_f$, which do not have an inverse in $\mathcal{A}$!
\end{remark}

\begin{figure}[t]
	\centering
	\psset{unit=0.2}
	\begin{subfigure}{0.49\textwidth}\centering
		\begin{pspicture}(-8,-7)(23,7)
		
		\pscustom*[linecolor=lightred,linewidth=0pt]{
			\psline(0,-5)(0,5)
			\psline(-7,5)(-7,-5)
			}
		\psline[linecolor=red,linestyle=dotted](-4,-5)(-4,5)
		\psline(0,5)(0,-5)
		
		\pscustom{
			\psecurve(4,2)(0,-2)(-4,2)(-8,-2)
			\psline(-4,2)(-5,2)
			}
		\psline[linestyle=dotted,dotsep=1pt](-5,2)(-6,2)
		
		\pscustom{
			\psecurve(4,-2)(0,2)(-4,-2)(-8,2)
			\psline(-4,-2)(-5,-2)
		}
		\psline[linestyle=dotted,dotsep=1pt](-5,-2)(-6,-2)
		
		%joins
		\psecurve(-4,4)(0,2)(4,4)(5,6)
		\psecurve(-4,-4)(0,-2)(4,-4)(5,-6)
		
		\psecurve[linestyle=dotted,dotsep=1pt](0,2)(4,4)(5,6)(6,20)
		\psecurve[linestyle=dotted,dotsep=1pt](0,2)(4,-4)(5,-6)(6,-20)
		
		\psdots(0,2)(0,-2)

		\rput(7,0){$\longleftrightarrow$}
		
		%right hand side
		\rput(13,0){
			\pscustom*[linecolor=lightred,linewidth=0pt]{
				\psline(0,-5)(0,5)
				\psline(-3,5)(-3,-5)
			}
			\psline(0,5)(0,-5)
			
			\psline(0,2)(-1,2)
			\psline[linestyle=dotted,dotsep=1pt](-1,2)(-2,2)
			
			\psline(0,-2)(-1,-2)
			\psline[linestyle=dotted,dotsep=1pt](-1,-2)(-2,-2)
			
			%joins
			\psecurve(-8,4)(0,-2)(8,4)(9,6)
			\psecurve(-8,-4)(0,2)(8,-4)(9,-6)
			
			\psecurve[linestyle=dotted,dotsep=1pt](0,2)(8,4)(9,6)(10,20)
			\psecurve[linestyle=dotted,dotsep=1pt](0,2)(8,-4)(9,-6)(10,-20)
			
			\rput(1,3.3){$j$}
			\rput(1,-3.5){$i$}
			
			\psdots(0,2)(0,-2)
			}
			
			\rput(-2,3){$\textcolor{red}{P_{ij}}$}
			\rput(1,3.3){$j$}
			\rput(1,-3.5){$i$}
		
		\end{pspicture}\\
		(M1)
	\end{subfigure}
	\begin{subfigure}{0.49\textwidth}\centering
		\begin{pspicture}(-10.5,-7)(20.5,7)
		%left
		\pscustom*[linecolor=lightred,linewidth=0pt]{
			\psline(0,-5)(0,5)
			\psline(-7,5)(-7,-5)
		}
		\psline[linecolor=red,linestyle=dotted](-4,-5)(-4,5)
		\psline(0,5)(0,-5)
		
		%right
		\pscustom*[linecolor=lightred,linewidth=0pt]{
			\psline(10,-5)(10,5)
			\psline(17,5)(17,-5)
		}
		\psline[linecolor=red,linestyle=dotted](14,-5)(14,5)
		\psline(10,5)(10,-5)
		
		%joins
		
		\psline[linestyle=dotted,dotsep=1pt](-5,2)(-6,2)
		\psline[linestyle=dotted,dotsep=1pt](-5,-2)(-6,-2)
		\psline[linestyle=dotted,dotsep=1pt](15,2)(16,2)
		\psline[linestyle=dotted,dotsep=1pt](15,-2)(16,-2)
		\psline(-5,2)(15,2)
		\psline(-5,-2)(15,-2)
		
		\psdots(0,2)(0,-2)
		\psdots(10,2)(10,-2)
		
		\rput(1,3.3){$j$}
		\rput(1,-3.5){$i$}
		\rput[r](9.7,3.3){$j'$}
		\rput[r](9.7,-3.3){$i'$}
		
		%crossings
		\rput(-2,0){
		\psecurve(-6,2)(-2,-2)(2,2)(6,-2)
		\psecurve(-6,-2)(-2,2)(2,-2)(6,2)
		\psline{->}(0.7,-1.12)(1,-1.55)
		\psline{->}(-0.7,-1.12)(-1,-1.55)
		}
		\rput(12,0){
			\psecurve(-6,2)(-2,-2)(2,2)(6,-2)
			\psecurve(-6,-2)(-2,2)(2,-2)(6,2)
			\psline{->}(0.7,-1.12)(1,-1.55)
			\psline{->}(-0.7,-1.12)(-1,-1.55)
		}
		\end{pspicture}\\
		(M2)
	\end{subfigure}
	\\
	\begin{subfigure}{0.24\textwidth}\centering
		\begin{pspicture}(-9,-11)(9,11)
		
		\pscustom*[linecolor=lightred,linewidth=0pt]{
			\psline(0,-5)(0,5)
			\psline(-7,5)(-7,-5)
		}
		\psline[linecolor=red,linestyle=dotted](-4,-5)(-4,5)
		\psline(0,5)(0,-5)
		
		\psline(0,2)(-5,2)
		\psline[linestyle=dotted,dotsep=1pt](-5,2)(-6,2)
		
		\psline(0,-2)(-5,-2)
		\psline[linestyle=dotted,dotsep=1pt](-5,-2)(-6,-2)
		
		\rput(-2,0){
			\psecurve(-6,2)(-2,-2)(2,2)(6,-2)
			\psecurve(-6,-2)(-2,2)(2,-2)(6,2)
			\psline{->}(0.7,-1.12)(1,-1.55)
			\psline{->}(-0.7,-1.12)(-1,-1.55)
		}

		%joins
		
		\psecurve(-4.5,7)(0,2)(4.5,7)(0,12)
		\psecurve(-4.5,-7)(0,-2)(4.5,-7)(0,-12)
		
		\psdots(0,2)(0,-2)
		
		%top
		\pscustom*[linecolor=lightred,linewidth=0pt]{
			\psline(2,7)(7,7)
			\psline(7,10)(2,10)
		}
		\psline(2,7)(7,7)
		\psline(4.5,7)(4.5,8)
		\psline[linestyle=dotted,dotsep=1pt](4.5,8)(4.5,9)
		\psdot(4.5,7)
		
		%bottom
		\pscustom*[linecolor=lightred,linewidth=0pt]{
			\psline(2,-7)(7,-7)
			\psline(7,-10)(2,-10)
		}
		\psline(2,-7)(7,-7)
		\psline(4.5,-7)(4.5,-8)
		\psline[linestyle=dotted,dotsep=1pt](4.5,-8)(4.5,-9)
		\psdot(4.5,-7)
		
		\rput(1,3.3){$j$}
		\rput(1,-3.5){$i$}
		%\rput[t](8,6.7){$\pi_f(s,j)$}
		%\rput[b](8,-6.7){$\pi_f(s,i)$}
		
		\end{pspicture}
		(M3a)
	\end{subfigure}
	\begin{subfigure}{0.24\textwidth}\centering
		\begin{pspicture}(-9,-11)(9,11)
		
		\pscustom*[linecolor=lightred,linewidth=0pt]{
			\psline(0,-5)(0,5)
			\psline(-7,5)(-7,-5)
		}
		\psline[linecolor=red,linestyle=dotted](-4,-5)(-4,5)
		\psline(0,5)(0,-5)
		
		\psline(0,2)(-5,2)
		\psline[linestyle=dotted,dotsep=1pt](-5,2)(-6,2)
		
		\psline(0,-2)(-5,-2)
		\psline[linestyle=dotted,dotsep=1pt](-5,-2)(-6,-2)
		
		\rput(-2,0){
			\psecurve(-6,2)(-2,-2)(2,2)(6,-2)
			\psecurve(-6,-2)(-2,2)(2,-2)(6,2)
			\psline{->}(0.7,-1.12)(1,-1.55)
			\psline{->}(-0.7,-1.12)(-1,-1.55)
		}

		%joins
		
		\psecurve(-4.5,7)(0,2)(4.5,7)(0,12)
		\psecurve(-4.5,-7)(0,-2)(4.5,-7)(0,-12)
		
		\psdots(0,2)(0,-2)
		
		%top
		\pscustom*[linecolor=lightred,linewidth=0pt]{
			\psline(2,7)(7,7)
			\psline(7,10)(2,10)
		}
		\psline(2,7)(7,7)
		\psline(4.5,7)(4.5,8)
		\psline[linestyle=dotted,dotsep=1pt](4.5,8)(4.5,9)
		\psdot(4.5,7)
		
		%bottom
		\psline[linecolor=darkgreen](2,-7)(7,-7)
		\psline[linecolor=darkgreen](4.5,-8)(4.5,-6)
		\rput[l](3.5,-8.5){\textcolor{darkgreen}{$m\!\in\!M$}}
		
		\rput(1,3.3){$j$}
		\rput(1,-3.5){$i$}
		%\rput[t](8,6.7){$\pi_f(s,j)$}
		%\rput[b](8,-6.7){$\pi_f(s,i)$}
		
		\end{pspicture}
		(M3b)
	\end{subfigure}
	\begin{subfigure}{0.24\textwidth}\centering
		\begin{pspicture}(-9,-11)(9,11)
		
		\pscustom*[linecolor=lightred,linewidth=0pt]{
			\psline(0,-5)(0,5)
			\psline(-7,5)(-7,-5)
		}
		\psline[linecolor=red,linestyle=dotted](-4,-5)(-4,5)
		\psline(0,5)(0,-5)
		
		\psline(0,2)(-5,2)
		\psline[linestyle=dotted,dotsep=1pt](-5,2)(-6,2)
		
		\psline(0,-2)(-5,-2)
		\psline[linestyle=dotted,dotsep=1pt](-5,-2)(-6,-2)
		
		\rput(-2,0){
			\psecurve(-6,2)(-2,-2)(2,2)(6,-2)
			\psecurve(-6,-2)(-2,2)(2,-2)(6,2)
			\psline{->}(0.7,-1.12)(1,-1.55)
			\psline{->}(-0.7,-1.12)(-1,-1.55)
		}

		%joins
		
		\psecurve(-4.5,7)(0,2)(4.5,7)(0,12)
		\psecurve(-4.5,-7)(0,-2)(4.5,-7)(0,-12)
		
		\psdots(0,2)(0,-2)
		
		%top
		\psline[linecolor=darkgreen](2,7)(7,7)
		\psline[linecolor=darkgreen](4.5,8)(4.5,6)
		\rput[l](3.5,8.8){\textcolor{darkgreen}{$m\!\in\!M$}}
		
		%bottom
		\pscustom*[linecolor=lightred,linewidth=0pt]{
			\psline(2,-7)(7,-7)
			\psline(7,-10)(2,-10)
		}
		\psline(2,-7)(7,-7)
		\psline(4.5,-7)(4.5,-8)
		\psline[linestyle=dotted,dotsep=1pt](4.5,-8)(4.5,-9)
		\psdot(4.5,-7)
		
		\rput(1,3.3){$j$}
		\rput(1,-3.5){$i$}
		%\rput[t](8,6.7){$\pi_f(s,j)$}
		%\rput[b](8,-6.7){$\pi_f(s,i)$}
		
		\end{pspicture}
		(M3c)
	\end{subfigure}
	\begin{subfigure}{0.24\textwidth}\centering
		\begin{pspicture}(-9,-11)(9,11)
		
		\pscustom*[linecolor=lightred,linewidth=0pt]{
			\psline(0,-5)(0,5)
			\psline(-7,5)(-7,-5)
		}
		\psline[linecolor=red,linestyle=dotted](-4,-5)(-4,5)
		\psline(0,5)(0,-5)
		
		\psline(0,2)(-5,2)
		\psline[linestyle=dotted,dotsep=1pt](-5,2)(-6,2)
		
		\psline(0,-2)(-5,-2)
		\psline[linestyle=dotted,dotsep=1pt](-5,-2)(-6,-2)
		
		\rput(-2,0){
			\psecurve(-6,2)(-2,-2)(2,2)(6,-2)
			\psecurve(-6,-2)(-2,2)(2,-2)(6,2)
			\psline{->}(0.7,-1.12)(1,-1.55)
			\psline{->}(-0.7,-1.12)(-1,-1.55)
		}

		%joins
		
		\psecurve(-5,0)(0,2)(5,0)(10,2)
		\psecurve(-5,0)(0,-2)(5,0)(10,-2)
		
		\psdots(0,2)(0,-2)
		
		%top
		\psline[linecolor=darkgreen](5,2)(5,-2)
		\psline[linecolor=darkgreen](4,0)(6,0)
		\rput{-90}(7,-2){\textcolor{darkgreen}{$m\!\in\!M$}}

		\rput(1,3.3){$j$}
		\rput(1,-3.5){$i$}		
		\end{pspicture}
		(M3d)
	\end{subfigure}
	\caption{An illustration of the moves (M1)--(M3) from Lemma~\ref{lem:CalculusForPreloops}. For (M3), there are four different cases to consider, depending on whether the $f$-joins are two- or one-sided.}\label{fig:GraphicCalculus}
\end{figure}
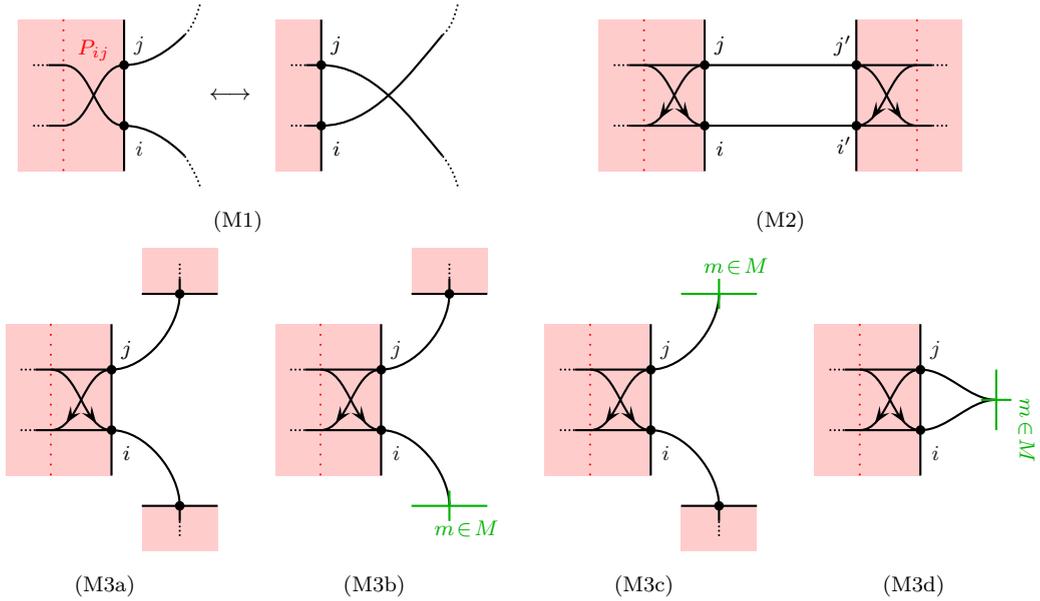

For the proof of the previous proposition, it was fairly irrelevant how the matrices $P_a$ change under the various base changes. For the main classification result, however, we need more control over these equivalences:

\begin{lemma}\label{lem:CalculusForPreloops}
	Let \((C, \{P_a\}_{a\in A},\partial)\) be a simply-faced precurve on a marked surface \((S,M)\) with arc system~\(A\). For some \(a\in A\), let \(s\) be a side of \(a\) and \(f\) the face adjacent to \(s\). Let \(\bullet(s,i)\) and \(\bullet(s,j)\) be two distinct dots on \(s\). Then \((C, \{P_a\}_{a\in A},\partial)\) is chain isomorphic to the precurve obtained by one of the following three moves, which are illustrated in Figure~\ref{fig:GraphicCalculus}.
	\begin{description}
		\item[(M1)] Multiply \(P_a\) on the right/left by \(P_{ij}\), depending on whether \(s=s_1(a)\) or \(s=s_2(a)\), and switch the endpoints of the two \(f\)-joins ending in \(\bullet(s,i)\) and \(\bullet(s,j)\). 
		\item[(M2)] Assume that \(\bullet(s,i)\) and \(\bullet(s,j)\) have the same \(\delta\)-grading. Suppose also that both \(\bullet(s,i)\) and \(\bullet(s,j)\) are connected by two two-sided \(f\)-joins which connect the same two sides of \(f\) and let \(\bullet(s',i')\) and \(\bullet(s',j')\) be the other endpoints, respectively. Then multiply \(P_a\) on the right/left by \(E^{j}_{i}\), depending on whether \(s=s_1(a)\) or \(s=s_2(a)\), and multiply \(P_{a'}\) on the right/left by \(E^{j'}_{i'}\), depending on whether \(s'=s_1(a')\) or \(s'=s_2(a')\).  
		\item[(M3)] Assume that \(\bullet(s,i)\) and \(\bullet(s,j)\) have the same \(\delta\)-grading. If the \(f\)-joins starting at \(\bullet(s,i)\) and \(\bullet(s,j)\) are both two-sided, let us assume that they end on different sides. Unless the \(f\)-joins are both one-sided, assume also that if we follow the oriented boundary of \(f\), starting at \(s\), we meet the other end of the second \(f\)-join before the other end of the first. Then multiply \(P_a\) on the right/left by \(E^{j}_{i}\) depending on whether \(s=s_1(a)\) or \(s=s_2(a)\). 
	\end{description}
\end{lemma}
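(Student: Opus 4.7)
The plan is to exhibit each move as the effect of an explicit degree-zero chain isomorphism in $\CC_i(S,M,A)$.

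Move (M1) will be essentially tautological at the level of precurves: the permutation $e^s_i \leftrightarrow e^s_j$ of the basis of $C.\iota_s$, extended by the identity on all other basis elements, is its own inverse. Conjugation by this permutation swaps the endpoints of any $f$-joins meeting $\bullet(s,i)$ and $\bullet(s,j)$, which is exactly the claimed effect on $\partial$. The compatibility condition of Definition~\ref{def:SplittingCatsForEquivalenceFunctors} then forces the local system $P_a$ to be replaced by $P_{ij}\,P_a$ or $P_a\,P_{ij}$ according to whether $s=s_2(a)$ or $s=s_1(a)$, which is the assertion.

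For (M2) and (M3), I would invoke the Clean-Up Lemma~\ref{lem:AbstractCleanUp} with a carefully chosen length-zero homotopy $h$ concentrated at the sides in question. For (M3), I would take $h$ to be the morphism sending $e^s_j \mapsto e^s_i$ and vanishing on all other basis vectors; for (M2) one uses an analogous homotopy with symmetric components on both $s$ and $s'$ so that the chain isomorphism simultaneously adjusts $P_a$ and $P_{a'}$. The $\delta$-grading hypothesis on $\bullet(s,i)$ and $\bullet(s,j)$ ensures that $h$ is of degree zero, and $h^2=0$ follows immediately from $i\neq j$. The carefully stated geometric assumptions---that the relevant $f$-joins terminate on different sides of $f$ and satisfy the prescribed cyclic order around $\partial f$---are exactly what is needed to force $h\partial h=0$, whence both $hD(h)$ and $D(h)h$ vanish. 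Clean-Up will then produce a chain isomorphism from $(C,\partial)$ to $(C,\partial+D(h))$; a direct inspection of $D(h)$, combined with the equivalence of Corollary~\ref{cor:SplittingCatsForEquivalenceComplexes} used to absorb the resulting parallel $f$-join into a change of local system, will show that the new precurve differs from the old exactly by multiplication of $P_a$ (and of $P_{a'}$ in (M2)) by the prescribed elementary matrix.

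The hardest step will be the case analysis for (M3): the hypotheses admit four subcases according to whether each of the two $f$-joins is one-sided or two-sided, and in each subcase one must separately confirm that $D(h)$ geometrically realises the crossover-arrow move. The ordering hypothesis is precisely what rules out the pathological configurations in which the new $f$-join introduced by Clean-Up would overlap an existing one, thereby creating either an unintended additional move or a violation of the simply-faced condition.
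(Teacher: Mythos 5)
Your treatment of (M1) is fine and coincides with the paper's: it is the base change by the permutation matrix, and the intertwining condition of Definition~\ref{def:SplittingCatsForEquivalenceFunctors} is what converts the basis permutation into the stated modification of $P_a$.

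For (M2) and (M3) there is a genuine gap. The Clean-Up Lemma applied to an object $(C,\{P_a\},\partial)$ of $\CC_i(S,M,A)$ produces $(C,\{P_a\},\partial+D(h))$: it never changes the local systems, only the differential, whereas the moves (M2) and (M3) change $P_a$ and are supposed to leave the $f$-joins alone. Moreover, your proposed $h$ (the length-zero map $e^s_j\mapsto e^s_i$, zero elsewhere) is not a morphism in $\Mat_i\overline{\mathcal{A}}$ at all: a degree-zero identity component supported on only one side of the arc $a$ violates the condition $(\iota_{s_2(a)}.\varphi^\times.\iota_{s_2(a)})\circ P_a = P_a\circ(\iota_{s_1(a)}.\varphi^\times.\iota_{s_1(a)})$, so the Clean-Up Lemma does not apply as stated. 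The correct mechanism --- and the one the paper uses --- is a base change: the map $1+h=E^{j}_{i}$ is an \emph{isomorphism onto a different object}, namely one whose local system at $a$ has been multiplied by $E^{j}_{i}$ and whose differential is the conjugate $E^{j}_{i}\,\partial\,E^{j}_{i}$. For (M2) the two simultaneous base changes at $s$ and $s'$ leave the differential unchanged because the two $f$-joins are parallel. For (M3), however, the conjugated differential acquires up to two new $f$-join components, and the real work of the proof is to remove these by a \emph{second} application of the Clean-Up Lemma with the positive-length morphism $h'=p^{t}_{t'}$ running between the far endpoints of the two $f$-joins inside $f$ (Figure~\ref{fig:CalculusForPreloopsProofHomotopy}). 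The cyclic-ordering hypothesis in (M3) is what guarantees that this $h'$ exists with the correct degree and direction so that $D(h')$ cancels exactly the new components; it is not needed to force $h\partial h=0$, which already follows from the $\delta$-grading and the simply-faced hypothesis. Your appeal to Corollary~\ref{cor:SplittingCatsForEquivalenceComplexes} to ``absorb the resulting parallel $f$-join into a change of local system'' does not supply this missing step.
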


\begin{wrapfigure}{r}{0.25\textwidth}
	\centering
	\psset{unit=0.2}
	\begin{pspicture}(-5,-10.5)(13,10.5)
	
	\pscustom*[linecolor=lightred,linewidth=0pt]{
		\psline(0,-5)(0,5)
		\psline(-3,5)(-3,-5)
	}
	\pscustom*[linecolor=lightred,linewidth=0pt]{
		\psline(2,7)(7,7)
		\psline(7,10)(2,10)
	}
	\pscustom*[linecolor=lightred,linewidth=0pt]{
		\psline(2,-7)(7,-7)
		\psline(7,-10)(2,-10)
	}
	
	%% dotted lines
	
	\psline[linestyle=dotted,dotsep=1pt](-2,2)(-1,2)
	\psline(-1,2)(0,2)
	\psline[linestyle=dotted,dotsep=1pt](-2,-2)(-1,-2)
	\psline(-1,-2)(0,-2)
	
	\psline[linestyle=dotted,dotsep=1pt](4.5,8)(4.5,9)
	\psline(4.5,7)(4.5,8)
	\psline[linestyle=dotted,dotsep=1pt](4.5,-8)(4.5,-9)
	\psline(4.5,-7)(4.5,-8)
	
	% left
	\psline(0,5)(0,-5)

	%joins
	\psecurve(-4.5,7)(0,2)(4.5,7)(3,12)
	\psecurve(-4.5,-7)(0,-2)(4.5,-7)(3,-12)
	\psdots(0,2)(0,-2)
	
	%top
	\psline(2,7)(7,7)
	\psdot(4.5,7)
	
	%bottom
	\psline(2,-7)(7,-7)
	\psdot(4.5,-7)
	
	% extra arrows
	\psecurve{->}(0,10)(4.5,7)(0,-2)(-4.5,-2)
	\psecurve{<-}(0,-10)(4.5,-7)(0,2)(-4.5,2)
	
	% homotopy
	\psecurve[linestyle=dashed]{<-}(0,-8)(4.5,-7)(4.5,7)(0,8)
	
	% labels
	\rput(-1,0){$s$}
	\rput(5.5,8){$t$}
	\rput(5.5,-8){$t'$}
	
	\rput(10,0){$h=p^{t}_{t'}$}
	\rput(5,2){$p^t_s$}
	\rput(5,-1.2){$p^s_{t'}$}

	\end{pspicture}
	\caption{The morphism~$h$ for the proof of invariance of (M3).}\label{fig:CalculusForPreloopsProofHomotopy}
	\vspace*{-20pt}
\end{wrapfigure}
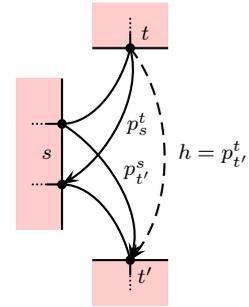
\myfixwrapfig

\begin{remark}
	Using (T2), we can reformulate (M2) as follows: if there is only one crossover arrow in the picture for (M2) in Figure~\ref{fig:GraphicCalculus}, then we may remove that crossover arrow and replace it by the other crossover arrow, thereby pushing the crossover arrow along parallel joins. 
\end{remark}

\begin{proof}[of Lemma~\ref{lem:CalculusForPreloops}]
	All three parts of the lemma can be shown in the same way, namely by doing a base change, which in our graphical notation shifts the outermost segment of $N(a)$ adjacent to $s$ into the face $f$. 
	\\\indent
	For (M1), the result after such a base change is obviously chain isomorphic to the original precurve. For (M2), we do two such base changes, one for $s$ and one for $s'$. For (M3), we do one such base change; the precurve then differs in at most two $f$-joins from the first, see Figure~\ref{fig:CalculusForPreloopsProofHomotopy}. However, we can remove these new $f$-joins using the Clean-Up Lemma and the morphism $h$ from the same figure.
\end{proof}

\begin{definition}\label{def:curves}
	Let $(S,M)$ be a marked surface with an arc system $A$. A \textbf{curve} on $(S,M,A)$ is a pair $(\gamma, X)$, where either
	\begin{enumerate}
		\item $\gamma$ is an immersion of an oriented circle into $S$, representing a non-trivial primitive element of $\pi_1(S)$ and 
		$X\in \GL_n(\mathbb{F}_2)$ for some $n$; or \label{def:curves:loops}
		\item $\gamma$ is an immersion of an interval $(I,\partial I)$ into $(S,M)$, defining a non-trivial element of $\pi_1(S,M)$ and 
		$X=\id\in\GL_n(\mathbb{F}_2)$ for some positive integer $n$ \label{def:curves:paths}
	\end{enumerate}
	satisfying the following properties:
	\begin{itemize}
		\item $\gamma$ restricted to each component of the preimage of each face is an embedding and 
		\item $\gamma$ restricted to each component of the preimage of the neighbourhood $N(a)$ of each arc $a$ is an embedding, intersecting each leaf of $\mathcal{F}_a$ exactly once.
	\end{itemize}
	In case \ref{def:curves:loops}, we call $(\gamma,X)$ a \textbf{compact curve} or a \textbf{loop}, in case \ref{def:curves:paths}, we call it a \textbf{non-compact curve} or a \textbf{path}. We say that a curve $(\gamma,X)$ is \textbf{supported} on the immersed curve~$\gamma$ and call $\gamma$ the \textbf{underlying curve} and $X$ its \textbf{local system}. Note that the local system of paths only records some positive integer $n$. We consider curves up to homotopy of the underlying immersed curves through curves. Furthermore, we consider the local systems of loops up to matrix similarity.
	
	A \textbf{$\delta$-grading} on a curve $(\gamma,X)$ is an $\mathbb{R}$-grading on the set of intersection points of the underlying curve with arcs in $A$ satisfying the following property: let $x$ and $y$ be two intersection points of $\delta$-grading $\delta(x)$ and $\delta(y)$, respectively. Suppose $x$ and $y$ are joined by a component of $\gamma\smallsetminus A$. Then such a component is mapped to a path in $Q(S,M,A)$ corresponding to an algebra element $p^s_t$, from $x$ to $y$, say. Then we ask that $\delta(y)-\delta(x)+\delta(p^s_t)=1$.
	
	A \textbf{collection of ($\delta$-graded) curves} is a finite set of ($\delta$-graded) curves such that all underlying curves are pairwise non-homotopic as unoriented ($\delta$-graded) curves. We denote the set of all collections of $\delta$-graded curves up to equivalence by $\loops(S,M,A)$.
\end{definition}

\begin{remark}
	Since any immersed curve $\gamma$ in a pair $(\gamma,X)$ of the form \ref{def:curves:loops} or \ref{def:curves:paths} can be homotoped to one that satisfies the two conditions of the previous definition, the arc system $A$ is only required to define the $\delta$-grading on elements of $\loops(S,M,A)$. Apart from that, $\loops(S,M,A)$ is independent of $A$.
\end{remark}

\begin{definition}
	Given an arc system $A$ on $(S,M)$, we define a map
	\[\Pi_i\co \loops(S,M,A)\rightarrow \ob(\CC_i(S,M,A))/\text{(chain homotopy)}\]
	as follows: given a single curve $(\gamma,X)$, choose a small immersed tubular neighbourhood of $\gamma$ and replace $\gamma$ by $\dim X$ parallel copies thereof in this neighbourhood. Then, for each face $f\in F(S,M,A)$, the $f$-joins of~$\Pi_i(\gamma,X)$ are given by the intersection of these curves with~$f$. Then pick an intersection point $x$ of an arc $a$ with $\gamma$. Let the matrix $P_a$ be the diagonal block matrix with blocks of dimension $\dim X$ such that all blocks are equal to the identity matrix except the one corresponding to the intersection point $x$. We define this block to be equal to $X$ if $\gamma$ goes through $x$ from the right of $a$ to its left (ie from $s_1(a)$ to $s_2(a)$), and set it equal to $X^t$ otherwise. On all other arcs, we choose the identity matrix. 
	Finally, we extend $\Pi_i$ to collections of curves by taking unions/direct sums. 
	
	Note that the definition of $\Pi_i$ is indeed independent of the choice of $a$ and $x$ up to homotopy in $\CC_i(S,M,A)$, which can be seen by repeatedly applying (M2).  Similarly, we see that conjugation of the local systems $C$ of a loop $(\gamma,X)$ does not change the homotopy type of the image under $\Pi_i$. 
	
	We define the map
	\[\Pi\co  \loops(S,M,A)\rightarrow\ob(\CC(S,M,A))/\text{(chain homotopy)}\]
	as the composition of $\Pi_i$ and the functor $\mathcal{G}$ from Definition~\ref{def:SplittingCatsForEquivalenceFunctors}.
\end{definition}

\begin{theorem}\label{thm:EverythingIsLoopTypeUpToLocalSystems}
	Any reduced precurve is chain isomorphic to one in the image of \(\Pi_i\). Thus any reduced curved complex is chain isomorphic to one in the image of \(\Pi\). 
\end{theorem}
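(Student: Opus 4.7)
The plan is to first apply Proposition~\ref{prop:PreloopToCC} to reduce to the case where the precurve is simply-faced, and then to show by a sequence of chain-isomorphism moves from Lemma~\ref{lem:CalculusForPreloops} together with the train-track moves (T1)--(T3) that any simply-faced precurve can be brought into the normal form $\Pi_i(L)$ for some collection of curves $L\in\loops(S,M,A)$.

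Given a simply-faced precurve $(C,\{P_a\},\partial)$, I first ignore the crossover arrows in each arc neighbourhood and regard the $P_a$ as compositions of crossings only, using (T1) to convert every crossover arrow to a combination of crossings (and then later use (T3) to split crossings back into crossover arrows as needed). Following the $f$-joins on the faces through the crossings inside the arc neighbourhoods produces a disjoint union of immersed arcs and circles $\gamma_1,\dots,\gamma_k$ on $S$; two-sided joins that close up give loops, and chains of $f$-joins terminating on one-sided joins give paths ending at basepoints in $M$. Each $\gamma_i$ has a multiplicity $n_i$ recording the number of strands in its homotopy class. After possibly reordering the dots on each side by (M1), I may assume that the $n_i$ strands parallel to $\gamma_i$ occupy consecutive dots on every side they cross, so that the matrices $P_a$ become block-diagonal with a block of size $n_i$ for each underlying curve meeting $a$.

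The main step is to concentrate all remaining non-trivial content of the matrix blocks at a single arc per curve. After the reordering above, each block may contain crossover arrows between the $n_i$ parallel strands of~$\gamma_i$; the task is to push these crossover arrows along $\gamma_i$ to a chosen reference arc. Since the $n_i$ parallel strands traverse parallel $f$-joins between successive arc neighbourhoods, I can apply (M2) to slide a crossover arrow from one side of a face to the opposite side, cancelling it on the first arc and reintroducing it on the second. Iterating this around~$\gamma_i$ collects all crossover arrows at a single preferred arc, giving a matrix of the form $\id\oplus\cdots\oplus X_i\oplus\cdots\oplus\id$ for some $X_i\in\GL_{n_i}(\mathbb{F}_2)$. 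For a non-compact $\gamma_i$ ending on a boundary component containing a basepoint, the crossover arrows can additionally be pushed across the one-sided join into the open face via (M3) and then eliminated using (T2), so that the local system reduces to $\id$. The result is exactly $\Pi_i$ of the collection $\{(\gamma_i,X_i)\}$.

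The main obstacle is the sliding step for loops: one needs to verify that (M2), together with (T2) and (T3), really does allow a crossover arrow at one side of a face to be transported to the opposite side when both sides lie on parallel $f$-joins, and that these local moves assemble consistently into a global transport along $\gamma_i$ without introducing uncontrolled crossover arrows between distinct curves $\gamma_i$ and $\gamma_j$. Any crossover arrow between strands of different curves can be removed via (M3), because two non-homotopic curves must disagree at some arc, and (M3) precisely kills such arrows once their endpoints lie on $f$-joins ending on distinct sides of a face. Once the algorithm terminates, the resulting precurve is manifestly in the image of $\Pi_i$, and the statement about $\CC(S,M,A)$ follows immediately by applying the functor~$\mathcal{G}$ from Corollary~\ref{cor:SplittingCatsForEquivalenceComplexes}.
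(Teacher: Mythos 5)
Your overall strategy (reduce to simply-faced precurves via Proposition~\ref{prop:PreloopToCC}, then use (T1)--(T3) and (M1)--(M3) to push all crossover arrows onto parallel strands) is the same as the paper's, but the step you yourself flag as ``the main obstacle'' is precisely the content of the proof, and your treatment of it is not correct. First, a smaller problem: you cannot ``use (T1) to convert every crossover arrow to a combination of crossings.'' Move (T1) expresses a \emph{crossing} as three crossover arrows in a specific configuration, not an arbitrary crossover arrow as crossings; since $\GL_n(\mathbb{F}_2)$ is not generated by permutation matrices, a general $P_a$ admits no crossings-only decomposition. The underlying curves must instead be read off by ignoring the crossover arrows and following strands through the crossings, and this decomposition is not canonical until the algorithm has terminated.

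The genuine gap is your claim that ``any crossover arrow between strands of different curves can be removed via (M3), because two non-homotopic curves must disagree at some arc.'' Move (M3) carries an orientation condition: the two $f$-joins must diverge so that, following $\partial f$ from $s$, one meets the end of the \emph{second} join before the end of the first. If the curves diverge the other way, (M3) does not apply at that face, and the arrow must be transported in the opposite direction to another point of divergence; whether it can be killed there depends again on the sign of the divergence. Worse, transporting an arrow past other crossover arrows uses (T3), which in general \emph{creates new} crossover arrows, so there is no a priori reason the process terminates rather than proliferating arrows indefinitely. This is exactly why the paper does not argue directly but invokes the algorithm of Hanselman--Rasmussen--Watson: one assigns to each crossover arrow a weight in $(\mathbb{Z}\smallsetminus\{0\})^2\cup\{\infty\}$ recording how far the two strands stay parallel in each direction and with which sign of divergence, and shows that a carefully ordered sequence of moves strictly increases the minimal depth of the configuration, which is bounded, so the process terminates with all arrows between parallel curves. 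Without this (or an equivalent) termination argument your proof does not go through; as written, the claim that a single application of (M3) disposes of each inter-curve arrow is false.
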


\begin{proof}
	Simply-faced precurves \((C, \{P_a\}_{a\in A},\partial)\) in our setting for which there are no one-sided $f$-joins correspond to certain types of train-tracks in the language of~\cite{HRW}, namely those in which all arrows come in pairs and only sit in the neighbourhoods of the arcs. 
	
	Then the train-track moves from \cite[Proposition~25]{HRW} correspond exactly to our moves (T1) to (T3) and (M1) to (M3). So we can apply the algorithm explained in~\cite[section~3.7]{HRW} for simplifying train-tracks, since the geometric objects agree, even though they represent algebraic objects defined over slightly different algebras. The output of the algorithm is train-tracks whose crossover arrows only connect parallel immersed curves, ie loops.  
	
	The same algorithm also works without any changes for simply-faced precurves which contain one-sided $f$-joins, since the additional moves for such $f$-joins from Lemma~\ref{lem:CalculusForPreloops}, namely (M3b), (M3c) and (M3d) from Figure~\ref{fig:GraphicCalculus}, can be regarded as generalisations of (M3a). Once all arrows only connect parallel immersed curves, we can remove all arrows on paths by applying moves (M2) followed by (M3d).
\end{proof}

\begin{remark}
	The key ingredient of the algorithm from~\cite{HRW} is a certain complexity that Hanselman, Rasmussen and Watson assign to each crossover arrow. This complexity, which they call weight, takes values in $(\mathbb{Z}\smallsetminus\{0\})^2\cup \infty$ and is defined as follows: the weight of a crossover arrow between two curves that always stay parallel is $\infty$. If the two segments diverge, the complexity is a pair of non-zero integers. Their absolute values record the maximum number of faces (plus 1) that the curve segments stay parallel to each other in each direction. Their signs are determined by whether the crossover arrow (if it were pushed into the face where the curves diverge) could be eliminated using the train-track move corresponding to our move (M3) or not. The depth of a crossover arrow is the minimum of the absolute values of these two integers (or $\infty$ if the weight is $\infty$). The depth of a curve configuration (ie precurve in our language) is defined as the minimum of the depths of all crossover arrows. Hanselman, Rasmussen and Watson then show that one can apply a sequence of train-track moves which strictly increases the depth of the curve configuration \cite[Proposition~29]{HRW}. Since the depth of any curve configuration is bounded (because there are only finitely many $f$-joins), this suffices to show that eventually one obtains a curve configuration of depth $\infty$, which means that all crossover arrows go between parallel curves. 
\end{remark}

\begin{observation}\label{obs:CompactPreCurves}
	Compactness of precurves is preserved under the moves (M1) to (M3), so if the original precurve in Theorem~\ref{thm:EverythingIsLoopTypeUpToLocalSystems} is compact, we can choose a compact precurve in the image of $\Pi_i$. Note that $\Pi_i$ maps compact curves to compact precurves and non-compact curves to non-compact precurves. 
\end{observation}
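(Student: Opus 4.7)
The plan is to verify the three assertions in the observation separately, each by tracing carefully the geometric effect of the moves on $f$-joins and of $\Pi_i$ on curves.

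First I will check that moves (M1)--(M3) (and also (T1)--(T3) which act only inside arc neighbourhoods) preserve compactness when applied to a compact precurve. Move (M1) only swaps the endpoints of two $f$-joins on the same side, which does not alter their number or their sidedness, and (M2) changes only the decorating matrices $P_a,P_{a'}$ without touching the joins at all. Move (M3) has four variants, shown in Figure~\ref{fig:GraphicCalculus}, and the key observation is that only variant (M3a) has two two-sided joins as input, while (M3b), (M3c), (M3d) each require at least one of the two joins to be one-sided. Thus on a compact precurve only (M3a) is applicable, and its output consists again of two two-sided joins. The simply-faced property is preserved by construction of the moves, so both conditions defining compactness persist at every step.

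Second, I will apply this to the simplification algorithm used in the proof of Theorem~\ref{thm:EverythingIsLoopTypeUpToLocalSystems}. Since compactness is preserved by each move of that algorithm, a compact input yields a compact output in which every crossover arrow connects parallel immersed curves. These parallel curves must in fact be loops, for a path in the output would contribute one-sided joins at its endpoints (which lie at points of $M$), contradicting compactness. Hence the extra cleanup step that removes crossover arrows on paths via (M2) and (M3d), which appears at the end of the proof of Theorem~\ref{thm:EverythingIsLoopTypeUpToLocalSystems}, is never needed in this case, and the output is literally a precurve in the image of $\Pi_i$ applied to a collection of loops.

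Finally, for the last sentence, I argue directly from the definition of $\Pi_i$. Parallel copies of a loop $\gamma$ close up inside every face into arcs whose two endpoints both lie on sides of $A$, producing only two-sided $f$-joins; hence $\Pi_i(\gamma,X)$ is compact. Conversely, the parallel copies near each endpoint of a path $\gamma\colon(I,\partial I)\to (S,M)$ terminate on the boundary segment containing the corresponding marking in $M$, which by the convention of Definition~\ref{def:precurvesGeometric} are precisely the one-sided joins; hence $\Pi_i$ of a non-compact curve is non-compact. The main thing to check in the whole argument is thus the case analysis of (M3), which is essentially a direct reading of Figure~\ref{fig:GraphicCalculus}; no real obstacle arises.
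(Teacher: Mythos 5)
Your argument is correct and is exactly the reasoning the paper leaves implicit in this observation: (M1) and (M2) do not change the sidedness of any $f$-join, (M3) only alters the matrix $P_a$ and its variants (M3b)--(M3d) require a one-sided join to be applicable, and the behaviour of $\Pi_i$ on loops versus paths follows directly from how one-sided joins are defined in Definition~\ref{def:precurvesGeometric}. Your additional remark that the final cleanup step of Theorem~\ref{thm:EverythingIsLoopTypeUpToLocalSystems} (removing crossover arrows on paths) is vacuous in the compact case is a correct and worthwhile clarification.
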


\subsection{Classification of morphisms between simply-faced precurves}\label{subsec:ClassificationMor}

Throughout this section, let $C=(C, \{P_a\}_{a\in A},\partial)$ and $C'=(C', \{P'_a\}_{a\in A},\partial')$ be a pair of simplify-faced precurves on a fixed marked surface \((S,M)\) with arc system \(A\). 

\begin{wrapfigure}{r}{0.3333\textwidth}
	\centering
	\medskip
	\psset{unit=0.18}
	\begin{pspicture}(-13.5,-10.1)(13.5,10.1)
	
	% arc
	\pscustom*[linecolor=lightred]{
		\psline(-9,-10)(-9,10)
		\psline(9,10)(9,-10)
	}
	
	\rput[l](9.5,7.5){$s_1(a)$}
	\rput[r](-9.5,7.5){$s_2(a)$}

	\psline[linecolor=blue](-9,5.5)(-8,5.5)
	\psline[linecolor=blue](-9,3.75)(-8,3.75)
	\psline[linecolor=blue](-9,2)(-8,2)
	
	\psline[linecolor=red](-9,-5.5)(-8,-5.5)
	\psline[linecolor=red](-9,-3.75)(-8,-3.75)
	\psline[linecolor=red](-9,-2)(-8,-2)

	\psecurve[linecolor=blue](22,2)(9,-5.5)(-4,2)(-17,-5.5)
	\psecurve[linecolor=blue](22,3.75)(9,-3.75)(-4,3.75)(-17,-3.75)
	\psecurve[linecolor=blue](22,5.5)(9,-2)(-4,5.5)(-17,-2)
	
	\psecurve[linecolor=red](22,-2)(9,5.5)(-4,-2)(-17,5.5)
	\psecurve[linecolor=red](22,-3.75)(9,3.75)(-4,-3.75)(-17,3.75)
	\psecurve[linecolor=red](22,-5.5)(9,2)(-4,-5.5)(-17,2)
	
	\psframe[linecolor=blue](-4,1)(-8,6.5)
	\rput[c](-6,3.75){\textcolor{blue}{$P'_a$}}
	
	\psframe[linecolor=red](-4,-1)(-8,-6.5)
	\rput[c](-6,-3.75){\textcolor{red}{$P_a$}}
	
	\rput[b](0,5){\textcolor{blue}{$C'$}}
	\rput[t](0,-5){\textcolor{red}{$C$}}
	
	\psline{->}(-9,-10)(-9,10)
	\psline{->}(9,-10)(9,10)
	
	% boundary
	\psline[linecolor=darkgreen](-10,10)(10,10)
	\psline[linecolor=darkgreen](-10,-10)(10,-10)
	
	\psdots[linecolor=red](9,5.5)(9,3.75)(9,2)(-9,-5.5)(-9,-3.75)(-9,-2)
	\psdots[linecolor=blue](9,-5.5)(9,-3.75)(9,-2)(-9,5.5)(-9,3.75)(-9,2)
	
	\end{pspicture}
	\caption{A pair of precurves in pairing position in the neighbourhood of an arc $a$.}\label{fig:StandardPairingPosition}
\end{wrapfigure}
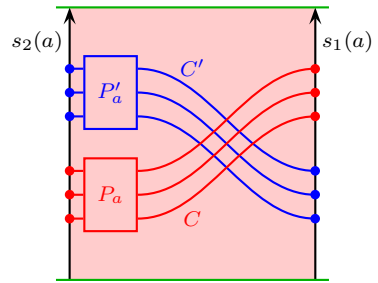

\myfixwrapfig

\begin{definition}\label{def:StandardPairingPosition}
	We say $C$ and $C'$ are in \textbf{pairing position} if the following holds:
	\begin{enumerate}[i]
		\item For each side $s$, all dots of $C$ on $s$ lie to the right of all dots of $C'$ on $s$, viewed from the face adjacent to $s$. \label{enu:pairingposSides}
		\item The crossings and crossover arrows lie in a small neighbourhood of the second side $s_2(a)$ for each arc $a\in A$; see Figure~\ref{fig:StandardPairingPosition} for an illustration. \label{enu:pairingposLocalSystems}
		\item For each open face $f$, the one-sided $f$-joins of $C$ end on the left of the basepoint, the ones of $C'$ end on the right of the basepoint, viewed from the face $f$. \label{enu:pairingposBasepoints}
		\item Any two $f$-joins intersect minimally with respect to the first three conditions. Similarly, the precurves intersect minimally on the neighbourhood of each arc.  \label{enu:pairingposMinimality}
	\end{enumerate}
\end{definition}

\begin{definition}
	With a pair of two simply-faced precurves $C$ and $C'$ in pairing position, we associate a chain complex $\LagrangianFC(C,C')$ as follows:
	as a vector space over $\mathbb{F}_2$, $\LagrangianFC(C,C')$ decomposes into two summands $\LagrangianFC^\times(C,C')$ and $\LagrangianFC^+(C,C')$. The former is generated by the intersection points between the curves in the neighbourhoods of the arcs, the latter by those on faces. We call the former \textbf{upper} and the latter \textbf{lower} intersection points/generators of $\LagrangianFC(C,C')$. 
	The differential on $\LagrangianFC(C,C')$ is a map 
	$$d\co\LagrangianFC^\times(C,C')\rightarrow\LagrangianFC^+(C,C')$$
	defined by counting bigons connecting upper intersection points to lower ones. More precisely, a bigon from an upper intersection point $x$ to a lower one $y$ is an orientation-preserving embedding
	$$\iota\co D^2\hookrightarrow S$$
	satisfying the following properties:
	\begin{itemize}
		\item the restriction of $\iota$ to the non-negative real part of $\partial D^2$ is a path from $x=\iota(-i)$ to $y=\iota(+i)$ on the first precurve $C$ such that the orientation is opposite to the orientation of any crossover arrows in $C$;
		\item the restriction of $\iota$ to the non-positive real part of $\partial D^2$ is a path from $y$ to $x$ on the second precurve $C'$ such that the orientation is opposite to the orientation of any crossover arrows in $C'$;
		\item $x$ and $y$ are convex corners of the image of $\iota$. 
	\end{itemize}
	See Figure~\ref{fig:LagrangianHFConventions} for an illustration of the above conventions.
	If $\mathcal{M}(x,y)$ denotes the set of such bigons up to reparametrization, the differential \(d\) is defined by
	$$d(x)=\sum \#\mathcal{M}(x,y)~y.$$
	By construction, it only connects upper generators to lower ones. We denote the homology of $\LagrangianFC(C,C')$ by $\LagrangianFH(C,C')$ and call it the \textbf{Lagrangian intersection Floer homology} of $C$ and~$C'$.
\end{definition}

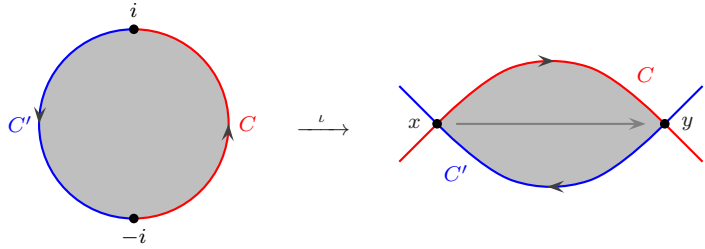
\begin{figure}[t]\centering
	{
		\psset{unit=0.5}
		\begin{pspicture}(-9,-3.5)(10,3.5)
		\rput(-5,0){
			\psarc*[linecolor=lightgray,linewidth=0pt](0,0){2.5}{0}{360}
			\psarc[linecolor=red](0,0){2.5}{-90}{90}
			\psarc[linecolor=blue](0,0){2.5}{90}{-90}
			\psarc[linecolor=darkgray]{->}(0,0){2.5}{-5}{0}
			\psarc[linecolor=darkgray]{->}(0,0){2.5}{175}{180}
			
			\psdot(2.5;90)
			\psdot(2.5;-90)
			
			\rput(3;-90){$-i$}
			\rput(3;90){$i$}
			
			\rput(3;0){$\textcolor{red}{C}$}
			\rput(3;180){$\textcolor{blue}{C'}$}
		}
		
		\rput(0,0){$\xrightarrow{~~\iota~~}$}
		
		\rput(6,0){
			\pscustom*[linecolor=lightgray]{
				\pscurve(-3,0)(-1,1.5)(1,1.5)(3,0)
				\pscurve[liftpen=2](3,0)(1,-1.5)(-1,-1.5)(-3,0)
			}

			\pscurve[linecolor=red](-3,0)(-1,1.5)(1,1.5)(3,0)
			\pscurve[linecolor=blue](-3,0)(-1,-1.5)(1,-1.5)(3,0)
			\psline[linecolor=darkgray]{->}(0.1,-1.66)(-0.1,-1.66)
			\psline[linecolor=darkgray]{<-}(0.1,1.66)(-0.1,1.66)
			
			\psline[linecolor=red](3,0)(4,-1)
			\psline[linecolor=red](-3,0)(-4,-1)
			\psline[linecolor=blue](3,0)(4,1)
			\psline[linecolor=blue](-3,0)(-4,1)
			
			\psdots(-3,0)(3,0)
			\rput(3.6,0){$y$}
			\rput(-3.6,0){$x$}
			
			\rput(2.5,1.3){$\textcolor{red}{C}$}
			\rput(-2.5,-1.3){$\textcolor{blue}{C'}$}
			\psline[linecolor=gray]{->}(-2.5,0)(2.5,0)
		}
		\end{pspicture}
	}
	\caption{Our orientation conventions for bigons: $\iota$ maps the unit disc in $\mathbb{C}$ to $S$. Thus, the normal vector, determined by the right-hand rule, points out of the plane on the left, but into the plane on the right. The arrows on the boundary of the disc and bigon indicate the induced boundary orientations. The generator $x$ is an upper generator (so it lies in the neighbourhood of an arc) and $y$ is a lower generator (so it lies in some face).  }\label{fig:LagrangianHFConventions}
\end{figure}

\begin{remark}
	The definition above is an adaptation of the Lagragian intersection Floer homology from Abouzaid's paper~\cite{AbouzaidSurfaces} to our more combinatorial setting, using the language of~\cite{HRW}. However, note that Hanselman, Rasmussen and Watson use slightly different orientation conventions in~\cite{HRW}: because of the way they express their glueing formula, they find it more convenient to interpret the two collections of (pre-)curves $C$ and $C'$ differently, namely one in terms of a type D structure and the other in terms of a type A structure; we treat both curves the same, which follows more standard conventions in Lagrangian intersection Floer theory. For example, in their setting~\cite[Definition~34]{HRW}, $\LagrangianFH(C,C)$ vanishes for some objects $C$; so in particular, there would be no identity morphism for such $C$. This does not happen with our conventions.
\end{remark}

\begin{definition}\label{def:resolution}
	With an intersection point $x$ between two simply-faced precurves $C$ and $C'$ in pairing position, we may associate a morphism of precurves $\varphi(x)$ from $C$ to $C'$ as follows. Consider the union of all paths $\gamma\co [0,1]\rightarrow C\cup C'$ (considered up to homotopy) satisfying the following conditions: 
	\begin{enumerate}
		\item the restriction of $\gamma$ to $[0,\frac{1}{2}]$ is a path on $C$ from a dot $\bullet(s,i)$ to $x$ which does not meet any other dot on $C$ and which follows the orientation of any crossover arrows,
		\item the restriction of $\gamma$ to $[\frac{1}{2},1]$ is a path on $C'$ from $x$ to a dot $\bullet(s',i')$ which does not meet any other dot on $C'$ and which follows the orientation of any crossover arrows,
		\item $\gamma$ turns left at the intersection point $x$: 
		$\raisebox{-3pt}{\psset{unit=0.2}%
			\begin{pspicture}(-1.1,-1)(1.1,1)
			\psline[linecolor=blue](1,-1)(-1,1)
			\psline[linecolor=red](-1,-1)(1,1)	
			\psdot(0,0)
			\rput{-45}(0,0){
				\psline[linearc=0.25,arrowsize= 1pt 2]{->}(0.3,1.3)(0.3,0.3)(1.3,0.3)
			}
			\rput{135}(0,0){
				\psline[linearc=0.25,arrowsize= 1pt 2]{->}(0.3,1.3)(0.3,0.3)(1.3,0.3)
			}
			\end{pspicture}}$.
	\end{enumerate}
	By labelling each of these paths by $p^{s}_{s'}$ and counting them modulo 2, we may regard them as a morphism $\varphi(x)$ from $C$ to $C'$, which we call the \textbf{resolution of $x$}. 
\end{definition}

\begin{lemma}\label{lem:ResolutionWellDefined}
	The resolution \(\varphi(x)\) is a well-defined morphism of precurves from \(C\) to \(C'\).
\end{lemma}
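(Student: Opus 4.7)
The plan is to argue by a case analysis on whether the intersection point~$x$ lies in a face~$f$ (a lower generator) or in the neighbourhood $N(a)$ of an arc~$a$ (an upper generator), and in each case to show that the paths counted by~$\varphi(x)$ are finite in number and assemble into an admissible morphism in $\Mat_i\overline{\mathcal{A}}$.

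Suppose first that $x$ lies in a face~$f$. Then any admissible~$\gamma$ is confined to $\overline{f}$ (or to $f^\ast$ if $f$ is closed), since crossing an arc would force $\gamma$ to pass through a dot, violating the ``no other dot'' condition. The simply-faced hypothesis guarantees that the unique $f$-join of~$C$ through~$x$ has a single other endpoint on either side of~$x$, and likewise for~$C'$, so the ``turn left'' condition at~$x$ selects a definite pair of endpoints $\bullet(s,i)$ on~$C$ and $\bullet(s',i')$ on~$C'$. The concatenated path is a well-defined homotopy class of immersed intervals in~$\overline{f}$ from the side~$s$ to the side~$s'$ of~$f$, and hence corresponds to a single element $p^s_{s'}\in\mathcal{A}_f^+\subseteq\overline{\mathcal{A}}^+$. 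This gives a valid component of $\varphi(x)^+$.

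If instead $x\in N(a)$, then~$\gamma$ is confined to $N(a)$ and, as in the first case, the strands of~$C$ and~$C'$ through~$x$ have well-defined endpoints on $s_1(a)$ and $s_2(a)$. The ``turn left'' and ``follow crossover arrows'' conditions, combined with condition~\ref{enu:pairingposSides} of pairing position, force both endpoints $\bullet(s,i)$ and $\bullet(s',i')$ to lie on the \emph{same} side $s=s'$ of~$a$, producing a component of $\varphi(x)^\times\in\Mor_{\Mat\overline{\mathcal{I}}}(C,C')$ consistent with the idempotent structure. In both cases the total number of admissible~$\gamma$ is finite because the precurves are finitely generated and, for each~$x$, the choice of $\gamma$ is determined up to at most a finite ambiguity by the left-turn convention.

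The only remaining point is to check that $\varphi(x)^\times$ satisfies the compatibility relation
\[
(\iota_{s_2(a)}.\varphi(x)^\times.\iota_{s_2(a)})\circ P_a = P'_a\circ (\iota_{s_1(a)}.\varphi(x)^\times.\iota_{s_1(a)})
\]
required by Definition~\ref{def:SplittingCatsForEquivalenceFunctors}. The main obstacle is that this identity genuinely involves the local systems and not just the underlying curves, so it is here that condition~\ref{enu:pairingposLocalSystems} of pairing position is essential. The plan is to invoke Remark~\ref{rem:PathsForTrainTracks}, which identifies the matrix entries of $P_a$ and $P'_a$ with counts of admissible paths through $N(a)$: both sides of the identity then count, in bijective fashion, admissible paths in $N(a)$ from a dot on $s_1(a)$ to one on $s_2(a)$ that pass through~$x$ by a left turn, traversing $C$ first and then~$C'$, and the equality becomes tautological once one notes that travelling along a path in reverse corresponds to inverting both local systems. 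This concludes the verification that $\varphi(x)$ is a well-defined morphism of precurves.
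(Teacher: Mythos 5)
Your overall architecture matches the paper's: split into lower and upper intersection points, note that for lower points $\varphi(x)=\varphi^+(x)$ so the compatibility condition of Definition~\ref{def:SplittingCatsForEquivalenceFunctors} is vacuous, and reduce the upper case to path counts via Remark~\ref{rem:PathsForTrainTracks}. (Two small inaccuracies along the way, neither fatal: for a lower point $\varphi(x)$ may have \emph{two} components rather than one, since $\gamma$ may approach $x$ from either end of the $f$-join through $x$; and condition~\ref{enu:pairingposSides} of pairing position is not what forces the two endpoints of an upper path onto a common side of $a$ --- that is automatic because $\varphi^\times$ is a map of $\overline{\mathcal{I}}$-modules.)

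The genuine gap is the final step. Declaring the identity $\varphi_2\circ P_a=P'_a\circ\varphi_1$ ``tautological'' skips precisely the content of the lemma in the upper case. The two sides do \emph{not} count the same concatenated paths: $\varphi_2\circ P_a$ counts paths that cross all of $N(a)$ on $C$ from $s_1(a)$ to $s_2(a)$, double back to $x$, and exit on $C'$ to $s_2(a)$, whereas $P'_a\circ\varphi_1$ counts paths that reach $x$ directly from $s_1(a)$, exit on $C'$ back to $s_1(a)$, and then cross to $s_2(a)$. Matching them mod~$2$ requires two ingredients you never invoke. First, condition~\ref{enu:pairingposLocalSystems} of pairing position confines all crossings and crossover arrows to a collar of $s_2(a)$, so $x$ lies between them and $s_1(a)$; hence $\varphi_1$ consists of exactly \emph{one} non-zero entry, from $\bullet(s_1(a),i)$ to $\bullet(s_1(a),i')$, where $i,i'$ index the two strands crossing at $x$. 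Second, you need the path-count interpretation of $P_a^{-1}$ (not only of $P_a$ and $P'_a$): the number of ``there-and-back'' paths from $\bullet(s_1(a),j)$ to $x$ is $\sum_k(P_a^{-1})_{ik}(P_a)_{kj}=\delta_{ij}$, which reduces the desired identity to $\varphi_2=P'_a\circ\varphi_1\circ P_a^{-1}$, and this is then checked entry by entry by composing path counts through $N(a)$. Your closing remark about ``inverting both local systems when travelling in reverse'' gestures at this but does not substitute for the computation.
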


\begin{proof}
	If the intersection point is lower, then $\varphi(x)=\varphi^+(x)$, so there is nothing to check.
	If the intersection point is an upper point on an arc $a$, then $\varphi(x)=\varphi^\times(x)$. More explicitly, $\varphi_1:=(\iota_{s_1(a)}.\varphi^\times(x).\iota_{s_1(a)})$ contains exactly one non-zero component, say an arrow from a dot $\bullet(s_1(a),i)$ on the first precurve to a dot $\bullet(s_1(a),i')$ on the second precurve; see Figure~\ref{fig:BigonCounts} for an illustration. 
	We need to show that $\varphi_2:=(\iota_{s_2(a)}.\varphi^\times(x).\iota_{s_2(a)})$ is equal to  $P'_a\circ\varphi_1\circ P_a^{-1}$. 
	For this, recall that the component of $P_a^{-1}$ in the $j^\text{th}$ column and $i^\text{th}$ row is given by the number of paths (satisfying the conditions in Remark~\ref{rem:PathsForTrainTracks}) from $\bullet(s_2(a),j)$ to $\bullet(s_1(a),i)$ of the first precurve; similarly, the component of $P'_a$ in the $(i')^\text{th}$ column and $(j')^\text{th}$ row is given by the number of paths from $\bullet(s_1(a),i')$ to $\bullet(s_2(a),j')$ of the second precurve. Thus, $P'_a\circ\varphi_1\circ P_a^{-1}$ has a non-zero component from $\bullet(s_2(a),j)$ to $\bullet(s_2(a),j')$ iff the number of paths from $\bullet(s_2(a),j)$ to $\bullet(s_2(a),j')$ via $x$ is odd. This is agrees with the definition of $\varphi_2$.
\end{proof}

\begin{lemma}
	The resolution \(\varphi(x)\) of any generator \(x\) of \(\LagrangianFC(C,C')\) is homogeneous with respect to the \(\delta\)-grading.
\end{lemma}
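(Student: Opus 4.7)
The plan is to argue by case analysis on whether $x$ is upper (lying in the neighborhood $N(a)$ of some arc $a$) or lower (lying on a face $f$). Recall that a morphism component $\bullet(s,i)\to\bullet(s',i')$ labeled by an algebra element $p$ has $\delta$-degree $\delta(\bullet(s',i'))-\delta(\bullet(s,i))+\delta(p)$, and the goal is to show this quantity agrees across all non-zero components of $\varphi(x)$.

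In the upper case, every contributing path $\gamma$ traverses only crossover arrows inside $N(a)$, so each component of $\varphi(x)$ carries an idempotent label, which has $\delta$-degree $0$. By Lemma~\ref{lem:ResolutionWellDefined}, $\varphi_1:=\iota_{s_1(a)}\cdot\varphi^\times(x)\cdot\iota_{s_1(a)}$ has a single non-zero entry, while $\varphi_2:=\iota_{s_2(a)}\cdot\varphi^\times(x)\cdot\iota_{s_2(a)}=P'_a\circ\varphi_1\circ P_a^{-1}$. Because $P_a$ and $P'_a$ are $\delta$-grading preserving (Definition~\ref{def:precurvesAlgebraic}), every non-zero entry of $\varphi_2$ connects dots whose grading difference matches that of the entry of $\varphi_1$, so all components of $\varphi(x)$ share a common $\delta$-degree.

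In the lower case, the left-turn rule picks out two morphism components at the crossing of the two $f$-joins $\alpha$ (on $C$, from $\bullet_a$ to $\bullet_A$, labeled $p^{s_a}_{s_A}$) and $\beta$ (on $C'$, from $\bullet_b$ to $\bullet_B$, labeled $p^{s_b}_{s_B}$): Path~A from $\bullet_a$ to $\bullet_b$ with label $l_A$, and Path~B from $\bullet_A$ to $\bullet_B$ with label $l_B$. I would then combine the $\delta$-grading relations $\delta(\bullet_A)-\delta(\bullet_a)=1-\delta(p^{s_a}_{s_A})$ and $\delta(\bullet_B)-\delta(\bullet_b)=1-\delta(p^{s_b}_{s_B})$ (which hold because $\alpha$ and $\beta$ are components of $\partial$ of degree $1$) with the identity $\delta(l_A)+\delta(p^{s_b}_{s_B})=\delta(p^{s_a}_{s_A})+\delta(l_B)$, which expresses the fact that the concatenations $\alpha\cdot l_B$ and $l_A\cdot\beta$ realise the same quiver path from $\bullet_a$ to $\bullet_B$: they trace the two outer boundaries of the pair of opposite regions cut out by the crossing, and so lie in the same homotopy class. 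A short computation then yields that Path~A and Path~B have equal $\delta$-degree.

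The main point I expect to have to justify carefully is the lower case for closed faces $f$, where $\pi_1(f^\ast)\cong\mathbb{Z}$: one needs to check that the labels $l_A,l_B$ carry no extra winding around the puncture and that the identity above does not acquire stray powers of $U_f$. This should follow from the simply-faced assumption, which forces all $f$-joins to be drawn in $f$ (rather than in $f^\ast$) and hence to carry their shortest admissible algebra labels, so that the resolutions inherit a coherent minimal winding class at the crossing.
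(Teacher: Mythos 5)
Your proof is correct and follows essentially the same route as the paper's: the upper case reduces to the fact that the matrices $P_a$, $P'_a$ preserve the $\delta$-grading of dots, and the lower case combines the degree-$1$ relations for the two $f$-joins with the observation that the two concatenations around the crossing represent the same (shortest) quiver path — your single identity $\delta(l_A)+\delta(\beta)=\delta(\alpha)+\delta(l_B)$ is exactly the sum of the paper's two factorisations $p^{t}_{s}=p^{t'}_{s}p^{t}_{t'}$ and $p^{t'}_{s'}=p^{s}_{s'}p^{t'}_{s}$. Your closing concern about stray powers of $U_f$ on closed faces is resolved exactly as you suggest: simply-facedness plus the $\delta$-grading force all two-sided $f$-joins (and, by definition, all resolution labels) to be shortest elements $p^s_{s'}$.
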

\begin{proof}
	If $x$ is upper, this is true because dots connected by curve segments on an arc neighbourhood have the same $\delta$-grading by the definition of $\delta$-gradings on precurves. For lower intersection points, $\varphi(x)$ has at most two components. If it has exactly two components, both $f$-joins are two-sided; for an illustration, see Figures~\ref{fig:GensOfMorNoM2}, \ref{fig:GensOfMorNoM4} and~\ref{fig:GensOfMorNoM5}. Suppose the two components correspond to paths from $\bullet(s,i)$ to $\bullet(s',i')$ and from $\bullet(t,j)$ to $\bullet(t',j')$. Assume without loss of generality that if $f$ is open, the basepoint of $f$ lies between the side $s'$ and $t$. Then the $\delta$-gradings of the two components are
	$$\delta(\bullet(s',i'))-\delta(\bullet(s,i))+\delta(p^{s}_{s'})
	\quad\text{ and }\quad \delta(\bullet(t',j'))-\delta(\bullet(t,j))+\delta(p^{t}_{t'}),$$
	respectively. Since 
	$$
	\delta(\bullet(s,i))-\delta(\bullet(t,j))+\delta(p^{t}_{s})
	=1=
	\delta(\bullet(s',i'))-\delta(\bullet(t',j'))+\delta(p^{t'}_{s'}),
	$$ 
	$p^{t}_{s}=p^{t'}_{s}p^{t}_{t'}$ and $p^{t'}_{s'}=p^{s}_{s'}p^{t'}_{s}$, these two gradings agree. 
\end{proof}

\begin{definition}\label{def:gradingVIAresolution}
	We endow $\LagrangianFC(C,C')$ with a \textbf{$\delta$-grading} by defining the $\delta$-grading of any intersection point to be the $\delta$-grading of its resolution. 
\end{definition}

\begin{lemma}
	The differential \(d\) on \(\LagrangianFC(C,C')\) increases \(\delta\)-grading by 1.
\end{lemma}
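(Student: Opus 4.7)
The plan is to reduce the statement to the fact that the differential $D$ on the morphism space $\Mor(C,C')$, given by $D(f)=\partial'\circ f+f\circ\partial$, strictly raises the $\delta$-grading by $1$. Since the $\delta$-grading on $\LagrangianFC(C,C')$ is defined (Definition~\ref{def:gradingVIAresolution}) via the resolutions $\varphi(x),\varphi(y)$, and each resolution is homogeneous by the previous lemma, it suffices to show that for every bigon contributing $y$ to $d(x)$, the morphism $\varphi(y)$ arises as a nonzero summand of $D(\varphi(x))$. Comparing $\delta$-gradings then gives
\[
\delta(y)=\delta(\varphi(y))=\delta(\varphi(x))+1=\delta(x)+1.
\]

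To realise $\varphi(y)$ as a summand of $D(\varphi(x))$, I would trace the boundary of a representative bigon $\iota\co D^2\hookrightarrow S$. The upper corner $x$ lies on a neighbourhood $N(a)$ of some arc $a$ bordering the face $f$ containing the lower corner $y$. The $C$-side of the bigon leaves $x$ through a side of $a$ at some dot $\bullet(s,i)$, and then follows (a portion of) an $f$-join, i.e.\ a component of $\partial$, to $y$; the $C'$-side does the symmetric thing on $C'$. The orientation convention at $x$ (Figure~\ref{fig:LagrangianHFConventions}) together with the convex-corner condition selects exactly those paths counted by $\varphi(x)$ whose $C$-segment enters $x$ along the bigon's $C$-side from $\bullet(s,i)$ and whose $C'$-segment exits $x$ along the bigon's $C'$-side. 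Post-composing such a path with the component of $\partial'$ corresponding to the $C'$-side of the bigon extends the $C'$-segment all the way to the dot on $C'$ on the far side of $y$, turning left at $y$; by definition of the resolution, the resulting path is precisely one of those counted by $\varphi(y)$. The $C$-side of the bigon contributes symmetrically via the $\varphi(x)\circ\partial$ half of $D$. Summing these two contributions over all paths used in $\varphi(x)$ exhibits $\varphi(y)$ as a summand of $D(\varphi(x))$.

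The main technical obstacle is the combinatorial bookkeeping at the upper corner $x$, where the matrices $P_a,P_a^{-1},P'_a,(P'_a)^{-1}$ enter $\varphi(x)$. One must verify that the action of $\partial'$ at the dot on $C'$ at which the bigon leaves $N(a)$ interacts with $P'_a$ to produce the matrix data built into $\varphi(y)$; this is a local computation in $N(a)$ entirely analogous to the path count in the proof of Lemma~\ref{lem:ResolutionWellDefined}, and secondary case distinctions (one-sided versus two-sided $f$-joins at $y$, as in the proof that $\varphi(y)$ is homogeneous) are handled uniformly since only the $\delta$-degree of $\varphi(y)$ is being extracted.
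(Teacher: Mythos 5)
Your argument is correct, but it is not the route the paper takes. The paper's proof is the direct analogue of the proof of homogeneity of resolutions: one writes out the $\delta$-gradings of the dots at the ends of the two boundary arcs of the bigon and of the algebra elements labelling the corresponding $f$-join components, and applies the defining relation $\delta(\bullet(s',i'))-\delta(\bullet(s,i))+\delta(p^{s}_{s'})=1$ along each arc to conclude $\delta(\varphi(y))=\delta(\varphi(x))+1$ directly. You instead reduce to the statement that $D(f)=\partial'\circ f+f\circ\partial$ raises $\delta$-degree by $1$ (immediate, since $\partial$ and $\partial'$ do) together with the claim that a bigon from $x$ to $y$ produces a component of $\varphi(y)$ inside $D(\varphi(x))$. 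That second claim is exactly the harder half of the proof of Theorem~\ref{thm:PairingMorLagrangianFH} (the identification of $\beta$ with the bigon count), so you are front-loading work the paper does later; there is no circularity, since that theorem's proof does not rely on this lemma, but your proof is heavier than what is needed here. One small point worth making explicit: because bigons are counted modulo $2$, the full morphism $\varphi(y)$ need not literally survive as a summand of $D(\varphi(x))$ after cancellation; what your argument actually needs — and does deliver — is a single homogeneous component shared between $D(\varphi(x))$ (before cancellation) and $\varphi(y)$, which together with the homogeneity of both resolutions (Lemma preceding Definition~\ref{def:gradingVIAresolution}) forces $\delta(y)=\delta(x)+1$. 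With that caveat stated, your approach is sound, and it has the merit of making the structural reason for the degree shift transparent and of being reusable verbatim in the proof of Theorem~\ref{thm:PairingMorLagrangianFH}; the paper's computation is more elementary and self-contained at this point in the text.
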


\begin{proof}
	This follows from a similar argument as the previous lemma. 
\end{proof}

\begin{definition}
	Given two reduced precurves $C$ and $C'$, let us write $(\Mor^+(C,C'),D)$ for the subcomplex of $(\Mor(C,C'),D)$ generated by those morphisms which do not contain an identity component. Let $\Mor^\times(C,C')$ be the vector space of all morphisms that consist of identity components only. By endowing it with the 0 differential, we obtain the following short exact sequence of chain complexes
	$$
	\begin{tikzcd}%[column sep=80pt,row sep=40pt]
	0
	\arrow{r}{}
	&
	(\Mor^+(C,C'),D^+)
	\arrow{r}{}
	&
	(\Mor(C,C'),D)
	\arrow{r}{}
	&
	(\Mor^\times(C,C'),0)
	\arrow{r}{}
	&
	0
	\end{tikzcd}
	$$
	as in Definitions~\ref{def:AlgebraFromGlueingFaces} and~\ref{def:SplittingCatsForEquivalenceFunctors}.
	Via the induced long exact sequence, it gives rise to a graded chain homotopy equivalence
	$$
	H_*(\Mor(C,C'),D)\cong
	\left(
	\begin{tikzcd}
	H_*(\Mor^\times(C,C'),0)
	\arrow{r}{\beta}
	&
	H_*(\Mor^+(C,C'),D^+)
	\end{tikzcd}
	\right),
	$$
	where $\beta$ is the boundary map from the long exact sequence. 
\end{definition}

The central result of this subsection is the following:

\begin{theorem}\label{thm:PairingMorLagrangianFH}
	Given a pair of simply-faced precurves \(C\) and \(C'\) in pairing position on a marked surface \((S,M)\) with arc system \(A\), the chain complex \(\LagrangianFC(C,C')\) and the mapping cone of the map
	$$
	\beta\co H_*(\Mor^\times(C,C'),0)
	\longrightarrow
	H_*(\Mor^+(C,C'),D^+)
	$$
	from the previous definition are graded chain isomorphic via the correspondence between generators of \(\LagrangianFC(C,C')\) and their resolutions. In particular, 
	$$\LagrangianFH(C,C')\cong H_*(\Mor(C,C'),D).$$
\end{theorem}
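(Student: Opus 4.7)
The plan is to construct a graded $\mathbb{F}_2$-vector space isomorphism
\[
\Phi\co \LagrangianFC(C,C') \longrightarrow \Mor^\times(C,C') \oplus H_*(\Mor^+(C,C'),D^+)
\]
sending each intersection point $x$ to (the class of) its resolution $\varphi(x)$, and then to verify that $\Phi$ intertwines the bigon-counting differential $d$ with the boundary map $\beta$ of the short exact sequence preceding the theorem. By Lemma~\ref{lem:ResolutionWellDefined}, $\varphi$ sends upper points into $\Mor^\times$ and lower points into $\Mor^+$, so the map respects the splitting, and by Definition~\ref{def:gradingVIAresolution} it preserves the $\delta$-grading.

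First I would analyse the upper half. Since $C$ and $C'$ are in pairing position, on the neighbourhood of each arc $a$ the dots of $C$ lie entirely to the right of those of $C'$, the local systems $P_a$ and $P'_a$ are concentrated near $s_2(a)$, and the two precurves intersect minimally. A direct inspection of the normal form in Figure~\ref{fig:StandardPairingPosition} then shows that the upper intersection points in this neighbourhood are in bijection with pairs of dots (one of $C$, one of $C'$ on $s_1(a)$) of matching $\delta$-grading, with the unique non-zero $(i,j)$-block of the associated $\varphi^\times(x)\in\Mor^\times$; this gives a basis. Since $D$ vanishes on $\Mor^\times$, we obtain $H_*(\Mor^\times,0)\cong\Mor^\times$, with the resolution map providing the identification.

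Next I would treat the lower half face by face. For a closed face $f$, all $f$-joins of $C$ and $C'$ are two-sided, and one analyses the restriction of $(\Mor^+,D^+)$ to $f$ by decomposing it over pairs consisting of one $f$-join of $C$ and one of $C'$. For each such pair the minimality condition forces the intersection pattern to be a simple standard configuration, for which one can directly read off a basis of the local homology from the resolutions of the corresponding lower intersection points; the $\partial^2 = U\cdot\id$ relation (which involves wrapping around $\partial f$) accounts for the cancellations in $H_*(\Mor^+)$. Open faces are handled in the same way, the one-sided $f$-joins of $C$ ending to the left of the basepoint and those of $C'$ to the right ensuring that no spurious classes appear near the puncture. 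The Cancellation Lemma (\ref{lem:AbstractCancellation}) and Clean-Up Lemma (\ref{lem:AbstractCleanUp}) can be used to put $\Mor^+$ into a standard form before reading off its homology.

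Finally I would match the differentials. Given an upper intersection point $x$, lift $\varphi^\times(x)\in\Mor^\times$ to a morphism $\tilde\varphi(x)\in\Mor(C,C')$ and compute $D(\tilde\varphi(x))\in\Mor^+$. Each non-vanishing component of $D(\tilde\varphi(x))$ is obtained by pre- or post-composing $\tilde\varphi(x)$ with a single $f$-join differential of $C$ or $C'$; geometrically this is a length-two path on $C\cup C'$ starting and ending at dots and passing through $x$, which is precisely the boundary of a bigon from $x$ to some lower intersection point $y$. The ``turn left at $x$'' convention in Definition~\ref{def:resolution} corresponds exactly to the orientation conventions on crossover arrows in $\varphi(y)$, so modulo boundaries in $\Mor^+$ one obtains $\beta([\varphi^\times(x)]) = \sum_y \#\mathcal{M}(x,y)\,[\varphi(y)] = [d(x)]$, as required.

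The main obstacle will be the homology computation in the lower half: showing that the resolutions of lower intersection points genuinely generate $H_*(\Mor^+,D^+)$ and that no hidden relations appear. This demands a careful local model for each face and a tight use of the minimality condition in pairing position to rule out excess classes; by contrast, the matching of differentials in the last step reduces to a local count of bigons that should be essentially tautological once the basis identifications are established.
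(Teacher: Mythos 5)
Your proposal follows essentially the same route as the paper's proof: identify the upper intersection points with a basis of $\Mor^\times(C,C')$ via their resolutions, identify the lower ones with generators of $H_*(\Mor^+(C,C'),D^+)$ by a face-by-face analysis of pairs of $f$-joins, and then match the connecting map $\beta$ with the bigon count using the orientation conventions on crossover arrows. Two remarks. First, the step you correctly flag as the main obstacle --- showing that the resolutions of lower intersection points form a basis of $H_*(\Mor^+(C,C'),D^+)$ --- is exactly where the paper's proof does its real work: it enumerates all cyclic configurations of one $f$-join of $C$ against one of $C'$ (two-sided against two-sided, two-sided against one-sided, both one-sided, with or without coinciding sides and basepoints; eighteen local models in total) and computes the local homology explicitly over $\mathbb{F}_2[U_f]$ in each case, checking that its dimension equals the number of intersection points and that the surviving class is the resolution. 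Your sketch defers precisely this computation, and the paper does not route it through the Cancellation or Clean-Up Lemmas, so to be complete you would need to carry out that case analysis. Second, a small slip in your upper-half argument: in pairing position every strand of $C$ through $N(a)$ crosses every strand of $C'$, so the upper intersection points correspond to \emph{all} pairs of dots on $s_1(a)$, not only those of matching $\delta$-grading; this is what makes their resolutions a basis of the full graded space $\Mor^\times(C.\iota_a,C'.\iota_a)$, which is identified with all linear maps $C.\iota_{s_1(a)}\rightarrow C'.\iota_{s_1(a)}$ via invertibility of $P_a$ and $P'_a$. Restricting to grading-matched pairs would undercount.
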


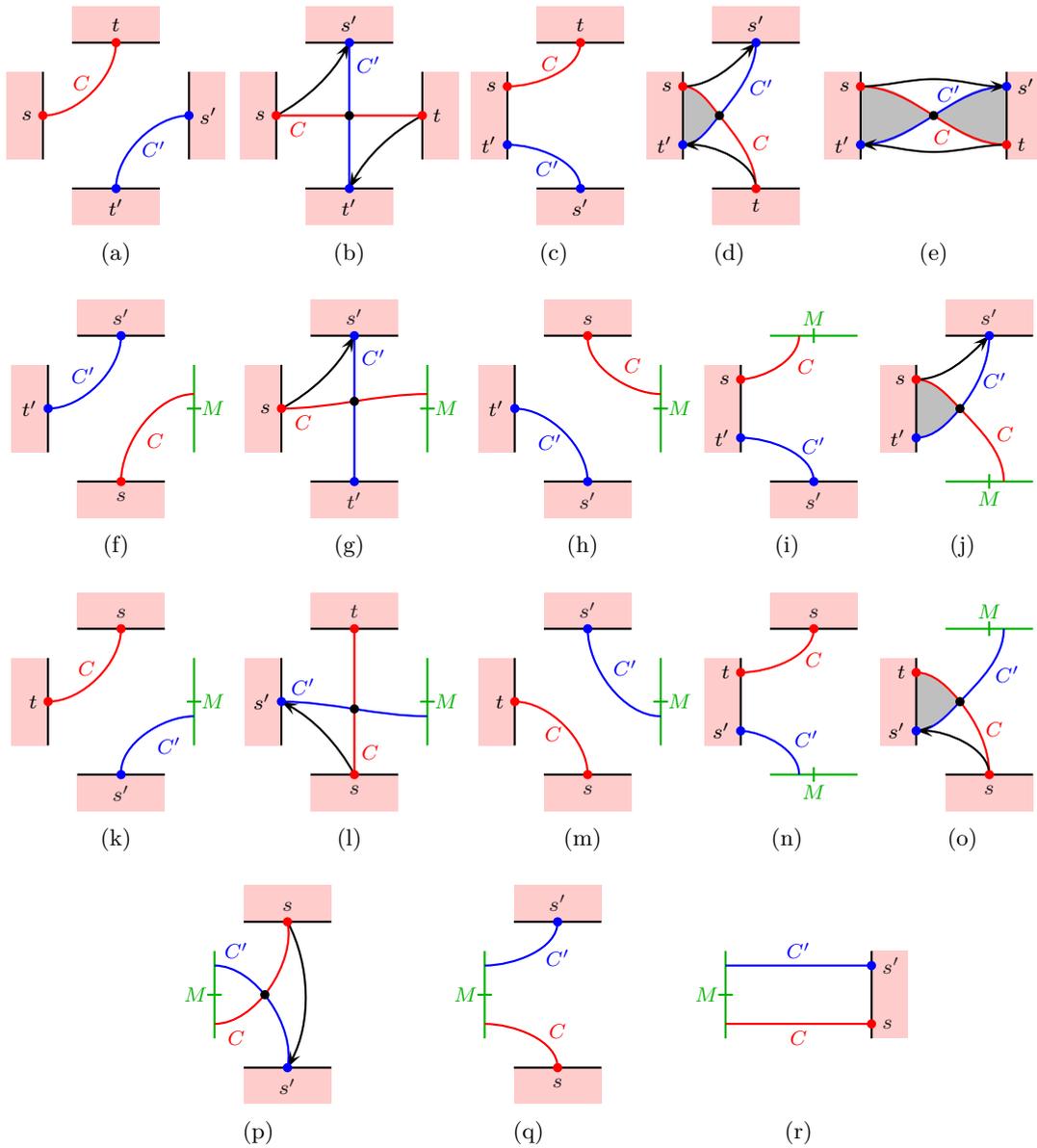
\begin{figure}[p]
	\centering
	\psset{unit=0.2}
	\begin{subfigure}{0.215\textwidth}\centering
		\begin{pspicture}(-7.5,-7.5)(7.5,7.5)
		
		\pscustom*[linecolor=lightred,linewidth=0pt]{
			\psline(5,-3)(5,3)
			\psline(7.5,3)(7.5,-3)
		}
		\pscustom*[linecolor=lightred,linewidth=0pt]{
			\psline(-3,5)(3,5)
			\psline(3,7.5)(-3,7.5)
		}
		\pscustom*[linecolor=lightred,linewidth=0pt]{
			\psline(-5,-3)(-5,3)
			\psline(-7.5,3)(-7.5,-3)
		}
		\pscustom*[linecolor=lightred,linewidth=0pt]{
			\psline(-3,-5)(3,-5)
			\psline(3,-7.5)(-3,-7.5)
		}
		
		\psline(-5,-3)(-5,3)
		\psline(5,-3)(5,3)
		\psline(-3,5)(3,5)
		\psline(-3,-5)(3,-5)
		
		\psecurve[linecolor=blue](10,-5)(5,0)(0,-5)(5,-10)
		\psecurve[linecolor=red](-10,5)(-5,0)(0,5)(-5,10)
		
		\psdots[linecolor=red](-5,0)
		\uput{0.75}[180](-5,0){$s$}
		
		\psdots[linecolor=blue](5,0)
		\uput{0.75}[0](5,0){$s'$}
		
		\psdots[linecolor=red](0,5)
		\uput{0.75}[90](0,5){$t$}
		
		\psdots[linecolor=blue](0,-5)
		\uput{0.75}[-90](0,-5){$t'$}
		
		\uput{2.5}[-45](0,0){$\textcolor{blue}{C'}$}
		\uput{2.5}[135](0,0){$\textcolor{red}{C}$}
		
		\end{pspicture}
		\subcaption{}\label{fig:GensOfMorNoM1}
	\end{subfigure}
	\begin{subfigure}{0.215\textwidth}\centering
		\begin{pspicture}(-7.5,-7.5)(7.5,7.5)
		\pscustom*[linecolor=lightred,linewidth=0pt]{
			\psline(5,-3)(5,3)
			\psline(7.5,3)(7.5,-3)
		}
		\pscustom*[linecolor=lightred,linewidth=0pt]{
			\psline(-3,5)(3,5)
			\psline(3,7.5)(-3,7.5)
		}
		\pscustom*[linecolor=lightred,linewidth=0pt]{
			\psline(-5,-3)(-5,3)
			\psline(-7.5,3)(-7.5,-3)
		}
		\pscustom*[linecolor=lightred,linewidth=0pt]{
			\psline(-3,-5)(3,-5)
			\psline(3,-7.5)(-3,-7.5)
		}
		
		\psline(-5,-3)(-5,3)
		\psline(5,-3)(5,3)
		\psline(-3,5)(3,5)
		\psline(-3,-5)(3,-5)
		
		\psline[linecolor=blue](0,5)(0,-5)
		\psline[linecolor=red](5,0)(-5,0)
		
		\psecurve{->}(10,1)(5,0)(0,-5)(-1,-10)
		\psecurve{->}(-10,-1)(-5,0)(0,5)(1,10)
		
		\psdots[linecolor=red](-5,0)
		\uput{0.75}[180](-5,0){$s$}
		
		\psdots[linecolor=red](5,0)
		\uput{0.75}[0](5,0){$t$}
		
		\psdots[linecolor=blue](0,5)
		\uput{0.75}[90](0,5){$s'$}
		
		\psdots[linecolor=blue](0,-5)
		\uput{0.75}[-90](0,-5){$t'$}
		
		\uput{0.5}[-90](-3.5,0){$\textcolor{red}{C}$}
		\uput{0.5}[0](0,3.5){$\textcolor{blue}{C'}$}
		
		\psdot(0,0)
		
		\end{pspicture}
		\subcaption{}\label{fig:GensOfMorNoM2}
	\end{subfigure}
	\begin{subfigure}{0.16\textwidth}\centering
		\begin{pspicture}(-7.5,-7.5)(4,7.5)
		
		\pscustom*[linecolor=lightred,linewidth=0pt]{
			\psline(-3,5)(3,5)
			\psline(3,7.5)(-3,7.5)
		}
		\pscustom*[linecolor=lightred,linewidth=0pt]{
			\psline(-5,-3)(-5,3)
			\psline(-7.5,3)(-7.5,-3)
		}
		\pscustom*[linecolor=lightred,linewidth=0pt]{
			\psline(-3,-5)(3,-5)
			\psline(3,-7.5)(-3,-7.5)
		}
		
		\psline(-5,-3)(-5,3)
		\psline(-3,5)(3,5)
		\psline(-3,-5)(3,-5)

		%\psecurve[linecolor=red]{->}(-3,7)(0,5)(0,-5)(-3,-7)
		%\psecurve[linecolor=red]{<-}(-11,-5)(-5,-2)(-5,2)(-11,5)

		\psecurve[linecolor=blue](-10,-5)(-5,-2)(0,-5)(-5,-8)
		\psecurve[linecolor=red](-10,5)(-5,2)(0,5)(-5,8)
		
		\psdots[linecolor=blue](-5,-2)
		\uput{0.75}[180](-5,-2){$t'$}
		
		\psdots[linecolor=red](-5,2)
		\uput{0.75}[180](-5,2){$s$}
		
		\psdots[linecolor=red](0,5)
		\uput{0.75}[90](0,5){$t$}
		
		\psdots[linecolor=blue](0,-5)
		\uput{0.75}[-90](0,-5){$s'$}
		
		\uput{3.4}[-120](0,0){$\textcolor{blue}{C'}$}
		\uput{3.4}[120](0,0){$\textcolor{red}{C}$}
		\end{pspicture}
		\subcaption{}\label{fig:GensOfMorNoM3}
	\end{subfigure}
	\begin{subfigure}{0.16\textwidth}\centering
		\begin{pspicture}(-7.5,-7.5)(4,7.5)
		
		\pscustom*[linecolor=lightgray]{
			\psecurve(-7.5,0)(-5,2)(-2.5,0)(0,-5)
			\psecurve(0,5)(-2.5,0)(-5,-2)(-7.5,0)
			\psline(-5,-2)(-5,2)
		}
		
		\pscustom*[linecolor=lightred,linewidth=0pt]{
			\psline(-3,5)(3,5)
			\psline(3,7.5)(-3,7.5)
		}
		\pscustom*[linecolor=lightred,linewidth=0pt]{
			\psline(-5,-3)(-5,3)
			\psline(-7.5,3)(-7.5,-3)
		}
		\pscustom*[linecolor=lightred,linewidth=0pt]{
			\psline(-3,-5)(3,-5)
			\psline(3,-7.5)(-3,-7.5)
		}
		
		\psline(-5,-3)(-5,3)
		\psline(-3,5)(3,5)
		\psline(-3,-5)(3,-5)
		
		\psecurve[linecolor=red](-7.5,0)(-5,2)(-2.5,0)(0,-5)(-2.5,-10)
		\psecurve[linecolor=blue](-7.5,0)(-5,-2)(-2.5,0)(0,5)(-2.5,10)
		
		\psecurve{<-}(-10,-5)(-5,-2)(0,-5)(-5,-8)
		\psecurve{->}(-10,5)(-5,2)(0,5)(3,10)
		
		\psdot(-2.5,0)
		
		\psdots[linecolor=blue](-5,-2)
		\uput{0.75}[180](-5,-2){$t'$}
		
		\psdots[linecolor=red](-5,2)
		\uput{0.75}[180](-5,2){$s$}
		
		\psdots[linecolor=blue](0,5)
		\uput{0.75}[90](0,5){$s'$}
		
		\psdots[linecolor=red](0,-5)
		\uput{0.75}[-90](0,-5){$t$}
		
		\uput{1.4}[-80](0,0){$\textcolor{red}{C}$}
		\uput{1.4}[80](0,0){$\textcolor{blue}{C'}$}
		
		\end{pspicture}
		\subcaption{}\label{fig:GensOfMorNoM4}
	\end{subfigure}
	\begin{subfigure}{0.215\textwidth}\centering
		\begin{pspicture}(-7.5,-7.5)(7.5,7.5)
		
		\pscustom*[linecolor=lightgray]{
			\psecurve(-10,2)(-5,-2)(5,2)(10,-2)
			\psline(5,2)(5,-2)
			\psecurve(10,2)(5,-2)(-5,2)(-10,-2)
			\psline(-5,2)(-5,-2)
		}
		\pscustom*[linecolor=white,linewidth=0pt]{
			\psecurve(-10,-1.5)(-5,-2)(0,-1.5)(5,-2)(10,-1.5)
			\psline(5,-2)(5,-3)(-5,-3)(-5,-2)
		}
		\pscustom*[linecolor=white,linewidth=0pt]{
			\psecurve(-10,1.5)(-5,2)(0,1.5)(5,2)(10,1.5)
			\psline(5,2)(5,3)(-5,3)(-5,2)
		}
		
		\pscustom*[linecolor=lightred,linewidth=0pt]{
			\psline(5,-3)(5,3)
			\psline(7.5,3)(7.5,-3)
		}
		\pscustom*[linecolor=lightred,linewidth=0pt]{
			\psline(-5,-3)(-5,3)
			\psline(-7.5,3)(-7.5,-3)
		}
		
		\psline(-5,-3)(-5,3)
		\psline(5,-3)(5,3)
		
		\psecurve[linecolor=blue](-15,2)(-5,-2)(5,2)(15,-2)
		\psecurve[linecolor=red](15,2)(5,-2)(-5,2)(-15,-2)
		
		\psecurve{->}(-10,2.5)(-5,2)(0,2.5)(5,2)(10,2.5)
		\psecurve{<-}(-10,-2.5)(-5,-2)(0,-2.5)(5,-2)(10,-2.5)
		
		\psdots[linecolor=red](-5,2)
		\uput{0.75}[180](-5,2){$s$}
		\psdots[linecolor=blue](-5,-2)
		\uput{0.75}[180](-5,-2){$t'$}
		
		\psdot(0,0)
		
		\psdots[linecolor=blue](5,2)
		\uput{0.75}[0](5,2){$s'$}
		\psdots[linecolor=red](5,-2)
		\uput{0.75}[0](5,-2){$t$}
		
		\uput{1}[60](0,0){$\textcolor{blue}{C'}$}
		\uput{1}[-70](0,0){$\textcolor{red}{C}$}
		\end{pspicture}
		\subcaption{}\label{fig:GensOfMorNoM5}
	\end{subfigure}
	\medskip\\
	\begin{subfigure}{0.215\textwidth}\centering
		\begin{pspicture}(-8,-7.5)(8,7.5)
		
		\pscustom*[linecolor=lightred,linewidth=0pt]{
			\psline(-3,5)(3,5)
			\psline(3,7.5)(-3,7.5)
		}
		\pscustom*[linecolor=lightred,linewidth=0pt]{
			\psline(-5,-3)(-5,3)
			\psline(-7.5,3)(-7.5,-3)
		}
		\pscustom*[linecolor=lightred,linewidth=0pt]{
			\psline(-3,-5)(3,-5)
			\psline(3,-7.5)(-3,-7.5)
		}
		
		\psline(-5,-3)(-5,3)
		\psline[linecolor=darkgreen](5,-3)(5,3)
		\psline(-3,5)(3,5)
		\psline(-3,-5)(3,-5)
		
		\psecurve[linecolor=red](10,-5)(5,1)(0,-5)(5,-11)
		\psecurve[linecolor=blue](-10,5)(-5,0)(0,5)(-5,10)
		
		\psdots[linecolor=blue](-5,0)
		\uput{0.75}[180](-5,0){$t'$}
		
		\psline[linecolor=darkgreen](4.5,0)(5.5,0)
		\uput{0.5}[0](5,0){\textcolor{darkgreen}{$M$}}
		
		\psdots[linecolor=blue](0,5)
		\uput{0.75}[90](0,5){$s'$}
		
		\psdots[linecolor=red](0,-5)
		\uput{0.75}[-90](0,-5){$s$}
		
		\uput{2.5}[-45](0,0){$\textcolor{red}{C}$}
		\uput{2.5}[135](0,0){$\textcolor{blue}{C'}$}
		
		\end{pspicture}
		\subcaption{}\label{fig:GensOfMorM1}
	\end{subfigure}
	\begin{subfigure}{0.215\textwidth}\centering
		\begin{pspicture}(-8,-7.5)(8,7.5)
		
		\pscustom*[linecolor=lightred,linewidth=0pt]{
			\psline(-3,5)(3,5)
			\psline(3,7.5)(-3,7.5)
		}
		\pscustom*[linecolor=lightred,linewidth=0pt]{
			\psline(-5,-3)(-5,3)
			\psline(-7.5,3)(-7.5,-3)
		}
		\pscustom*[linecolor=lightred,linewidth=0pt]{
			\psline(-3,-5)(3,-5)
			\psline(3,-7.5)(-3,-7.5)
		}
		
		\psline(-5,-3)(-5,3)
		\psline[linecolor=darkgreen](5,-3)(5,3)
		\psline(-3,5)(3,5)
		\psline(-3,-5)(3,-5)
		
		\psline[linecolor=blue](0,5)(0,-5)
		\psecurve[linecolor=red](-15,1)(-5,0)(5,1)(15,0)
		
		\psecurve{->}(-10,-1)(-5,0)(0,5)(1,10)
		
		\psdot(0,0.5)
		
		\psdots[linecolor=red](-5,0)
		\uput{0.75}[180](-5,0){$s$}
		
		\psline[linecolor=darkgreen](4.5,0)(5.5,0)
		\uput{0.5}[0](5,0){\textcolor{darkgreen}{$M$}}
		
		\psdots[linecolor=blue](0,5)
		\uput{0.75}[90](0,5){$s'$}
		
		\psdots[linecolor=blue](0,-5)
		\uput{0.75}[-90](0,-5){$t'$}
		
		\uput{0.5}[-90](-3.5,0){$\textcolor{red}{C}$}
		\uput{0.5}[0](0,3.5){$\textcolor{blue}{C'}$}
		
		\end{pspicture}
		\subcaption{}\label{fig:GensOfMorM2}
	\end{subfigure}
	\begin{subfigure}{0.215\textwidth}\centering
		\begin{pspicture}(-8,-7.5)(8,7.5)
		
		\pscustom*[linecolor=lightred,linewidth=0pt]{
			\psline(-3,5)(3,5)
			\psline(3,7.5)(-3,7.5)
		}
		\pscustom*[linecolor=lightred,linewidth=0pt]{
			\psline(-5,-3)(-5,3)
			\psline(-7.5,3)(-7.5,-3)
		}
		\pscustom*[linecolor=lightred,linewidth=0pt]{
			\psline(-3,-5)(3,-5)
			\psline(3,-7.5)(-3,-7.5)
		}
		
		\psline(-5,-3)(-5,3)
		\psline[linecolor=darkgreen](5,-3)(5,3)
		\psline(-3,5)(3,5)
		\psline(-3,-5)(3,-5)
		
		\psecurve[linecolor=red](10,5)(5,1)(0,5)(5,8)
		\psecurve[linecolor=blue](-10,-5)(-5,0)(0,-5)(-5,-10)
		
		\psdots[linecolor=blue](-5,0)
		\uput{0.75}[180](-5,0){$t'$}
		
		\psline[linecolor=darkgreen](4.5,0)(5.5,0)
		\uput{0.5}[0](5,0){\textcolor{darkgreen}{$M$}}
		
		\psdots[linecolor=blue](0,-5)
		\uput{0.75}[-90](0,-5){$s'$}
		
		\psdots[linecolor=red](0,5)
		\uput{0.75}[90](0,5){$s$}
		
		\uput{3.5}[45](0,0){$\textcolor{red}{C}$}
		\uput{2.5}[-135](0,0){$\textcolor{blue}{C'}$}
		
		\end{pspicture}
		\subcaption{}\label{fig:GensOfMorM3}
	\end{subfigure}
	\begin{subfigure}{0.16\textwidth}\centering
		\begin{pspicture}(-7.5,-7.5)(4,7.5)
		
		\pscustom*[linecolor=lightred,linewidth=0pt]{
			\psline(-5,-3)(-5,3)
			\psline(-7.5,3)(-7.5,-3)
		}
		\pscustom*[linecolor=lightred,linewidth=0pt]{
			\psline(-3,-5)(3,-5)
			\psline(3,-7.5)(-3,-7.5)
		}
		
		\psline(-5,-3)(-5,3)
		\psline[linecolor=darkgreen](-3,5)(3,5)
		\psline(-3,-5)(3,-5)
		
		\psecurve[linecolor=blue](-10,-5)(-5,-2)(0,-5)(-5,-8)
		\psecurve[linecolor=red](-10,5)(-5,2)(-1,5)(-5,8)
		
		\psdots[linecolor=blue](-5,-2)
		\uput{0.75}[180](-5,-2){$t'$}
		
		\psdots[linecolor=red](-5,2)
		\uput{0.75}[180](-5,2){$s$}
		
		\psline[linecolor=darkgreen](0,4.5)(0,5.5)
		\uput{0.5}[90](0,5.2){\textcolor{darkgreen}{$M$}}
		
		\psdots[linecolor=blue](0,-5)
		\uput{0.75}[-90](0,-5){$s'$}
		
		\uput{2}[-90](0,0){$\textcolor{blue}{C'}$}
		\uput{2.4}[100](0,0){$\textcolor{red}{C}$}
		\end{pspicture}
		\subcaption{}\label{fig:GensOfMorM4}
	\end{subfigure}
	\begin{subfigure}{0.16\textwidth}\centering
		\begin{pspicture}(-7.5,-7.5)(4,7.5)
		\pscustom*[linecolor=lightgray]{
			\psecurve(-8,0)(-5,2)(-2,0)(0,-5)
			\psecurve(0,5)(-2,0)(-5,-2)(-8,0)
			\psline(-5,-2)(-5,2)
		}	
		
		\pscustom*[linecolor=lightred,linewidth=0pt]{
			\psline(-3,5)(3,5)
			\psline(3,7.5)(-3,7.5)
		}
		\pscustom*[linecolor=lightred,linewidth=0pt]{
			\psline(-5,-3)(-5,3)
			\psline(-7.5,3)(-7.5,-3)
		}
		
		\psline(-5,-3)(-5,3)
		\psline(-3,5)(3,5)
		\psline[linecolor=darkgreen](-3,-5)(3,-5)

		\psecurve[linecolor=red](-8,0)(-5,2)(-2,0)(1,-5)(-2,-10)
		\psecurve[linecolor=blue](-8,0)(-5,-2)(-2,0)(0,5)(-2,10)
		
		\psecurve{->}(-10,5)(-5,2)(0,5)(3,10)
		
		\psdot(-2,0)
		
		\psdots[linecolor=blue](-5,-2)
		\uput{0.75}[180](-5,-2){$t'$}
		
		\psdots[linecolor=red](-5,2)
		\uput{0.75}[180](-5,2){$s$}
		
		\psline[linecolor=darkgreen](0,-4.5)(0,-5.5)
		\uput{0.5}[-90](0,-5.2){\textcolor{darkgreen}{$M$}}
		
		\psdots[linecolor=blue](0,5)
		\uput{0.75}[90](0,5){$s'$}
		
		\uput{1.4}[-60](0,0){$\textcolor{red}{C}$}
		\uput{1.4}[80](0,0){$\textcolor{blue}{C'}$}
		
		\end{pspicture}
		\subcaption{}\label{fig:GensOfMorM5}
	\end{subfigure}
	\medskip\\
	\begin{subfigure}{0.215\textwidth}\centering
		\begin{pspicture}(-8,-7.5)(8,7.5)
		
		\pscustom*[linecolor=lightred,linewidth=0pt]{
			\psline(-3,5)(3,5)
			\psline(3,7.5)(-3,7.5)
		}
		\pscustom*[linecolor=lightred,linewidth=0pt]{
			\psline(-5,-3)(-5,3)
			\psline(-7.5,3)(-7.5,-3)
		}
		\pscustom*[linecolor=lightred,linewidth=0pt]{
			\psline(-3,-5)(3,-5)
			\psline(3,-7.5)(-3,-7.5)
		}
		
		\psline(-5,-3)(-5,3)
		\psline[linecolor=darkgreen](5,-3)(5,3)
		\psline(-3,5)(3,5)
		\psline(-3,-5)(3,-5)
		
		\psecurve[linecolor=blue](10,-5)(5,-1)(0,-5)(5,-8)
		\psecurve[linecolor=red](-10,5)(-5,0)(0,5)(-5,10)
		
		\psdots[linecolor=red](-5,0)
		\uput{0.75}[180](-5,0){$t$}
		
		\psline[linecolor=darkgreen](4.5,0)(5.5,0)
		\uput{0.5}[0](5,0){\textcolor{darkgreen}{$M$}}
		
		\psdots[linecolor=red](0,5)
		\uput{0.75}[90](0,5){$s$}
		
		\psdots[linecolor=blue](0,-5)
		\uput{0.75}[-90](0,-5){$s'$}
		
		\uput{3.5}[-45](0,0){$\textcolor{blue}{C'}$}
		\uput{2.5}[135](0,0){$\textcolor{red}{C}$}
		
		\end{pspicture}
		\subcaption{}\label{fig:GensOfMorMp1}
	\end{subfigure}
	\begin{subfigure}{0.215\textwidth}\centering
		\begin{pspicture}(-8,-7.5)(8,7.5)
		
		\pscustom*[linecolor=lightred,linewidth=0pt]{
			\psline(-3,5)(3,5)
			\psline(3,7.5)(-3,7.5)
		}
		\pscustom*[linecolor=lightred,linewidth=0pt]{
			\psline(-5,-3)(-5,3)
			\psline(-7.5,3)(-7.5,-3)
		}
		\pscustom*[linecolor=lightred,linewidth=0pt]{
			\psline(-3,-5)(3,-5)
			\psline(3,-7.5)(-3,-7.5)
		}
		
		\psline(-5,-3)(-5,3)
		\psline[linecolor=darkgreen](5,-3)(5,3)
		\psline(-3,5)(3,5)
		\psline(-3,-5)(3,-5)
		
		\psline[linecolor=red](0,5)(0,-5)
		\psecurve[linecolor=blue](-15,-1)(-5,0)(5,-1)(15,0)
		
		\psecurve{<-}(-10,1)(-5,0)(0,-5)(1,-10)
		
		\psdot(0,-0.5)
		
		\psdots[linecolor=blue](-5,0)
		\uput{0.75}[180](-5,0){$s'$}
		
		\psline[linecolor=darkgreen](4.5,0)(5.5,0)
		\uput{0.5}[0](5,0){\textcolor{darkgreen}{$M$}}
		
		\psdots[linecolor=red](0,5)
		\uput{0.75}[90](0,5){$t$}
		
		\psdots[linecolor=red](0,-5)
		\uput{0.75}[-90](0,-5){$s$}
		
		\uput{0.5}[90](-3.5,0){$\textcolor{blue}{C'}$}
		\uput{0.5}[0](0,-3.5){$\textcolor{red}{C}$}
		
		\end{pspicture}
		\subcaption{}\label{fig:GensOfMorMp2}
	\end{subfigure}
	\begin{subfigure}{0.215\textwidth}\centering
		\begin{pspicture}(-8,-7.5)(8,7.5)
		
		\pscustom*[linecolor=lightred,linewidth=0pt]{
			\psline(-3,5)(3,5)
			\psline(3,7.5)(-3,7.5)
		}
		\pscustom*[linecolor=lightred,linewidth=0pt]{
			\psline(-5,-3)(-5,3)
			\psline(-7.5,3)(-7.5,-3)
		}
		\pscustom*[linecolor=lightred,linewidth=0pt]{
			\psline(-3,-5)(3,-5)
			\psline(3,-7.5)(-3,-7.5)
		}
		
		\psline(-5,-3)(-5,3)
		\psline[linecolor=darkgreen](5,-3)(5,3)
		\psline(-3,5)(3,5)
		\psline(-3,-5)(3,-5)
		
		\psecurve[linecolor=blue](10,5)(5,-1)(0,5)(5,11)
		\psecurve[linecolor=red](-10,-5)(-5,0)(0,-5)(-5,-10)
		
		\psdots[linecolor=red](-5,0)
		\uput{0.75}[180](-5,0){$t$}
		
		\psline[linecolor=darkgreen](4.5,0)(5.5,0)
		\uput{0.5}[0](5,0){\textcolor{darkgreen}{$M$}}
		
		\psdots[linecolor=red](0,-5)
		\uput{0.75}[-90](0,-5){$s$}
		
		\psdots[linecolor=blue](0,5)
		\uput{0.75}[90](0,5){$s'$}
		
		\uput{2.5}[45](0,0){$\textcolor{blue}{C'}$}
		\uput{2.5}[-135](0,0){$\textcolor{red}{C}$}
		
		\end{pspicture}
		\subcaption{}\label{fig:GensOfMorMp3}
	\end{subfigure}
	\begin{subfigure}{0.16\textwidth}\centering
		\begin{pspicture}(-7.5,-7.5)(4,7.5)
		
		\pscustom*[linecolor=lightred,linewidth=0pt]{
			\psline(-3,5)(3,5)
			\psline(3,7.5)(-3,7.5)
		}
		\pscustom*[linecolor=lightred,linewidth=0pt]{
			\psline(-5,-3)(-5,3)
			\psline(-7.5,3)(-7.5,-3)
		}

		\psline(-5,-3)(-5,3)
		\psline(-3,5)(3,5)
		\psline[linecolor=darkgreen](-3,-5)(3,-5)
		
		\psecurve[linecolor=blue](-10,-5)(-5,-2)(-1,-5)(-5,-8)
		\psecurve[linecolor=red](-10,5)(-5,2)(0,5)(-5,8)
		
		\psdots[linecolor=blue](-5,-2)
		\uput{0.75}[180](-5,-2){$s'$}
		
		\psdots[linecolor=red](-5,2)
		\uput{0.75}[180](-5,2){$t$}
		
		\psline[linecolor=darkgreen](0,-4.5)(0,-5.5)
		\uput{0.5}[-90](0,-5.2){\textcolor{darkgreen}{$M$}}
		
		\psdots[linecolor=red](0,5)
		\uput{0.75}[90](0,5){$s$}
		
		\uput{2}[-100](0,0){$\textcolor{blue}{C'}$}
		\uput{2.4}[90](0,0){$\textcolor{red}{C}$}
		\end{pspicture}
		\subcaption{}\label{fig:GensOfMorMp4}
	\end{subfigure}
	\begin{subfigure}{0.16\textwidth}\centering
		\begin{pspicture}(-7.5,-7.5)(4,7.5)
		\pscustom*[linecolor=lightgray]{
			\psecurve(-8,0)(-5,-2)(-2,0)(0,5)
			\psecurve(0,-5)(-2,0)(-5,2)(-8,0)
			\psline(-5,2)(-5,-2)
		}	
		
		\pscustom*[linecolor=lightred,linewidth=0pt]{
			\psline(-5,-3)(-5,3)
			\psline(-7.5,3)(-7.5,-3)
		}
		\pscustom*[linecolor=lightred,linewidth=0pt]{
			\psline(-3,-5)(3,-5)
			\psline(3,-7.5)(-3,-7.5)
		}
		
		\psline(-5,-3)(-5,3)
		\psline[linecolor=darkgreen](-3,5)(3,5)
		\psline(-3,-5)(3,-5)

		\psecurve[linecolor=blue](-8,0)(-5,-2)(-2,0)(1,5)(-2,10)
		\psecurve[linecolor=red](-8,0)(-5,2)(-2,0)(0,-5)(-2,-10)
		
		\psecurve{<-}(-10,-5)(-5,-2)(0,-5)(-5,-8)
		
		\psdot(-2,0)

		\psdots[linecolor=blue](-5,-2)
		\uput{0.75}[180](-5,-2){$s'$}
		
		\psdots[linecolor=red](-5,2)
		\uput{0.75}[180](-5,2){$t$}
		
		\psline[linecolor=darkgreen](0,4.5)(0,5.5)
		\uput{0.5}[90](0,5.2){\textcolor{darkgreen}{$M$}}
		
		\psdots[linecolor=red](0,-5)
		\uput{0.75}[-90](0,-5){$s$}
		
		\uput{1.4}[60](0,0){$\textcolor{blue}{C'}$}
		\uput{1.4}[-80](0,0){$\textcolor{red}{C}$}
		
		\end{pspicture}
		\subcaption{}\label{fig:GensOfMorMp5}
	\end{subfigure}
	\medskip\\
	\begin{subfigure}{0.25\textwidth}\centering
		\begin{pspicture}(-8,-7.5)(4,7.5)
		
		\pscustom*[linecolor=lightred,linewidth=0pt]{
			\psline(-3,5)(3,5)
			\psline(3,7.5)(-3,7.5)
		}
		\pscustom*[linecolor=lightred,linewidth=0pt]{
			\psline(-3,-5)(3,-5)
			\psline(3,-7.5)(-3,-7.5)
		}
		\psline[linecolor=darkgreen](-5,-3)(-5,3)
		\psline(-3,5)(3,5)
		\psline(-3,-5)(3,-5)

		\psecurve{->}(-3,7)(0,5)(0,-5)(-3,-7)
		
		\psecurve[linecolor=blue](-10,-5)(-5,2)(0,-5)(-5,-8)
		\psecurve[linecolor=red](-10,5)(-5,-2)(0,5)(-5,8)
		
		\psdot(-1.55,0)
		
		\psline[linecolor=darkgreen](-4.5,0)(-5.5,0)
		\uput{0.5}[180](-5,0){\textcolor{darkgreen}{$M$}}
		
		\psdots[linecolor=red](0,5)
		\uput{0.75}[90](0,5){$s$}
		
		\psdots[linecolor=blue](0,-5)
		\uput{0.75}[-90](0,-5){$s'$}
		
		\uput{0.5}[90](-3.5,2){$\textcolor{blue}{C'}$}
		\uput{0.5}[-90](-3.5,-2){$\textcolor{red}{C}$}
		\end{pspicture}
		\subcaption{}\label{fig:GensOfMorBoth1}
	\end{subfigure}
	\begin{subfigure}{0.25\textwidth}\centering
		\begin{pspicture}(-8,-7.5)(4,7.5)
		
		\pscustom*[linecolor=lightred,linewidth=0pt]{
			\psline(-3,5)(3,5)
			\psline(3,7.5)(-3,7.5)
		}
		
		\pscustom*[linecolor=lightred,linewidth=0pt]{
			\psline(-3,-5)(3,-5)
			\psline(3,-7.5)(-3,-7.5)
		}
		
		\psline[linecolor=darkgreen](-5,-3)(-5,3)
		\psline(-3,5)(3,5)
		\psline(-3,-5)(3,-5)

		\psecurve[linecolor=red](-10,-5)(-5,-2)(0,-5)(-5,-8)
		\psecurve[linecolor=blue](-10,5)(-5,2)(0,5)(-5,8)

		\psline[linecolor=darkgreen](-4.5,0)(-5.5,0)
		\uput{0.5}[180](-5,0){\textcolor{darkgreen}{$M$}}
		
		\psdots[linecolor=blue](0,5)
		\uput{0.75}[90](0,5){$s'$}
		
		\psdots[linecolor=red](0,-5)
		\uput{0.75}[-90](0,-5){$s$}
		
		\uput{2}[90](0,0){$\textcolor{blue}{C'}$}
		\uput{2}[-90](0,0){$\textcolor{red}{C}$}
		
		\end{pspicture}
		\subcaption{}\label{fig:GensOfMorBoth2}
	\end{subfigure}
	\begin{subfigure}{0.25\textwidth}\centering
		\begin{pspicture}(-8,-7.5)(8,7.5)
		\pscustom*[linecolor=lightred,linewidth=0pt]{
			\psline(5,-3)(5,3)
			\psline(7.5,3)(7.5,-3)
		}
		
		\psline[linecolor=darkgreen](-5,-3)(-5,3)
		\psline(5,-3)(5,3)
		
		\psline[linecolor=blue](5,2)(-5,2)
		\psline[linecolor=red](5,-2)(-5,-2)
		
		\psline[linecolor=darkgreen](-4.5,0)(-5.5,0)
		\uput{0.5}[180](-5,0){\textcolor{darkgreen}{$M$}}

		\psdots[linecolor=blue](5,2)
		\uput{0.75}[0](5,2){$s'$}
		\psdots[linecolor=red](5,-2)
		\uput{0.75}[0](5,-2){$s$}
		
		\uput{2.5}[90](0,0){$\textcolor{blue}{C'}$}
		\uput{2.5}[-90](0,0){$\textcolor{red}{C}$}
		
		\end{pspicture}
		\subcaption{}\label{fig:GensOfMorBoth3}
	\end{subfigure}
	\caption{The computation of morphism spaces between $f$-joins of two precurves illustrating the proof of Theorem~\ref{thm:PairingMorLagrangianFH}. The first row shows all configurations with two-sided $f$-joins only. However, there might be a point of $M$ in the face, in which case some of the black arrows are zero. The second and third rows show those configurations with both two- and one-sided $f$-joins. The last row shows those configurations with one-sided $f$-joins only.}\label{fig:GensOfMor}
\end{figure}

\begin{proof}
	Let us consider lower intersection points first. According to the theorem, these should correspond to generators of the homology of $(\Mor^+(C,C'),D^+)$. This chain complex is equal to the direct sum of the morphism spaces between individual $f$-joins of $C$ and $C'$ for all faces $f$, so we may compute each summand separately. Let us fix a face $f$. Consider a single $f$-join of $C$ between $s$ and $t$ and a single $f$-join of $C'$ between $s'$ and $t'$ where $s$ and $s'$ are sides of $f$, and $t$ and $t'$ are either sides of $f$ or points near the basepoint of $f$. We consider all cyclic orders of $s$, $t$, $s'$ and $t'$ on the boundary of $f$ separately, modulo interchanging $s$ and $t$ and interchanging $s'$ and $t'$. All possibilities are illustrated in Figure~\ref{fig:GensOfMor}: 
	\begin{itemize}
		\item The first row shows the various cases in which both $f$-joins are two-sided. If $s$, $t$, $s'$ and $t'$ are all distinct, we are in case (a) or (b). If two sides coincide (without loss of generality $s$ and $t'$), we are in case (c) or (d). If both sides coincide, there is only one case, namely (e). In all five cases, we allow the face to be open. 
		\item The second row shows all cases for which the $f$-join of $C$ is one-sided and the one for $C'$ is two-sided. In cases (f), (g) and (h), all sides are distinct. If two sides coincide, we are in case (i) or (j). 
		\item The third row shows all cases for which the $f$-join of $C$ is two-sided and the one for $C'$ is one-sided. Again, in the first three cases, all sides are distinct and in the last two cases, two sides agree. 
		\item The last row shows the cases in which both $f$-joins are one-sided. (p) and (q) are the two cases in which the two sides are distinct, in case (r), the two sides coincide. 
	\end{itemize}
	In each of these cases, we may now study the morphism space separately and compare it to the number of intersection points. As we can see, in each case there is either one or no intersection point between the two $f$-joins. We claim that this agrees with the dimension of the homology of the morphism space between the $f$-joins. Moreover, we claim that the generator of the homology of each morphism space is given by the resolution of the corresponding intersection point.

	Let us only verify these claims in the examples of the first row, assuming the face $f$ is closed; all other cases follow similarly.
	\begin{enumerate}
		\item[(a)] Over $\mathbb{F}_2[U_f]$, the kernel of $D^+$ is generated by
		$$
		(s\xrightarrow{p^{s}_{s'}} s')
		\oplus
		(t\xrightarrow{p^{t}_{t'}} t')
		\quad\text{ and }\quad
		(s\xrightarrow{p^{s}_{t'}} t')
		\oplus
		(t\xrightarrow{U_fp^{t}_{s'}} s').
		$$
		The former is equal to $D^+(t\xrightarrow{p^{t}_{s'}} s')$ and the latter is equal to $D^+(s\xrightarrow{p^{s}_{s'}} s')=D^+(t\xrightarrow{p^{t}_{t'}} t')$. 
		\item[(b)] Over $\mathbb{F}_2[U_f]$, the kernel of $D^+$ is generated by
		$$
		(s\xrightarrow{p^{s}_{s'}} s')
		\oplus
		(t\xrightarrow{p^{t}_{t'}} t')
		\quad\text{ and }\quad
		(s\xrightarrow{p^{s}_{t'}} t')
		\oplus
		( t\xrightarrow{p^{t}_{s'}} s').
		$$
		The latter is equal to $D^+(s\xrightarrow{p^{s}_{s'}} s')=D^+(t\xrightarrow{p^{t}_{t'}} t')$. However, the former does not lie in the image of $D^+$, only its products with positive powers of $U_f$ do. 
		\item[(c)] This case is the same as case (a), except that two sides coincide. This does not change the morphism space of algebra elements in $\mathcal{A}_f^+$.
		\item[(d)] This case is the same as case (b), except that two sides coincide. Again, this does not change the morphism space of algebra elements in $\mathcal{A}_f^+$; however, note that while the surviving generator is not in the image of $D^+$, it is a component of $D(s\xrightarrow{\iota_{s}=\iota_{t'}} t')$. 
		\item[(e)] This case is similar to the previous one; here, the surviving generator is a component of $D(s\xrightarrow{\iota_{s}=\iota_{t'}} t')$ and $D(t\xrightarrow{\iota_{t}=\iota_{s'}} s')$. 
	\end{enumerate}
	Let us now turn to upper intersection points. These should correspond to basis elements of $\Mor^\times(C,C')$, which can be written as the direct sum of $\Mor^\times(C.\iota_a,C'.\iota_a)$ over all arcs $a\in A$. 
	Each summand $\Mor^\times(C.\iota_a,C'.\iota_a)$ can in turn be written as
	$$
	\left\{\left.\left(C.\iota_{s_1(a)}\xrightarrow{\varphi_1} C'.\iota_{s_1(a)}, C.\iota_{s_2(a)}\xrightarrow{\varphi_2} C'.\iota_{s_2(a)}\right)\right\vert \varphi_2\circ P_a=P'_a \circ \varphi_1\right\}.
	$$
	Since $P_a$ and $P'_a$ are invertible, the projection onto the first component $\varphi_1$ is an isomorphism. In other words, $\Mor^\times(C.\iota_a,C'.\iota_a)$ is isomorphic to the vector space of linear maps 
	$$\varphi_1\co C.\iota_{s_1(a)}\rightarrow C'.\iota_{s_1(a)}.$$ 
	By Lemma~\ref{lem:ResolutionWellDefined}, a resolution $\varphi$ of an intersection point $x$ between the precurves $C$ and $C'$ on an arc $a$ is an element of $\Mor^\times(C.\iota_a,C'.\iota_a)$. Moreover, it corresponds to a standard basis element of the vector space of linear maps $\varphi_1\co C.\iota_{s_1(a)}\rightarrow C'.\iota_{s_1(a)}$. So for each $a\in A$, the resolutions of upper intersection points in $N(a)$ form a basis of $\Mor^\times(C.\iota_a,C'.\iota_a)$.
	
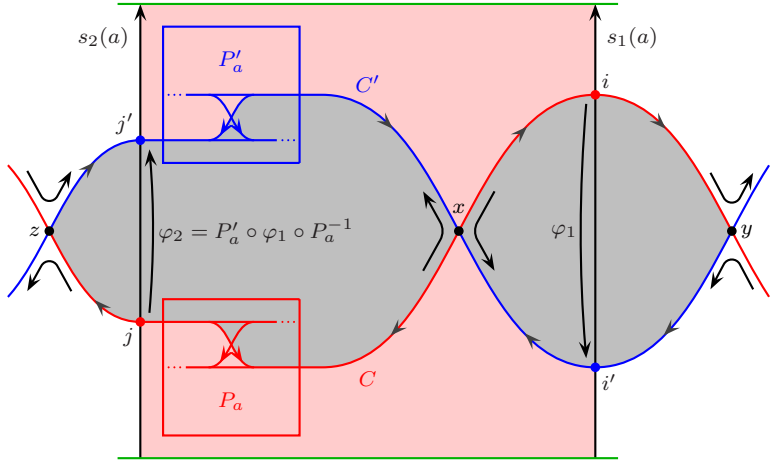
\begin{figure}[t]
	\centering
	\psset{unit=0.6}
	\begin{pspicture}(-13,-5.1)(9,5.1)
		\pscustom*[linecolor=lightred,linewidth=0pt]{
			\psline(-7,-5)(-7,5)
			\psline(3,5)(3,-5)
		}		
		%bigons from left to right
		\pscustom*[linecolor=lightgray]{%
			\psecurve(-9,3)(-3,-3)(3,3)(9,-3)(15,3)%red
			\psline(9,-3)(9,3)% right end
			\psecurve(15,-3)(9,3)(3,-3)(-3,3)(-9,-3)%blue
			\psline(-3,3)(-4.5,3)(-4.5,-3)(-3,-3)%
		}
		\pscustom*[linecolor=lightgray]{%
			\psecurve(-3.5,2)(-4.5,3)(-5.5,2)(-6.5,3)% blue crossing
			\psline(-5.5,2)(-7,2)(-7,-2)(-5.5,-2)%
			\psecurve(-6.5,-3)(-5.5,-2)(-4.5,-3)(-3.5,-2)% red crossing
			\psline(-4.5,-3)(-3,-3)(-3,3)(-4.5,3)%
		}
		\pscustom*[linecolor=lightgray]{%
			\psecurve(-15,-2)(-11,2)(-7,-2)(-3,2)%red
			\psline(-7,-2)(-5.5,-2)(-5.5,2)(-7,2)% right end
			\psecurve(-3,-2)(-7,2)(-11,-2)(-15,2)%blue
			\psline(-11,-2)(-11,2)%
		}
%		% direction of bigons
%		\psline[linecolor=gray]{->}(0.5,0)(5.8,0)
%		\psline[linecolor=gray]{->}(-0.5,0)(-8.8,0)
%		\psline[linecolor=lightgray,linewidth=1](1.9,0)(3.1,0)
%		\psline[linecolor=lightgray,linewidth=1](-2.3,0)(-7.1,0)
		
		% white screens for half-bigons left and right
		\psline*[linecolor=white,linewidth=0pt](10,4)(6,4)(6,-4)(10,-4)(10,4)
		\psline*[linecolor=white,linewidth=0pt](-9,3)(-12,3)(-12,-3)(-9,-3)(-9,3)
		
		\pscustom*[linecolor=white,linewidth=0pt]{
			\psecurve(-13,1.5)(-11,2)(-9,1.5)(-7,2)(-5,1.5)
			\psline(-7,2)(-7,3)(-11,3)(-11,2)
		}
		\pscustom*[linecolor=white,linewidth=0pt]{
			\psecurve(-13,-1.5)(-11,-2)(-9,-1.5)(-7,-2)(-5,-1.5)
			\psline(-7,-2)(-7,-3)(-11,-3)(-11,-2)
		}
		\pscustom*[linecolor=white,linewidth=0pt]{
			\psecurve(0,2.5)(3,3)(6,2.5)(9,3)(12,2.5)
			\psline(3,3)(3,4)(9,4)(9,3)
		}
		\pscustom*[linecolor=white,linewidth=0pt]{
			\psecurve(0,-2.5)(3,-3)(6,-2.5)(9,-3)(12,-2.5)
			\psline(3,-3)(3,-4)(9,-4)(9,-3)
		}
		
		%curves
		%right
		\pscustom[linecolor=blue]{
			\psline(-6,3)(-3,3)
			\psecurve(-9,-3)(-3,3)(3,-3)(9,3)(15,-3)
		}
		%left
		\pscustom[linecolor=blue]{
			\psline(-4,2)(-7,2)		
			\psecurve(-3,-2)(-7,2)(-11,-2)(-15,2)
		}
		%right
		\pscustom[linecolor=red]{
			\psline(-6,-3)(-3,-3)
			\psecurve(-9,3)(-3,-3)(3,3)(9,-3)(15,3)
		}
		%left
		\pscustom[linecolor=red]{
			\psline(-4,-2)(-7,-2)		
			\psecurve(-3,2)(-7,-2)(-11,2)(-15,-2)
		}
		
		%dots on curves
		\psline[linecolor=blue,linestyle=dotted, dotsep=0.1](-6.1,3)(-6.4,3)
		\psline[linecolor=blue,linestyle=dotted, dotsep=0.1](-3.9,2)(-3.6,2)
		\psline[linecolor=red,linestyle=dotted, dotsep=0.1](-6.1,-3)(-6.4,-3)
		\psline[linecolor=red,linestyle=dotted, dotsep=0.1](-3.9,-2)(-3.6,-2)
		
		% intersection points
		\psdot(-9,0)
		\psdot(0,0)
		\psdot(6,0)
		\uput{0.2}[0](6,0){$y$}
		\uput{0.4}[90](0,0){$x$}
		\uput{0.2}[180](-9,0){$z$}

		% white screens for curves
		\pscircle*[linecolor=white](-17,8.5){10}
		\pscircle*[linecolor=white](-17,-8.5){10}
		\pscircle*[linecolor=white](12,-10){10}
		\pscircle*[linecolor=white](12,10){10}

		%crossings
		\rput(-5,2.5){\psset{linecolor=blue}
			\psecurve(-1.5,0.5)(-0.5,-0.5)(0.5,0.5)(1.5,-0.5)
			\psecurve(-1.5,-0.5)(-0.5,0.5)(0.5,-0.5)(1.5,0.5)
			\psline{->}(0.175,-0.28)(0.25,-0.4)
			\psline{->}(-0.175,-0.28)(-0.25,-0.4)
		}
		\rput(-5,-2.5){\psset{linecolor=red}
			\psecurve(-1.5,0.5)(-0.5,-0.5)(0.5,0.5)(1.5,-0.5)
			\psecurve(-1.5,-0.5)(-0.5,0.5)(0.5,-0.5)(1.5,0.5)
			\psline{->}(0.175,-0.28)(0.25,-0.4)
			\psline{->}(-0.175,-0.28)(-0.25,-0.4)
		}
		
		\psline[linecolor=blue](-3.5,1.5)(-6.5,1.5)(-6.5,4.5)(-3.5,4.5)(-3.5,1.5)
		\rput(-5,3.75){\textcolor{blue}{$P'_a$}}
		
		\psline[linecolor=red](-3.5,-1.5)(-6.5,-1.5)(-6.5,-4.5)(-3.5,-4.5)(-3.5,-1.5)
		\rput(-5,-3.75){\textcolor{red}{$P_a$}}
		
		\psline{->}(-7,-5)(-7,5)
		\rput[r](-7.25,4.25){$s_2(a)$}
		\psline{->}(3,-5)(3,5)
		\rput[l](3.25,4.25){$s_1(a)$}

		% generators of curves
		\psdot[linecolor=red](3,3)
		\psdot[linecolor=blue](3,-3)
		\psdot[linecolor=blue](-7,2)
		\psdot[linecolor=red](-7,-2)
		\uput{0.2}[45](3,3){$i$}
		\uput{0.2}[-45](3,-3){$i'$}
		\uput{0.2}[135](-7,2){$j'$}
		\uput{0.2}[-135](-7,-2){$j$}
		
		% resolutions
		\rput{0}(0.3,0){\psline[linearc=0.4]{->}(1;61)(0,0)(1;-61)}
		\rput{180}(-0.3,0){\psline[linearc=0.4]{->}(1;61)(0,0)(1;-61)}
		\rput{0}(6,0.4){\psline[linearc=0.2]{<-}(1;62)(0,0)(1;118)}
		\rput{180}(6,-0.4){\psline[linearc=0.2]{<-}(1;62)(0,0)(1;118)}
		\rput{0}(-9,0.45){\psline[linearc=0.2]{<-}(1;61)(0,0)(1;119)}
		\rput{180}(-9,-0.45){\psline[linearc=0.2]{<-}(1;61)(0,0)(1;119)}
		
		% arrows on red curves
		\psline[linecolor=darkgray]{->}(1.45,2.25)(1.5,2.3)
		\psline[linecolor=darkgray]{->}(-1.45,-2.25)(-1.5,-2.3)
		\psline[linecolor=darkgray]{<-}(4.55,2.25)(4.5,2.3)
		\psline[linecolor=darkgray]{->}(-7.95,-1.57)(-8,-1.52)
		
		% arrows on blue curves
		\psline[linecolor=darkgray]{<-}(-1.45,2.25)(-1.5,2.3)
		\psline[linecolor=darkgray]{<-}(1.45,-2.25)(1.5,-2.3)
		\psline[linecolor=darkgray]{->}(4.55,-2.25)(4.5,-2.3)
		\psline[linecolor=darkgray]{->}(-8.05,1.47)(-8,1.52)
		
		\psecurve{->}(3,4)(2.8,2.8)(2.8,-2.8)(3,-4)
		\psecurve{<-}(-7,3)(-6.8,1.8)(-6.8,-1.8)(-7,-3)
		\uput{0.4}[180](3,0){$\varphi_1$}
		\uput{0.4}[0](-7,0){$\varphi_2=P'_a\circ \varphi_1\circ P_a^{-1}$}
		
		% boundary
		\psline[linecolor=darkgreen](-7.5,5)(3.5,5)
		\psline[linecolor=darkgreen](-7.5,-5)(3.5,-5)
		
		\rput(-2,-3.25){$\textcolor{red}{C}$}
		\rput(-2,3.25){$\textcolor{blue}{C'}$}
		
		\end{pspicture}
	\caption{Illustration for the identification of bigons with the map $\beta$ in the proof of Theorem~\ref{thm:PairingMorLagrangianFH}.}\label{fig:BigonCounts}
\end{figure}

Finally, we need to identify the map $\beta$ with the differential on $\LagrangianFC(C,C')$. Given an upper intersection point $x$ and its resolution $(\varphi_1,\varphi_2)$, $\beta(\varphi_1,\varphi_2)$ is simply given by the differential of this morphism. Let us compute the components of this differential on $\varphi_1$ and $\varphi_2$ separately; for an illustration of the following argument, see Figure~\ref{fig:BigonCounts}. Suppose the morphism $\varphi_1$ goes from $\bullet(s_1(a),i)$ of the first precurve to $\bullet(s_1(a),i')$ of the second precurve. We claim that if the two $f$-joins starting at these two dots are disjoint, $D(\varphi_1)$ is null-homotopic; moreover, if they intersect at some point $y$, $D(\varphi_1)$ is equal to the resolution of $y$ and there is a single bigon from $x$ to $y$. These two claims can be easily verified for each case in which two sides coincide (ie cases (c), (d), (e), (i), (j), (n), (o) and (r)) separately. 

For $D(\varphi_2)$, we can argue similarly. However, we need to take the matrices $P_a$ and $P'_a$ into account. As we have seen in the proof of Lemma~\ref{lem:ResolutionWellDefined}, $\varphi_2$ has a non-zero component from $\bullet(s_2(a),j)$ of the first precurve to $\bullet(s_2(a),j')$ of the second precurve iff there are an odd number of paths from $\bullet(s_2(a),j)$ to $\bullet(s_2(a),j')$ via $x$. If the two $f$-joins starting at $\bullet(s_2(a),j)$ and $\bullet(s_2(a),j')$, respectively, intersect in a point $z$, then the number of bigons from $x$ to $z$ agrees with the number of such paths, noting that the boundary orientation of each bigon is opposite to the orientation of the crossover arrows. We may now argue as for $D(\varphi_1)$. 
\end{proof}

\subsection{A formula for computing the homology of morphism spaces between curves}\label{subsec:formulaMor}

If $C$ and $C'$ are two precurves, then by Theorem~\ref{thm:EverythingIsLoopTypeUpToLocalSystems}, there are two collections of curves $L$ and $L'$ such that $C$ and $\Pi_i(L)$ as well as $C'$ and $\Pi_i(L')$ are homotopic, respectively. Thus, 
$\Mor(C,C')$ and $\Mor(L,L'):=\Mor(\Pi_i(L),\Pi_i(L'))$ are chain homotopic. If we put $\Pi_i(L)$ and $\Pi_i(L')$ into pairing position, Theorem~\ref{thm:PairingMorLagrangianFH} says that the homology of $\Mor(L,L')$ is graded isomorphic to $\LagrangianFH(L,L'):=\LagrangianFH(\Pi_i(L),\Pi_i(L'))$. The main result of this section says that we can actually compute $\LagrangianFH(L,L')$ without putting the curves into pairing position.

\begin{definition}
	Let $(L,L')$ be a pair of $\delta$-graded curves $L=(\gamma,X)$ and $L'=(\gamma',X')$ on a marked surface with arc system $(S,M,A)$ with $\dim X=:n$ and $\dim X'=:n'$. Assume that $\gamma$ and $\gamma'$ intersect minimally. Let $\mathbb{F}_2\langle\gamma\cap\gamma'\rangle$ denote the vector space over $\mathbb{F}_2$ spanned by intersection points between $\gamma$ and $\gamma'$. 
	Each intersection point can be $\delta$-graded in exactly the same way as intersection points in $\LagrangianFC(C,C')$. 
	If $\gamma$ and $\gamma'$ are parallel, let $\delta(\gamma,\gamma')$ be the unique real number one needs to add to the $\delta$-grading of each intersection point of $\gamma$ with arcs in $A$ such that $\gamma$ and $\gamma'$ agree as $\delta$-graded curves.
	For any non-negative integer $m$, let $V_\delta(m)$ be an $m$-dimensional vector space in $\delta$-grading $\delta\in\mathbb{R}$. 
\end{definition}

\begin{theorem}\label{thm:PairingFormula}
	With the notation from above, \(\LagrangianFH(L,L')\) is graded isomorphic to 
	\begin{equation}\label{eqn:MorSpacesNonparallel}
	V_0(n\cdot n')\otimes \mathbb{F}_2\langle\gamma\cap\gamma'\rangle,
	\end{equation}
	unless \(\gamma\) and \(\gamma'\) are parallel. If they are parallel, let us assume without loss of generality that their orientations agree. Then, \(\LagrangianFH(L,L')\) is graded isomorphic to 
	\begin{equation}\label{eqn:MorSpacesParallel}
	\Big(V_0(n\cdot n')\otimes\mathbb{F}_2\langle\gamma\cap\gamma'\rangle\Big)
	\oplus
	\Big(\!\big(V_0(1)\oplus V_{1}(1)\big)\otimes V_{\delta(\gamma,\gamma')}\left(\dim\left(\ker\left((X^{-1})^t\otimes X'-\id\right)\right)\right)\!\!\Big).
	\end{equation}
\end{theorem}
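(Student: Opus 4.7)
The proof applies Theorem~\ref{thm:PairingMorLagrangianFH} after placing $\Pi_i(L)$ and $\Pi_i(L')$ into pairing position, and then carries out a bigon count. First, one realises $\Pi_i(L)$ and $\Pi_i(L')$ geometrically as $n$ and $n'$ parallel copies of $\gamma$ and $\gamma'$ in small tubular neighbourhoods, with the local systems $X$ and $X'$ encoded by crossover arrows supported in a single arc neighbourhood each, as in the definition of $\Pi_i$. A small perturbation then achieves pairing position in the sense of Definition~\ref{def:StandardPairingPosition}, so that $\LagrangianFH(L,L') \cong H_\ast(\Mor(\Pi_i(L),\Pi_i(L')))$.

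For the non-parallel case, minimality of $\gamma\cap\gamma'$ ensures that each transverse intersection $p\in\gamma\cap\gamma'$ gives rise to an $n\times n'$-grid of lower intersection points confined to a small disc inside a single face around $p$, and that no further intersections occur. Because the curves meet transversally at $p$, no immersed bigon is contained in such a grid, and the minimal-position assumption prevents any bigon from connecting intersection points coming from different $p$. Hence each of the $n\cdot n'$ generators at $p$ survives to $\LagrangianFH$. A direct check via Definition~\ref{def:gradingVIAresolution} shows that all generators in one grid carry the same $\delta$-grading, namely the $\delta$-grading of $p$ read off from the underlying curves; after normalising this to $0$ we obtain the summand $V_0(n\cdot n')\otimes\mathbb{F}_2\langle\gamma\cap\gamma'\rangle$, which proves~\eqref{eqn:MorSpacesNonparallel}.

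When $\gamma$ and $\gamma'$ are parallel with matching orientation, we arrange the two packets of parallel copies in an annular neighbourhood of $\gamma\simeq\gamma'$, so that after pairing position all intersections of $\Pi_i(L)$ with $\Pi_i(L')$ lie in arc neighbourhoods and are controlled by the local systems. In this situation the bigon complex becomes purely algebraic: by the proof of Theorem~\ref{thm:PairingMorLagrangianFH}, $\LagrangianFH$ is the mapping cone of the connecting map $\beta\co H_\ast(\Mor^\times)\to H_\ast(\Mor^+)$. An element of $\Mor^\times(\Pi_i(L),\Pi_i(L'))$ is a coherent collection of matrices, one per arc that $\gamma$ crosses. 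Following the loop once around, the compatibility imposed by the moves (M1)-(M3) of Lemma~\ref{lem:CalculusForPreloops} and by the crossover arrows encoding $X$ and $X'$ reduces to the single closure condition $X'\varphi X^{-1}=\varphi$ for a representative $\varphi\co\mathbb{F}_2^n\to\mathbb{F}_2^{n'}$, which via the standard identity $\operatorname{vec}(X'\varphi X^{-1})=((X^{-1})^t\otimes X')\operatorname{vec}(\varphi)$ is equivalent to $((X^{-1})^t\otimes X'-\id)\,\operatorname{vec}(\varphi)=0$. An analogous analysis of $H_\ast(\Mor^+)$ via case~(e) of the bigon catalogue in Figure~\ref{fig:GensOfMor} shows that each pair of parallel $f$-joins in an annular strip contributes a single class that lives in two consecutive $\delta$-degrees (this is the origin of the $V_0(1)\oplus V_1(1)$ factor); the map $\beta$ intertwines these groups via the same operator $(X^{-1})^t\otimes X'-\id$. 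Passing to the mapping cone and incorporating the relative $\delta$-shift $\delta(\gamma,\gamma')$ between the two parallel curves yields~\eqref{eqn:MorSpacesParallel}.

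The principal obstacle is the parallel case. The geometric reduction to pairing position requires simultaneously pushing one packet of parallel copies slightly off the other, rerouting all crossover arrows through a common arc neighbourhood, and applying the moves (T1)-(T3) from Definition~\ref{def:precurvesGeometric} to combine the two local systems into the single matrix $(X^{-1})^t\otimes X'$ without introducing or losing any intersection points. Verifying that the resulting bigon count gives precisely the kernel and cokernel of $(X^{-1})^t\otimes X'-\id$ (and not some larger or smaller subquotient) demands a careful case-by-case analysis, mirroring the configurations of Figure~\ref{fig:GensOfMor} but now restricted to the annular neighbourhood between the two parallel curves; the doubling expressed by the factor $V_0(1)\oplus V_1(1)$ ultimately reflects the two boundary circles of this annulus. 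Once these local computations are completed, both parts of the formula follow by assembling the contributions from every $p\in\gamma\cap\gamma'$ and, if applicable, the parallel annular contribution.
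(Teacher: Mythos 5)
There is a genuine gap, and it occurs at the heart of the argument in both cases. In the non-parallel case you assert that, after perturbing $\Pi_i(L)$ and $\Pi_i(L')$ into pairing position, the only intersection points are the $n\times n'$ grids sitting over the minimal geometric intersections $\gamma\cap\gamma'$, and that no bigons occur. This is false: pairing position (Definition~\ref{def:StandardPairingPosition}, see Figure~\ref{fig:StandardPairingPosition}) forces the two precurves to cross inside $N(a)$ for \emph{every} arc $a$ that both curves traverse, producing upper generators of $\LagrangianFC^{\times}$, together with additional lower generators joined to them by bigons. The complex $\LagrangianFC(C,C')$ is therefore far from reduced, and the content of the theorem is precisely that its homology nevertheless equals the minimal intersection count. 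The missing step is the cancellation argument: each upper generator is the source of at most two bigons and each lower generator the target of at most two, so the complex splits into ``zig-zag'' chains; linear chains have homology of dimension $0$ or $1$ (cancelling one bigon at a time corresponds to isotoping one curve across the other), and only the surviving generators match $\mathbb{F}_2\langle\gamma\cap\gamma'\rangle$. Without this your claim that ``each of the $n\cdot n'$ generators at $p$ survives'' is unjustified, and the identification $\LagrangianFC(L,L')\cong V_0(n\cdot n')\otimes\LagrangianFC(\gamma,\gamma')$ must be made at the chain level, with the local systems entering only through the bigons that cross the side $s_2(a)$ carrying the matrices, where the bigon count is $(X^{-1})^t\otimes X'$ rather than the identity.

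In the parallel case your algebra is also misplaced. The space $\Mor^{\times}(C,C')$ carries no ``closure condition'' of the form $X'\varphi X^{-1}=\varphi$; it is the direct sum over all arcs of full matrix spaces (the constraint $\varphi_2\circ P_a=P'_a\circ\varphi_1$ only ties the two sides of one arc together, so each summand is still a full matrix space). The operator $(X^{-1})^t\otimes X'-\id$ arises only after running the zig-zag cancellation on the single \emph{cyclic} chain: one is left with a two-term periodic complex $\bullet\rightrightarrows\bullet$ whose two arrows are $\id$ and $(X^{-1})^t\otimes X'$, and whose homology is $\ker\bigl((X^{-1})^t\otimes X'-\id\bigr)$ together with a degree-shifted copy --- this, not ``the two boundary circles of the annulus'', is the source of the factor $V_0(1)\oplus V_1(1)$. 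Since your final paragraph concedes that the required verification ``demands a careful case-by-case analysis'' that you do not carry out, the proposal as written does not establish the formula.
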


\begin{figure}[t]\centering
	{
		\psset{unit=0.3}
		\begin{pspicture}(-18,-3.5)(18,3.5)
		
		\rput(-12,0){
			\pscustom*[linecolor=lightgray]{
				\pscurve(-3,0)(-1,1.5)(1,1.5)(3,0)
				\pscurve[liftpen=2](3,0)(1,-1.5)(-1,-1.5)(-3,0)
			}
			\pscurve[linecolor=red](-3,0)(-1,1.5)(1,1.5)(3,0)
			\pscurve[linecolor=blue](-3,0)(-1,-1.5)(1,-1.5)(3,0)
			\psline[linecolor=darkgray]{->}(-0.3,-1.66)(-0.4,-1.66)
			\psline[linecolor=darkgray]{<-}(0.4,1.66)(0.3,1.66)
			\psline[linecolor=gray]{->}(-2.5,0)(2.5,0)
		}
		
		\rput(-6,0){
			\pscustom*[linecolor=lightgray]{
				\pscurve(-3,0)(-1,1.5)(1,1.5)(3,0)
				\pscurve[liftpen=2](3,0)(1,-1.5)(-1,-1.5)(-3,0)
			}
			\pscurve[linecolor=blue](-3,0)(-1,1.5)(1,1.5)(3,0)
			\pscurve[linecolor=red](-3,0)(-1,-1.5)(1,-1.5)(3,0)
			\psline[linecolor=darkgray]{->}(-0.3,1.66)(-0.4,1.66)
			\psline[linecolor=darkgray]{<-}(0.4,-1.66)(0.3,-1.66)
			\psline[linecolor=gray]{<-}(-2.5,0)(2.5,0)
		}
		
		\rput(0,0){
			\pscustom*[linecolor=lightgray]{
				\pscurve(-3,0)(-1,1.5)(1,1.5)(3,0)
				\pscurve[liftpen=2](3,0)(1,-1.5)(-1,-1.5)(-3,0)
			}
			\pscurve[linecolor=red](-3,0)(-1,1.5)(1,1.5)(3,0)
			\pscurve[linecolor=blue](-3,0)(-1,-1.5)(1,-1.5)(3,0)
			\psline[linecolor=darkgray]{->}(-0.3,-1.66)(-0.4,-1.66)
			\psline[linecolor=darkgray]{<-}(0.4,1.66)(0.3,1.66)
			\psline[linecolor=gray]{->}(-2.5,0)(2.5,0)
		}
		
		\rput(6,0){
			\pscustom*[linecolor=lightgray]{
				\pscurve(-3,0)(-1,1.5)(1,1.5)(3,0)
				\pscurve[liftpen=2](3,0)(1,-1.5)(-1,-1.5)(-3,0)
			}
			\pscurve[linecolor=blue](-3,0)(-1,1.5)(1,1.5)(3,0)
			\pscurve[linecolor=red](-3,0)(-1,-1.5)(1,-1.5)(3,0)
			\psline[linecolor=darkgray]{->}(-0.3,1.66)(-0.4,1.66)
			\psline[linecolor=darkgray]{<-}(0.4,-1.66)(0.3,-1.66)
			\psline[linecolor=gray]{<-}(-2.5,0)(2.5,0)
		}
		
		\rput(12,0){
			\pscustom*[linecolor=lightgray]{
				\pscurve(-3,0)(-1,1.5)(1,1.5)(3,0)
				\pscurve[liftpen=2](3,0)(1,-1.5)(-1,-1.5)(-3,0)
			}
			\pscurve[linecolor=red](-3,0)(-1,1.5)(1,1.5)(3,0)
			\pscurve[linecolor=blue](-3,0)(-1,-1.5)(1,-1.5)(3,0)
			\psline[linecolor=darkgray]{->}(-0.3,-1.66)(-0.4,-1.66)
			\psline[linecolor=darkgray]{<-}(0.4,1.66)(0.3,1.66)
			\psline[linecolor=gray]{->}(-2.5,0)(2.5,0)
		}
		
		%right
		\rput(12,0){
			\psline*[linecolor=lightgray](3,0)(4,-1)(4,1)(3,0)
			\psline[linecolor=red](3,0)(4,-1)
			\psline[linecolor=blue](3,0)(4,1)	
		}
		%left
		\rput(-12,0){
			\psline*[linecolor=lightgray](-3,0)(-4,-1)(-4,1)(-3,0)
			\psline[linecolor=blue](-3,0)(-4,1)
			\psline[linecolor=red](-3,0)(-4,-1)
		}
		
		\psdots(-15,0)(-9,0)(-3,0)(3,0)(9,0)(15,0)
		
		\rput(0,3){$\textcolor{red}{L}$}
		\rput(0,-3){$\textcolor{blue}{L'}$}
		\rput(-16,0){$\dots$}
		\rput(16,0){$\dots$}
		
		\end{pspicture}
	}
	\caption{A ``zig-zag'' chain of bigons.}\label{fig:BigonChain}
\end{figure}
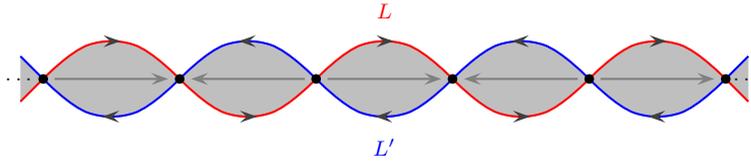

\begin{proof}
	Let us start by considering the case when the local systems of both curves are trivial, ie $n=n'=1$, so $X=X'=\id$. Then, any upper intersection point is the source of at most two bigons and each lower intersection point is the target of at most two bigons. Thus, if we start at any intersection point and follow the bigons in either direction, we obtain a ``zig-zag'' chain of intersection points which are connected by bigons, see Figure~\ref{fig:BigonChain} for an illustration. If $L$ and $L'$ are not parallel, $\LagrangianFC(L,L')$ decomposes into a direct sum of finitely many linear chains, which look like
	$$
	\begin{tikzcd}
	\bullet
	\arrow[leftarrow]{r}
	&
	\bullet
	&
	\cdots
	\arrow[leftarrow]{l}
	\arrow[leftarrow]{r}
	&
	\bullet
	&
	\bullet
	\arrow[leftarrow]{l}
	\end{tikzcd},
	$$
	$$
	\begin{tikzcd}
	\bullet
	\arrow{r}
	&
	\bullet
	&
	\cdots
	\arrow{l}
	\arrow{r}
	&
	\bullet
	&
	\bullet
	\arrow{l}
	\end{tikzcd}
	$$
	or
	$$
	\begin{tikzcd}
	\bullet
	\arrow{r}
	&
	\bullet
	&
	\cdots
	\arrow{l}
	&
	\bullet
	\arrow{l}
	\arrow{r}
	&
	\bullet
	\end{tikzcd}.
	$$
	In the first two cases, the number of intersection points is odd with an even number of upper, respectively lower intersection points. In the third case, the number of intersection points is even. The homology in the first two cases is 1 and in the third 0, which can be seen by cancelling one arrow at a time. This corresponds to sliding one curve over the other to remove one bigon at a time, until there is only one intersection point left or none. 
	
	If $L$ and $L'$ are parallel, there is also a cyclic ``zig-zag'' chain:
	$$
	\begin{tikzcd}
	\bullet
	\arrow{r}
	&
	\bullet
	&
	\bullet
	\arrow{l}
	\arrow{r}
	&
	\cdots
	\arrow{r}
	&
	\bullet
	&
	\bullet
	\arrow{r}
	\arrow{l}
	&
	\bullet
	\arrow[leftarrow,rounded corners,%
	to path={ -- ([xshift=2ex]\tikztostart.east)
		|-  ([xshift=-2ex,yshift=-4ex]\tikztostart.east)
		-| ([xshift=-2ex,yshift=-4ex]\tikztotarget.west)
		-|
		([xshift=-2ex]\tikztotarget.west)
		-- (\tikztotarget)}]{llllll}
	\end{tikzcd}\bigskip
	$$
	In this case, we can apply the same procedure to obtain a cyclic chain with just two intersection points:
	$$
	\begin{tikzcd}
	\bullet
	\arrow[bend left]{r}
	\arrow[bend right]{r}
	&
	\bullet
	\end{tikzcd}
	=\left(\bullet\oplus\bullet\right).
	$$
	So in this case, the homology is 2-dimensional. Geometrically, this corresponds to removing all bigons except the last two. In order to compute the minimal intersection  number of $\gamma$ and $\gamma'$, however, we need to remove these two bigon as well. So these additional two generators make up the extra term in formula~\eqref{eqn:MorSpacesParallel}. 
	
	The general case with potentially non-trivial local systems is shown similarly, by assuming that the underlying curves $\gamma$ and $\gamma'$, considered as precurves, are in pairing position. Each intersection point $x\in\LagrangianFC(\gamma,\gamma')$ corresponds to $n\cdot n'$ intersection points of $\Pi_i(L)$ and $\Pi_i(L')$.
	Let us number the copies of $\gamma$ and $\gamma'$ in $\Pi_i(L)$ and $\Pi_i(L')$ such that we can index the intersection points corresponding to $x$ by pairs $(i,i')$. Let us write $(i,i')\otimes x$ for the generator indexed by $(i,i')$. Thus, we can identify $\LagrangianFC(L,L')$ with $V_0(n\cdot n')\otimes \LagrangianFC(\gamma, \gamma')$.
	
	Suppose the precurves $\Pi_i(L)$ and $\Pi_i(L')$ do not have any generators on a common arc. In this case, they are certainly not parallel and there are also no bigons; so the theorem follows immediately. If $\Pi_i(L)$ and $\Pi_i(L')$ have generators on a common arc, let us put the matrices $X$ and $X'$ on such an arc $a$. Let us also assume for the moment, that $\gamma$ and $\gamma'$ pass through this arc from its right to its left, such that the non-identity block of $P_a$ is $X$ and the one of $P'_a$ is $X'$.	For an illustration, see Figure~\ref{fig:ParallelLoopsExtraMorphisms}. Then, the local systems only impact the bigons which cross the side $s_2(a)$. More precisely, suppose $x,z\in \LagrangianFC(\gamma, \gamma')$ and there is a bigon from $x$ to $z$. If this bigon does not cross $s_2(a)$, the bigon count from $V_0(n\cdot n')\otimes x$ to $V_0(n\cdot n')\otimes z$ is given by the identity matrix. Otherwise, the bigon count is given by $(X^{-1})^t\otimes X'$, because the bigon count from $(i,i')\otimes x$ to $(j,j')\otimes z$ is given by $(X^{-1})_{ij}\cdot X'_{j'i'}$ as in the proof of Theorem~\ref{thm:PairingMorLagrangianFH}. 
	
	Since $X$ and $X'$ have full rank, $(X^{-1})^t\otimes X'$ has full rank, too. So for linear chains, we can now argue as before by doing cancellations which remove all bigons corresponding to a single bigon between $\gamma$ and $\gamma'$ at a time. By applying this procedure to cyclic chains, we now arrive at
	$$
	\begin{tikzcd}
	\bullet
	\arrow[bend left]{r}{\id}
	\arrow[bend right,swap]{r}{(X^{-1})^t\otimes X'}
	&
	\bullet
	\end{tikzcd}.
	$$
	The homology of this complex is given by the direct sum of a copy of the kernel of $(X^{-1})^t\otimes X'-\id$ and another such copy shifted up by 1 in $\delta$-grading. 
	
	Finally, by definition, changing the orientation of $\gamma$ or $\gamma'$ only inverts and transposes the local systems. So for non-parallel curves, the formula does not change. If $\gamma$ and $\gamma'$ are parallel, we are assuming that they are oriented in the same direction. So if they pass through the arc $a$ from left to right, the non-identity block of $P_a$ is $X^t$ and the one of $P'_a$ is $X^{\prime t}$. Now observe that
	$X^{-1}\otimes X^{\prime t}-\id$ has the same rank as $(X^{-1})^t\otimes X'-\id$.
\end{proof}

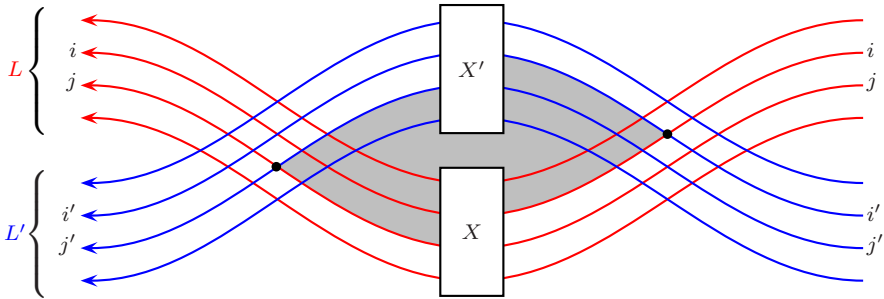
\begin{figure}[t]\centering
	\psset{unit=0.43}
	\begin{pspicture}(-15,-5.5)(15,5.5)
	
	\pscustom*[linecolor=lightgray]{
		\psecurve(-12,3)(0,-2)(12,3)(24,-2)
		\psline(12,3)(12,-2)
		\psecurve(24,3)(12,-2)(0,3)(-12,-2)
		\psline(0,3)(0,-2)
	}
	\psframe*[linecolor=white](6,-10)(20,10)
	
	\pscustom*[linecolor=lightgray]{
		\psecurve(12,2)(0,-3)(-12,2)(-24,-3)
		\psline(-12,2)(-12,-3)
		\psecurve(-24,2)(-12,-3)(0,2)(12,-3)
		\psline(0,2)(0,-3)
	}
	\psframe*[linecolor=white](-6,-10)(-20,10)
	
	\psframe*[linecolor=lightgray](-1,1.5)(1,-1.5)

	\rput(-11.2,2.5){$\left\{\textcolor{white}{\rule[-0.8cm]{1.8cm}{1.8cm}}\right.$}
	\rput{0}(-14,2.5){$\textcolor{red}{L}$}
	
	\rput(-11.2,-2.5){$\left\{\textcolor{white}{\rule[-0.8cm]{1.8cm}{1.8cm}}\right.$}
	\rput{0}(-14,-2.5){$\textcolor{blue}{L'}$}
	
	\psecurve[linecolor=red]{<-}(-24,-1)(-12,4)(0,-1)(12,4)(24,-1)
	\psecurve[linecolor=red]{<-}(-24,-2)(-12,3)(0,-2)(12,3)(24,-2)
	\psecurve[linecolor=red]{<-}(-24,-3)(-12,2)(0,-3)(12,2)(24,-3)
	\psecurve[linecolor=red]{<-}(-24,-4)(-12,1)(0,-4)(12,1)(24,-4)
	
	\psecurve[linecolor=blue]{<-}(-24,1)(-12,-4)(0,1)(12,-4)(24,1)
	\psecurve[linecolor=blue]{<-}(-24,2)(-12,-3)(0,2)(12,-3)(24,2)
	\psecurve[linecolor=blue]{<-}(-24,3)(-12,-2)(0,3)(12,-2)(24,3)
	\psecurve[linecolor=blue]{<-}(-24,4)(-12,-1)(0,4)(12,-1)(24,4)
	
	\psframe[fillcolor=white,fillstyle=solid](-1,0.5)(1,4.5)
	\psframe[fillcolor=white,fillstyle=solid](-1,-0.5)(1,-4.5)
	\rput(0,2.5){$X'$}
	\rput(0,-2.5){$X$}
	
	\rput[r](-12,3.1){$i\,$}
	\rput[r](-12,2.1){$j\,$}
	\rput[l](12,3.1){$\,i$}
	\rput[l](12,2.1){$\,j$}
	
	\rput[r](-12,-2.9){$j'\,$}
	\rput[r](-12,-1.9){$i'\,$}
	\rput[l](12,-2.9){$\,j'$}
	\rput[l](12,-1.9){$\,i'$}
	
	\psdots(6,0.5)(-6,-0.5)
	
	%\rput[l](6.5,0.5){$(i,i')$}
	%\rput[r](-6.5,-0.5){$(j,j')$}
	
	%		\pscustom*[linecolor=lightgray]{
	%			\psline[liftpen=1,linearc=1,cornersize=absolute](12,0)(12,-12)(0,-12)
	%			\psline(0,-12)(0,0)(12,0)
	%		}
	%		\psline*[linecolor=white,linewidth=0pt](0,0)(0,-8)(8,-8)(8,0)(0,0)	
	
	\end{pspicture}
	\caption{An illustration of the identification of bigons between parallel curves and the matrix $(X^{-1})^t\otimes X'$. Up to conventions, this is the same as~\protect\cite[Figure~43]{HRW}.}\label{fig:ParallelLoopsExtraMorphisms}
\end{figure}

\subsection{Classification of curved complexes}\label{subsec:complete_classification}

\begin{theorem}\label{thm:CompleteClassification}
	Let \(L=\{(\gamma_i,X_i)\}_{i\in I}\) and \(L'=\{(\gamma'_{i'},X'_{i'})\}_{i'\in I'}\) be two collections of loops. Then \(\Pi_i(L)\) is homotopic to~\(\Pi_i(L')\) iff there is a bijection \(\iota\co I\rightarrow I'\) such that \(\gamma_i\)~is homotopic to~\(\gamma'_{\iota(i)}\) and \(X_i\)~is similar to~\(X'_{\iota(i)}\). The same holds if we replace \(\Pi_i\) by \(\Pi\).
\end{theorem}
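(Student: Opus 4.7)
The direction ``$\Leftarrow$'' is immediate from the construction: a homotopy of underlying curves through curves and a similarity of local systems produces a chain isomorphism between $\Pi_i(L)$ and $\Pi_i(L')$.

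For the converse, suppose $\Pi_i(L)\simeq \Pi_i(L')$. By the last statement of Theorem~\ref{thm:PairingMorLagrangianFH}, this homotopy equivalence induces an isomorphism
\[\LagrangianFH(L,L'')\cong \LagrangianFH(L',L'')\]
of $\delta$-graded vector spaces for every collection $L''$ of $\delta$-graded curves. Theorem~\ref{thm:PairingFormula} describes both sides as a sum of ``intersection'' contributions (of total dimension $\dim X_i\cdot \dim X''\cdot |\gamma_i\cap\gamma''|$) and, for each pair of parallel components, an additional ``parallelism'' contribution of dimension $2\dim\ker((X_i^{-1})^t\otimes X''-\id)$ concentrated in two consecutive $\delta$-gradings.

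The plan is now to recover the data of $L$ from these Lagrangian intersection Floer homologies by varying $L''$. \emph{Step 1 (matching underlying curves).} For each free homotopy class $[\gamma]$ on $(S,M)$ and each $X\in\GL_m(\mathbb{F}_2)$, test against $L''=\{(\gamma,X)\}$. The parallelism contribution, being rank $2$ and living in adjacent $\delta$-gradings determined by $\delta(\gamma_i,\gamma)$, serves as a detector of whether $[\gamma]$ occurs among the underlying curves of $L$. Since collections of loops have pairwise non-homotopic underlying curves, for each $[\gamma]$ there is at most one $i$ with $\gamma_i\simeq\gamma$; comparing the graded dimensions for $L$ and $L'$ while letting $\dim X$ vary allows one to separate the linearly-growing intersection part from the parallelism part, and hence to recover the set of underlying homotopy classes together with the dimensions $\dim X_i$. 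This yields a bijection $\iota\co I\to I'$ with $\gamma_i\simeq\gamma'_{\iota(i)}$ and $\dim X_i=\dim X'_{\iota(i)}$.

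\emph{Step 2 (matching local systems).} Fix $i\in I$ and write $\iota=\iota(i)$. For any $X\in\GL_m(\mathbb{F}_2)$, testing with $L''=\{(\gamma_i,X)\}$ and using Step~1 to cancel the intersection contributions from all $j\neq i$ (which only depend on underlying curves) gives
\[\dim\ker\!\bigl((X_i^{-1})^t\otimes X-\id\bigr)=\dim\ker\!\bigl((X'^{-1}_{\iota})^t\otimes X-\id\bigr)\]
for every invertible $X$. The left-hand side equals $\dim\{M:XM=MX_i\}$, i.e., the dimension of the space of $\mathbb{F}_2[t,t^{-1}]$-module homomorphisms from $V_X$ to $V_{X_i}$, where $V_Y$ denotes the module on which $t$ acts as $Y$. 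Since $X_i$ and $X'_\iota$ are invertible, their modules have trivial $t$-primary part, and by a standard Yoneda-type argument for finitely generated $\mathbb{F}_2[t,t^{-1}]$-modules (which can be read off from agreement of Hom-dimensions against all companion-matrix test modules of invertible polynomials), we conclude $V_{X_i}\cong V_{X'_\iota}$, i.e., $X_i\sim X'_\iota$. The statement for $\Pi$ then follows via Corollary~\ref{cor:SplittingCatsForEquivalenceComplexes}.

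The main obstacle is Step~1: we must cleanly disentangle the intersection and parallelism contributions to recover the underlying homotopy classes. The $\delta$-grading provides much of the separation (parallelism terms always occur as $V_\delta\oplus V_{\delta+1}$ with equal dimensions), but intersection points may a priori land in the same gradings. The intersection contribution scales linearly in $\dim X$ while $\dim\ker((X_i^{-1})^t\otimes X-\id)$ has a more constrained growth, so by scaling $\dim X$ one can isolate the parallelism contribution. Once underlying curves are matched, Step~2 is clean linear algebra.
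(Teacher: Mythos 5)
Your overall strategy is the paper's: probe both sides with test curves, use Theorem~\ref{thm:PairingMorLagrangianFH} and Theorem~\ref{thm:PairingFormula} to express the resulting graded dimensions as an ``intersection'' part plus a ``parallelism'' part, and then recover first the underlying curves with their dimensions and afterwards the similarity classes of the local systems. Your Step~2 is essentially the paper's second half in module-theoretic language (the paper spells out the ``Yoneda-type argument'' concretely via Frobenius normal forms, a minimal index $m$ with $f_m\neq f'_m$, and the inequality $\dim\ker(f_m(X_{f'_m}))<\dim X_{f'_m}$), and that part is fine.

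The genuine gap is in Step~1. The assertion that $\dim\ker\bigl((X_i^{-1})^t\otimes X-\id\bigr)$ ``has a more constrained growth'' than the intersection term as $\dim X$ grows is false for arbitrary $X$: taking $X=\id_m$ gives $\dim\ker\bigl((X_i^{-1})^t\otimes\id_m-\id\bigr)=m\cdot\dim\ker(X_i-\id)$, which grows linearly in $m$, exactly like the intersection contribution. So ``scaling $\dim X$'' does not by itself isolate the parallelism term, and Step~1 does not go through as written. The paper's fix is a specific choice of test local system: the companion matrix $X_{f_N}$ of $f_N=(x^{N-\deg p-\deg p'}+1)\,p\,p'$, where $p,p'$ are the minimal polynomials of $X_j$ and $X'_j$. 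By Lemma~\ref{lem:reformulationofparalleldimensioncount} the parallelism term is then $2\dim\ker(f_N(X_j))=2\dim X_j$, a \emph{constant}, while the intersection term is linear in $N=\deg f_N$; comparing two linear functions of $N$ recovers both the weighted intersection numbers and $\dim X_j=\dim X'_j$. Note also that the paper sidesteps your ``detection of homotopy classes'' problem entirely by first adjoining formal curves with $0$-dimensional local systems to both collections, so that a bijection $\iota$ with $\gamma_i=\gamma'_{\iota(i)}$ may be assumed from the start and the matching of curves is subsumed in the statement $\dim X_i=\dim X'_{\iota(i)}$ (a curve is absent precisely when that dimension is $0$). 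With the companion-matrix test curves inserted into your Step~1, your argument becomes the paper's proof.
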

 \begin{remark}
 		We expect the same theorem to hold for general curves ie compact \emph{and non-compact} ones. The arguments that we use in the proof of Theorem~\ref{thm:CompleteClassification} rely on different growth properties of the dimensions of the two summands in \eqref{eqn:MorSpacesParallel} from Theorem~\ref{thm:PairingFormula} under pairing with particular test curves. However, the second term simply vanishes for non-compact curves and thus, a separate argument would be needed in this case.
 \end{remark}
 
 \begin{corollary}\label{cor:AddingBasepointFunctorIsFaithfulUpToHom}
 	%	Consider a marked surface \((S,M)\) with an arc system \(A\). Suppose there is a face \(f\in F(S,M,A)\) which is closed. Let \(m'\) be a point in \(f\cap\partial S\) and write \(M=M\cup\{m'\}\). Then, two objects in \(\CC(S,M,A)\) are homotopic iff their images under the induced functor
 	%	\[\CC(S,M,A)\rightarrow\CC(S,M,A)\]
 	%	are homotopic.
 	Consider a marked surface \((S,\emptyset)\) with an arc system \(A\). Let \(M\) be a set of points on \(\partial S\), such that every face \(f\in (S,\emptyset,A)\) contains at most one point in \(M\). Then, two objects in \(\CC(S,\emptyset,A)\) are homotopic iff their images under the induced functor
 	\[\CC(S,\emptyset,A)\rightarrow\CC(S,M,A)\]
 	are homotopic.
 \end{corollary}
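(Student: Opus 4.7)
The proof proposal is to reduce to the classification of curved complexes by collections of loops, where the hypothesis on $M$ ensures that the functor in question respects this classification.

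The forward implication ($C_1 \simeq C_2 \Rightarrow F(C_1) \simeq F(C_2)$, where $F$ denotes the induced functor) is immediate from functoriality: chain homotopy equivalences are preserved by any additive, differential-respecting functor. So the real content lies in the converse, and the plan is to use Theorems~\ref{thm:EverythingIsLoopTypeUpToLocalSystems} and~\ref{thm:CompleteClassification} to translate the homotopy classification in both categories into collections of loops. Specifically, since $(S,\emptyset)$ has no basepoints, every face of $(S,\emptyset,A)$ is closed, so by Observation~\ref{obs:CompactPreCurves} each $C_i$ is chain homotopic to $\Pi_{(S,\emptyset,A)}(L_i)$ for some collection $L_i$ of compact curves (loops with local systems).

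The key step will then be to verify the compatibility claim
\[
F\bigl(\Pi_{(S,\emptyset,A)}(L)\bigr) \;\cong\; \Pi_{(S,M,A)}(L)
\]
for any collection $L$ of compact curves. This should follow directly from the constructions: a compact curve produces a simply-faced precurve all of whose $f$-joins are two-sided, with length strictly less than $n_f$ for every adjacent face $f$ (by the $\partial^2$-relation as used in the proof of Proposition~\ref{prop:PreloopToCC}). Consequently, all algebra elements labelling its differential lie in a subspace of $\mathcal{A}(S,\emptyset,A)$ on which the quotient map $\mathcal{A}(S,\emptyset,A)\twoheadrightarrow\mathcal{A}(S,M,A)$ is injective, while the curvature $U_{(S,\emptyset,A)}=\sum_f U_f$ maps to $U_{(S,M,A)}$ (the $U_f$ for open faces in $(S,M,A)$ become $0$, but they did not appear in the differential anyway). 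With a suitably chosen $\delta$-grading on $\mathcal{A}(S,M,A)$ (namely the one inherited from $\mathcal{A}(S,\emptyset,A)$ through the quotient), the two curved complexes are identified on the nose.

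Given this compatibility, the hypothesis $F(C_1)\simeq F(C_2)$ yields $\Pi_{(S,M,A)}(L_1)\simeq \Pi_{(S,M,A)}(L_2)$, so Theorem~\ref{thm:CompleteClassification} provides a bijection $\iota:I\to I'$ identifying the underlying loops up to homotopy in $S$ and the local systems up to similarity. Since both notions are intrinsic to $S$ and do not depend on $M$, the collections $L_1$ and $L_2$ agree as elements of $\loops(S,\emptyset,A)$, whence $\Pi_{(S,\emptyset,A)}(L_1)\simeq \Pi_{(S,\emptyset,A)}(L_2)$ by the same theorem applied in the source, and therefore $C_1\simeq C_2$.

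The main (and really only) obstacle is the verification of the compatibility $F\circ \Pi_{(S,\emptyset,A)}\simeq\Pi_{(S,M,A)}$, which is where the hypothesis that every face of $(S,\emptyset,A)$ contains at most one point of $M$ enters: without this, a face of $(S,\emptyset,A)$ could break into several faces of $(S,M,A)$, the induced map on algebras would no longer be a quotient of the form considered in Definition~\ref{def:InducedFunctors}, and the geometric identification of $\Pi$ would fail. Beyond that, all ingredients are in place and the remaining steps are formal consequences of the classification.
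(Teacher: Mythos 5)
Your overall strategy coincides with the paper's: forward direction by functoriality, reduction of both objects to collections of loops with local systems via Theorem~\ref{thm:EverythingIsLoopTypeUpToLocalSystems} (and Observation~\ref{obs:CompactPreCurves}), a compatibility statement $F\circ\Pi_{(S,\emptyset,A)}\cong\Pi_{(S,M,A)}$ on loops, and then Theorem~\ref{thm:CompleteClassification} applied in source and target. However, your justification of the compatibility step --- which you correctly single out as the only real content --- is wrong. You claim that since every $f$-join of the precurve $\Pi_{(S,\emptyset,A)}(L)$ has length strictly less than $n_f$, all labels of its differential lie in a subspace of $\mathcal{A}(S,\emptyset,A)$ on which the quotient map to $\mathcal{A}(S,M,A)$ is injective, so that the two curved complexes are ``identified on the nose''. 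This is false: the quotient kills the length-one elementary algebra element $p_m$ corresponding to the boundary segment containing a basepoint $m\in M$, and hence kills \emph{every} path traversing that segment, irrespective of its length. For a closed face $f$ of $(S,\emptyset,A)$ that acquires a basepoint, each two-sided $f$-join of $\Pi_{(S,\emptyset,A)}(L)$ consists of \emph{two} arrows, labelled $p^s_{s'}$ and $p^{s'}_s$ with composition $U_f$; since these two paths together traverse the entire boundary of $f$, exactly one of them passes through the marked segment and is annihilated by the functor. So the differential is not preserved injectively.

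The compatibility itself is still true, but for precisely the opposite reason to the one you give: the arrow that gets killed is exactly the arrow that $\Pi_{(S,M,A)}$ never draws, because for an open face the geometric dictionary of Definition~\ref{def:precurvesGeometric} assigns to an $f$-join a \emph{single} differential component (the one avoiding the basepoint) rather than a pair of components composing to $U_f$. This is how the paper argues: adding one basepoint to a basepoint-free face removes exactly one of the two arrows corresponding to each $f$-join under $\Pi_i$. With that correction in place, the remainder of your argument --- that homotopy and similarity of loops and local systems are intrinsic to $S$ and independent of $M$, so Theorem~\ref{thm:CompleteClassification} transfers the conclusion back to $\CC(S,\emptyset,A)$ --- goes through as written.
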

 \begin{proof}%[of Corollary~\ref{cor:AddingBasepointFunctorIsFaithfulUpToHom}]
 	The induced functor is a functor of dg categories, as it is induced by the quotient map 
 	\[\mathcal{A}(S,\emptyset,A)\rightarrow\mathcal{A}(S,\emptyset,A)/\{p_{m}=0\mid m\in M\}=\mathcal{A}(S,M,A),\]
 	where $p_{m}$ is the algebra element corresponding to the boundary component in which $m\in M$ lies. Thus, the images of two homotopic objects are homotopic. So we may assume that the two objects are direct sums of loops with local systems. The images of such objects are represented by the same loops with local systems, because adding a single basepoint to a face $f$ without any basepoints has the effect of removing exactly one of the two arrows that correspond to an $f$-join under $\Pi_i$. By Theorem~\ref{thm:CompleteClassification}, these loops with local systems represent homotopic objects in both $\CC(S,\emptyset,A)$ and $\CC(S,M,A)$ iff the curves are the same and the local systems are equivalent.
 \end{proof}

We now turn to the proof of Theorem~\ref{thm:CompleteClassification}.

\begin{definition}\label{def:companionmatrix}
	Given a polynomial 
	\[f=x^n+\sum_{i=0}^{n-1} a_i x^i\in\mathbb{F}_2[x],\]
	define the \textbf{companion matrix $X_f$ of $f$} to be the matrix
	\[
	X_f:=\left(
	\begin{matrix}
	0 & \phantom{a_1} & \phantom{a_1} & a_0\\
	1 & \ddots & \phantom{a_1} & a_1\\
	\phantom{a_1}& \ddots & 0 & \vdots\\
	\phantom{a_1} & \phantom{a_1} & 1 & a_{n-1}
	\end{matrix}\right)\in\GL_n(\mathbb{F}_2).
	\]
	Note that $X_f$ is invertible iff $a_0\neq 0$. Also, the minimal polynomial of $X_f$ is $f$, so that for any polynomial $g\in\mathbb{F}_2[x]$, $g(X_f)=0$ iff $f\vert g$. A diagonal block matrix of the form
	\[
	\left(
	\begin{matrix}
	X_{f_1} & \phantom{X_{f_1}}& \phantom{X_{f_1}}\\
	\phantom{X_{f_1}}& \ddots &\phantom{X_{f_1}}\\
	\phantom{X_{f_1}}& \phantom{X_{f_1}} & X_{f_r}
	\end{matrix}\right), \text{ where $f_1,\dots,f_r\in\mathbb{F}_2[x],$}
	\]
	is in Frobenius normal form if $f_{i+1}\vert f_{i}$ for all $i=1,\dots,r-1$. 
\end{definition}

\begin{theorem}
	Every matrix is similar to a matrix in Frobenius normal form. Two matrices are similar iff they have the same Frobenius normal form. 
\end{theorem}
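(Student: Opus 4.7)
The plan is to deduce this classical result from the structure theorem for finitely generated modules over the principal ideal domain $\mathbb{F}_2[x]$, so I would simply sketch the standard module-theoretic proof and cite a textbook for the details.

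First, given a matrix $A\in\Mat_n(\mathbb{F}_2)$, I would turn $V=\mathbb{F}_2^n$ into an $\mathbb{F}_2[x]$-module $V_A$ by letting $x$ act as $A$. Since $\mathbb{F}_2[x]$ is a PID and $V_A$ is finitely generated and torsion (as $A$ satisfies its characteristic polynomial), the structure theorem yields a decomposition
\[V_A \cong \mathbb{F}_2[x]/(f_1)\oplus\cdots\oplus\mathbb{F}_2[x]/(f_r)\]
with monic $f_i\in\mathbb{F}_2[x]$ satisfying $f_{i+1}\mid f_i$; the polynomials $f_i$ (the invariant factors) are uniquely determined by $V_A$.

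Next, I would observe that on each cyclic summand $\mathbb{F}_2[x]/(f_i)$, choosing the ordered basis $\{1, x, x^2, \ldots, x^{\deg f_i-1}\}$ makes the action of $x$ into precisely the companion matrix $X_{f_i}$ of Definition~\ref{def:companionmatrix}. Assembling these bases across the direct sum produces a basis of $V$ in which $A$ is represented by the block diagonal Frobenius matrix built from $X_{f_1},\dots,X_{f_r}$; since change of basis is conjugation, this shows $A$ is similar to a Frobenius normal form matrix, proving existence.

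For uniqueness, I would note that two matrices $A$, $A'$ are similar precisely when the associated modules $V_A$ and $V_{A'}$ are isomorphic as $\mathbb{F}_2[x]$-modules. By the uniqueness clause of the structure theorem, this happens iff their invariant factors coincide, and hence iff their Frobenius normal forms are equal. The only mildly delicate step is verifying that distinct Frobenius forms really give non-isomorphic modules, but this is immediate from the uniqueness of invariant factors in the structure theorem. As this is entirely standard linear algebra, I would in practice just refer the reader to a reference such as Dummit--Foote or Jacobson for a fully detailed proof.
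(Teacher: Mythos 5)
Your proposal is correct and is exactly the standard argument that the paper has in mind when it dismisses the statement with ``This is standard linear algebra'': existence and uniqueness of the Frobenius normal form via the invariant factor decomposition of $V=\mathbb{F}_2^n$ as a module over the PID $\mathbb{F}_2[x]$. Nothing is missing, and citing a standard reference for the structure theorem is entirely appropriate here.
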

\begin{proof}
	This is standard linear algebra.
\end{proof}

\begin{lemma}\label{lem:reformulationofparalleldimensioncount}
	Given a polynomial \(f\in \mathbb{F}_2[x]\), and \(X\in\GL_m(\mathbb{F}_2)\) for some integer \(m\),
	\[\dim(\ker((X^{-1})^t\otimes X_f-\id))=\dim(\ker(f(X))),\]
	where \(X_f\) is the companion matrix of~\(f\) from Definition~\ref{def:companionmatrix}.
\end{lemma}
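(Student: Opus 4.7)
\noindent
My plan is to reinterpret the left-hand side as the space of intertwiners between two $\mathbb{F}_2[x]$-modules and then reduce to a direct calculation using the explicit form of the companion matrix.

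First I would identify the tensor product $\mathbb{F}_2^m\otimes\mathbb{F}_2^n$ with the space $\mathrm{Mat}_{n\times m}(\mathbb{F}_2)$ via $v\otimes w\mapsto w v^t$. With this identification, an operator of the form $A^t\otimes B$ acts on a matrix $M$ by $M\mapsto BMA$. In particular $(X^{-1})^t\otimes X_f$ corresponds to $M\mapsto X_f\,M\,X^{-1}$, and its fixed locus is
\[
\ker\bigl((X^{-1})^t\otimes X_f-\id\bigr)\;\cong\;\{\,M\in\mathrm{Mat}_{n\times m}(\mathbb{F}_2)\mid X_f M=MX\,\}.
\]
These are exactly the $\mathbb{F}_2[x]$-module homomorphisms from $(\mathbb{F}_2^m,X)$ to $(\mathbb{F}_2^n,X_f)\cong\mathbb{F}_2[x]/(f)$, a viewpoint that may be useful but which I will not need for the concrete count.

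Next I would compute the dimension of the intertwiner space directly from the explicit form of $X_f$ given in Definition~\ref{def:companionmatrix}. Writing $M$ as a stack of row vectors $r_1,\dots,r_n\in\mathbb{F}_2^m$, the equation $X_f M=MX$ translates (in characteristic~$2$) into the system
\[
r_1 X=a_0 r_n,\qquad r_i X=r_{i-1}+a_{i-1}r_n\quad(i=2,\dots,n).
\]
The last $n-1$ equations let one express $r_{n-k}$ inductively in terms of $r_n$: a short induction gives
\[
r_{n-k}=r_n\bigl(X^k+a_{n-1}X^{k-1}+\cdots+a_{n-k}\bigr),\qquad k=0,\dots,n-1.
\]
Substituting the $k=n-1$ case into the remaining equation $r_1X=a_0r_n$ collapses to the single constraint $r_n\cdot f(X)=0$. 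Hence $M$ is determined freely by any row vector $r_n$ lying in the left kernel of $f(X)$, so
\[
\dim\bigl\{M:X_f M=MX\bigr\}\;=\;\dim\ker\bigl(f(X)^t\bigr)\;=\;\dim\ker f(X),
\]
the last equality because $f(X)$ and $f(X)^t$ have the same rank.

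The only mildly delicate step is pinning down the tensor/matrix conventions so that $(X^{-1})^t\otimes X_f$ really does act as the two-sided multiplication $M\mapsto X_f M X^{-1}$; once that is nailed down, the rest is a direct computation driven by the shape of the companion matrix, and no subtler input (such as the structure theorem for modules over~$\mathbb{F}_2[x]$) is needed.
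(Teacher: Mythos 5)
Your proof is correct. The reinterpretation of $\ker\bigl((X^{-1})^t\otimes X_f-\id\bigr)$ as the solution space of the Sylvester-type equation $X_fM=MX$ is consistent with the conventions used elsewhere in the paper (the bigon count from $(i,i')\otimes x$ to $(j,j')\otimes z$ is $(X^{-1})_{ij}X'_{j'i'}$, which is exactly the matrix of $M\mapsto X'MX^{-1}$), and the row-by-row elimination driven by the shape of the companion matrix correctly collapses the system to the single constraint $r_nf(X)=0$, giving $\dim\ker f(X)^t=\dim\ker f(X)$. The paper's own proof (following Hanselman--Rasmussen--Watson) performs essentially the same elimination, but phrased as row and column operations on the $nm\times nm$ Kronecker matrix itself: it reduces $(X^{-1})^t\otimes X_f-\id$ to a block-diagonal matrix with an identity block of size $(n-1)m$ and a residual $m\times m$ block $\id+a_{n-1}(X^{-1})^t+\cdots+a_0((X^{-1})^t)^n$, which after multiplication by $(X^t)^n$ becomes $f(X^t)$. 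Your residual constraint $r_nf(X)=0$ is precisely the transpose of that block, so the two arguments are computationally parallel; what your version buys is a cleaner conceptual framing (the kernel as an intertwiner space of $\mathbb{F}_2[x]$-modules, so that the convention-checking is localized in one explicit identification $v\otimes w\mapsto wv^t$) at the cost of being slightly further from the literal matrix $(X^{-1})^t\otimes X_f-\id$ appearing in the statement.
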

\begin{proof}
	This follows from the same arguments as~\cite[Proposition~36]{HRW}. Let $n=\deg f$. By performing row and column operations, we can bring $((X^{-1})^t\otimes X_f-\id)$ into block diagonal form with the first block of dimension $(n-1)m$ equal to the identity matrix and the second block of dimension $m$ equal to the expression 
	\begin{equation}\label{eqn:reformulationofparalleldimensioncount}
	\id+a_{n-1}(X^{-1})^t\cdots +a_0((X^{-1})^t)^{n},
	\end{equation}
	where the $a_i$ are the coefficients of $f$ as in Definition~\ref{def:companionmatrix}. The kernel of the matrix \eqref{eqn:reformulationofparalleldimensioncount} has the same dimension as the kernel of $((X^{-1})^t\otimes X_f-\id)$. Now multiply \eqref{eqn:reformulationofparalleldimensioncount} by $(X^t)^n$ to obtain $f(X^t)$. Transposing a square matrix does not change the dimension of its kernel, so we are done.
\end{proof}

\begin{proof}[of Theorem~\ref{thm:CompleteClassification}]
	By Corollary~\ref{cor:SplittingCatsForEquivalenceComplexes}, it suffices to show the first part of the theorem. The if-direction is clear, so let us assume that $\Pi_i(L)$ and $\Pi_i(L')$ are homotopic. For every $i'\in I'$ such that there is no $i\in I$ with $\gamma'_{i'}=\gamma_i$, add a ``formal'' curve to $L$ which is supported on $\gamma'_{i'}$ and which has a 0-dimensional local system. Do the same for~$L'$. Note that this does not change $\LagrangianFH(L,L'')$ nor $\LagrangianFH(L',L'')$ for any curve $L''$ with local system. So by allowing 0-dimensional local systems, we may assume without loss of generality that there exists a bijection $\iota\co I\rightarrow I'$ such that $\gamma_i=\gamma'_{\iota(i)}$. Let us assume for simplicity that $I=I'$ and $\iota$ is the identity. 
	
	Let us fix some $j\in I$ and let $p$ and $p'$ be the minimal polynomials of the matrices $X_j$ and $X'_{j}$, respectively. Then for $N>\deg p+\deg p'$, let 
	\[
	f_N(x):=(x^{N-\deg p-\deg p'}+1) \cdot p(x)\cdot p'(x).
	\]
	Note that since $f_N$ has a non-zero constant term, its companion matrix is invertible. So $L''=(\gamma_j, X_{f_N})$ is a well-defined curve with local system, which we can use as a ``test curve''. By Theorem~\ref{thm:PairingFormula} and Lemma~\ref{lem:reformulationofparalleldimensioncount}, the dimensions of the morphism spaces from $L$ and $L'$ to $L''$ are equal to
	\[
	\left(\sum_{i\in I}\#\gamma_i\cap\gamma_j\cdot \dim X_{i}\right)\cdot N+2\dim X_j
	\]
	and 
	\[
	\left(\sum_{i\in I}\#\gamma_i\cap\gamma_j\cdot \dim X'_{i}\right) \cdot N+2\dim X'_{j},
	\]
	respectively. By considering these two terms as linear functions in $N$, we see that they coincide iff their coefficients coincide. Hence, in particular $\dim X_j=\dim X'_{j}$.
	
	So it only remains to show that $X_j$ and $X'_{j}$ are similar for all $j\in I$. 	
	For this, let us fix some $j\in I$ and assume that both $X_j$ and $X'_{j}$ are in Frobenius normal form defined by polynomials $f_1,\dots, f_r$ and $f'_1,\dots, f'_{r'}$ such that $f_{l+1}\vert f_{l}$ and $f'_{l'+1}\vert f'_{l'}$ for all $l=1,\dots,r-1$ and $l'=1,\dots,r'-1$.
	Then $X_j$ and $X'_{j}$ are similar iff $r=r'$ and $f_l=f'_l$ for all $l=1,\dots,r$. Suppose this is not the case. Then there exists some minimal $m\leq\min(r,r')$ such that $f_m\neq f'_m$. Assume without loss of generality that $f'_m\not\vert f_m$ and consider the ``test curve'' given by $(\gamma_j,X_{f_m})$. Let $N=\deg f_m$. Then the spaces of morphisms from $L$ and $L'$ to $(\gamma_j,X_{f_m})$ have dimension
	\begin{equation}\label{eqn:pairingdim3}
	\left(\sum_{i\in I}\#\gamma_i\cap\gamma_{j}\cdot \dim X_{i}\right)\cdot N+
	2\left(
	\sum_{l=1}^{m-1}\dim\ker(f_m(X_{f_l}))+
	\sum_{l=m}^{r}\dim \ker(f_m(X_{f_l}))
	\right)
	\end{equation}
	and 
	\begin{equation}\label{eqn:pairingdim4}
	\left(\sum_{i\in I}\#\gamma_{i}\cap\gamma_{j}\cdot \dim X'_{i}\right)\cdot N+
	2\left(
	\sum_{l=1}^{m-1}\dim\ker(f_m(X_{f'_l}))+
	\sum_{l=m}^{r'}\dim \ker(f_m(X_{f'_l}))
	\right),
	\end{equation}
	respectively. The first sums coincide by the results that we have already established. The second sums agree by minimality of $m$. The summands in the third sum of \eqref{eqn:pairingdim3} are equal to $\dim X_{f_l}=\deg f_l$, since $f_l\vert f_m$ for $l>m$. Now,
	\[\sum_{l=1}^{r}\dim X_{f_l}=\dim X_j=\dim X'_{j}=\sum_{l=1}^{r'}\dim X_{f'_l}.\]
	By minimality of $m$, we obtain
	\[\sum_{l=m}^{r}\dim X_{f_l}=\sum_{l=m}^{r'}\dim X_{f'_l}.\]
	Hence, the third sum in~\eqref{eqn:pairingdim4} is at most as large as the third sum in~\eqref{eqn:pairingdim3}. However, $f'_m\not\vert f_m$, hence $f_m(X_{f'_m})\neq 0$, so
	\[\dim \ker(f_m(X_{f'_m}))<\dim X_{f'_m}.\]
	Contradiction.
\end{proof}

% PeculiarModulesAsCurves.tex

\section{Peculiar modules as collections of immersed curves}\label{sec:glueingrevisited}
 
\begin{definition}\label{def:ImmersedCurveInvariantspqMod}
 	By Example~\ref{exa:pqModSpecialCaseofCC}, the category of peculiar modules is a special case of a category of curved complexes over a marked surface with arc system. Therefore, if $T$ is a 4-ended tangle in a homology 3-ball $M$ with spherical boundary, Theorem~\ref{thm:EverythingIsLoopTypeUpToLocalSystems} implies that there is a collection of immersed curves corresponding to the peculiar module $\CFTd(T)$. We denote such a collection of immersed curves by $L_T$. 
 	
 	If the tangle $T$ is oriented, the Alexander gradings on $\CFTd(T)$ give rise to an Alexander grading on $L_T$, which is defined similar to the $\delta$-grading. More precisely, the \textbf{Alexander grading of a curve} is an Alexander grading $A$ of intersection points of the curve with the four arcs parametrizing the 4-punctured sphere, such that if $x$ and $y$ are joined by a curve segment corresponding to an algebra element $p^s_t\in\Ad$ from $x$ to $y$, $A(y)-A(x)+A(p^s_t)=0$. The Alexander grading on the Lagrangian intersection Floer homology of two bigraded curves is defined in exactly the same way as the $\delta$-grading, namely via the Alexander grading of resolutions of intersection points, see Definitions~\ref{def:resolution} and~\ref{def:gradingVIAresolution}.
\end{definition}

As a special case of Theorem~\ref{thm:CompleteClassification}, we obtain:

\begin{theorem}\label{thm:ImmersedCurveInvariants}
	\(L_T\) is a well-defined tangle invariant up to homotopy of the underlying immersed curves and similarity of the local systems.\hfill\qedsymbol
\end{theorem}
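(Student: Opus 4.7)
The plan is to assemble this invariance statement from the machinery already built up in Sections~\ref{sec:CFTd} and~\ref{sec:classification}. The key inputs are: (i) Theorem~\ref{thm:PecMod}, which says $\CFTd(T)$ is a tangle invariant up to chain homotopy in $\pqMod$; (ii) Example~\ref{exa:pqModSpecialCaseofCC}, which identifies $\pqMod$ with the category $\CC(S,M,A)$ for the specific marked surface with arc system shown in Figure~\ref{fig:nutshelll}; and (iii) the classification results of Section~\ref{sec:classification}, particularly Theorems~\ref{thm:EverythingIsLoopTypeUpToLocalSystems} and~\ref{thm:CompleteClassification}.

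The first step will be to reduce $\CFTd(T)$ to a homotopic reduced curved complex by Lemma~\ref{lem:CCreduced}. Using the equivalence $\CC(S,M,A) \simeq \CC_i(S,M,A)$ from Corollary~\ref{cor:SplittingCatsForEquivalenceComplexes}, we pass to a reduced precurve, then apply Proposition~\ref{prop:PreloopToCC} to obtain a chain isomorphic simply-faced precurve. Finally, Theorem~\ref{thm:EverythingIsLoopTypeUpToLocalSystems} produces a collection of loops with local systems $L$ such that $\Pi(L) \simeq \CFTd(T)$. This collection is the candidate for $L_T$.

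The main content of the proof is the uniqueness of $L_T$, which is precisely Theorem~\ref{thm:CompleteClassification} applied to our setting: if $L$ and $L'$ are two collections of loops such that $\Pi(L) \simeq \Pi(L') \simeq \CFTd(T)$, then there is a bijection between their components matching homotopic underlying curves and similar local systems. Combining with the invariance statement of Theorem~\ref{thm:PecMod}, we conclude that the collection $L_T$ is independent of any choices made along the way---both those in the construction of $\CFTd(T)$ (choice of Heegaard diagram) and those in the extraction of $L$ from $\CFTd(T)$ (simplification algorithm).

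For the Alexander-graded refinement, the same argument goes through once we observe that the Alexander grading on $\CFTd(T)$ transports to an Alexander grading on the precurve, and the moves (T1)--(T3) and (M1)--(M3) used in the simplification preserve this structure. The only subtlety worth highlighting is that Theorem~\ref{thm:CompleteClassification} is stated for $\Pi_i$ but the final sentence explicitly extends the conclusion to $\Pi$; this covers precisely the step we need. No new geometric arguments are required---the proof is essentially a one-paragraph citation chain, and I expect no serious obstacle beyond bookkeeping of how invariance in $\pqMod$ translates, via the equivalence of categories, into invariance of the image under the map $\Pi$ up to the equivalence relation specified in Definition~\ref{def:curves}.
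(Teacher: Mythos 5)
Your proposal is correct and follows exactly the route the paper takes: the paper presents this theorem as an immediate corollary of Theorem~\ref{thm:CompleteClassification} (existence from Theorem~\ref{thm:EverythingIsLoopTypeUpToLocalSystems}, uniqueness from the classification, and invariance of the input from Theorem~\ref{thm:PecMod} via the identification in Example~\ref{exa:pqModSpecialCaseofCC}), which is why it carries only a \qedsymbol and no written proof. Your citation chain is the intended argument, just spelled out more explicitly.
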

 
Just as for peculiar modules, we can study how immersed curves behave under mirroring and reversing Alexander gradings. 

\begin{definition}
	Given a collection $L$ of bigraded curves, let $\rr(L)$ denote the collection of curves obtained from $L$ by reversing all Alexander gradings. Note that for this to be a well-defined Alexander grading, we also need to reverse the Alexander grading of $\Ad$. Moreover, let $\m(L)$ denote the collection of curves defined as follows: the underlying oriented curves of $\m(L)$ are obtained as the image of the underlying oriented curves of $L$ under the orientation reversing automorphism of the 4-punctured sphere which preserves the four parametrizing arcs pointwise and exchanges the front and back. The bigrading of $\m(L)$ is equal to the reversed bigrading of $L$. Note that this is well-defined over the same bigraded algebra $\Ad$. Finally, the local systems of $L$ and $\m(L)$ are the same. 
	
	As for the corresponding operations on peculiar modules and tangles from Definition~\ref{def:reversedmirror}, we write $\mr(L)$ for $\m(\rr(L))=\rr(\m(L))$.
\end{definition}

\begin{proposition}
	For any collection \(L\) of bigraded curves, \(\Pi(\rr(L))=\rr(\Pi(L))\) and \(\Pi(\m(L))=\m(\Pi(L))\). Moreover, if \(T\) is an oriented 4-ended tangle, then \(\rr(L_T)=L_{\rr(T)}\) and \(\m(L_T)=L_{\m(T)}\). 
\end{proposition}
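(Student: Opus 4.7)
The plan is to verify the two identities at the level of collections of curves by directly unwinding the definition of $\Pi$, and then deduce the tangle statements by combining with Proposition~\ref{prop:reversedmirror} and the uniqueness clause of Theorem~\ref{thm:CompleteClassification}.

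First I would check $\Pi(\rr(L))=\rr(\Pi(L))$. Recall that $\Pi=\mathcal{G}\circ\Pi_i$: a bigraded curve $(\gamma,X)$ in $L$ produces generators of $\Pi_i(L)$ in bijection with intersection points of $\gamma$ with the four parametrizing arcs, each inheriting its Alexander grading from $\gamma$, and arrows corresponding to $f$-joins labelled by the appropriate $p^s_t$ of~$\Ad$. The operation $\rr$ on the curve side reverses all Alexander gradings while leaving the underlying oriented curve and local system unchanged, so $\Pi_i(\rr(L))$ has the same generators and arrows as $\Pi_i(L)$ with reversed grading. This is exactly the output of $\rr$ applied to the peculiar module $\Pi(L)$, once we remember that by definition of $\rr$ on $\pqMod$ the Alexander grading on $\Ad$ itself is reversed, so each arrow label $p^s_t$ keeps the same symbol but its grading flips. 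Hence the two operations commute on the nose.

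Next I would check $\Pi(\m(L))=\m(\Pi(L))$. The operation $\m$ on curves is an orientation-reversing involution of $S^2\smallsetminus 4D^2$ that fixes the four parametrizing arcs pointwise and swaps the front and back faces; in addition, it reverses the orientation of the underlying immersed curve and reverses the bigrading, leaving the local system unchanged. Applying $\Pi_i$ face by face, an $f$-join on the front (labelled by powers of $p$) is sent to an $f$-join on the back (labelled by powers of $q$), so the labels $p_i$ and $q_i$ are exchanged; simultaneously, reversing the orientation of the curve reverses the direction of every arrow in $\Pi_i(L)$, and inverting bigradings flips all gradings. These three changes together are precisely the definition of $\m$ on peculiar modules, so $\Pi(\m(L))=\m(\Pi(L))$. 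The only point requiring care is the local system: arrow reversal on a loop naively replaces $X$ by $(X^{-1})^t$, while the geometric swap of front and back contributes a matching transpose/inverse, and the combined effect preserves $X$ up to similarity, which is the only equivalence on local systems we track.

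For the tangle statements, I would invoke Proposition~\ref{prop:reversedmirror}, which gives $\CFTd(\rr(T))=\rr(\CFTd(T))$ and $\CFTd(\m(T))=\m(\CFTd(T))$. By definition $\CFTd(T)$ is chain homotopic to $\Pi(L_T)$, so combining with the two commutation identities just established yields
\[
\CFTd(\rr(T))\simeq \rr(\Pi(L_T))=\Pi(\rr(L_T))
\quad\text{and}\quad
\CFTd(\m(T))\simeq \m(\Pi(L_T))=\Pi(\m(L_T)).
\]
Since $L_{\rr(T)}$ and $L_{\m(T)}$ are characterised up to homotopy of the underlying curves and similarity of local systems by being collections whose image under $\Pi$ is chain homotopic to $\CFTd(\rr(T))$ and $\CFTd(\m(T))$ respectively (Theorem~\ref{thm:CompleteClassification}), it follows that $L_{\rr(T)}=\rr(L_T)$ and $L_{\m(T)}=\m(L_T)$.

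The main obstacle is the careful verification for $\m$: one has to track four operations simultaneously (swap of front/back faces, reversal of curve orientation, inversion of gradings, and their combined effect on local systems), and check that their composite really matches the algebraically defined $\m$ on peculiar modules. Once this local bookkeeping is done, the rest of the argument is a formal application of Proposition~\ref{prop:reversedmirror} and the classification.
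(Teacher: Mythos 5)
Your proposal is correct and follows essentially the same route as the paper: the curve-level identities are checked directly from the definitions (with the only delicate point being the local systems under $\m$), and the tangle statements are deduced from Proposition~\ref{prop:reversedmirror} together with the uniqueness in Theorem~\ref{thm:CompleteClassification}. One small bookkeeping remark: the paper identifies the effect on local systems as \emph{transposition} (reversing crossover arrows and their order on arc neighbourhoods, matched by the mirrored curve crossing each arc in the opposite direction), not $(X^{-1})^t$ as you write, but since $X$ and $X^t$ are always similar this does not affect your conclusion.
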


\begin{proof}
	The second part follows from the first in conjunction with Proposition~\ref{prop:reversedmirror}. The first part follows from the definitions of the operations. To identify the local systems of $\Pi(\m(L))$ and $\m(\Pi(L))$, note that the underlying curves of $\m(L)$ pass through arcs in the opposite direction to those of $L$. This corresponds to transposing the local systems, ie reversing the orientation of any crossover arrows and reversing the order of crossings and crossover arrows on arc neighbourhoods. 
\end{proof}

\subsection{Peculiar modules from nice diagrams}

\begin{definition}
 	Given a tangle $T$, pick two basepoints $p_i$ and $q_j$ for some $i,j\in\{1,2,3,4\}$ as well as one of $z_j$ and $w_j$ for each closed component of $T$. 
 	A peculiar Heegaard diagram for $T$ is \textbf{nice} with respect to this choice of special basepoints, if all regions except those containing these basepoints are bigons or squares.
 \end{definition}
 \begin{theorem}
 	Every 4-ended tangle \(T\) has a nice peculiar Heegaard diagram with respect to any choice of basepoints. 
 \end{theorem}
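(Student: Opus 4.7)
The strategy is to adapt the finger-move algorithm of Sarkar and Wang for producing nice multi-pointed Heegaard diagrams to our peculiar setting. Starting from an arbitrary admissible peculiar Heegaard diagram $\mathcal{H}_T$ for $T$ together with the prescribed choice of special basepoints (one $p_i$, one $q_j$, and one of $\{z_k,w_k\}$ for each closed component), I would apply a sequence of $\beta$-curve isotopies to successively reduce the combinatorial complexity of the regions, until every region not containing a special basepoint is a bigon or a square.

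The key step is essentially a verbatim transcription of Sarkar-Wang: identify a region $R$ that is neither a bigon, nor a square, nor contains a special basepoint, and perform a finger move on a $\beta$-curve chosen so as to strictly decrease a complexity function that measures, roughly, the combined ``distance'' from bad regions to the nearest special basepoint, weighted by the number of sides. Because these finger moves are just isotopies of $\beta$-curves away from the $\alpha$-curves and from the marked points, they are instances of the Heegaard moves listed in Remark~\ref{rem:PeculiarHDmoves} and therefore preserve the bigraded chain-homotopy type of $\CFTd(T)$. The non-special basepoints play no role during the procedure; they simply sit inside whichever region contains them, and after the algorithm terminates their containing regions are automatically bigons or squares, since those regions contain no special basepoint.

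The main obstacle is verifying that the convergence argument of Sarkar-Wang still applies when only the chosen special basepoints are designated as basepoints. Their proof crucially requires that from any bad region one can always find a finger move that decreases complexity, which in turn requires every connected component of $\Sigma \smallsetminus \boldsymbol{\alpha}$ to contain at least one designated basepoint. The special circle $\red S^1$ separates the $p_i$'s (on the front of $\partial M\smallsetminus \red S^1$) from the $q_j$'s (on the back), so choosing one $p_i$ and one $q_j$ places a special basepoint on each side of $\red S^1$; together with one basepoint per closed component, this covers the generic case. If further subdivision by other $\alpha$-circles leaves a component of $\Sigma \smallsetminus \boldsymbol{\alpha}$ without a special basepoint, I would first perform a preliminary (de)stabilization or a handleslide of Remark~\ref{rem:PeculiarHDmoves} to introduce auxiliary basepoints in such orphan components, run the algorithm on the enlarged configuration, and then observe that after the procedure the neighbourhoods of these auxiliary basepoints have been simplified to bigons, so they may be absorbed back without destroying niceness with respect to the original special basepoints.
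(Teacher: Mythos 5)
Your approach is the same as the paper's: the theorem is a direct application of the Sarkar--Wang algorithm, which niceifies every region not containing a designated basepoint provided each component of $\Sigma\smallsetminus\boldsymbol{\alpha}$ contains one designated basepoint. The only place you diverge is in doubting that this hypothesis holds and proposing a fallback with auxiliary basepoints; neither is needed. In a peculiar Heegaard diagram the components of $\Sigma\smallsetminus\boldsymbol{\alpha}$ are precisely: one containing all the $p_i$ (coming from the front of $\partial M\smallsetminus S^1$), one containing all the $q_i$ (the back), and one for each closed tangle component containing the pair $z_k,w_k$ --- this is forced by the requirement that attaching $2$-handles along $\boldsymbol{\alpha}$ yields the prescribed compression body of the tangle complement. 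Hence the choice of special basepoints in the definition of nice (one $p_i$, one $q_j$, and one of $z_k,w_k$ per closed component) places exactly one basepoint in each such component, and Sarkar--Wang applies verbatim. I would also caution that your fallback, as stated, does not work: after the algorithm terminates, the region containing an auxiliary basepoint may still be arbitrarily bad (the algorithm only guarantees niceness of regions \emph{without} basepoints), so such basepoints cannot simply be ``absorbed back'' while preserving niceness.
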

 \begin{proof}
 	This is an application of Sarkar and Wang's main result in~\cite{SarkarWang}, where they describe an algorithm for niceifying any pointed Heegaard diagram with one basepoint in each component of the Heegaard surface minus the $\alpha$-circles.
 \end{proof}
 
 \begin{corollary}\label{cor:PeculiarModulesFromNiceDiagrams}
 	Peculiar modules for 4-ended tangles can be computed combinatorially.
 \end{corollary}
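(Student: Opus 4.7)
The plan is to combine the existence of nice peculiar Heegaard diagrams just established with Corollary~\ref{cor:AddingBasepointFunctorIsFaithfulUpToHom}. Given a 4-ended tangle $T$, I would first pick special basepoints $p_i$, $q_j$ on the Heegaard surface (together with one of $z_k, w_k$ for each closed component, which is automatic since $U'_k, V'_k$ vanish in $\Ad$) and, by the preceding theorem, fix a peculiar Heegaard diagram $\mathcal{H}_T$ that is nice with respect to this choice. Let $M \subset \partial S$ consist of the two points on the 4-punctured sphere $S = S^2 \setminus 4D^2$ corresponding to $p_i$ and $q_j$, one on each face of the arc system. Then $\mathcal{A}(S, M, A)$ is the quotient of $\Ad = \mathcal{A}(S, \emptyset, A)$ by the relations $p_i = q_j = 0$, and Corollary~\ref{cor:AddingBasepointFunctorIsFaithfulUpToHom} guarantees that the homotopy class of $\CFTd(T)$ is completely determined by its image under the induced functor $\CC(S, \emptyset, A) \to \CC(S, M, A)$.

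This image is given by the same Heegaard diagram $\mathcal{H}_T$ together with the truncated differential in which only those contributions whose decorating algebra element survives in the quotient are retained; equivalently, only Maslov index $1$ holomorphic disks whose domains avoid all special basepoints. Since $\mathcal{H}_T$ is nice with respect to the chosen basepoints, every region of $\mathcal{H}_T$ that does not contain a special basepoint is a bigon or a square, so a standard multi-pointed extension of Sarkar--Wang's classification of Maslov index $1$ positive domains in nice diagrams \cite{SarkarWang}---of the kind already used in combinatorial computations of link Floer homology---enumerates these disks purely combinatorially. Each such disk arises as an embedded bigon or empty rectangle lying in the non-special regions, decorated by a monomial in the remaining $p_k$ and $q_k$ that records the multiplicities at the non-special basepoints it sweeps across.

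Hence the image of $\CFTd(T)$ in $\CC(S, M, A)$ can be written down explicitly from $\mathcal{H}_T$, and by Corollary~\ref{cor:AddingBasepointFunctorIsFaithfulUpToHom} the homotopy type of $\CFTd(T)$ itself is thereby determined. A concrete combinatorial representative---for instance the collection $L_T$ of loops with local systems furnished by Theorem~\ref{thm:EverythingIsLoopTypeUpToLocalSystems}---can then be extracted by applying the simplification algorithm proving that theorem to the quotient complex. The main technical point to verify is the multi-pointed extension of the Sarkar--Wang domain classification; this is, however, essentially routine once one observes that excising the special-basepoint regions from $\mathcal{H}_T$ leaves a configuration consisting entirely of bigons and squares, to which their original analysis applies verbatim.
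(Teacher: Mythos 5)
Your proposal is correct and follows essentially the same route as the paper: compute the truncated complex $\overline{\CFTd(T)}$ from a nice diagram by combinatorially counting the bigons and squares away from the special basepoints, identify it with the image of $\CFTd(T)$ under the functor induced by $\Ad\rightarrow\Ad/(p_i=0=q_j)$, and invoke Corollary~\ref{cor:AddingBasepointFunctorIsFaithfulUpToHom} together with the curve-extraction algorithm of Theorem~\ref{thm:EverythingIsLoopTypeUpToLocalSystems} to recover the full homotopy type. Your extra care about placing one special basepoint in each face (so the hypothesis of Corollary~\ref{cor:AddingBasepointFunctorIsFaithfulUpToHom} is met) and about the closed-component basepoints being automatic is a correct and welcome elaboration of what the paper leaves implicit.
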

 \begin{proof}
 	We can use a nice peculiar Heegaard diagram for a tangle to compute the peculiar module $\CFTd(T)$. Since all regions away from the special basepoints are bigons or squares, the calculation of all domains that miss those basepoints is purely combinatorial. The complex $\overline{\CFTd(T)}$ corresponding to those domains is exactly the image of $\CFTd(T)$ under the functor induced by the quotient map $\Ad\rightarrow\Ad/(p_i=0=q_j)$. So by Corollary~\ref{cor:AddingBasepointFunctorIsFaithfulUpToHom}, we can recover the homotopy type of $\CFTd(T)$ from $\overline{\CFTd(T)}$. This can be done algorithmically, by finding a curve with local system representing $\overline{\CFTd(T)}$ as described in the previous section.
 \end{proof}

\begin{remark}
	The algorithm described in the proof above is implemented in the Mathematica package~\cite{PQM.m}.
\end{remark}

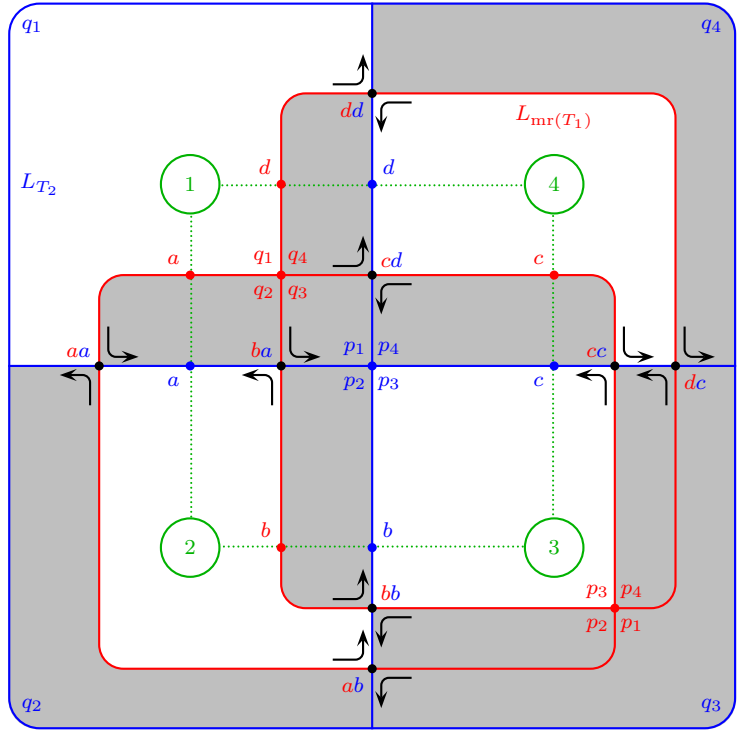
\begin{figure}[t]\centering
	\psset{unit=0.4}
	\begin{pspicture}(-12.5,-12.5)(12.5,12.5)
	
	% grey area lower right corner
	\pscustom*[linecolor=lightgray]{
		\psline[liftpen=1,linearc=1,cornersize=absolute](12,0)(12,-12)(0,-12)
		\psline(0,-12)(0,0)(12,0)
	}
	\psline*[linecolor=white,linewidth=0pt](0,0)(0,-8)(8,-8)(8,0)(0,0)	
	
	% grey area lower left corner
	\pscustom*[linecolor=lightgray]{
		\psline[liftpen=1,linearc=1,cornersize=absolute](-12,0)(-12,-12)(0,-12)
		\psline(0,-12)(0,0)(-12,0)
	}
	\pscustom*[linecolor=white,linewidth=0pt]{
		\psline[liftpen=1,linearc=0.8,cornersize=absolute](0,-10)(-9,-10)(-9,0)
		\psline(-9,0)(0,0)(0,-10)
	}
	
	% grey area upper right corner
	\pscustom*[linecolor=lightgray]{
		\psline[liftpen=1,linearc=1,cornersize=absolute](12,0)(12,12)(0,12)
		\psline(0,12)(0,0)(12,0)
	}
	\pscustom*[linecolor=white,linewidth=0pt]{
		\psline[liftpen=1,linearc=0.8,cornersize=absolute](0,9)(10,9)(10,0)
		\psline(10,0)(0,0)(0,9)
	}
	
	% grey area  front grey bigons
	\pscustom*[linecolor=lightgray]{
		\psline[liftpen=1,linearc=0.8,cornersize=absolute](-9,0)(-9,3)(8,3)(8,0)
		\psline(8,0)(-9,0)
	}
	\pscustom*[linecolor=lightgray]{
		\psline[liftpen=1,linearc=0.8,cornersize=absolute](0,-8)(-3,-8)(-3,9)(0,9)
		\psline(0,-8)(0,9)
	}
	
	%blue skeleton
	\psframe[linecolor=blue,linearc=1,cornersize=absolute](-12,-12)(12,12)
	\psline[linecolor=blue](0,12)(0,-12)
	\psline[linecolor=blue](12,0)(-12,0)
	%\psdots[linecolor=blue](10,0)(-10,0)(0,10)(0,-10)(0,0)
	
	% boundary punctures and former alpha arcs
	\psframe[linecolor=darkgreen,linestyle=dotted,dotsep=1pt](-6,-6)(6,6)
	\pscircle[fillstyle=solid,linecolor=darkgreen](-6,-6){1}
	\pscircle[fillstyle=solid,linecolor=darkgreen](6,-6){1}
	\pscircle[fillstyle=solid,linecolor=darkgreen](-6,6){1}
	\pscircle[fillstyle=solid,linecolor=darkgreen](6,6){1}
	
	\rput(-6,6){$\textcolor{darkgreen}{1}$}
	\rput(-6,-6){$\textcolor{darkgreen}{2}$}
	\rput(6,-6){$\textcolor{darkgreen}{3}$}
	\rput(6,6){$\textcolor{darkgreen}{4}$}

	% red skeleton
	\psline[linecolor=red,linearc=0.8,cornersize=absolute]
	(0,-8)(-3,-8)(-3,9)(10,9)(10,-8)(0,-8)
	\psline[linecolor=red,linearc=0.8,cornersize=absolute]
	(8,0)(8,3)(-9,3)(-9,-10)(8,-10)(8,0)
	%\psdots[linecolor=red](-2,2)(7,-7)
	
	% blue 0-skeleton + algebra labelling 
	\pscircle*[linecolor=blue](0,0){0.15}
	\rput(0.8;135){\blue $p_1$}
	\rput(0.8;-135){\blue $p_2$}
	\rput(0.8;-45){\blue $p_3$}
	\rput(0.8;45){\blue $p_4$}
	
	\rput(11.2,11.2){\blue $q_4$}
	\rput(11.2,-11.2){\blue $q_3$}
	\rput(-11.2,-11.2){\blue $q_2$}
	\rput(-11.2,11.2){\blue $q_1$}
	
	% red 0-skeleton + algebra labelling 
	\pscircle*[linecolor=red](-3,3){0.15}
	\rput(-3,3){
		\rput(0.8;135){\textcolor{red}{$q_1$}}
		\rput(0.8;-135){\textcolor{red}{$q_2$}}
		\rput(0.8;-45){\textcolor{red}{$q_3$}}
		\rput(0.8;45){\textcolor{red}{$q_4$}}}
	
	\pscircle*[linecolor=red](8,-8){0.15}
	\rput(8,-8){
		\rput(0.8;135){\textcolor{red}{$p_3$}}
		\rput(0.8;-135){\textcolor{red}{$p_2$}}
		\rput(0.8;-45){\textcolor{red}{$p_1$}}
		\rput(0.8;45){\textcolor{red}{$p_4$}}}

	% aa ab
	\pscircle*(-9,0){0.15}
	\uput{0.4}[135](-9,0){\red $a$\textcolor{blue}{$a$}}	
	\pscircle*(-3,0){0.15}
	\uput{0.4}[135](-3,0){\red $b$\textcolor{blue}{$a$}}
	
	\rput(-9,0){
		\rput{0}(0,0){\psline[linearc=0.25]{->}(0.3,1.3)(0.3,0.3)(1.3,0.3)}
		\rput{180}(0,0){\psline[linearc=0.25]{->}(0.3,1.3)(0.3,0.3)(1.3,0.3)}
	}
	\rput(-3,0){
		\rput{0}(0,0){\psline[linearc=0.25]{->}(0.3,1.3)(0.3,0.3)(1.3,0.3)}
		\rput{180}(0,0){\psline[linearc=0.25]{->}(0.3,1.3)(0.3,0.3)(1.3,0.3)}
	}
	
	% ba bb
	\pscircle*(0,-8){0.15}
	\uput{0.4}[45](0,-8){\red $b$\textcolor{blue}{$b$}}
	\pscircle*(0,-10){0.15}
	\uput{0.4}[-135](0,-10){\red $a$\textcolor{blue}{$b$}}
	
	\rput(0,-8){
		\rput{90}(0,0){\psline[linearc=0.25]{->}(0.3,1.3)(0.3,0.3)(1.3,0.3)}
		\rput{-90}(0,0){\psline[linearc=0.25]{->}(0.3,1.3)(0.3,0.3)(1.3,0.3)}
	}
	\rput(0,-10){
		\rput{90}(0,0){\psline[linearc=0.25]{->}(0.3,1.3)(0.3,0.3)(1.3,0.3)}
		\rput{-90}(0,0){\psline[linearc=0.25]{->}(0.3,1.3)(0.3,0.3)(1.3,0.3)}
	}
	
	% cc cd
	\pscircle*(8,0){0.15}
	\uput{0.4}[135](8,0){\red $c$\textcolor{blue}{$c$}}
	\pscircle*(10,0){0.15}
	\uput{0.4}[-45](10,0){\red $d$\textcolor{blue}{$c$}}
	
	\rput(8,0){
		\rput{0}(0,0){\psline[linearc=0.25]{->}(0.3,1.3)(0.3,0.3)(1.3,0.3)}
		\rput{180}(0,0){\psline[linearc=0.25]{->}(0.3,1.3)(0.3,0.3)(1.3,0.3)}
	}
	\rput(10,0){
		\rput{0}(0,0){\psline[linearc=0.25]{->}(0.3,1.3)(0.3,0.3)(1.3,0.3)}
		\rput{180}(0,0){\psline[linearc=0.25]{->}(0.3,1.3)(0.3,0.3)(1.3,0.3)}
	}
	
	% dc dd
	\pscircle*(0,3){0.15}
	\uput{0.4}[45](0,3){\red $c$\textcolor{blue}{$d$}}
	\pscircle*(0,9){0.15}
	\uput{0.4}[-135](0,9){\red $d$\textcolor{blue}{$d$}}
	
	\rput(0,3){
		\rput{90}(0,0){\psline[linearc=0.25]{->}(0.3,1.3)(0.3,0.3)(1.3,0.3)}
		\rput{-90}(0,0){\psline[linearc=0.25]{->}(0.3,1.3)(0.3,0.3)(1.3,0.3)}
	}
	\rput(0,9){
		\rput{90}(0,0){\psline[linearc=0.25]{->}(0.3,1.3)(0.3,0.3)(1.3,0.3)}
		\rput{-90}(0,0){\psline[linearc=0.25]{->}(0.3,1.3)(0.3,0.3)(1.3,0.3)}
	}
	
	\uput{0.4}[-90](6,9){\textcolor{red}{$L_{\mr(T_1)}$}}
	\uput{0.4}[0](-12,6){\textcolor{blue}{$L_{T_2}$}}
	
	% blue 1-skeleton labels
	\uput{0.5}[-135](-6,0){\blue $a$}
	\uput{0.5}[45](0,-6){\blue $b$}
	\uput{0.5}[-135](6,0){\blue $c$}
	\uput{0.5}[45](0,6){\blue $d$}
	
	% red 1-skeleton labels
	\uput{0.5}[135](-6,3){\textcolor{red}{$a$}}
	\uput{0.5}[135](-3,-6){\textcolor{red}{$b$}}
	\uput{0.5}[135](6,3){\textcolor{red}{$c$}}
	\uput{0.5}[135](-3,6){\textcolor{red}{$d$}}
	
	\pscircle*[linecolor=red](-6,3){0.15}
	\pscircle*[linecolor=red](6,3){0.15}
	\pscircle*[linecolor=red](-3,6){0.15}
	\pscircle*[linecolor=red](-3,-6){0.15}
	
	\pscircle*[linecolor=blue](0,6){0.15}
	\pscircle*[linecolor=blue](0,-6){0.15}
	\pscircle*[linecolor=blue](-6,0){0.15}
	\pscircle*[linecolor=blue](6,0){0.15}
	
	\end{pspicture}
	\caption{A geometric interpretation of the type~AA structure $\mathcal{P}$ for pairing in the wrapped Fukaya category of the 4-punctured sphere. The boundary of the picture is identified to a point. The {\blue blue} curves denote a 1-skeleton and the \textcolor{red}{red} ones a Hamiltonian translate thereof with reversed roles of $p_i$ and $q_i$. The orientation is chosen such that the normal vector (determined by the right-hand rule) points into the projection plane. The arrow pairs at the intersection points of the two skeletons serve as a reminder of our conventions for resolutions of two curves lying in a tubular neighbourhood of the skeletons, see Definition~\ref{def:resolution}. }\label{fig:GlueingInterpretationFUK}
\end{figure} 

\subsection{The Glueing Theorem revisited}

\begin{theorem}[(Glueing Theorem, version 2)]\label{thm:CFTdGlueingAsMorphism}
	With the same notation as in Theorem~\ref{thm:CFTdGeneralGlueing}, 
	\[\HFL(L)\otimes V^{i}\cong\LagrangianFH(L_{\mr(T_1)},L_{T_2})\cong H_\ast(\Mor(\CFTd(\mr(T_1)),\CFTd(T_2))),\]
	where \(\mr(T_1)\) denotes the reversed mirror of \(T_1\) (see Definition~\ref{def:reversedmirror}).
\end{theorem}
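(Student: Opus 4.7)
The plan is to prove the two claimed isomorphisms independently, both making essential use of the classification theorems of section~\ref{sec:classification} and the first version of the Glueing Theorem.

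The second isomorphism
$$\LagrangianFH(L_{\mr(T_1)}, L_{T_2}) \;\cong\; H_\ast(\Mor(\CFTd(\mr(T_1)), \CFTd(T_2)))$$
follows almost immediately from Theorem~\ref{thm:PairingMorLagrangianFH}. By Theorem~\ref{thm:ImmersedCurveInvariants} together with Corollary~\ref{cor:SplittingCatsForEquivalenceComplexes}, the precurves $\Pi_i(L_{\mr(T_1)})$ and $\Pi_i(L_{T_2})$ are, via the equivalence $\mathcal{G}$, chain-homotopic representatives of $\CFTd(\mr(T_1))$ and $\CFTd(T_2)$. Placing them in pairing position in the sense of Definition~\ref{def:StandardPairingPosition} and applying Theorem~\ref{thm:PairingMorLagrangianFH} yields the desired graded isomorphism, after noting that morphism spaces are invariant under chain homotopy of either argument.

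For the first isomorphism, the starting point is Theorem~\ref{thm:CFTdGeneralGlueing}, which provides
$$\HFL(L) \otimes V^i \;\cong\; H_\ast\bigl(\rr(\CFTd(T_1)) \boxtimes \mathcal{P} \boxtimes \CFTd(T_2)\bigr).$$
It therefore suffices to establish, for all peculiar modules $X$ and $Y$, a chain-homotopy equivalence
$$\rr(X) \boxtimes \mathcal{P} \boxtimes Y \;\simeq\; \Mor(\mr(X), Y),$$
since applying this with $X = \CFTd(T_1)$ and $Y = \CFTd(T_2)$ and then invoking Proposition~\ref{prop:reversedmirror} (to identify $\CFTd(\mr(T_1))$ with $\mr(\CFTd(T_1))$) yields the desired conclusion.

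The main obstacle is proving this last equivalence, and I expect the cleanest route is geometric rather than algebraic. By Theorem~\ref{thm:EverythingIsLoopTypeUpToLocalSystems} and the functoriality of both the $\boxtimes$-pairing (Remark~\ref{rem:PairingIsFunctorial}) and the $\Mor$-construction, it is enough to verify the identity when $X$ and $Y$ are peculiar modules arising from collections of immersed curves with (say) trivial local systems. In that setting, pairing with $\mathcal{P}$ admits the geometric interpretation indicated by Figure~\ref{fig:GlueingInterpretationFUK}: $\mathcal{P}$ encodes the thickened 4-punctured sphere equipped with a Hamiltonian perturbation that interchanges the roles of the $p_i$ and $q_i$ arcs. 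This perturbation is precisely the mirror operation on curves, so $\rr(X) \boxtimes \mathcal{P}$ is homotopy-equivalent, as a type~A structure, to a Hamiltonian translate of the curve collection representing $\mr(X)$. The subsequent pairing with $Y$ then counts exactly the generators and differentials that appear in $\LagrangianFC(L_{\mr(X)}, L_Y)$ in pairing position, which by the second isomorphism computes $H_\ast(\Mor(\mr(X), Y))$. The technical core of the argument is therefore a direct comparison, on basic curve representatives, between the pairing-with-$\mathcal{P}$ prescription of Figure~\ref{fig:GlueingTypeAAstructure} and the resolution construction of Definition~\ref{def:resolution}; a purely algebraic verification along the lines of the proof of Theorem~\ref{thm:CFTdGeneralGlueing} is possible but considerably more laborious.
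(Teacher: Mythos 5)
Your proposal follows essentially the same route as the paper: the second isomorphism is exactly the paper's application of Theorem~\ref{thm:PairingMorLagrangianFH}, and for the first you likewise combine Theorem~\ref{thm:CFTdGeneralGlueing} with the classification results to reduce to curve representatives and then match $\rr(\cdot)\boxtimes\mathcal{P}\boxtimes(\cdot)$ against the Lagrangian intersection complex via the picture of Figure~\ref{fig:GlueingInterpretationFUK}, which is precisely the generator/grading/bigon comparison that constitutes the paper's proof. The only caveat is that the classification only lets you reduce to curves with \emph{arbitrary} local systems rather than trivial ones, but since the local systems enter both sides of your comparison identically (through the matrices sitting on the arcs), this does not affect the argument.
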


\begin{proof}
	The second equality is an application of Theorem~\ref{thm:PairingMorLagrangianFH}. As we saw in the proof of Theorem~\ref{thm:PairingFormula}, we can use any representatives of the underlying curves of $L_{\mr(T_1)}$ and $L_{T_2}$ to compute $\LagrangianFH(L_{\mr(T_1)},L_{T_2})$, as long as each pair of parallel curves bounds a cyclic chain of bigons (ie does not bound an immersed annulus). For example, we may homotope the curves in $L_{\mr(T_1)}$ and $L_{T_2}$ into neighbourhoods of the red and blue curves in Figure~\ref{fig:GlueingInterpretationFUK}, respectively. The first equality is then seen by identifying the intersection points between those curves and the connecting bigons with the generators and differentials of 
	\[
	\CFTd(\rr(T_1))\boxtimes\,\mathcal{P}\boxtimes\,\CFTd(T_2)
	\cong
	\Pi(L_{\rr(T_1)})\boxtimes\,\mathcal{P}\boxtimes \Pi(L_{T_2})
	\]
	from Theorem~\ref{thm:CFTdGeneralGlueing}. 
	
	Let us first consider the generators and their gradings. Obviously, there is a one-to-one correspondence between generators and intersection points. Let $\bullet(\textcolor{red}{s},i)$ be a generator of $\Pi(L_{\rr(T_1)})$ and $\bullet(\textcolor{blue}{s'},i')$ a generator of $\Pi(L_{T_2})$, where $\textcolor{red}{s}\in\{\textcolor{red}{a},\textcolor{red}{b},\textcolor{red}{c},\textcolor{red}{d}\}$ and $\textcolor{blue}{s'}\in\{\textcolor{blue}{a},\textcolor{blue}{b},\textcolor{blue}{c},\textcolor{blue}{d}\}$, such that there exists a generator $\textcolor{red}{s}\textcolor{blue}{s'}\in\mathcal{P}$. Then the $\delta$-grading of the corresponding intersection point agrees with the $\delta$-grading of $\bullet(\textcolor{red}{s},i)\boxtimes\mathcal{P}\boxtimes\bullet(\textcolor{blue}{s'},i')$; namely, it is equal to 
	$$
	\delta(\bullet(\textcolor{blue}{s'},i'))+\delta(\bullet(\textcolor{red}{s},i))+\delta(\textcolor{red}{s}\textcolor{blue}{s'})=	\delta(\bullet(\textcolor{blue}{s'},i'))-\delta(\m(\bullet(\textcolor{red}{s},i)))+\delta(\textcolor{red}{s}\textcolor{blue}{s'}),
	$$
	which can be checked for each generator $\textcolor{red}{s}\textcolor{blue}{s'}\in\mathcal{P}$ separately. 
	A similar argument also shows that the Alexander gradings agree on both sides. 
	
	Next, let us consider differentials. For this, let $\bullet(\textcolor{red}{s},i)\xrightarrow{\red p} \bullet(\textcolor{red}{t},j)$ be a differential in $\Pi(L_{\rr(T_1)})$ and $\bullet(\textcolor{blue}{s'},i')\xrightarrow{\blue p'} \bullet(\textcolor{blue}{t'},j')$ a differential in $\Pi(L_{T_2})$, where $\textcolor{red}{s},\textcolor{red}{t}\in\{\textcolor{red}{a},\textcolor{red}{b},\textcolor{red}{c},\textcolor{red}{d}\}$ and $\textcolor{blue}{s'},\textcolor{blue}{t'}\in\{\textcolor{blue}{a},\textcolor{blue}{b},\textcolor{blue}{c},\textcolor{blue}{d}\}$, such that there exist generators $\textcolor{red}{s}\textcolor{blue}{s'},\textcolor{red}{t}\textcolor{blue}{t'}\in\mathcal{P}$. 
	Now observe that there is a component $({\red p}\vert {\blue p'} )$ in $\mathcal{P}$ from $\textcolor{red}{s}\textcolor{blue}{s'}$ to $\textcolor{red}{t}\textcolor{blue}{t'}$, iff there is a bigon in Figure~\ref{fig:GlueingInterpretationFUK} between the corresponding intersection points which covers exactly those ${\red p_i}$ and ${\red q_i}$ in ${\red p}$ and those ${\blue p_i}$ and ${\blue q_i}$ in ${\blue p'}$. Note that the boundary orientation of the bigons is indeed opposite to the orientation of any crossover arrows in $\Pi(L_{\mr(T_1)})$ and $\Pi(L_{T_2})$.
\end{proof}

We end this section with a number of computations illustrating the above Glueing Theorem.

\begin{figure}[t]
	\centering
		{
		\psset{unit=0.2,linewidth=\stringwidth}
		\begin{pspicture}[showgrid=false](-11,-15)(11,15)
		\rput{-45}(0,0){
			% T_1 (left)
			\psline[linecolor=red](1,-4)(-9,-4)
			\psline[linewidth=\stringwhite,linecolor=white](-4,1)(-4,-9)
			\psline[linecolor=red](-4,1)(-4,-9)
			\psline[linecolor=red]{->}(-4,-7)(-4,-8)
			\psline[linecolor=red]{->}(-1,-4)(0,-4)
			
			% T_2 (right)
			\psline[linecolor=blue](-1,4)(9,4)
			\psline[linewidth=\stringwhite,linecolor=white](4,-1)(4,9)
			\psline[linecolor=blue](4,-1)(4,9)
			\psline[linecolor=blue]{->}(4,7)(4,8)
			\psline[linecolor=blue]{<-}(0,4)(1,4)
			
			% tangle circles outside
			\pscircle[linestyle=dotted](4,4){5}
			\pscircle[linestyle=dotted](-4,-4){5}

			% labelling
			\rput{45}(-10,0){$t_2$}
			\rput{45}(0,10){$t_1$}
			
			\rput{45}(1.5,1.5){$a$}
			\rput{45}(6.5,1.5){$b$}
			\rput{45}(6.5,6.5){$c$}
			\rput{45}(1.5,6.5){$d$}
			
			\rput{45}(-6.5,-6.5){$a$}
			\rput{45}(-1.5,-6.5){$b$}
			\rput{45}(-1.5,-1.5){$c$}
			\rput{45}(-6.5,-1.5){$d$}
			
			% arc segments left 
			\psecurve{C-C}(8,-3)(-1,4)(-10,-3)(-9,-4)(-8,-3)
			\psecurve{C-C}(-3,8)(4,-1)(-3,-10)(-4,-9)(-3,-8)

			% cover of arc segments left 
			\psecurve[linewidth=\stringwhite,linecolor=white](-8,3)(1,-4)(10,3)(9,4)(8,3)
			\psecurve[linewidth=\stringwhite,linecolor=white](3,-8)(-4,1)(3,10)(4,9)(3,8)
			
			% arc segments right
			\psecurve{C-C}(-8,3)(1,-4)(10,3)(9,4)(8,3)
			\psecurve{C-C}(3,-8)(-4,1)(3,10)(4,9)(3,8)
		}
		\end{pspicture}
		}\qquad{\psset{unit=2}
		\begin{pspicture}(-2,-1.5)(1.7,1.5)
		\psecurve[linecolor=blue](1.3,1.3)(-0.25,0.25)(-1.3,-1.3)(0.25,-0.25)(1.3,1.3)(-0.25,0.25)(-1.3,-1.3)
		\rput{90}(0,0){
			\psecurve[linecolor=red](1.3,1.3)(-0.25,0.25)(-1.3,-1.3)(0.25,-0.25)(1.3,1.3)(-0.25,0.25)(-1.3,-1.3)
		}
		
		%\psline*[linecolor=white](1,1)(1,-1)(-1,-1)(-1,1)
		\psline[linestyle=dotted](1,1)(1,-1)
		\psline[linestyle=dotted](1,-1)(-1,-1)
		\psline[linestyle=dotted](-1,-1)(-1,1)
		\psline[linestyle=dotted](-1,1)(1,1)
		%
		%\psecurve[dotsep=1pt,linestyle=dotted](-1.2,0.6)(-1.2,1.2)(1.2,0.6)(1.2,1.2)(-1.2,0.6)(-1.2,1.2)(1.2,0.6)
		
		\psset{dotsize=5pt}
		
		\psdot[linecolor=red](-1,0.45)
		\psdot[linecolor=blue](-1,-0.45)
		\psdot[linecolor=blue](0.45,1)
		\psdot[linecolor=red](-0.45,1)
		\psdot[linecolor=blue](1,0.45)
		\psdot[linecolor=red](1,-0.45)
		\psdot[linecolor=red](0.45,-1)
		\psdot[linecolor=blue](-0.45,-1)
		
		% colours of tangle ends
		\uput{0.1}[45]{0}(1,1){$t_1$}
		\uput{0.1}[135]{0}(-1,1){$t_2$}
		\uput{0.1}[-135]{0}(-1,-1){$t_1$}
		\uput{0.1}[-45]{0}(1,-1){$t_2$}
		
		% blue
		\uput{0.1}[90]{0}(0.45,1){$\delta^{\frac{1}{2}}t_1^\frac{1}{2}t_2^\frac{1}{2}$}
		\uput{0.1}[-90]{0}(-0.45,-1){$\delta^{\frac{1}{2}}t_1^{-\frac{1}{2}}t_2^{-\frac{1}{2}}$}
		\uput{0.1}[180]{0}(-1,-0.45){$\delta^0t_1^{\frac{1}{2}}t_2^{-\frac{1}{2}}$}
		\uput{0.1}[0]{0}(1,0.45){$\delta^0t_1^{-\frac{1}{2}}t_2^{\frac{1}{2}}$}
		
		% red
		\uput{0.1}[90]{0}(-0.45,1){$\delta^{-\frac{1}{2}}t_1^{-\frac{1}{2}}t_2^{-\frac{1}{2}}$}
		\uput{0.1}[-90]{0}(0.45,-1){$\delta^{-\frac{1}{2}}t_1^{\frac{1}{2}}t_2^{\frac{1}{2}}$}
		\uput{0.1}[180]{0}(-1,0.45){$\delta^0t_1^{-\frac{1}{2}}t_2^{\frac{1}{2}}$}
		\uput{0.1}[0]{0}(1,-0.45){$\delta^0t_1^{\frac{1}{2}}t_2^{-\frac{1}{2}}$}
		
		\uput{0.1}[180]{0}(-0.52,0){$\delta^1t_1^{1}t_2^{-1}$}
		\uput{0.1}[0]{0}(0.52,0){$\delta^1t_1^{-1}t_2^{1}$}
		\uput{0.1}[0]{0}(0,-0.52){$\delta^1t_1^{-1}t_2^{-1}$}
		\uput{0.1}[0]{0}(0,0.52){$\delta^1t_1^{1}t_2^{1}$}
		
		% labelling of curves
		\uput{0.45}[180]{0}(-1,1){\textcolor{red}{$L_{\mr(T_1)}$}}
		\uput{0.45}[0]{0}(1,1){\textcolor{blue}{$L_{T_2}$}}
		
		\pscircle*[linecolor=white](-1,0.7){3pt}
		\rput(-1,0.7){$a$}
		\pscircle*[linecolor=white](0,-1){3pt}
		\rput(0,-1){$b$}
		\pscircle*[linecolor=white](1,0.7){3pt}
		\rput(1,0.7){$c$}
		\pscircle*[linecolor=white](0,1){3pt}
		\rput(0,1){$d$}
		
		\psdot(-0.52,0)
		\rput{0}(-0.52,0.1){%
			\psline[linearc=0.1]{->}(0.25;137)(0,0)(0.25;43)}
		\rput{180}(-0.52,-0.1){%
			\psline[linearc=0.1]{->}(0.25;137)(0,0)(0.25;43)}
		
		\psdot(0.52,0)
		\rput{0}(0.52,0.1){%
			\psline[linearc=0.1]{->}(0.25;137)(0,0)(0.25;43)}
		\rput{180}(0.52,-0.1){%
			\psline[linearc=0.1]{->}(0.25;137)(0,0)(0.25;43)}
		
		\psdot(0,-0.52)
		\rput{180}(0,-0.62){%
			\psline[linearc=0.1]{->}(0.25;133)(0,0)(0.25;47)}
		\rput{0}(0,-0.42){%
			\psline[linearc=0.1]{->}(0.25;133)(0,0)(0.25;47)}
		
		\psdot(0,0.52)
		\rput{0}(0,0.62){%
			\psline[linearc=0.1]{->}(0.25;133)(0,0)(0.25;47)}
		\rput{180}(0,0.42){%
			\psline[linearc=0.1]{->}(0.25;133)(0,0)(0.25;47)}
		
		\pscircle[fillstyle=solid, fillcolor=white](1,1){0.08}
		\pscircle[fillstyle=solid, fillcolor=white](-1,1){0.08}
		\pscircle[fillstyle=solid, fillcolor=white](1,-1){0.08}
		\pscircle[fillstyle=solid, fillcolor=white](-1,-1){0.08}
		\end{pspicture}
		}
\caption{A tangle decomposition of the Hopf link (left) and the intersection theory of the corresponding tangle invariants (right).}\label{fig:HopfLink}
\end{figure}
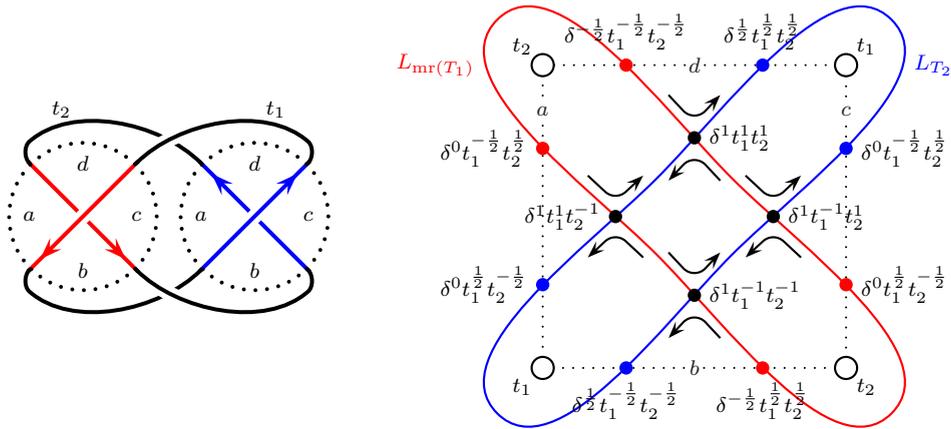 

\begin{example}[(Hopf link)]
	Consider the tangle decomposition of the Hopf link into two tangles $\textcolor{red}{T_1}$ and $\textcolor{blue}{T_2}$ from Figure~\ref{fig:HopfLink}, following the same conventions as in Definition~\ref{def:tanglepairing}. The diagram on the right of that figure shows a 4-punctured sphere which we consider as the  boundary of the 3-ball of the tangle $\textcolor{blue}{T_2}$. It is parametrized by four arcs which are drawn as dotted lines labelled $a$, $b$, $c$ and $d$ as usual. The intersection points of the curve $\textcolor{blue}{L_{T_2}}$ with those arcs are labelled by their gradings. %Note that we may regard the diagram without the curve $\textcolor{red}{L_{\mr(T_1)}}$ as a Heegaard diagram for the tangle $\textcolor{blue}{T_2}$, so $\textcolor{blue}{L_{T_2}}$ plays the role of the $\beta$-curve in this diagram. 
	The tangle $\textcolor{red}{T_1}$ agrees with $\textcolor{blue}{T_2}$ except for the orientations of the tangle strands.  Thus, $\textcolor{red}{L_{\mr(T_1)}}=L_{\m(T_2)}$ is obtained from $\textcolor{blue}{L_{T_2}}$ by taking the mirror of the underlying curve and reversing all gradings. 
	
	The curves $\textcolor{red}{L_{\mr(T_1)}}$ and $\textcolor{blue}{L_{T_2}}$ intersect in four points. They are labelled by their respective gradings, which we can compute from their resolutions. As a reminder of our conventions, we have indicated these resolutions by pairs of arrows around the intersection points in this example. (We omit those arrows in the examples below.) 
	In summary, we conclude that $\HFL$ of the Hopf link is a 4-dimensional vector space supported in a single $\delta$-grading and Alexander gradings
	$$
	t_1^{-1}t_2^{-1}+
	t_1^{1}t_2^{-1}+
	t_1^{-1}t_2^{1}+
	t_1^{1}t_2^{1}.
	$$
\end{example}

\begin{figure}[t]
		{
			\psset{unit=0.2,linewidth=\stringwidth}
			\begin{pspicture}[showgrid=false](-11,-15)(11,15)
			\rput{-45}(0,0){
				% T_1 (left)
				\psline[linecolor=red](-4,1)(-4,-9)
				\psline[linewidth=\stringwhite,linecolor=white](1,-4)(-9,-4)
				\psline[linecolor=red](1,-4)(-9,-4)
				\psline[linecolor=red]{->}(-4,-7)(-4,-8)
				\psline[linecolor=red]{->}(-1,-4)(0,-4)
				
				% T_2 (right)
				\psline[linecolor=blue](-1,4)(9,4)
				\psline[linewidth=\stringwhite,linecolor=white](4,-1)(4,9)
				\psline[linecolor=blue](4,-1)(4,9)
				\psline[linecolor=blue]{->}(4,7)(4,8)
				\psline[linecolor=blue]{<-}(0,4)(1,4)
				
				% tangle circles outside
				\pscircle[linestyle=dotted](4,4){5}
				\pscircle[linestyle=dotted](-4,-4){5}

				% labelling			
				\rput{45}(-10,0){$t_2$}
				\rput{45}(0,10){$t_1$}
				
				\rput{45}(1.5,1.5){$a$}
				\rput{45}(6.5,1.5){$b$}
				\rput{45}(6.5,6.5){$c$}
				\rput{45}(1.5,6.5){$d$}
				
				\rput{45}(-6.5,-6.5){$a$}
				\rput{45}(-1.5,-6.5){$b$}
				\rput{45}(-1.5,-1.5){$c$}
				\rput{45}(-6.5,-1.5){$d$}
				
				% arc segments left 
				\psecurve{C-C}(8,-3)(-1,4)(-10,-3)(-9,-4)(-8,-3)
				\psecurve{C-C}(-3,8)(4,-1)(-3,-10)(-4,-9)(-3,-8)

				% cover of arc segments left 
				\psecurve[linewidth=\stringwhite,linecolor=white](-8,3)(1,-4)(10,3)(9,4)(8,3)
				\psecurve[linewidth=\stringwhite,linecolor=white](3,-8)(-4,1)(3,10)(4,9)(3,8)
				
				% arc segments right
				\psecurve{C-C}(-8,3)(1,-4)(10,3)(9,4)(8,3)
				\psecurve{C-C}(3,-8)(-4,1)(3,10)(4,9)(3,8)
			}
			\end{pspicture}
		}\qquad{\psset{unit=2}
		\begin{pspicture}(-1.7,-1.5)(1.7,1.5)
		\psecurve[linecolor=blue](1.3,1.3)(-0.25,0.25)(-1.3,-1.3)(0.25,-0.25)(1.3,1.3)(-0.25,0.25)(-1.3,-1.3)
		%\psecurve[linecolor=red](1.4,1.4)(-0.35,0.35)(-1.4,-1.4)(-0.25,-1)(0.15,-0.15)(1,0.25)(1.4,1.4)(-0.35,0.35)(-1.4,-1.4)
		\psecurve[linecolor=red]%
		(-0.3,-1)%
		(-1.15,-1.45)(-1.45,-1.15)%
		(-1,-0.3)%
		%(-0.35,0.35)%
		(0.3,1)%
		(1.15,1.45)(1.45,1.15)%
		(1,0.3)%
		(0.15,-0.15)%
		(-0.3,-1)%
		(-1.15,-1.45)(-1.45,-1.15)%

		%\psline*[linecolor=white](1,1)(1,-1)(-1,-1)(-1,1)
		\psline[linestyle=dotted](1,1)(1,-1)
		\psline[linestyle=dotted](1,-1)(-1,-1)
		\psline[linestyle=dotted](-1,-1)(-1,1)
		\psline[linestyle=dotted](-1,1)(1,1)
		%
		%\psecurve[dotsep=1pt,linestyle=dotted](-1.2,0.6)(-1.2,1.2)(1.2,0.6)(1.2,1.2)(-1.2,0.6)(-1.2,1.2)(1.2,0.6)
		
		\psset{dotsize=5pt}
		
		\psdot[linecolor=red](-1,-0.3)
		\psdot[linecolor=blue](-1,-0.45)
		\psdot[linecolor=blue](0.45,1)
		\psdot[linecolor=red](0.3,1)
		\psdot[linecolor=blue](1,0.45)
		\psdot[linecolor=red](1,0.3)
		\psdot[linecolor=red](-0.3,-1)
		\psdot[linecolor=blue](-0.45,-1)
		
		% colours of tangle ends
		\uput{0.1}[45]{0}(1,1){$t_1$}
		\uput{0.1}[135]{0}(-1,1){$t_2$}
		\uput{0.1}[-135]{0}(-1,-1){$t_1$}
		\uput{0.1}[-45]{0}(1,-1){$t_2$}
		
		% blue
		\uput{0.1}[-60]{0}(0.45,1){$\delta^{\frac{1}{2}}t_1^\frac{1}{2}t_2^\frac{1}{2}$}
		\uput{0.1}[120]{0}(-0.45,-1){$\delta^{\frac{1}{2}}t_1^{-\frac{1}{2}}t_2^{-\frac{1}{2}}$}
		\uput{0.1}[0]{0}(-1,-0.45){$\delta^0t_1^{\frac{1}{2}}t_2^{-\frac{1}{2}}$}
		\uput{0.1}[180]{0}(1,0.45){$\delta^0t_1^{-\frac{1}{2}}t_2^{\frac{1}{2}}$}
		
		% red
		\uput{0.1}[120]{0}(0.3,1){$\delta^{\frac{1}{2}}t_1^\frac{1}{2}t_2^\frac{1}{2}$}
		\uput{0.1}[-60]{0}(-0.3,-1){$\delta^{\frac{1}{2}}t_1^{-\frac{1}{2}}t_2^{-\frac{1}{2}}$}
		\uput{0.1}[180]{0}(-1,-0.3){$\delta^0t_1^{\frac{1}{2}}t_2^{-\frac{1}{2}}$}
		\uput{0.1}[0]{0}(1,0.3){$\delta^0t_1^{-\frac{1}{2}}t_2^{\frac{1}{2}}$}
		
		\uput{0.1}[-30]{0}(-0.09,-0.61){$\delta^0t_1^0t_2^0$}
		\uput{0.1}[-60]{0}(0.61,0.09){$\delta^1t_1^0t_2^0$}
		
		% labelling of curves
		\uput{0.05}[135]{0}(0,0){\textcolor{blue}{$L_{T_2}$}}
		\uput{0.6}[135]{0}(0,0){\textcolor{red}{$L_{\mr(T_1)}$}}
		
		\psdot(-0.09,-0.61)
		\psdot(0.61,0.09)
		
		\pscircle*[linecolor=white](-1,0.5){3pt}
		\rput(-1,0.5){$a$}
		\pscircle*[linecolor=white](0.5,-1){3pt}
		\rput(0.5,-1){$b$}
		\pscircle*[linecolor=white](1,-0.5){3pt}
		\rput(1,-0.5){$c$}
		\pscircle*[linecolor=white](-0.5,1){3pt}
		\rput(-0.5,1){$d$}
		
		\pscircle[fillstyle=solid, fillcolor=white](1,1){0.08}
		\pscircle[fillstyle=solid, fillcolor=white](-1,1){0.08}
		\pscircle[fillstyle=solid, fillcolor=white](1,-1){0.08}
		\pscircle[fillstyle=solid, fillcolor=white](-1,-1){0.08}
		\end{pspicture}
	}
	\caption{A tangle decomposition of the unlink (left) and the intersection theory of the corresponding tangle invariants (right).}\label{fig:Unlink}
\end{figure}
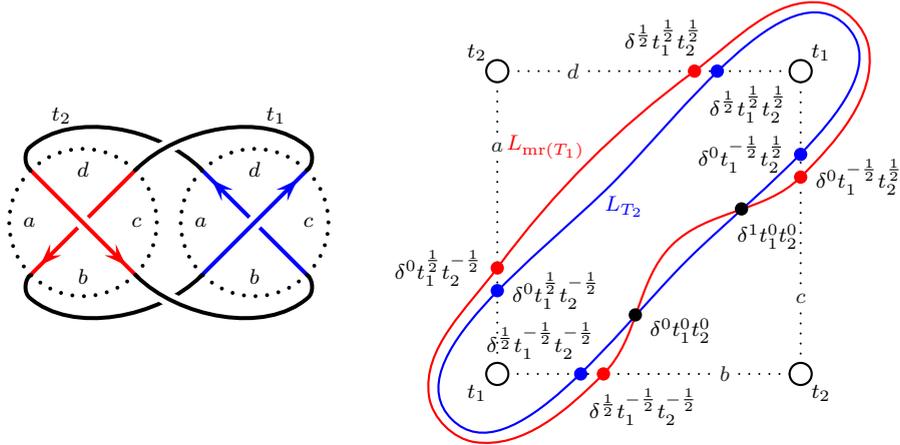

\begin{example}[(Unlink)]
	Figure~\ref{fig:Unlink} shows a tangle decomposition of the two-component unlink and the pairing of the two corresponding curves. This example is very similar to the previous one, so we let the pictures speak for themselves. It illustrates that admissibility of Heegaard diagrams corresponds to the fact that we need to remove immersed annuli before counting intersection points between parallel immersed curves. 
\end{example}

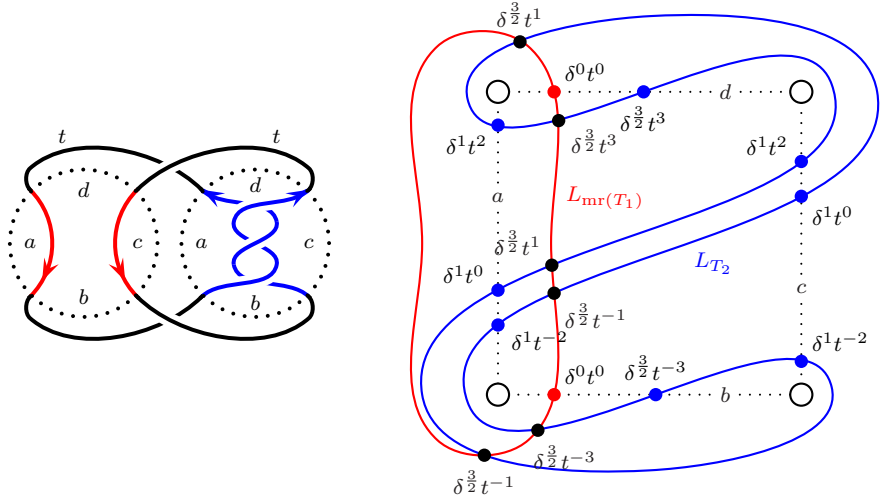
\begin{figure}[t]
	\centering
		{
			\psset{unit=0.2,linewidth=\stringwidth}
			\begin{pspicture}[showgrid=false](-11,-16.5)(11,16.5)
			%\rput{0}(0,0){
			\rput{-45}(0,0){
				% T_1 (left)
				\psarc[linecolor=red](1,1){5}{180}{270}
				\psarc[linecolor=red](-9,-9){5}{0}{90}
				
				\psarc[linecolor=red]{->}(1,1){5}{250}{255}
				\psarcn[linecolor=red]{->}(-9,-9){5}{20}{15}
				
				% T_2 (right)
				\psecurve[linecolor=blue](14,2)(9,4)(4,2)(4,6)(-1,4)(-6,6)
				\psecurve[linewidth=\stringwhite,linecolor=white](6,-6)(4,-1)(6,4)(2,4)(4,9)(2,14)
				\psecurve[linecolor=blue](6,-6)(4,-1)(6,4)(2,4)(4,9)(2,14)
				\psecurve[linewidth=\stringwhite,linecolor=white](9,4)(4,2)(4,6)(-1,4)
				\psecurve[linecolor=blue](9,4)(4,2)(4,6)(-1,4)
				
				%\psline[linecolor=blue]{<-}(0,4.3)(0.1,4.355)
				%\psline[linecolor=blue]{<-}(3.7,8)(3.645,7.9)
				
				\psline[linecolor=blue]{<-}(-1,4)(-0.9,4.03)
				\psline[linecolor=blue]{<-}(4,9)(3.97,8.9)
				
				% tangle circles outside
				\pscircle[linestyle=dotted](4,4){5}
				\pscircle[linestyle=dotted](-4,-4){5}

				% labelling
				\rput{45}(-10,0){$t$}
				\rput{45}(0,10){$t$}
				
				\rput{45}(1.5,1.5){$a$}
				\rput{45}(6.75,1.25){$b$}
				\rput{45}(6.5,6.5){$c$}
				\rput{45}(1.25,6.75){$d$}
				
				\rput{45}(-6.5,-6.5){$a$}
				\rput{45}(-1.5,-6.5){$b$}
				\rput{45}(-1.5,-1.5){$c$}
				\rput{45}(-6.5,-1.5){$d$}
				
				% arc segments left 
				\psecurve{C-C}(8,-3)(-1,4)(-10,-3)(-9,-4)(-8,-3)
				\psecurve{C-C}(-3,8)(4,-1)(-3,-10)(-4,-9)(-3,-8)
				
				% cover of arc segments left 
				\psecurve[linewidth=\stringwhite,linecolor=white](-8,3)(1,-4)(10,3)(9,4)(8,3)
				\psecurve[linewidth=\stringwhite,linecolor=white](3,-8)(-4,1)(3,10)(4,9)(3,8)
				
				% arc segments right
				\psecurve{C-C}(-8,3)(1,-4)(10,3)(9,4)(8,3)
				\psecurve{C-C}(3,-8)(-4,1)(3,10)(4,9)(3,8)
			}
			\end{pspicture}
	}\qquad{\psset{unit=2}
		\begin{pspicture}(-1.7,-1.65)(1.7,1.65)
		\psecurve[linecolor=blue](1.2,-0.95)(0,-1.5)(-1.5,-1)(1.2,1.05)(-1.2,0.95)(0,1.5)(1.5,1)(-1.2,-1.05)(1.2,-0.95)(0,-1.5)(-1.5,-1)
		\rput{0}(-1.1,0){
			\psecurve[linecolor=red]%
			(0.3,1.3)(-0.3,1.3)%
			(-0.4,0)%
			(-0.3,-1.3)(0.3,-1.3)%
			(0.45,0)%
			(0.3,1.3)(-0.3,1.3)%
			(-0.4,0)%
		}
		
		%\psline*[linecolor=white](1,1)(1,-1)(-1,-1)(-1,1)
		\psline[linestyle=dotted](1,1)(1,-1)
		\psline[linestyle=dotted](1,-1)(-1,-1)
		\psline[linestyle=dotted](-1,-1)(-1,1)
		\psline[linestyle=dotted](-1,1)(1,1)
		%
		%\psecurve[dotsep=1pt,linestyle=dotted](-1.2,0.6)(-1.2,1.2)(1.2,0.6)(1.2,1.2)(-1.2,0.6)(-1.2,1.2)(1.2,0.6)
		
		\psset{dotsize=5pt}
		
		% blue
		\psdot[linecolor=blue](-1,0.78)
		\uput{0.1}[-135]{0}(-1,0.78){$\delta^1t^{2}$}
		\psdot[linecolor=blue](-1,-0.31)
		\uput{0.1}[145]{0}(-1,-0.31){$\delta^1t^{0}$}
		\psdot[linecolor=blue](-1,-0.54)
		\uput{0.1}[-35]{0}(-1,-0.54){$\delta^1t^{-2}$}
		\psdot[linecolor=blue](-0.04,1)
		\uput{0.1}[-90]{0}(-0.04,1){$\delta^{\frac{3}{2}}t^{3}$}
		\psdot[linecolor=blue](0.04,-1)
		\uput{0.1}[90]{0}(0.04,-1){$\delta^{\frac{3}{2}}t^{-3}$}
		\psdot[linecolor=blue](1,0.54)
		\uput{0.1}[145]{0}(1,0.54){$\delta^1t^{2}$}
		\psdot[linecolor=blue](1,0.31)
		\uput{0.1}[-45]{0}(1,0.31){$\delta^1t^{0}$}
		\psdot[linecolor=blue](1,-0.78)
		\uput{0.1}[45]{0}(1,-0.78){$\delta^1t^{-2}$}
		
		% red
		\psdot[linecolor=red](-0.63,1)
		\uput{0.1}[45]{0}(-0.63,1){$\delta^0t^{0}$}
		\psdot[linecolor=red](-0.63,-1)
		\uput{0.1}[45]{0}(-0.63,-1){$\delta^0t^{0}$}
		
		% labelling of curves
		\uput{0.05}[0]{0}(-0.6,0.3){\textcolor{red}{$L_{\mr(T_1)}$}}
		\uput{0.3}[-20]{0}(0,0){\textcolor{blue}{$L_{T_2}$}}
		
		% intersection points
		\psdot(-0.855,1.33)
		\uput{0.1}[90]{0}(-0.855,1.33){$\delta^{\frac{3}{2}}t^{1}$}
		\psdot(-0.6,0.81)
		\uput{0.1}[-40]{0}(-0.6,0.81){$\delta^{\frac{3}{2}}t^{3}$}
		
		\psdot(-0.645,-0.145)
		\uput{0.1}[135]{0}(-0.645,-0.145){$\delta^{\frac{3}{2}}t^{1}$}
		\psdot(-0.63,-0.33)
		\uput{0.1}[-45]{0}(-0.63,-0.33){$\delta^{\frac{3}{2}}t^{-1}$}
		
		\psdot(-0.737,-1.235)
		\uput{0.1}[-60]{0}(-0.737,-1.235){$\delta^{\frac{3}{2}}t^{-3}$}
		\psdot(-1.09,-1.4)
		\uput{0.1}[-90]{0}(-1.09,-1.4){$\delta^{\frac{3}{2}}t^{-1}$}

		\pscircle*[linecolor=white](-1,0.3){3pt}
		\rput(-1,0.3){$a$}
		\pscircle*[linecolor=white](0.5,-1){3pt}
		\rput(0.5,-1){$b$}
		\pscircle*[linecolor=white](1,-0.3){3pt}
		\rput(1,-0.3){$c$}
		\pscircle*[linecolor=white](0.5,1){3pt}
		\rput(0.5,1){$d$}

		\pscircle[fillstyle=solid, fillcolor=white](1,1){0.08}
		\pscircle[fillstyle=solid, fillcolor=white](-1,1){0.08}
		\pscircle[fillstyle=solid, fillcolor=white](1,-1){0.08}
		\pscircle[fillstyle=solid, fillcolor=white](-1,-1){0.08}
		\end{pspicture}
	}
	\caption{A tangle decomposition of the trefoil (left) and the intersection theory of the corresponding tangle invariants (right).}\label{fig:Trefoil}
\end{figure} 
\begin{example}[(Trefoil knot)]
	Figure~\ref{fig:Trefoil} shows a tangle decomposition of the trefoil knot and the pairing of the two corresponding curves. Again, this example is very similar to the previous two, except that the two open components of the two tangles are identified in the trefoil knot, so the same happens to the Alexander gradings: $t_1=t=t_2$. Intersection points lie in a single $\delta$-grading and Alexander gradings
	$$(t+t^{-1})(t^2+t^0+t^{-2}).$$
\end{example}

\begin{remark}
	The attentive reader will have noticed that in each of the previous three examples, the 4-punctured sphere together with the curves $\textcolor{red}{L_{\mr(T_1)}}$ and $\textcolor{blue}{L_{T_2}}$ constitutes a well-defined multi-pointed Heegaard diagram for the link $L(\textcolor{red}{T_1},\textcolor{blue}{T_2})$, where we interpret $\textcolor{red}{L_{\mr(T_1)}}$ as an $\textcolor{red}{\alpha}$-curve and $\textcolor{blue}{L_{T_2}}$ as a $\textcolor{blue}{\beta}$-curve. But this is only true more generally if both $\textcolor{red}{T_1}$ and $\textcolor{blue}{T_2}$ are rational. In general, we cannot find a Heegaard diagram for the link $L(\textcolor{red}{T_1},\textcolor{blue}{T_2})$ whose Heegaard surface is a 4-punctured sphere inducing the given tangle decomposition. 
	Theorem~\ref{thm:CFTdGlueingAsMorphism} says that nonetheless, we can regard the 4-punctured sphere together with $\textcolor{red}{L_{\mr(T_1)}}$ and  $\textcolor{blue}{L_{T_2}}$ as a generalized Heegaard diagram for $L(\textcolor{red}{T_1},\textcolor{blue}{T_2})$ and use it to compute $\HFL(L(\textcolor{red}{T_1},\textcolor{blue}{T_2}))$, even though $\textcolor{red}{L_{\mr(T_1)}}$ and  $\textcolor{blue}{L_{T_2}}$ might have multiple and possibly immersed components. Below, we compute two more examples to illustrate this point of view.
\end{remark}

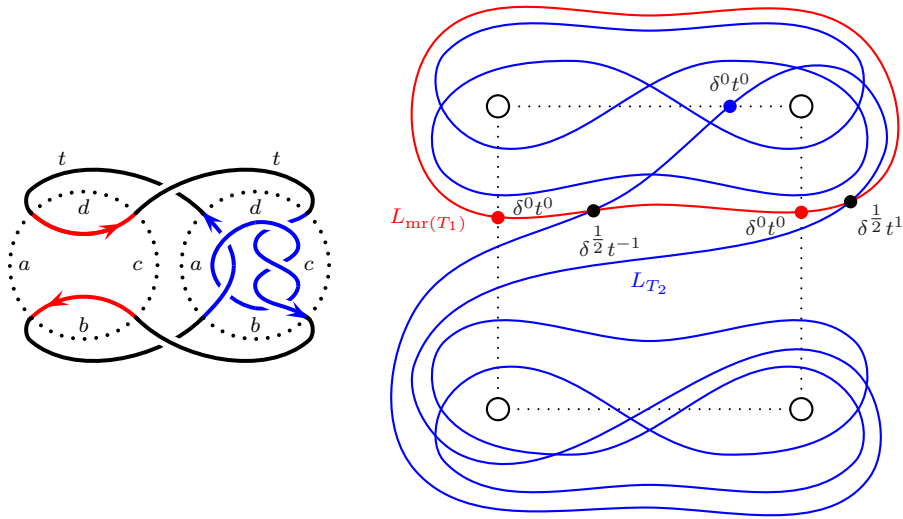
\begin{figure}[t]
	\centering
	{
		\psset{unit=0.2,linewidth=\stringwidth}
		\begin{pspicture}[showgrid=false](-11,-16.5)(11,16.5)
		%\rput{0}(0,0){
		\rput{-45}(0,0){
			% T_1 (left)
			\psarc[linecolor=red](-9,1){5}{-90}{0}
			\psarc[linecolor=red](1,-9){5}{90}{180}
			
			\psarc[linecolor=red]{->}(-9,1){5}{-90}{-15}
			\psarc[linecolor=red]{->}(1,-9){5}{160}{165}
			
			% T_2 (right)
%			\psecurve[linecolor=blue]%
%			(14,3)(9,4)(5,3)(5,7)%
%			(2,6)%
%			(1.5,1.5)%
%			(6,2)%
%			(7,5)(3,5)(4,9)(3,14)
			% two crossings left
			\psecurve[linecolor=blue]
			(-5,3)%
			(-1,4)(3,3)(4,-1)%
			(3,-5)
			\psecurve[linewidth=\stringwhite,linecolor=white](5,7)%
			(2,6)%
			(2,2)%
			(6,2)%
			(7,5)
			\psecurve[linecolor=blue](5,7)%
			(2,6)%
			(2,2)%
			(6,2)%
			(7,5)
			\psecurve[linewidth=\stringwhite,linecolor=white]
			(-1,4)(3,3)(4,-1)%
			(3,-5)
			\psecurve[linecolor=blue]
			(-1,4)(3,3)(4,-1)%
			(3,-5)
			
			% three crossings right
			\psecurve[linecolor=blue]%
			(2,2)%
			(6,2)%
			(7,5)(3,5)(4,9)(3,13)
			\psecurve[linewidth=\stringwhite,linecolor=white]%
			(13,3)(9,4)(5,3)(5,7)%
			(2,6)%
			(2,2)%
			\psecurve[linecolor=blue]%
			(13,3)(9,4)(5,3)(5,7)%
			(2,6)%
			(2,2)%
			\psecurve[linewidth=\stringwhite,linecolor=white]%
			(6,2)%
			(7,5)(3,5)(4,9)
			\psecurve[linecolor=blue]%
			(6,2)%
			(7,5)(3,5)(4,9)
			
			\psline[linecolor=blue]{<-}(-1,4)(-0.9,4)
			\psline[linecolor=blue]{<-}(9,4)(8.9,3.97)
			
			% tangle circles outside
			\pscircle[linestyle=dotted](4,4){5}
			\pscircle[linestyle=dotted](-4,-4){5}
			
			% labelling
			\rput{45}(-10,0){$t$}
			\rput{45}(0,10){$t$}
			
			\rput{45}(1.25,1.25){$a$}
			\rput{45}(6.75,1.25){$b$}
			\rput{45}(6.5,6.5){$c$}
			\rput{45}(1.25,6.75){$d$}
			
			\rput{45}(-6.75,-6.75){$a$}
			\rput{45}(-1.25,-6.75){$b$}
			\rput{45}(-1.5,-1.5){$c$}
			\rput{45}(-6.75,-1.25){$d$}
			
			% arc segments left 
			\psecurve{C-C}(8,-3)(-1,4)(-10,-3)(-9,-4)(-8,-3)
			\psecurve{C-C}(-3,8)(4,-1)(-3,-10)(-4,-9)(-3,-8)

			% cover of arc segments left 
			\psecurve[linewidth=\stringwhite,linecolor=white](-8,3)(1,-4)(10,3)(9,4)(8,3)
			\psecurve[linewidth=\stringwhite,linecolor=white](3,-8)(-4,1)(3,10)(4,9)(3,8)
			
			% arc segments right
			\psecurve{C-C}(-8,3)(1,-4)(10,3)(9,4)(8,3)
			\psecurve{C-C}(3,-8)(-4,1)(3,10)(4,9)(3,8)
		}
		\end{pspicture}
	}\qquad{\psset{unit=2}
	\begin{pspicture}(-1.7,-1.7)(1.7,1.7)	
	
	\psline[linestyle=dotted](1,1)(1,-1)
	\psline[linestyle=dotted](1,-1)(-1,-1)
	\psline[linestyle=dotted](-1,-1)(-1,1)
	\psline[linestyle=dotted](-1,1)(1,1)
	
	% labelling of curves
	\uput{0.05}[180]{0}(-1.15,0.25){\textcolor{red}{$L_{\mr(T_1)}$}}
	\uput{0.1}[-90]{0}(0,0){\textcolor{blue}{$L_{T_2}$}}

	% immersed curve top (mid)
	\psecurve[linecolor=blue]%
	(0,1.5)%
	(-1.35,1.35)(-1.375,1)(-1,0.725)(0.5,1.3)(1.375,1)(1.35,0.55)%
	(0,0.55)%
	(-1.35,0.55)(-1.375,1)(-0.5,1.3)(1,0.725)(1.375,1)(1.35,1.35)%
	(0,1.5)%
	(-1.35,1.35)(-1.375,1)
	
	% immersed curve bottom (mid)
	\rput{180}(0,0){
		\psecurve[linecolor=blue]%
		(0,1.5)%
		(-1.35,1.35)(-1.375,1)(-1,0.725)(0.5,1.3)(1.375,1)(1.35,0.55)%
		(0,0.55)%
		(-1.35,0.55)(-1.375,1)(-0.5,1.3)(1,0.725)(1.375,1)(1.35,1.35)%
		(0,1.5)%
		(-1.35,1.35)(-1.375,1)
	}
	
	% rational tangle curve
	\psecurve[linecolor=blue](0,-1.65)(-1.55,-1.4)(-1.55,-0.35)(-0.2,0.4)(1.25,1.25)(1.55,0.7)(-1.55,-0.7)(-1.25,-1.3)(1.3,-0.7)(1.45,-1.45)(0,-1.65)(-1.55,-1.4)(-1.55,-0.35)

	\rput{0}(0,1.1){
		\psecurve[linecolor=red]%
		(1.4,0.35)(1.55,-0.55)%
		(0,-0.75)%
		(-1.4,-0.65)(-1.4,0.35)%
		(0,0.5)%
		(1.4,0.35)(1.55,-0.55)%
		(0,-0.8)%
	}
	
	\psset{dotsize=5pt}
	
	\psdot(1.325,0.37)
	\uput{0.05}[-40]{0}(1.325,0.37){$\delta^{\tfrac{1}{2}}t^{1}$}
	
	\psdot(-0.37,0.31)
	\uput{0.1}[-70]{0}(-0.37,0.31){$\delta^{\tfrac{1}{2}}t^{-1}$}
	
	\psdot[linecolor=red](-1,0.265)
	\uput{0.1}[25]{0}(-1,0.265){$\delta^{0}t^{0}$}
	\psdot[linecolor=red](1,0.3)
	\uput{0.1}[-150]{0}(1,0.3){$\delta^{0}t^{0}$}
	
%	\psdot[linecolor=blue](1,0.185)
%	\uput{0.1}[-90]{0}(1,0.185){$\delta^{-\tfrac{1}{2}}t^{1}$}
%	\psdot[linecolor=blue](-1,0.07)
%	\uput{0.1}[-90]{0}(-1,0.07){$\delta^{-\tfrac{1}{2}}t^{-1}$}
	\psdot[linecolor=blue](0.53,1)
	\uput{0.1}[90]{0}(0.53,1){$\delta^{0}t^{0}$}

	\pscircle[fillstyle=solid, fillcolor=white](1,1){0.08}
	\pscircle[fillstyle=solid, fillcolor=white](-1,1){0.08}
	\pscircle[fillstyle=solid, fillcolor=white](1,-1){0.08}
	\pscircle[fillstyle=solid, fillcolor=white](-1,-1){0.08}
	
	\end{pspicture}
}
\caption{A tangle decomposition of the unknot as a closure of the $(2,-3)$-pretzel tangle (left) and the intersection theory of the corresponding tangle invariants (right).}\label{fig:Unknot}
\end{figure}

\begin{example}[(unknot closure of the $(2,-3)$-pretzel tangle)]
	Figure~\ref{fig:Unknot} shows the unknot closure of the $(2,-3)$-pretzel tangle $T_{2,-3}$. Only the embedded component of $L_{T_{2,-3}}$ contributes to the pairing, since the immersed components can be homotoped such that they do not intersect the embedded curve of the trivial tangle. Note that the gradings are different from Example~\ref{exa:HFTdpretzeltangle}. This is because the orientation of one of the tangle strands is reversed, which,  for relative gradings, corresponds simply to reversing the Alexander grading of $t_2$ and leaving the $\delta$-grading unchanged. However, since I expect that the relative $\delta$-gradings of generators of peculiar modules can be lifted to absolute ones agreeing with those of the corresponding Kauffman states, I am using the $\delta$-grading from~\cite[Example~6.9]{HDsForTangles} here. The relevant generator of the peculiar module of $T_{2,-3}$ is $dy_3$, % 	$x_1c_2$ and $a_1y_3$
	which corresponds to the blue intersection point labelled by its $\delta$- and Alexander grading on the right of Figure~\ref{fig:Unknot}. From this, we can compute the gradings of the two intersection points and obtain the once stabilized knot Floer homology of the unknot.	
\end{example}

\begin{figure}[p]
	\centering
{\psset{unit=3}
	\begin{pspicture}(-2,-1.7)(2,1.7)
	\psline[linestyle=dotted](1,1)(1,-1)
	\psline[linestyle=dotted](1,-1)(-1,-1)
	\psline[linestyle=dotted](-1,-1)(-1,1)
	\psline[linestyle=dotted](-1,1)(1,1)
	
	% labelling of curves
	\uput{0.05}[180]{0}(-1.1,0.3){\textcolor{red}{$L_{\mr(T_1)}$}}
	\uput{0.05}[0]{0}(1.1,-0.3){\textcolor{red}{$L_{\mr(T_1)}$}}
	\uput{0.05}[180]{0}(-1.1,-0.3){\textcolor{blue}{$L_{T_2}$}}
	\uput{0.05}[0]{0}(1.1,0.3){\textcolor{blue}{$L_{T_2}$}}
		
	% colours of tangle ends
	\uput{0.1}[0]{0}(1,1){$t_2$}
	\uput{0.1}[180]{0}(-1,1){$t_1$}
	\uput{0.1}[180]{0}(-1,-1){$t_1$}
	\uput{0.1}[0]{0}(1,-1){$t_2$}
	
	% immersed curve top (left)
	\psecurve[linecolor=red]%
	(0.3,1.5)%
	(-1.4,1.4)(-1.45,1)(-1,0.65)(0.5,1.25)(1.3,1)(1.3,0.6)%
	(0.3,0.55)%
	(-1.4,0.5)(-1.45,1)(-0.5,1.35)(1,0.8)(1.3,1)(1.3,1.3)%
	(0.3,1.5)%
	(-1.4,1.4)(-1.45,1)
	
	%	% immersed curve top (mid)
	%	\psecurve[linecolor=blue]%
	%	(0,1.5)%
	%	(-1.35,1.35)(-1.375,1)(-1,0.725)(0.5,1.3)(1.375,1)(1.35,0.55)%
	%	(0,0.55)%
	%	(-1.35,0.55)(-1.375,1)(-0.5,1.3)(1,0.725)(1.375,1)(1.35,1.35)%
	%	(0,1.5)%
	%	(-1.35,1.35)(-1.375,1)
	
	% immersed curve top (right)
	\psecurve[linecolor=blue]%
	(-0.3,1.5)%
	(1.4,1.4)(1.45,1)(1,0.65)(-0.5,1.25)(-1.3,1)(-1.3,0.6)%
	(-0.3,0.55)%
	(1.4,0.5)(1.45,1)(0.5,1.35)(-1,0.8)(-1.3,1)(-1.3,1.3)%
	(-0.3,1.5)%
	(1.4,1.4)(1.45,1)
	
	% immersed curve top (left)
	\psecurve[linecolor=blue]%
	(0.3,-1.5)%
	(-1.4,-1.4)(-1.45,-1)(-1,-0.65)(0.5,-1.25)(1.3,-1)(1.3,-0.6)%
	(0.3,-0.55)%
	(-1.4,-0.5)(-1.45,-1)(-0.5,-1.35)(1,-0.8)(1.3,-1)(1.3,-1.3)%
	(0.3,-1.5)%
	(-1.4,-1.4)(-1.45,-1)
	
	%	% immersed curve bottom (mid)
	%	\rput{180}(0,0){
	%		\psecurve[linecolor=blue]%
	%		(0,1.5)%
	%		(-1.35,1.35)(-1.375,1)(-1,0.725)(0.5,1.3)(1.375,1)(1.35,0.55)%
	%		(0,0.55)%
	%		(-1.35,0.55)(-1.375,1)(-0.5,1.3)(1,0.725)(1.375,1)(1.35,1.35)%
	%		(0,1.5)%
	%		(-1.35,1.35)(-1.375,1)
	%	}
	
	% immersed curve top (right)
	\psecurve[linecolor=red]%
	(-0.3,-1.5)%
	(1.4,-1.4)(1.45,-1)(1,-0.65)(-0.5,-1.25)(-1.3,-1)(-1.3,-0.6)%
	(-0.3,-0.55)%
	(1.4,-0.5)(1.45,-1)(0.5,-1.35)(-1,-0.8)(-1.3,-1)(-1.3,-1.3)%
	(-0.3,-1.5)%
	(1.4,-1.4)(1.45,-1)
	
	%	% rational tangle curve
	%	\psecurve[linecolor=blue](0,-1.65)(-1.55,-1.4)(-1.55,-0.35)(-0.2,0.4)(1.25,1.25)(1.55,0.7)(-1.55,-0.7)(-1.25,-1.3)(1.3,-0.7)(1.45,-1.45)(0,-1.65)(-1.55,-1.4)(-1.55,-0.35)
	
	\psset{dotsize=5pt}
	
	% LABELS top
	% intersection points
	\psdot(0,0.505)
	\uput{0.05}[-90]{0}(0,0.505){$\delta^{0}t_1^{0}t_2^{0}$}
	\psdot(0,1.074)
	\psline{->}(0,0.8)(0,1.074)
	\uput{0.3}[-90]{0}(0,1.074){$\delta^{0}t_1^{-2}t_2^{-2}$}
	\psdot(0,1.2)
	\uput{0.05}[90]{0}(0,1.2){$\delta^{1}t_1^{2}t_2^{2}$}
	\psdot(0,1.53)
	\uput{0.05}[90]{0}(0,1.53){$\delta^{1}t_1^{0}t_2^{0}$}
	
	\psdot(-1.356,0.85)
	\uput{0.05}[180]{0}(-1.356,0.85){$\delta^{0}t_1^{0}t_2^{-2}$}
	\psdot(-1.34,1.13)
	\uput{0.05}[150]{0}(-1.34,1.13){$\delta^{1}t_1^{0}t_2^{2}$}
	\psdot(1.356,0.85)
	\uput{0.05}[0]{0}(1.356,0.85){$\delta^{0}t_1^{-2}t_2^{0}$}
	\psdot(1.34,1.13)
	\uput{0.05}[30]{0}(1.34,1.13){$\delta^{1}t_1^{2}t_2^{0}$}
	
	% gradings of curves
	\uput{0.05}[0]{0}(-1,0.72){$\delta^{-\tfrac{3}{2}}t_1^{0}t_2^{-1}$}
	\uput{0.05}[0]{0}(-1,0.41){$\delta^{-\tfrac{3}{2}}t_1^{0}t_2^{-3}$}
	\uput{0.05}[180]{0}(1,0.72){$\delta^{-\tfrac{3}{2}}t_1^{-1}t_2^{-2}$}
	\uput{0.05}[180]{0}(1,0.41){$\delta^{-\tfrac{3}{2}}t_1^{1}t_2^{-2}$}
	\uput{0.05}[-90]{0}(-0.33,1){$\delta^{-1}t_1^{1}t_2^{-1}$}
	\uput{0.05}[-90]{0}(0.33,1){$\delta^{-2}t_1^{-1}t_2^{-3}$}
	
	% generators of curves
	\psdot[linecolor=blue](-1,0.8)
	\psdot[linecolor=blue](1,0.645)
	\psdot[linecolor=blue](-1,0.48)
	\psdot[linecolor=blue](1,0.34)
	\psdot[linecolor=blue](-0.33,1)
	\psdot[linecolor=blue](0.12,1)
	
	\psdot[linecolor=red](1,0.8)
	\psdot[linecolor=red](-1,0.645)
	\psdot[linecolor=red](1,0.48)
	\psdot[linecolor=red](-1,0.34)
	\psdot[linecolor=red](0.33,1)
	\psdot[linecolor=red](-0.12,1)

	% LABELS bottom
	% intersection points
	\psdot(0,-0.505)
	\uput{0.05}[90]{0}(0,-0.505){$\delta^{0}t_1^{0}t_2^{0}$}
	\psdot(0,-1.074)
	\psline{->}(0,-0.8)(0,-1.074)
	\uput{0.3}[90]{0}(0,-1.074){$\delta^{0}t_1^{2}t_2^{2}$}
	\psdot(0,-1.2)
	\uput{0.05}[-90]{0}(0,-1.2){$\delta^{1}t_1^{-2}t_2^{-2}$}
	\psdot(0,-1.53)
	\uput{0.05}[-90]{0}(0,-1.53){$\delta^{1}t_1^{0}t_2^{0}$}
	
	\psdot(-1.356,-0.85)
	\uput{0.05}[180]{0}(-1.356,-0.85){$\delta^{0}t_1^{0}t_2^{2}$}
	\psdot(-1.34,-1.13)
	\uput{0.05}[210]{0}(-1.34,-1.13){$\delta^{1}t_1^{0}t_2^{-2}$}
	\psdot(1.356,-0.85)
	\uput{0.05}[0]{0}(1.356,-0.85){$\delta^{0}t_1^{2}t_2^{0}$}
	\psdot(1.34,-1.13)
	\uput{0.05}[-30]{0}(1.34,-1.13){$\delta^{1}t_1^{-2}t_2^{0}$}
	
	% gradings of curves
	\uput{0.05}[0]{0}(-1,-0.41){$\delta^{-\tfrac{3}{2}}t_1^{0}t_2^{1}$}
	\uput{0.05}[0]{0}(-1,-0.72){$\delta^{-\tfrac{3}{2}}t_1^{0}t_2^{3}$}
	\uput{0.05}[180]{0}(1,-0.41){$\delta^{-\tfrac{3}{2}}t_1^{1}t_2^{2}$}
	\uput{0.05}[180]{0}(1,-0.72){$\delta^{-\tfrac{3}{2}}t_1^{-1}t_2^{2}$}
	\uput{0.05}[90]{0}(0.33,-1){$\delta^{-1}t_1^{-1}t_2^{1}$}
	\uput{0.05}[90]{0}(-0.33,-1){$\delta^{-2}t_1^{1}t_2^{3}$}
	
	% generators of curves
	\psdot[linecolor=red](-1,-0.8)
	\psdot[linecolor=red](1,-0.645)
	\psdot[linecolor=red](-1,-0.48)
	\psdot[linecolor=red](1,-0.34)
	\psdot[linecolor=red](-0.33,-1)
	\psdot[linecolor=red](0.12,-1)
	
	\psdot[linecolor=blue](1,-0.8)
	\psdot[linecolor=blue](-1,-0.645)
	\psdot[linecolor=blue](1,-0.48)
	\psdot[linecolor=blue](-1,-0.34)
	\psdot[linecolor=blue](0.33,-1)
	\psdot[linecolor=blue](-0.12,-1)

	\pscircle[fillstyle=solid, fillcolor=white](1,1){0.08}
	\pscircle[fillstyle=solid, fillcolor=white](-1,1){0.08}
	\pscircle[fillstyle=solid, fillcolor=white](1,-1){0.08}
	\pscircle[fillstyle=solid, fillcolor=white](-1,-1){0.08}
	
	\end{pspicture}
	}
	{\psset{unit=3}
		\begin{pspicture}(-2,-1.7)(2,1.64)
		\psline[linestyle=dotted](1,1)(1,-1)
		\psline[linestyle=dotted](1,-1)(-1,-1)
		\psline[linestyle=dotted](-1,-1)(-1,1)
		\psline[linestyle=dotted](-1,1)(1,1)
		% labelling of curves
		\uput{0.45}[45]{0}(0,0){\textcolor{red}{$L_{\mr(T_1)}$}}
		\uput{0.4}[-90]{0}(0,0){\textcolor{red}{$L_{\mr(T_1)}$}}
		\uput{0.05}[90]{0}(0,0){\textcolor{blue}{$L_{T_2}$}}
				
		% immersed curve top (mid)
		\psecurve[linecolor=red]%
		(0,1.5)%
		(-1.35,1.35)(-1.375,1)(-1,0.725)(0.5,1.3)(1.375,1)(1.35,0.55)%
		(0,0.55)%
		(-1.35,0.55)(-1.375,1)(-0.5,1.3)(1,0.725)(1.375,1)(1.35,1.35)%
		(0,1.5)%
		(-1.35,1.35)(-1.375,1)
		
		% immersed curve bottom (mid)
		\rput{180}(0,0){
			\psecurve[linecolor=red]%
			(0,1.5)%
			(-1.35,1.35)(-1.375,1)(-1,0.725)(0.5,1.3)(1.375,1)(1.35,0.55)%
			(0,0.55)%
			(-1.35,0.55)(-1.375,1)(-0.5,1.3)(1,0.725)(1.375,1)(1.35,1.35)%
			(0,1.5)%
			(-1.35,1.35)(-1.375,1)
		}
		
		% rational tangle curve
		\psecurve[linecolor=blue](0,-1.65)(-1.55,-1.4)(-1.55,-0.35)(-0.2,0.4)(1.25,1.25)(1.55,0.7)(-1.55,-0.7)(-1.25,-1.3)(1.3,-0.7)(1.45,-1.45)(0,-1.65)(-1.55,-1.4)(-1.55,-0.35)
		
		\psset{dotsize=5pt}
		
		% intersection points (ordered by order on beta-curve)
		\psdot(0.015,0.55)
		\uput{0.05}[150]{0}(0.015,0.55){$\delta^{1}t_1^{0}t_2^{4}$}
		\psdot(0.4,0.89)
		\uput{0.05}[-90]{0}(0.4,0.89){$\delta^{1}t_1^{2}t_2^{4}$}
		\psdot(0.97,1.25)
		\uput{0.05}[90]{0}(0.97,1.25){$\delta^{1}t_1^{0}t_2^{2}$}
		\psdot(1.41,1.15)
		\uput{0.05}[30]{0}(1.41,1.15){$\delta^{1}t_1^{2}t_2^{2}$}
		
		\psdot(-1.41,-1.17)
		\uput{0.05}[-150]{0}(-1.41,-1.17){$\delta^{1}t_1^{0}t_2^{-4}$}
		\psdot(-0.63,-1.3)
		\uput{0.05}[-80]{0}(-0.63,-1.3){$\delta^{1}t_1^{0}t_2^{-2}$}
		\psdot(-0.12,-1.065)
		\uput{0.05}[-80]{0}(-0.12,-1.065){$\delta^{1}t_1^{-2}t_2^{-4}$}
		\psdot(1.43,-0.84)
		\uput{0.05}[20]{0}(1.43,-0.84){$\delta^{1}t_1^{-2}t_2^{-2}$}
		
		% generators+grading of curves
		\psdot[linecolor=blue](0.53,1)
		\uput{0.05}[-30]{0}(0.53,1){$\delta^{-1}t_1^{1}t_2^{1}$}
		\psdot[linecolor=blue](1,0.19)
		\uput{0.05}[-30]{0}(1,0.19){$\delta^{-\tfrac{3}{2}}t_1^{1}t_2^{0}$}
		\psdot[linecolor=blue](1,-0.61)
		\uput{0.05}[150]{0}(1,-0.61){$\delta^{-\tfrac{3}{2}}t_1^{-1}t_2^{0}$}
		\psdot[linecolor=blue](-0.01,-1)
		\uput{0.05}[90]{0}(-0.01,-1){$\delta^{-1}t_1^{-1}t_2^{-1}$}
		\psdot[linecolor=blue](-1,-0.2)
		\uput{0.05}[150]{0}(-1,-0.2){$\delta^{-\tfrac{3}{2}}t_1^{0}t_2^{-1}$}
		\psdot[linecolor=blue](-1,0.07)
		\uput{0.05}[150]{0}(-1,0.07){$\delta^{-\tfrac{3}{2}}t_1^{0}t_2^{1}$}
		
		\psdot[linecolor=red](-1,0.413)
		\uput{0.05}[-150]{0}(-1,0.413){$\delta^{-\tfrac{3}{2}}t_1^{0}t_2^{-3}$}
		\psdot[linecolor=red](-1,0.726)
		\uput{0.05}[-150]{0}(-1,0.726){$\delta^{-\tfrac{3}{2}}t_1^{0}t_2^{-1}$}
		\psdot[linecolor=red](-0.22,1)
		\uput{0.05}[120]{0}(-0.22,1){$\delta^{-1}t_1^{1}t_2^{-1}$}
		\psdot[linecolor=red](1,0.413)
		\uput{0.05}[-30]{0}(1,0.413){$\delta^{-\tfrac{3}{2}}t_1^{1}t_2^{-2}$}
		\psdot[linecolor=red](1,0.726)
		\uput{0.05}[-30]{0}(1,0.726){$\delta^{-\tfrac{3}{2}}t_1^{-1}t_2^{-2}$}
		\psdot[linecolor=red](0.22,1)
		\uput{0.05}[90]{0}(0.22,1){$\delta^{-2}t_1^{-1}t_2^{-3}$}
		
		\psdot[linecolor=red](-1,-0.413)
		\uput{0.05}[30]{0}(-1,-0.413){$\delta^{-\tfrac{3}{2}}t_1^{0}t_2^{1}$}
		\psdot[linecolor=red](-1,-0.726)
		\uput{0.05}[30]{0}(-1,-0.726){$\delta^{-\tfrac{3}{2}}t_1^{0}t_2^{3}$}
		\psdot[linecolor=red](-0.22,-1)
		\uput{0.05}[210]{0}(-0.22,-1){$\delta^{-2}t_1^{1}t_2^{3}$}
		\psdot[linecolor=red](1,-0.413)
		\uput{0.05}[30]{0}(1,-0.413){$\delta^{-\tfrac{3}{2}}t_1^{1}t_2^{2}$}
		\psdot[linecolor=red](1,-0.726)
		\uput{0.05}[-135]{0}(1,-0.726){$\delta^{-\tfrac{3}{2}}t_1^{-1}t_2^{2}$}
		\psdot[linecolor=red](0.22,-1)
		\uput{0.05}[-40]{0}(0.22,-1){$\delta^{-1}t_1^{-1}t_2^{1}$}
				
		\pscircle[fillstyle=solid, fillcolor=white](1,1){0.08}
		\pscircle[fillstyle=solid, fillcolor=white](-1,1){0.08}
		\pscircle[fillstyle=solid, fillcolor=white](1,-1){0.08}
		\pscircle[fillstyle=solid, fillcolor=white](-1,-1){0.08}
		
		\end{pspicture}
	}
	\caption{The main calculation for Example~\ref{exa:KinoshitaTerasakaLink}.}\label{fig:KTLinkMixedPairing}
\end{figure}

\begin{figure}[p]
	\centering
\begin{subfigure}[b]{0.49\textwidth}\centering
	{
		\psset{unit=0.2,linewidth=\stringwidth}%1.5*1.2= 1.8=0.2*9
		\begin{pspicture}[showgrid=false](-11,-9)(11,9)
		%\rput{0}(0,0){
		\rput{-45}(0,0){
			% T_1 (left)
			% two crossings left
			%\psecurve[linecolor=blue]%
			%			(6,-5)(1,-4)(-3,-5)(-3,-1)%
			%			(-6,-2)%
			%			(-6,-6)%
			%			(-2,-6)%
			%			(-1,-3)(-5,-3)(-4,1)(-5,6)
			\psecurve[linecolor=red]
			(-13,-5)%
			(-9,-4)(-5,-5)(-4,-9)%
			(-5,-13)
			\psecurve[linewidth=\stringwhite,linecolor=white](-3,-1)%
			(-6,-2)%
			(-6,-6)%
			(-2,-6)%
			(-1,-3)
			\psecurve[linecolor=red](-3,-1)%
			(-6,-2)%
			(-6,-6)%
			(-2,-6)%
			(-1,-3)
			\psecurve[linewidth=\stringwhite,linecolor=white]
			(-13,-5)%
			(-9,-4)(-5,-5)(-4,-9)%
			\psecurve[linecolor=red]
			(-13,-5)%
			(-9,-4)(-5,-5)(-4,-9)%
			
			% three crossings right
			\psecurve[linecolor=red]%
			(5,-5)(1,-4)(-3,-5)(-3,-1)%
			(-6,-2)%
			(-6,-6)%
			\psecurve[linewidth=\stringwhite,linecolor=white]%
			(-6,-6)%
			(-2,-6)%
			(-1,-3)(-5,-3)(-4,1)(-5,5)
			\psecurve[linecolor=red]%
			(-6,-6)%
			(-2,-6)%
			(-1,-3)(-5,-3)(-4,1)(-5,5)
			\psecurve[linewidth=\stringwhite,linecolor=white]%
			(1,-4)(-3,-5)(-3,-1)%
			(-6,-2)%
			\psecurve[linecolor=red]%
			(1,-4)(-3,-5)(-3,-1)%
			(-6,-2)%
		
			\psline[linecolor=red]{<-}(1,-4)(0.9,-4.025)
			\psline[linecolor=red]{<-}(-4,-9)(-4,-8.9)
			
			% T_2 (right)
			%			\psecurve[linecolor=blue]%
			%			(14,3)(9,4)(5,3)(5,7)%
			%			(2,6)%
			%			(1.5,1.5)%
			%			(6,2)%
			%			(7,5)(3,5)(4,9)(3,14)
			% two crossings left
			\psecurve[linecolor=blue]
			(-5,3)%
			(-1,4)(3,3)(4,-1)%
			(3,-5)
			\psecurve[linewidth=\stringwhite,linecolor=white](5,7)%
			(2,6)%
			(2,2)%
			(6,2)%
			(7,5)
			\psecurve[linecolor=blue](5,7)%
			(2,6)%
			(2,2)%
			(6,2)%
			(7,5)
			\psecurve[linewidth=\stringwhite,linecolor=white]
			(-1,4)(3,3)(4,-1)%
			(3,-5)
			\psecurve[linecolor=blue]
			(-1,4)(3,3)(4,-1)%
			(3,-5)
			
			% three crossings right
			\psecurve[linecolor=blue]%
			(2,2)%
			(6,2)%
			(7,5)(3,5)(4,9)(3,13)
			\psecurve[linewidth=\stringwhite,linecolor=white]%
			(13,3)(9,4)(5,3)(5,7)%
			(2,6)%
			(2,2)%
			\psecurve[linecolor=blue]%
			(13,3)(9,4)(5,3)(5,7)%
			(2,6)%
			(2,2)%
			\psecurve[linewidth=\stringwhite,linecolor=white]%
			(6,2)%
			(7,5)(3,5)(4,9)
			\psecurve[linecolor=blue]%
			(6,2)%
			(7,5)(3,5)(4,9)

			\psline[linecolor=blue]{<-}(-1,4)(-0.9,4)
			\psline[linecolor=blue]{<-}(4,9)(3.975,8.9)
			
			% tangle circles outside
			\pscircle[linestyle=dotted](4,4){5}
			\pscircle[linestyle=dotted](-4,-4){5}
			
			% labelling
			\rput{45}(-10,0){$t_1$}
			\rput{45}(0,10){$t_2$}
			
			\rput{45}(1.25,1.25){$a$}
			\rput{45}(6.75,1.25){$b$}
			\rput{45}(6.5,6.5){$c$}
			\rput{45}(1.25,6.75){$d$}
			
			\rput{45}(-6.75,-6.75){$a$}
			\rput{45}(-1.25,-6.75){$b$}
			\rput{45}(-1.5,-1.5){$c$}
			\rput{45}(-6.75,-1.25){$d$}
			% arc segments left 
			\psecurve{C-C}(8,-3)(-1,4)(-10,-3)(-9,-4)(-8,-3)
			\psecurve{C-C}(-3,8)(4,-1)(-3,-10)(-4,-9)(-3,-8)

			% cover of arc segments left 
			\psecurve[linewidth=\stringwhite,linecolor=white](-8,3)(1,-4)(10,3)(9,4)(8,3)
			\psecurve[linewidth=\stringwhite,linecolor=white](3,-8)(-4,1)(3,10)(4,9)(3,8)
			
			% arc segments right
			\psecurve{C-C}(-8,3)(1,-4)(10,3)(9,4)(8,3)
			\psecurve{C-C}(3,-8)(-4,1)(3,10)(4,9)(3,8)
		}
		\end{pspicture}
	}
	\caption{}\label{fig:KTLinkDiagram}
\end{subfigure}
\begin{subfigure}[b]{0.49\textwidth}\centering
{\psset{unit=1.5}
	\begin{pspicture}(-1.7,-1.2)(1.7,1.2)
	\psline[linecolor=lightgray](-1.5,0.6)(1.5,0.6)
	\psline[linecolor=lightgray](-1.5,-0.6)(1.5,-0.6)
	\psline[linecolor=lightgray](-1.2,0.9)(-1.2,-0.9)
	\psline[linecolor=lightgray](-0.6,0.9)(-0.6,-0.9)
	\psline[linecolor=lightgray](0.6,0.9)(0.6,-0.9)
	\psline[linecolor=lightgray](1.2,0.9)(1.2,-0.9)
	\psline{->}(0,1.1)(0,-1.1)
	\psline{->}(-1.7,0)(1.7,0)
	\uput{0.1}[-90]{0}(1.6,0){$t_2$}
	\uput{0.1}[0]{0}(0,-1){$t_1$}
	
	\rput(-1.2,0.6){
		\pscircle*(-0.025,0.025){0.05}
		\pscircle[fillcolor=white,fillstyle=solid](0.025,-0.025){0.05}
	}
	\rput(-1.2,0){
		\pscircle*(-0.025,0.025){0.05}
		\pscircle[fillcolor=white,fillstyle=solid](0.025,-0.025){0.05}
	}
	
	\rput(-0.6,0.6){
		\uput{0.1}[135]{0}(-0.025,0.025){$2$}
		\uput{0.1}[-45]{0}(0.025,-0.025){$2$}
		\pscircle*(-0.025,0.025){0.05}
		\pscircle[fillcolor=white,fillstyle=solid](0.025,-0.025){0.05}
	}
	\rput(-0.6,0){
		\uput{0.1}[135]{0}(-0.025,0.025){$2$}
		\uput{0.1}[-45]{0}(0.025,-0.025){$2$}
		\pscircle*(-0.025,0.025){0.05}
		\pscircle[fillcolor=white,fillstyle=solid](0.025,-0.025){0.05}
	}
	
	\rput(0,0.6){
		\pscircle*(-0.025,0.025){0.05}
		\pscircle[fillcolor=white,fillstyle=solid](0.025,-0.025){0.05}
	}
	\rput(0,0){
		\uput{0.1}[135]{0}(-0.025,0.025){$3$}
		\uput{0.1}[-45]{0}(0.025,-0.025){$3$}
		\pscircle*(-0.025,0.025){0.05}
		\pscircle[fillcolor=white,fillstyle=solid](0.025,-0.025){0.05}
	}
	\rput(0,-0.6){
		\pscircle*(-0.025,0.025){0.05}
		\pscircle[fillcolor=white,fillstyle=solid](0.025,-0.025){0.05}
	}
	
	\rput(0.6,0){
		\uput{0.1}[135]{0}(-0.025,0.025){$2$}
		\uput{0.1}[-45]{0}(0.025,-0.025){$2$}
		\pscircle*(-0.025,0.025){0.05}
		\pscircle[fillcolor=white,fillstyle=solid](0.025,-0.025){0.05}
	}
	\rput(0.6,-0.6){
		\uput{0.1}[135]{0}(-0.025,0.025){$2$}
		\uput{0.1}[-45]{0}(0.025,-0.025){$2$}
		\pscircle*(-0.025,0.025){0.05}
		\pscircle[fillcolor=white,fillstyle=solid](0.025,-0.025){0.05}
	}
	
	\rput(1.2,0){
		\pscircle*(-0.025,0.025){0.05}
		\pscircle[fillcolor=white,fillstyle=solid](0.025,-0.025){0.05}
	}
	\rput(1.2,-0.6){
		\pscircle*(-0.025,0.025){0.05}
		\pscircle[fillcolor=white,fillstyle=solid](0.025,-0.025){0.05}
	}
	\end{pspicture}
}
\caption{}\label{fig:KTLinkFinalResult}
\end{subfigure}
\\
\begin{subfigure}[b]{\textwidth}\centering
	{\psset{unit=0.75}
\begin{tabular}{cccc}
	\begin{pspicture}(-1.7,-1.7)(1.7,1.7)	
	\uput{0}[90]{0}(0,0.5){\textcolor{blue}{$L_{T_2}$}}
	\psecurve{->}(1.7,1)(0,0.5)(1.7,0)(3.4,0.5)
	\uput{0}[0]{0}(-1.7,0){\textcolor{red}{$L_{\mr(T_1)}$}}
	\psecurve{->}(-0.6,-1.7)(-0.3,0)(0,-1.7)(-0.3,-3.4)
	\end{pspicture}
	&
	\begin{pspicture}(-1.7,-1.7)(1.7,1.7)
	\psline[linestyle=dotted](1,1)(1,-1)
	\psline[linestyle=dotted](1,-1)(-1,-1)
	\psline[linestyle=dotted](-1,-1)(-1,1)
	\psline[linestyle=dotted](-1,1)(1,1)
		
	% immersed curve top (mid)
	\psecurve[linecolor=blue]%
	(0,1.5)%
	(-1.35,1.35)(-1.375,1)(-1,0.725)(0.5,1.3)(1.375,1)(1.35,0.55)%
	(0,0.55)%
	(-1.35,0.55)(-1.375,1)(-0.5,1.3)(1,0.725)(1.375,1)(1.35,1.35)%
	(0,1.5)%
	(-1.35,1.35)(-1.375,1)
	
	\pscircle[fillstyle=solid, fillcolor=white](1,1){0.08}
	\pscircle[fillstyle=solid, fillcolor=white](-1,1){0.08}
	\pscircle[fillstyle=solid, fillcolor=white](1,-1){0.08}
	\pscircle[fillstyle=solid, fillcolor=white](-1,-1){0.08}
	
	\end{pspicture}
	&
	\begin{pspicture}(-1.7,-1.7)(1.7,1.7)
	\psline[linestyle=dotted](1,1)(1,-1)
	\psline[linestyle=dotted](1,-1)(-1,-1)
	\psline[linestyle=dotted](-1,-1)(-1,1)
	\psline[linestyle=dotted](-1,1)(1,1)
		
	% rational tangle curve
	\psecurve[linecolor=blue]%
	(1.2,-0.95)(0,-1.5)(-1.5,-1)(1.25,1.15)(-1.2,-1.05)(1.2,-0.95)(0,-1.5)(-1.5,-1)
	
	\pscircle[fillstyle=solid, fillcolor=white](1,1){0.08}
	\pscircle[fillstyle=solid, fillcolor=white](-1,1){0.08}
	\pscircle[fillstyle=solid, fillcolor=white](1,-1){0.08}
	\pscircle[fillstyle=solid, fillcolor=white](-1,-1){0.08}
	\end{pspicture}
	&
	\begin{pspicture}(-1.7,-1.7)(1.7,1.7)
	\psline[linestyle=dotted](1,1)(1,-1)
	\psline[linestyle=dotted](1,-1)(-1,-1)
	\psline[linestyle=dotted](-1,-1)(-1,1)
	\psline[linestyle=dotted](-1,1)(1,1)
		
	% immersed curve bottom (mid)
	\rput{180}(0,0){
		\psecurve[linecolor=blue]%
		(0,1.5)%
		(-1.35,1.35)(-1.375,1)(-1,0.725)(0.5,1.3)(1.375,1)(1.35,0.55)%
		(0,0.55)%
		(-1.35,0.55)(-1.375,1)(-0.5,1.3)(1,0.725)(1.375,1)(1.35,1.35)%
		(0,1.5)%
		(-1.35,1.35)(-1.375,1)
	}
	
	\pscircle[fillstyle=solid, fillcolor=white](1,1){0.08}
	\pscircle[fillstyle=solid, fillcolor=white](-1,1){0.08}
	\pscircle[fillstyle=solid, fillcolor=white](1,-1){0.08}
	\pscircle[fillstyle=solid, fillcolor=white](-1,-1){0.08}
	\end{pspicture}
	\\
	\begin{pspicture}(-1.7,-1.7)(1.7,1.7)	
	\psline[linestyle=dotted](1,1)(1,-1)
	\psline[linestyle=dotted](1,-1)(-1,-1)
	\psline[linestyle=dotted](-1,-1)(-1,1)
	\psline[linestyle=dotted](-1,1)(1,1)
	
	% immersed curve top (mid)
	\psecurve[linecolor=red]%
	(0,1.5)%
	(-1.35,1.35)(-1.375,1)(-1,0.725)(0.5,1.3)(1.375,1)(1.35,0.55)%
	(0,0.55)%
	(-1.35,0.55)(-1.375,1)(-0.5,1.3)(1,0.725)(1.375,1)(1.35,1.35)%
	(0,1.5)%
	(-1.35,1.35)(-1.375,1)
	
	\pscircle[fillstyle=solid, fillcolor=white](1,1){0.08}
	\pscircle[fillstyle=solid, fillcolor=white](-1,1){0.08}
	\pscircle[fillstyle=solid, fillcolor=white](1,-1){0.08}
	\pscircle[fillstyle=solid, fillcolor=white](-1,-1){0.08}
	
	\end{pspicture}
	&
	\begin{pspicture}(-1.7,-1.7)(1.7,1.7)
	\psline[linecolor=lightgray](-1.5,0.6)(1.5,0.6)
	\psline[linecolor=lightgray](-1.5,-0.6)(1.5,-0.6)
	\psline[linecolor=lightgray](-1.2,0.9)(-1.2,-0.9)
	\psline[linecolor=lightgray](-0.6,0.9)(-0.6,-0.9)
	\psline[linecolor=lightgray](0.6,0.9)(0.6,-0.9)
	\psline[linecolor=lightgray](1.2,0.9)(1.2,-0.9)
	\psline{->}(0,1.1)(0,-1.1)
	\psline{->}(-1.7,0)(1.7,0)
	\uput{0.2}[-90]{0}(1.6,0){$t_2$}
	\uput{0.2}[0]{0}(0,-1){$t_1$}
	
	\pscircle*(0.6,-0.6){0.1}
	\pscircle*(0,-0.6){0.1}
	\pscircle*(0.6,0){0.1}
	\pscircle*(0.05,-0.05){0.1}
	\pscircle[fillcolor=white,fillstyle=solid](-0.6,0.6){0.1}
	\pscircle[fillcolor=white,fillstyle=solid](0,0.6){0.1}
	\pscircle[fillcolor=white,fillstyle=solid](-0.6,0){0.1}
	\pscircle[fillcolor=white,fillstyle=solid](-0.05,0.05){0.1}
	\end{pspicture}
	&
	\begin{pspicture}(-1.7,-1.7)(1.7,1.7)
	\psline[linecolor=lightgray](-1.5,0.6)(1.5,0.6)
	\psline[linecolor=lightgray](-1.5,-0.6)(1.5,-0.6)
	\psline[linecolor=lightgray](-1.2,0.9)(-1.2,-0.9)
	\psline[linecolor=lightgray](-0.6,0.9)(-0.6,-0.9)
	\psline[linecolor=lightgray](0.6,0.9)(0.6,-0.9)
	\psline[linecolor=lightgray](1.2,0.9)(1.2,-0.9)
	\psline{->}(0,1.1)(0,-1.1)
	\psline{->}(-1.7,0)(1.7,0)
	\uput{0.2}[-90]{0}(1.6,0){$t_2$}
	\uput{0.2}[0]{0}(0,-1){$t_1$}
	
	\pscircle*(1.2,-0.6){0.1}
	\pscircle*(0.6,-0.6){0.1}
	\pscircle*(1.2,0){0.1}
	\pscircle*(0.6,0){0.1}
	\end{pspicture}
	&
	\begin{pspicture}(-1.7,-1.7)(1.7,1.7)
	\psline[linecolor=lightgray](-1.5,0.6)(1.5,0.6)
	\psline[linecolor=lightgray](-1.5,-0.6)(1.5,-0.6)
	\psline[linecolor=lightgray](-1.2,0.9)(-1.2,-0.9)
	\psline[linecolor=lightgray](-0.6,0.9)(-0.6,-0.9)
	\psline[linecolor=lightgray](0.6,0.9)(0.6,-0.9)
	\psline[linecolor=lightgray](1.2,0.9)(1.2,-0.9)
	\psline{->}(0,1.1)(0,-1.1)
	\psline{->}(-1.7,0)(1.7,0)
	\uput{0.2}[-90]{0}(1.6,0){$t_2$}
	\uput{0.2}[0]{0}(0,-1){$t_1$}
	\end{pspicture}
	\\
	\begin{pspicture}(-1.7,-1.7)(1.7,1.7)
	\psline[linestyle=dotted](1,1)(1,-1)
	\psline[linestyle=dotted](1,-1)(-1,-1)
	\psline[linestyle=dotted](-1,-1)(-1,1)
	\psline[linestyle=dotted](-1,1)(1,1)
		
	% rational tangle curve
	\psecurve[linecolor=red]%
	(1.2,-0.95)(0,-1.5)(-1.5,-1)(1.25,1.15)(-1.2,-1.05)(1.2,-0.95)(0,-1.5)(-1.5,-1)
	
	\pscircle[fillstyle=solid, fillcolor=white](1,1){0.08}
	\pscircle[fillstyle=solid, fillcolor=white](-1,1){0.08}
	\pscircle[fillstyle=solid, fillcolor=white](1,-1){0.08}
	\pscircle[fillstyle=solid, fillcolor=white](-1,-1){0.08}
	\end{pspicture}
	&
	\begin{pspicture}(-1.7,-1.7)(1.7,1.7)
	\psline[linecolor=lightgray](-1.5,0.6)(1.5,0.6)
	\psline[linecolor=lightgray](-1.5,-0.6)(1.5,-0.6)
	\psline[linecolor=lightgray](-1.2,0.9)(-1.2,-0.9)
	\psline[linecolor=lightgray](-0.6,0.9)(-0.6,-0.9)
	\psline[linecolor=lightgray](0.6,0.9)(0.6,-0.9)
	\psline[linecolor=lightgray](1.2,0.9)(1.2,-0.9)
	\psline{->}(0,1.1)(0,-1.1)
	\psline{->}(-1.7,0)(1.7,0)
	\uput{0.2}[-90]{0}(1.6,0){$t_2$}
	\uput{0.2}[0]{0}(0,-1){$t_1$}
	
	\pscircle[fillstyle=solid, fillcolor=white](-1.2,0.6){0.1}
	\pscircle[fillstyle=solid, fillcolor=white](-0.6,0.6){0.1}
	\pscircle[fillstyle=solid, fillcolor=white](-1.2,0){0.1}
	\pscircle[fillstyle=solid, fillcolor=white](-0.6,0){0.1}
	\end{pspicture}
	&
	\begin{pspicture}(-1.7,-1.7)(1.7,1.7)
	\psline[linecolor=lightgray](-1.5,0.6)(1.5,0.6)
	\psline[linecolor=lightgray](-1.5,-0.6)(1.5,-0.6)
	\psline[linecolor=lightgray](-1.2,0.9)(-1.2,-0.9)
	\psline[linecolor=lightgray](-0.6,0.9)(-0.6,-0.9)
	\psline[linecolor=lightgray](0.6,0.9)(0.6,-0.9)
	\psline[linecolor=lightgray](1.2,0.9)(1.2,-0.9)
	\psline{->}(0,1.1)(0,-1.1)
	\psline{->}(-1.7,0)(1.7,0)
	\uput{0.2}[-90]{0}(1.6,0){$t_2$}
	\uput{0.2}[0]{0}(0,-1){$t_1$}
	
	\pscircle*(0.05,-0.05){0.1}
	\pscircle[fillcolor=white,fillstyle=solid](-0.05,0.05){0.1}
	\end{pspicture}
	&
	\begin{pspicture}(-1.7,-1.7)(1.7,1.7)
	\psline[linecolor=lightgray](-1.5,0.6)(1.5,0.6)
	\psline[linecolor=lightgray](-1.5,-0.6)(1.5,-0.6)
	\psline[linecolor=lightgray](-1.2,0.9)(-1.2,-0.9)
	\psline[linecolor=lightgray](-0.6,0.9)(-0.6,-0.9)
	\psline[linecolor=lightgray](0.6,0.9)(0.6,-0.9)
	\psline[linecolor=lightgray](1.2,0.9)(1.2,-0.9)
	\psline{->}(0,1.1)(0,-1.1)
	\psline{->}(-1.7,0)(1.7,0)
	\uput{0.2}[-90]{0}(1.6,0){$t_2$}
	\uput{0.2}[0]{0}(0,-1){$t_1$}
	
	\pscircle[fillstyle=solid, fillcolor=white](1.2,-0.6){0.1}
	\pscircle[fillstyle=solid, fillcolor=white](0.6,-0.6){0.1}
	\pscircle[fillstyle=solid, fillcolor=white](1.2,0){0.1}
	\pscircle[fillstyle=solid, fillcolor=white](0.6,0){0.1}
	\end{pspicture}
	\\
	\begin{pspicture}(-1.7,-1.7)(1.7,1.7)
	\psline[linestyle=dotted](1,1)(1,-1)
	\psline[linestyle=dotted](1,-1)(-1,-1)
	\psline[linestyle=dotted](-1,-1)(-1,1)
	\psline[linestyle=dotted](-1,1)(1,1)
		
	% immersed curve bottom (mid)
	\rput{180}(0,0){
		\psecurve[linecolor=red]%
		(0,1.5)%
		(-1.35,1.35)(-1.375,1)(-1,0.725)(0.5,1.3)(1.375,1)(1.35,0.55)%
		(0,0.55)%
		(-1.35,0.55)(-1.375,1)(-0.5,1.3)(1,0.725)(1.375,1)(1.35,1.35)%
		(0,1.5)%
		(-1.35,1.35)(-1.375,1)
	}
	
	\pscircle[fillstyle=solid, fillcolor=white](1,1){0.08}
	\pscircle[fillstyle=solid, fillcolor=white](-1,1){0.08}
	\pscircle[fillstyle=solid, fillcolor=white](1,-1){0.08}
	\pscircle[fillstyle=solid, fillcolor=white](-1,-1){0.08}
	\end{pspicture}
	&
	\begin{pspicture}(-1.7,-1.7)(1.7,1.7)
	\psline[linecolor=lightgray](-1.5,0.6)(1.5,0.6)
	\psline[linecolor=lightgray](-1.5,-0.6)(1.5,-0.6)
	\psline[linecolor=lightgray](-1.2,0.9)(-1.2,-0.9)
	\psline[linecolor=lightgray](-0.6,0.9)(-0.6,-0.9)
	\psline[linecolor=lightgray](0.6,0.9)(0.6,-0.9)
	\psline[linecolor=lightgray](1.2,0.9)(1.2,-0.9)
	\psline{->}(0,1.1)(0,-1.1)
	\psline{->}(-1.7,0)(1.7,0)
	\uput{0.2}[-90]{0}(1.6,0){$t_2$}
	\uput{0.2}[0]{0}(0,-1){$t_1$}
	\end{pspicture}
	&
	\begin{pspicture}(-1.7,-1.7)(1.7,1.7)
	\psline[linecolor=lightgray](-1.5,0.6)(1.5,0.6)
	\psline[linecolor=lightgray](-1.5,-0.6)(1.5,-0.6)
	\psline[linecolor=lightgray](-1.2,0.9)(-1.2,-0.9)
	\psline[linecolor=lightgray](-0.6,0.9)(-0.6,-0.9)
	\psline[linecolor=lightgray](0.6,0.9)(0.6,-0.9)
	\psline[linecolor=lightgray](1.2,0.9)(1.2,-0.9)
	\psline{->}(0,1.1)(0,-1.1)
	\psline{->}(-1.7,0)(1.7,0)
	\uput{0.2}[-90]{0}(1.6,0){$t_2$}
	\uput{0.2}[0]{0}(0,-1){$t_1$}
	
	\pscircle*(-1.2,0.6){0.1}
	\pscircle*(-0.6,0.6){0.1}
	\pscircle*(-1.2,0){0.1}
	\pscircle*(-0.6,0){0.1}
	\end{pspicture}
	&
	\begin{pspicture}(-1.7,-1.7)(1.7,1.7)
	\psline[linecolor=lightgray](-1.5,0.6)(1.5,0.6)
	\psline[linecolor=lightgray](-1.5,-0.6)(1.5,-0.6)
	\psline[linecolor=lightgray](-1.2,0.9)(-1.2,-0.9)
	\psline[linecolor=lightgray](-0.6,0.9)(-0.6,-0.9)
	\psline[linecolor=lightgray](0.6,0.9)(0.6,-0.9)
	\psline[linecolor=lightgray](1.2,0.9)(1.2,-0.9)
	\psline{->}(0,1.1)(0,-1.1)
	\psline{->}(-1.7,0)(1.7,0)
	\uput{0.2}[-90]{0}(1.6,0){$t_2$}
	\uput{0.2}[0]{0}(0,-1){$t_1$}
	
	\pscircle*(-0.6,0.6){0.1}
	\pscircle*(0,0.6){0.1}
	\pscircle*(-0.6,0){0.1}
	\pscircle*(-0.05,0.05){0.1}
	\pscircle[fillcolor=white,fillstyle=solid](0.6,-0.6){0.1}
	\pscircle[fillcolor=white,fillstyle=solid](0,-0.6){0.1}
	\pscircle[fillcolor=white,fillstyle=solid](0.6,0){0.1}
	\pscircle[fillcolor=white,fillstyle=solid](0.05,-0.05){0.1}
	\end{pspicture}
\end{tabular}
}
\caption{}\label{fig:KTLinkTable}
\end{subfigure}
\caption{A tangle decomposition of the Kinoshita-Terasaka link L10n36 \protect\cite[Figure~17]{OSHFLThurston} (a) and the intersection theory of the corresponding tangle invariants (b). The table (c) shows the intersection theory for each pair of components separately. The main calculation is done in  Figure~\ref{fig:KTLinkMixedPairing}. In (b) and (c), generators of the homology groups are denoted by
\protect\begin{pspicture}(-0.1,-0.1)(0.1,0.1)
\protect\pscircle[fillcolor=white,fillstyle=solid]{0.075}
\protect\end{pspicture}
or 
\protect\begin{pspicture}(-0.1,-0.1)(0.1,0.1)
\protect\pscircle*{0.075}
\protect\end{pspicture}, 
	depending on whether their $\delta$-grading is 0 or 1. They are arranged in the coordinate systems according to their Alexander gradings. In (b), the labels $2$ and $3$ of some generators indicate their multiplicity. }\label{fig:KTLink}
\end{figure}

%{\psset{unit=2}
%	\begin{pspicture}(-1.7,-1.7)(1.7,1.7)
%	
%	% rational tangle curve
%	\psecurve[linecolor=blue]%
%	(0,-1.4)(-1.35,-1.25)(-1.35,-0.4)%
%	(-0.1,0.35)%
%	(1,1.25)(1.25,0.9)%
%	(-1.15,-0.65)(-1.15,-1.1)%
%	(1.2,-0.65)(1.3,-1.3)%
%	(0,-1.4)%
%	(-1.35,-1.25)(-1.35,-0.4)
%	
%	
%	% rational tangle curve
%	\psecurve[linecolor=red]%
%	(0,-1.5)(-1.45,-1.3)(-1.45,-0.35)%
%	(-0.2,0.4)%
%	(1,1.4)(1.36,0.8)%
%	(-1.25,-0.55)(-1.25,-1.2)%
%	(1.2,-0.8)(1.2,-1.2)%
%	(0,-1.5)%
%	(-1.45,-1.3)(-1.45,-0.35)
%	
%	\psset{dotsize=5pt}
%	
%	\psdot(0.15,0.1)
%	\uput{0.1}[-70]{0}(0.15,0.1){$\delta^{1}t_1^{0}t_2^0$}
%	
%	\psdot(0.65,-1.47)
%	\uput{0.1}[90]{0}(0.65,-1.47){$\delta^{0}t_1^{0}t_2^0$}
%	
%	%\psdot[linecolor=blue](1,0.185)
%	%\uput{0.1}[-90]{0}(1,0.185){$\delta^{-\tfrac{1}{2}}t^{1}$}
%	%\psdot[linecolor=blue](-1,0.07)
%	%\uput{0.1}[-90]{0}(-1,0.07){$\delta^{-\tfrac{1}{2}}t^{-1}$}
%	%\psdot[linecolor=blue](0.53,1)
%	%\uput{0.1}[90]{0}(0.53,1){$\delta^{0}t^{0}$}
%	
%	
%	\psline[linestyle=dotted](1,1)(1,-1)
%	\psline[linestyle=dotted](1,-1)(-1,-1)
%	\psline[linestyle=dotted](-1,-1)(-1,1)
%	\psline[linestyle=dotted](-1,1)(1,1)
%	
%	\pscircle[fillstyle=solid, fillcolor=white](1,1){0.08}
%	\pscircle[fillstyle=solid, fillcolor=white](-1,1){0.08}
%	\pscircle[fillstyle=solid, fillcolor=white](1,-1){0.08}
%	\pscircle[fillstyle=solid, fillcolor=white](-1,-1){0.08}
%	
%	\end{pspicture}
%}

\begin{example}[(Kinoshita-Terasaka link)]\label{exa:KinoshitaTerasakaLink}
	The Kinoshita-Terasaka link $L_{KT}$ can be decomposed into the $(2,-3)$-pretzel tangle $\textcolor{blue}{T_2}=T_{2,-3}$ and $\textcolor{red}{T_1}=\mr(T_{2,-3})$, as shown in Figure~\ref{fig:KTLinkDiagram}. Thus, its link Floer homology can be computed as the self-pairing of $L_{T_{2,-3}}$, ie
	$$\HFL(L_{KT})\cong\LagrangianFH(L_{T_{2,-3}},L_{T_{2,-3}}).$$
	The Lagrangian Floer homology can be computed for each pair of components separately. The result is shown in the table of Figure~\ref{fig:KTLinkTable}. The final result is shown in Figure~\ref{fig:KTLinkFinalResult}. Oszváth and Szabó computed the link Floer homology of $L_{KT}$ in~\cite[Figure~24]{OSHFLThurston}. Note that our orientation of the strand $t_1$ is opposite to the one in~\cite[Figure~17]{OSHFLThurston}, so our result agrees with theirs after reversing the Alexander grading $t_1$. 
	
	Let us discuss the computation of the table in Figure~\ref{fig:KTLinkTable} in more detail. The two opposite immersed components are easiest to pair, since they can be homotoped such that they do not intersect each other. The intersection of each immersed component with itself, however, gives eight generators each. This computation is done in the upper half of Figure~\ref{fig:KTLinkMixedPairing}. The self-intersection of the embedded component with itself is equal to the link Floer homology of the unlink, ie two generators in gradings $\delta^0t_1^0t_2^0$ and $\delta^1t_1^0t_2^0$, respectively. The lower half of Figure~\ref{fig:KTLinkMixedPairing} shows the pairing of the two immersed components with the embedded components. The opposite pairing is done similarly by switching red and blue curves, which reverses the Alexander gradings of intersection points between the curves and acts like $\delta\mapsto 1-\delta$ on their $\delta$-gradings. 
\end{example}

% 3_SkeinRelations.tex

\section{Applications}\label{sec:ApplicationsGlueing}

\subsection{Rational tangle detection}

 \begin{observation}\label{obs:AlexGradingOfCFTdLoops}
 	The Alexander grading on $\CFTd(T,M)$ implies that for a tangle $T$, each loop in $L_T$ lies in the kernel of 
 	\[\pi_1(\partial M\smallsetminus \partial T)\rightarrow\pi_1(M\smallsetminus \nu(T))\rightarrow H_1(M\smallsetminus \nu(T)),\]
 	where the first map is induced by the inclusion and the second is the Abelianization map.
 \end{observation}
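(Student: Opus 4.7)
The plan is to translate the claim into an algebraic statement about Alexander gradings, and then invoke the fact that the Alexander grading on $\CFTd(T,M)$ is a well-defined function on generators. First I would identify the target: $H_1(M\smallsetminus\nu(T))$ is freely generated by meridians of the tangle components, one generator per component (open or closed), and these meridians are precisely the variables $t_1, t_2, t'_1, \dots, t'_n$ labelling the Alexander gradings $A_{t_k}$ and $A_{t'_j}$. On the other hand, $H_1(\partial M\smallsetminus \partial T)$ is generated by meridians at the four punctures; under inclusion into $M\smallsetminus\nu(T)$, the meridians at the two ends of a given open strand get identified (up to sign determined by orientation), which matches the definition of the matching $P_T$ from Definition~\ref{def:matching}.

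Next I would interpret a loop $\gamma$ in $L_T$ homologically. After passing to a simply-faced precurve representing $\CFTd(T)$ on the 4-punctured sphere (Theorem~\ref{thm:EverythingIsLoopTypeUpToLocalSystems}, Example~\ref{exa:pqModSpecialCaseofCC}), $\gamma$ is realized as a closed sequence of arrows in the graph of $\CFTd(T)$, each arrow labelled by a monomial in the $p_i$ and $q_i$. The image $[\gamma]\in H_1(\partial M\smallsetminus\partial T)\to H_1(M\smallsetminus\nu(T))$ can be read off as the number of signed crossings of $\gamma$ with arcs $a,b,c,d$, grouped by tangle component after the identifications above; up to sign conventions, the coefficient of $t_k$ in this image is exactly $A_{t_k}$ of the product of the arrow labels around $\gamma$, and similarly for closed components (where only the closed-component basepoints contribute, matching $A_{t'_j}$).

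The key step is then: going around a closed loop in a graded type~D (or curved type~D) structure must pick up an algebra element of total Alexander grading zero. This is because the Alexander grading $A_{t}(x)\in\mathbb{Z}$ is a genuine function on generators $x$, and for each arrow $x\xrightarrow{a} y$ in $\CFTd(T)$ the differential preserves Alexander gradings (Lemma~\ref{lem:CFTdGradings}), so $A_{t}(y)-A_{t}(x)+A_{t}(a)=0$. Summing along the closed cycle $\gamma$ telescopes the generator contributions to $0$, forcing the total Alexander grading of the product of arrow labels to vanish. Combining this with the previous step, the image of $[\gamma]$ in $H_1(M\smallsetminus\nu(T))$ is zero in every $t_k$ and $t'_j$ coefficient, hence zero.

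The only subtle point, and what I expect to be the main bookkeeping obstacle, is matching sign and orientation conventions: namely, checking that the identification of punctures via the matching $P_T$, the signs in the definitions of $A_{t_k}$ and $A_{t'_j}$ in Definition~\ref{def:RecallGradingsFromHDsForTangles}, and the signed intersection formula for a loop on the 4-punctured sphere with the arcs $a,b,c,d$ all fit together so that the two computations yield literally the same integer (rather than the same up to a global sign per component). Once this is verified, the observation follows immediately and uniformly for both open and closed tangle components.
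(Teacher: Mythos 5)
The paper records this as an unproved Observation, so there is no argument of the author's to compare against; your telescoping strategy is certainly the intended one, and your first and last steps are fine. The gap is in the middle step, and it is more than sign bookkeeping. The total Alexander grading of the labels around an oriented loop is $\sum_{\text{labels}}A_{t_k}=P_{i_k}-P_{o_k}$, where $P_i$ counts occurrences of $p_i$ and $q_i$ among the elementary factors of the labels traversed; this is a count of \emph{corner-passings}, not of crossings with the parametrizing arcs (the number of arc-crossings is just the number of generators and carries no grading information), and each individual $P_i$ is an unsigned count, since every label is a positive path in the quiver. So the quantity you need to compare with the winding numbers $w_i$ of $\gamma$ about the punctures is not "the same integer" as any signed arc-crossing count: for the figure-eight curve of Figure~\ref{fig:figure8loops} one finds $(P_1,\dots,P_4)=(2,0,4,2)$ while $[\gamma]=\mu_2-\mu_3$, i.e. $(w_1,\dots,w_4)=(0,1,-1,0)$ up to a common shift. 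The correct relation is $P_i-P_j=\pm 2(w_i-w_j)$, and establishing it is the actual content of the observation; the verification route you propose (matching your count against intersections with $a,b,c,d$) would not close this.

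Here is the missing argument. First, recording one label per segment in the direction of travel is a genuine orientation-sensitive $1$-cochain on $\gamma$, because the two labels of a segment compose to $U_f$, which contains each elementary generator exactly once, so $A_{t_k}(p^s_t)=-A_{t_k}(p^t_s)$. Second, for a segment $e$ in a face $f$, its label contains $p_i$ (resp.\ $q_i$) exactly when puncture $i$ lies in the region that $e$ cuts off on the side of the positive boundary path from its initial to its terminal edge; hence $[\,i\in C(e)\,]-[\,j\in C(e)\,]$ equals the signed intersection number of $e$ with a chord of $f$ joining punctures $i$ and $j$ (the chords link in the disc iff their endpoints interleave on $\partial f$). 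Summing over the segments in $f$ gives $\gamma\cdot\alpha^f_{ij}=\pm(w_i-w_j)$, and this is independent of the face because the two chords differ by a closed curve and the intersection form on $H_1$ of a planar surface vanishes; summing over both faces yields $P_{i_k}-P_{o_k}=\pm 2(w_{i_k}-w_{o_k})$, i.e.\ twice the $t_k$-coefficient of the image of $[\gamma]$ in $H_1(M\smallsetminus\nu(T))$. Telescoping kills the left-hand side, hence the right-hand side, for every $k$, which is the claim. The figure-eight is a useful consistency check: it admits a consistent \emph{reduced} Alexander grading but no consistent $A_{t_k}$ for any matching, exactly as it must since $\mu_2-\mu_3$ survives in $H_1(M\smallsetminus\nu(T))$ for every matching; so any proof that only uses the reduced grading, or that loses track of which puncture each elementary factor belongs to, cannot work.
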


\begin{theorem}\label{thm:CFTdDetectsRatTan}
	A 4-ended tangle \(T\) in the 3-ball is rational iff \(L_T\) is a single embedded loop with the unique 1-dimensional local system.
\end{theorem}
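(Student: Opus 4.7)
\emph{Plan.} The forward direction is immediate from Example~\ref{exa:CFTdRatTang}: any rational tangle admits a Heegaard diagram with a single $\beta$-curve, and the associated peculiar module is read off directly from the intersection pattern of this $\beta$-curve with the four $\alpha$-arcs on the 4-punctured sphere. Under the correspondence of Theorem~\ref{thm:EverythingIsLoopTypeUpToLocalSystems}, this pattern is precisely the data of a single embedded loop with the 1-dimensional local system, so $L_T$ has the claimed form.

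For the reverse direction, suppose $L_T$ consists of a single embedded loop with trivial local system. The first observation I would make is that essential embedded loops on the 4-punctured sphere, up to homotopy, are classified by rational slopes --- the very same invariant that classifies rational tangles. Hence there is a rational tangle $T'$ with $L_{T'}=L_T$, and Theorem~\ref{thm:CompleteClassification} gives $\CFTd(T)\simeq\CFTd(T')$; by the compatibility of $\m$ and $\rr$ with the pairing this equivalence persists after forming the reversed mirror. Next, for any rational tangle $T''$, I would apply the Glueing Theorem~\ref{thm:CFTdGlueingAsMorphism} to get
\[\HFL(L(T,T''))\otimes V^{i}\cong\HFL(L(T',T''))\otimes V^{i'}.\]
Since $T'$ is rational, there is an infinite family of rational tangles $T''$ for which $L(T',T'')$ is the unknot; Ozsv\'ath--Szab\'o's detection theorem for $\HFK$ then forces $L(T,T'')$ to be the unknot for each such $T''$.

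The hard step is promoting ``infinitely many distinct rational closures of $T$ are unknots'' to ``$T$ is rational''. The approach would be to pass to double branched covers: $\Sigma_2(T)$ has torus boundary, and for each of the $T''$ above the glued manifold $\Sigma_2(T)\cup_{T^2}\Sigma_2(T'')$ is $S^3$. Consequently $\Sigma_2(T)$ admits infinitely many pairwise distinct Dehn fillings yielding $S^3$, and the cyclic surgery theorem of Culler--Gordon--Luecke--Shalen together with the Gordon--Luecke knot complement theorem forces $\Sigma_2(T)$ to be a solid torus. This is the classical characterization of rationality, so $T$ is rational; and since rational tangles are distinguished by their slope (equivalently, by their embedded loop), $T=T'$.
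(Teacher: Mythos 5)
Your forward direction matches the paper's (it is exactly the computation of Example~\ref{exa:CFTdRatTang}), but the converse has a genuine gap at its very first step. It is not true that essential embedded loops on the 4-punctured sphere are classified by rational slopes: a simple closed curve encircling a \emph{single} puncture (equivalently, three punctures) is embedded, non-nullhomotopic and primitive in $\pi_1(S)$, so it is a perfectly good loop in the sense of Definition~\ref{def:INTROcurves}, yet it is not $L_{T'}$ for any rational tangle $T'$ — rational tangles only realize the curves separating the punctures two-and-two. Without excluding this case you cannot produce the comparison tangle $T'$, and the rest of your argument never gets off the ground. The paper closes exactly this hole with Observation~\ref{obs:AlexGradingOfCFTdLoops}: the Alexander grading forces every loop in $L_T$ to be nullhomologous in the tangle complement, hence to separate two punctures from the other two. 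You need this (or an equivalent homological argument) before you may assert $L_T=L_{T'}$.

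Granting that repair, the remainder of your argument is a legitimately different route from the paper's. The paper pairs $T$ with its own reversed mirror, uses unlink detection to conclude that $L(T,\mr(T))$ is the two-component unlink, and then finishes with the elementary innermost-disc argument of Lemma~\ref{lem:RatTanDet}. You instead close $T$ by infinitely many rational tangles, use unknot detection, pass to double branched covers, and invoke Gordon--Luecke to conclude that $\Sigma_2(T)$ is a solid torus. This works in principle (and the cyclic surgery theorem is not even needed — two distinct $S^3$-fillings plus Gordon--Luecke already force the solid torus), but it imports substantially heavier machinery than the paper's two-page Lemma~\ref{lem:RatTanDet}, and it requires some care you have elided: you must check that $L(T,T'')$ and $L(T',T'')$ have the same number of components (so that the stabilisation exponents $i,i'$ in the Glueing Theorem match up and rank~$1$ really means unknot rather than rank~$2$ meaning two-component unlink), and you should say why distinct rational fillings $T''$ give distinct slopes on $\partial\Sigma_2(T)$. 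The paper's route avoids all of this by needing only a single well-chosen closure.
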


\begin{proof}%[proof of Theorem~\ref{thm:CFTdDetectsRatTan}]
	The only-if direction is simply a calculation, see Example~\ref{exa:CFTdRatTang}. Conversely, suppose $L_T$ is a single loop which corresponds to an embedded loop on the 4-punctured sphere. It divides the sphere into two disc components, each of which has at least one puncture, since the loop is not nullhomotopic. By Observation~\ref{obs:AlexGradingOfCFTdLoops}, there are exactly two punctures in each disc, so $L_T$ agrees with $L_{T'}$ for some rational tangle $T'$. Then also $L_{\mr(T)}$ agrees with $L_{\mr(T')}$. Let $L=L(T_1,T_2)$ be the link obtained by pairing $T_2=T$ with $T_1=\mr(T)$. If we pair $T'$ with $\mr(T')$, we obtain the 2-component unlink $\bigcirc\amalg\bigcirc$. So by Theorem~\ref{thm:CFTdGlueingAsMorphism},
	\[
	\HFL(L)\cong\LagrangianFH(L_T,L_T)\cong\LagrangianFH(L_{T'},L_{T'})\cong\HFL(\bigcirc\amalg\bigcirc).
	\]
	We now apply the fact that link Floer homology detects unlinks \cite{OSHFLThurston}, so $L$ is the 2-component unlink. The following lemma finishes the proof.
\end{proof}
\begin{lemma}\label{lem:RatTanDet}
	Let \(T_1\) and \(T_2\) be two 4-ended tangles without closed components that glue together to the 2-component unlink. Then either \(T_1\) or \(T_2\) is a rational tangle. 
\end{lemma}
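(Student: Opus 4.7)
The plan is to work in the double branched cover. Let $M$ denote the double cover of $S^3$ branched over $L$. Since $L$ is the 2-component unlink, $M$ is homeomorphic to $S^1 \times S^2$. The separating sphere between $T_1$ and $T_2$ meets $L$ in four points and lifts to a torus $\tilde F \subset M$, which decomposes $M$ as $V_1 \cup_{\tilde F} V_2$, where $V_i$ is the double branched cover of the pair $(B^3, T_i)$. By the classical theorem of Montesinos, a 4-ended tangle in $B^3$ without closed components is rational if and only if its double branched cover is a solid torus; hence it suffices to prove that at least one of $V_1, V_2$ is a solid torus.

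The first step is to observe that $\tilde F$ must be compressible in at least one of $V_1, V_2$: since $\pi_1(S^1 \times S^2) \cong \mathbb{Z}$ contains no rank-two free abelian subgroup, no embedded torus in $M$ can be $\pi_1$-injective. Without loss of generality, $\tilde F$ admits a compressing disk in $V_1$. Assuming further that $V_1$ is irreducible, compressing $\tilde F$ along the disk produces a 2-sphere, which must bound a 3-ball by irreducibility; together with the 1-handle reconstructed from $\tilde F$, this identifies $V_1$ as a solid torus and shows that $T_1$ is rational.

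The main obstacle is to handle the possibility that $V_1$ is reducible. Using the equivariant sphere theorem, any essential sphere in $V_1$ can be chosen invariant under the deck transformation; its quotient in $(B^3, T_1)$ is either an essential 2-sphere disjoint from $T_1$, which is impossible because $B^3$ is irreducible and $T_1$ has no closed components, or an essential Conway sphere for $T_1$, meeting $T_1$ transversely in two points. The plan for this remaining case is to show that such a Conway sphere would persist as an essential 2-sphere in $S^3$ meeting $L = L(T_1, T_2)$ in two points, forcing a nontrivial prime decomposition of $L$ along this sphere that is incompatible with $L$ being the 2-component unlink. Alternatively, one can iteratively decompose $V_1$ along equivariant reducing spheres into prime summands and apply the irreducible case to the summand whose boundary contains $\tilde F$. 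Making this last reduction precise — in particular, ruling out essential Conway spheres of $T_1$ given the assumption on the global link type — is the crux of the argument.
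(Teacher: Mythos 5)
Your route through the double branched cover is genuinely different from the one in the paper, and the skeleton is sound: $\Sigma_2(S^3,\text{unlink})\cong S^1\times S^2$, the Conway sphere lifts to a torus $\tilde F$ splitting it as $V_1\cup_{\tilde F}V_2$, the torus compresses on at least one side because $\pi_1(S^1\times S^2)\cong\mathbb{Z}$ (via the loop theorem and the amalgamated/HNN structure of $\pi_1$), and Lickorish--Montesinos converts ``$V_i$ is a solid torus'' back into ``$T_i$ is rational''. However, as you yourself say, the argument is not finished: the step ``irreducible $+$ compressible torus boundary $\Rightarrow$ solid torus'' only applies once you have ruled out reducibility of $V_1$, and that is exactly where you stop. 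The missing piece is concrete and fillable but genuinely needed: an equivariant essential sphere in $V_1$ meeting the branch locus descends to a sphere $P\subset B^3$ meeting $T_1$ in exactly two points (not four --- so this is a connected-sum decomposing sphere, not a Conway sphere in the usual sense), cutting off a knotted ball--arc pair. Such a $P$ also sits in $S^3$ meeting $L$ in two points and exhibits that knotted arc as a nontrivial connected summand of one component of $L$, contradicting that both components are unknots. You then still have to run the prime-decomposition/induction you allude to in order to reduce to the irreducible summand containing $\tilde F$. Until those steps are written out, the proof has a gap at its crux.

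For comparison, the paper avoids branched covers entirely. It takes the sphere $U\subset S^3$ separating the two unlink components, puts it transverse to the Conway sphere $S$, and inducts on $|U\cap S|$ with an innermost-disc argument: an innermost disc $D\subset U$ either has boundary inessential in $S$, in which case it can be isotoped away (using irreducibility of $S^3$), or its boundary separates two punctures of $S$ from the other two, in which case $D$ splits one of the tangles into two arcs, each of which must be unknotted (a local knot would make a component of $L$ a nontrivial connected sum) and hence boundary-parallel, so that tangle is rational. This is more elementary --- it needs only Alexander/Schoenflies-type facts and primeness of the unknot --- whereas your approach imports the loop theorem, the equivariant sphere theorem and the Montesinos correspondence. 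The payoff of your approach is that it generalizes cleanly (e.g.\ to other links whose branched double covers are understood), but for this lemma the direct argument is shorter and closes the very case you leave open.
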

\begin{proof}%[proof of lemma~\ref{lem:RatTanDet}]
	Let $S$ be the 4-punctured sphere along which we glue $T_1$ and $T_2$. Let $U$ be the sphere that separates the two unknot components and assume that $S$ and $U$ intersect transversely in a disjoint union of circles. We now proceed by induction on the number of circles in $S\cap U$. First of all, this intersection is non-empty, since $U$ is separating. So we can always find a curve $\gamma$ that bounds a disc $D$ in $U$ which does not contain any other curves in $U\cap S$. If $\gamma$ bounds a disc $D'$ in $S$, $D\cup D'$ bounds a 3-ball, which we can use as a homotopy for $U$ to remove $\gamma$ (along with any other components of $S\cap U$ in $D'$), so we are done by the induction hypothesis. If $\gamma$ does not bound a disc in $S$, it separates two punctures from the other two. So $D$ separates the two strands in $T_1$ or $T_2$. They must obviously be unknotted, since the connected sum of two knots is the unknot iff both knots are unknots. Thus either $T_1$ or $T_2$ is rational.
\end{proof}

\begin{wrapfigure}{r}{0.3333\textwidth}
	\centering
	\psset{unit=0.35}\vspace*{-25pt}
	\begin{subfigure}[b]{0.116\textwidth}\centering
		$n\left\{\raisebox{-1.1cm}{
			\begin{pspicture}(-1.1,-3.3)(1.1,3.3)
			\psset{linewidth=\stringwidth}
			\psecurve(1,5)(-1,3)(1,1)(-1,-1)
			\psecurve[linecolor=white,linewidth=\stringwhite](-1,5)(1,3)(-1,1)(1,-1)
			\psecurve(-1,5)(1,3)(-1,1)(1,-1)
			\psline[linestyle=dotted,dotsep=0.4](0,-0.6)(0,0.6)
			\psecurve(-1,-5)(1,-3)(-1,-1)(1,1)
			\psecurve[linecolor=white,linewidth=\stringwhite](1,-5)(-1,-3)(1,-1)(-1,1)
			\psecurve(1,-5)(-1,-3)(1,-1)(-1,1)
			\psline{->}(1,3)(1.07,3.2)
			\psline{->}(-1,3)(-1.07,3.2)
			\end{pspicture}}\right.\quad
		$
		\caption{$T_{n}$}\label{fig:OrientedSkeinRelationTn}
	\end{subfigure}
	\begin{subfigure}[b]{0.116\textwidth}\centering
		$n\left\{\raisebox{-1.1cm}{
			\begin{pspicture}(-1.1,-3.3)(1.1,3.3)
			\psset{linewidth=\stringwidth}
			\psecurve(-1,5)(1,3)(-1,1)(1,-1)
			\psecurve[linecolor=white,linewidth=\stringwhite](1,5)(-1,3)(1,1)(-1,-1)
			\psecurve(1,5)(-1,3)(1,1)(-1,-1)
			\psline[linestyle=dotted,dotsep=0.4](0,-0.6)(0,0.6)
			\psecurve(1,-5)(-1,-3)(1,-1)(-1,1)
			\psecurve[linecolor=white,linewidth=\stringwhite](-1,-5)(1,-3)(-1,-1)(1,1)
			\psecurve(-1,-5)(1,-3)(-1,-1)(1,1)
			\psline{->}(1,3)(1.07,3.2)
			\psline{->}(-1,3)(-1.07,3.2)
			\end{pspicture}}\right.\quad
		$
		\caption{$T_{-n}$}\label{fig:OrientedSkeinRelationTmn}
	\end{subfigure}
	\begin{subfigure}[b]{0.085\textwidth}\centering
		\begin{pspicture}(-1.1,-3.3)(1.1,3.3)
		\psset{linewidth=\stringwidth}
		\psecurve{<-}(-2,6)(-1,3.1)(-1,-3)(-2,-6)
		\psecurve{<-}(2,6)(1,3.1)(1,-3)(2,-6)
		\end{pspicture}
		\caption{$T_0$}\label{fig:OrientedSkeinRelationT0}
	\end{subfigure}
	\caption{Basic tangles.}\label{fig:OrientedSkeinRelationTangles}\vspace*{-25pt}
\end{wrapfigure}
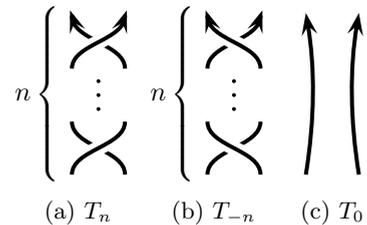

\subsection{Skein exact sequences}

We start with a slight generalisation of Ozsv\'{a}th and Szab\'{o}'s exact triangle \cite{OSHFK} which categorifies the oriented skein relation for the Alexander polynomial. However, we remind the reader that all gradings on link Floer homology should be regarded as relative, see Remark~\ref{rmk:RelativeGradings}, so the graded version of the following theorem is not quite as strong as Ozsv\'{a}th and Szab\'{o}'s result in the case $n=1$.

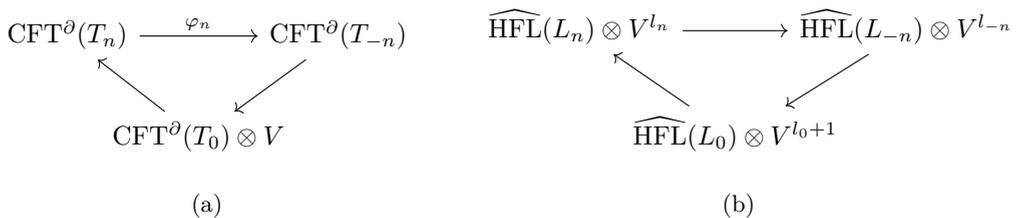
\begin{figure}[b]
\centering
\psset{unit=0.3}
\begin{subfigure}[b]{0.48\textwidth}
	{\normalsize
	\[\begin{tikzcd}[row sep=0.7cm, column sep=-0.5cm]
	\CFTd(T_{n})
	\arrow{rr}{\varphi_n}
	& &
	\CFTd(T_{-n})
	\arrow{dl}
	\\
	&
	\CFTd(T_{0})\otimes V
	\arrow{lu}
	\end{tikzcd}\]
	}
	\caption{}\label{fig:nTwistSkeinRelationTangle}
\end{subfigure}
\begin{subfigure}[b]{0.48\textwidth}
	{\normalsize
	\[\begin{tikzcd}[row sep=0.7cm, column sep=-0.8cm]
	\HFL(L_{n})\otimes V^{l_{n}}
	\arrow{rr}
	& &
	\HFL(L_{-n})\otimes V^{l_{-n}}
	\arrow{dl}
	\\
	&
	\HFL(L_{0})\otimes V^{l_0+1}
	\arrow{lu}
	\end{tikzcd}\]	
	}
	\caption{}\label{fig:nTwistSkeinRelationLinks}
\end{subfigure}
\caption{The skein exact triangles from Theorem~\ref{thm:nTwistSkeinRelation}.}\label{fig:OrientedSkeinRelation}
\end{figure}

\begin{theorem}[($n$-twist skein exact triangle)]\label{thm:nTwistSkeinRelation}
	Let \(T_n\) be the positive \(n\)-twist tangle, \(T_{-n}\) the negative \(n\)-twist tangle and \(T_0\) the trivial tangle, see Figure~\ref{fig:OrientedSkeinRelationTangles}. Furthermore, let \(V\) be a 2-dimensional vector space supported in degrees \(\delta^0 t^{n}\) and \(\delta^0 t^{-n}\), where \(t\) is the colour of the two open strands. Then there is an exact triangle shown in Figure~\ref{fig:nTwistSkeinRelationTangle}.
	\(\varphi_n\) preserves the (univariate) Alexander grading and changes \(\delta\)- and homological gradings by \(+1\) and \(-1\), respectively; the other two maps preserve all three gradings. Moreover, given three links \(L_{n}\), \(L_{-n}\) and \(L_0\) in \(S^3\), which agree outside a closed 3-ball and in this closed 3-ball agree with the 4-ended tangles \(T_{n}\), \(T_{-n}\) and \(T_{0}\), respectively, then the above triangle together with the Glueing Theorem induces an exact triangle shown in Figure~\ref{fig:nTwistSkeinRelationLinks}, where for \(i\in\{n,-n,0\}\), \(l_i\) is either \(0\) or \(1\), depending on whether the two strands in \(T_i\) belong to different or the same components in \(L_i\), respectively. 
\end{theorem}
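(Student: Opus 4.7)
Since $T_n$, $T_{-n}$, and $T_0$ are all rational tangles, Example~\ref{exa:CFTdRatTang} provides explicit combinatorial models for their peculiar modules: each is represented by a single embedded loop on $S^2\smallsetminus 4D^2$ with the unique 1-dimensional local system, and the associated chain complex can be read off directly from the intersections of that loop with the four parametrizing arcs. My plan is to first establish the exact triangle at the level of peculiar modules (Figure~\ref{fig:nTwistSkeinRelationTangle}) by explicit computation with these models, and then to deduce the link-level triangle (Figure~\ref{fig:nTwistSkeinRelationLinks}) by a formal application of the Glueing Theorem.

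For the peculiar module triangle, I would first draw the three loops $\gamma_n$, $\gamma_{-n}$, $\gamma_0$ on the 4-punctured sphere in pairing position and identify the three arrows geometrically. A natural candidate for $\varphi_n$ is supplied by Theorem~\ref{thm:PairingMorLagrangianFH}: the distinguished intersection point of $\gamma_n$ and $\gamma_{-n}$ of $\delta$-degree $1$ and Alexander degree $t^0$ represents a morphism in $H_\ast(\Mor(\CFTd(T_n),\CFTd(T_{-n})))$ with precisely the grading behaviour asserted; the remaining two arrows, which preserve all three gradings, are read off from intersections between $\gamma_0$ and $\gamma_{\pm n}$. Exactness amounts to showing that the mapping cone of any one of the three arrows is chain homotopic to the third term, which I would verify by using the Cancellation Lemma~\ref{lem:AbstractCancellation} to simplify the cone complex down to the expected form. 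The grading assertions then follow by tracking $\delta$- and Alexander gradings through the construction, using Definition~\ref{def:RecallGradingsFromHDsForTangles}.

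The link-level triangle then follows from a general principle. Given a complementary 4-ended tangle $T'$ whose glueing with $T_i$ recovers $L_i$ for $i\in\{n,-n,0\}$, apply the functor $H_\ast\Mor(\CFTd(\mr(T')),\cdot)$ to the peculiar module triangle. By Theorem~\ref{thm:CFTdGlueingAsMorphism} the output identifies with $\HFL(L_i)\otimes V^{l_i}$, where $l_i\in\{0,1\}$ according to whether the two open strands of $T_i$ lie on distinct components of $L_i$, exactly as in Theorem~\ref{thm:CFTdGeneralGlueing}. The extra factor $V$ tensored onto $\CFTd(T_0)$ combines with the stabilisation from the glueing to produce the exponent $l_0+1$ in Figure~\ref{fig:nTwistSkeinRelationLinks}.

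The main obstacle will be the cone identification for general $n$. The complex $\CFTd(T_n)$ grows in size with $n$, so one must check that the successive applications of the Cancellation Lemma simplify the cone uniformly to $\CFTd(T_0)\otimes V$ with the correct bigrading; in particular, the two summands of $V$ in Alexander gradings $t^{\pm n}$ must emerge naturally, reflecting the fact that $\varphi_n$ itself has internal $\delta$-grading $1$ while preserving Alexander grading. I expect an induction on $n$ to be the cleanest approach: the base case $n=1$ reduces to a small explicit calculation with rational tangle complexes of at most a handful of generators, and in the inductive step one exploits that $\gamma_{n+1}$ differs from $\gamma_n$ by a single local modification near the twist region. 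The reversed mirror symmetry of Definition~\ref{def:reversedmirror} and Proposition~\ref{prop:reversedmirror}, which exchanges $T_n \leftrightarrow T_{-n}$ and preserves $T_0$, should further reduce the bookkeeping by making the triangle self-dual up to explicit grading shifts.
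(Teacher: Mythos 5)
Your outline follows the paper's proof in its essentials: both arguments exploit that $T_{\pm n}$ and $T_0$ are rational, so their peculiar modules are read off from a single embedded $\beta$-curve as in Example~\ref{exa:CFTdRatTang}, and both identify the third vertex of the triangle by simplifying a mapping cone with the Cancellation Lemma~\ref{lem:AbstractCancellation}, then feed the result into the Glueing Theorem for the link-level statement. The difference is in how the cone identification is actually carried out, and that is where your proposal has a genuine gap. First, specifying $\varphi_n$ as ``the distinguished intersection point of $\gamma_n$ and $\gamma_{-n}$ in $\delta$-degree $1$ and Alexander degree $t^0$'' is not enough: Theorem~\ref{thm:PairingMorLagrangianFH} only identifies the \emph{homology} of the morphism space with $\LagrangianFH$, a closed morphism in that class generically mixes identity components (upper intersection points) with non-identity ones, and --- more importantly --- merely living in the correct bidegree does not force the cone to be $\CFTd(T_0)\otimes V$; one must exhibit the morphism explicitly and compute. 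Second, the inductive scheme is not set up: you never state an inductive hypothesis relating $\mathrm{Cone}(\varphi_n)$ to $\mathrm{Cone}(\varphi_{n+1})$, and it is unclear how knowing the former helps with the latter, since the local modification near the twist region changes both source and target complexes simultaneously.

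The paper sidesteps both issues by writing $\varphi_n$ down uniformly in $n$ (Figure~\ref{fig:OrientSkeinBoundaryMap}): $\CFTd(T_{\pm n})$ are zig-zag chains of $2n$ generators, $\varphi_n$ consists of identity arrows matching up the interior generators $a^{\pm(n-1)},c^{\pm(n-1)},\dots$ together with components such as $p_{12}+q_{43}$ and $p_{34}+q_{21}$ on the generators $b^{\pm n}$, $d^{\pm n}$, and a \emph{single} round of cancellation of all identity components collapses the cone to two copies of the two-generator complex $\CFTd(T_0)$, sitting in Alexander gradings $t^{\pm n}$ --- no induction required. If you want to keep your geometric packaging of $\varphi_n$, you would still need to translate the intersection point into an explicit closed morphism of precurves (its resolution plus the lower corrections) before the Cancellation Lemma can be applied; at that point you have essentially reconstructed Figure~\ref{fig:OrientSkeinBoundaryMap}. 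Your treatment of the link-level triangle via $H_\ast(\Mor(\CFTd(\mr(T')),\cdot))$ and Theorem~\ref{thm:CFTdGlueingAsMorphism} is fine.
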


\begin{remark}\label{rem:nTwistSkeinRelation}
Similar results hold for other orientations; for $n$ even, also multivariate Alexander gradings are preserved.
\end{remark}

\begin{figure}[t]
	\centering
	\begin{subfigure}[b]{0.23\textwidth}\centering
		{\psset{unit=1}
			\begin{pspicture}(-1.6,-1.7)(1.6,1.7)
			\psecurve(1.2,-0.95)(0,-1.5)(-1.5,-1)(1.25,1.15)(-1.2,-1.05)(1.2,-0.95)(0,-1.5)(-1.5,-1)
			
			\psline[linestyle=dotted](1,1)(1,-1)
			\psline[linestyle=dotted](1,-1)(-1,-1)
			\psline[linestyle=dotted](-1,-1)(-1,1)
			\psline[linestyle=dotted](-1,1)(1,1)
			
			\pscircle[fillstyle=solid, fillcolor=white](1,1){0.08}
			\pscircle[fillstyle=solid, fillcolor=white](-1,1){0.08}
			\pscircle[fillstyle=solid, fillcolor=white](1,-1){0.08}
			\pscircle[fillstyle=solid, fillcolor=white](-1,-1){0.08}
			
			\psset{dotsize=5pt}
			\psdot[linecolor=red](-1,0.18)
			\psdot[linecolor=red](-1,-0.48)
			\psdot[linecolor=gold](0,1)
			\psdot[linecolor=blue](0.04,-1)
			\psdot[linecolor=darkgreen](1,0.65)
			\psdot[linecolor=darkgreen](1,-0.78)
			
			\end{pspicture}
		}
		\caption{$L_{T_2}$}\label{fig:OrientSkeinRatTangleII}
	\end{subfigure}
	\begin{subfigure}[b]{0.23\textwidth}\centering
		{\psset{unit=1}
			\begin{pspicture}(-1.6,-1.7)(1.6,1.7)
			\psecurve(-1.2,-0.95)(0,-1.5)(1.5,-1)(-1.25,1.15)(1.2,-1.05)(-1.2,-0.95)(0,-1.5)(1.5,-1)
			
			\psline[linestyle=dotted](1,1)(1,-1)
			\psline[linestyle=dotted](1,-1)(-1,-1)
			\psline[linestyle=dotted](-1,-1)(-1,1)
			\psline[linestyle=dotted](-1,1)(1,1)
			
			\pscircle[fillstyle=solid, fillcolor=white](1,1){0.08}
			\pscircle[fillstyle=solid, fillcolor=white](-1,1){0.08}
			\pscircle[fillstyle=solid, fillcolor=white](1,-1){0.08}
			\pscircle[fillstyle=solid, fillcolor=white](-1,-1){0.08}
			
			\psset{dotsize=5pt}
			\psdot[linecolor=darkgreen](1,0.18)
			\psdot[linecolor=darkgreen](1,-0.48)
			\psdot[linecolor=gold](0,1)
			\psdot[linecolor=blue](-0.04,-1)
			\psdot[linecolor=red](-1,0.65)
			\psdot[linecolor=red](-1,-0.78)
			
			\end{pspicture}
		}
		\caption{$L_{T_{-2}}$}\label{fig:OrientSkeinRatTanglemII}
	\end{subfigure}
	\begin{subfigure}[b]{0.23\textwidth}\centering
		{\psset{unit=1}
			\begin{pspicture}(-1.6,-1.7)(1.6,1.7)
			\psecurve(1.2,-0.95)(0,-1.5)(-1.5,-1)(1.2,1.05)(-1.2,0.95)(0,1.5)(1.5,1)(-1.2,-1.05)(1.2,-0.95)(0,-1.5)(-1.5,-1)
			
			\psline[linestyle=dotted](1,1)(1,-1)
			\psline[linestyle=dotted](1,-1)(-1,-1)
			\psline[linestyle=dotted](-1,-1)(-1,1)
			\psline[linestyle=dotted](-1,1)(1,1)
			
			\pscircle[fillstyle=solid, fillcolor=white](1,1){0.08}
			\pscircle[fillstyle=solid, fillcolor=white](-1,1){0.08}
			\pscircle[fillstyle=solid, fillcolor=white](1,-1){0.08}
			\pscircle[fillstyle=solid, fillcolor=white](-1,-1){0.08}
			
			\psset{dotsize=5pt}
			\psdot[linecolor=red](-1,0.78)
			\psdot[linecolor=red](-1,-0.31)
			\psdot[linecolor=red](-1,-0.54)
			\psdot[linecolor=gold](-0.04,1)
			\psdot[linecolor=blue](0.04,-1)
			\psdot[linecolor=darkgreen](1,0.54)
			\psdot[linecolor=darkgreen](1,0.31)
			\psdot[linecolor=darkgreen](1,-0.78)
			
			\end{pspicture}
		}
		\caption{$L_{T_3}$}\label{fig:OrientSkeinRatTangleIII}
	\end{subfigure}
	\begin{subfigure}[b]{0.23\textwidth}\centering
		{\psset{unit=1}
			\begin{pspicture}(-1.6,-1.7)(1.6,1.7)
			\psecurve(-1.2,-0.95)(0,-1.5)(1.5,-1)(-1.2,1.05)(1.2,0.95)(0,1.5)(-1.5,1)(1.2,-1.05)(-1.2,-0.95)(0,-1.5)(1.5,-1)
			
			\psline[linestyle=dotted](1,1)(1,-1)
			\psline[linestyle=dotted](1,-1)(-1,-1)
			\psline[linestyle=dotted](-1,-1)(-1,1)
			\psline[linestyle=dotted](-1,1)(1,1)
			
			\pscircle[fillstyle=solid, fillcolor=white](1,1){0.08}
			\pscircle[fillstyle=solid, fillcolor=white](-1,1){0.08}
			\pscircle[fillstyle=solid, fillcolor=white](1,-1){0.08}
			\pscircle[fillstyle=solid, fillcolor=white](-1,-1){0.08}
			
			\psset{dotsize=5pt}
			\psdot[linecolor=darkgreen](1,0.78)
			\psdot[linecolor=darkgreen](1,-0.31)
			\psdot[linecolor=darkgreen](1,-0.54)
			\psdot[linecolor=gold](0.04,1)
			\psdot[linecolor=blue](-0.04,-1)
			\psdot[linecolor=red](-1,0.54)
			\psdot[linecolor=red](-1,0.31)
			\psdot[linecolor=red](-1,-0.78)
			
			\end{pspicture}
		}
		\caption{$L_{T_{-3}}$}\label{fig:OrientSkeinRatTanglemIII}
	\end{subfigure}
	\caption{Immersed curves of $T_n$ and $T_{-n}$ for $n=2$ and $n=3$.}\label{fig:OrientSkeinRatTangle}
\end{figure}
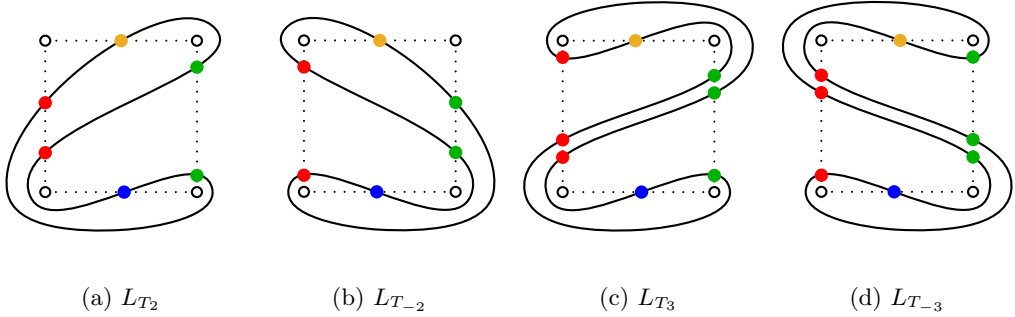

\begin{figure}[t]
	\centering
	$\begin{tikzcd}[row sep=0.3cm, column sep=1cm]
	%1st row
	&
	\delta^{-\frac{1}{2}}c^\Red{1-n}
	\arrow[pos=0.35]{rrrr}{1}
	\arrow[-, dotted,in=90,out=-90]{dddddddl}
	\arrow[-, dashed,in=90,out=-90]{dddd}
	&
	&
	&
	&
	\delta^{\frac{1}{2}}c^\Red{1-n}
	\arrow[-, dashed,in=90,out=-90]{dddd}
	\\
	%2nd row
	\delta^0 b^\Red{-n}
	\arrow[crossing over,pos=0.65]{rrrr}{p_{12}+q_{43}}
	\arrow[bend left=10,leftarrow]{ru}{p_3}
	\arrow[bend right=10,swap,pos=0.55]{ru}{p_{412}}
	\arrow[bend left=10,leftarrow]{dd}{q_{2}}
	\arrow[bend right=10,swap]{dd}{q_{143}}
	&
	&
	&
	&
	\delta^0 d^\Red{-n}
	\arrow[bend left=10,leftarrow,pos=0.2]{ru}{p_{123}}
	\arrow[bend right=10,swap]{ru}{p_{4}}
	\arrow[bend left=10,leftarrow]{dd}{q_{432}}
	\arrow[bend right=10,swap]{dd}{q_{1}}
	\\
	%3rd row
	&
	\phantom{\vdots}
	&
	&
	&
	&
	\phantom{\vdots}
	\\
	%4th row
	\delta^{-\frac{1}{2}}a^\Red{1-n}
	\arrow[-, dashed,in=90,out=-90]{dddd}
	\arrow[crossing over,pos=0.65]{rrrr}{1}
	\arrow[-, dotted]{dr}
	&
	&
	&
	&
	\delta^{\frac{1}{2}}a^\Red{1-n}
	\arrow[-, dotted]{dr}
	\\
	%5th row
	&
	\delta^{-\frac{1}{2}}c^\Red{n-1}
	\arrow[pos=0.35]{rrrr}{1}
	&
	&
	&
	&
	\delta^{\frac{1}{2}}c^\Red{n-1}
	\\
	%6th row
	\phantom{\vdots}
	&
	&
	&
	&
	\phantom{\vdots}
	\\
	%7th row
	&
	\delta^0 d^\Red{n}
	\arrow[bend left=10,leftarrow,pos=0.3]{ld}{p_1}
	\arrow[bend right=10,swap,pos=0.3]{ld}{p_{234}}
	\arrow[bend left=10,leftarrow]{uu}{q_{4}}
	\arrow[bend right=10,swap]{uu}{q_{321}}
	\arrow[pos=0.35]{rrrr}{p_{34}+q_{21}}
	&
	&
	&
	&
	\delta^0 b^\Red{n}
	\arrow[bend left=10,leftarrow]{ld}{p_{341}}
	\arrow[bend right=10,swap,pos=0.7]{ld}{p_{2}}
	\arrow[bend left=10,leftarrow]{uu}{q_{214}}
	\arrow[bend right=10,swap]{uu}{q_{3}}
	\\
	%8th row
	\delta^{-\frac{1}{2}}a^\Red{n-1}
	\arrow[pos=0.65]{rrrr}{1}
	&
	&
	&
	&
	\delta^{\frac{1}{2}}a^\Red{n-1}
	\arrow[-, dashed,in=-90,out=90,crossing over]{uuuu}
	\arrow[-, dotted,in=-90,out=90]{uuuuuuur}
	\end{tikzcd}$
	\caption{The morphism $\varphi_n:\CFTd(T_{n})\rightarrow\CFTd(T_{-n})$.}\label{fig:OrientSkeinBoundaryMap}
\end{figure}

\begin{proof}
$T_n$ and $T_{-n}$ are rational tangles. As such, their immersed curve invariants are straightforward to compute from genus 0 Heegaard diagrams, since (as explained in Example~\ref{exa:CFTdRatTang}) the $\beta$-curves of such Heegaard diagrams \emph{are} the invariants. Figure~\ref{fig:OrientSkeinRatTangle} illustrates this for the cases $n=2$ and $n=3$.
The peculiar modules $\CFTd(T_{n})$ and $\CFTd(T_{-n})$ for general $n$ are shown in Figure~\ref{fig:OrientSkeinBoundaryMap}, the former on the left, the latter on the right. If $n$ is odd, the dashed lines denote a sequence of alternating generators in sites $a$ and $c$, connected by pairs of morphisms, labelled alternatingly by $p_i$s and $q_i$s. For even $n$, the two components are connected by similar sequences along the dotted lines. The horizontal arrows in Figure~\ref{fig:OrientSkeinBoundaryMap} describe $\varphi_n$. 
By cancelling all identity components of the mapping cone of $\varphi_n$, we get two copies of 
\[\begin{tikzcd}[row sep=0.3cm, column sep=1cm]
\nmathphantom{\CFTd(T_0)=}\CFTd(T_0)=\delta^0b^\Red{0}
\arrow[bend left=10,leftarrow]{r}{p_{34}+q_{21}}
\arrow[bend right=10,swap,pos=0.55]{r}{p_{12}+q_{43}}
&
\delta^0d^\Red{0}.
\end{tikzcd}\]
Thus, we can write $\CFTd(T_0)\otimes V$ as a cone of $\CFTd(T_n)$ and $\CFTd(T_{-n})$, which gives rise to the exact triangle of the required form.
\end{proof}

Next, we give a new proof of a theorem by Manolescu \cite[Theorem~1]{Manolescu}. Like Manolescu's triangle, ours does not preserve any gradings. Note that Manolescu uses slightly different conventions from ours, so the two triangles only look the same after reversing the direction of the three arrows.

\begin{figure}[t]
	\centering
	\begin{subfigure}[b]{0.48\textwidth}
		{\normalsize
			\[\begin{tikzcd}[row sep=0.7cm, column sep=-0.5cm]
			\CFTd(\!\!
			\raisebox{-6pt}{
				\psset{unit=0.2}
				\begin{pspicture}(-1.05,-1.45)(1.05,1.05)
				\psline(-0.9,-0.9)(0.9,0.9)
				\psline(0.9,-0.9)(-0.9,0.9)
				\pscircle*[linecolor=white](0,0){0.5}
				\psarc(0.7071,0){0.5}{135}{225}
				\psarc(-0.7071,0){0.5}{-45}{45}
				\end{pspicture}
			}
			\!\!)
			\arrow{rr}{\varphi}
			& &
			\CFTd(\!\!
			\raisebox{-6pt}{
				\psset{unit=0.2}
				\begin{pspicture}(-1.05,-1.45)(1.05,1.05)
				\psline(-0.9,-0.9)(0.9,0.9)
				\psline(0.9,-0.9)(-0.9,0.9)
				\pscircle*[linecolor=white](0,0){0.5}
				\psarc(0,0.7071){0.5}{-135}{-45}
				\psarc(0,-0.7071){0.5}{45}{135}
				\end{pspicture}
			}
			\!\!)
			\arrow{dl}
			\\
			&
			\CFTd(\!\!
			\raisebox{-6pt}{
				\psset{unit=0.2}
				\begin{pspicture}(-1.05,-1.45)(1.05,1.05)
				\psline(-0.9,-0.9)(0.9,0.9)
				\pscircle*[linecolor=white](0,0){0.3}
				\psline(0.9,-0.9)(-0.9,0.9)
				\end{pspicture}
			}
			\!\!)
			\arrow{lu}
			\end{tikzcd}\]
		}
		\caption{}\label{fig:ResolutionExactTriangleTangle}
	\end{subfigure}
	\begin{subfigure}[b]{0.48\textwidth}
		{\normalsize
			\[\begin{tikzcd}[row sep=0.7cm, column sep=-1.1cm]
			\HFL(L_{0})\otimes V^{l_{0}}
			\arrow{rr}
			& &
			\HFL(L_{1})\otimes V^{l_{1}}
			\arrow{dl}
			\\
			&
			\HFL(L_{X})\otimes V^{l_X}
			\arrow{lu}
			\end{tikzcd}\]
		}
		\caption{}\label{fig:ResolutionExactTriangleLinks}
	\end{subfigure}
	\caption{The skein exact triangles from Theorem~\ref{thm:ResolutionExactTriangle}.}\label{fig:ResolutionExactTriangle}
\end{figure}
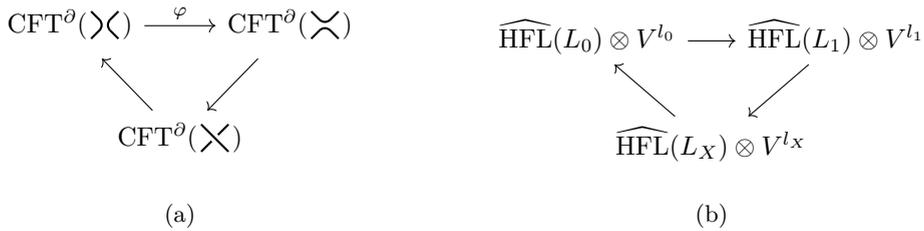

\begin{theorem}[(resolution skein exact triangle)]\label{thm:ResolutionExactTriangle}
There is an exact triangle as shown in Figure~\ref{fig:ResolutionExactTriangleTangle}.
Moreover, given three links \(L_{0}\), \(L_{1}\) and \(L_X\) in \(S^3\) which agree outside a closed 3-ball and in this closed 3-ball agree with the 4-ended tangles 
\(T_0=\!\!\raisebox{-5pt}{
\psset{unit=0.2}
\begin{pspicture}(-1.05,-1.45)(1.05,1.05)
\psline(-0.9,-0.9)(0.9,0.9)
\psline(0.9,-0.9)(-0.9,0.9)
\pscircle*[linecolor=white](0,0){0.5}
\psarc(0.7071,0){0.5}{135}{225}
\psarc(-0.7071,0){0.5}{-45}{45}
\end{pspicture}
}\!\!,\) 
\(T_1=\!\!\raisebox{-5pt}{
\psset{unit=0.2}
\begin{pspicture}(-1.05,-1.45)(1.05,1.05)
\psline(-0.9,-0.9)(0.9,0.9)
\psline(0.9,-0.9)(-0.9,0.9)
\pscircle*[linecolor=white](0,0){0.5}
\psarc(0,0.7071){0.5}{-135}{-45}
\psarc(0,-0.7071){0.5}{45}{135}
\end{pspicture}
}\)\!\!
 and  
 \(T_X=
\!\!\raisebox{-5pt}{
\psset{unit=0.2}
\begin{pspicture}(-1.05,-1.45)(1.05,1.05)
\psline(-0.9,-0.9)(0.9,0.9)
\pscircle*[linecolor=white](0,0){0.3}
\psline(0.9,-0.9)(-0.9,0.9)
\end{pspicture}
}\!\!,\)
respectively, then the above triangle, together with the Glueing Theorem induces the exact triangle from Figure~\ref{fig:ResolutionExactTriangleLinks}, 
where for \(i\in\{0,1,X\}\), \(l_i\) is either \(0\) or \(1\), depending on whether the two strands in \(T_i\) belong to different or the same components in \(L_i\), respectively. 
\end{theorem}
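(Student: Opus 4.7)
The proof will follow the same pattern as Theorem~\ref{thm:nTwistSkeinRelation}. Since each of $T_0$, $T_1$, and $T_X$ is rational, by Example~\ref{exa:CFTdRatTang} and Theorem~\ref{thm:CFTdDetectsRatTan} their peculiar modules are completely described by single embedded loops on the 4-punctured sphere with trivial 1-dimensional local systems, and they admit small, explicit presentations (at most four generators) which can be read off directly from genus-$0$ peculiar Heegaard diagrams with a single $\beta$-curve. In particular, $\CFTd(T_0)$ and $\CFTd(T_1)$ each consist of two generators connected by a pair of arrows labelled by length-$2$ monomials in the $p_i$s and $q_i$s, while $\CFTd(T_X)$ has four generators arranged in a square with shorter length-$1$ arrows, all as illustrated in Figure~\ref{fig:CFTdForSomeRatTangles}.

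The plan is then to write down an explicit morphism $\varphi\co \CFTd(T_0)\to\CFTd(T_1)$ of curved type~D structures. Because the generators on the two sides occupy complementary sets of sites, each component of $\varphi$ must be labelled by a non-trivial element of $\Ad$; the condition $D(\varphi)=0$ (modulo the curvature $p^4+q^4$) together with the idempotent constraints pins $\varphi$ down essentially uniquely up to chain homotopy. The next step is to verify that the mapping cone $\Cone(\varphi)$ is chain homotopic to $\CFTd(T_X)$: after writing out the differential on $\Cone(\varphi)$, the new components contributed by $\varphi$ combine with the original arrows to produce precisely the arrow pattern of $\CFTd(T_X)$, possibly after an application of the Cancellation Lemma~\ref{lem:AbstractCancellation} to absorb identity components and a base change via the Clean-Up Lemma~\ref{lem:AbstractCleanUp}. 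Once this identification is made, the exact triangle of peculiar modules follows from the standard mapping cone triangle in the dg category $\pqMod$.

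For the link-level statement, I would invoke the Glueing Theorem in the form of Theorem~\ref{thm:CFTdGlueingAsMorphism}: if $T'$ denotes the complementary 4-ended tangle whose closure with $T_i$ gives $L_i$, then
\[
\HFL(L_i)\otimes V^{l_i}\cong H_\ast\bigl(\Mor(\CFTd(\mr(T')),\CFTd(T_i))\bigr),
\]
where the stabilisation exponent $l_i\in\{0,1\}$ is determined by whether the two open strands of $T_i$ lie in the same component of $L_i$. Applying the dg functor $\Mor(\CFTd(\mr(T')),-)$ to the triangle of peculiar modules produces the desired exact triangle on link Floer homology with the stated shifts.

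The main technical step to carry out is the explicit identification of $\Cone(\varphi)$ with $\CFTd(T_X)$. The complexes are small enough that this amounts to a tractable hand computation, but one must keep careful track of which components lie in which idempotent space, of the curved $\partial^2=p^4+q^4$ terms, and of the fact that $\varphi$ does \emph{not} in general respect the $\delta$- or Alexander gradings. This grading mismatch is consistent with the statement that the resolution skein triangle, unlike its oriented counterpart in Theorem~\ref{thm:nTwistSkeinRelation}, does not come with a graded refinement.
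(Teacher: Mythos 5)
Your proposal is correct and follows essentially the same route as the paper: one writes down the explicit morphism $\varphi$ from the two-generator complex $\CFTd(T_0)$ (supported in sites $b,d$) to the two-generator complex $\CFTd(T_1)$ (supported in sites $a,c$), identifies its mapping cone with the four-generator square $\CFTd(T_X)$ — the paper does this via two applications of the Clean-Up Lemma~\ref{lem:AbstractCleanUp} rather than the Cancellation Lemma, since $\varphi$ has no identity components — and then deduces the link-level triangle from the Glueing Theorem exactly as in Theorem~\ref{thm:nTwistSkeinRelation}. The only caveat is that the verification of the cone identification really is the whole content of the proof, so a complete write-up must exhibit $\varphi$ and the homotopies explicitly rather than asserting that they exist.
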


\begin{proof}
The map $\varphi$ is given by (the horizontal arrows in) the following diagram on the left:
\[\begin{tikzcd}[row sep=2cm, column sep=2.8cm]
% 1st row
b
\arrow[bend left=10,leftarrow]{d}{p_{34}+q_{21}}
\arrow[bend right=10,swap]{d}{p_{12}+q_{43}}
\arrow{r}{q_3}
\arrow[pos=0.3]{rd}{p_2}
&
c
\arrow[bend left=10,leftarrow]{d}{p_{41}+q_{32}}
\arrow[bend right=10,swap]{d}{p_{23}+q_{14}}
\\
%2nd row
d
\arrow{r}{q_1}
\arrow[pos=0.3,swap]{ru}{p_4}
&
a
\end{tikzcd}
\quad\cong\quad
\begin{tikzcd}[row sep=2cm, column sep=2.8cm]
% 1st row
b
\arrow[bend left=10,leftarrow,pos=0.8]{dr}{p_{341}}
\arrow[bend right=10,swap,pos=0.2]{dr}{p_{2}}
\arrow[bend left=10,leftarrow]{r}{q_{214}}
\arrow[bend right=10,swap]{r}{q_{3}}
&
c
\\
%2nd row
d
\arrow[bend right=10,swap,leftarrow,pos=0.8]{ru}{p_{123}}
\arrow[bend left=10,pos=0.2]{ru}{p_{4}}
\arrow[bend right=10,swap,leftarrow]{r}{q_{432}}
\arrow[bend left=10]{r}{q_{1}}
&
a
\end{tikzcd}\]
Using the Clean-Up Lemma (\ref{lem:AbstractCleanUp}) twice with $h=(a\oplus c\xrightarrow{(q_2~p_3)}b)$ and $h=(a\oplus c\xrightarrow{(p_1~q_4)}d)$, respectively, we see that it is chain isomorphic to the diagram on the right,
which is $\CFTd(\!\!\raisebox{-5pt}{
\psset{unit=0.2}
\begin{pspicture}(-1.05,-1.45)(1.05,1.05)
\psline(-0.9,-0.9)(0.9,0.9)
\pscircle*[linecolor=white](0,0){0.3}
\psline(0.9,-0.9)(-0.9,0.9)
\end{pspicture}
}\!\!)$. Now apply the same arguments as in the proof of Theorem~\ref{thm:nTwistSkeinRelation}.
\end{proof}

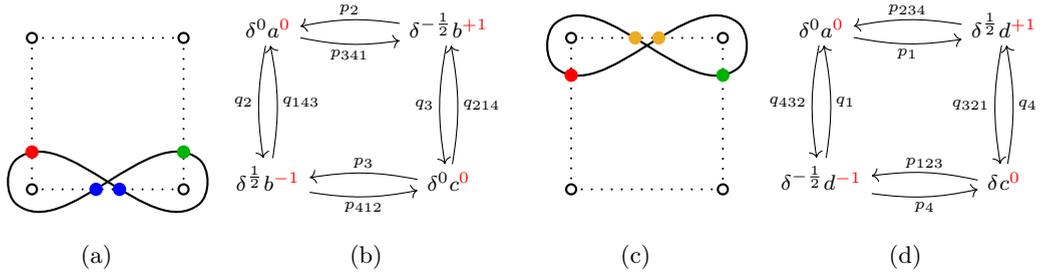
\begin{figure}[t]
	\begin{subfigure}[b]{0.19\textwidth}\centering
		\begin{pspicture}(-1.5,-1.5)(1.5,1.5)
		\psrotate(0,0){180}{
			\psecurve(-1.2,0.6)(-1.2,1.2)(1.2,0.6)(1.2,1.2)(-1.2,0.6)(-1.2,1.2)(1.2,0.6)
			
			%\psline*[linecolor=white](1,1)(1,-1)(-1,-1)(-1,1)
			\psline[linestyle=dotted](1,1)(1,-1)
			\psline[linestyle=dotted](1,-1)(-1,-1)
			\psline[linestyle=dotted](-1,-1)(-1,1)
			\psline[linestyle=dotted](-1,1)(1,1)
			%
			%\psecurve[dotsep=1pt,linestyle=dotted](-1.2,0.6)(-1.2,1.2)(1.2,0.6)(1.2,1.2)(-1.2,0.6)(-1.2,1.2)(1.2,0.6)
			
			\psset{dotsize=5pt}
			
			\psdot[linecolor=darkgreen](-1,0.507)
			
			\psdot[linecolor=blue](0.155,1)
			\psdot[linecolor=blue](-0.155,1)
			
			\psdot[linecolor=red](1,0.507)

			\pscircle[fillstyle=solid, fillcolor=white](1,1){0.08}
			\pscircle[fillstyle=solid, fillcolor=white](-1,1){0.08}
			\pscircle[fillstyle=solid, fillcolor=white](1,-1){0.08}
			\pscircle[fillstyle=solid, fillcolor=white](-1,-1){0.08}
		}
		\end{pspicture}
		\caption{}\label{fig:figure8loop1}
	\end{subfigure}
	\begin{subfigure}[b]{0.29\textwidth}
		{
		\[
		\begin{tikzcd}[row sep=1.5cm, column sep=1.2cm,crossing over clearance=3pt]
		%1st row
		\delta^{0}a^{\red 0}
		\arrow[bend left=10,leftarrow,pos=0.5]{d}{q_{143}}
		\arrow[bend right=10,swap,pos=0.5]{d}{q_2}
		\arrow[bend left=10,leftarrow,pos=0.5]{r}{p_2}
		\arrow[bend right=10,swap,pos=0.5]{r}{p_{341}}
		&
		\delta^{-\frac{1}{2}}b^{\red +1}
		\\
		%2nd row
		\delta^{\frac{1}{2}}b^{\red -1}
		&
		\delta^{0}c^{\red 0}
		\arrow[bend left=10,leftarrow,pos=0.5]{u}{q_{3}}
		\arrow[bend right=10,swap,pos=0.5]{u}{q_{214}}
		\arrow[bend left=10,leftarrow,pos=0.5]{l}{p_{412}}
		\arrow[bend right=10,swap,pos=0.5]{l}{p_3}
		\end{tikzcd}
		\]
		}
		\caption{}\label{fig:figure8loop1pm}
	\end{subfigure}
	\begin{subfigure}[b]{0.19\textwidth}\centering
		\begin{pspicture}(-1.5,-1.5)(1.5,1.5)
		\psecurve(-1.2,0.6)(-1.2,1.2)(1.2,0.6)(1.2,1.2)(-1.2,0.6)(-1.2,1.2)(1.2,0.6)
		
		%\psline*[linecolor=white](1,1)(1,-1)(-1,-1)(-1,1)
		\psline[linestyle=dotted](1,1)(1,-1)
		\psline[linestyle=dotted](1,-1)(-1,-1)
		\psline[linestyle=dotted](-1,-1)(-1,1)
		\psline[linestyle=dotted](-1,1)(1,1)
		%
		%\psecurve[dotsep=1pt,linestyle=dotted](-1.2,0.6)(-1.2,1.2)(1.2,0.6)(1.2,1.2)(-1.2,0.6)(-1.2,1.2)(1.2,0.6)
		
		\psset{dotsize=5pt}
		
		\psdot[linecolor=red](-1,0.507)
		
		\psdot[linecolor=gold](0.155,1)
		\psdot[linecolor=gold](-0.155,1)
		
		\psdot[linecolor=darkgreen](1,0.507)

		\pscircle[fillstyle=solid, fillcolor=white](1,1){0.08}
		\pscircle[fillstyle=solid, fillcolor=white](-1,1){0.08}
		\pscircle[fillstyle=solid, fillcolor=white](1,-1){0.08}
		\pscircle[fillstyle=solid, fillcolor=white](-1,-1){0.08}
		\end{pspicture}
		\caption{}\label{fig:figure8loop2}
	\end{subfigure}
	\begin{subfigure}[b]{0.29\textwidth}
		{
		\[
		\begin{tikzcd}[row sep=1.5cm, column sep=1.2cm,crossing over clearance=3pt]
		%1st row
		\delta^{0}a^{\red 0}
		\arrow[bend left=10,leftarrow,pos=0.5]{d}{q_1}
		\arrow[bend right=10,swap,pos=0.5]{d}{q_{432}}
		\arrow[bend left=10,leftarrow,pos=0.5]{r}{p_{234}}
		\arrow[bend right=10,swap,pos=0.5]{r}{p_1}
		&
		\delta^{\frac{1}{2}}d^{\red +1}
		\\
		%2nd row
		\delta^{-\frac{1}{2}}d^{\red -1}
		&
		\delta^{}c^{\red 0}
		\arrow[bend left=10,leftarrow,pos=0.5]{u}{q_{321}}
		\arrow[bend right=10,swap,pos=0.5]{u}{q_4}
		\arrow[bend left=10,leftarrow,pos=0.5]{l}{p_4}
		\arrow[bend right=10,swap,pos=0.5]{l}{p_{123}}
		\end{tikzcd}
		\]
		}
		\caption{}\label{fig:figure8loop2pm}
	\end{subfigure}
	\caption{Two figure-8 loops (a) and (c) and their peculiar modules (b) and (d).}\label{fig:figure8loops}
\end{figure} 

\begin{proposition}\label{prop:singularcrossing}
There are two morphisms
\[
\delta^{-\frac{1}{2}}t^{\pm1}\CFTd\left(\raisebox{-5pt}{
\psset{unit=0.3}
\begin{pspicture}(-0.91,-0.91)(0.91,0.91)
\psline{->}(-0.9,-0.9)(0.9,0.9)
\psline{->}(0.9,-0.9)(-0.9,0.9)
\pscircle*[linecolor=white](0,0){0.5}
\psarc(0.7071,0){0.5}{135}{225}
\psarc(-0.7071,0){0.5}{-45}{45}
\end{pspicture}
}\right)\rightarrow
\CFTd\left(\raisebox{-5pt}{
\psset{unit=0.3}
\begin{pspicture}(-0.91,-0.91)(0.91,0.91)
\psline{->}(0.9,-0.9)(-0.9,0.9)
\pscircle*[linecolor=white](0,0){0.3}
\psline{->}(-0.9,-0.9)(0.9,0.9)
\end{pspicture}
}\right)
\]
whose mapping cones are homotopic to the peculiar modules represented by the ``figure-8'' loops shown in Figure~\ref{fig:figure8loops}. Since these loops are invariant under taking the mirror, they agree with the mapping cones of maps from the negative crossing to the trivial tangle. 
\end{proposition}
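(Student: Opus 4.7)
The idea is to construct the morphisms $\varphi_\pm$ explicitly and then check by a direct cancellation argument that their mapping cones simplify to the peculiar modules shown in Figure~\ref{fig:figure8loops}(b) and~(d).

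Concretely, I would first read off the peculiar module of the source (the horizontal trivial tangle, denoted $T_0$ in Theorem~\ref{thm:ResolutionExactTriangle}) and of the target (the positive crossing) from the first and second diagrams of Figure~\ref{fig:CFTdForSomeRatTangles}, together with their $\delta$- and Alexander gradings. After applying the bigrading shift $\delta^{-\tfrac{1}{2}}t^{\pm 1}$ to the source, the requirement that a chain map preserve bigrading leaves essentially one candidate morphism in each of the two shifted bigradings, up to homotopy and scaling. This can be checked independently by computing $H_\ast(\Mor(\CFTd(T_0),\CFTd(T_1)))$ via the Glueing Theorem~(\ref{thm:CFTdGlueingAsMorphism}): the glued link $L(\mr(T_0),T_1)$ is a Hopf link, whose link Floer homology determines the dimensions of the morphism space in each bigrading. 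I would then write down each candidate $\varphi_\pm$ as a collection of arrows between generators labelled by algebra elements and verify that $D(\varphi_\pm)=0$ by checking that all compositions of $\varphi_\pm$ with the relevant differentials contribute in matched pairs.

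With the morphisms in hand, the mapping cone of $\varphi_\pm$ is a six-generator complex obtained by stacking the shifted source on top of the target with the additional $\varphi_\pm$-arrows between them. Because two of the source generators have bigradings matching two of the target generators, $\varphi_\pm$ necessarily contains identity components between them; the Cancellation Lemma~(\ref{lem:AbstractCancellation}) can then be applied twice to eliminate these, reducing each cone to a four-generator peculiar module. A generator-by-generator comparison with Figure~\ref{fig:figure8loops}(b) and~(d) completes the algebraic identification, and the identification of these peculiar modules with the geometric figure-8 loops is then a direct application of Theorem~\ref{thm:EverythingIsLoopTypeUpToLocalSystems}. The final ``mirror'' sentence of the proposition follows from Proposition~\ref{prop:reversedmirror}: applying $\m$ to $\varphi_\pm$ produces morphisms from $\m(T_1)=$~negative~crossing to $\m(T_0)=T_0$, and their cones are the mirrors of the figure-8 loops, which coincide with the original loops since the loops in Figure~\ref{fig:figure8loops}(a) and~(c) are manifestly invariant under the reflection that swaps $p_i\leftrightarrow q_i$.

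The main obstacle is the second step: writing down $\varphi_\pm$ explicitly and verifying $D(\varphi_\pm)=0$. Although grading constraints greatly narrow down the possibilities, the closedness condition forces a careful accounting of all paths in $\Ad$ connecting the relevant sites in the appropriate bigradings. A cleaner but less explicit alternative would be to argue geometrically via Theorem~\ref{thm:classificationPecMod}: the figure-8 loops in Figure~\ref{fig:figure8loops}(a) and~(c) each have a unique self-intersection whose two local smoothings produce an embedded loop supporting $L_{T_0}$ and an embedded loop supporting (a component of) $L_{T_1}$ respectively, and the Lagrangian-intersection interpretation of $\Mor$ from Theorem~\ref{thm:PairingMorLagrangianFH} shows that this smoothing picture assembles into the claimed mapping cones — with the two choices of smoothing corresponding to the two choices of sign in $t^{\pm 1}$.
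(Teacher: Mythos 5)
Your overall strategy --- write down the two morphisms explicitly, verify they are closed, cancel identity components in the mapping cones, compare the result with Figure~\ref{fig:figure8loops}, and deduce the negative-crossing statement from mirror symmetry --- is exactly the route the paper takes (the two maps are displayed in Figure~\ref{fig:singularcrossing}). However, your description of the cancellation step is wrong, and the error is not cosmetic. The shifted source $\delta^{-\frac{1}{2}}t^{\pm1}\CFTd$ of the trivial tangle has only two generators, and after the shift only \emph{one} of them shares its Alexander grading with a generator of the crossing: for the shift $t^{+1}$ it is the $d$-generator (pairing with $\delta^{\frac{1}{2}}d^{+1}$), while the shifted $b$-generator sits in Alexander grading $+1$ against the target's $\delta^{\frac{1}{2}}b^{-1}$, so no identity component is possible there; for $t^{-1}$ the roles of $b$ and $d$ are exchanged. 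Each morphism therefore has exactly one identity component, with the other source generator mapping by $p_2+q_3$ (resp.\ $p_4+q_1$) into the site-$a$ and site-$c$ generators of the crossing, and the Cancellation Lemma is applied exactly \emph{once} per cone: $6-2=4$ generators remain, matching the four-generator figure-8 complexes of Figures~\ref{fig:figure8loop1pm} and~\ref{fig:figure8loop2pm}. Your claim that it is applied twice is also internally inconsistent: two cancellations would leave a two-generator complex, not a four-generator one.

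Two smaller points. First, even where bigradings permit it, a chain map need not contain an identity component, so ``necessarily contains identity components'' is not a valid inference --- the maps have to be exhibited and checked, which is what the paper does. Second, your mirror argument and the geometric picture of smoothing the unique self-intersection of each figure-8 loop are consistent with the paper's closing remark, but note that $\m$ reverses the direction of morphisms, so one must say (as the paper does) that the mirrored maps go in the \emph{opposite} direction, from the negative crossing to the trivial tangle.
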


\begin{figure}[t]
	\begin{subfigure}[b]{0.48\textwidth}
		{
		\[
		\begin{tikzcd}[row sep=0.5cm, column sep=0.22cm,crossing over clearance=3pt]
		%1st row
		&
		\delta^{-\frac{1}{2}}d^{\red +1}
		\arrow[out=0,in=180]{rrrr}{1}
		&&&&
		\delta^{\frac{1}{2}}d^{\red +1}
		\arrow[bend left=10,leftarrow,pos=0.5]{dd}{q_4}
		\arrow[bend right=10,swap,pos=0.6]{dd}{q_{321}}
		\\
		%2nd row
		&&&&
		\delta^{0}a^{\red 0}
		\arrow[crossing over, bend left=10,leftarrow,pos=0.35]{ur}{p_{234}}
		\arrow[crossing over, bend right=10,swap,pos=0.4]{ur}{p_1}
		\\
		%3rd row
		&&&&&
		\delta^{0}c^{\red 0}
		\\
		%4th row
		\delta^{-\frac{1}{2}}b^{\red +1}
		\arrow[out=30,in=180,pos=0.3]{rrrruu}{p_2}
		\arrow[swap,out=20,in=180,pos=0.5]{rrrrru}{q_3}
		\arrow[bend left=10,leftarrow,pos=0.5]{uuur}{p_{34}+q_{21}}
		\arrow[bend right=10,swap,pos=0.85]{uuur}{p_{12}+q_{43}}
		&&&&
		\delta^{\frac{1}{2}}b^{\red -1}
		\arrow[bend left=10,leftarrow,pos=0.65]{ur}{p_3}
		\arrow[bend right=10,swap,pos=0.65]{ur}{p_{412}}
		\arrow[crossing over, bend left=10,leftarrow,pos=0.55]{uu}{q_2}
		\arrow[crossing over, bend right=10,swap,pos=0.75]{uu}{q_{143}}
		\end{tikzcd}
		\]
		}
		\caption{}
	\end{subfigure}
	\begin{subfigure}[b]{0.48\textwidth}
		{
		\[
		\begin{tikzcd}[row sep=0.5cm, column sep=0.22cm,crossing over clearance=3pt]
		%1st row
		&
		\delta^{-\frac{1}{2}}d^{\red -1}
		\arrow[out=-20,in=180,pos=0.2]{rrrrdd}{p_4}
		\arrow[crossing over, out=-30,in=180,swap,pos=0.3]{rrrd}{q_1}
		&&&&
		\delta^{\frac{1}{2}}d^{\red +1}
		\arrow[bend left=10,leftarrow,pos=0.5]{dd}{p_4}
		\arrow[bend right=10,swap,pos=0.6]{dd}{q_{321}}
		\\
		%2nd row
		&&&&
		\delta^{0}a^{\red 0}
		\arrow[crossing over, bend left=10,leftarrow,pos=0.35]{ur}{p_{234}}
		\arrow[crossing over, bend right=10,swap,pos=0.4]{ur}{p_1}
		\\
		%3rd row
		&&&&&
		\delta^{0}c^{\red 0}
		\\
		%4th row
		\delta^{-\frac{1}{2}}b^{\red -1}
		\arrow[out=0,in=180,swap]{rrrr}{1}
		\arrow[bend left=10,leftarrow,pos=0.5]{uuur}{p_{34}+q_{21}}
		\arrow[bend right=10,swap,pos=0.5]{uuur}{p_{12}+q_{43}}
		&&&&
		\delta^{\frac{1}{2}}b^{\red -1}
		\arrow[bend left=10,leftarrow,pos=0.65]{ur}{p_3}
		\arrow[bend right=10,swap,pos=0.65]{ur}{p_{412}}
		\arrow[crossing over, bend left=10,leftarrow,pos=0.55]{uu}{q_2}
		\arrow[crossing over, bend right=10,swap,pos=0.75]{uu}{q_{143}}
		\end{tikzcd}
		\]
		}
		\caption{}
	\end{subfigure}
	\caption{The two maps from Proposition~\ref{prop:singularcrossing}.}\label{fig:singularcrossing}
\end{figure}
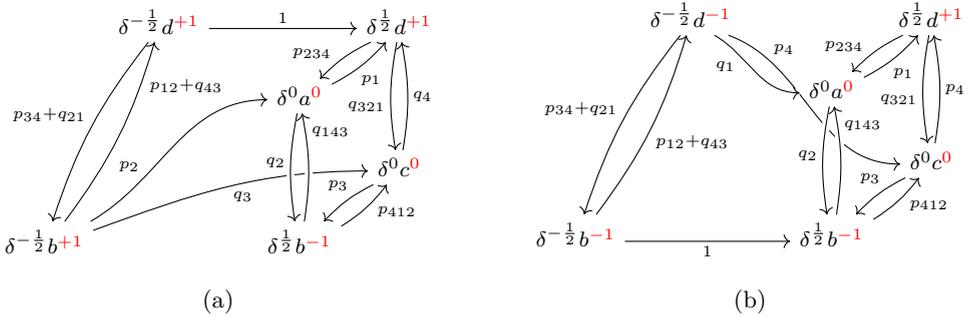 

\begin{proof}
The two maps between the peculiar invariants of the two tangles are shown in Figure~\ref{fig:singularcrossing}. After cancelling the identity arrows in both mapping cones, we obtain the two peculiar modules shown in Figure~\ref{fig:figure8loops}. Also, reversing all arrows, swapping $p_i$ and $q_i$ and reversing the Alexander grading leaves both of them invariant. Doing this to the mapping cone gives us maps between the mirrors of the two tangles, but in the opposite direction.
\end{proof}

\begin{figure}[t]\centering
	{
		$
		\begin{tikzcd}[row sep=0.5cm, column sep=0.45cm,crossing over clearance=3pt]
		%1st row
		&
		b^{\red 2-n}
		\arrow[bend left=10,leftarrow,pos=0.35]{dd}{q_{214}}
		\arrow[bend right=10,swap,pos=0.75]{dd}{q_3}
		&&&&
		b^{\red 4-n}
		\arrow[bend left=10,leftarrow,pos=0.35]{dd}{q_{214}}
		\arrow[bend right=10,swap,pos=0.75]{dd}{q_3}
		&&
		\!\!\!\cdots\!\!\!
		&&
		b^{\red n}
		\arrow[bend left=10,leftarrow,pos=0.65]{dd}{q_{214}}
		\arrow[bend right=10,swap,pos=0.75]{dd}{q_3}
		&&&&
		d^{\red n}
		\\
		%2nd row
		a^{\red 1-n}
		\arrow[bend left=10,leftarrow]{ru}{p_2}
		\arrow[bend right=10,swap,pos=0.15]{ru}{p_{341}}
		\arrow[crossing over,in=180,out=0,swap,pos=0.1]{rrrrrd}{p_{41}}
		&&&&
		a^{\red 3-n}
		\arrow[bend left=10,leftarrow]{ru}{p_2}
		\arrow[bend right=10,swap,pos=0.15]{ru}{p_{341}}
		\arrow[dotted,crossing over,in=180,out=0,swap,pos=0.1]{rrrrrd}{p_{41}}
		&&
		\!\!\!\cdots\!\!\!
		&&
		a^{\red n-1}
		\arrow[bend left=10,leftarrow]{ru}{p_2}
		\arrow[bend right=10,swap,pos=0.15]{ru}{p_{341}}
		\arrow[crossing over,pos=0.9,in=180,out=0]{urrrrr}{p_1}
		\\
		%3rd row
		&
		c^{\red 1-n}
		\arrow[bend left=10,leftarrow]{ld}{p_{412}}
		\arrow[bend right=10,swap,pos=0.3]{ld}{p_3}
		\arrow[in=180,out=0,pos=0.03,swap]{rrru}{q_{14}}
		&&&&
		c^{\red 3-n}
		\arrow[bend left=10,leftarrow]{ld}{p_{412}}
		\arrow[bend right=10,swap,pos=0.3]{ld}{p_3}
		\arrow[dotted,in=180,out=0,pos=0.03,swap]{rrru}{q_{14}}
		&&
		\!\!\!\cdots\!\!\!
		&&
		c^{\red n-1}
		\arrow[bend left=10,leftarrow]{ld}{p_{412}}
		\arrow[bend right=10,swap,pos=0.4]{ld}{p_3}
		\arrow[pos=0.65,in=-150,out=0,pos=0.6]{uurrrr}{q_4}
		\\
		%4th row
		b^{\red -n}
		\arrow[crossing over,bend left=10,leftarrow,pos=0.5]{uu}{q_2}
		\arrow[crossing over,bend right=10,swap,pos=0.55]{uu}{q_{143}}
		&&&&
		b^{\red 2-n}
		\arrow[crossing over,bend left=10,leftarrow,pos=0.65]{uu}{q_2}
		\arrow[crossing over,bend right=10,swap,pos=0.65]{uu}{q_{143}}
		\arrow[swap,in=0,out=180,pos=0.8,leftarrow]{llluuu}{1}
		&&
		\!\!\!\cdots\!\!\!
		&&
		b^{\red n-2}
		\arrow[crossing over,bend left=10,leftarrow,pos=0.65]{uu}{q_2}
		\arrow[crossing over,bend right=10,swap,pos=0.65]{uu}{q_{143}}
		\arrow[swap,dotted,in=0,out=180,pos=0.8,leftarrow]{llluuu}{1}
		&&&&
		b^{\red n}
		\arrow[swap,in=0,out=180,pos=0.8,leftarrow]{llluuu}{1}
		\arrow[bend left=10,leftarrow,pos=0.2]{uuur}{p_{34}+q_{21}}
		\arrow[bend right=10,swap,pos=0.3]{uuur}{p_{12}+q_{43}}
		\end{tikzcd}$
	}
	\caption{A complex of peculiar modules homotopic to $\CFTd(T_n)$.}\label{fig:ntwistascomplex}
\end{figure} 

\begin{remark}\label{rem:singularcrossings}
It is interesting to compare the ``figure-8'' curve to the local Heegaard diagram for a singular crossing $\!\!\raisebox{-5pt}{
\psset{unit=0.2}
\psset{arrowsize=1.5pt 2}
\begin{pspicture}(-1.05,-1.45)(1.05,1.05)
\psline{->}(0.9,-0.9)(-0.9,0.9)
\psline{->}(-0.9,-0.9)(0.9,0.9)
\psdot(0,0)
\end{pspicture}
}\!\!$ in \cite{OSSHFS}, as the number of generators agree for the second loop, up to an additional tensor factor. Also note that the proposition above gives rise to an exact triangle similar to the one in \cite{OSrescube}. Moreover, we can write the $n$-twist tangle $T_{n}$ from Figure~\ref{fig:OrientedSkeinRelationTn}, with both strands oriented upwards, as a complex in the objects $\!\!\raisebox{-5pt}{
\psset{unit=0.2}
\psset{arrowsize=1.5pt 2}
\begin{pspicture}(-1.05,-1.45)(1.05,1.05)
\psline{->}(0.9,-0.9)(-0.9,0.9)
\psline{->}(-0.9,-0.9)(0.9,0.9)
\psdot(0,0)
\end{pspicture}
}\!\!$ 
and
$
\!\!\raisebox{-5pt}{
\psset{unit=0.2}
\psset{arrowsize=1.5pt 2}
\begin{pspicture}(-1.05,-1.45)(1.05,1.05)
\psline{->}(-0.9,-0.9)(0.9,0.9)
\psline{->}(0.9,-0.9)(-0.9,0.9)
\pscircle*[linecolor=white](0,0){0.5}
\psarc(0.7071,0){0.5}{135}{225}
\psarc(-0.7071,0){0.5}{-45}{45}
\end{pspicture}
}\!\!
$. Indeed, cancelling the identity components in the complex from Figure~\ref{fig:ntwistascomplex} gives us a loop representing $T_{n}$.
\pagebreak[3]

Similarly, we can obtain a complex for $T_{-n}$ by applying the mirror operation. Furthermore, it is also easy to find such complexes for other orientations of $T_{-n}$ and $T_n$. Note that these complexes look very much like the ones we get in Bar-Natan's Khovanov homology of tangles \cite{BarNatanKhT}. We assume that every tangle can be written as a complex in the two objects $\!\!\raisebox{-5pt}{
\psset{unit=0.2}
\psset{arrowsize=1.5pt 2}
\begin{pspicture}(-1.05,-1.45)(1.05,1.05)
\psline{->}(0.9,-0.9)(-0.9,0.9)
\psline{->}(-0.9,-0.9)(0.9,0.9)
\psdot(0,0)
\end{pspicture}
}\!\!$ 
and
$
\!\!\raisebox{-5pt}{
\psset{unit=0.2}
\psset{arrowsize=1.5pt 2}
\begin{pspicture}(-1.05,-1.45)(1.05,1.05)
\psline{->}(-0.9,-0.9)(0.9,0.9)
\psline{->}(0.9,-0.9)(-0.9,0.9)
\pscircle*[linecolor=white](0,0){0.5}
\psarc(0.7071,0){0.5}{135}{225}
\psarc(-0.7071,0){0.5}{-45}{45}
\end{pspicture}
}\!\!
$, or the two objects 
$
\!\!\raisebox{-5pt}{
\psset{unit=0.2}
\psset{arrowsize=1.5pt 2}
\begin{pspicture}(-1.05,-1.45)(1.05,1.05)
\psline{-}(-0.9,-0.9)(0.9,0.9)
\psline{<->}(0.9,-0.9)(-0.9,0.9)
\pscircle*[linecolor=white](0,0){0.5}
\psarc(0.7071,0){0.5}{135}{225}
\psarc(-0.7071,0){0.5}{-45}{45}
\end{pspicture}
}\!\!
$
and 
$
\!\!\raisebox{-5pt}{
\psset{unit=0.2}
\psset{arrowsize=1.5pt 2}
\begin{pspicture}(-1.05,-1.45)(1.05,1.05)
\psrotate(0,0){90}{
\psline{<->}(-0.9,-0.9)(0.9,0.9)
\psline{-}(0.9,-0.9)(-0.9,0.9)
\pscircle*[linecolor=white](0,0){0.5}
\psarc(0.7071,0){0.5}{135}{225}
\psarc(-0.7071,0){0.5}{-45}{45}
}
\end{pspicture}
}\!\!
$, depending on the orientation. In fact, we can iteratively use the type AA glueing structure from Theorem~\ref{thm:CFTdGeneralGlueing} together with the skein exact sequence from Theorem~\ref{thm:nTwistSkeinRelation} to locally modify tangles until we obtain a complex of peculiar modules of trivial and 1-crossing tangles, up to a large number of tensor factors from glueing. Then, one (only) needs to get rid of these extra factors. For example, in the case of the $(2,-3)$-pretzel tangle, this is indeed possible. 

It is also interesting to compare our ``figure-8'' curve to the curve  %shown in Figure~\ref{fig:pillowcasecurve} 
that Hedden, Herald, Kirk associate with a trivial tangle in \cite[Figure~10]{HHK13} in the context of instanton tangle Floer homology in the pillowcase.
\end{remark}

\begin{wrapfigure}{r}{0.3333\textwidth}
	\centering
	\psset{unit=0.3}
	\begin{pspicture}(-5.6,-5.6)(5.6,5.6)
		\psset{linewidth=\stringwidth}
		% boundary
		\pscircle[linestyle=dotted](0,0){5.5}
		
		% brackets
		\rput(0,0){$\left\{\textcolor{white}{\rule[-0.8cm]{1.9cm}{1.8cm}}\right\}$}
		\rput{-90}(4.45,0){$2m+1$}
		\rput{90}(-4.45,0){$2n$}
		
		% bottom join
		\psecurve(-1,1)(-3,-1)(0,-2.4)(3,-1)(1,1)
		
		% left
		\pscustom{
			\psline{<-}(-3,4.5)(-3,3)
			\psecurve(-1,5)(-3,3)(-1,1)(-3,-1)
		}
		\rput(-2,0.3){$\vdots$}
		\psline[linestyle=dotted,dotsep=0.4](-2,-0.6)(-2,0.6)

		\psecurve[linecolor=white,linewidth=\stringwhite](-1,-5)(-3,-3)(-1,-1)(-3,1)
		\pscustom{
			\psline(-3,-4.5)(-3,-3)
			\psecurve(-1,-5)(-3,-3)(-1,-1)(-3,1)
		}
		
		% right
		\pscustom{
			\psline{<-}(3,4.5)(3,3)
			\psecurve(1,5)(3,3)(1,1)(3,-1)
		}
		\psline[linestyle=dotted,dotsep=0.4](2,-0.6)(2,0.6)
		\psecurve[linecolor=white,linewidth=\stringwhite](1,-5)(3,-3)(1,-1)(3,1)
		\pscustom{
			\psline(3,-4.5)(3,-3)
			\psecurve(1,-5)(3,-3)(1,-1)(3,1)
		}
		
		% join top
		\psecurve[linecolor=white,linewidth=\stringwhite](-1,-1)(-3,1)(0,2.4)(3,1)(1,-1)
		\psecurve(-1,-1)(-3,1)(0,2.4)(3,1)(1,-1)
		
		%labels
		\rput(-2,4){$t_1$}
		\rput(2,4){$t_2$}
		\rput(-2.2,-4){$t_1$}
		\rput(2.2,-4){$t_2$}
	
	\end{pspicture}
	\caption{The pretzel tangle from Theorem~\ref{thm:pretzeltangleCalc}.}\label{fig:pretzeltangle2nm2mp1}\vspace*{-20pt}
\end{wrapfigure}
\myfixwrapfig
\subsection{\texorpdfstring{Peculiar modules of $(2n,-(2m+1))$-pretzel tangles}{Peculiar modules of (2n,-(2m+1))-pretzel tangles}}\label{subsec:pretzels}

\begin{theorem}\label{thm:pretzeltangleCalc}
	The peculiar modules of \((2n,-(2m+1))\)-pretzel tangles for \(n,m>0\), oriented as in Figure~\ref{fig:pretzeltangle2nm2mp1}, are equal to those shown in Figure~\ref{fig:ResultPretzels}.
\end{theorem}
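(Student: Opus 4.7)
The plan is to carry out the calculation in two stages: first produce a nice peculiar Heegaard diagram $\mathcal{H}_{n,m}$ for the $(2n,-(2m+1))$-pretzel tangle with respect to some pair of special basepoints $p_i,q_j$, then read off the image of $\CFTd$ under the quotient map $\Ad\rightarrow\Ad/(p_i=0=q_j)$ combinatorially, and finally lift back to $\CFTd$ using Corollary~\ref{cor:PeculiarModulesFromNiceDiagrams} and the classification by immersed curves. Concretely, I would start from the obvious genus~$0$ diagram of the rational $(1/(2m+1))$-tangle, glue on the $2n$ half-twists, and then isotope $\beta$-curves to remove all non-bigonal, non-square regions except the two containing the special basepoints. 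A natural choice, generalising the diagram of Figure~\ref{fig:HFTdmutationpretzeltangleHD} for the $(2,-3)$-case from Example~\ref{exa:HFTdpretzeltangle}, produces two ``pretzel columns'' of intersection points indexed by $\{1,\dots,2n\}$ and $\{1,\dots,2m+1\}$ plus a few generators near the ends, organised periodically along the twist regions.

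Next I would enumerate generators and bigon/square domains. Because the diagram is nice, every component of the differential of $\overline{\CFTd}:=\CFTd\otimes_{\Ad}\Ad/(p_i=0=q_j)$ comes from a bigon or an empty square, each of which I can read directly off the diagram. I would arrange the generators on the $\mathbb{Z}^2$-chessboard according to their Alexander bigrading (as in Example~\ref{exa:HFTdpretzeltangle}), so that grading constraints dictate which pairs of generators can be connected and force most of the ambiguous arrows either to vanish or to appear in cancellable pairs. The Clean-Up Lemma~\ref{lem:AbstractCleanUp} then removes the residual ``nuisance'' arrows, and the Cancellation Lemma~\ref{lem:AbstractCancellation} collapses the complex to a reduced one whose shape mirrors the one predicted by Figure~\ref{fig:ResultPretzels}.

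At that point I would invoke Corollary~\ref{cor:AddingBasepointFunctorIsFaithfulUpToHom}: the homotopy type of the reduced complex determines the homotopy type of $\CFTd$ itself. Equivalently, I would apply the simplification algorithm of subsection~\ref{subsec:simplify_precurves} to extract the associated collection of loops on the $4$-punctured sphere, and then lift these to $\Ad$-decorated arrows via $\Pi$. The lift is uniquely determined by the requirement that each loop's arrow-labels, when composed around it, multiply to the face variables $p^4$ and $q^4$; this means we recover the full differential of $\CFTd$ from its $\overline{\CFTd}$-image without any extra choices.

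The main obstacle is the combinatorial bookkeeping for general $n,m$: there are roughly $(2n+1)(2m+2)$ generators, and the interaction of the two twist regions creates long ``zig-zag'' chains whose differentials must be tracked through the $2n\cdot(2m+1)$-many squares. I would handle this by identifying a period in the twist columns and showing by induction on $n$ (respectively on $m$) that the sub-complex associated to a single half-twist contributes a standard block of arrows, analogous to the block $\delta^{-\frac{1}{2}}c^{n-1}\xrightarrow{\,1\,}\delta^{\frac{1}{2}}c^{n-1}$ pattern that appeared in the proof of Theorem~\ref{thm:nTwistSkeinRelation}. Stacking these blocks and verifying that the boundary contributions at the two ``corners'' of the pretzel match the generators visible in Figure~\ref{fig:ResultPretzels} completes the argument; the answer can then be double-checked against the $(2,-3)$-case (where it must reproduce Figure~\ref{fig:firstcomplex}) and against the decategorified pretzel polynomial computation from~\cite{HDsForTangles}.
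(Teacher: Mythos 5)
Your proposal follows essentially the same route as the paper: niceify a Heegaard diagram for the pretzel tangle (the paper does this by two handleslides on a glued pair of rational-tangle diagrams), compute the quotient complex $\overline{\CFTd}$ combinatorially from bigons and squares, recover $\CFTd$ via Corollary~\ref{cor:AddingBasepointFunctorIsFaithfulUpToHom}, and finish with the Cancellation and Clean-Up Lemmas. The one caution is that grading constraints alone do \emph{not} determine how the long diagonal strings of generators in sites $a$ and $c$ connect the two extremal corners of the complex --- the paper resolves exactly this point by tracking the parities of the generator indices and exploiting the mutation symmetry of the Heegaard diagram, so your block-by-block induction over the twist regions must be set up to carry that connectivity data through each half-twist rather than relying on gradings to force it.
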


\begin{remark} 
	For $n=m=1$, this calculation was already done in Example~\ref{exa:HFTdpretzeltangle} directly from the definition of peculiar modules. The general case uses the combinatorial algorithm for computing peculiar modules from Corollary~\ref{cor:PeculiarModulesFromNiceDiagrams}.
	The Mathematica package \cite{PQM.m} allows us to easily confirm Theorem~\ref{thm:pretzeltangleCalc} for fixed pairs $(n,m)$. The cases $(n,m)=(3,4)$ and $(5,2)$ are included as examples in the manual for~\cite{PQM.m}. 
\end{remark}

\begin{theorem}\label{thm:GeneralMutationInvariance}
	Let \(T\) be a tangle in the closed 3-ball \(B^3\) and \(T'\) the tangle obtained by relabelling the sites such that \(a\) and \(c\), and \(b\) and \(d\) are interchanged. If \(T\) is oriented, orient \(T'\) such that the orientation at the first tangle ends of \(T\) and \(T'\) (and hence any others) agree, by either changing the orientation of all strands or leaving them all the same. Then, if \(\CFTd(T)\) and \(\CFTd(T')\) are (graded) chain homotopic, mutation of these tangles preserves (graded) link Floer homology. 
\end{theorem}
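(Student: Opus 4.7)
The plan is to apply the Glueing Theorem~\ref{thm:CFTdGeneralGlueing} twice and exploit the hypothesis to match the two sides.

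Suppose $L$ is an oriented link containing the mutating 4-ended tangle $T\subset B^3$, and write $R$ for the tangle in the complement $S^3\smallsetminus\operatorname{int}(B^3)\cong B^3$. Then $L=L(R,T)$ in the notation of Definition~\ref{def:tanglepairing}. By Definition~\ref{def:INTROmutation}, the Conway mutant $L'$ is obtained by cutting $T$ out of $B^3$ and re-glueing it after a half-rotation along the axis perpendicular to the mutating sphere $\partial B^3$. Relative to the fixed parametrization of $\partial B^3$, this half-rotation has precisely the effect of relabelling the sites of $T$ via $a\leftrightarrow c$ and $b\leftrightarrow d$, and the orientation-reversal rule in Definition~\ref{def:INTROmutation} matches the one imposed on $T'$ in the theorem statement. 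Hence $L'=L(R,T')$ as oriented links.

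Applying Theorem~\ref{thm:CFTdGeneralGlueing} to both glueings gives
\[
\CFL(L)\otimes V^{i}\cong \rr(\CFTd(R))\boxtimes\mathcal{P}\boxtimes\CFTd(T)
\quad\text{and}\quad
\CFL(L')\otimes V^{i'}\cong \rr(\CFTd(R))\boxtimes\mathcal{P}\boxtimes\CFTd(T'),
\]
where the stabilisation exponents $i,i'\in\{0,1\}$ are determined by the component counts of $L$ and $L'$; these agree because Conway mutation preserves the number of components. Since the $\boxtimes$-product is invariant under chain homotopy in each argument (Remark~\ref{rem:PairingIsFunctorial}), the hypothesised (graded) chain homotopy $\CFTd(T)\simeq\CFTd(T')$ lifts to a (graded) chain homotopy $\CFL(L)\otimes V^{i}\simeq\CFL(L')\otimes V^{i}$. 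Cancelling the common tensor factor and taking homology yields $\HFL(L)\cong\HFL(L')$.

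The one step that requires genuine care is the geometric set-up: one has to verify that the purely combinatorial relabelling of sites on $T$, together with the prescribed orientation rule, really reproduces Conway mutation of $L$ at $B^3$. In the graded setting one additionally has to confirm that the ordered matchings chosen for $T$ and $T'$ are compatible with the orientation conventions, so that the bigradings on the two sides of the glueing formula align; this is forced once the orientations are fixed.
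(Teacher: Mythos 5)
Your proposal is correct and follows exactly the route the paper takes: its entire proof of this theorem is the one-line observation that the claim ``follows directly from the definition of mutation and the Glueing Theorem,'' which is precisely the two applications of Theorem~\ref{thm:CFTdGeneralGlueing} plus homotopy invariance of $\boxtimes$ that you spell out. Your version just makes explicit the geometric identification $L'=L(R,T')$ and the grading bookkeeping that the paper leaves implicit.
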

\begin{proof}
	This follows directly from the definition of mutation and the Glueing Theorem.
\end{proof}

\begin{figure}[p]
	\centering
	\psset{unit=0.5}
		\begin{subfigure}[b]{0.99\textwidth}\centering
\begin{pspicture}(-13,-5.5)(13,5.5)

% bottom left
\rput(-12.5,-4){
	
	{\psset{linecolor=lightgray}
		\psframe*(-0.5,-0.5)(0.5,0.5)
		\psframe*(1.5,-0.5)(2.5,0.5)
		\psframe*(3.5,-0.5)(4.5,0.5)
		\psframe*(5.5,-0.5)(6.5,0.5)
		\psframe*(7.5,-0.5)(8.5,0.5)
		
		\psframe*(0.5,0.5)(1.5,1.5)
		\psframe*(2.5,0.5)(3.5,1.5)
		\psframe*(4.5,0.5)(5.5,1.5)
		\psframe*(6.5,0.5)(7.5,1.5)
		
		\psframe*(-0.5,1.5)(0.5,2.5)
		\psframe*(1.5,1.5)(2.5,2.5)
		\psframe*(3.5,1.5)(4.5,2.5)
		\psframe*(5.5,1.5)(6.5,2.5)
		
		\psframe*(0.5,2.5)(1.5,3.5)
		\psframe*(2.5,2.5)(3.5,3.5)
		\psframe*(4.5,2.5)(5.5,3.5)
		
		\psframe*(1.5,3.5)(2.5,4.5)
		\psframe*(3.5,3.5)(4.5,4.5)
		
		\psframe*(2.5,4.5)(3.5,5.5)
	}
	
	\psline(1,0)(0,1)
	
	\psline(3,0)(2.25,1)
	\psline(5,0)(4.25,1)
	\psline(7,0)(6.25,1)
	\psline(9,0)(8.25,1)
	
	\psline(2,0)(1.75,1)
	\psline(4,0)(3.75,1)
	\psline(6,0)(5.75,1)
	\psline(8,0)(7.75,1)
	
	\psline(0,2)(1,2)
	\psline(2,2)(2.75,2)
	\psline(3.25,2)(2,3)
	\psline(4.75,2)(3.75,3)
	\psline(5.25,2)(4.25,3)
	\psline(6.75,2)(5.75,3)
	\psline(7.25,2)(6.25,3)
	
	\psline(2,4)(3,4)
	\psline(4,4)(4.75,4)
	\psline(5.25,4)(4,5)
	
	{\psset{linestyle=dashed,dash=2pt 1pt}
		\psline(1,0)(2,0)
		\psline(3,0)(4,0)
		\psline(5,0)(6,0)
		\psline(7,0)(8,0)
		
		\psline(0,1)(0,2)
		\psline(1.75,1)(1,2)
		\psline(2.25,1)(2,2)
		\psline(3.75,1)(2.75,2)
		\psline(4.25,1)(3.25,2)
		\psline(5.75,1)(4.75,2)
		\psline(6.25,1)(5.25,2)
		\psline(7.75,1)(6.75,2)
		\psline(8.25,1)(7.25,2)
		
		\psline(2,3)(2,4)
		\psline(3.75,3)(3,4)
		\psline(4.25,3)(4,4)
		\psline(5.75,3)(4.75,4)
		\psline(6.25,3)(5.25,4)
		
		% loose ends
		\psline(4,5)(4,5.5)
		\psline(9,0)(9.5,0)
	}

	\psdots[linecolor=red]%
	(0,1)%
	(1.75,1)(2.25,1)%
	(3.75,1)(4.25,1)%
	(5.75,1)(6.25,1)%
	(7.75,1)(8.25,1)%
	(2,3)%
	(3.75,3)(4.25,3)%
	(5.75,3)(6.25,3)%
	(4,5)
	
	\psdots[linecolor=darkgreen]
	(1,0)%
	(3,0)%
	(5,0)%
	(7,0)%
	(9,0)%
	(1,2)%
	(2.75,2)(3.25,2)%
	(4.75,2)(5.25,2)%
	(6.75,2)(7.25,2)%
	(3,4)%
	(4.75,4)(5.25,4)%
	
	\psdots[linecolor=gold]
	(2,0)(4,0)(6,0)(8,0)%
	(0,2)(2,2)%
	(2,4)(4,4)%		
}

% bottom right
\rput(6,-4){
	
	{\psset{linecolor=lightgray}
		\psframe*(-0.5,-0.5)(0.5,0.5)
		\psframe*(-2.5,-0.5)(-1.5,0.5)
		\psframe*(-4.5,-0.5)(-3.5,0.5)
		\psframe*(-6.5,-0.5)(-5.5,0.5)
		
		\psframe*(0.5,0.5)(1.5,1.5)
		\psframe*(-1.5,0.5)(-0.5,1.5)
		\psframe*(-3.5,0.5)(-2.5,1.5)
		\psframe*(-5.5,0.5)(-4.5,1.5)
		\psframe*(-7.5,0.5)(-6.5,1.5)
		
		\psframe*(-0.5,1.5)(0.5,2.5)
		\psframe*(-2.5,1.5)(-1.5,2.5)
		\psframe*(-4.5,1.5)(-3.5,2.5)
		\psframe*(-6.5,1.5)(-5.5,2.5)
		
		\psframe*(-1.5,2.5)(-0.5,3.5)
		\psframe*(-3.5,2.5)(-2.5,3.5)
		\psframe*(-5.5,2.5)(-4.5,3.5)
		
		\psframe*(-2.5,3.5)(-1.5,4.5)
		\psframe*(-4.5,3.5)(-3.5,4.5)
		
		\psframe*(-3.5,4.5)(-2.5,5.5)
	}

	\psline(0,0)(0,1)
	
	\psline(-1,0)(-1.75,1)
	\psline(-2,0)(-2.25,1)
	\psline(-3,0)(-3.75,1)
	\psline(-4,0)(-4.25,1)
	\psline(-5,0)(-5.75,1)
	\psline(-6,0)(-6.25,1)
	
	\psline(0,2)(-0.25,3)
	\psline(1,2)(0.25,3)
	\psline(-0.75,2)(-1.75,3)
	\psline(-1.25,2)(-2.25,3)
	\psline(-2.75,2)(-3.75,3)
	\psline(-3.25,2)(-4.25,3)
	\psline(-0.75,4)(-1.75,5)
	\psline(-1.25,4)(-2.25,5)
	
	%% loose ends
	\psline(-7,0)(-7.5,0.5)
	\psline(-4.75,2)(-5.25,2.5)
	\psline(-5.25,2)(-5.75,2.5)
	\psline(-2.75,4)(-3.25,4.5)
	\psline(-3.25,4)(-3.75,4.5)
	
	{\psset{linestyle=dashed,dash=2pt 1pt}
		\psline(1.5,2)(1,2)
		\psline(0,0)(-1,0)
		\psline(-2,0)(-3,0)
		
		\psline(-1.75,1)(-2.75,2)
		\psline(-2.25,1)(-3.25,2)
		\psline(-3.75,1)(-4.75,2)
		\psline(-4.25,1)(-5.25,2)
		
		\psline(-1.75,3)(-2.75,4)
		\psline(-2.25,3)(-3.25,4)
		
		\psline(0.25,3)(-0.75,4)
		\psline(-0.25,3)(-1.25,4)
		
		\psline(0,2)(-0.75,2)
		\psline(0,1)(-1.25,2)
		
		\psline(-4,0)(-5,0)
		\psline(-6,0)(-7,0)
		
		% loose ends
		\psline(-1.75,5)(-2.25,5.5)
		\psline(-2.25,5)(-2.75,5.5)
		\psline(-3.75,3)(-4.25,3.5)
		\psline(-4.25,3)(-4.75,3.5)
		\psline(-5.75,1)(-6.25,1.5)
		\psline(-6.25,1)(-6.75,1.5)
	}
	
	\psdots[linecolor=blue]
	(0,0)(0,2)

	\psdots[linecolor=red]%
	(0,1)%
	(-1.75,1)(-2.25,1)%
	(-3.75,1)(-4.25,1)%
	(-5.75,1)(-6.25,1)%
	(0.25,3)(-0.25,3)%
	(-1.75,3)(-2.25,3)%
	(-3.75,3)(-4.25,3)%
%	(-5.75,3)(-6.25,3)%
	(-1.75,5)(-2.25,5)%
%	(-3.75,5)(-4.25,5)%
	
	\psdots[linecolor=darkgreen]
	(1,2)%
	(-1,0)(-3,0)(-5,0)(-7,0)%
	(-0.75,2)(-1.25,2)%
	(-2.75,2)(-3.25,2)%
	(-4.75,2)(-5.25,2)%
%	(-6.75,2)(-7.25,2)%
	(-0.75,4)(-1.25,4)%
	(-2.75,4)(-3.25,4)%
%	(-4.75,4)(-5.25,4)%
%	(-2.75,6)(-3.25,6)%
	
	\psdots[linecolor=gold]
	(-2,0)(-4,0)(-6,0)
}

% top left
\rput(-6,4){\psrotate(0,0){180}{

		{\psset{linecolor=lightgray}
			\psframe*(-0.5,-0.5)(0.5,0.5)
			\psframe*(-2.5,-0.5)(-1.5,0.5)
			\psframe*(-4.5,-0.5)(-3.5,0.5)
			\psframe*(-6.5,-0.5)(-5.5,0.5)
			
			\psframe*(0.5,0.5)(1.5,1.5)
			\psframe*(-1.5,0.5)(-0.5,1.5)
			\psframe*(-3.5,0.5)(-2.5,1.5)
			\psframe*(-5.5,0.5)(-4.5,1.5)
			\psframe*(-7.5,0.5)(-6.5,1.5)
			
			\psframe*(-0.5,1.5)(0.5,2.5)
			\psframe*(-2.5,1.5)(-1.5,2.5)
			\psframe*(-4.5,1.5)(-3.5,2.5)
			\psframe*(-6.5,1.5)(-5.5,2.5)
			
			\psframe*(-1.5,2.5)(-0.5,3.5)
			\psframe*(-3.5,2.5)(-2.5,3.5)
			\psframe*(-5.5,2.5)(-4.5,3.5)
			
			\psframe*(-2.5,3.5)(-1.5,4.5)
			\psframe*(-4.5,3.5)(-3.5,4.5)
			
			\psframe*(-3.5,4.5)(-2.5,5.5)
		}
		
		{\psset{linestyle=dashed,dash=2pt 1pt}
			\psline(0,0)(0,1)
			
			\psline(-1,0)(-1.75,1)
			\psline(-2,0)(-2.25,1)
			\psline(-3,0)(-3.75,1)
			\psline(-4,0)(-4.25,1)
			\psline(-5,0)(-5.75,1)
			\psline(-6,0)(-6.25,1)
			
			\psline(0,2)(-0.25,3)
			\psline(1,2)(0.25,3)
			\psline(-0.75,2)(-1.75,3)
			\psline(-1.25,2)(-2.25,3)
			\psline(-2.75,2)(-3.75,3)
			\psline(-3.25,2)(-4.25,3)
			\psline(-0.75,4)(-1.75,5)
			\psline(-1.25,4)(-2.25,5)
			
			%% loose ends
			\psline(-7,0)(-7.5,0.5)
			\psline(-4.75,2)(-5.25,2.5)
			\psline(-5.25,2)(-5.75,2.5)
			\psline(-2.75,4)(-3.25,4.5)
			\psline(-3.25,4)(-3.75,4.5)
		}

		\psline(1.5,2)(1,2)
		\psline(0,0)(-1,0)
		\psline(-2,0)(-3,0)
		
		\psline(-1.75,1)(-2.75,2)
		\psline(-2.25,1)(-3.25,2)
		\psline(-3.75,1)(-4.75,2)
		\psline(-4.25,1)(-5.25,2)
		
		\psline(-1.75,3)(-2.75,4)
		\psline(-2.25,3)(-3.25,4)
		
		\psline(0.25,3)(-0.75,4)
		\psline(-0.25,3)(-1.25,4)
		
		\psline(0,2)(-0.75,2)
		\psline(0,1)(-1.25,2)
		
		\psline(-4,0)(-5,0)
		\psline(-6,0)(-7,0)
		
		% loose ends
		\psline(-1.75,5)(-2.25,5.5)
		\psline(-2.25,5)(-2.75,5.5)
		\psline(-3.75,3)(-4.25,3.5)
		\psline(-4.25,3)(-4.75,3.5)
		\psline(-5.75,1)(-6.25,1.5)
		\psline(-6.25,1)(-6.75,1.5)
		
		\psdots[linecolor=gold]
		(0,0)(0,2)

		\psdots[linecolor=red]%
		(0,1)%
		(-1.75,1)(-2.25,1)%
		(-3.75,1)(-4.25,1)%
		(-5.75,1)(-6.25,1)%
		(0.25,3)(-0.25,3)%
		(-1.75,3)(-2.25,3)%
		(-3.75,3)(-4.25,3)%
		%	(-5.75,3)(-6.25,3)%
		(-1.75,5)(-2.25,5)%
		%	(-3.75,5)(-4.25,5)%
		
		\psdots[linecolor=darkgreen]
		(1,2)%
		(-1,0)(-3,0)(-5,0)(-7,0)%
		(-0.75,2)(-1.25,2)%
		(-2.75,2)(-3.25,2)%
		(-4.75,2)(-5.25,2)%
		%	(-6.75,2)(-7.25,2)%
		(-0.75,4)(-1.25,4)%
		(-2.75,4)(-3.25,4)%
		%	(-4.75,4)(-5.25,4)%
		%	(-2.75,6)(-3.25,6)%
		
		\psdots[linecolor=blue]
		(-2,0)(-4,0)(-6,0)
	}}
		
		% top right
		\rput(12.5,4){\psrotate(0,0){180}{
				
				{\psset{linecolor=lightgray}
					\psframe*(-0.5,-0.5)(0.5,0.5)
					\psframe*(1.5,-0.5)(2.5,0.5)
					\psframe*(3.5,-0.5)(4.5,0.5)
					\psframe*(5.5,-0.5)(6.5,0.5)
					\psframe*(7.5,-0.5)(8.5,0.5)
					
					\psframe*(0.5,0.5)(1.5,1.5)
					\psframe*(2.5,0.5)(3.5,1.5)
					\psframe*(4.5,0.5)(5.5,1.5)
					\psframe*(6.5,0.5)(7.5,1.5)
					
					\psframe*(-0.5,1.5)(0.5,2.5)
					\psframe*(1.5,1.5)(2.5,2.5)
					\psframe*(3.5,1.5)(4.5,2.5)
					\psframe*(5.5,1.5)(6.5,2.5)
					
					\psframe*(0.5,2.5)(1.5,3.5)
					\psframe*(2.5,2.5)(3.5,3.5)
					\psframe*(4.5,2.5)(5.5,3.5)
					
					\psframe*(1.5,3.5)(2.5,4.5)
					\psframe*(3.5,3.5)(4.5,4.5)
					
					\psframe*(2.5,4.5)(3.5,5.5)
				}
				
				{\psset{linestyle=dashed,dash=2pt 1pt}
					\psline(1,0)(0,1)
					
					\psline(3,0)(2.25,1)
					\psline(5,0)(4.25,1)
					\psline(7,0)(6.25,1)
					\psline(9,0)(8.25,1)
					
					\psline(2,0)(1.75,1)
					\psline(4,0)(3.75,1)
					\psline(6,0)(5.75,1)
					\psline(8,0)(7.75,1)
					
					\psline(0,2)(1,2)
					\psline(2,2)(2.75,2)
					\psline(3.25,2)(2,3)
					\psline(4.75,2)(3.75,3)
					\psline(5.25,2)(4.25,3)
					\psline(6.75,2)(5.75,3)
					\psline(7.25,2)(6.25,3)
					
					\psline(2,4)(3,4)
					\psline(4,4)(4.75,4)
					\psline(5.25,4)(4,5)
				}
				\psline(1,0)(2,0)
				\psline(3,0)(4,0)
				\psline(5,0)(6,0)
				\psline(7,0)(8,0)
				
				\psline(0,1)(0,2)
				\psline(1.75,1)(1,2)
				\psline(2.25,1)(2,2)
				\psline(3.75,1)(2.75,2)
				\psline(4.25,1)(3.25,2)
				\psline(5.75,1)(4.75,2)
				\psline(6.25,1)(5.25,2)
				\psline(7.75,1)(6.75,2)
				\psline(8.25,1)(7.25,2)
				
				\psline(2,3)(2,4)
				\psline(3.75,3)(3,4)
				\psline(4.25,3)(4,4)
				\psline(5.75,3)(4.75,4)
				\psline(6.25,3)(5.25,4)
				
				% loose ends
				\psline(4,5)(4,5.5)
				\psline(9,0)(9.5,0)

				\psdots[linecolor=red]%
				(0,1)%
				(1.75,1)(2.25,1)%
				(3.75,1)(4.25,1)%
				(5.75,1)(6.25,1)%
				(7.75,1)(8.25,1)%
				(2,3)%
				(3.75,3)(4.25,3)%
				(5.75,3)(6.25,3)%
				(4,5)
				
				\psdots[linecolor=darkgreen]
				(1,0)%
				(3,0)%
				(5,0)%
				(7,0)%
				(9,0)%
				(1,2)%
				(2.75,2)(3.25,2)%
				(4.75,2)(5.25,2)%
				(6.75,2)(7.25,2)%
				(3,4)%
				(4.75,4)(5.25,4)%
				
				\psdots[linecolor=blue]
				(2,0)(4,0)(6,0)(8,0)%
				(0,2)(2,2)%
				(2,4)(4,4)%
			}}
			
%							%% pretzel tangle 
%							\rput(-10,5.5){\psset{unit=0.5}
%							
%							\rput(0,-6.5){$n\leq m+1$}
%							% boundary
%							\pscircle[linestyle=dotted](0,0){5.5}
%							
%							% brackets
%							\rput(0,0){$\left\{\textcolor{white}{\rule[-0.8cm]{1.8cm}{1.8cm}}\right\}$}
%							\rput{-90}(4.4,0){$2m+1$}
%							\rput{90}(-4.4,0){$2n$}
%							
%							% bottom join
%							\psecurve(-1,1)(-3,-1)(0,-2.4)(3,-1)(1,1)
%							
%							% left
%							\pscustom{
%								\psline{<-}(-3,4.5)(-3,3)
%								\psecurve(-1,5)(-3,3)(-1,1)(-3,-1)
%							}
%							\pscircle*[linecolor=white](-2,2){0.3}
%							\rput(-2,0.3){$\vdots$}
%							\pscircle*[linecolor=white](-2,-2){0.3}
%							\pscustom{
%								\psline(-3,-4.5)(-3,-3)
%								\psecurve(-1,-5)(-3,-3)(-1,-1)(-3,1)
%							}
%							
%							% right
%							\pscustom{
%								\psline{<-}(3,4.5)(3,3)
%								\psecurve(1,5)(3,3)(1,1)(3,-1)
%							}
%							\pscircle*[linecolor=white](2,2){0.3}
%							\rput(2,0.3){$\vdots$}
%							\pscircle*[linecolor=white](2,-2){0.3}
%							\pscustom{
%								\psline(3,-4.5)(3,-3)
%								\psecurve(1,-5)(3,-3)(1,-1)(3,1)
%							}
%							
%							% join top
%							\psecurve(-1,-1)(-3,1)(0,2.4)(3,1)(1,-1)
%							
%							%labels
%							\rput(-2.2,4){$p$}
%							\rput(2.2,4){$q$}
%							\rput(-2.2,-4){$p$}
%							\rput(2.2,-4){$q$}
%						}
			
			\rput[r](11.5,5){\small$t_1^{-n}t_2^{n+2m}$}
			\psecurve{->}(10.5,4)(11.5,5)(12.5,4)(12.5,3.25)
			\rput[l](-11.5,-5){\small$\,t_1^{n}t_2^{-n-2m}$}
			\psecurve{->}(-10.5,-4)(-11.5,-5)(-12.5,-4)(-12.5,-3.25)
			
			\rput[l](-5,5){\small$\,t_1^{-n}t_2^{n-2m-2}$}
			\psecurve{->}(-4,4)(-5,5)(-6,4.25)(-6,3.25)
			\rput[r](5,-5){\small$t_1^{n}t_2^{-n+2m+2}$}
			\psecurve{->}(4,-4)(5,-5)(6,-4.25)(6,-3.25)
			
			\end{pspicture}
			\caption{$n\leq m+1$}\label{fig:ResultPretzelsCase1}
		\end{subfigure}
		\\
		\begin{subfigure}[b]{0.99\textwidth}\centering
\begin{pspicture}(-13,-8.5)(13,8.5)

% bottom left
\rput(-12.5,-7){
	
	{\psset{linecolor=lightgray}
		\psframe*(-0.5,-0.5)(0.5,0.5)
		\psframe*(1.5,-0.5)(2.5,0.5)
		\psframe*(3.5,-0.5)(4.5,0.5)
		\psframe*(5.5,-0.5)(6.5,0.5)
		\psframe*(7.5,-0.5)(8.5,0.5)
		
		\psframe*(0.5,0.5)(1.5,1.5)
		\psframe*(2.5,0.5)(3.5,1.5)
		\psframe*(4.5,0.5)(5.5,1.5)
		\psframe*(6.5,0.5)(7.5,1.5)
		
		\psframe*(-0.5,1.5)(0.5,2.5)
		\psframe*(1.5,1.5)(2.5,2.5)
		\psframe*(3.5,1.5)(4.5,2.5)
		\psframe*(5.5,1.5)(6.5,2.5)
		
		\psframe*(0.5,2.5)(1.5,3.5)
		\psframe*(2.5,2.5)(3.5,3.5)
		\psframe*(4.5,2.5)(5.5,3.5)
		
		\psframe*(1.5,3.5)(2.5,4.5)
		\psframe*(3.5,3.5)(4.5,4.5)
		
		\psframe*(2.5,4.5)(3.5,5.5)
	}
	
	\psline(1,0)(0,1)
	
	\psline(3,0)(2.25,1)
	\psline(5,0)(4.25,1)
	\psline(7,0)(6.25,1)
	\psline(9,0)(8.25,1)
	
	\psline(2,0)(1.75,1)
	\psline(4,0)(3.75,1)
	\psline(6,0)(5.75,1)
	\psline(8,0)(7.75,1)
	
	\psline(0,2)(1,2)
	\psline(2,2)(2.75,2)
	\psline(3.25,2)(2,3)
	\psline(4.75,2)(3.75,3)
	\psline(5.25,2)(4.25,3)
	\psline(6.75,2)(5.75,3)
	\psline(7.25,2)(6.25,3)
	
	\psline(2,4)(3,4)
	\psline(4,4)(4.75,4)
	\psline(5.25,4)(4,5)
	
	{\psset{linestyle=dashed,dash=2pt 1pt}
		\psline(1,0)(2,0)
		\psline(3,0)(4,0)
		\psline(5,0)(6,0)
		\psline(7,0)(8,0)
		
		\psline(0,1)(0,2)
		\psline(1.75,1)(1,2)
		\psline(2.25,1)(2,2)
		\psline(3.75,1)(2.75,2)
		\psline(4.25,1)(3.25,2)
		\psline(5.75,1)(4.75,2)
		\psline(6.25,1)(5.25,2)
		\psline(7.75,1)(6.75,2)
		\psline(8.25,1)(7.25,2)
		
		\psline(2,3)(2,4)
		\psline(3.75,3)(3,4)
		\psline(4.25,3)(4,4)
		\psline(5.75,3)(4.75,4)
		\psline(6.25,3)(5.25,4)
		
		% loose ends
		\psline(4,5)(4,5.5)
		\psline(9,0)(9.5,0)
	}

	\psdots[linecolor=red]%
	(0,1)%
	(1.75,1)(2.25,1)%
	(3.75,1)(4.25,1)%
	(5.75,1)(6.25,1)%
	(7.75,1)(8.25,1)%
	(2,3)%
	(3.75,3)(4.25,3)%
	(5.75,3)(6.25,3)%
	(4,5)
	
	\psdots[linecolor=darkgreen]
	(1,0)%
	(3,0)%
	(5,0)%
	(7,0)%
	(9,0)%
	(1,2)%
	(2.75,2)(3.25,2)%
	(4.75,2)(5.25,2)%
	(6.75,2)(7.25,2)%
	(3,4)%
	(4.75,4)(5.25,4)%
	
	\psdots[linecolor=gold]
	(2,0)(4,0)(6,0)(8,0)%
	(0,2)(2,2)%
	(2,4)(4,4)%		
}

% bottom right
\rput(2.5,-7){
	
	{\psset{linecolor=lightgray}
		\psframe*(-0.5,-0.5)(0.5,0.5)
		\psframe*(-2.5,-0.5)(-1.5,0.5)
		\psframe*(-4.5,-0.5)(-3.5,0.5)
		
		\psframe*(0.5,0.5)(1.5,1.5)
		\psframe*(-1.5,0.5)(-0.5,1.5)
		\psframe*(-3.5,0.5)(-2.5,1.5)
		\psframe*(-5.5,0.5)(-4.5,1.5)
		
		\psframe*(-0.5,1.5)(0.5,2.5)
		\psframe*(-2.5,1.5)(-1.5,2.5)
		\psframe*(-4.5,1.5)(-3.5,2.5)
		
		\psframe*(-1.5,2.5)(-0.5,3.5)
		\psframe*(-3.5,2.5)(-2.5,3.5)
		
		\psframe*(-2.5,3.5)(-1.5,4.5)
	}

	\psline(0,0)(0,1)
	
	\psline(-1,0)(-1.75,1)
	\psline(-2,0)(-2.25,1)
	\psline(-3,0)(-3.75,1)
	\psline(-4,0)(-4.25,1)
	
	\psline(0,2)(-0.25,3)
	\psline(1,2)(0.25,3)
	\psline(-0.75,2)(-1.75,3)
	\psline(-1.25,2)(-2.25,3)
	\psline(-2.75,2)(-3.75,3)
	\psline(-3.25,2)(-4.25,3)
	\psline(-0.75,4)(-1.75,5)
	\psline(-1.25,4)(-2.25,5)
	
	%% loose ends
	\psline(-4.75,2)(-5.25,2.5)
	\psline(-5.25,2)(-5.75,2.5)
	\psline(-2.75,4)(-3.25,4.5)
	\psline(-3.25,4)(-3.75,4.5)
	
	{\psset{linestyle=dashed,dash=2pt 1pt}
		\psline(1.5,2)(1,2)
		\psline(0,0)(-1,0)
		\psline(-2,0)(-3,0)
		
		\psline(-1.75,1)(-2.75,2)
		\psline(-2.25,1)(-3.25,2)
		\psline(-3.75,1)(-4.75,2)
		\psline(-4.25,1)(-5.25,2)
		
		\psline(-1.75,3)(-2.75,4)
		\psline(-2.25,3)(-3.25,4)
		
		\psline(0.25,3)(-0.75,4)
		\psline(-0.25,3)(-1.25,4)
		
		\psline(0,1)(-0.75,2)
		\pscurve(0,2)(-0.25,1.8)(-1,1.8)(-1.25,2)
		
		% loose ends
		\psline(-4,0)(-4.5,0)
		\psline(-1.75,5)(-2.25,5.5)
		\psline(-2.25,5)(-2.75,5.5)
		\psline(-3.75,3)(-4.25,3.5)
		\psline(-4.25,3)(-4.75,3.5)
	}
	
	\psdots[linecolor=blue]
	(0,0)(0,2)

	\psdots[linecolor=red]%
	(0,1)%
	(-1.75,1)(-2.25,1)%
	(-3.75,1)(-4.25,1)%
	(0.25,3)(-0.25,3)%
	(-1.75,3)(-2.25,3)%
	(-3.75,3)(-4.25,3)%
	(-1.75,5)(-2.25,5)%
	
	\psdots[linecolor=darkgreen]
	(1,2)%
	(-1,0)(-3,0)%
	(-0.75,2)(-1.25,2)%
	(-2.75,2)(-3.25,2)%
	(-4.75,2)(-5.25,2)%
	(-0.75,4)(-1.25,4)%
	(-2.75,4)(-3.25,4)%
	
	\psdots[linecolor=gold]
	(-2,0)(-4,0)
}

% middle left
\rput(-8.5,-2){
	
	{\psset{linecolor=lightgray}
		\psframe*(3.5,-2.5)(4.5,-1.5)
		
		\psframe*(2.5,-1.5)(3.5,-0.5)
		\psframe*(4.5,-1.5)(5.5,-0.5)
		
		\psframe*(1.5,-0.5)(2.5,0.5)
		\psframe*(3.5,-0.5)(4.5,0.5)
		\psframe*(5.5,-0.5)(6.5,0.5)
		
		\psframe*(0.5,0.5)(1.5,1.5)
		\psframe*(2.5,0.5)(3.5,1.5)
		\psframe*(4.5,0.5)(5.5,1.5)
		\psframe*(6.5,0.5)(7.5,1.5)
		
		%\psframe*(-0.5,1.5)(0.5,2.5)
		\psframe*(1.5,1.5)(2.5,2.5)
		\psframe*(3.5,1.5)(4.5,2.5)
		\psframe*(5.5,1.5)(6.5,2.5)
		%\psframe*(7.5,1.5)(8.5,2.5)
		
		\psframe*(0.5,2.5)(1.5,3.5)
		\psframe*(2.5,2.5)(3.5,3.5)
		\psframe*(4.5,2.5)(5.5,3.5)
		\psframe*(6.5,2.5)(7.5,3.5)
		
		\psframe*(1.5,3.5)(2.5,4.5)
		\psframe*(3.5,3.5)(4.5,4.5)
		\psframe*(5.5,3.5)(6.5,4.5)
		
		\psframe*(2.5,4.5)(3.5,5.5)
		\psframe*(4.5,4.5)(5.5,5.5)
		
		%\psframe*(3.5,5.5)(4.5,6.5)
	}
	
	\psline(0.5,2)(1,2)
	\psline(2,2)(2.75,2)
	\psline(3.25,2)(2,3)
	\psline(4.75,2)(3.75,3)
	\psline(5.25,2)(4.25,3)
	\psline(6.75,2)(5.75,3)
	\psline(7.25,2)(6.25,3)
	
	\psline(2.75,0)(1.75,1)
	\psline(3.25,0)(2.25,1)
	\psline(4.75,0)(3.75,1)
	\psline(5.25,0)(4.25,1)
	
	\psline(2,4)(3,4)
	\pscurve(4,4)(4.25,4.2)(5,4.2)(5.25,4)
	\psline(4.75,4)(4,5)
	
	%% lose ends
	\psline(4.25,-1.5)(3.75,-1)
	\psline(4.75,-1.5)(4.25,-1)
	\psline(6.25,0.5)(5.75,1)
	\psline(6.75,0.5)(6.25,1)
	
	{\psset{linestyle=dashed,dash=2pt 1pt}
		
		\psline(1.75,1)(1,2)
		\psline(2.25,1)(2,2)
		\psline(3.75,1)(2.75,2)
		\psline(4.25,1)(3.25,2)
		\psline(5.75,1)(4.75,2)
		\psline(6.25,1)(5.25,2)
		
		\psline(3.75,-1)(2.75,0)
		\psline(4.25,-1)(3.25,0)
		
		\psline(5.25,-0.5)(4.75,0)
		\psline(5.75,-0.5)(5.25,0)
		\psline(7.25,1.5)(6.75,2)
		\psline(7.75,1.5)(7.25,2)
		
		\psline(2,3)(2,4)
		\psline(3.75,3)(3,4)
		\psline(4.25,3)(4,4)
		\psline(5.75,3)(4.75,4)
		\psline(6.25,3)(5.25,4)
		
		% loose ends
		\psline(4,5)(4,5.5)
	}

	\psdots[linecolor=red]%
	(1.75,1)(2.25,1)%
	(3.75,1)(4.25,1)%
	(5.75,1)(6.25,1)%
	(2,3)%
	(3.75,3)(4.25,3)%
	(5.75,3)(6.25,3)%
	(4,5)
	(3.75,-1)(4.25,-1)%
	
	\psdots[linecolor=darkgreen]
	(1,2)%
	(2.75,2)(3.25,2)%
	(4.75,2)(5.25,2)%
	(6.75,2)(7.25,2)%
	(3,4)%
	(4.75,4)(5.25,4)%
	(2.75,0)(3.25,0)%
	(4.75,0)(5.25,0)%
	
	\psdots[linecolor=gold]
	(2,2)%
	(2,4)(4,4)%	
}

% top left
\rput(-2.5,7){\psrotate(0,0){180}{
		
		{\psset{linecolor=lightgray}
			\psframe*(-0.5,-0.5)(0.5,0.5)
			\psframe*(-2.5,-0.5)(-1.5,0.5)
			\psframe*(-4.5,-0.5)(-3.5,0.5)
			
			\psframe*(0.5,0.5)(1.5,1.5)
			\psframe*(-1.5,0.5)(-0.5,1.5)
			\psframe*(-3.5,0.5)(-2.5,1.5)
			\psframe*(-5.5,0.5)(-4.5,1.5)
			
			\psframe*(-0.5,1.5)(0.5,2.5)
			\psframe*(-2.5,1.5)(-1.5,2.5)
			\psframe*(-4.5,1.5)(-3.5,2.5)
			
			\psframe*(-1.5,2.5)(-0.5,3.5)
			\psframe*(-3.5,2.5)(-2.5,3.5)
			
			\psframe*(-2.5,3.5)(-1.5,4.5)
		}
		
		{\psset{linestyle=dashed,dash=2pt 1pt}
			\psline(0,0)(0,1)
			
			\psline(-1,0)(-1.75,1)
			\psline(-2,0)(-2.25,1)
			\psline(-3,0)(-3.75,1)
			\psline(-4,0)(-4.25,1)
			
			\psline(0,2)(-0.25,3)
			\psline(1,2)(0.25,3)
			\psline(-0.75,2)(-1.75,3)
			\psline(-1.25,2)(-2.25,3)
			\psline(-2.75,2)(-3.75,3)
			\psline(-3.25,2)(-4.25,3)
			\psline(-0.75,4)(-1.75,5)
			\psline(-1.25,4)(-2.25,5)
			
			%% loose ends
			\psline(-4.75,2)(-5.25,2.5)
			\psline(-5.25,2)(-5.75,2.5)
			\psline(-2.75,4)(-3.25,4.5)
			\psline(-3.25,4)(-3.75,4.5)
		}
		\psline(1.5,2)(1,2)
		\psline(0,0)(-1,0)
		\psline(-2,0)(-3,0)
		
		\psline(-1.75,1)(-2.75,2)
		\psline(-2.25,1)(-3.25,2)
		\psline(-3.75,1)(-4.75,2)
		\psline(-4.25,1)(-5.25,2)
		
		\psline(-1.75,3)(-2.75,4)
		\psline(-2.25,3)(-3.25,4)
		
		\psline(0.25,3)(-0.75,4)
		\psline(-0.25,3)(-1.25,4)
		
		\psline(0,1)(-0.75,2)
		\pscurve(0,2)(-0.25,1.8)(-1,1.8)(-1.25,2)
		
		%loose ends
		\psline(-4,0)(-4.5,0)
		\psline(-1.75,5)(-2.25,5.5)
		\psline(-2.25,5)(-2.75,5.5)
		\psline(-3.75,3)(-4.25,3.5)
		\psline(-4.25,3)(-4.75,3.5)

		\psdots[linecolor=gold]
		(0,0)(0,2)

		\psdots[linecolor=red]%
		(0,1)%
		(-1.75,1)(-2.25,1)%
		(-3.75,1)(-4.25,1)%
		(0.25,3)(-0.25,3)%
		(-1.75,3)(-2.25,3)%
		(-3.75,3)(-4.25,3)%
		(-1.75,5)(-2.25,5)%
		
		\psdots[linecolor=darkgreen]
		(1,2)%
		(-1,0)(-3,0)%
		(-0.75,2)(-1.25,2)%
		(-2.75,2)(-3.25,2)%
		(-4.75,2)(-5.25,2)%
		(-0.75,4)(-1.25,4)%
		(-2.75,4)(-3.25,4)%
		
		\psdots[linecolor=blue]
		(-2,0)(-4,0)
	}}
	
	% middle right
	\rput(8.5,2){\psrotate(0,0){180}{
			
			{\psset{linecolor=lightgray}
				\psframe*(3.5,-2.5)(4.5,-1.5)
				
				\psframe*(2.5,-1.5)(3.5,-0.5)
				\psframe*(4.5,-1.5)(5.5,-0.5)
				
				\psframe*(1.5,-0.5)(2.5,0.5)
				\psframe*(3.5,-0.5)(4.5,0.5)
				\psframe*(5.5,-0.5)(6.5,0.5)
				
				\psframe*(0.5,0.5)(1.5,1.5)
				\psframe*(2.5,0.5)(3.5,1.5)
				\psframe*(4.5,0.5)(5.5,1.5)
				\psframe*(6.5,0.5)(7.5,1.5)
				
				%\psframe*(-0.5,1.5)(0.5,2.5)
				\psframe*(1.5,1.5)(2.5,2.5)
				\psframe*(3.5,1.5)(4.5,2.5)
				\psframe*(5.5,1.5)(6.5,2.5)
				%\psframe*(7.5,1.5)(8.5,2.5)
				
				\psframe*(0.5,2.5)(1.5,3.5)
				\psframe*(2.5,2.5)(3.5,3.5)
				\psframe*(4.5,2.5)(5.5,3.5)
				\psframe*(6.5,2.5)(7.5,3.5)
				
				\psframe*(1.5,3.5)(2.5,4.5)
				\psframe*(3.5,3.5)(4.5,4.5)
				\psframe*(5.5,3.5)(6.5,4.5)
				
				\psframe*(2.5,4.5)(3.5,5.5)
				\psframe*(4.5,4.5)(5.5,5.5)
				
				%\psframe*(3.5,5.5)(4.5,6.5)
			}
			
			{\psset{linestyle=dashed,dash=2pt 1pt}
				\psline(0.5,2)(1,2)
				\psline(2,2)(2.75,2)
				\psline(3.25,2)(2,3)
				\psline(4.75,2)(3.75,3)
				\psline(5.25,2)(4.25,3)
				\psline(6.75,2)(5.75,3)
				\psline(7.25,2)(6.25,3)
				
				\psline(2.75,0)(1.75,1)
				\psline(3.25,0)(2.25,1)
				\psline(4.75,0)(3.75,1)
				\psline(5.25,0)(4.25,1)
				
				\psline(2,4)(3,4)
				\pscurve(4,4)(4.25,4.2)(5,4.2)(5.25,4)
				\psline(4.75,4)(4,5)
				
				%% lose ends
				\psline(4.25,-1.5)(3.75,-1)
				\psline(4.75,-1.5)(4.25,-1)
				\psline(6.25,0.5)(5.75,1)
				\psline(6.75,0.5)(6.25,1)
				
			}
			
			\psline(1.75,1)(1,2)
			\psline(2.25,1)(2,2)
			\psline(3.75,1)(2.75,2)
			\psline(4.25,1)(3.25,2)
			\psline(5.75,1)(4.75,2)
			\psline(6.25,1)(5.25,2)
			
			\psline(3.75,-1)(2.75,0)
			\psline(4.25,-1)(3.25,0)
			
			\psline(5.25,-0.5)(4.75,0)
			\psline(5.75,-0.5)(5.25,0)
			\psline(7.25,1.5)(6.75,2)
			\psline(7.75,1.5)(7.25,2)
			
			\psline(2,3)(2,4)
			\psline(3.75,3)(3,4)
			\psline(4.25,3)(4,4)
			\psline(5.75,3)(4.75,4)
			\psline(6.25,3)(5.25,4)
			
			% loose ends
			\psline(4,5)(4,5.5)

			\psdots[linecolor=red]%
			(1.75,1)(2.25,1)%
			(3.75,1)(4.25,1)%
			(5.75,1)(6.25,1)%
			(2,3)%
			(3.75,3)(4.25,3)%
			(5.75,3)(6.25,3)%
			(4,5)
			(3.75,-1)(4.25,-1)%
			
			\psdots[linecolor=darkgreen]
			(1,2)%
			(2.75,2)(3.25,2)%
			(4.75,2)(5.25,2)%
			(6.75,2)(7.25,2)%
			(3,4)%
			(4.75,4)(5.25,4)%
			(2.75,0)(3.25,0)%
			(4.75,0)(5.25,0)%
			
			\psdots[linecolor=blue]
			(2,2)%
			(2,4)(4,4)%	
		}}
		
		% top right
		\rput(12.5,7){\psrotate(0,0){180}{
				
				{\psset{linecolor=lightgray}
					\psframe*(-0.5,-0.5)(0.5,0.5)
					\psframe*(1.5,-0.5)(2.5,0.5)
					\psframe*(3.5,-0.5)(4.5,0.5)
					\psframe*(5.5,-0.5)(6.5,0.5)
					\psframe*(7.5,-0.5)(8.5,0.5)
					
					\psframe*(0.5,0.5)(1.5,1.5)
					\psframe*(2.5,0.5)(3.5,1.5)
					\psframe*(4.5,0.5)(5.5,1.5)
					\psframe*(6.5,0.5)(7.5,1.5)
					
					\psframe*(-0.5,1.5)(0.5,2.5)
					\psframe*(1.5,1.5)(2.5,2.5)
					\psframe*(3.5,1.5)(4.5,2.5)
					\psframe*(5.5,1.5)(6.5,2.5)
					
					\psframe*(0.5,2.5)(1.5,3.5)
					\psframe*(2.5,2.5)(3.5,3.5)
					\psframe*(4.5,2.5)(5.5,3.5)
					
					\psframe*(1.5,3.5)(2.5,4.5)
					\psframe*(3.5,3.5)(4.5,4.5)
					
					\psframe*(2.5,4.5)(3.5,5.5)
				}
				
				{\psset{linestyle=dashed,dash=2pt 1pt}
					\psline(1,0)(0,1)
					
					\psline(3,0)(2.25,1)
					\psline(5,0)(4.25,1)
					\psline(7,0)(6.25,1)
					\psline(9,0)(8.25,1)
					
					\psline(2,0)(1.75,1)
					\psline(4,0)(3.75,1)
					\psline(6,0)(5.75,1)
					\psline(8,0)(7.75,1)
					
					\psline(0,2)(1,2)
					\psline(2,2)(2.75,2)
					\psline(3.25,2)(2,3)
					\psline(4.75,2)(3.75,3)
					\psline(5.25,2)(4.25,3)
					\psline(6.75,2)(5.75,3)
					\psline(7.25,2)(6.25,3)
					
					\psline(2,4)(3,4)
					\psline(4,4)(4.75,4)
					\psline(5.25,4)(4,5)
				}
				\psline(1,0)(2,0)
				\psline(3,0)(4,0)
				\psline(5,0)(6,0)
				\psline(7,0)(8,0)
				
				\psline(0,1)(0,2)
				\psline(1.75,1)(1,2)
				\psline(2.25,1)(2,2)
				\psline(3.75,1)(2.75,2)
				\psline(4.25,1)(3.25,2)
				\psline(5.75,1)(4.75,2)
				\psline(6.25,1)(5.25,2)
				\psline(7.75,1)(6.75,2)
				\psline(8.25,1)(7.25,2)
				
				\psline(2,3)(2,4)
				\psline(3.75,3)(3,4)
				\psline(4.25,3)(4,4)
				\psline(5.75,3)(4.75,4)
				\psline(6.25,3)(5.25,4)
				
				% loose ends
				\psline(4,5)(4,5.5)
				\psline(9,0)(9.5,0)

				\psdots[linecolor=red]%
				(0,1)%
				(1.75,1)(2.25,1)%
				(3.75,1)(4.25,1)%
				(5.75,1)(6.25,1)%
				(7.75,1)(8.25,1)%
				(2,3)%
				(3.75,3)(4.25,3)%
				(5.75,3)(6.25,3)%
				(4,5)
				
				\psdots[linecolor=darkgreen]
				(1,0)%
				(3,0)%
				(5,0)%
				(7,0)%
				(9,0)%
				(1,2)%
				(2.75,2)(3.25,2)%
				(4.75,2)(5.25,2)%
				(6.75,2)(7.25,2)%
				(3,4)%
				(4.75,4)(5.25,4)%
				
				\psdots[linecolor=blue]
				(2,0)(4,0)(6,0)(8,0)%
				(0,2)(2,2)%
				(2,4)(4,4)%
			}}
			
%							%% pretzel tangle 
%							\rput(-10,6.5){\psset{unit=0.5}
%							\rput(0,-6.5){$n>m+1$}
%							% boundary
%							\pscircle[linestyle=dotted](0,0){5.5}
%							
%							% brackets
%							\rput(0,0){$\left\{\textcolor{white}{\rule[-0.8cm]{1.8cm}{1.8cm}}\right\}$}
%							\rput{-90}(4.4,0){$2m+1$}
%							\rput{90}(-4.4,0){$2n$}
%							
%							% bottom join
%							\psecurve(-1,1)(-3,-1)(0,-2.4)(3,-1)(1,1)
%							
%							% left
%							\pscustom{
%								\psline{<-}(-3,4.5)(-3,3)
%								\psecurve(-1,5)(-3,3)(-1,1)(-3,-1)
%							}
%							\pscircle*[linecolor=white](-2,2){0.3}
%							\rput(-2,0.3){$\vdots$}
%							\pscircle*[linecolor=white](-2,-2){0.3}
%							\pscustom{
%								\psline(-3,-4.5)(-3,-3)
%								\psecurve(-1,-5)(-3,-3)(-1,-1)(-3,1)
%							}
%							
%							% right
%							\pscustom{
%								\psline{<-}(3,4.5)(3,3)
%								\psecurve(1,5)(3,3)(1,1)(3,-1)
%							}
%							\pscircle*[linecolor=white](2,2){0.3}
%							\rput(2,0.3){$\vdots$}
%							\pscircle*[linecolor=white](2,-2){0.3}
%							\pscustom{
%								\psline(3,-4.5)(3,-3)
%								\psecurve(1,-5)(3,-3)(1,-1)(3,1)
%							}
%							
%							% join top
%							\psecurve(-1,-1)(-3,1)(0,2.4)(3,1)(1,-1)
%							
%							%labels
%							\rput(-2.2,4){$p$}
%							\rput(2.2,4){$q$}
%							\rput(-2.2,-4){$p$}
%							\rput(2.2,-4){$q$}
%						}
			
			\rput[r](11.5,8){\small$t_1^{-n}t_2^{n+2m}$}
			\psecurve{->}(10.5,7)(11.5,8)(12.5,7)(12.5,6.25)
			\rput[l](-11.5,-8){\small$\,t_1^{n}t_2^{-n-2m}$}
			\psecurve{->}(-10.5,-7)(-11.5,-8)(-12.5,-7)(-12.5,-6.25)
			
			\rput[l](-1.5,8){\small$\,t_1^{-n}t_2^{n-2m-2}$}
			\psecurve{->}(-0.5,7)(-1.5,8)(-2.5,7.25)(-2.5,6.25)
			\rput[r](1.5,-8){\small$t_1^{n}t_2^{-n+2m+2}$}
			\psecurve{->}(0.5,-7)(1.5,-8)(2.5,-7.25)(2.5,-6.25)
			
			\rput[r](-7.75,2){\small$t_1^{n-2m-2}t_2^{-n}$}
			\psline{->}(-7.75,2)(-6.75,2)
			\rput[l](7.75,-2){\small$\,t_1^{-n+2m+2}t_2^{n}$}
			\psline{->}(7.75,-2)(6.75,-2)
			
			\end{pspicture}
			\caption{$n>m+1$}\label{fig:ResultPretzelsCase2}
		\end{subfigure}
		
		\caption{The peculiar module of the $(2n,-(2m+1))$-pretzel tangle, shown in Figure~\ref{fig:pretzeltangle2nm2mp1}. We use the same conventions as in Example~\ref{exa:HFTdpretzeltangle}, see also Figures~\ref{fig:firstcomplex} and~\ref{fig:examplesimplifiedgraph}. All generators for site a and c, ie the red and green vertices, are in the same $\delta$-grading. The diagonals connecting pairs of red and green generators of the same Alexander gradings should be continued in such a way that they do not intersect each other. }\label{fig:ResultPretzels}
	\end{figure}
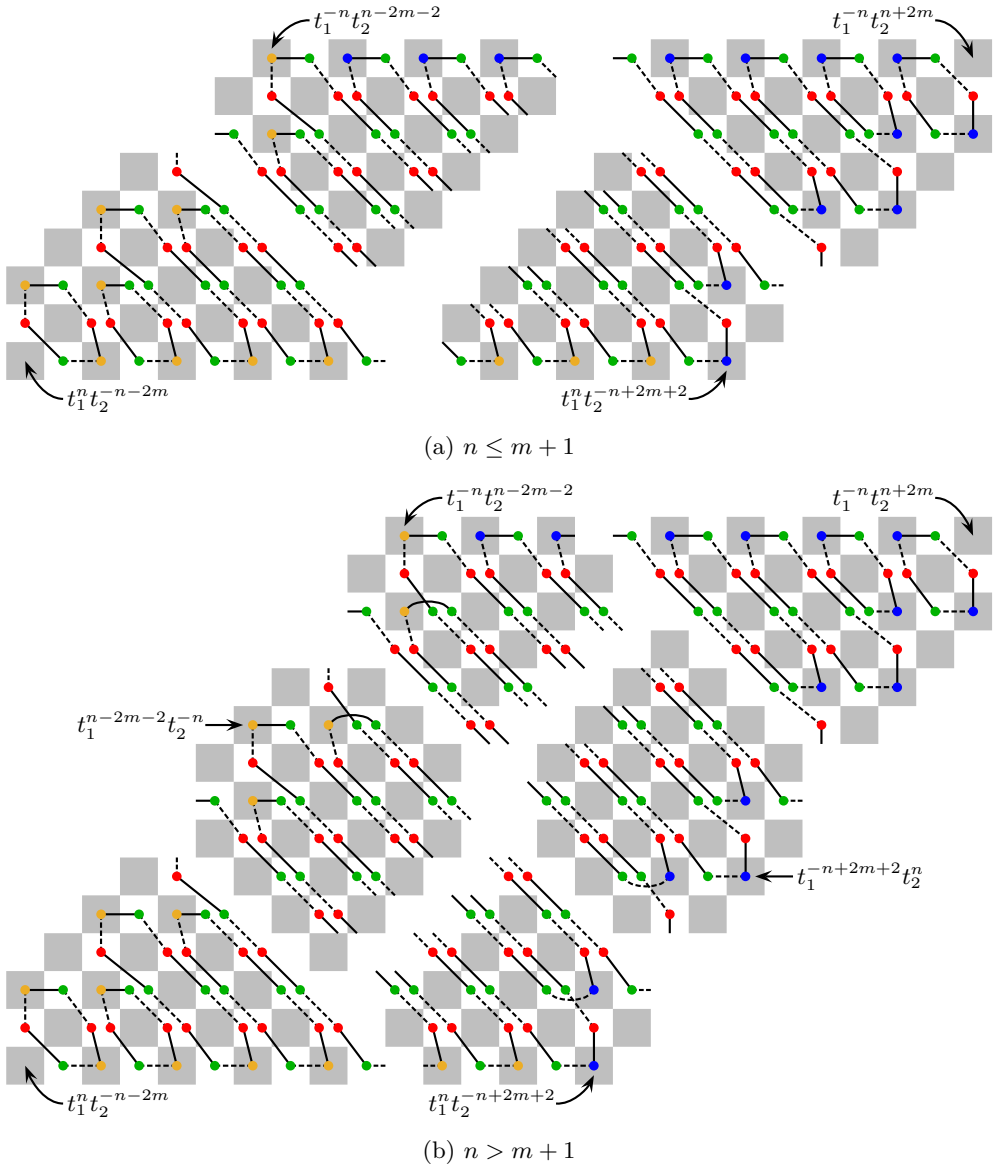

\begin{figure}[p]
	\begin{subfigure}[b]{\textwidth}\centering
		\psset{unit=0.45,linearc=0.25}
		\begin{pspicture}(-19.5,-17.5)(9.5,17.5)
		
		% top right
		\rput(0,10){
			% grey area
			\pscustom*[linecolor=lightgray,linewidth=0pt]{
				\psline[liftpen=1](-3,0)(-3,-4)(-4,-4)
				\psline[liftpen=1](-4,-4)(-4,-5)
				\psline[liftpen=1](-4,-5)(3,-5)
				\psline[liftpen=1](3,-5)(3,-4)
				\psline[linecolor=violet,liftpen=1](3,-4)(-2,-4)(-2,0)(-3,0)
			}
			
			% blue curve
			\psline[linecolor=blue](4,-5)(-6,-5)(-6,3)(0,3)(0,-2)(5,-2)(5,6)(-9,6)(-9,-8)(-3,-8)
			% violet curve
			\psline[linecolor=violet](-3,0)(-3,-4)(-5,-4)(-5,2)(-1,2)(-1,-3)(6,-3)(6,7)(-10,7)(-10,-9)(-3,-9)
			\psline[linecolor=violet](-3,0)(-2,0)(-2,-4)(4,-4)
			\psline[linecolor=violet](4,-6)(-7,-6)(-7,4)(1,4)(1,-1)(4,-1)(4,5)(-8,5)(-8,-7)(-3,-7)
			\psline[linecolor=violet,linestyle=dotted,dotsep=1pt](4.5,-4)(4,-4)
			\psline[linecolor=blue,linestyle=dotted,dotsep=1pt](4.5,-5)(4,-5)
			\psline[linecolor=violet,linestyle=dotted,dotsep=1pt](4.5,-6)(4,-6)
			\psline[linecolor=violet,linestyle=dotted,dotsep=1pt](-2.5,-7)(-3,-7)
			\psline[linecolor=blue,linestyle=dotted,dotsep=1pt](-2.5,-8)(-3,-8)
			\psline[linecolor=violet,linestyle=dotted,dotsep=1pt](-2.5,-9)(-3,-9)
			% alpha arcs
			\psline[linecolor=red](-3,0)(-3,1)(3,1)(3,0)
			\psline[linecolor=red](-3,0)(-4,0)(-4,-10)
			\psline[linecolor=red](3,0)(3,-7)
			% boundary punctures and former alpha arcs
			\pscircle[fillstyle=solid,fillcolor=white](-3,0){0.5}
			\pscircle[fillstyle=solid,fillcolor=white,linecolor=darkgreen](3,0){0.5}
			%labels
			{\psset{dotstyle=|}
				\psdot[dotangle=-45](-1,1)\uput{0.2}[45](-1,1){$\overline{d}$}
				\psdot[dotangle=-45](0,1)\uput{0.2}[45](0,1){$d$}
				\psdot[dotangle=-45](1,1)\uput{0.2}[45](1,1){$\underline{d}$}
				
				\psdot[dotangle=45](3,-1)\uput{0.2}[-45](3,-1){$\underline{c}_1$}
				\psdot[dotangle=45](3,-2)\uput{0.2}[-45](3,-2){$c_1$}
				\psdot[dotangle=45](3,-3)\uput{0.2}[-45](3,-3){$\overline{c}_1$}
				\psdot[dotangle=45](3,-4)\uput{0.2}[-45](3,-4){$\overline{c}_2$}
				\psdot[dotangle=45](3,-5)\uput{0.2}[-45](3,-5){$c_2$}
				\psdot[dotangle=45](3,-6)\uput{0.2}[-45](3,-6){$\underline{c}_2$}
				
				\psdot[dotangle=-45](-4,-4)\uput{0.2}[-135](-4,-4){$\overline{y}_1$}
				\psdot[dotangle=-45](-4,-5)\uput{0.2}[-135](-4,-5){$y_1$}
				\psdot[dotangle=-45](-4,-6)\uput{0.2}[-135](-4,-6){$\underline{y}_1$}
				\psdot[dotangle=-45](-4,-7)\uput{0.2}[-135](-4,-7){$\underline{y}_2$}
				\psdot[dotangle=-45](-4,-8)\uput{0.2}[-135](-4,-8){$y_2$}
				\psdot[dotangle=-45](-4,-9)\uput{0.2}[-135](-4,-9){$\overline{y}_2$}
				
				\uput{0.7}[65](3,0){$q_4$}
				\uput{0.7}[180](3,0){$p_4$}
			}
		}

		%bottom right
		\rput(0,-10){
			% grey area
			\pscustom*[linecolor=lightgray,linewidth=0pt]{
				\psline[liftpen=1](3,7)(3,8)
				\psline[liftpen=1](3,8)(8,8)(8,-5)(-6,-5)(-6,2)(-4,2)
				\psline[liftpen=1](-4,2)(-4,1)
				\psline[liftpen=1](-4,1)(-5,1)(-5,0)(-4,0)
				\psline[liftpen=1](-4,0)(-4,-4)(7,-4)(7,7)(3,7)
			}

			% blue curve
			\psline[linecolor=blue](2,8)(8,8)(8,-5)(-6,-5)(-6,2)(0,2)(0,-2)(5,-2)(5,5)(-5,5)
			% violet curve
			\psline[linecolor=violet](-4,0)(-5,0)(-5,1)(-1,1)(-1,-3)(6,-3)(6,6)(-5,6)
			\psline[linecolor=violet](-4,0)(-4,-4)(7,-4)(7,7)(2,7)
			\psline[linecolor=violet](2,9)(9,9)(9,-6)(-7,-6)(-7,3)(1,3)(1,-1)(4,-1)(4,4)(-5,4)
			\psline[linecolor=violet,linestyle=dotted,dotsep=1pt](-5.5,4)(-5,4)
			\psline[linecolor=blue,linestyle=dotted,dotsep=1pt](-5.5,5)(-5,5)
			\psline[linecolor=violet,linestyle=dotted,dotsep=1pt](-5.5,6)(-5,6)
			\psline[linecolor=violet,linestyle=dotted,dotsep=1pt](1.5,7)(2,7)
			\psline[linecolor=blue,linestyle=dotted,dotsep=1pt](1.5,8)(2,8)
			\psline[linecolor=violet,linestyle=dotted,dotsep=1pt](1.5,9)(2,9)
			% alpha arcs
			\psline[linecolor=red](-4,0)(3,0)
			\psline[linecolor=red](-4,0)(-4,7)
			\psline[linecolor=red](3,0)(3,10)
			% dotted aplha arcs
			\psline[linestyle=dotted,linecolor=red](-4,10)(-4,7)
			\psline[linestyle=dotted,linecolor=red](3,13)(3,10)
			% boundary punctures and former alpha arcs
			\pscircle[fillstyle=solid,fillcolor=white](-4,0){0.5}
			\pscircle[fillstyle=solid,fillcolor=white,linecolor=darkgreen](3,0){0.5}
			
			%labels
			{\psset{dotstyle=|}
				\psdot[dotangle=-45](-1,0)\uput{0.2}[-135](-1,0){$\underline{b}$}
				\psdot[dotangle=-45](0,0)\uput{0.2}[-135](0,0){$b$}
				\psdot[dotangle=-45](1,0)\uput{0.2}[-135](1,0){$\overline{b}$}
				
				\psdot[dotangle=-45](3,4)\uput{0.2}[-135](3,4){$\overline{c}_{2m+1}$}
				\psdot[dotangle=-45](3,5)\uput{0.2}[-135](3,5){$c_{2m+1}$}
				\psdot[dotangle=-45](3,6)\uput{0.2}[-135](3,6){$\underline{c}_{2m+1}$}
				\psdot[dotangle=-45](3,7)\uput{0.2}[-135](3,7){$\underline{c}_{2m}$}
				\psdot[dotangle=-45](3,8)\uput{0.2}[-135](3,8){$c_{2m}$}
				\psdot[dotangle=-45](3,9)\uput{0.2}[-135](3,9){$\overline{c}_{2m}$}
				
				\psdot[dotangle=-45](-4,1)\uput{0.2}[45](-4,1){$\underline{y}_{2m+1}$}
				\psdot[dotangle=-45](-4,2)\uput{0.2}[45](-4,2){$y_{2m+1}$}
				\psdot[dotangle=-45](-4,3)\uput{0.2}[45](-4,3){$\overline{y}_{2m+1}$}
				\psdot[dotangle=-45](-4,4)\uput{0.2}[45](-4,4){$\overline{y}_{2m}$}
				\psdot[dotangle=-45](-4,5)\uput{0.2}[45](-4,5){$y_{2m}$}
				\psdot[dotangle=-45](-4,6)\uput{0.2}[45](-4,6){$\underline{y}_{2m}$}
				
				\uput{0.7}[65](3,0){$q_3$}
				\uput{0.7}[135](3,0){$p_3$}
			}
		}
		
		% top left
		\rput(-16,10){
			% grey area
			\pscustom*[linecolor=lightgray,linewidth=0pt]{
				\psline[liftpen=1](-1,-1)(-1,1)(0,1)
				\psline[liftpen=1](0,1)(0,0)(1,0)
				\psline[liftpen=1](1,0)(1,-1)(-1,-1)
			}
			
			% violet curve
			\psline[linecolor=violet](1,0)(0,0)(0,2)(3,2)(3,-2)(-2,-2)
			\psline[linecolor=violet](1,0)(1,-1)(-2,-1)(-2,3)(4,3)(4,-3)(1,-3)
			\psline[linecolor=violet,linestyle=dotted,dotsep=1pt](0.5,-3)(1,-3)
			\psline[linecolor=violet,linestyle=dotted,dotsep=1pt](-2.5,-2)(-2,-2)
			% alpha arcs
			\psline[linecolor=red](-1,0)(-1,1)(1,1)(1,0)
			\psline[linecolor=red](-1,0)(-1,-3)
			\psline[linecolor=red](1,0)(2,0)(2,-4)
			% boundary punctures and former alpha arcs
			\pscircle[fillstyle=solid,fillcolor=white,linecolor=darkgreen](-1,0){0.5}
			\pscircle[fillstyle=solid,fillcolor=white](1,0){0.5}
			\psline[linecolor=darkgreen](-0.75,0)(-0.25,0)
			%labels
			{\psset{dotstyle=|}
				\psdot[dotangle=45](0,1)\uput{0.2}[135](0,1){$d'$}
				
				\psdot[dotangle=45](2,-2)\uput{0.2}[-45](2,-2){$x_1$}
				\psdot[dotangle=45](2,-3)\uput{0.2}[-45](2,-3){$x_2$}
				
				\psdot[dotangle=-45](-1,-1)\uput{0.2}[-135](-1,-1){$a_1$}
				\psdot[dotangle=-45](-1,-2)\uput{0.2}[-135](-1,-2){$a_2$}
				
				\uput{0.7}[115](-1,0){$q_1$}
			}
		}
		
		% bottom left
		\rput(-16,-10){
			% grey area
			\pscustom*[linecolor=lightgray,linewidth=0pt]{
				\psline[liftpen=1](-1,2)(-1,0)(0,0)
				\psline[liftpen=1](0,0)(0,-1)(2,-1)(2,0)(3,0)(3,-2)(-2,-2)(-2,2)(-1,2)
			}
			
			% violet curve
			\psline[linecolor=violet](2,0)(2,-1)(0,-1)(0,1)(4,1)(4,-3)(-3,-3)(-3,3)(0,3)
			\psline[linecolor=violet](2,0)(3,0)(3,-2)(-2,-2)(-2,2)(3,2)
			\psline[linecolor=violet,linestyle=dotted,dotsep=1pt](0.5,3)(0,3)
			\psline[linecolor=violet,linestyle=dotted,dotsep=1pt](3.5,2)(3,2)
			% alpha arcs
			\psline[linecolor=red](-1,0)(2,0)
			\psline[linecolor=red](-1,0)(-1,4)
			\psline[linecolor=red](2,0)(2,3)
			\psline[linecolor=red,linestyle=dotted](-1,17)(-1,4)
			\psline[linecolor=red,linestyle=dotted](2,16)(2,3)
			% boundary punctures and former alpha arcs
			\pscircle[fillstyle=solid,fillcolor=white,linecolor=darkgreen](-1,0){0.5}
			\pscircle[fillstyle=solid,fillcolor=white](2,0){0.5}
			\rput(-1,0){\psline[linecolor=darkgreen](0.75;-135)(0.25;-135)}
			%labels
			{\psset{dotstyle=|}
				\psdot[dotangle=-45](0,0)\uput{0.2}[45](0,0){$b'$}
				
				\psdot[dotangle=-45](2,1)\uput{0.2}[45](2,1){$x_{2n}$}
				\psdot[dotangle=-45](2,2)\uput{0.2}[45](2,2){$x_{2n-1}$}
				
				\psdot[dotangle=-45](-1,2)\uput{0.2}[45](-1,2){$a_{2n}$}
				\psdot[dotangle=-45](-1,3)\uput{0.2}[45](-1,3){$a_{2n-1}$}
				
				\uput{0.7}[65](-1,0){$p_2$}
				
			}
		}
		
		\end{pspicture}
		\caption{A niceified Heegaard diagram for a $(2n,-(2m+1))$-pretzel tangle with $n,m>0$.}\label{fig:CalculationPretzelsStep1HD}
	\end{subfigure}
	%	\begin{subfigure}[b]{0.35\textwidth}\centering
	%		\medskip
	%		\begin{pspicture}(-1.2,-1.05)(1.2,1.05)
	%		\SpecialCoor
	%		\psline{->}(0.6;45)(1;45)
	%		\psline(0.6;-45)(1;-45)
	%		
	%		\psline{->}(0.6;135)(1;135)
	%		\psline(0.6;-135)(1;-135)
	%		
	%		\uput{0.6}[0](0,0){$\textcolor{darkgreen}{c}$}
	%		\uput{0.6}[90](0,0){$\textcolor{gold}{d}$}
	%		\uput{0.6}[180](0,0){$\textcolor{red}{a}$}
	%		\uput{0.6}[270](0,0){$\textcolor{blue}{b}$}
	%		
	%		\uput{1.1}[135](0,0){$p$}
	%		\uput{1.1}[-135](0,0){$p$}
	%		\uput{1.1}[45](0,0){$q$}
	%		\uput{1.1}[-45](0,0){$q$}
	%		\pscircle[linestyle=dotted](0,0){1}
	%		\end{pspicture}
	%		
	%		\caption{Orientation and colouring of the two strands}\label{fig:CalculationPretzelsStep1Config}
	%	\end{subfigure}
	\begin{subfigure}[b]{0.97\textwidth}\centering
		\bigskip
		\begin{tabular}{cccccccc}
			&
			$\textcolor{red}{a_iy_j}$
			&
			$\textcolor{blue}{b'y_j}$
			&
			$\textcolor{blue}{x_ib}$
			&
			$\textcolor{darkgreen}{x_ic_j}$
			&&
			$\textcolor{gold}{d'y_j}$
			&
			$\textcolor{gold}{x_id}$
			\\ 
			&
			&
			$\textcolor{blue}{\underline{b}y_j}$
			&
			$\textcolor{blue}{\underline{y}_jb}$
			&
			$\textcolor{darkgreen}{\underline{c}_jy_{k}}$
			&
			$\textcolor{darkgreen}{\underline{y}_kc_{j}}$
			&
			$\textcolor{gold}{\underline{d}y_j}$
			&
			$\textcolor{gold}{\underline{y}_jd}$
			\\ 
			&
			&
			$\textcolor{blue}{\overline{b}y_j}$
			&
			$\textcolor{blue}{\overline{y}_jb}$
			&
			$\textcolor{darkgreen}{\overline{c}_jy_{k}}$
			&
			$\textcolor{darkgreen}{\overline{y}_kc_{j}}$
			&
			$\textcolor{gold}{\overline{d}y_j}$
			&
			$\textcolor{gold}{\overline{y}_jd}$
		\end{tabular}
		\medskip 
		\caption{Generators of the Heegaard diagram above, where $1\leq i\leq 2n$ and $1\leq j,k\leq 2m+1$. The generators of  the second and third row can be cancelled. }\label{fig:CalculationPretzelsStep1Gens}
	\end{subfigure}
	\caption{The first step of the calculation of the peculiar modules of $(2n,-(2m+1))$-pretzel tangles.}\label{fig:CalculationPretzelsStep1}
\end{figure}

\begin{figure}[h]
	\centering
	\begin{subfigure}[b]{0.37\textwidth}
		{
			\[
			\begin{tikzcd}[row sep=0.6cm, column sep=0.4cm]
			% green
			\textcolor{darkgreen}{x_{i+1}c_{j-1}}
			\arrow{dr}{q_{14}}	
			\\
			&
			% red
			\textcolor{red}{\ovalbox{$a_iy_j$}}
			\\
			&&
			% green
			\textcolor{darkgreen}{x_{i-1}c_{j+1}}
			\arrow[swap]{ul}{p_{23}}	
			\end{tikzcd}
			\]
		}
		\caption{$1<i<2n$ and $1<j<2m+1$}\label{fig:CalculationPretzelsStep2Redij}
	\end{subfigure}
	\begin{subfigure}[b]{0.28\textwidth}
		{
			\[
			\begin{tikzcd}[row sep=0.6cm, column sep=0.4cm]
			% gold
			\textcolor{gold}{x_{i+1}d}
			\arrow{d}{q_{1}}	
			\\
			% red
			\textcolor{red}{\ovalbox{$a_iy_1$}}
			\\
			&
			% green
			\textcolor{darkgreen}{x_{i-1}c_{2}}
			\arrow[swap]{ul}{p_{23}}	
			\end{tikzcd}
			\]
		}
		\caption{$1<i<2n$ and $j=1$}\label{fig:CalculationPretzelsStep2Redi1}
	\end{subfigure}
	\begin{subfigure}[b]{0.33\textwidth}
		{
			\[
			\begin{tikzcd}[row sep=0.6cm, column sep=0.4cm]
			% gold
			\textcolor{darkgreen}{x_{i+1}c_{2m}}
			\arrow{rd}{q_{14}}	
			\\
			&
			% red
			\textcolor{red}{\ovalbox{$a_{i}y_{2m+1}$}}
			\\
			&
			% red
			\textcolor{blue}{x_{i-1}b}
			\arrow[swap]{u}{p_{2}}	
			\end{tikzcd}
			\]
		}
		\caption{$1<i<2n$ and $j=2m+1$}\label{fig:CalculationPretzelsStep2Redi2mp1}
	\end{subfigure}
	\\
	\begin{subfigure}[b]{0.37\textwidth}
		{
			\[
			\begin{tikzcd}[row sep=0.6cm, column sep=0.4cm]
			% red
			\textcolor{red}{a_{i+1}y_{j-1}}	
			\\
			&
			% green
			\textcolor{darkgreen}{\ovalbox{$x_{i}c_{j}$}}
			\arrow[swap]{ul}{p_{23}}
			\arrow{dr}{q_{14}}
			\\
			&&
			% red
			\textcolor{red}{a_{i-1}y_{j+1}}	
			\end{tikzcd}
			\]
		}
		\caption{$1<i<2n$ and $1<j<2m+1$}\label{fig:CalculationPretzelsStep2Greenij}
	\end{subfigure}
	\begin{subfigure}[b]{0.28\textwidth}
		{
			\[
			\begin{tikzcd}[row sep=0.6cm, column sep=0.4cm]
			% gold
			\textcolor{gold}{x_{i}d}
			\arrow{r}{p_4}	
			&
			% green
			\textcolor{darkgreen}{\ovalbox{$x_{i}c_{1}$}}
			\arrow{dr}{q_{14}}
			\\
			&&
			% red
			\textcolor{red}{a_{i-1}y_2}	
			\end{tikzcd}
			\]
		}
		\caption{$1<i<2n$ and $j=1$}\label{fig:CalculationPretzelsStep2Greeni1}
	\end{subfigure}
	\begin{subfigure}[b]{0.33\textwidth}
		{
			\[
			\begin{tikzcd}[row sep=0.6cm, column sep=0.4cm]
			% red
			\textcolor{red}{a_{i+1}y_{2m}}	
			\\
			&
			% green
			\textcolor{darkgreen}{\ovalbox{$x_{i}c_{2m+1}$}}
			\arrow[swap]{lu}{p_{23}}	
			&
			% blue
			\textcolor{blue}{x_{i}b}
			\arrow[swap]{l}{q_{3}}\\
			\phantom{x_1}
			\end{tikzcd}
			\]
		}
		\caption{$1<i<2n$ and $j=2m+1$}\label{fig:CalculationPretzelsStep2Greeni2mp1}
	\end{subfigure}
	\\
	\begin{subfigure}[b]{0.27\textwidth}
		{
			\[
			\begin{tikzcd}[row sep=0.6cm, column sep=0.4cm]
			&
			% red
			\textcolor{red}{a_{i+1}y_{2m+1}}
			\\
			% green
			\textcolor{darkgreen}{x_{i}c_{2m+1}}
			&
			% blue
			\textcolor{blue}{\ovalbox{$x_{i}b$}}
			\arrow[swap]{l}{q_{3}}	
			\arrow{u}{p_{2}}
			\end{tikzcd}
			\]
		}
		\caption{$1\leq i<2n$}\label{fig:CalculationPretzelsStep2Bluei}
	\end{subfigure}
	\begin{subfigure}[b]{0.27\textwidth}
		{
			\[
			\begin{tikzcd}[row sep=0.6cm, column sep=0.4cm]
			% gold
			\textcolor{gold}{\ovalbox{$x_{i}d$}}
			\arrow[swap]{d}{q_{1}}	
			\arrow{r}{p_{4}}
			&
			% darkgreen
			\textcolor{darkgreen}{x_{i}c_{1}}
			\\
			% red
			\textcolor{red}{a_{i-1}y_{1}}
			\end{tikzcd}
			\]
		}
		\caption{$1< i\leq 2n$}\label{fig:CalculationPretzelsStep2Goldi}
	\end{subfigure}		
	\caption{Some differentials for the computation of the $(2n,-(2m+1))$-pretzel tangle in non-extremal $t_1$-Alexander grading.}\label{fig:CalculationPretzelsAStep2generic}
\end{figure}
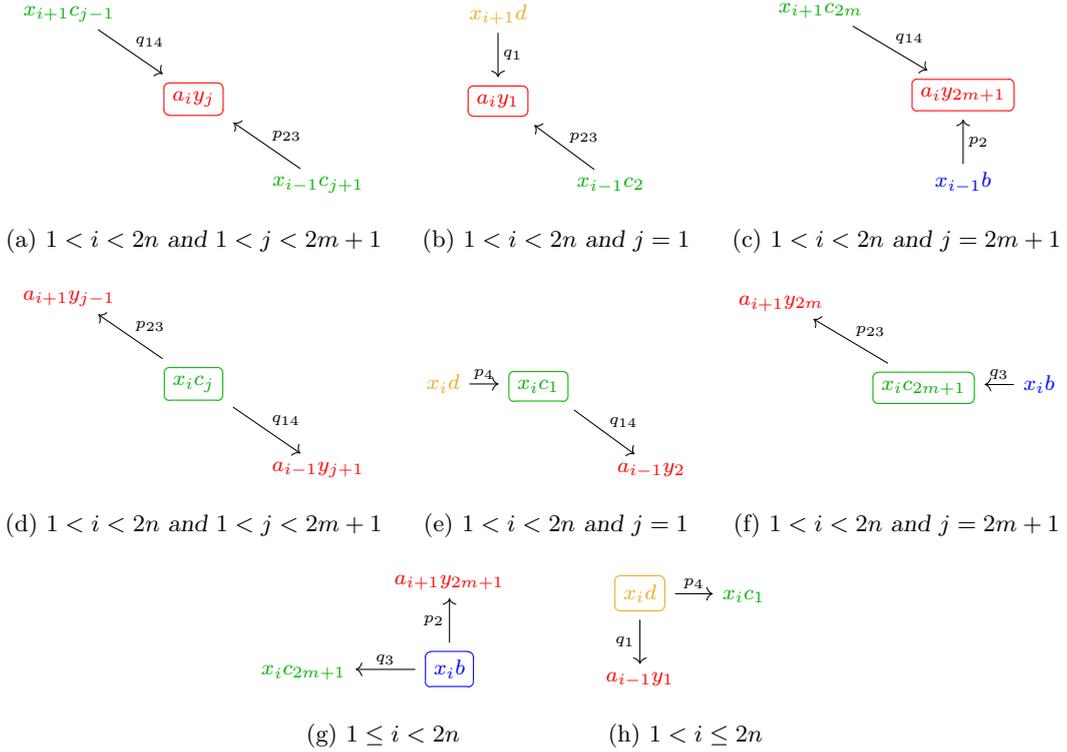

\begin{figure}[p]
	\begin{subfigure}[b]{0.95\textwidth}
		{
		\[
		\begin{tikzcd}[row sep=0.6cm, column sep=0.4cm,%
		execute at end picture={
			\begin{pgfonlayer}{background}
			\foreach \Nombre in  {A,B,...,F}
			{\coordinate (\Nombre) at (\Nombre.center);}
			\fill[lightgray!40] 
			([xshift=-27pt,yshift=9pt]A) [rounded corners=5pt] %
			-- ([xshift=-27pt,yshift=-12pt]A) %
			-- ([xshift=21pt,yshift=-12pt]B) %
			-- ([xshift=21pt,yshift=9pt]B) %
			-- cycle;
			\fill[lightgray!40] 
			([xshift=-21pt,yshift=12pt]C) [rounded corners=5pt] %
			-- ([xshift=-21pt,yshift=-12pt]C) %
			-- ([xshift=10pt,yshift=-12pt]D) %
			-- ([xshift=10pt,yshift=12pt]D) %
			-- cycle;
			\fill[lightgray!40] 
			([xshift=-18pt,yshift=9pt]E) [rounded corners=5pt] %
			-- ([xshift=-18pt,yshift=-9pt]E) %
			-- ([xshift=30pt,yshift=-9pt]F) %
			-- ([xshift=30pt,yshift=9pt]F) %
			-- cycle;
			\end{pgfonlayer}
		}
		]
		|[alias=E]|
		\textcolor{darkgreen}{x_2c_{2m}}
		\arrow[swap]{drr}{q_{14}}
		&
		\textcolor{darkgreen}{x_3c_{2m-1}}
		\arrow[dashed,swap,pos=0.25]{drr}{q_{14}}
		&
		|[alias=F]|
		\textcolor{darkgreen}{\underline{y}_{2m+1}c_{2m-1}}
		\arrow[dotted]{dr}{q_{14}}
		\\
		&&
		|[alias=A]|
		\textcolor{red}{\ovalbox{$a_1y_{2m+1}$}}
		&
		|[alias=B]|
		\textcolor{red}{\ovalbox{$a_2y_{2m}$}}
		\\
		% green
		\textcolor{darkgreen}{x_{1}c_{2m}}
		\arrow{r}{q_4}
		&
		%gold
		|[alias=C]|
		\textcolor{gold}{d'y_{2m+1}}
		&
		\textcolor{blue}{\ovalbox{$\overline{y}_{1}b$}}
		\arrow{r}{1}
		\arrow{u}{p_2}
		&
		|[alias=D]|
		\textcolor{blue}{\overline{b}y_{1}}
		&
		\textcolor{darkgreen}{\ovalbox{$x_{1}c_{2m+1}$}}
		\arrow[swap]{l}{p_3}
		\arrow[swap]{ul}{p_{23}}
		&
		\textcolor{blue}{x_1b}
		\arrow[swap]{l}{q_3}
		\end{tikzcd}
		\]
		}
		\caption{}\label{fig:CalculationPretzelsStep2BottomEnd}
	\end{subfigure}
	\\
	\begin{subfigure}[b]{0.95\textwidth}
		{
			\[
			\begin{tikzcd}[row sep=0.6cm, column sep=0.4cm,%
			execute at end picture={
				\begin{pgfonlayer}{background}
				\foreach \Nombre in  {A,B,...,H}
				{\coordinate (\Nombre) at (\Nombre.center);}
				\fill[lightgray!40] 
				([xshift=-25pt,yshift=9pt]A) [rounded corners=5pt] %
				-- ([xshift=-25pt,yshift=-13pt]A) %
				-- ([xshift=19pt,yshift=-13pt]B) %
				-- ([xshift=19pt,yshift=9pt]B) %
				-- cycle;
				\fill[lightgray!40] 
				([xshift=-16pt,yshift=9pt]G) [rounded corners=5pt] %
				-- ([xshift=-16pt,yshift=-9pt]G) %
				-- ([xshift=29pt,yshift=-9pt]H) %
				-- ([xshift=29pt,yshift=9pt]H) %
				-- cycle;
				\fill[lightgray!40] 
				([xshift=-25pt,yshift=11pt]C) [rounded corners=5pt] %
				-- ([xshift=-25pt,yshift=-13pt]C) %
				-- ([xshift=24pt,yshift=-13pt]D) %
				-- ([xshift=24pt,yshift=11pt]D) %
				-- cycle;
				\fill[lightgray!40] 
				([shift=(145:23pt)]E) %
				[rounded corners=5pt]-- ([shift=(35:23pt)]E) %
				-- ([shift=(-35:23pt)]F) %
				-- ([shift=(-145:23pt)]F) %
				-- cycle;
				\end{pgfonlayer}
			}
			]
			|[alias=G]|
			\textcolor{darkgreen}{x_2c_{2j}}
			\arrow[swap]{drr}{q_{14}}
			&
			\textcolor{darkgreen}{x_3c_{2j-1}}
			\arrow[dashed,swap,pos=0.25]{drr}{q_{14}}
			&
			|[alias=H]|
			\textcolor{darkgreen}{\underline{y}_{2m+1}c_{2j-1}}
			\arrow[dotted]{dr}{q_{14}}
			\\
			&&
			|[alias=A]|
			\textcolor{red}{\ovalbox{$a_1y_{2j+1}$}}
			&
			|[alias=B]|
			\textcolor{red}{\ovalbox{$a_2y_{2j}$}}
			&&&
			|[alias=E]|
			\textcolor{gold}{\overline{d}y_{2j+3}}
			\\
			&
			% green
			\textcolor{darkgreen}{x_{1}c_{2j}}
			\arrow{r}{q_4}
			&
			%gold
			\textcolor{gold}{d'y_{2j+1}}
			&
			|[alias=C]|
			\textcolor{darkgreen}{\ovalbox{$\overline{y}_{1}c_{2j+2}$}}
			\arrow{r}{1}
			\arrow{ul}{p_{23}}
			\arrow[in=180,out=90,looseness=0.3,pos=.7]{rrru}{q_4}
			&
			\textcolor{darkgreen}{\overline{c}_{2j+2}y_{1}}
			&
			|[alias=D]|
			\textcolor{darkgreen}{\ovalbox{$x_{1}c_{2j+1}$}}
			\arrow[swap]{l}{1}
			\arrow{ull}{p_{23}}
			\arrow{r}{q_4}
			&
			|[alias=F]|
			\textcolor{gold}{d'y_{2j+2}}
			\end{tikzcd}
			\]
		}
		\caption{$1\leq j< m$}\label{fig:CalculationPretzelsStep2BottomOdd}
	\end{subfigure}
	\\
	\begin{subfigure}[b]{0.95\textwidth}
		{
		\[
		\begin{tikzcd}[row sep=0.6cm, column sep=0.4cm,%
		execute at end picture={
			\begin{pgfonlayer}{background}
			\foreach \Nombre in  {A,B,...,D}
			{\coordinate (\Nombre) at (\Nombre.center);}
			\fill[lightgray!40] 
			([xshift=-16pt,yshift=9pt]A) [rounded corners=5pt] %
			-- ([xshift=-16pt,yshift=-13pt]A) %
			-- ([xshift=25pt,yshift=-13pt]B) %
			-- ([xshift=25pt,yshift=9pt]B) %
			-- cycle;
			\fill[lightgray!40] 
			([xshift=-21pt,yshift=9pt]C) [rounded corners=5pt] %
			-- ([xshift=-21pt,yshift=-9pt]C) %
			-- ([xshift=29pt,yshift=-9pt]D) %
			-- ([xshift=29pt,yshift=9pt]D) %
			-- cycle;
			\end{pgfonlayer}
		}]
		|[alias=C]|
		\textcolor{darkgreen}{x_3c_{2j-2}}
		\arrow[dashed,swap,pos=0.25]{drr}{q_{14}}
		&
		|[alias=D]|
		\textcolor{darkgreen}{\underline{y}_{2m+1}c_{2j-2}}
		\arrow[dotted]{dr}{q_{14}}
		\\
		&
		|[alias=A]|
		\textcolor{red}{a_1y_{2j}}
		&
		|[alias=B]|
		\textcolor{red}{\ovalbox{$a_2y_{2j-1}$}}
		\\
		% green
		\textcolor{darkgreen}{x_{1}c_{2j-1}}
		\arrow{r}{q_4}
		&
		%gold
		\textcolor{gold}{d'y_{2j}}
		&
		&
		\textcolor{darkgreen}{\ovalbox{$x_{1}c_{2j}$}}
		\arrow{ul}{p_{23}}
		\arrow{r}{q_4}
		&
		\textcolor{gold}{d'y_{2j+1}}
		\end{tikzcd}
		\]
		}
		\caption{$1\leq j\leq m$}\label{fig:CalculationPretzelsStep2BottomEven}
	\end{subfigure}
	\\
	\begin{subfigure}[b]{0.3\textwidth}
		{
			% see https://tex.stackexchange.com/questions/256835/tikz-cd-shade-faces-of-commutative-cube
			\[
			\begin{tikzcd}[row sep=0.6cm, column sep=0.4cm,%
			execute at end picture={
				\begin{pgfonlayer}{background}
				\foreach \Nombre in  {A,B,...,D}
				{\coordinate (\Nombre) at (\Nombre.center);}
				\fill[lightgray!40] 
				([xshift=-17pt,yshift=11pt]A) [rounded corners=5pt]%
				-- ([xshift=17pt,yshift=11pt]A) %
				-- ([xshift=17pt,yshift=-11pt]B) %
				-- ([xshift=-17pt,yshift=-11pt]B) %
				-- cycle;
				\fill[lightgray!40] 
				([xshift=-16pt,yshift=13pt]C) [rounded corners=5pt]%
				-- ([xshift=16pt,yshift=13pt]C) %
				-- ([xshift=16pt,yshift=-11pt]D) %
				-- ([xshift=-16pt,yshift=-11pt]D) %
				-- cycle;
				\end{pgfonlayer}
			}
			]
			% gold
			\textcolor{gold}{x_{2}d}
			\arrow[swap]{d}{q_{1}}	
			\\
			% red
			\textcolor{red}{\ovalbox{$a_{1}y_1$}}
			\\
			&
			%green
			|[alias=A]|
			\textcolor{darkgreen}{\ovalbox{$\overline{y}_1c_2$}}
			\arrow[swap]{d}{1}
			\arrow[swap]{lu}{p_{23}}
			\\
			%gold
			|[alias=C]|
			\textcolor{gold}{\ovalbox{$d'y_1$}}
			\arrow{r}{p_{4}}
			&
			%green
			\textcolor{darkgreen}{\ovalbox{$\overline{c}_2y_1$}}		
			\\
			%gold
			|[alias=D]|
			\textcolor{gold}{\ovalbox{$x_1d$}}
			\arrow{r}{p_{4}}
			\arrow{u}{1}
			&
			%green
			|[alias=B]|
			\textcolor{darkgreen}{\ovalbox{$x_1c_1$}}
			\arrow{u}{1}
			\arrow{r}{q_4}
			&
			%gold
			\textcolor{gold}{d'y_2}
			\end{tikzcd}
			\]
		}
		\caption{}\label{fig:CalculationPretzelsStep2GoldCancel}
	\end{subfigure}
	
	\caption{Some differentials for the computation of the $(2n,-(2m+1))$-pretzel tangle at generators in maximal $t_1$-Alexander grading before cancellation. The dotted arrows appear iff $n=1$, the dashed arrows iff $n>1$.}\label{fig:CalculationPretzelsAStep2MaximumP}
\end{figure}
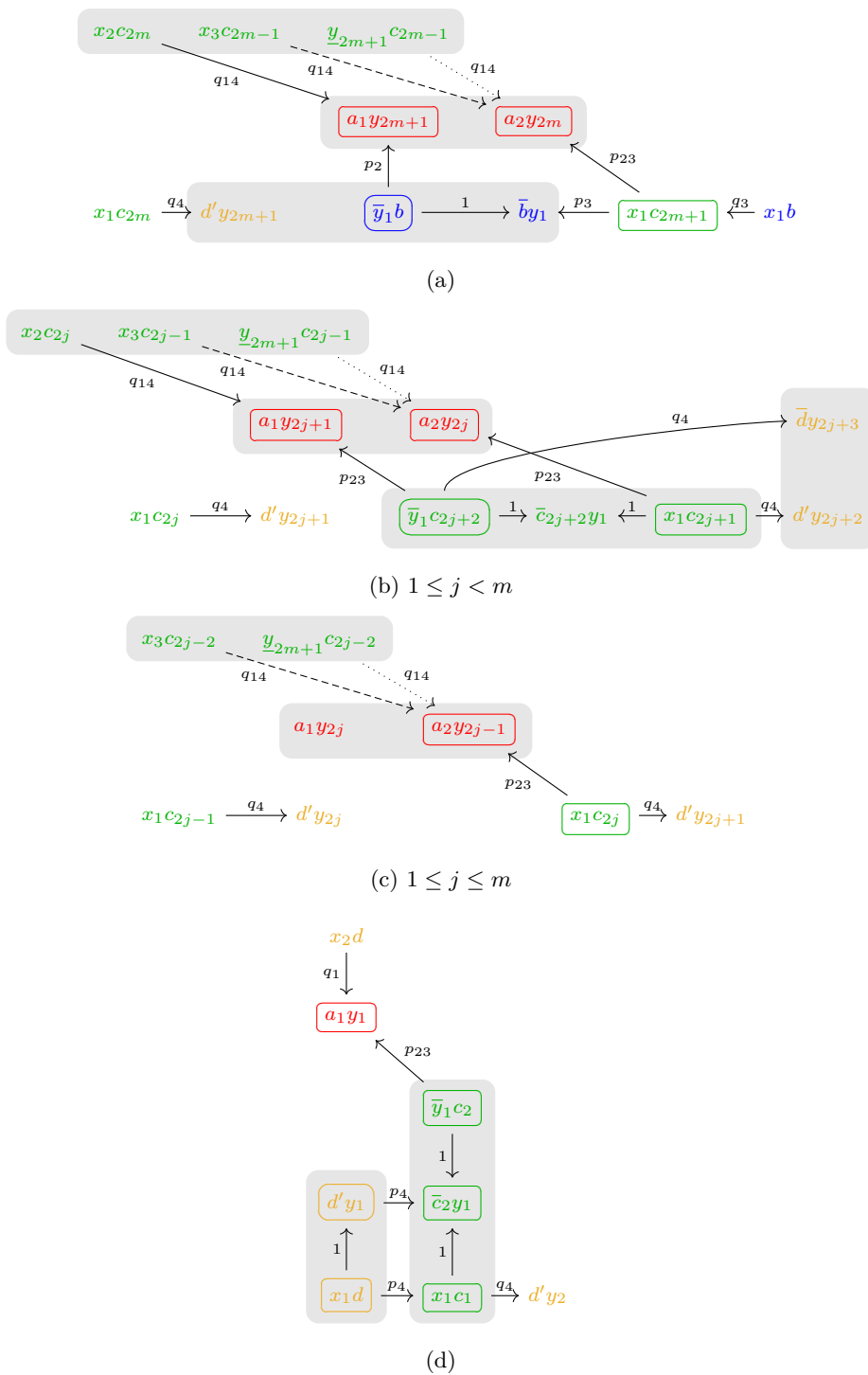 

\begin{figure}[b]
	\begin{subfigure}[b]{0.95\textwidth}
		{
			\[
			\begin{tikzcd}[row sep=0.6cm, column sep=0.4cm,%
			execute at end picture={
				\begin{pgfonlayer}{background}
				\foreach \Nombre in  {A,B,...,D}
				{\coordinate (\Nombre) at (\Nombre.center);}
				\fill[lightgray!40] 
				([xshift=-23pt,yshift=8pt]C) [rounded corners=5pt] %
				-- ([xshift=-23pt,yshift=-10pt]C) %
				-- ([xshift=21pt,yshift=-10pt]D) %
				-- ([xshift=21pt,yshift=8pt]D) %
				-- cycle;
				\fill[lightgray!40] 
				([xshift=-25pt,yshift=9pt]A) [rounded corners=5pt] %
				-- ([xshift=-25pt,yshift=-12pt]A) %
				-- ([xshift=20pt,yshift=-12pt]B) %
				-- ([xshift=20pt,yshift=9pt]B) %
				-- cycle;
				\end{pgfonlayer}
			}
			]
			% gold
			\textcolor{gold}{x_{2n}d}
			\arrow{r}{p_4}
			&
			% green
			\textcolor{darkgreen}{x_{2n}c_{1}}
			\arrow{rrd}{q_{14}}
			\arrow[swap]{rd}{q_{14}}
			&
			&
			% blue
			\textcolor{blue}{b'y_1}
			\arrow{d}{q_{143}}
			&
			% green
			\textcolor{darkgreen}{x_{2n}c_{2}}
			\arrow[swap]{l}{p_3}
			\\
			&&
			|[alias=A]|
			\textcolor{red}{a_{2n-1}y_{2}}
			&
			|[alias=B]|
			\textcolor{red}{a_{2n}y_1}
			\\
			&&&&
			|[alias=C]|
			% green
			\textcolor{darkgreen}{x_{2n-2}c_{3}}
			\arrow[dashed,swap,pos=0.25]{llu}{p_{23}}
			&
			% green
			|[alias=D]|
			\textcolor{darkgreen}{x_{2n-1}c_{2}}
			\arrow[swap]{llu}{p_{23}}
			\end{tikzcd}
			\]
		}
		\caption{}\label{fig:CalculationPretzelsStep2TopEnd}
	\end{subfigure}
	\\
	\begin{subfigure}[b]{\textwidth}
		{
			\[
			\begin{tikzcd}[row sep=0.6cm, column sep=0.35cm,%
			execute at end picture={
				\begin{pgfonlayer}{background}
				\foreach \Nombre in  {A,B,...,D}
				{\coordinate (\Nombre) at (\Nombre.center);}
				\fill[lightgray!40] 
				([xshift=-33pt,yshift=9pt]A) [rounded corners=5pt] %
				-- ([xshift=-33pt,yshift=-13pt]A) %
				-- ([xshift=27pt,yshift=-13pt]B) %
				-- ([xshift=27pt,yshift=9pt]B) %
				-- cycle;
				\fill[lightgray!40] 
				([xshift=-30pt,yshift=9pt]C) [rounded corners=5pt] %
				-- ([xshift=-30pt,yshift=-9pt]C) %
				-- ([xshift=30pt,yshift=-9pt]D) %
				-- ([xshift=30pt,yshift=9pt]D) %
				-- cycle;
				\end{pgfonlayer}
			}
			]
			% blue
			\textcolor{blue}{b'y_{2j}}
			&
			% green
			\textcolor{darkgreen}{x_{2n}c_{2j+1}}
			\arrow[swap]{l}{p_3}
			\arrow[swap]{rd}{q_{14}}
			\arrow{rrd}{q_{14}}
			&
			&
			% blue
			\textcolor{blue}{b'y_{2j+1}}
			\arrow{d}{q_{143}}
			&
			% green
			\textcolor{darkgreen}{x_{2n}c_{2j+2}}
			\arrow[swap]{l}{p_3}
			\\
			&&
			|[alias=A]|
			\textcolor{red}{a_{2n-1}y_{2j+2}}
			&
			|[alias=B]|
			\textcolor{red}{a_{2n}y_{2j+1}}
			\\
			&&&&
			% green
			|[alias=C]|
			\textcolor{darkgreen}{x_{2n-2}c_{2j+3}}
			\arrow[dashed,swap,pos=0.25]{llu}{p_{23}}
			&
			% green
			|[alias=D]|
			\textcolor{darkgreen}{x_{2n-1}c_{2j+2}}
			\arrow[swap]{llu}{p_{23}}
			\end{tikzcd}
			\]
		}
		\caption{$1\leq j<m$}\label{fig:CalculationPretzelsStep2TopOdd}
	\end{subfigure}
	\\
	\begin{subfigure}[b]{0.95\textwidth}
		{
			\[
			\begin{tikzcd}[row sep=0.6cm, column sep=0.4cm,%
			execute at end picture={
				\begin{pgfonlayer}{background}
				\foreach \Nombre in  {A,B,C,D}
				{\coordinate (\Nombre) at (\Nombre.center);}
				\fill[lightgray!40] 
				([xshift=-33pt,yshift=9pt]A) [rounded corners=5pt] %
				-- ([xshift=-33pt,yshift=-13pt]A) %
				-- ([xshift=18pt,yshift=-13pt]B) %
				-- ([xshift=18pt,yshift=9pt]B) %
				-- cycle;
				\end{pgfonlayer}
			}
			]
			% blue
			\textcolor{blue}{b'y_{2j-1}}
			&
			% green
			\textcolor{darkgreen}{x_{2n}c_{2j}}
			\arrow[swap]{l}{p_3}
			\arrow{rd}{q_{14}}
			&
			&
			% blue
			\textcolor{blue}{b'y_{2j}}
			\arrow{d}{q_{143}}
			&
			% green
			\textcolor{darkgreen}{x_{2n}c_{2j+1}}
			\arrow[swap]{l}{p_3}
			\\
			&&
			|[alias=A]|
			\textcolor{red}{a_{2n-1}y_{2j+1}}
			&
			|[alias=B]|
			\textcolor{red}{a_{2n}y_{2j}}
			\\
			&&&&
			% green
			\textcolor{darkgreen}{x_{2n-2}c_{2j+2}}
			\arrow[dashed,swap,pos=0.25]{llu}{p_{23}}
			\end{tikzcd}
			\]
		}
		\caption{$1\leq j\leq m$}\label{fig:CalculationPretzelsStep2TopEven}
	\end{subfigure}
	\\
	\begin{subfigure}[b]{0.4\textwidth}
		{
			\[
			\begin{tikzcd}[row sep=0.6cm, column sep=0.4cm	]
			% blue
			\textcolor{blue}{b'y_{2m}}
			&
			% green
			\textcolor{darkgreen}{x_{2n}c_{2m+1}}
			\arrow[swap]{l}{p_{3}}
			\arrow{rd}{q_{14}}
			\\
			&&
			%red
			\textcolor{red}{a_{2n}y_{2m+1}}
			\\
			&&
			%blue
			\textcolor{blue}{x_{2n-1}b}
			\arrow[swap]{u}{p_{2}}
			\end{tikzcd}
			\]
		}
		\caption{}\label{fig:CalculationPretzelsStep2BlueCancel}
	\end{subfigure}
	\caption{Some differentials for the computation of the $(2n,-(2m+1))$-pretzel tangle at generators in minimal $t_1$-Alexander grading after cancellation. The dashed arrows appear iff $n>1$.}\label{fig:CalculationPretzelsAStep2MinimumP}
\end{figure}
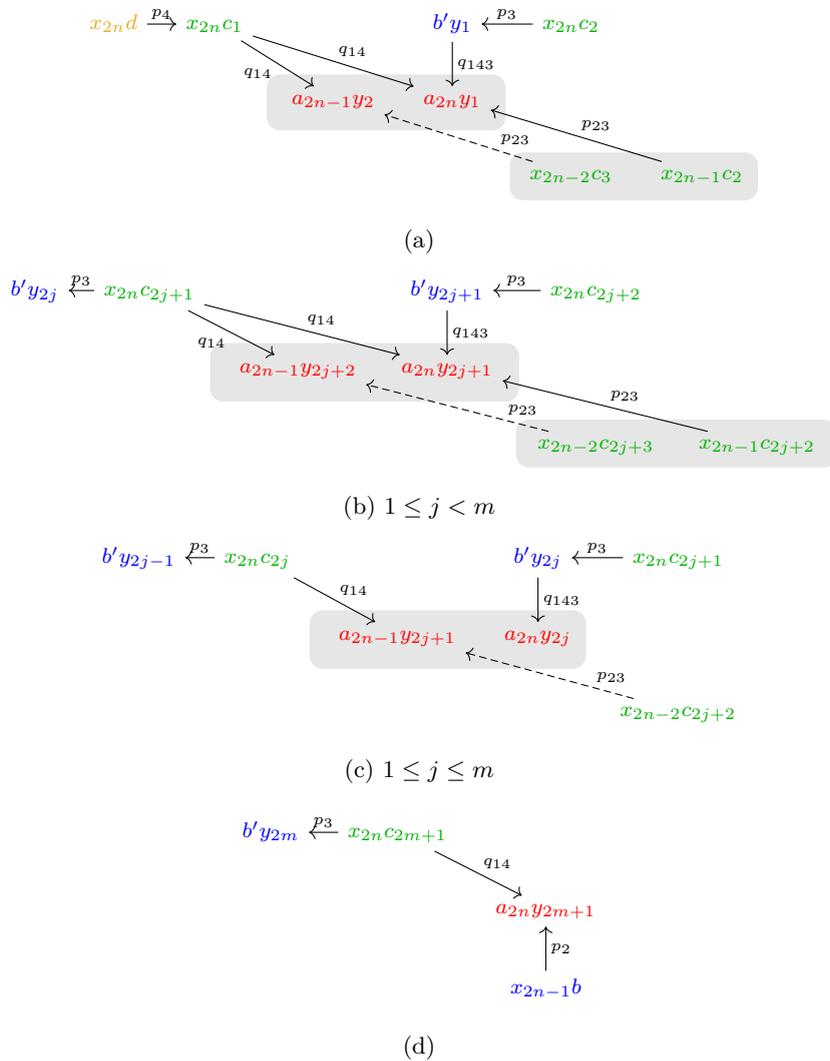

\begin{figure}[t]
	\centering
	\psset{unit=0.5}
	\begin{pspicture}(-13,-7.5)(13,7.5)
	
	% bottom left
	\rput(-12.5,-7){
		
		{\psset{linecolor=lightgray}
			\psframe*(-0.5,-0.5)(0.5,0.5)
			\psframe*(1.5,-0.5)(2.5,0.5)
			\psframe*(3.5,-0.5)(4.5,0.5)
			\psframe*(5.5,-0.5)(6.5,0.5)
			\psframe*(7.5,-0.5)(8.5,0.5)
			
			\psframe*(0.5,0.5)(1.5,1.5)
			\psframe*(2.5,0.5)(3.5,1.5)
			\psframe*(4.5,0.5)(5.5,1.5)
			\psframe*(6.5,0.5)(7.5,1.5)
			
			\psframe*(-0.5,1.5)(0.5,2.5)
			\psframe*(1.5,1.5)(2.5,2.5)
			\psframe*(3.5,1.5)(4.5,2.5)
			\psframe*(5.5,1.5)(6.5,2.5)
			
			\psframe*(0.5,2.5)(1.5,3.5)
			\psframe*(2.5,2.5)(3.5,3.5)
			\psframe*(4.5,2.5)(5.5,3.5)
			
			\psframe*(1.5,3.5)(2.5,4.5)
			\psframe*(3.5,3.5)(4.5,4.5)
			
			\psframe*(2.5,4.5)(3.5,5.5)
		}
		
		\psline(1,0)(0,1)
		
		\psline(3,0)(2.25,1)
		\psline(5,0)(4.25,1)
		\psline(7,0)(6.25,1)
		\psline(9,0)(8.25,1)
		
		\psline(2,0)(1.75,1)
		\psline(4,0)(3.75,1)
		\psline(6,0)(5.75,1)
		\psline(8,0)(7.75,1)
		
		\psline(0,2)(1,2)
		\pscurve(2,2)(2.25,2.2)(3,2.2)(3.25,2)
		\psline(2.75,2)(2,3)
		\psline(4.75,2)(3.75,3)
		\psline(5.25,2)(4.25,3)
		\psline(6.75,2)(5.75,3)
		\psline(7.25,2)(6.25,3)
		
		\psline(2,4)(3,4)
		\pscurve(4,4)(4.25,4.2)(5,4.2)(5.25,4)
		\psline(4.75,4)(4,5)
		
		{\psset{linestyle=dashed,dash=2pt 1pt}
			\psline(1,0)(2,0)
			\psline(3,0)(4,0)
			\psline(5,0)(6,0)
			\psline(7,0)(8,0)
			
			\psline(0,1)(0,2)
			\psline(1.75,1)(1,2)
			\psline(2.25,1)(2,2)
			\psline(3.75,1)(2.75,2)
			\psline(4.25,1)(3.25,2)
			\psline(5.75,1)(4.75,2)
			\psline(6.25,1)(5.25,2)
			\psline(7.75,1)(6.75,2)
			\psline(8.25,1)(7.25,2)
			
			\psline(2,3)(2,4)
			\psline(3.75,3)(3,4)
			\psline(4.25,3)(4,4)
			\psline(5.75,3)(4.75,4)
			\psline(6.25,3)(5.25,4)
			
			% loose ends
			\psline(4,5)(4,5.5)
			\psline(9,0)(9.5,0)
		}
		
		%% arrows
		\pscurve{->}(5,0)(4.75,0.1)(4,0.8)(3.75,1)
		\pscurve{->}(9,0)(8.75,0.1)(8,0.8)(7.75,1)
		\pscurve{->}(4.25,1)(4.2,0.5)(4,0)
		\pscurve{->}(8.25,1)(8.2,0.5)(8,0)
		
		\psdots[linecolor=red]%
		(0,1)%
		(1.75,1)(2.25,1)%
		(3.75,1)(4.25,1)%
		(5.75,1)(6.25,1)%
		(7.75,1)(8.25,1)%
		(2,3)%
		(3.75,3)(4.25,3)%
		(5.75,3)(6.25,3)%
		(4,5)
		
		{\tiny
			\uput{0.15}[180](1.75,1){oe}
			\uput{0.15}[0](2.25,1){eo}
			\uput{0.15}[180](3.75,1){oo}
			\uput{0.15}[0](4.25,1){ee}
			\uput{0.15}[180](5.75,1){oe}
			\uput{0.15}[0](6.25,1){eo}
			\uput{0.15}[180](7.75,1){oo}
			\uput{0.15}[0](8.25,1){ee}
			
			\uput{0.15}[180](2,3){oo}
			\uput{0.15}[180](4,5){oo}
		}
		
		\psdots[linecolor=darkgreen]
		(1,0)%
		(3,0)%
		(5,0)%
		(7,0)%
		(9,0)%
		(1,2)%
		(2.75,2)(3.25,2)%
		(4.75,2)(5.25,2)%
		(6.75,2)(7.25,2)%
		(3,4)%
		(4.75,4)(5.25,4)%
		
		{\tiny
			\uput{0.15}[90](1,2){eo}
			\uput{0.15}[90](3,4){eo}
		}
		
		\psdots[linecolor=gold]
		(2,0)(4,0)(6,0)(8,0)%
		(0,2)(2,2)%
		(2,4)(4,4)%		
		
		{\tiny
			\uput{0.15}[-90](2,0){e}
			\uput{0.15}[-90](4,0){o}
			\uput{0.15}[-90](6,0){e}
			\uput{0.15}[-90](8,0){o}
			
			\uput{0.15}[135](0,2){e}
			\uput{0.15}[135](2,2){o}
			\uput{0.15}[135](2,4){e}
			\uput{0.15}[135](4,4){o}
		}
		
	}
	
	% bottom right
	\rput(2.5,-7){
		{\psset{linecolor=lightgray}
			\psframe*(-0.5,-0.5)(0.5,0.5)
			\psframe*(-2.5,-0.5)(-1.5,0.5)
			\psframe*(-4.5,-0.5)(-3.5,0.5)
			
			\psframe*(0.5,0.5)(1.5,1.5)
			\psframe*(-1.5,0.5)(-0.5,1.5)
			\psframe*(-3.5,0.5)(-2.5,1.5)
			\psframe*(-5.5,0.5)(-4.5,1.5)
			
			\psframe*(-0.5,1.5)(0.5,2.5)
			\psframe*(-2.5,1.5)(-1.5,2.5)
			\psframe*(-4.5,1.5)(-3.5,2.5)
			
			\psframe*(-1.5,2.5)(-0.5,3.5)
			\psframe*(-3.5,2.5)(-2.5,3.5)
			
			\psframe*(-2.5,3.5)(-1.5,4.5)
		}

		\psline(0,0)(0,1)
		
		\psline(-1,0)(-1.75,1)
		\psline(-2,0)(-2.25,1)
		\psline(-3,0)(-3.75,1)
		\psline(-4,0)(-4.25,1)
		
		\psline(0,2)(-0.25,3)
		\psline(1,2)(0.25,3)
		\psline(-0.75,2)(-1.75,3)
		\psline(-1.25,2)(-2.25,3)
		\psline(-2.75,2)(-3.75,3)
		\psline(-3.25,2)(-4.25,3)
		\psline(-0.75,4)(-1.75,5)
		\psline(-1.25,4)(-2.25,5)
		
		%% loose ends
		\psline(-4.75,2)(-5.25,2.5)
		\psline(-5.25,2)(-5.75,2.5)
		\psline(-2.75,4)(-3.25,4.5)
		\psline(-3.25,4)(-3.75,4.5)
		
		{\psset{linestyle=dashed,dash=2pt 1pt}
			\psline(1.5,2)(1,2)
			\psline(0,0)(-1,0)
			\psline(-2,0)(-3,0)
			
			\psline(-1.75,1)(-2.75,2)
			\psline(-2.25,1)(-3.25,2)
			\psline(-3.75,1)(-4.75,2)
			\psline(-4.25,1)(-5.25,2)
			
			\psline(-1.75,3)(-2.75,4)
			\psline(-2.25,3)(-3.25,4)
			
			\psline(0.25,3)(-0.75,4)
			\psline(-0.25,3)(-1.25,4)
			
			\psline(0,1)(-0.75,2)
			\pscurve(0,2)(-0.25,1.8)(-1,1.8)(-1.25,2)
			
			% loose ends
			\psline(-4,0)(-4.5,0)
			\psline(-1.75,5)(-2.25,5.5)
			\psline(-2.25,5)(-2.75,5.5)
			\psline(-3.75,3)(-4.25,3.5)
			\psline(-4.25,3)(-4.75,3.5)
		}
		
		%% arrows
		\pscurve{->}(-1,0)(-1.25,0.1)(-2,0.8)(-2.25,1)
		\pscurve{->}(-1.75,1)(-1.8,0.5)(-2,0)
		
		\psdots[linecolor=blue]
		(0,0)(0,2)
		
		{\tiny
			\uput{0.15}[-45](0,0){o}
			\uput{0.15}[-45](0,2){e}
		}

		\psdots[linecolor=red]%
		(0,1)%
		(-1.75,1)(-2.25,1)%
		(-3.75,1)(-4.25,1)%
		(0.25,3)(-0.25,3)%
		(-1.75,3)(-2.25,3)%
		(-3.75,3)(-4.25,3)%
		(-1.75,5)(-2.25,5)%
		
		{\tiny
			\uput{0.15}[0](-1.75,1){ee}
			\uput{0.15}[180](-2.25,1){oo}
			\uput{0.15}[0](-3.75,1){eo}
			\uput{0.15}[180](-4.25,1){oe}
			
			\uput{0.15}[0](0,1){eo}
		}
		
		\psdots[linecolor=darkgreen]
		(1,2)%
		(-1,0)(-3,0)%
		(-0.75,2)(-1.25,2)%
		(-2.75,2)(-3.25,2)%
		(-4.75,2)(-5.25,2)%
		(-0.75,4)(-1.25,4)%
		(-2.75,4)(-3.25,4)%
		
		{\tiny
			\uput{0.15}[-90](1,2){oo}
		}
		
		\psdots[linecolor=gold]
		(-2,0)(-4,0)
		
		{\tiny
			\uput{0.15}[-90](-2,0){o}
			\uput{0.15}[-90](-4,0){e}
		}
		
	}
	
	% middle left
	\rput(-8.5,-2){
		
		{\psset{linecolor=lightgray}
			\psframe*(3.5,-2.5)(4.5,-1.5)
			
			\psframe*(2.5,-1.5)(3.5,-0.5)
			\psframe*(4.5,-1.5)(5.5,-0.5)
			
			\psframe*(1.5,-0.5)(2.5,0.5)
			\psframe*(3.5,-0.5)(4.5,0.5)
			\psframe*(5.5,-0.5)(6.5,0.5)
			
			\psframe*(0.5,0.5)(1.5,1.5)
			\psframe*(2.5,0.5)(3.5,1.5)
			\psframe*(4.5,0.5)(5.5,1.5)
			\psframe*(6.5,0.5)(7.5,1.5)
			
			%\psframe*(-0.5,1.5)(0.5,2.5)
			\psframe*(1.5,1.5)(2.5,2.5)
			\psframe*(3.5,1.5)(4.5,2.5)
			\psframe*(5.5,1.5)(6.5,2.5)
			%\psframe*(7.5,1.5)(8.5,2.5)
			
			\psframe*(0.5,2.5)(1.5,3.5)
			\psframe*(2.5,2.5)(3.5,3.5)
			\psframe*(4.5,2.5)(5.5,3.5)
			\psframe*(6.5,2.5)(7.5,3.5)
			
			\psframe*(1.5,3.5)(2.5,4.5)
			\psframe*(3.5,3.5)(4.5,4.5)
			\psframe*(5.5,3.5)(6.5,4.5)
			
			\psframe*(2.5,4.5)(3.5,5.5)
			\psframe*(4.5,4.5)(5.5,5.5)
			
			%\psframe*(3.5,5.5)(4.5,6.5)
		}
		
		\psline(0.5,2)(1,2)
		\pscurve(2,2)(2.25,2.2)(3,2.2)(3.25,2)
		\psline(2.75,2)(2,3)
		\psline(4.75,2)(3.75,3)
		\psline(5.25,2)(4.25,3)
		\psline(6.75,2)(5.75,3)
		\psline(7.25,2)(6.25,3)
		
		\psline(2.75,0)(1.75,1)
		\psline(3.25,0)(2.25,1)
		\psline(4.75,0)(3.75,1)
		\psline(5.25,0)(4.25,1)
		
		\psline(2,4)(3,4)
		\pscurve(4,4)(4.25,4.2)(5,4.2)(5.25,4)
		\psline(4.75,4)(4,5)
		
		%% lose ends
		\psline(4.25,-1.5)(3.75,-1)
		\psline(4.75,-1.5)(4.25,-1)
		\psline(6.25,0.5)(5.75,1)
		\psline(6.75,0.5)(6.25,1)
		
		{\psset{linestyle=dashed,dash=2pt 1pt}
			
			\psline(1.75,1)(1,2)
			\psline(2.25,1)(2,2)
			\psline(3.75,1)(2.75,2)
			\psline(4.25,1)(3.25,2)
			\psline(5.75,1)(4.75,2)
			\psline(6.25,1)(5.25,2)
			
			\psline(3.75,-1)(2.75,0)
			\psline(4.25,-1)(3.25,0)
			
			\psline(5.25,-0.5)(4.75,0)
			\psline(5.75,-0.5)(5.25,0)
			\psline(7.25,1.5)(6.75,2)
			\psline(7.75,1.5)(7.25,2)
			
			\psline(2,3)(2,4)
			\psline(3.75,3)(3,4)
			\psline(4.25,3)(4,4)
			\psline(5.75,3)(4.75,4)
			\psline(6.25,3)(5.25,4)
			
			% loose ends
			\psline(4,5)(4,5.5)
		}

		\psdots[linecolor=red]%
		(1.75,1)(2.25,1)%
		(3.75,1)(4.25,1)%
		(5.75,1)(6.25,1)%
		(2,3)%
		(3.75,3)(4.25,3)%
		(5.75,3)(6.25,3)%
		(4,5)
		(3.75,-1)(4.25,-1)%
		
		{\tiny
			\uput{0.15}[180](2,3){oo}
			\uput{0.15}[180](4,5){oo}
		}
		
		\psdots[linecolor=darkgreen]
		(1,2)%
		(2.75,2)(3.25,2)%
		(4.75,2)(5.25,2)%
		(6.75,2)(7.25,2)%
		(3,4)%
		(4.75,4)(5.25,4)%
		(2.75,0)(3.25,0)%
		(4.75,0)(5.25,0)%
		
		{\tiny
			\uput{0.15}[90](1,2){eo}
			\uput{0.15}[90](3,4){eo}
		}
		
		\psdots[linecolor=gold]
		(2,2)%
		(2,4)(4,4)%	
		
		{\tiny
			\uput{0.15}[135](2,2){o}
			\uput{0.15}[135](2,4){e}
			\uput{0.15}[135](4,4){o}
		}

	}
	
	% top left
	\rput(-2.5,7){\psrotate(0,0){180}{
			
			{\psset{linecolor=lightgray}
				\psframe*(-0.5,-0.5)(0.5,0.5)
				\psframe*(-2.5,-0.5)(-1.5,0.5)
				\psframe*(-4.5,-0.5)(-3.5,0.5)
				
				\psframe*(0.5,0.5)(1.5,1.5)
				\psframe*(-1.5,0.5)(-0.5,1.5)
				\psframe*(-3.5,0.5)(-2.5,1.5)
				\psframe*(-5.5,0.5)(-4.5,1.5)
				
				\psframe*(-0.5,1.5)(0.5,2.5)
				\psframe*(-2.5,1.5)(-1.5,2.5)
				\psframe*(-4.5,1.5)(-3.5,2.5)
				
				\psframe*(-1.5,2.5)(-0.5,3.5)
				\psframe*(-3.5,2.5)(-2.5,3.5)
				
				\psframe*(-2.5,3.5)(-1.5,4.5)
			}
			
			{\psset{linestyle=dashed,dash=2pt 1pt}
				\psline(0,0)(0,1)
				
				\psline(-1,0)(-1.75,1)
				\psline(-2,0)(-2.25,1)
				\psline(-3,0)(-3.75,1)
				\psline(-4,0)(-4.25,1)
				
				\psline(0,2)(-0.25,3)
				\psline(1,2)(0.25,3)
				\psline(-0.75,2)(-1.75,3)
				\psline(-1.25,2)(-2.25,3)
				\psline(-2.75,2)(-3.75,3)
				\psline(-3.25,2)(-4.25,3)
				\psline(-0.75,4)(-1.75,5)
				\psline(-1.25,4)(-2.25,5)
				
				%% loose ends
				\psline(-4.75,2)(-5.25,2.5)
				\psline(-5.25,2)(-5.75,2.5)
				\psline(-2.75,4)(-3.25,4.5)
				\psline(-3.25,4)(-3.75,4.5)
				
				%% arrows
				\pscurve{->}(-1,0)(-1.25,0.1)(-2,0.8)(-2.25,1)
				\pscurve{->}(-1.75,1)(-1.8,0.5)(-2,0)
			}
			\psline(1.5,2)(1,2)
			\psline(0,0)(-1,0)
			\psline(-2,0)(-3,0)
			
			\psline(-1.75,1)(-2.75,2)
			\psline(-2.25,1)(-3.25,2)
			\psline(-3.75,1)(-4.75,2)
			\psline(-4.25,1)(-5.25,2)
			
			\psline(-1.75,3)(-2.75,4)
			\psline(-2.25,3)(-3.25,4)
			
			\psline(0.25,3)(-0.75,4)
			\psline(-0.25,3)(-1.25,4)
			
			\psline(0,1)(-0.75,2)
			\pscurve(0,2)(-0.25,1.8)(-1,1.8)(-1.25,2)
			
			%loose ends
			\psline(-4,0)(-4.5,0)
			\psline(-1.75,5)(-2.25,5.5)
			\psline(-2.25,5)(-2.75,5.5)
			\psline(-3.75,3)(-4.25,3.5)
			\psline(-4.25,3)(-4.75,3.5)

			\psdots[linecolor=gold]
			(0,0)(0,2)
			{\tiny
				\uput{0.15}[-45]{180}(0,0){e}
				\uput{0.15}[-45]{180}(0,2){o}
			}
			
			\psdots[linecolor=red]%
			(0,1)%
			(-1.75,1)(-2.25,1)%
			(-3.75,1)(-4.25,1)%
			(0.25,3)(-0.25,3)%
			(-1.75,3)(-2.25,3)%
			(-3.75,3)(-4.25,3)%
			(-1.75,5)(-2.25,5)%
			
			{\tiny
				\uput{0.15}[0]{180}(-1.75,1){oe}
				\uput{0.15}[180]{180}(-2.25,1){eo}
				\uput{0.15}[0]{180}(-3.75,1){oo}
				\uput{0.15}[180]{180}(-4.25,1){ee}
				
				\uput{0.15}[0]{180}(0,1){oo}
			}
			
			\psdots[linecolor=darkgreen]
			(1,2)%
			(-1,0)(-3,0)%
			(-0.75,2)(-1.25,2)%
			(-2.75,2)(-3.25,2)%
			(-4.75,2)(-5.25,2)%
			(-0.75,4)(-1.25,4)%
			(-2.75,4)(-3.25,4)%
			
			{\tiny
				\uput{0.15}[-90]{180}(1,2){eo}
			}
			
			\psdots[linecolor=blue]
			(-2,0)(-4,0)
			
			{\tiny
				\uput{0.15}[-90]{180}(-2,0){o}
				\uput{0.15}[-90]{180}(-4,0){e}
			}
		}}
		
		% middle right
		\rput(8.5,2){\psrotate(0,0){180}{
				
				{\psset{linecolor=lightgray}
					\psframe*(3.5,-2.5)(4.5,-1.5)
					
					\psframe*(2.5,-1.5)(3.5,-0.5)
					\psframe*(4.5,-1.5)(5.5,-0.5)
					
					\psframe*(1.5,-0.5)(2.5,0.5)
					\psframe*(3.5,-0.5)(4.5,0.5)
					\psframe*(5.5,-0.5)(6.5,0.5)
					
					\psframe*(0.5,0.5)(1.5,1.5)
					\psframe*(2.5,0.5)(3.5,1.5)
					\psframe*(4.5,0.5)(5.5,1.5)
					\psframe*(6.5,0.5)(7.5,1.5)
					
					%\psframe*(-0.5,1.5)(0.5,2.5)
					\psframe*(1.5,1.5)(2.5,2.5)
					\psframe*(3.5,1.5)(4.5,2.5)
					\psframe*(5.5,1.5)(6.5,2.5)
					%\psframe*(7.5,1.5)(8.5,2.5)
					
					\psframe*(0.5,2.5)(1.5,3.5)
					\psframe*(2.5,2.5)(3.5,3.5)
					\psframe*(4.5,2.5)(5.5,3.5)
					\psframe*(6.5,2.5)(7.5,3.5)
					
					\psframe*(1.5,3.5)(2.5,4.5)
					\psframe*(3.5,3.5)(4.5,4.5)
					\psframe*(5.5,3.5)(6.5,4.5)
					
					\psframe*(2.5,4.5)(3.5,5.5)
					\psframe*(4.5,4.5)(5.5,5.5)
					
					%\psframe*(3.5,5.5)(4.5,6.5)
				}
				
				{\psset{linestyle=dashed,dash=2pt 1pt}
					\psline(0.5,2)(1,2)
					\pscurve(2,2)(2.25,2.2)(3,2.2)(3.25,2)
					\psline(2.75,2)(2,3)
					\psline(4.75,2)(3.75,3)
					\psline(5.25,2)(4.25,3)
					\psline(6.75,2)(5.75,3)
					\psline(7.25,2)(6.25,3)
					
					\psline(2.75,0)(1.75,1)
					\psline(3.25,0)(2.25,1)
					\psline(4.75,0)(3.75,1)
					\psline(5.25,0)(4.25,1)
					
					\psline(2,4)(3,4)
					\pscurve(4,4)(4.25,4.2)(5,4.2)(5.25,4)
					\psline(4.75,4)(4,5)
					
					%% lose ends
					\psline(4.25,-1.5)(3.75,-1)
					\psline(4.75,-1.5)(4.25,-1)
					\psline(6.25,0.5)(5.75,1)
					\psline(6.75,0.5)(6.25,1)
					
				}
				
				\psline(1.75,1)(1,2)
				\psline(2.25,1)(2,2)
				\psline(3.75,1)(2.75,2)
				\psline(4.25,1)(3.25,2)
				\psline(5.75,1)(4.75,2)
				\psline(6.25,1)(5.25,2)
				
				\psline(3.75,-1)(2.75,0)
				\psline(4.25,-1)(3.25,0)
				
				\psline(5.25,-0.5)(4.75,0)
				\psline(5.75,-0.5)(5.25,0)
				\psline(7.25,1.5)(6.75,2)
				\psline(7.75,1.5)(7.25,2)
				
				\psline(2,3)(2,4)
				\psline(3.75,3)(3,4)
				\psline(4.25,3)(4,4)
				\psline(5.75,3)(4.75,4)
				\psline(6.25,3)(5.25,4)
				
				% loose ends
				\psline(4,5)(4,5.5)

				\psdots[linecolor=red]%
				(1.75,1)(2.25,1)%
				(3.75,1)(4.25,1)%
				(5.75,1)(6.25,1)%
				(2,3)%
				(3.75,3)(4.25,3)%
				(5.75,3)(6.25,3)%
				(4,5)
				(3.75,-1)(4.25,-1)%
				
				{\tiny
					\uput{0.15}[180]{180}(2,3){eo}
					\uput{0.15}[180]{180}(4,5){eo}
				}
				
				\psdots[linecolor=darkgreen]
				(1,2)%
				(2.75,2)(3.25,2)%
				(4.75,2)(5.25,2)%
				(6.75,2)(7.25,2)%
				(3,4)%
				(4.75,4)(5.25,4)%
				(2.75,0)(3.25,0)%
				(4.75,0)(5.25,0)%
				
				{\tiny
					\uput{0.15}[90]{180}(1,2){oo}
					\uput{0.15}[90]{180}(3,4){oo}
				}
				
				\psdots[linecolor=blue]
				(2,2)%
				(2,4)(4,4)%	
				
				{\tiny
					\uput{0.15}[135]{180}(2,2){e}
					\uput{0.15}[135]{180}(2,4){o}
					\uput{0.15}[135]{180}(4,4){e}
				}
			}}
			
			% top right
			\rput(12.5,7){\psrotate(0,0){180}{
					
					{\psset{linecolor=lightgray}
						\psframe*(-0.5,-0.5)(0.5,0.5)
						\psframe*(1.5,-0.5)(2.5,0.5)
						\psframe*(3.5,-0.5)(4.5,0.5)
						\psframe*(5.5,-0.5)(6.5,0.5)
						\psframe*(7.5,-0.5)(8.5,0.5)
						
						\psframe*(0.5,0.5)(1.5,1.5)
						\psframe*(2.5,0.5)(3.5,1.5)
						\psframe*(4.5,0.5)(5.5,1.5)
						\psframe*(6.5,0.5)(7.5,1.5)
						
						\psframe*(-0.5,1.5)(0.5,2.5)
						\psframe*(1.5,1.5)(2.5,2.5)
						\psframe*(3.5,1.5)(4.5,2.5)
						\psframe*(5.5,1.5)(6.5,2.5)
						
						\psframe*(0.5,2.5)(1.5,3.5)
						\psframe*(2.5,2.5)(3.5,3.5)
						\psframe*(4.5,2.5)(5.5,3.5)
						
						\psframe*(1.5,3.5)(2.5,4.5)
						\psframe*(3.5,3.5)(4.5,4.5)
						
						\psframe*(2.5,4.5)(3.5,5.5)
					}
					
					{\psset{linestyle=dashed,dash=2pt 1pt}

						\psline(1,0)(0,1)
						
						\psline(3,0)(2.25,1)
						\psline(5,0)(4.25,1)
						\psline(7,0)(6.25,1)
						\psline(9,0)(8.25,1)
						
						\psline(2,0)(1.75,1)
						\psline(4,0)(3.75,1)
						\psline(6,0)(5.75,1)
						\psline(8,0)(7.75,1)
						
						\psline(0,2)(1,2)
						\pscurve(2,2)(2.25,2.2)(3,2.2)(3.25,2)
						\psline(2.75,2)(2,3)
						\psline(4.75,2)(3.75,3)
						\psline(5.25,2)(4.25,3)
						\psline(6.75,2)(5.75,3)
						\psline(7.25,2)(6.25,3)
						
						\psline(2,4)(3,4)
						\pscurve(4,4)(4.25,4.2)(5,4.2)(5.25,4)
						\psline(4.75,4)(4,5)
						
						%% arrows
						\pscurve{->}(5,0)(4.75,0.1)(4,0.8)(3.75,1)
						\pscurve{->}(9,0)(8.75,0.1)(8,0.8)(7.75,1)
						\pscurve{->}(4.25,1)(4.2,0.5)(4,0)
						\pscurve{->}(8.25,1)(8.2,0.5)(8,0)
					}
					\psline(1,0)(2,0)
					\psline(3,0)(4,0)
					\psline(5,0)(6,0)
					\psline(7,0)(8,0)
					
					\psline(0,1)(0,2)
					\psline(1.75,1)(1,2)
					\psline(2.25,1)(2,2)
					\psline(3.75,1)(2.75,2)
					\psline(4.25,1)(3.25,2)
					\psline(5.75,1)(4.75,2)
					\psline(6.25,1)(5.25,2)
					\psline(7.75,1)(6.75,2)
					\psline(8.25,1)(7.25,2)
					
					\psline(2,3)(2,4)
					\psline(3.75,3)(3,4)
					\psline(4.25,3)(4,4)
					\psline(5.75,3)(4.75,4)
					\psline(6.25,3)(5.25,4)
					
					% loose ends
					\psline(4,5)(4,5.5)
					\psline(9,0)(9.5,0)

					\psdots[linecolor=red]%
					(0,1)%
					(1.75,1)(2.25,1)%
					(3.75,1)(4.25,1)%
					(5.75,1)(6.25,1)%
					(7.75,1)(8.25,1)%
					(2,3)%
					(3.75,3)(4.25,3)%
					(5.75,3)(6.25,3)%
					(4,5)
					
					{\tiny
						\uput{0.15}[180]{180}(1.75,1){ee}
						\uput{0.15}[0]{180}(2.25,1){oo}
						\uput{0.15}[180]{180}(3.75,1){eo}
						\uput{0.15}[0]{180}(4.25,1){oe}
						\uput{0.15}[180]{180}(5.75,1){ee}
						\uput{0.15}[0]{180}(6.25,1){oo}
						\uput{0.15}[180]{180}(7.75,1){eo}
						\uput{0.15}[0]{180}(8.25,1){oe}
						
						\uput{0.15}[180]{180}(2,3){eo}
						\uput{0.15}[180]{180}(4,5){eo}
					}
					
					\psdots[linecolor=darkgreen]
					(1,0)%
					(3,0)%
					(5,0)%
					(7,0)%
					(9,0)%
					(1,2)%
					(2.75,2)(3.25,2)%
					(4.75,2)(5.25,2)%
					(6.75,2)(7.25,2)%
					(3,4)%
					(4.75,4)(5.25,4)%
					
					{\tiny
						\uput{0.15}[90]{180}(1,2){oo}
						\uput{0.15}[90]{180}(3,4){oo}
					}
					
					\psdots[linecolor=blue]
					(2,0)(4,0)(6,0)(8,0)%
					(0,2)(2,2)%
					(2,4)(4,4)%		
					
					{\tiny
						\uput{0.15}[-90]{180}(2,0){e}
						\uput{0.15}[-90]{180}(4,0){o}
						\uput{0.15}[-90]{180}(6,0){e}
						\uput{0.15}[-90]{180}(8,0){o}
						
						\uput{0.15}[135]{180}(0,2){o}
						\uput{0.15}[135]{180}(2,2){e}
						\uput{0.15}[135]{180}(2,4){o}
						\uput{0.15}[135]{180}(4,4){e}
					}
				}}

				\end{pspicture}
				\caption{The last step of the calculation of the peculiar modules of $(2n,-(2m+1))$-pretzel tangles. Some of the vertices are labelled according to the parity of the indices of the generators they correspond to, where ``e'' stands for ``even'' and ``o'' for ``odd''. }\label{fig:PreResultPretzels}
			\end{figure}
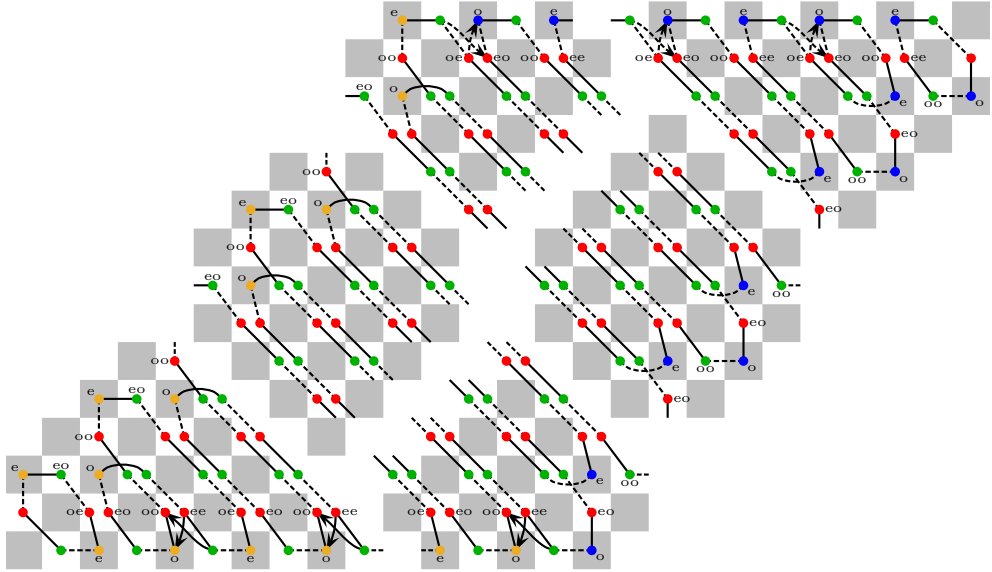

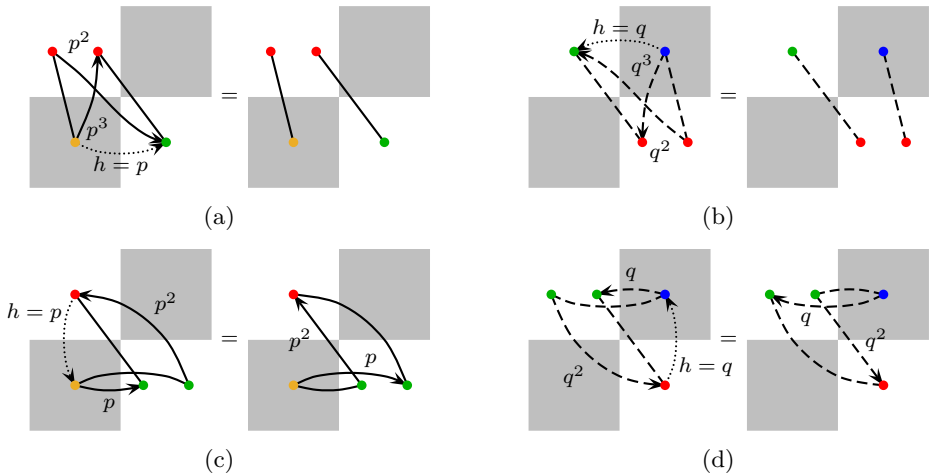
\begin{figure}
	\centering
	\psset{unit=1.2}
	\begin{subfigure}[b]{0.45\textwidth}\centering
	\begin{pspicture}(-2.8,-1)(2.8,1)
		\rput(-1.2,0){
			{\psset{linecolor=lightgray}
				\psframe*(0,0)(1,1)
				\psframe*(0,0)(-1,-1)
			}
			
			\psline(-0.5,-0.5)(-0.75,0.5)
			\psline(0.5,-0.5)(-0.25,0.5)
			
			%% arrows
			\pscurve{<-}(0.5,-0.5)(0.25,-0.4)(-0.5,0.3)(-0.75,0.5)
			\pscurve{<-}(-0.25,0.5)(-0.3,0)(-0.5,-0.5)
	
			\pscurve[linestyle=dotted,dotsep=1pt]{->}(-0.5,-0.5)(-0.25,-0.6)(0.25,-0.6)(0.5,-0.5)
	
			\psdots[linecolor=red](-0.75,0.5)(-0.25,0.5)
			\psdots[linecolor=darkgreen](0.5,-0.5)
			\psdots[linecolor=gold](-0.5,-0.5)
			
			\rput[t](0,-0.65){$h=p$}
			\rput[c](-0.25,-0.35){$p^3$}
			\rput[c](-0.45,0.6){$p^2$}
		}
		\rput(0,0){$=$}
		\rput(1.2,0){
			{\psset{linecolor=lightgray}
				\psframe*(0,0)(1,1)
				\psframe*(0,0)(-1,-1)
			}
			
			\psline(-0.5,-0.5)(-0.75,0.5)
			\psline(0.5,-0.5)(-0.25,0.5)
			
			\psdots[linecolor=red](-0.75,0.5)(-0.25,0.5)
			\psdots[linecolor=darkgreen](0.5,-0.5)
			\psdots[linecolor=gold](-0.5,-0.5)
		}
	\end{pspicture}
	\caption{}\label{fig:CalculationPretzelsBasicHomotopies1}
	\end{subfigure}
	\begin{subfigure}[b]{0.45\textwidth}\centering
		\begin{pspicture}(-2.8,-1)(2.8,1)
		\psset{linestyle=dashed,dash=4pt 2pt}
		\rput{180}(-1.2,0){
			{\psset{linecolor=lightgray}
				\psframe*(0,0)(1,1)
				\psframe*(0,0)(-1,-1)
			}
			
			\psline(-0.5,-0.5)(-0.75,0.5)
			\psline(0.5,-0.5)(-0.25,0.5)
			
			%% arrows
			\pscurve{<-}(0.5,-0.5)(0.25,-0.4)(-0.5,0.3)(-0.75,0.5)
			\pscurve{<-}(-0.25,0.5)(-0.3,0)(-0.5,-0.5)
			
			\pscurve[linestyle=dotted,dotsep=1pt]{->}(-0.5,-0.5)(-0.25,-0.6)(0.25,-0.6)(0.5,-0.5)
			
			\psdots[linecolor=red](-0.75,0.5)(-0.25,0.5)
			\psdots[linecolor=darkgreen](0.5,-0.5)
			\psdots[linecolor=blue](-0.5,-0.5)
			
			\rput[b]{180}(0,-0.65){$h=q$}
			\rput[c]{180}(-0.25,-0.35){$q^3$}
			\rput[c]{180}(-0.45,0.6){$q^2$}
		}
		\rput(0,0){$=$}
		\rput{180}(1.2,0){
			{\psset{linecolor=lightgray}
				\psframe*(0,0)(1,1)
				\psframe*(0,0)(-1,-1)
			}
			
			\psline(-0.5,-0.5)(-0.75,0.5)
			\psline(0.5,-0.5)(-0.25,0.5)
			
			\psdots[linecolor=red](-0.75,0.5)(-0.25,0.5)
			\psdots[linecolor=darkgreen](0.5,-0.5)
			\psdots[linecolor=blue](-0.5,-0.5)
		}
		\end{pspicture}
		\caption{}\label{fig:CalculationPretzelsBasicHomotopies2}
	\end{subfigure}	
	\\
	\begin{subfigure}[b]{0.45\textwidth}\centering
		\begin{pspicture}(-2.8,-1)(2.8,1)
		\rput(-1.2,0){
			{\psset{linecolor=lightgray}
				\psframe*(0,0)(1,1)
				\psframe*(0,0)(-1,-1)
			}
			
			\psline(0.25,-0.5)(-0.5,0.5)
			\pscurve(-0.5,-0.5)(-0.25,-0.4)(0.5,-0.4)(0.75,-0.5)
			
			%% arrows
			%\pscurve{<-}(0.25,-0.5)(0.45,-0.54)(0.55,-0.54)(0.75,-0.5)
			\pscurve{<-}(0.25,-0.5)(0,-0.55)(-0.25,-0.55)(-0.5,-0.5)
			\pscurve{->}(0.75,-0.5)(0.5,0)(0,0.4)(-0.5,0.5)
			
			\pscurve[linestyle=dotted,dotsep=1pt]{<-}(-0.5,-0.5)(-0.6,-0.25)(-0.6,0.25)(-0.5,0.5)
			
			\psdot[linecolor=red](-0.5,0.5)
			\psdot[linecolor=darkgreen](0.25,-0.5)
			\psdot[linecolor=darkgreen](0.75,-0.5)
			\psdot[linecolor=gold](-0.5,-0.5)
			
			\rput[r](-0.65,0.3){$h=p$}
			\rput[c](0.5,0.4){$p^2$}
			\rput[t](-0.125,-0.65){$p$}
		}
		\rput(0,0){$=$}
		\rput(1.2,0){
			{\psset{linecolor=lightgray}
				\psframe*(0,0)(1,1)
				\psframe*(0,0)(-1,-1)
			}
			
			\psline{->}(0.25,-0.5)(-0.5,0.5)
			\pscurve{->}(-0.5,-0.5)(-0.25,-0.4)(0.5,-0.4)(0.75,-0.5)
			
			%% arrows
			\pscurve(0.25,-0.5)(0,-0.55)(-0.25,-0.55)(-0.5,-0.5)
			\pscurve(0.75,-0.5)(0.5,0)(0,0.4)(-0.5,0.5)
			
		%	\pscurve[linestyle=dotted,dotsep=1pt]{<-}(-0.5,-0.5)(-0.6,-0.25)(-0.6,0.25)(-0.5,0.5)
			
			\psdot[linecolor=red](-0.5,0.5)
			\psdot[linecolor=darkgreen](0.25,-0.5)
			\psdot[linecolor=darkgreen](0.75,-0.5)
			\psdot[linecolor=gold](-0.5,-0.5)
			
			\rput[r](-0.3,0){$p^2$}
			\rput[b](0.35,-0.3){$p$}
		}
		\end{pspicture}
		\caption{}\label{fig:CalculationPretzelsBasicHomotopies3}
	\end{subfigure}
	\begin{subfigure}[b]{0.45\textwidth}\centering
		\begin{pspicture}(-2.8,-1)(2.8,1)
		\psset{linestyle=dashed,dash=4pt 2pt}
		\rput(0,0){$=$}
		\rput{180}(1.2,0){
			{\psset{linecolor=lightgray}
				\psframe*(0,0)(1,1)
				\psframe*(0,0)(-1,-1)
			}
			
			\psline{->}(0.25,-0.5)(-0.5,0.5)
			\pscurve{->}(-0.5,-0.5)(-0.25,-0.4)(0.5,-0.4)(0.75,-0.5)
			
			%% arrows
			\pscurve(0.25,-0.5)(0,-0.55)(-0.25,-0.55)(-0.5,-0.5)
			\pscurve(0.75,-0.5)(0.5,0)(0,0.4)(-0.5,0.5)
			
			%	\pscurve[linestyle=dotted,dotsep=1pt]{<-}(-0.5,-0.5)(-0.6,-0.25)(-0.6,0.25)(-0.5,0.5)
			
			\psdot[linecolor=red](-0.5,0.5)
			\psdot[linecolor=darkgreen](0.25,-0.5)
			\psdot[linecolor=darkgreen](0.75,-0.5)
			\psdot[linecolor=blue](-0.5,-0.5)
			
			\rput[l]{180}(-0.3,0){$q^2$}
			\rput[t]{180}(0.35,-0.3){$q$}
		}
		\rput{180}(-1.2,0){
			{\psset{linecolor=lightgray}
				\psframe*(0,0)(1,1)
				\psframe*(0,0)(-1,-1)
			}
			
			\psline(0.25,-0.5)(-0.5,0.5)
			\pscurve(-0.5,-0.5)(-0.25,-0.4)(0.5,-0.4)(0.75,-0.5)
			
			%% arrows
			%\pscurve{<-}(0.25,-0.5)(0.45,-0.54)(0.55,-0.54)(0.75,-0.5)
			\pscurve{<-}(0.25,-0.5)(0,-0.55)(-0.25,-0.55)(-0.5,-0.5)
			\pscurve{->}(0.75,-0.5)(0.5,0)(0,0.4)(-0.5,0.5)
			
			\pscurve[linestyle=dotted,dotsep=1pt]{<-}(-0.5,-0.5)(-0.6,-0.25)(-0.6,0.25)(-0.5,0.5)
			
			\psdot[linecolor=red](-0.5,0.5)
			\psdot[linecolor=darkgreen](0.25,-0.5)
			\psdot[linecolor=darkgreen](0.75,-0.5)
			\psdot[linecolor=blue](-0.5,-0.5)
			
			\rput[l]{180}(-0.65,0.3){$h=q$}
			\rput[c]{180}(0.5,0.4){$q^2$}
			\rput[b]{180}(-0.125,-0.65){$q$}
		}
		\end{pspicture}
		\caption{}\label{fig:CalculationPretzelsBasicHomotopies4}
	\end{subfigure}
	\caption{The morphisms $h$ for the final step of the proof of Theorem~\ref{thm:pretzeltangleCalc}.}\label{fig:CalculationPretzelsBasicHomotopies}
	\end{figure}

\begin{corollary}\label{cor:pretzeltangleMutation}
	Mutation about \((2n,-(2m+1))\)-pretzel tangles for \(n,m>0\), oriented as in Figure~\ref{fig:pretzeltangle2nm2mp1}, preserves bigraded link Floer homology, after identifying the Alexander gradings of the two open strands. If we reverse the orientation of one of the two strands, mutation in general only preserves \(\delta\)-graded link Floer homology.
\end{corollary}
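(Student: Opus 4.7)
The plan is to apply Theorem~\ref{thm:GeneralMutationInvariance} using the explicit peculiar modules furnished by Theorem~\ref{thm:pretzeltangleCalc}. Concretely, I need to produce a chain isomorphism between $\CFTd(T)$ and $\CFTd(T')$, where $T'$ is the tangle obtained from the $(2n,-(2m+1))$-pretzel tangle $T$ by interchanging sites $a\leftrightarrow c$ and $b\leftrightarrow d$, with orientations matched as in Theorem~\ref{thm:GeneralMutationInvariance}, and then to track exactly how much of the bigrading survives.

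The key geometric observation is that each diagram in Figure~\ref{fig:ResultPretzels} admits a $\pi$-rotational symmetry about its centre. Under this rotation, red (site $a$) generators are exchanged with green (site $c$) generators and blue (site $b$) generators are exchanged with gold (site $d$) generators, exactly realising the site relabelling $a\leftrightarrow c$, $b\leftrightarrow d$. The solid edges, which encode arrow pairs labelled by powers of $p$, map to solid edges with labels shifted by $p_i\mapsto p_{i+2}$, and likewise the dashed $q$-edges shift by $q_i\mapsto q_{i+2}$; these shifts are precisely compatible with the site cycling. I would verify by inspection, one local configuration at a time, that this $\pi$-rotation is actually a chain isomorphism rather than merely a symmetry of the vertex set; most of the work amounts to checking that the ``connecting diagonals'' interpolating between the corner chunks displayed in Figure~\ref{fig:ResultPretzels} are themselves carried to one another.

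Next, I would analyse the gradings. The $\delta$-grading is manifestly preserved because all generators of a given colour sit in a single $\delta$-grading. Reading the labelled corners of Figure~\ref{fig:ResultPretzels}, the $\pi$-symmetry sends a generator with Alexander grading $(A_{t_1},A_{t_2})$ to one with grading $(-A_{t_1},-A_{t_2})$. Now for the oriented tangle of Figure~\ref{fig:INTROpretzeltangle2nm2mp1}, both open strands run in the same direction, so matching orientations at the tangle ends after mutation forces one to reverse the orientations of \emph{both} open strands in $T'$; combined with the strand swap $t_1\leftrightarrow t_2$ induced by the relabelling, the map comparing $\CFTd(T)$ and $\CFTd(T')$ acts on Alexander gradings by $(A_{t_1},A_{t_2})\mapsto(-A_{t_2},-A_{t_1})$. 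This coincides with the $\pi$-symmetry exactly on the diagonal $A_{t_1}=A_{t_2}$, i.e.\ after identifying the two colours, yielding the bigraded statement. If instead one of the two strands is reversed, matching orientations requires no global reversal and mutation acts by $(A_{t_1},A_{t_2})\mapsto(A_{t_2},A_{t_1})$; this and the $\pi$-symmetry $(A_{t_1},A_{t_2})\mapsto(-A_{t_1},-A_{t_2})$ agree only after collapsing to $\delta$, giving the weaker conclusion.

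The principal obstacle is the first step: unwinding Figure~\ref{fig:ResultPretzels} enough to confirm that the $\pi$-symmetry really is a chain isomorphism, including on the arrows connecting the four corner regions. Once this symmetry is in hand, the grading bookkeeping above is straightforward and Theorem~\ref{thm:GeneralMutationInvariance} immediately delivers the stated mutation invariance of bigraded (respectively $\delta$-graded) link Floer homology.
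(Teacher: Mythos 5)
Your proposal is correct and follows essentially the same route as the paper: read the symmetry of the explicitly computed invariants in Figure~\ref{fig:ResultPretzels} (the paper phrases it as ``site relabelling $=$ recolouring $\textcolor{red}{\bullet}\leftrightarrow\textcolor{darkgreen}{\bullet}$, $\textcolor{blue}{\bullet}\leftrightarrow\textcolor{gold}{\bullet}$'' and ``reversal of both strands $=$ rotation of the chessboard by $\pi$''), feed it into Theorem~\ref{thm:GeneralMutationInvariance}, and track the Alexander gradings along the two families of diagonals to distinguish the bigraded from the $\delta$-graded conclusion. Your grading bookkeeping and the observation that one must reverse both strands for the parallel orientation (and neither for the antiparallel one) match the paper's argument exactly.
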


\begin{proof}[of Corollary~\ref{cor:pretzeltangleMutation}]
	The invariants of the \((2n,-(2m+1))\)-pretzel tangles simply have the desired symmetry. This can be seen as follows: 
	in terms of the loops on our infinite chessboard, a relabelling of the sites as in Theorem~\ref{thm:GeneralMutationInvariance} corresponds to a recolouring of the vertices: $\textcolor{red}{\bullet}\leftrightarrow\textcolor{darkgreen}{\bullet}$ and $\textcolor{blue}{\bullet}\leftrightarrow\textcolor{gold}{\bullet}$. An orientation reversal of both tangle strands corresponds to a rotation of the chessboard by $\pi$. After identifying the Alexander gradings of the two tangle strands, all generators on the diagonals from bottom-left to top-right have the same Alexander grading. Finally, if we reverse the orientation of one strand, we do not need to rotate the curves, but the generators in the same Alexander gradings now sit on the diagonals that go from top-left to the bottom-right.
\end{proof}

\begin{proof}[of Theorem~\ref{thm:pretzeltangleCalc}]
	The generators of the peculiar module can already be determined from the decategorified invariants and from the observation of two obvious differentials that can be cancelled as in Example~\ref{exa:HFTdpretzeltangle}. Thus, the vertices of the graphs in Figure~\ref{fig:ResultPretzels} are fixed. What we need to decide is how they are connected. Because of the restrictions given by the gradings, for most cases, there is only one way to connect them such that the result is a peculiar module. The only question is how the diagonal strings of red and green generators connect the generators on the top left to the generators on the bottom right of each of the subfigures of Figure~\ref{fig:ResultPretzels}. 
	For this, we are going to apply the algorithm from Corollary~\ref{cor:PeculiarModulesFromNiceDiagrams}, setting $p_1=0$ and $q_2=0$. 
	
	We start with a Heegaard diagram obtained by glueing two Heegaard diagrams for the rational tangles with $2n$~twist, respectively $-(2m+1)$~twist together. We can niceify the diagram by doing two handleslides of the $\beta$-curve for the $2n$-twist rational tangle across the other $\beta$-curve. The result is shown in Figure~\ref{fig:CalculationPretzelsStep1HD}. The generators of this diagram are shown in Figure~\ref{fig:CalculationPretzelsStep1Gens}. The generators of the second and third row are the ones that were created during the first and second handleslide, respectively, and thus can be cancelled along the identity arrows connecting those generators of the same site and with the same indices. 
	
	Note that the nice Heegaard diagram has the same symmetry as the tangles, and thus the complex inherits this symmetry. More precisely, the Heegaard diagram remains invariant under the following operation: in the names of the generators, exchange the letters $d$ and $b$, exchange underlining and overlining, replace $i$ by $2n+1-i$, and $j$ and $k$ by $2m+2-j$ and $2m+2-k$, respectively. Finally, in the algebra, exchange $p_2$ and $q_1$, as well as $p_3$ and $q_4$, and $q_3$ and $p_4$. This symmetry corresponds to mutation about the horizontal axis, which leaves the tangle invariant up to exchanging the two sites $b$ and $d$. We will use this symmetry in the following to simplify some parts of the computation. 
	
	In Figures~\ref{fig:CalculationPretzelsAStep2generic} and \ref{fig:CalculationPretzelsAStep2MaximumP}, we compute all differentials that start/end at some selected generators, which are enclosed in those figures by boxes. Note that in all figures, generators in the same shaded regions share the same Alexander bigrading. Since the Heegaard diagram is nice, the only contributing domains are bigons and squares, so the computation is purely combinatorial and straightforward. We therefore ask the reader to check for themselves that indeed, the differentials starting/ending at all marked generators in those figures are included. (There are two observations that one might find useful when determining the contributing differentials: firstly, the only bigons in the diagram contribute arrows labelled by the elementary algebra elements $q_3$ and $p_4$. Secondly, all other contributions come from squares, which necessarily have boundary on both $\beta$-curves.)

	Next, we consider the effect of cancelling generators, first those corresponding to undoing the handleslides and then any remaining identity arrows. Obviously, the pictures in Figure~\ref{fig:CalculationPretzelsAStep2generic} do not change. 
	In Figure~\ref{fig:CalculationPretzelsStep2BottomEnd}, cancellation only contributes an arrow $p_{23}\co\textcolor{darkgreen}{x_{1}c_{2m+1}}\rightarrow\textcolor{red}{a_{1}y_{2m+1}}$. The only possible arrow labelled by a power of $p$ leaving $\textcolor{gold}{d'y_{2m+1}}$ can go to $\textcolor{red}{a_{1}y_{2m+1}}$. Because of the $\partial^2$-relation in the peculiar module, this arrow has to be there. 
	Similarly, we can argue for Figure~\ref{fig:CalculationPretzelsStep2BottomOdd}. Cancellation contributes an arrow $p_{23}\co\textcolor{darkgreen}{x_{1}c_{2j+1}}\rightarrow\textcolor{red}{a_{1}y_{2j+1}}$. It might also contribute another arrow, $p_{3}\co\textcolor{darkgreen}{x_{1}c_{2j+1}}\rightarrow\textcolor{gold}{d'y_{2j+2}}$, stemming from the arrow $q_{4}\co\textcolor{darkgreen}{\overline{y}_{1}c_{2j+2}}\rightarrow\textcolor{gold}{\overline{d}y_{2j+3}}$; however, the $\partial^2$-relation at $\textcolor{darkgreen}{x_{1}c_{2j+1}}$ in the peculiar module tells us that this does not happen. Again, there has to be a contribution $p_{234}\co\textcolor{gold}{d'y_{2j+1}}\rightarrow\textcolor{red}{a_{1}y_{2j+1}}$. 
	In Figure~\ref{fig:CalculationPretzelsStep2TopEven}, there is no arrow $\textcolor{darkgreen}{x_{1}c_{2j}}\rightarrow\textcolor{red}{a_{1}y_{2j}}$, but again, the cancellation must contribute an arrow $p_{234}\co\textcolor{gold}{d'y_{2j}}\rightarrow\textcolor{red}{a_{1}y_{2j}}$. 
	Finally, in Figure~\ref{fig:CalculationPretzelsStep2GoldCancel}, we cancel two arrows, namely $\textcolor{gold}{x_1d}\rightarrow\textcolor{gold}{d'y_1}$ and $\textcolor{darkgreen}{\overline{y}_1c_2}\rightarrow\textcolor{darkgreen}{\overline{c}_2y_1}$. This only contributes one arrow, namely  $p_{23}\co\textcolor{darkgreen}{x_{1}c_1}\rightarrow\textcolor{red}{a_1y_{1}}$.
	
	Similarly, we argue for those generators in minimal Alexander grading corresponding to the colour $t_1$. Alternatively, we may apply the symmetry of the Heegaard diagram. The corresponding subcomplexes after cancellation are shown in Figure~\ref{fig:CalculationPretzelsAStep2MinimumP}.

	We have now obtained a reduced complex. To this, we add some arrows labelled by basic algebra elements that lie in the kernel of $\Ad\rightarrow\Ad/(p_1=0=q_2)$ to turn the complex into the peculiar module shown in Figure~\ref{fig:PreResultPretzels}.
	To see that this is indeed the result of putting all pieces together, one might for example start at the ends of Figure~\ref{fig:CalculationPretzelsStep2GoldCancel} and~\ref{fig:CalculationPretzelsStep2BlueCancel} which sit at the bottom left and top right corners of Figure~\ref{fig:PreResultPretzels}, respectively, and then connect these subcomplexes. To see how the diagonal strings of red and green generators connect the generators on the top left to the generators on the bottom right of Figure~\ref{fig:PreResultPretzels}, 
	one can consider the parity of the generator indices, some of which are marked in the figure. 
	
	We can now apply the Clean-Up Lemma (\ref{lem:AbstractCleanUp}) using the morphisms $h$ shown in Figure~\ref{fig:CalculationPretzelsBasicHomotopies} to obtain the desired result. 
\end{proof}

\newcommand*{\arxiv}[1]{(\href{http://arxiv.org/abs/#1}{arXiv:#1})}
\bibliographystyle{abbrv}
\bibliography{PeculiarModules}

\begin{thebibliography}{10}

\bibitem{AbouzaidSurfaces}
M.~Abouzaid.
\newblock On the {F}ukaya categories of higher genus surfaces.
\newblock {\em Adv. Math.}, 217(3):1192--1235, 2008.
\newblock \arxiv{math/0606598}.

\bibitem{Abouzaid}
M.~Abouzaid, D.~Auroux, A.~I. Efimov, L.~Katzarkov, and D.~Orlov.
\newblock Homological mirror symmetry for punctured spheres.
\newblock {\em J. Amer. Math. Soc.}, 26(4):1051--1083, 2013.
\newblock \arxiv{1103.4322v2}.

\bibitem{Alexander}
J.~W. Alexander.
\newblock Topological invariants of knots and links.
\newblock {\em Trans. Amer. Math. Soc.}, 30(2):275--306, 1928.

\bibitem{EftekharyAlishahiSFT}
A.~S. Alishahi and E.~Eftekhary.
\newblock A refinement of sutured {F}loer homology.
\newblock {\em J. Symplectic Geom.}, 13(3):609--743, 2015.
\newblock \arxiv{1112.3540}.

\bibitem{EftekharyAlishahiTangles}
A.~S. Alishahi and E.~Eftekhary.
\newblock Tangle {F}loer homology and cobordisms between tangles.
\newblock {\em arXiv preprint}, 2016.
\newblock \arxiv{1610.07122}.

\bibitem{BaldwinLevine}
J.~A. Baldwin and A.~S. Levine.
\newblock A combinatorial spanning tree model for knot {F}loer homology.
\newblock {\em Adv. Math.}, 231(3-4):1886--1939, 2012.
\newblock \arxiv{1105.5199v3}.

\bibitem{BarNatanKhT}
D.~Bar-Natan.
\newblock Khovanov's homology for tangles and cobordisms.
\newblock {\em Geom. Topol.}, 9:1443--1499, 2005.
\newblock \arxiv{math/0410495v2}.

\bibitem{BarNatanBurgosSoto}
D.~Bar-Natan and H.~Burgos-Soto.
\newblock Khovanov homology for alternating tangles.
\newblock {\em J. Knot Theory Ramifications}, 23(2):1450013, 22, 2014.
\newblock \arxiv{1305.1695}.

\bibitem{DecatCTFH}
A.~P. Ellis, I.~Petkova, and V.~Vértesi.
\newblock Quantum $\mathfrak{gl}(1|1)$ and tangle {F}loer homology.
\newblock {\em arXiv preprint}, 2015.
\newblock \arxiv{1510.03483v1}.

\bibitem{Kontsevich}
F.~Haiden, L.~Katzarkov, and M.~Kontsevich.
\newblock Flat surfaces and stability structures.
\newblock {\em Publ. Math. Inst. Hautes \'{E}tudes Sci.}, 126:247--318, 2017.
\newblock \arxiv{1409.8611v2}.

\bibitem{Hanselman}
J.~Hanselman.
\newblock Bordered {H}eegaard {F}loer homology and graph manifolds.
\newblock {\em Algebr. Geom. Topol.}, 16(6):3103--3166, 2016.
\newblock \arxiv{1310.6696}.

\bibitem{HRW}
J.~Hanselman, J.~A. Rasmussen, and L.~Watson.
\newblock Bordered {F}loer homology for manifolds with torus boundary via
  immersed curves.
\newblock {\em arXiv preprint}, 2016.
\newblock \arxiv{1604.03466v2}.

\bibitem{HanselmanWatson}
J.~Hanselman and L.~Watson.
\newblock A calculus for bordered {F}loer homology.
\newblock {\em arXiv preprint}, 2015.
\newblock \arxiv{1508.05445v1}.

\bibitem{HHK13}
M.~Hedden, C.~M. Herald, and P.~Kirk.
\newblock The pillowcase and perturbations of traceless representations of knot
  groups.
\newblock {\em Geom. Topol.}, 18(1):211--287, 2014.
\newblock \arxiv{1301.0164v1}.

\bibitem{KhRoz}
M.~Khovanov and L.~Rozansky.
\newblock Matrix factorizations and link homology.
\newblock {\em Fund. Math.}, 199(1):1--91, 2008.
\newblock \arxiv{math/0401268v2}.

\bibitem{LambertCole2}
P.~Lambert-Cole.
\newblock On {C}onway mutation and link homology.
\newblock {\em arXiv preprint}, 2017.
\newblock \arxiv{1701.00880}.

\bibitem{LambertCole1}
P.~Lambert-Cole.
\newblock Twisting, mutation and knot {F}loer homology.
\newblock {\em Quantum Topol.}, 9(4):749--774, 2018.
\newblock \arxiv{1608.02011}.

\bibitem{CylindricalReformulation}
R.~Lipshitz.
\newblock A cylindrical reformulation of {H}eegaard {F}loer homology.
\newblock {\em Geom. Topol.}, 10:955--1096, 2006.
\newblock \arxiv{math/0502404v2}.

\bibitem{LOTBimodules}
R.~Lipshitz, P.~S. Ozsv\'{a}th, and D.~P. Thurston.
\newblock Bimodules in bordered {H}eegaard {F}loer homology.
\newblock {\em Geom. Topol.}, 19(2):525--724, 2015.
\newblock \arxiv{1003.0598v4}.

\bibitem{LOT}
R.~Lipshitz, P.~S. Ozsv\'{a}th, and D.~P. Thurston.
\newblock Bordered {H}eegaard {F}loer homology.
\newblock {\em Mem. Amer. Math. Soc.}, 254(1216):viii+279, 2018.
\newblock \arxiv{0810.0687v5}.

\bibitem{ManionDecat}
A.~Manion.
\newblock On the decategorification of {O}zsv\'{a}th and {S}zab\'{o}'s bordered
  theory for knot {F}loer homology.
\newblock {\em Quantum Topol.}, 10(1):77--206, 2019.
\newblock \arxiv{1611.08001}.

\bibitem{Manolescu}
C.~Manolescu.
\newblock An unoriented skein exact triangle for knot {F}loer homology.
\newblock {\em Math. Res. Lett.}, 14(5):839--852, 2007.
\newblock \arxiv{math/0609531v3}.

\bibitem{OSSHFS}
P.~Ozsv\'{a}th, A.~Stipsicz, and Z.~Szab\'{o}.
\newblock {F}loer homology and singular knots.
\newblock {\em J. Topol.}, 2(2):380--404, 2009.
\newblock \arxiv{0705.2661v3}.

\bibitem{OSHFKalt}
P.~Ozsv\'{a}th and Z.~Szab\'{o}.
\newblock {H}eegaard {F}loer homology and alternating knots.
\newblock {\em Geom. Topol.}, 7:225--254, 2003.
\newblock \arxiv{math/0209149v3}.

\bibitem{OSHFK}
P.~Ozsv\'{a}th and Z.~Szab\'{o}.
\newblock Holomorphic disks and knot invariants.
\newblock {\em Adv. Math.}, 186(1):58--116, 2004.
\newblock \arxiv{math/0209056v4}.

\bibitem{OSHF3mfds}
P.~Ozsv\'{a}th and Z.~Szab\'{o}.
\newblock Holomorphic disks and topological invariants for closed
  three-manifolds.
\newblock {\em Ann. of Math. (2)}, 159(3):1027--1158, 2004.
\newblock \arxiv{math/0101206v4}.

\bibitem{OSmutation}
P.~Ozsv\'{a}th and Z.~Szab\'{o}.
\newblock Knot {F}loer homology, genus bounds, and mutation.
\newblock {\em Topology Appl.}, 141(1-3):59--85, 2004.
\newblock \arxiv{math/0303225v2}.

\bibitem{OSHFL}
P.~Ozsv\'{a}th and Z.~Szab\'{o}.
\newblock Holomorphic disks, link invariants and the multi-variable {A}lexander
  polynomial.
\newblock {\em Algebr. Geom. Topol.}, 8(2):615--692, 2008.
\newblock \arxiv{math/0512286v2}.

\bibitem{OSHFLThurston}
P.~Ozsv\'{a}th and Z.~Szab\'{o}.
\newblock Link {F}loer homology and the {T}hurston norm.
\newblock {\em J. Amer. Math. Soc.}, 21(3):671--709, 2008.
\newblock \arxiv{math/0601618v3}.

\bibitem{OSrescube}
P.~Ozsv\'{a}th and Z.~Szab\'{o}.
\newblock A cube of resolutions for knot {F}loer homology.
\newblock {\em J. Topol.}, 2(4):865--910, 2009.
\newblock \arxiv{0705.3852v1}.

\bibitem{OSKauffmanStates2}
P.~Ozsv\'{a}th and Z.~Szab\'{o}.
\newblock Bordered knot algebras with matchings.
\newblock {\em arXiv preprint}, 2017.
\newblock \arxiv{1707.00597v3}.

\bibitem{OSKauffmanStates1}
P.~Ozsv\'{a}th and Z.~Szab\'{o}.
\newblock Kauffman states, bordered algebras, and a bigraded knot invariant.
\newblock {\em Adv. Math.}, 328:1088--1198, 2018.
\newblock \arxiv{1603.06559v1}.

\bibitem{cHFT}
I.~Petkova and V.~V\'{e}rtesi.
\newblock Combinatorial tangle {F}loer homology.
\newblock {\em Geom. Topol.}, 20(6):3219--3332, 2016.
\newblock \arxiv{1410.2161v2}.

\bibitem{Jake}
J.~A. Rasmussen.
\newblock {\em {F}loer homology and knot complements}.
\newblock ProQuest LLC, Ann Arbor, MI, 2003.
\newblock Thesis (Ph.D.)--Harvard University. \arxiv{math/0306378v1}.

\bibitem{cathtpy}
E.~Riehl.
\newblock {\em Categorical homotopy theory}, volume~24 of {\em New Mathematical
  Monographs}.
\newblock Cambridge University Press, Cambridge, 2014.

\bibitem{SarkarWang}
S.~Sarkar and J.~Wang.
\newblock An algorithm for computing some {H}eegaard {F}loer homologies.
\newblock {\em Ann. of Math. (2)}, 171(2):1213--1236, 2010.
\newblock \arxiv{math/0607777v4}.

\bibitem{Seidel}
P.~Seidel.
\newblock {\em Fukaya categories and {P}icard-{L}efschetz theory}.
\newblock Zurich Lectures in Advanced Mathematics. European Mathematical
  Society (EMS), Z\"{u}rich, 2008.

\bibitem{Zarev}
R.~Zarev.
\newblock Bordered {F}loer homology for sutured manifolds.
\newblock {\em arXiv preprint}, 2009.
\newblock \arxiv{0908.1106v2}.

\bibitem{ZarevJoining}
R.~Zarev.
\newblock Joining and gluing sutured {F}loer homology.
\newblock {\em arXiv preprint}, 2010.
\newblock \arxiv{1010.3496v1}.

\bibitem{ZarevThesis}
R.~Zarev.
\newblock {\em Bordered sutured {F}loer homology}.
\newblock ProQuest LLC, Ann Arbor, MI, 2011.
\newblock Thesis (Ph.D.)--Columbia University.
  (\href{https://doi.org/10.7916/D83R10V4}{doi:~10.7916/D83R10V4}).

\bibitem{HDsForTangles}
C.~B. Zibrowius.
\newblock {K}auffman states and {H}eegaard diagrams for tangles.
\newblock {\em To appear in Algebr. Geom. Topol.}
\newblock \arxiv{1601.04915}.

\bibitem{MyEssay}
C.~B. Zibrowius.
\newblock On a polynomial {A}lexander invariant for tangles and its
  categorification.
\newblock {\em Essay for the {S}mith-{K}night \& {R}ayleigh-{K}night {P}rize
  {C}ompetition 2015, {C}ambridge}, 2015.
\newblock \arxiv{1601.04915v1}.

\bibitem{MyThesis}
C.~B. Zibrowius.
\newblock On a {H}eegaard {F}loer theory for tangles.
\newblock {\em Apollo -- University of Cambridge Repository}, 2016.
\newblock Thesis (Ph.D.)--University of Cambridge. \arxiv{1610.07494}.

\bibitem{PQM.m}
C.~B. Zibrowius.
\newblock {\texttt{PQM.m}}, 2019.
\newblock Mathematica package, ancillary file to \arxiv{1712.05050v2}.

\end{thebibliography}

\affiliationone{
  Claudius Zibrowius\\
  Department of Mathematics\\
  The University of British Columbia\\
  Room 121, 1984 Mathematics Road\\
  Vancouver, BC\\
  Canada V6T 1Z2
  \email{claudius.zibrowius\textcolor{white}{.antispam}\nphantom{.antispam}@\textcolor{white}{antispam-}\nphantom{antispam-}posteo.net}}
% Important: Do not put any empty line here.
\end{document}